\theoremstyle{plain}
\newtheorem{theorem}{Theorem}[section]
\newtheorem{lemma}[theorem]{Lemma}
\theoremstyle{definition}
\newtheorem{definition}[theorem]{Definition}
\newtheorem{assumption}[theorem]{Assumption}
\theoremstyle{remark}
\newtheorem{remark}[theorem]{Remark}
\numberwithin{equation}{section}
\begin{document}

\title[Navier--Stokes equations in a curved thin domain, Part III]{Navier--Stokes equations in a curved thin domain, Part III: thin-film limit}

\author[T.-H. Miura]{Tatsu-Hiko Miura}
\address{Department of Mathematics, Kyoto University, Kitashirakawa Oiwake-cho, Sakyo-ku, Kyoto 606-8502, Japan}
\email{t.miura@math.kyoto-u.ac.jp}

\subjclass[2010]{Primary: 35B25, 35Q30, 76D05; Secondary: 35R01, 76A20}

\keywords{Navier--Stokes equations, curved thin domain, slip boundary conditions, thin-film limit, surface fluids}

\begin{abstract}
  We consider the Navier--Stokes equations with Navier's slip boundary conditions in a three-dimensional curved thin domain around a given closed surface.
  Under suitable assumptions we show that the average in the thin direction of a strong solution to the bulk Navier--Stokes equations converges weakly in appropriate function spaces on the limit surface as the thickness of the thin domain tends to zero.
  Moreover, we characterize the limit as a weak solution to limit equations, which are the damped and weighted Navier--Stokes equations on the limit surface.
  We also prove the strong convergence of the average of a strong solution to the bulk equations towards a weak solution to the limit equations by showing estimates for the difference between them.
  In some special case our limit equations agree with the Navier--Stokes equations on a Riemannian manifold in which the viscous term contains the Ricci curvature.
  This is the first result on a rigorous derivation of the surface Navier--Stokes equations on a general closed surface by the thin-film limit.
\end{abstract}

\maketitle

\section{Introduction} \label{S:Intro}

\subsection{Problem settings} \label{SS:In_Prob}
In this paper, as a continuation of \cites{Miu_NSCTD_01,Miu_NSCTD_02}, we consider the Navier--Stokes equations with Navier's slip boundary conditions
\begin{align} \label{E:NS_CTD}
  \left\{
  \begin{alignedat}{3}
    \partial_tu^\varepsilon+(u^\varepsilon\cdot\nabla)u^\varepsilon-\nu\Delta u^\varepsilon+\nabla p^\varepsilon &= f^\varepsilon &\quad &\text{in} &\quad &\Omega_\varepsilon\times(0,\infty), \\
    \mathrm{div}\,u^\varepsilon &= 0 &\quad &\text{in} &\quad &\Omega_\varepsilon\times(0,\infty), \\
    u^\varepsilon \cdot n_\varepsilon &= 0 &\quad &\text{on} &\quad &\Gamma_\varepsilon\times(0,\infty), \\
    [\sigma(u^\varepsilon,p^\varepsilon)n_\varepsilon]_{\mathrm{tan}}+\gamma_\varepsilon u^\varepsilon &= 0 &\quad &\text{on} &\quad &\Gamma_\varepsilon\times(0,\infty), \\
    u^\varepsilon|_{t=0} &= u_0^\varepsilon &\quad &\text{in} &\quad &\Omega_\varepsilon.
  \end{alignedat}
  \right.
\end{align}
Here $\Omega_\varepsilon$ is a curved thin domain in $\mathbb{R}^3$ with small thickness of order $\varepsilon>0$ given by
\begin{align} \label{E:Def_CTD}
  \Omega_\varepsilon := \{y+rn(y) \mid y\in\Gamma,\, \varepsilon g_0(y) < r < \varepsilon g_1(y)\},
\end{align}
where $\Gamma$ is a given closed surface in $\mathbb{R}^3$ with unit outward normal vector field $n$ and $g_0$ and $g_1$ are functions on $\Gamma$ satisfying
\begin{align*}
  g := g_1-g_0 \geq c \quad\text{on}\quad \Gamma
\end{align*}
with some constant $c>0$.
Also, $\nu>0$ is the viscosity coefficient independent of $\varepsilon$.
We define the inner and outer boundaries $\Gamma_\varepsilon^0$ and $\Gamma_\varepsilon^1$ of $\Omega_\varepsilon$ by
\begin{align*}
  \Gamma_\varepsilon^i := \{y+\varepsilon g_i(y)n(y) \mid y\in\Gamma\}, \quad i=0,1
\end{align*}
and write $\Gamma_\varepsilon:=\Gamma_\varepsilon^0\cup\Gamma_\varepsilon^1$ and $n_\varepsilon$ for the whole boundary of $\Omega_\varepsilon$ and its unit outward normal vector field.
Moreover, $\gamma_\varepsilon\geq 0$ is the friction coefficient given by
\begin{align} \label{E:Def_Fric}
  \gamma_\varepsilon := \gamma_\varepsilon^i \quad\text{on}\quad \Gamma_\varepsilon^i,\, i=0,1
\end{align}
with nonnegative constants $\gamma_\varepsilon^0$ and $\gamma_\varepsilon^1$ depending on $\varepsilon$ and
\begin{align*}
  \sigma(u^\varepsilon,p^\varepsilon) := 2\nu D(u^\varepsilon)-p^\varepsilon I_3, \quad [\sigma(u^\varepsilon,p^\varepsilon)n_\varepsilon]_{\mathrm{tan}} := P_\varepsilon[\sigma(u^\varepsilon,p^\varepsilon)n_\varepsilon]
\end{align*}
are the stress tensor and the tangential component of the stress vector on $\Gamma_\varepsilon$, where $I_3$ is the $3\times3$ identity matrix,
\begin{align*}
  D(u^\varepsilon) := \frac{\nabla u^\varepsilon+(\nabla u^\varepsilon)^T}{2}, \quad P_\varepsilon:=I_3-n_\varepsilon\otimes n_\varepsilon
\end{align*}
are the strain rate tensor and the orthogonal projection onto the tangent plane of $\Gamma_\varepsilon$, and $n_\varepsilon\otimes n_\varepsilon$ is the tensor product of $n_\varepsilon$ with itself.
Note that
\begin{align*}
  [\sigma(u^\varepsilon,p^\varepsilon)n_\varepsilon]_{\mathrm{tan}} = 2\nu P_\varepsilon D(u^\varepsilon)n_\varepsilon
\end{align*}
is independent of the pressure $p^\varepsilon$ and thus the slip boundary conditions reduce to
\begin{align} \label{E:Slip_Intro}
  u^\varepsilon\cdot n_\varepsilon = 0, \quad 2\nu P_\varepsilon D(u^\varepsilon)n_\varepsilon+\gamma_\varepsilon u^\varepsilon = 0 \quad\text{on}\quad \Gamma_\varepsilon.
\end{align}
In what follows, we mainly refer to \eqref{E:Slip_Intro} as the slip boundary conditions.

The aim of this paper is to study the behavior of a solution $u^\varepsilon$ to \eqref{E:NS_CTD} as $\varepsilon\to0$.
Our goal is to derive limit equations on $\Gamma$ for \eqref{E:NS_CTD} by showing the convergence of $u^\varepsilon$ as $\varepsilon\to0$ and characterizing its limit as a solution to the limit equations.
Such a problem is a kind of singular limit problem since $\Omega_\varepsilon$ degenerates into the lower dimensional set $\Gamma$ as $\varepsilon\to0$.
The study of a singular limit problem for partial differential equations (PDEs) in thin domains is important for a good understanding of the effects of the thin and other directions on those PDEs.
There are many works on a singular limit problem for reaction-diffusion equations in thin domains around lower dimensional sets (see e.g. \cites{HaRa92b,PrRiRy02,PrRy03,Yan90}).
Also, a few researchers have studied a singular limit problem for the Navier--Stokes equations in flat thin domains in $\mathbb{R}^3$ around two-dimensional domains \cites{Hu07,IfRaSe07,TeZi96} and in a thin spherical shell between two concentric spheres \cite{TeZi97}.
However, there has been no result on a curved thin domain around a general closed surface in the study of the Navier--Stokes equations.

\subsection{Outline of main results} \label{SS:In_Main}
Let us briefly state the main results of the series of our study (see Section \ref{S:Main} for details of the main results of this paper).

In the first part \cite{Miu_NSCTD_01} we studied the Stokes operator $A_\varepsilon$ for $\Omega_\varepsilon$ under the slip boundary conditions \eqref{E:Slip_Intro} and derived the uniform norm equivalence
\begin{align} \label{E:NoEq_In}
  c^{-1}\|u\|_{H^k(\Omega_\varepsilon)} \leq \|A_\varepsilon^{k/2}u\|_{L^2(\Omega_\varepsilon)} \leq c\|u\|_{H^k(\Omega_\varepsilon)}, \quad u\in D(A_\varepsilon^{k/2})
\end{align}
for $k=1,2$ and the uniform difference estimate for $A_\varepsilon$ and $-\nu\Delta$ of the form
\begin{align} \label{E:UnDi_In}
  \|A_\varepsilon u+\nu\Delta u\|_{L^2(\Omega_\varepsilon)} \leq c\|u\|_{H^1(\Omega_\varepsilon)}, \quad u\in D(A_\varepsilon)
\end{align}
with a constant $c>0$ independent of $\varepsilon$.
Using them and average operators in the thin direction we established in the second part \cite{Miu_NSCTD_02} the global-in-time existence of a strong solution $u^\varepsilon$ to \eqref{E:NS_CTD} for large data $u_0^\varepsilon$ and $f^\varepsilon$ in the sense that
\begin{align*}
  \|u_0^\varepsilon\|_{H^1(\Omega_\varepsilon)},\, \|f^\varepsilon\|_{L^\infty(0,\infty;L^2(\Omega_\varepsilon))} = O(\varepsilon^{-1/2})
\end{align*}
when $\varepsilon$ is sufficiently small.
We also derived estimates for $u^\varepsilon$ of the form
\begin{align} \label{E:UE_In}
  \begin{aligned}
    \|u^\varepsilon(t)\|_{H^k(\Omega_\varepsilon)}^2 &\leq c(\varepsilon^{-2k+1+\alpha}+\varepsilon^{-k+\beta}), \\
    \int_0^t\|u^\varepsilon(s)\|_{H^{k+1}(\Omega_\varepsilon)}^2\,ds &\leq c(\varepsilon^{-2k+1+\alpha}+\varepsilon^{-k+\beta})(1+t)
  \end{aligned}
\end{align}
for $t\geq0$ and $k=0,1$ with constants $c,\alpha,\beta>0$ independent of $\varepsilon$ under additional conditions on $u_0^\varepsilon$ and $f^\varepsilon$ (see Theorem \ref{T:UE}).
In this paper, based on the results of \cites{Miu_NSCTD_01,Miu_NSCTD_02}, we study the behavior as $\varepsilon\to0$ of the average in the thin direction of $u^\varepsilon$ given by
\begin{align*}
  Mu^\varepsilon(y,t) := \frac{1}{\varepsilon g(y)}\int_{\varepsilon g_0(y)}^{\varepsilon g_1(y)}u^\varepsilon(y+rn(y),t)\,dr, \quad (y,t)\in\Gamma\times(0,\infty).
\end{align*}
Under suitable assumptions we show that the normal and tangential components
\begin{align*}
  Mu^\varepsilon\cdot n, \quad M_\tau u^\varepsilon := Mu^\varepsilon-(Mu^\varepsilon\cdot n)n
\end{align*}
of the average converge strongly to zero on $\Gamma$ and weakly to a tangential vector field $v$ on $\Gamma$, respectively, and characterize $v$ as a unique weak solution to limit equations on $\Gamma$ (see Theorem \ref{T:SL_Weak}).
We also establish the strong convergence of $M_\tau u^\varepsilon$ to $v$ by estimating their difference (see Theorem \ref{T:SL_Strong}).

\subsection{Limit equations in the simplest case} \label{SS:In_Lim}
When $g\equiv1$ and $\gamma_\varepsilon=0$, i.e. the thickness of $\Omega_\varepsilon$ is $\varepsilon$ and we impose the perfect slip boundary conditions
\begin{align} \label{E:PeSl_In}
  u^\varepsilon\cdot n_\varepsilon = 0, \quad 2\nu P_\varepsilon D(u^\varepsilon)n_\varepsilon = 0 \quad\text{on}\quad \Gamma_\varepsilon,
\end{align}
our limit equations given in Theorem \ref{T:SL_Weak} are of the form
\begin{align} \label{E:Lim_In_Ex}
  \left\{
  \begin{alignedat}{3}
    \partial_tv+\overline{\nabla}_vv-2\nu P\mathrm{div}_\Gamma[D_\Gamma(v)]+\nabla_\Gamma q &= f &\quad &\text{on} &\quad &\Gamma\times(0,\infty), \\
    \mathrm{div}_\Gamma v &= 0 &\quad &\text{on} &\quad &\Gamma\times(0,\infty)
  \end{alignedat}
  \right.
\end{align}
for a tangential vector field $v$ on $\Gamma$ with initial condition.
Here $\nabla_\Gamma$ and $\mathrm{div}_\Gamma$ denote the tangential gradient and the surface divergence on $\Gamma$.
Also, $\overline{\nabla}_vv$ is the covariant derivative of $v$ along itself and
\begin{align*}
  P := I_3-n\otimes n, \quad D_\Gamma(v) := P\left(\frac{\nabla_\Gamma v+(\nabla_\Gamma v)^T}{2}\right)P
\end{align*}
are the orthogonal projection onto the tangent plane of $\Gamma$ and the surface strain rate tensor (see Section \ref{SS:Pre_Surf} for details).
When $g\not\equiv1$ and $\gamma_\varepsilon\neq0$, we have a damped and weighted version of \eqref{E:Lim_In_Ex} (see \eqref{E:NS_Limit} for the precise form of the limit equations).
Formally, if $\Gamma=\mathbb{T}^2$ is the flat torus, then the limit equations \eqref{E:Lim_In_Ex} reduce to the usual two-dimensional Navier--Stokes equations on $\mathbb{T}^2$.

The limit equations \eqref{E:Lim_In_Ex} are a priori extrinsic in the sense that they are written in terms of a fixed coordinate system of the ambient space $\mathbb{R}^3$, since we use the $3\times3$ matrices $P$ and $D_\Gamma(v)$ to describe the viscous term of \eqref{E:Lim_In_Ex}.
However, it turns out (see Remark \ref{R:Lim_Reduce}) that the limit equations \eqref{E:Lim_In_Ex} are equivalent to
\begin{align} \label{E:Lim_In_In}
  \left\{
  \begin{alignedat}{3}
    \partial_tv+\overline{\nabla}_vv-\nu\{\Delta_Bv+\mathrm{Ric}(v)\}+\nabla_\Gamma q &= f &\quad &\text{on} &\quad &\Gamma\times(0,\infty), \\
    \mathrm{div}_\Gamma v &= 0 &\quad &\text{on} &\quad &\Gamma\times(0,\infty),
  \end{alignedat}
  \right.
\end{align}
where $\Delta_B$ and $\mathrm{Ric}$ are the Bochner Laplacian on $\Gamma$ and the Ricci curvature of $\Gamma$ (see Appendix \ref{S:Ap_Visc}), and all terms in \eqref{E:Lim_In_In} are intrinsic, i.e. independent of an embedding of $\Gamma$ into $\mathbb{R}^3$.
Thus our limit equations \eqref{E:Lim_In_Ex} are in fact intrinsic.

The limit equations \eqref{E:Lim_In_Ex} and their equivalent form \eqref{E:Lim_In_In} are closely related to the Navier--Stokes equations on surfaces and manifolds.
Recently, the Navier--Stokes equations on an evolving surface in $\mathbb{R}^3$ were derived in \cites{JaOlRe18,KoLiGi17,Miu18}.
Those equations reduce to \eqref{E:Lim_In_Ex} if a surface is stationary.
Moreover, the equations \eqref{E:Lim_In_In} agree with the Navier--Stokes equations on a Riemannian manifold introduced in \cites{EbMa70,MitYa02,Ta92}.
We emphasize that this paper presents the first result on a rigorous derivation of \eqref{E:Lim_In_In} by the thin-film limit when a manifold is a closed surface in $\mathbb{R}^3$.
Our results also justify the result of \cite{Miu18} which derived the surface Navier--Stokes equations by formal calculations of the thin-film limit.

Let us also compare our limit equations \eqref{E:Lim_In_In} with limit equations derived in \cite{TeZi97} under different boundary conditions when $\Gamma=S^2$ is the unit sphere in $\mathbb{R}^3$.
Temam and Ziane \cite{TeZi97} studied the Navier--Stokes equations in a thin spherical shell
\begin{align*}
  \Omega_\varepsilon = \{x\in\mathbb{R}^3 \mid 1<|x|<1+\varepsilon\} \quad (\Gamma=S^2, \, g_0\equiv0, \, g_1\equiv1)
\end{align*}
around the unit sphere under the Hodge (or de Rham) boundary conditions
\begin{align} \label{E:Hod_In}
  u^\varepsilon\cdot n_\varepsilon = 0, \quad \mathrm{curl}\,u^\varepsilon\times n_\varepsilon = 0 \quad\text{on}\quad \Gamma_\varepsilon,
\end{align}
which were called the free boundary conditions in \cite{TeZi97}, and derived limit equations on $S^2$ which were described in spherical coordinates.
In terms of our notations the limit equations derived in \cite{TeZi97} are of the form (see Remark \ref{R:Limit_TZ})
\begin{align} \label{E:Lim_In_TZ}
  \left\{
  \begin{alignedat}{3}
    \partial_tv+\overline{\nabla}_vv-\nu(\Delta_Bv-v)+\nabla_\Gamma q &= f &\quad &\text{on} &\quad &S^2\times(0,\infty), \\
    \mathrm{div}_\Gamma v &= 0 &\quad &\text{on} &\quad &S^2\times(0,\infty).
  \end{alignedat}
  \right.
\end{align}
On the other hand, since the Ricci curvature of $S^2$ is equal to the Gaussian curvature $K=1$ of $S^2$, i.e. $\mathrm{Ric}(X)=X$ on $S^2$ for a tangential vector field $X$ on $S^2$ (see Lemma \ref{L:Cur_Ten}), our limit equations \eqref{E:Lim_In_In} read
\begin{align} \label{E:Lim_In_S2}
  \left\{
  \begin{alignedat}{3}
    \partial_tv+\overline{\nabla}_vv-\nu(\Delta_Bv+v)+\nabla_\Gamma q &= f &\quad &\text{on} &\quad &S^2\times(0,\infty), \\
    \mathrm{div}_\Gamma v &= 0 &\quad &\text{on} &\quad &S^2\times(0,\infty).
  \end{alignedat}
  \right.
\end{align}
Here the sign of the zeroth order term $v$ in our limit equations \eqref{E:Lim_In_S2} is opposite to that of the same term in the limit equations \eqref{E:Lim_In_TZ} derived in \cite{TeZi97}.
This is due to the fact that the boundary conditions \eqref{E:PeSl_In} and \eqref{E:Hod_In} for the original problem are different by the nonzero curvatures of the limit surface $S^2$.
Indeed, if a vector field $u$ on $\overline{\Omega}_\varepsilon$ satisfies $u\cdot n_\varepsilon=0$ on $\Gamma_\varepsilon$, then (see \cite{MitMon09}*{Section 2} and \cite{Miu_NSCTD_01}*{Lemma B.10})
\begin{align*}
  2P_\varepsilon D(u)n_\varepsilon-\mathrm{curl}\,u\times n_\varepsilon = 2W_\varepsilon u \quad\text{on}\quad \Gamma_\varepsilon,
\end{align*}
where $W_\varepsilon$ is the Weingarten map (or the shape operator) of $\Gamma_\varepsilon$ that represents the curvatures of $\Gamma_\varepsilon$ (see Section \ref{SS:Pre_CTD}).
Moreover, since the unit outward normal vector field of the boundary $\Gamma_\varepsilon=\Gamma_\varepsilon^0\cup\Gamma_\varepsilon^1$ of the thin spherical shell is of the form
\begin{align*}
  n_\varepsilon(x) =
  \begin{cases}
    -x, &x\in\Gamma_\varepsilon^0 = \{z\in\mathbb{R}^3 \mid |z| = 1\}, \\
    \displaystyle\frac{x}{1+\varepsilon}, &x\in\Gamma_\varepsilon^1 = \{z\in\mathbb{R}^3 \mid |z|=1+\varepsilon\},
  \end{cases}
\end{align*}
we observe that, under the condition $u\cdot n_\varepsilon=0$ on $\Gamma_\varepsilon$,
\begin{gather*}
  W_\varepsilon u = u \quad\text{on}\quad \Gamma_\varepsilon^0, \quad W_\varepsilon u = -\frac{u}{1+\varepsilon} \approx -u \quad\text{on}\quad \Gamma_\varepsilon^1, \\
  2P_\varepsilon D(u)n_\varepsilon-\mathrm{curl}\,u\times n_\varepsilon = 2W_\varepsilon u \approx \pm 2u \quad\text{on}\quad \Gamma_\varepsilon.
\end{gather*}
Therefore, roughly speaking, the perfect slip boundary conditions \eqref{E:PeSl_In} differ from the Hodge boundary conditions \eqref{E:Hod_In} by $2u$ and this difference results in the difference $2v$ of our limit equations \eqref{E:Lim_In_S2} from the limit equations \eqref{E:Lim_In_TZ} derived in \cite{TeZi97}.
For further discussions on \eqref{E:Lim_In_TZ} and \eqref{E:Lim_In_S2}, see Remark \ref{R:Limit_TZ}.

\subsection{Main ideas and outline of the proofs} \label{SS:In_Idea}
To prove the main results of this paper (Theorems \ref{T:SL_Weak} and \ref{T:SL_Strong}) we use the results of the first and second parts \cites{Miu_NSCTD_01,Miu_NSCTD_02} of our study.
So let us first explain main ideas used in \cites{Miu_NSCTD_01,Miu_NSCTD_02} and then give the outline of the proofs of Theorems \ref{T:SL_Weak} and \ref{T:SL_Strong}.

In \cite{Miu_NSCTD_01} we derived \eqref{E:NoEq_In}, \eqref{E:UnDi_In}, and other basic inequalities on $\Omega_\varepsilon$ such as Poincar\'{e} and trace type inequalities.
The important point of \cite{Miu_NSCTD_01} was to verify that constants in those inequalities are independent of $\varepsilon$ or depend explicitly on $\varepsilon$, since our aim is to study properties of a solution to \eqref{E:NS_CTD} related to the smallness of $\varepsilon$.
To achieve that we carefully analyzed surface quantities of $\Gamma_\varepsilon$ such as curvatures and carried out calculations by using the change of variables formula (here $J$ is the Jacobian)
\begin{align} \label{E:CoV_In}
  \int_{\Omega_\varepsilon}\varphi(x)\,dx = \int_\Gamma\int_{\varepsilon g_0(y)}^{\varepsilon g_1(y)}\varphi(y+rn(y))J(y,r)\,dr\,d\mathcal{H}^2(y)
\end{align}
for a function $\varphi$ on $\Omega_\varepsilon$ and a change of variables formula for an integral over $\Gamma_\varepsilon$.
We also analyzed the behavior on $\Gamma_\varepsilon$ of a vector field on $\overline{\Omega}_\varepsilon$ in order to prove \eqref{E:NoEq_In} with $k=2$.
For that purpose we usually take a local coordinate system of $\Gamma_\varepsilon$ or transform a part of $\Gamma_\varepsilon$ into the boundary of the half space, but in our case these methods would cause too complicated calculations since we were required to deal with the second order derivatives of a vector field.
Instead, we employed the Gauss formula for tangential vector fields $X$ and $Y$ on $\Gamma_\varepsilon$ of the form
\begin{align*}
  (Y\cdot\nabla)X = \overline{\nabla}_Y^\varepsilon X+(W_\varepsilon X\cdot Y)n_\varepsilon \quad\text{on}\quad \Gamma_\varepsilon
\end{align*}
which expresses the directional derivative $(Y\cdot\nabla)X$ in $\mathbb{R}^3$ by the covariant derivative $\overline{\nabla}_Y^\varepsilon X$ on $\Gamma_\varepsilon$ and the second fundamental form $(W_\varepsilon X\cdot Y)n_\varepsilon$ of $\Gamma_\varepsilon$.
Using this formula and other formulas for the covariant derivative we carried out calculations on $\Gamma_\varepsilon$ in a fixed coordinate system of the ambient space $\mathbb{R}^3$.

In \cite{Miu_NSCTD_02} we proved the global existence of a strong solution $u^\varepsilon$ to \eqref{E:NS_CTD} and obtained the estimates \eqref{E:UE_In} for $u^\varepsilon$ by a standard energy method.
The main ingredient for the proof was the following estimate for the trilinear term:
\begin{multline} \label{E:TrEs_In}
  \left|\bigl((u\cdot\nabla)u,A_\varepsilon u\bigr)_{L^2(\Omega_\varepsilon)}\right| \leq \left(\frac{1}{4}+d_1\varepsilon^{1/2}\|A_\varepsilon^{1/2}u\|_{L^2(\Omega_\varepsilon)}\right)\|A_\varepsilon u\|_{L^2(\Omega_\varepsilon)}^2 \\
  +d_2\left(\|u\|_{L^2(\Omega_\varepsilon)}^2\|A_\varepsilon^{1/2}u\|_{L^2(\Omega_\varepsilon)}^4+\varepsilon^{-1}\|u\|_{L^2(\Omega_\varepsilon)}^2\|A_\varepsilon^{1/2}u\|_{L^2(\Omega_\varepsilon)}^2\right)
\end{multline}
for $u\in D(A_\varepsilon)$ with positive constants $d_1$ and $d_2$ independent of $\varepsilon$.
To derive \eqref{E:TrEs_In} we decomposed $u$ into the almost two-dimensional average part $u^a$ and the residual part $u^r$.
Based on a detailed study of the average operators $M$ and $M_\tau$ we proved a good product estimate for $u^a$ of the form
\begin{align*}
  \bigl\|\,|u^a|\,\varphi\bigr\|_{L^2(\Omega_\varepsilon)} \leq c\varepsilon^{-1/2}\|\varphi\|_{L^2(\Omega_\varepsilon)}^{1/2}\|\varphi\|_{H^1(\Omega_\varepsilon)}^{1/2}\|u\|_{L^2(\Omega_\varepsilon)}^{1/2}\|u\|_{H^1(\Omega_\varepsilon)}^{1/2}, \quad \varphi\in H^1(\Omega_\varepsilon)
\end{align*}
with a similar one for $\nabla u^a$ and a good $L^\infty(\Omega_\varepsilon)$-estimate for $u^r$ of the form
\begin{align*}
  \|u^r\|_{L^\infty(\Omega_\varepsilon)} \leq c\left(\varepsilon^{1/2}\|u\|_{H^2(\Omega_\varepsilon)}+\|u\|_{L^2(\Omega_\varepsilon)}^{1/2}\|u\|_{H^2(\Omega_\varepsilon)}^{1/2}\right).
\end{align*}
We applied these estimates, the uniform estimates \eqref{E:NoEq_In} and \eqref{E:UnDi_In} for $A_\varepsilon$, and other inequalities for $M$ and $M_\tau$ to obtain \eqref{E:TrEs_In}, but the actual proof involved long and careful calculations of vector fields on $\Omega_\varepsilon$ and $\Gamma$.

Now let us explain the outline of the proofs of our main results (Theorems \ref{T:SL_Weak} and \ref{T:SL_Strong}).
For the global strong solution $u^\varepsilon$ to \eqref{E:NS_CTD} satisfying \eqref{E:UE_In}, we consider the normal and tangential components $Mu^\varepsilon\cdot n$ and $M_\tau u^\varepsilon$ of the average separately.
First we note that, since $u^\varepsilon$ satisfies the impermeable boundary condition
\begin{align*}
  u^\varepsilon\cdot n_\varepsilon = 0 \quad\text{on}\quad \Gamma_\varepsilon,
\end{align*}
the strong convergence of $Mu^\varepsilon\cdot n$ to zero on $\Gamma$ as $\varepsilon\to0$ easily follows from \eqref{E:UE_In} and a property of $M$ (see Lemma \ref{L:Ave_N_Lp}).
Thus our main task is to analyze the behavior of $M_\tau u^\varepsilon$ as $\varepsilon\to0$.
We first show that $M_\tau u^\varepsilon$ satisfies a weak formulation of the limit equations with a residual term that converges to zero as $\varepsilon\to0$ (see Lemma \ref{L:Mu_Weak}).
Using that weak formulation we next derive an energy estimate for $M_\tau u^\varepsilon$ and an estimate for its time derivative with bounds independent of $\varepsilon$ (see Lemmas \ref{L:Mu_Energy} and \ref{L:Mu_Dt}).
These estimates imply the weak convergence of a subsequence of $M_\tau u^\varepsilon$ in appropriate function spaces on $\Gamma$, and we show that the limit $v$ is a weak solution to the limit equations by sending $\varepsilon\to0$ in the weak formulation for $M_\tau u^\varepsilon$.
We can also prove the uniqueness of a weak solution to the limit equations as in the case of the two-dimensional Navier--Stokes equations.
By this uniqueness result we obtain the weak convergence of the full sequence of $M_\tau u^\varepsilon$ to $v$ as $\varepsilon\to0$, which completes the proof of Theorem \ref{T:SL_Weak} (see Section \ref{SS:SL_WeCh}).
To show the strong convergence of $M_\tau u^\varepsilon$ towards $v$ (Theorem \ref{T:SL_Strong}), we derive an energy estimate for the difference between $M_\tau u^\varepsilon$ and $v$ by using the weak formulations for them (see Section \ref{SS:SL_ErST}).

The proof of Theorem \ref{T:SL_Weak} outlined above seems similar to that of the existence of a weak solution to the Navier--Stokes equations by standard approximation methods such as the Galerkin and Yosida methods (see e.g. \cites{BoFa13,CoFo88,So01,Te79}), but in the actual proof we encounter some new difficulties arising from the complicated geometry of the curved thin domain $\Omega_\varepsilon$ and its limit surface $\Gamma$.
In the first step of the proof of Theorem \ref{T:SL_Weak} (Section \ref{SS:SL_Ave}) we substitute a test function $\eta\in H^1(\Gamma)^3$ for the limit equations satisfying
\begin{align} \label{E:SoSu_In}
  \eta\cdot n = 0, \quad \mathrm{div}_\Gamma(g\eta) = 0 \quad\text{on}\quad \Gamma
\end{align}
in a weak formulation of \eqref{E:NS_CTD} satisfied by $u^\varepsilon$ in order to derive the weak formulation for $M_\tau u^\varepsilon$.
To this end, we need to extend $\eta$ to an appropriate test function for \eqref{E:NS_CTD}, i.e. a vector field $\eta_\varepsilon\in H^1(\Omega_\varepsilon)^3$ close to $\eta$ in an appropriate sense and satisfying
\begin{align} \label{E:SoDo_In}
  \mathrm{div}\,\eta_\varepsilon = 0 \quad\text{in}\quad \Omega_\varepsilon, \quad \eta_\varepsilon\cdot n_\varepsilon = 0 \quad\text{on}\quad \Gamma_\varepsilon.
\end{align}
If $\Gamma$ is flat and $g\equiv1$, we have such a test function just by extending $\eta$ constantly in the normal direction of $\Gamma$.
In our case, however, the constant extension of $\eta$ does not necessarily satisfy \eqref{E:SoDo_In} since $\Gamma$ has nonzero curvatures and $g\not\equiv1$.
To get an appropriate test function for \eqref{E:NS_CTD}, we first employ an impermeable extension $E_\varepsilon\eta$ of $\eta$ to $\Omega_\varepsilon$ satisfying
\begin{align} \label{E:IE_In}
  E_\varepsilon\eta\cdot n_\varepsilon = 0 \quad\text{on}\quad \Gamma_\varepsilon
\end{align}
given in Section \ref{SS:Fund_IE} and apply the Helmholtz--Leray decomposition
\begin{align} \label{E:HLDo_In}
  E_\varepsilon\eta = \eta_\varepsilon+\nabla\varphi_\varepsilon \quad\text{in}\quad \Omega_\varepsilon, \quad (\eta_\varepsilon,\nabla\varphi_\varepsilon)_{L^2(\Omega_\varepsilon)} = 0
\end{align}
in which the solenoidal part $\eta_\varepsilon$ satisfies \eqref{E:SoDo_In}.
Then we can show (see Lemma \ref{L:Const_Test}) that $\eta_\varepsilon$ is close to $\eta$ by \eqref{E:SoSu_In} and good estimates for $E_\varepsilon\eta$ and the difference $E_\varepsilon\eta-\eta_\varepsilon$ shown in Sections \ref{SS:Fund_IE} and \ref{SS:Fund_HL}.
Hence we can take $\eta_\varepsilon$ in \eqref{E:HLDo_In} as an appropriate test function for \eqref{E:NS_CTD}.
Here we note that the impermeable boundary condition \eqref{E:IE_In} for $E_\varepsilon\eta$ is essential for the estimate of $E_\varepsilon\eta-\eta_\varepsilon$ (see Lemma \ref{L:HP_Dom}).

After substituting a test function for the limit equations in the weak formulation of \eqref{E:NS_CTD}, we transform the resulting equality into the weak formulation for $M_\tau u^\varepsilon$ by applying the change of variables formula \eqref{E:CoV_In}.
In this step we need to show that the residual term in the weak formulation for $M_\tau u^\varepsilon$ converges to zero as $\varepsilon\to0$, since we intend to obtain a weak formulation of the limit equations as a limit of the weak formulation for $M_\tau u^\varepsilon$ as $\varepsilon\to0$.
For this purpose, we give good approximations of bilinear and trilinear forms in the weak formulation of \eqref{E:NS_CTD} by those for the weak formulation of the limit equations (see Lemmas \ref{L:Appr_Bili} and \ref{L:Appr_Tri}) and use the estimates \eqref{E:UE_In} for $u^\varepsilon$ shown in \cite{Miu_NSCTD_02}.
Here we note that the $H^2(\Omega_\varepsilon)$-estimate for $u^\varepsilon$ is required for the good approximation of not only the trilinear form but also the bilinear form.
This is essentially due to the fact that the limit equations are described only in terms of intrinsic quantities of the embedded surface $\Gamma$ in $\mathbb{R}^3$ (see Remark \ref{R:Lim_Reduce}), while the bulk equations \eqref{E:NS_CTD} contain extrinsic quantities of $\Gamma$.
The intrinsic part of \eqref{E:NS_CTD} with respect to $\Gamma$ is approximated by the limit equations, but the extrinsic part is just estimated by the $H^2(\Omega_\varepsilon)$-norm of $u^\varepsilon$.
In other words, the $H^2$-regularity of a solution to \eqref{E:NS_CTD} compensates for lack of the extrinsic information of $\Gamma$ in the limit equations.

We also have difficulties in the second step of the proof of Theorem \ref{T:SL_Weak}.
In that step we derive the energy estimate for $M_\tau u^\varepsilon$ with a uniform bound in $\varepsilon$.
We might easily get it from the weak formulation for $M_\tau u^\varepsilon$ if we could take $M_\tau u^\varepsilon$ itself as a test function, but we cannot do that since $M_\tau u^\varepsilon$ does not necessarily satisfy \eqref{E:SoSu_In}.
To overcome this difficulty, we apply the weighted Helmholtz--Leray decomposition
\begin{align*}
  M_\tau u^\varepsilon = v^\varepsilon+g\nabla_\Gamma q^\varepsilon \quad\text{on}\quad \Gamma, \quad (v^\varepsilon,g\nabla_\Gamma q^\varepsilon)_{L^2(\Gamma)} = 0
\end{align*}
in which the weighted solenoidal part $v^\varepsilon$ satisfies \eqref{E:SoSu_In} (see Theorem \ref{T:HL_L2T}).
Using various estimates for the difference $M_\tau u^\varepsilon-v^\varepsilon$ shown in Section \ref{SS:WS_HL} we derive a weak formulation for $v^\varepsilon$ from that for $M_\tau u^\varepsilon$ (see Lemma \ref{L:PMu_Weak}).
Then since $v^\varepsilon$ satisfies \eqref{E:SoSu_In}, we can take $v^\varepsilon$ as a test function for its weak formulation to get an energy estimate for $v^\varepsilon$ (see Lemma \ref{L:PMu_Energy}), which implies the energy estimate for $M_\tau u^\varepsilon$ when combined with estimates for $M_\tau u^\varepsilon-v^\varepsilon$ (see Lemma \ref{L:Mu_Energy}).
We also use $v^\varepsilon$ to show the energy estimate for the difference between $M_\tau u^\varepsilon$ and the weak solution $v$ to the limit equations (see Theorem \ref{T:Diff_Mu_V} and Lemma \ref{L:Diff_Ve_V}).

\subsection{Literature overview} \label{SS:In_Lit}
PDEs in thin domains appear in many applications in solid mechanics (thin rods, plates, shells), fluid mechanics (lubrication, meteorology, ocean dynamics), etc.
Many researchers have studied PDEs in thin domains, mainly reaction-diffusion and the Navier--Stokes equations, since the pioneering works \cites{HaRa92a,HaRa92b} by Hale and Raugel on damped wave and reaction-diffusion equations.

The study of the Navier--Stokes equations in a three-dimensional thin domain was initiated by Raugel and Sell \cites{RaSe93_I,RaSe94_II,RaSe93_III}, who considered a thin product domain $Q\times(0,\varepsilon)$ in $\mathbb{R}^3$ with a rectangle $Q$ and a sufficiently small $\varepsilon>0$.
Under the purely periodic or mixed Dirichlet-periodic boundary conditions they established the global existence of a strong solution for large data depending on the smallness of $\varepsilon$ by dilating the thin domain $Q\times(0,\varepsilon)$ and analyzing scaled equations in $Q\times(0,1)$ as a perturbation of the two-dimensional Navier--Stokes equations.
Temam and Ziane \cite{TeZi96} generalized the results of \cites{RaSe93_I,RaSe94_II,RaSe93_III} to a thin product domain $\omega\times(0,\varepsilon)$ in $\mathbb{R}^3$ around a bounded domain $\omega$ in $\mathbb{R}^2$ under combinations of the Dirichlet, periodic, and Hodge boundary conditions.
By analyzing the average of a strong solution in the thin direction and the residual term separately, they proved the global existence of the strong solution without dilating the thin domain.
Moreover, they observed that the average of the strong solution under suitable boundary conditions converges to a solution of the two-dimensional Navier--Stokes equations in $\omega$ as $\varepsilon\to0$.
We refer to \cites{Hu07,If99,IfRa01,KuZi06,KuZi07,MoTeZi97,Mo99} and the references cited therein for further studies of the Navier--Stokes equations in thin product domains in $\mathbb{R}^3$.

Thin product domains studied in the above cited papers are flat in the sense that they have flat top and bottom boundaries and their limit sets are domains in $\mathbb{R}^2$.
In physical problems, however, there are many kinds of nonflat thin domains (see \cite{Ra95} for examples of nonflat thin domains), so it is not only mathematically challenging but also important for applications to generalize the shape of a thin domain.
There are a few works on the Navier--Stokes equations in nonflat thin domains.
The first study was done by Temam and Ziane \cite{TeZi97}, who dealt with a thin spherical shell
\begin{align*}
  \{x\in\mathbb{R}^3 \mid a<|x|<a+\varepsilon a\}, \quad a>0
\end{align*}
in order to give a mathematical justification of derivation of the primitive equations for the atmosphere and ocean dynamics (see \cites{LiTeWa92a,LiTeWa92b,LiTeWa95}).
As in their previous work \cite{TeZi96} on the thin product domain, they used an average operator in the thin direction to prove the global existence of a strong solution and the convergence of its average towards a solution of limit equations on a sphere as $\varepsilon\to0$.
A flat thin domain with nonflat top and bottom boundaries
\begin{align*}
  \{(x',x_3)\in\mathbb{R}^3 \mid x'\in(0,1)^2,\, \varepsilon g_0(x') < x_3 < \varepsilon g_1(x')\}, \quad g_0,g_1\colon(0,1)^2\to\mathbb{R}
\end{align*}
was also considered by Iftimie, Raugel, and Sell \cite{IfRaSe07} (with $g_0\equiv0$), Hoang \cites{Ho10}, and Hoang and Sell \cite{HoSe10}.
They obtained the global existence of a strong solution under the laterally periodic and vertically slip boundary conditions by analyzing average operators in detail.
Moreover, Iftimie, Raugel, and Sell \cite{IfRaSe07} compared the strong solution with a solution to limit equations on $(0,1)^2$.
Two-phase flows in a flat thin domain with nonflat top and bottom boundaries were also studied by Hoang \cite{Ho13}.

Let us also mention the slip boundary conditions \eqref{E:Slip_Intro}.
The slip boundary conditions were introduced by Navier \cite{Na1823}.
Unlike the usual no-slip boundary condition, the fluid subject to \eqref{E:Slip_Intro} slips on the boundary with velocity proportional to the tangential component of the stress vector.
Such conditions are considered as an appropriate model for flows with free boundaries and for flows past chemically reacting walls (see \cite{Ve87}).
They also appear in the study of the atmosphere and ocean dynamics \cites{LiTeWa92a,LiTeWa92b,LiTeWa95} and in the homogenization of the no-slip boundary condition on a rough boundary \cites{Hi16,JaMi01}.
We refer to \cites{SoSc73,Be04,AmRe14} for the study of the Stokes problem under the slip boundary conditions for a general bounded domain in $\mathbb{R}^3$.

In the series of \cites{Miu_NSCTD_01,Miu_NSCTD_02} and this paper we deal with the curved thin domain $\Omega_\varepsilon$ around the closed surface $\Gamma$ of the form \eqref{E:Def_CTD}.
PDEs in curved thin domains have been studied in various contexts.
In elasticity, the theory of thin shells has been developed over the years (see \cites{Ci97,Ci00} and the references cited therein).
There are several works on the asymptotic behavior of eigenvalues of the Laplace operator on a curved thin domain around a hypersurface (see e.g. \cites{JiKu16,Kr14,Sch96,Yac18}).
In \cites{PrRiRy02,PrRy03,Yan90} the authors studied reaction-diffusion equations in curved thin domains around lower dimensional manifolds.
Curved thin domains around evolving surfaces were also considered in the study of the heat equation \cite{Miu17} and of an advection-diffusion equation \cite{ElSt09}.
However, there has been no literature on the Navier--Stokes equations in a curved thin domain in $\mathbb{R}^3$ around a general closed surface due to difficulties in analyzing vector fields arising from the complicated geometry of the curved thin domain and its boundary.
In the series of our study we present mathematical tools for dealing with such difficulties and investigate the effect of the geometry of the curved thin domain and its limit surface on the original and limit equations.

The aim of our study is not just to further generalize the shape of a thin domain in the study of the Navier--Stokes equations, but to give a rigorous derivation of the surface Navier--Stokes equations by the thin-film limit.
As mentioned in Section \ref{SS:In_Lim} (see also Remark \ref{R:Lim_Reduce}), our limit equations in the spacial case $g\equiv1$ and $\gamma_\varepsilon=0$ agree with the Navier--Stokes equations on surfaces and manifolds.
Fluid flows on surfaces and manifolds have attracted interest of many researchers in various fields.
The Navier--Stokes equations on a Riemannian manifold were introduced in \cites{EbMa70,MitYa02,Ta92} and have been studied over the years (see e.g. \cites{ChaCzu13,DinMit04,KheMis12,KohWen18,MitTa01,Nag99,Pi17,Prie94,PrSiWi20pre,SaTu20}).
Also, the Navier--Stokes equations on an evolving surface in $\mathbb{R}^3$ were derived recently by the local conservation laws of mass and momentum \cite{JaOlRe18}, a global energetic variational approach \cite{KoLiGi17}, and the formal thin-film limit \cite{Miu18}.
The fluid model given in \cites{JaOlRe18,KoLiGi17,Miu18} agrees with the Boussinesq--Scriven surface fluid model introduced by Boussinesq \cite{Bo1913} and generalized by Scriven \cite{Sc60} to arbitrary curved surfaces (see also \cites{Ar89,SlSaOh07}).
It is used to formulate equations for the interface of a two-phase flow \cites{BaGaNu15,BoPr10} and for a fluid membrane \cite{ArrDe09}, although the acceleration term given in \cite{ArrDe09} should be corrected as pointed out in \cite{YaOzSa16}.
We also refer to \cites{Fr18,LeLeSc20,NiVoWe12,NiReVo17,OlQuReYu18,OlYu19,Reus20,ReVo15,ReVo18,SaOmSaMa20,ToSaAr20} for the recent developments of numerical methods for the fluid equations on stationary and evolving surfaces.
In this paper we give the first result on a rigorous derivation of the Navier--Stokes equations on a general closed surface in $\mathbb{R}^3$ by the thin-film limit, which justifies the result of \cite{Miu18} when a limit surface is stationary in time.

Finally, let us also mention our new result on the Helmholtz--Leray decomposition for a vector field on the closed surface $\Gamma$.
To prove the main results of this paper we show the weighted Helmholtz--Leray decomposition for a tangential vector field on $\Gamma$ and related results in Section \ref{S:WSol}.
As an easy consequence of those results, we derive in Section \ref{SS:WS_NTS} the Helmholtz--Leray decomposition
\begin{align} \label{E:NTHL_In}
  v = v_\sigma+\nabla_\Gamma q+qHn \quad\text{on}\quad \Gamma, \quad (v_\sigma,\nabla_\Gamma q+qHn)_{L^2(\Gamma)} = 0
\end{align}
for a not necessarily tangential vector field $v\in L^2(\Gamma)^3$, where the solenoidal part $v_\sigma$ satisfies $\mathrm{div}_\Gamma v_\sigma=0$ on $\Gamma$ and $H$ is the mean curvature of $\Gamma$ (see Theorem \ref{T:HL_L2Ge}).
Moreover, we obtain the uniqueness of the scalar potential $q$ in \eqref{E:NTHL_In}, not up to a constant, by the fact that $\Gamma$ is closed (see Lemma \ref{L:TGrHn_Hin_Con}).
The decomposition \eqref{E:NTHL_In} was already found in \cite{KoLiGi17} (see also \cite{KoLiGi18Er}), but the uniqueness of $q$ is a new result of this paper.
Although we do not apply \eqref{E:NTHL_In} and the other results of Section \ref{SS:WS_NTS} in the proof of our main results, we expect that they will play an important role in the future study of the Navier--Stokes equations on an evolving surface (see \cites{JaOlRe18,KoLiGi17,Miu18}).

\subsection{Organization of this paper} \label{SS:In_Out}
The rest of this paper is organized as follows.
In Section \ref{S:Main} we present the main results of this paper.
Section \ref{S:Pre} fixes notations on a closed surface and a curved thin domain and gives their basic properties.
We show fundamental inequalities for functions on the closed surface and the curved thin domain in Section \ref{S:Fund}.
Section \ref{S:Ave} deals with average operators in the thin direction.
We derive useful estimates for the average operators and use them to approximate bilinear and trilinear forms for functions on the curved thin domain by those for functions on the closed surface.
In Section \ref{S:WSol} we consider weighted solenoidal spaces on the closed surface.
The purpose of Section \ref{S:WSol} is to show the weighted Helmholtz--Leray decomposition for a tangential vector field on the closed surface and related results.
In Section \ref{S:SL} we study a singular limit problem for \eqref{E:NS_CTD} as the thickness of the curved thin domain tends to zero and establish the main results.
Appendix \ref{S:Ap_Vec} fixes notations on vectors and matrices.
In Appendix \ref{S:Ap_Aux} we give auxiliary results on the closed surface and prove Lemmas \ref{L:WTGr_Van}--\ref{L:Poin_Surf_Lp}.
Appendix \ref{S:Ap_Visc} provides formulas for differential operators on the closed surface related to the viscous term in the surface Navier--Stokes equations.
In Appendix \ref{S:Ap_CWL} we explain the outline of construction of a weak solution to limit equations on the closed surface by the Galerkin method.

Most results of this paper were obtained in the doctoral thesis of the author \cite{Miu_DT}.
However, we add the new condition (A3) in Assumption \ref{Assump_2} to consider some curved thin domains excluded in \cite{Miu_DT} by showing new results on a uniform Korn inequality and the axial symmetry of a curved thin domain in the first part \cite{Miu_NSCTD_01} of our study.
The most important example of a curved thin domain excluded in \cite{Miu_DT} but included in this paper is the thin spherical shell
\begin{align*}
  \Omega_\varepsilon = \{x\in\mathbb{R}^3 \mid 1 < |x| < 1+\varepsilon\}
\end{align*}
under the perfect slip boundary conditions \eqref{E:PeSl_In}.
As mentioned in Section \ref{SS:In_Lim}, this kind of curved thin domain was studied by Temam and Ziane \cite{TeZi97} under different boundary conditions (see also Remarks \ref{R:Ex_As} and \ref{R:Limit_TZ}).
We also add Appendix \ref{S:Ap_Visc} to derive some new formulas for differential operators on a closed surface.
Using them we compare our limit equations with the Navier--Stokes equations on a Riemannian manifold and limit equations derived in \cite{TeZi97} (see Remarks \ref{R:Lim_Reduce} and \ref{R:Limit_TZ}).

\section{Main results} \label{S:Main}
In this section we state the main results of this paper after we fix some notations and make assumptions (see also Section \ref{S:Pre} for notations).
The proofs of the main results (Theorems \ref{T:SL_Weak} and \ref{T:SL_Strong}) are given in Section \ref{S:SL}.

Let $\Gamma$ be a closed (i.e. compact and without boundary), connected, and oriented surface in $\mathbb{R}^3$ with unit outward normal vector field $n$.
Also, let $g_0,g_1\in C^4(\Gamma)$.
We assume that $\Gamma$ is of class $C^5$ and there exists a constant $c>0$ such that
\begin{align} \label{E:G_Inf}
  g := g_1-g_0 \geq c \quad\text{on}\quad \Gamma.
\end{align}
Note that we do not assume $g_0\leq0$ or $g_1\geq0$ on $\Gamma$.
For a sufficiently small $\varepsilon\in(0,1]$ let $\Omega_\varepsilon$ be the curved thin domain in $\mathbb{R}^3$ of the form \eqref{E:Def_CTD} and $L_\sigma^2(\Omega_\varepsilon)$ the standard $L^2$-solenoidal space on $\Omega_\varepsilon$ given by
\begin{align*}
  L_\sigma^2(\Omega_\varepsilon) := \{u\in L^2(\Omega_\varepsilon)^3 \mid \text{$\mathrm{div}\,u=0$ in $\Omega_\varepsilon$, $u\cdot n_\varepsilon=0$ on $\Gamma_\varepsilon$}\}.
\end{align*}
In the second part \cite{Miu_NSCTD_02} of our study we proved the global existence and estimates of a strong solution to \eqref{E:NS_CTD}, which are fundamental for the study of a singular limit problem for \eqref{E:NS_CTD} as $\varepsilon\to0$ carried out in this paper.
To state the results of \cite{Miu_NSCTD_02} we define function spaces and make assumptions as follows.

We denote by
\begin{align} \label{E:Def_R}
  \mathcal{R} := \{w(x)=a\times x+b,\,x\in\mathbb{R}^3 \mid a,b\in\mathbb{R}^3,\,\text{$w|_\Gamma\cdot n=0$ on $\Gamma$}\}
\end{align}
the space of all infinitesimal rigid displacements of $\mathbb{R}^3$ whose restrictions on $\Gamma$ are tangential.
It is of finite dimension and describes the axial symmetry of the closed surface $\Gamma$, i.e. $\mathcal{R}\neq\{0\}$ if and only if $\Gamma$ is invariant under a rotation by any angle around some line (see \cite{Miu_NSCTD_01}*{Lemma E.1}).
We define subspaces of $\mathcal{R}$ by
\begin{align} \label{E:Def_Rg}
  \begin{aligned}
    \mathcal{R}_i &:= \{w\in\mathcal{R} \mid \text{$w|_\Gamma\cdot\nabla_\Gamma g_i=0$ on $\Gamma$}\}, \quad i=0,1, \\
    \mathcal{R}_g &:= \{w\in\mathcal{R} \mid \text{$w|_\Gamma\cdot\nabla_\Gamma g=0$ on $\Gamma$}\} \quad (g=g_1-g_0),
  \end{aligned}
\end{align}
where $\nabla_\Gamma$ is the tangential gradient on $\Gamma$ (see Section \ref{SS:Pre_Surf}).
By the above definitions we immediately get $\mathcal{R}_0\cap\mathcal{R}_1\subset\mathcal{R}_g$.
It is also shown in \cite{Miu_NSCTD_01}*{Lemmas E.6 and E.7} that the curved thin domain $\Omega_\varepsilon$ is axially symmetric around the same line for all $\varepsilon\in(0,1]$ if $\mathcal{R}_0\cap\mathcal{R}_1\neq\{0\}$, while $\Omega_\varepsilon$ is not axially symmetric around any line for all $\varepsilon>0$ sufficiently small if $\mathcal{R}_g=\{0\}$.

Next we denote by
\begin{align*}
  P := I_3-n\otimes n, \quad (\nabla_\Gamma v)_S := \frac{\nabla_\Gamma v+(\nabla_\Gamma v)^T}{2} \quad\text{on}\quad \Gamma
\end{align*}
the orthogonal projection onto the tangent plane of $\Gamma$ and the symmetric part of the tangential gradient matrix of a (not necessarily tangential) vector field $v$ on $\Gamma$ (see Section \ref{SS:Pre_Surf}).
We define the surface strain rate tensor by
\begin{align*}
  D_\Gamma(v) := P(\nabla_\Gamma v)_SP \quad\text{on}\quad \Gamma
\end{align*}
and function spaces of tangential vector fields on $\Gamma$ by
\begin{align} \label{E:Def_Kil}
  \begin{aligned}
    \mathcal{K}(\Gamma) &:= \{v \in H^1(\Gamma)^3 \mid \text{$v\cdot n=0$, $D_\Gamma(v)=0$ on $\Gamma$}\}, \\
    \mathcal{K}_g(\Gamma) &:= \{v\in\mathcal{K}(\Gamma) \mid \text{$v\cdot\nabla_\Gamma g=0$ on $\Gamma$}\}.
  \end{aligned}
\end{align}
Then $v\in\mathcal{K}(\Gamma)$ satisfies
\begin{align*}
  \overline{\nabla}_Xv\cdot Y+X\cdot\overline{\nabla}_Yv = 0 \quad\text{on}\quad \Gamma
\end{align*}
for all tangential vector fields $X$ and $Y$ on $\Gamma$, where $\overline{\nabla}_Xv$ is the covariant derivative of $v$ along $X$ (see Section \ref{SS:Pre_Surf}).
A tangential vector field on $\Gamma$ satisfying this property generates a one-parameter group of isometries of $\Gamma$ and is called a Killing vector field on $\Gamma$ (see \cites{Jo11,Pe06} for details).
Direct calculations show that
\begin{align*}
  \mathcal{R}|_\Gamma := \{w|_\Gamma \mid w\in\mathcal{R}\} \subset \mathcal{K}(\Gamma).
\end{align*}
The sets $\mathcal{K}(\Gamma)$ and $\mathcal{R}|_\Gamma$ describe the intrinsic and extrinsic infinitesimal symmetry of $\Gamma$, respectively.
It is known (see \cite{Miu_NSCTD_01}*{Lemma E.3}) that $\mathcal{R}|_\Gamma=\mathcal{K}(\Gamma)$ when $\Gamma$ is axially symmetric.
The same relation holds for a closed and convex surface by the Cohn-Vossen rigidity theorem (see \cite{Sp79}).
However, it is not known whether this relation is valid for closed but nonconvex and not axially symmetric surfaces in $\mathbb{R}^3$.

We make the following assumptions on the closed surface $\Gamma$, the functions $g_0$ and $g_1$, and the friction coefficients $\gamma_\varepsilon^0$ and $\gamma_\varepsilon^1$ appearing in \eqref{E:Def_Fric}.

\begin{assumption} \label{Assump_1}
  There exists a constant $c>0$ such that
  \begin{align} \label{E:Fric_Upper}
    \gamma_\varepsilon^0 \leq c\varepsilon, \quad \gamma_\varepsilon^1 \leq c\varepsilon
  \end{align}
  for all $\varepsilon\in(0,1]$.
\end{assumption}

\begin{assumption} \label{Assump_2}
  Either of the following conditions is satisfied:
  \begin{itemize}
    \item[(A1)] There exists a constant $c>0$ such that
    \begin{align*}
      \gamma_\varepsilon^0 \geq c\varepsilon \quad\text{for all}\quad \varepsilon\in(0,1] \quad\text{or}\quad \gamma_\varepsilon^1 \geq c\varepsilon \quad\text{for all}\quad \varepsilon\in(0,1].
    \end{align*}
    \item[(A2)] The space $\mathcal{K}_g(\Gamma)$ contains only a trivial vector field, i.e. $\mathcal{K}_g(\Gamma)=\{0\}$.
    \item[(A3)] The relations
    \begin{align*}
      \mathcal{R}_g=\mathcal{R}_0\cap\mathcal{R}_1, \quad \mathcal{R}_g|_\Gamma:=\{w|_\Gamma\mid w\in\mathcal{R}_g\}=\mathcal{K}_g(\Gamma)
    \end{align*}
    hold and $\gamma_\varepsilon^0=\gamma_\varepsilon^1=0$ for all $\varepsilon\in(0,1]$.
  \end{itemize}
\end{assumption}

These assumptions are imposed only in this section, Section \ref{SS:Fund_St}, and Section \ref{S:SL}.
In Remark \ref{R:Ex_As} below we give a few examples for which Assumption \ref{Assump_2} is satisfied.
For further discussions on Assumption \ref{Assump_2} we refer to \cite{Miu_NSCTD_01}*{Remarks 2.9 and 2.10}.

Under Assumptions \ref{Assump_1} and \ref{Assump_2} we define
\begin{align} \label{E:Def_Heps}
  \begin{aligned}
    \mathcal{H}_\varepsilon &:=
    \begin{cases}
      L_\sigma^2(\Omega_\varepsilon) &\text{if the condition (A1) or (A2) is satisfied}, \\
      L_\sigma^2(\Omega_\varepsilon)\cap\mathcal{R}_g^\perp &\text{if the condition (A3) is satisfied},
  \end{cases} \\
  \mathcal{V}_\varepsilon &:= \mathcal{H}_\varepsilon\cap H^1(\Omega_\varepsilon)^3,
    \end{aligned}
\end{align}
where $\mathcal{R}_g^\perp$ is the orthogonal complement of $\mathcal{R}_g$ in $L^2(\Omega_\varepsilon)^3$.
Here we consider vector fields in $\mathcal{R}_g$ defined on $\mathbb{R}^3$ as elements of $L^2(\Omega_\varepsilon)^3$ just by restricting them on $\overline{\Omega}_\varepsilon$.
Note that $\mathcal{R}_0\cap\mathcal{R}_1\subset L_\sigma^2(\Omega_\varepsilon)$ (see \cite{Miu_NSCTD_01}*{Lemma E.8}).
Thus $\mathcal{R}_g$ is a finite dimensional subspace of $L_\sigma^2(\Omega_\varepsilon)$ under the condition (A3).
Also, $\mathcal{H}_\varepsilon$ and $\mathcal{V}_\varepsilon$ are closed subspaces of $L^2(\Omega_\varepsilon)^3$ and $H^1(\Omega_\varepsilon)^3$, respectively.
We denote by $\mathbb{P}_\varepsilon$ the orthogonal projection from $L^2(\Omega_\varepsilon)^3$ onto $\mathcal{H}_\varepsilon$.

The function spaces $\mathcal{H}_\varepsilon$ and $\mathcal{V}_\varepsilon$ play a fundamental role in the study of \eqref{E:NS_CTD}.
By integration by parts we see that the bilinear form for the Stokes problem in $\Omega_\varepsilon$ under the slip boundary conditions is of the form
\begin{align*}
  a_\varepsilon(u_1,u_2) := 2\nu\bigl(D(u_1),D(u_2)\bigr)_{L^2(\Omega_\varepsilon)}+\sum_{i=0,1}\gamma_\varepsilon^i(u_1,u_2)_{L^2(\Gamma_\varepsilon^i)}
\end{align*}
for $u_1,u_2\in H^1(\Omega_\varepsilon)^3$ (see Section \ref{SS:Fund_St}).
In the first paper \cite{Miu_NSCTD_01} we showed that $a_\varepsilon$ is bounded and coercive on $\mathcal{V}_\varepsilon$ uniformly in $\varepsilon$.

\begin{lemma}[{\cite{Miu_NSCTD_01}*{Theorem 2.4}}] \label{L:Uni_aeps}
  Under Assumptions \ref{Assump_1} and \ref{Assump_2}, there exist constants $\varepsilon_0\in(0,1]$ and $c>0$ such that
  \begin{align} \label{E:Uni_aeps}
    c^{-1}\|u\|_{H^1(\Omega_\varepsilon)}^2 \leq a_\varepsilon(u,u) \leq c\|u\|_{H^1(\Omega_\varepsilon)}^2
  \end{align}
  for all $\varepsilon\in(0,\varepsilon_0]$ and $u\in\mathcal{V}_\varepsilon$.
\end{lemma}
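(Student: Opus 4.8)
The plan is to prove the two-sided bound \eqref{E:Uni_aeps} separately, the upper bound being elementary and the lower (coercivity) bound being the substantive part. For the upper bound, I would estimate $\|D(u)\|_{L^2(\Omega_\varepsilon)}\le\|\nabla u\|_{L^2(\Omega_\varepsilon)}$ pointwise, and bound the boundary terms by a uniform trace inequality on $\Gamma_\varepsilon$: since $\gamma_\varepsilon^i\le c\varepsilon$ by Assumption \ref{Assump_1} and the trace operator $H^1(\Omega_\varepsilon)\to L^2(\Gamma_\varepsilon^i)$ has norm of order $\varepsilon^{-1/2}$ (a uniform trace estimate of the type established in the first part \cite{Miu_NSCTD_01}), the product $\gamma_\varepsilon^i\|u\|_{L^2(\Gamma_\varepsilon^i)}^2\le c\varepsilon\cdot\varepsilon^{-1}\|u\|_{H^1(\Omega_\varepsilon)}^2=c\|u\|_{H^1(\Omega_\varepsilon)}^2$ is controlled uniformly in $\varepsilon$. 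Adding these gives $a_\varepsilon(u,u)\le c\|u\|_{H^1(\Omega_\varepsilon)}^2$.

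For the lower bound I would reduce to a uniform Korn-type inequality on $\Omega_\varepsilon$. The point is that $a_\varepsilon(u,u)\ge 2\nu\|D(u)\|_{L^2(\Omega_\varepsilon)}^2$ (the friction terms are nonnegative), so it suffices to show $\|D(u)\|_{L^2(\Omega_\varepsilon)}^2\ge c^{-1}\|u\|_{H^1(\Omega_\varepsilon)}^2$ for $u\in\mathcal V_\varepsilon$ with $c$ independent of $\varepsilon$. This is a uniform Korn inequality on the family $\{\Omega_\varepsilon\}$, restricted to the subspace $\mathcal V_\varepsilon$ where the rigid-displacement obstruction has been removed. The natural route is a contradiction/compactness argument combined with the change-of-variables formula \eqref{E:CoV_In}: pull everything back to the fixed reference set $\Gamma\times(0,1)$ via $(y,r)\mapsto y+\varepsilon(g_0(y)+r g(y))n(y)$, so that $\Omega_\varepsilon$ becomes a fixed domain with an $\varepsilon$-dependent metric that converges (with uniform bounds on the Jacobian $J$ and its derivatives, from \cite{Miu_NSCTD_01}) as $\varepsilon\to0$. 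A sequence $u_\varepsilon\in\mathcal V_\varepsilon$ violating the inequality, suitably normalized so that $\|u_\varepsilon\|_{H^1}=1$ and $\|D(u_\varepsilon)\|_{L^2}\to0$ (after rescaling so the $L^2$-norm does not degenerate — one also needs a uniform Poincaré inequality on $\Omega_\varepsilon$, again from \cite{Miu_NSCTD_01}, to prevent the weak limit from vanishing), would have a weakly convergent subsequence whose limit is a nonzero field with vanishing symmetric gradient, hence an infinitesimal rigid displacement; the impermeability $u\cdot n_\varepsilon=0$ and, under (A3), the orthogonality to $\mathcal R_g$ force this limit to be zero, a contradiction. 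The cases (A1), (A2), (A3) enter precisely at this last step, determining which rigid displacements survive in the limit; Assumption \ref{Assump_2} is exactly what rules out a nontrivial limit in each case, using the characterizations of $\mathcal K(\Gamma)$, $\mathcal R|_\Gamma$ and $\mathcal R_g$ recalled before the statement.

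The main obstacle is making the compactness argument uniform in $\varepsilon$: one must control the dependence on $\varepsilon$ of the pulled-back strain tensor (the Euclidean $D(u)$ does not transform as a tensor, so cross terms involving $\varepsilon^{-1}\partial_r$ and the curvature of $\Gamma$ appear), and show that these terms are either harmless or converge to the surface strain rate tensor $D_\Gamma$ in the limit. This is where the detailed surface-geometry estimates and the Gauss-formula computations from \cite{Miu_NSCTD_01} are indispensable, and it is why the result is quoted there rather than reproved here. Since the excerpt cites \cite{Miu_NSCTD_01}*{Theorem 2.4} directly, I would in the present paper simply invoke it; the sketch above is the argument behind that cited theorem, whose heart is the uniform Korn inequality on curved thin domains combined with the removal of the symmetry obstruction encoded in Assumption \ref{Assump_2}.
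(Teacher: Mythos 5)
Your proposal matches the paper exactly: this lemma is quoted from \cite{Miu_NSCTD_01}*{Theorem 2.4} and not reproved here, and the paper's own remark after the statement attributes the upper bound to Assumption \ref{Assump_1} combined with a trace estimate and the lower bound to Assumption \ref{Assump_2} combined with a uniform Korn inequality on $\Omega_\varepsilon$, in which $\mathcal{K}_g(\Gamma)$ and $\mathcal{R}_g$ play the role you describe. Your sketch of the underlying Korn argument and your decision to simply invoke the cited theorem are both consistent with what the paper does.
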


Here Assumption \ref{Assump_1} is used to get the right-hand inequality of \eqref{E:Uni_aeps}.
Also, the left-hand inequality follows from Assumption \ref{Assump_2} and a uniform Korn inequality on $\Omega_\varepsilon$, for which the function spaces $\mathcal{K}_g(\Gamma)$ and $\mathcal{R}_g$ play an important role (see \cite{Miu_NSCTD_01} and also \cite{LeMu11} for a uniform Korn inequality on a curved thin domain).

By Lemma \ref{L:Uni_aeps} and the Lax--Milgram theorem we see that $a_\varepsilon$ induces a bounded linear operator $A_\varepsilon$ from $\mathcal{V}_\varepsilon$ into its dual space.
We consider $A_\varepsilon$ as an unbounded operator on $\mathcal{H}_\varepsilon$ with domain $D(A_\varepsilon)$ and call it the Stokes operator on $\mathcal{H}_\varepsilon$.
In \cite{Miu_NSCTD_01} we derived several estimates for $A_\varepsilon$ which was essential mainly for the second paper \cite{Miu_NSCTD_02}.
We briefly review basic properties of $A_\varepsilon$ in Section \ref{SS:Fund_St}.

\begin{remark} \label{R:Assump}
  Assumptions \ref{Assump_1} and \ref{Assump_2} and the assumptions on the regularity of $\Gamma$, $g_0$, and $g_1$ are required mainly for the study of the Stokes operator $A_\varepsilon$ carried out in \cite{Miu_NSCTD_01}.
  We employ the conditions of Assumptions \ref{Assump_1} and \ref{Assump_2} in Section \ref{S:SL}, but do not use the $C^5$-regularity of $\Gamma$ and the $C^4$-regularity of $g_0$ and $g_1$ on $\Gamma$ explicitly in this paper.
\end{remark}

Based on the results of \cite{Miu_NSCTD_01} we studied the abstract evolution equation
\begin{align} \label{E:NS_Abst}
  \partial_tu^\varepsilon+A_\varepsilon u^\varepsilon+\mathbb{P}_\varepsilon(u^\varepsilon\cdot\nabla)u^\varepsilon = \mathbb{P}_\varepsilon f^\varepsilon \quad\text{on}\quad (0,\infty), \quad u^\varepsilon|_{t=0} = u_0^\varepsilon
\end{align}
in $\mathcal{H}_\varepsilon$ and established the global existence of a strong solution to \eqref{E:NS_CTD} and estimates for it with constants explicitly depending on $\varepsilon$ in \cite{Miu_NSCTD_02}.
For a vector field $u$ on $\Omega_\varepsilon$ let
\begin{align*}
  Mu(y) := \frac{1}{\varepsilon g(y)}\int_{\varepsilon g_0(y)}^{\varepsilon g_1(y)} u(y+rn(y))\,dr, \quad M_\tau u(y) := P(y)Mu(y), \quad y\in\Gamma
\end{align*}
be the average of $u$ in the thin direction and the averaged tangential component of $u$ (see Section \ref{S:Ave}).
Also, we set
\begin{align*}
  H^1(\Gamma,T\Gamma) := \{v\in H^1(\Gamma)^3 \mid \text{$v\cdot n=0$ on $\Gamma$}\},
\end{align*}
which is a Hilbert space equipped with inner product of $H^1(\Gamma)^3$, and denote by $H^{-1}(\Gamma,T\Gamma)$ the dual space of $H^1(\Gamma,T\Gamma)$ (see Section \ref{SS:Pre_Surf}).

\begin{theorem}[{\cite{Miu_NSCTD_02}*{Theorem 2.7}}] \label{T:UE}
  Under Assumptions \ref{Assump_1} and \ref{Assump_2}, let $\varepsilon_0$ be the constant given in Lemma \ref{L:Uni_aeps}.
  Also, let $c_1$, $c_2$, $\alpha$, and $\beta$ be positive constants.
  Then there exists a constant $\varepsilon_1\in(0,\varepsilon_0]$ such that the following statement holds: for $\varepsilon\in(0,\varepsilon_1]$ suppose that the given data
  \begin{align*}
    u_0^\varepsilon\in \mathcal{V}_\varepsilon, \quad f^\varepsilon\in L^\infty(0,\infty;L^2(\Omega_\varepsilon)^3)
  \end{align*}
  satisfy
  \begin{align} \label{E:UE_Data}
    \begin{aligned}
      \|u_0^\varepsilon\|_{H^1(\Omega_\varepsilon)}^2+\|\mathbb{P}_\varepsilon f^\varepsilon\|_{L^\infty(0,\infty;L^2(\Omega_\varepsilon))}^2 &\leq c_1\varepsilon^{-1+\alpha}, \\
      \|M_\tau u_0^\varepsilon\|_{L^2(\Gamma)}^2+\|M_\tau\mathbb{P}_\varepsilon f^\varepsilon\|_{L^\infty(0,\infty;H^{-1}(\Gamma,T\Gamma))}^2 &\leq c_2\varepsilon^{-1+\beta}.
    \end{aligned}
  \end{align}
  If the condition (A3) of Assumption \ref{Assump_2} is imposed, suppose further that $f^\varepsilon(t)\in\mathcal{R}_g^\perp$ for a.a. $t\in(0,\infty)$.
  Then there exists a global-in-time strong solution
  \begin{align*}
    u^\varepsilon \in C([0,\infty);\mathcal{V}_\varepsilon)\cap L_{loc}^2([0,\infty);D(A_\varepsilon))\cap H_{loc}^1([0,\infty);\mathcal{H}_\varepsilon)
  \end{align*}
  to \eqref{E:NS_CTD}.
  Moreover, there exists a constant $c>0$ independent of $\varepsilon$ and $u^\varepsilon$ such that
  \begin{align} \label{E:UE_L2}
    \begin{aligned}
      \|u^\varepsilon(t)\|_{L^2(\Omega_\varepsilon)}^2 &\leq c(\varepsilon^{1+\alpha}+\varepsilon^\beta), \\
      \int_0^t\|u^\varepsilon(s)\|_{H^1(\Omega_\varepsilon)}^2\,ds &\leq c(\varepsilon^{1+\alpha}+\varepsilon^\beta)(1+t)
    \end{aligned}
  \end{align}
  for all $t\geq0$ and
  \begin{align} \label{E:UE_H1}
    \begin{aligned}
      \|u^\varepsilon(t)\|_{H^1(\Omega_\varepsilon)}^2 &\leq c(\varepsilon^{-1+\alpha}+\varepsilon^{-1+\beta}), \\
      \int_0^t\|u^\varepsilon(s)\|_{H^2(\Omega_\varepsilon)}^2\,ds &\leq c(\varepsilon^{-1+\alpha}+\varepsilon^{-1+\beta})(1+t)
    \end{aligned}
  \end{align}
  for all $t\geq 0$.
\end{theorem}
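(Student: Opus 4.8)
The plan is to deduce the theorem from local well-posedness for the abstract equation \eqref{E:NS_Abst} together with global-in-time a priori bounds obtained by a continuation (bootstrap) argument, first at the $L^2$-level and then at the $H^1$-level; the essential inputs are the uniform norm equivalences \eqref{E:NoEq_In}, the uniform difference estimate \eqref{E:UnDi_In}, the uniform coercivity of $a_\varepsilon$ on $\mathcal{V}_\varepsilon$ (Lemma \ref{L:Uni_aeps}), and above all the uniform trilinear estimate \eqref{E:TrEs_In}. For local existence: since $A_\varepsilon$ is a positive self-adjoint operator on $\mathcal{H}_\varepsilon$ with $D(A_\varepsilon^{1/2})=\mathcal{V}_\varepsilon$ and $-A_\varepsilon$ generates an analytic semigroup, a standard semigroup (Fujita--Kato type) fixed-point argument for \eqref{E:NS_Abst} in $C([0,T];\mathcal{V}_\varepsilon)$, with the nonlinear term controlled via \eqref{E:NoEq_In}, gives for each fixed $\varepsilon$ a unique strong solution on a maximal interval $[0,T_{\mathrm{max}})$ in the stated class, with the continuation criterion that $T_{\mathrm{max}}=\infty$ whenever $\sup_{0\le t<T_{\mathrm{max}}}\|A_\varepsilon^{1/2}u^\varepsilon(t)\|_{L^2(\Omega_\varepsilon)}<\infty$. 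Under the condition (A3) the hypothesis $f^\varepsilon(t)\in\mathcal{R}_g^\perp$ ensures $\mathbb{P}_\varepsilon f^\varepsilon=f^\varepsilon$ and that the solution stays in $\mathcal{H}_\varepsilon=L_\sigma^2(\Omega_\varepsilon)\cap\mathcal{R}_g^\perp$, so the argument is unaffected. Everything then reduces to a uniform a priori bound on $\|A_\varepsilon^{1/2}u^\varepsilon\|_{L^2(\Omega_\varepsilon)}$.

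For the $L^2$-estimate, test \eqref{E:NS_Abst} with $u^\varepsilon\in\mathcal{H}_\varepsilon$: the projection drops, $(A_\varepsilon u^\varepsilon,u^\varepsilon)_{L^2(\Omega_\varepsilon)}=a_\varepsilon(u^\varepsilon,u^\varepsilon)$, and the trilinear term vanishes since $\mathrm{div}\,u^\varepsilon=0$ in $\Omega_\varepsilon$ and $u^\varepsilon\cdot n_\varepsilon=0$ on $\Gamma_\varepsilon$, whence
\begin{align*}
  \frac{1}{2}\frac{d}{dt}\|u^\varepsilon\|_{L^2(\Omega_\varepsilon)}^2+a_\varepsilon(u^\varepsilon,u^\varepsilon)=(\mathbb{P}_\varepsilon f^\varepsilon,u^\varepsilon)_{L^2(\Omega_\varepsilon)}.
\end{align*}
By Lemma \ref{L:Uni_aeps} the left side dominates $c^{-1}\|u^\varepsilon\|_{H^1(\Omega_\varepsilon)}^2$. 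For the right side I would split $u^\varepsilon$ into its averaged part and a residual: the residual gains a factor $\varepsilon$ from a Poincar\'e inequality in the thin direction, contributing at most $\tfrac14 c^{-1}\|u^\varepsilon\|_{H^1(\Omega_\varepsilon)}^2+C\varepsilon^2\|\mathbb{P}_\varepsilon f^\varepsilon\|_{L^2(\Omega_\varepsilon)}^2$, while the averaged part, rescaled to $\Gamma$, is paired with $M_\tau\mathbb{P}_\varepsilon f^\varepsilon$ in $H^{-1}(\Gamma,T\Gamma)$ and contributes at most $\tfrac14 c^{-1}\|u^\varepsilon\|_{H^1(\Omega_\varepsilon)}^2+C\varepsilon\|M_\tau\mathbb{P}_\varepsilon f^\varepsilon\|_{H^{-1}(\Gamma,T\Gamma)}^2$. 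Using \eqref{E:UE_Data}, absorbing, and using $\|u^\varepsilon\|_{H^1(\Omega_\varepsilon)}^2\ge\|u^\varepsilon\|_{L^2(\Omega_\varepsilon)}^2$, one gets
\begin{align*}
  \frac{d}{dt}\|u^\varepsilon\|_{L^2(\Omega_\varepsilon)}^2+\tfrac{1}{2}c^{-1}\|u^\varepsilon\|_{L^2(\Omega_\varepsilon)}^2\le C(\varepsilon^{1+\alpha}+\varepsilon^\beta).
\end{align*}
The same splitting applied to $u_0^\varepsilon$, together with $u_0^\varepsilon\cdot n_\varepsilon=0$ and \eqref{E:UE_Data}, gives $\|u_0^\varepsilon\|_{L^2(\Omega_\varepsilon)}^2\le C(\varepsilon^{1+\alpha}+\varepsilon^\beta)$, so Gronwall's inequality yields the first line of \eqref{E:UE_L2}, and integrating yields the second line together with the $t$-uniform bound $\int_t^{t+1}\|u^\varepsilon(s)\|_{H^1(\Omega_\varepsilon)}^2\,ds\le C(\varepsilon^{1+\alpha}+\varepsilon^\beta)$.

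For the $H^1$-estimate, set $\mu:=\min\{\alpha,\beta\}$ and let $T^\ast\in(0,T_{\mathrm{max}}]$ be the largest time up to which $\|A_\varepsilon^{1/2}u^\varepsilon(t)\|_{L^2(\Omega_\varepsilon)}^2\le K(\varepsilon^{-1+\alpha}+\varepsilon^{-1+\beta})$; by \eqref{E:UE_Data} and \eqref{E:NoEq_In} this holds at $t=0$ once $K$ is large, so $T^\ast>0$. On $[0,T^\ast)$, testing \eqref{E:NS_Abst} with $A_\varepsilon u^\varepsilon$ gives
\begin{align*}
  \frac{1}{2}\frac{d}{dt}\|A_\varepsilon^{1/2}u^\varepsilon\|_{L^2(\Omega_\varepsilon)}^2+\|A_\varepsilon u^\varepsilon\|_{L^2(\Omega_\varepsilon)}^2=-\bigl((u^\varepsilon\cdot\nabla)u^\varepsilon,A_\varepsilon u^\varepsilon\bigr)_{L^2(\Omega_\varepsilon)}+(\mathbb{P}_\varepsilon f^\varepsilon,A_\varepsilon u^\varepsilon)_{L^2(\Omega_\varepsilon)}.
\end{align*}
The bootstrap hypothesis makes $d_1\varepsilon^{1/2}\|A_\varepsilon^{1/2}u^\varepsilon\|_{L^2(\Omega_\varepsilon)}\le\tfrac14$ for $\varepsilon$ small, so \eqref{E:TrEs_In} and Young's inequality absorb the top-order part of the nonlinear term and the forcing term into $\tfrac34\|A_\varepsilon u^\varepsilon\|_{L^2(\Omega_\varepsilon)}^2$; using the uniform spectral gap $\|A_\varepsilon u^\varepsilon\|_{L^2(\Omega_\varepsilon)}^2\ge c^{-1}\|A_\varepsilon^{1/2}u^\varepsilon\|_{L^2(\Omega_\varepsilon)}^2$ furnished by Lemma \ref{L:Uni_aeps}, the remaining lower-order terms of \eqref{E:TrEs_In}—each carrying the small prefactor $\|u^\varepsilon\|_{L^2(\Omega_\varepsilon)}^2\le C(\varepsilon^{1+\alpha}+\varepsilon^\beta)$ from the $L^2$-step—are distributed between further absorption into $\|A_\varepsilon u^\varepsilon\|_{L^2(\Omega_\varepsilon)}^2$ and the right-hand side of a Gronwall inequality. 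A Gronwall argument uniform in $t$ (using the spectral gap, the $t$-uniform control of $\int_t^{t+1}\|u^\varepsilon\|_{H^1(\Omega_\varepsilon)}^2$, and $\|\mathbb{P}_\varepsilon f^\varepsilon\|_{L^\infty(0,\infty;L^2(\Omega_\varepsilon))}^2\le c_1\varepsilon^{-1+\alpha}$) then improves the bound on $[0,T^\ast)$ to $\|A_\varepsilon^{1/2}u^\varepsilon(t)\|_{L^2(\Omega_\varepsilon)}^2\le\tfrac{K}{2}(\varepsilon^{-1+\alpha}+\varepsilon^{-1+\beta})$, once $\varepsilon\le\varepsilon_1$ for a suitable $\varepsilon_1\in(0,\varepsilon_0]$. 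By continuity this strict improvement forces $T^\ast=T_{\mathrm{max}}$, so $\|A_\varepsilon^{1/2}u^\varepsilon\|_{L^2(\Omega_\varepsilon)}$ is bounded on $[0,T_{\mathrm{max}})$, whence $T_{\mathrm{max}}=\infty$; the bound just obtained is \eqref{E:UE_H1} (first line, via \eqref{E:NoEq_In}), and integrating the differential inequality over $[0,t]$ gives its second line.

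The step I expect to be the main obstacle is closing the $H^1$-bootstrap: the lower-order terms in \eqref{E:TrEs_In} carry negative powers of $\varepsilon$, so the argument only goes through because the prefactor $\|u^\varepsilon\|_{L^2(\Omega_\varepsilon)}^2$ is genuinely small of size $\varepsilon^{1+\alpha}+\varepsilon^\beta$—which is precisely why \eqref{E:UE_Data} controls the averaged component in $H^{-1}(\Gamma,T\Gamma)$ rather than merely in $L^2(\Omega_\varepsilon)$—and because the strict subcriticality $\alpha,\beta>0$ leaves the room needed both for the $\varepsilon^{\mu/2}$ gain in the top-order coefficient of \eqref{E:TrEs_In} and for the various absorptions; balancing exactly how much of each term is absorbed into $\|A_\varepsilon u^\varepsilon\|_{L^2(\Omega_\varepsilon)}^2$ and how much is left for Gronwall is the delicate point. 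A secondary difficulty is that \eqref{E:UE_H1} must hold uniformly in $t$, not merely with a $t$-dependent bound, which is what forces the use of the uniform coercivity of $a_\varepsilon$ (Lemma \ref{L:Uni_aeps}) and of a time-uniform Gronwall lemma in place of a naive one.
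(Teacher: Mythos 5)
Your proposal is correct and follows essentially the same route as the cited proof: this paper omits the proof of Theorem \ref{T:UE} (it is quoted from \cite{Miu_NSCTD_02}), but its strategy is summarized in Section \ref{SS:In_Idea}, and your plan matches it — an $L^2$ energy estimate in which the forcing is split into the average part (paired with $M_\tau\mathbb{P}_\varepsilon f^\varepsilon$ in $H^{-1}(\Gamma,T\Gamma)$, yielding $\varepsilon^\beta$) and the residual part (gaining $\varepsilon$ from the thin-direction Poincar\'{e} inequality, yielding $\varepsilon^{1+\alpha}$), followed by an $H^1$ bootstrap closed via the trilinear estimate \eqref{E:TrEs_In}, where the smallness $\varepsilon^{1/2}\|A_\varepsilon^{1/2}u^\varepsilon\|_{L^2(\Omega_\varepsilon)}\lesssim\varepsilon^{\min\{\alpha,\beta\}/2}$ absorbs the top-order part and the prefactor $\|u^\varepsilon\|_{L^2(\Omega_\varepsilon)}^2\leq c(\varepsilon^{1+\alpha}+\varepsilon^\beta)$ tames the lower-order terms in a time-uniform Gronwall argument. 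You also correctly identify why \eqref{E:UE_Data} controls $M_\tau\mathbb{P}_\varepsilon f^\varepsilon$ in $H^{-1}(\Gamma,T\Gamma)$ and why $\alpha,\beta>0$ is essential for closing the continuation argument.
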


The estimates \eqref{E:UE_L2} and \eqref{E:UE_H1} for the strong solution $u^\varepsilon$ to \eqref{E:NS_CTD} are important for the study of a singular limit problem for \eqref{E:NS_CTD} as $\varepsilon\to0$ carried out in Section \ref{S:SL}.
Note that the assumption $f^\varepsilon(t)\in\mathcal{R}_g^\perp$ for a.a. $t\in(0,\infty)$ under the condition (A3) is required to recover the momentum equations of \eqref{E:NS_CTD} from the abstract evolution equation \eqref{E:NS_Abst} in $\mathcal{H}_\varepsilon$ properly.
For details, we refer to \cite{Miu_NSCTD_02}*{Remark 2.8}.

Now let us present the main results of this paper.
We define function spaces of tangential vector fields on $\Gamma$ by
\begin{align*}
  L^2(\Gamma,T\Gamma) &:= \{v\in L^2(\Gamma)^3 \mid \text{$v\cdot n=0$ on $\Gamma$}\}, \\
  \mathcal{H}_g &:= \{v\in L^2(\Gamma,T\Gamma) \mid \text{$\mathrm{div}_\Gamma(gv)=0$ on $\Gamma$}\}, \\
  \mathcal{V}_g &:= \mathcal{H}_g\cap H^1(\Gamma,T\Gamma),
\end{align*}
where $\mathrm{div}_\Gamma$ is the surface divergence on $\Gamma$ (see Sections \ref{S:Pre} and \ref{S:WSol}).

\begin{theorem} \label{T:SL_Weak}
  Under Assumptions \ref{Assump_1} and \ref{Assump_2}, let $\varepsilon_0$ be the constant given in Lemma \ref{L:Uni_aeps} and
  \begin{align*}
    u_0^\varepsilon\in \mathcal{V}_\varepsilon, \quad f^\varepsilon\in L^\infty(0,\infty;L^2(\Omega_\varepsilon)^3)
  \end{align*}
  for $\varepsilon\in(0,\varepsilon_0]$.
  Suppose that the following conditions hold:
  \begin{itemize}
    \item[(a)] There exist constants $c>0$, $\varepsilon_2\in(0,\varepsilon_0]$, and $\alpha\in(0,1]$ such that
    \begin{align*}
      \|u_0^\varepsilon\|_{H^1(\Omega_\varepsilon)}^2+\|\mathbb{P}_\varepsilon f^\varepsilon\|_{L^\infty(0,\infty;L^2(\Omega_\varepsilon))}^2 \leq c\varepsilon^{-1+\alpha}
    \end{align*}
    for all $\varepsilon\in(0,\varepsilon_2]$.
    \item[(b)] There exist $v_0\in L^2(\Gamma,T\Gamma)$ and $f\in L^\infty(0,\infty;H^{-1}(\Gamma,T\Gamma))$ such that
    \begin{alignat*}{3}
      \lim_{\varepsilon\to0}M_\tau u_0^\varepsilon &= v_0 &\quad &\text{weakly in} &\quad &L^2(\Gamma,T\Gamma), \\
      \lim_{\varepsilon\to0}M_\tau\mathbb{P}_\varepsilon f^\varepsilon &= f &\quad &\text{weakly-$\star$ in} &\quad &L^\infty(0,\infty;H^{-1}(\Gamma,T\Gamma)).
    \end{alignat*}
    \item[(c)] For $i=0,1$ there exists a constant $\gamma^i\geq0$ such that
    \begin{align*}
      \lim_{\varepsilon\to0}\frac{\gamma_\varepsilon^i}{\varepsilon} = \gamma^i.
    \end{align*}
  \end{itemize}
  If the condition (A3) of Assumption \ref{Assump_2} is imposed, suppose further that $f^\varepsilon(t)\in\mathcal{R}_g^\perp$ for all $\varepsilon\in(0,\varepsilon_2]$ and a.a. $t\in(0,\infty)$.
  Then there exists a constant $\varepsilon_3\in(0,\varepsilon_2]$ such that a global-in-time strong solution
  \begin{align*}
    u^\varepsilon \in C([0,\infty);\mathcal{V}_\varepsilon)\cap L_{loc}^2([0,\infty);D(A_\varepsilon))\cap H_{loc}^1([0,\infty);\mathcal{H}_\varepsilon)
  \end{align*}
  to \eqref{E:NS_CTD} exists for each $\varepsilon\in(0,\varepsilon_3]$ and
  \begin{align*}
    \lim_{\varepsilon\to0}Mu^\varepsilon\cdot n = 0 \quad\text{strongly in}\quad C([0,\infty);L^2(\Gamma)).
  \end{align*}
  Moreover, there exists a vector field
  \begin{align*}
    v \in C([0,\infty);\mathcal{H}_g)\cap L_{loc}^2([0,\infty);\mathcal{V}_g)\cap H_{loc}^1([0,\infty);H^{-1}(\Gamma,T\Gamma))
  \end{align*}
  such that
  \begin{align*}
    \lim_{\varepsilon\to0}M_\tau u^\varepsilon = v \quad\text{weakly in}\quad L^2(0,T;H^1(\Gamma,T\Gamma))
  \end{align*}
  for each $T>0$ and $v$ is a unique weak solution to
  \begin{align} \label{E:NS_Limit}
    \left\{
    \begin{aligned}
      &g\Bigl(\partial_tv+\overline{\nabla}_vv\Bigr)-2\nu\left\{P\mathrm{div}_\Gamma[gD_\Gamma(v)]-\frac{1}{g}(v\cdot\nabla_\Gamma g)\nabla_\Gamma g\right\} \\
      &\mspace{200mu}
      \begin{alignedat}{3}
        +(\gamma^0+\gamma^1)v+g\nabla_\Gamma q &= gf &\quad &\text{on} &\quad &\Gamma\times(0,\infty), \\
        \mathrm{div}_\Gamma(gv) &= 0 &\quad &\text{on} &\quad &\Gamma\times(0,\infty), \\
        v|_{t=0} &= v_0 &\quad &\text{on} &\quad &\Gamma
      \end{alignedat}
    \end{aligned}
    \right.
  \end{align}
  with an associated pressure $q$.
\end{theorem}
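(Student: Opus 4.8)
The plan is to follow the standard compactness/weak-convergence scheme used for the existence of weak solutions to the 2D Navier--Stokes equations, but adapted to the curved thin domain via the average operators. Throughout, Theorem~\ref{T:UE} supplies the global strong solution $u^\varepsilon$ together with the uniform-in-$\varepsilon$ bounds \eqref{E:UE_L2}--\eqref{E:UE_H1}: note that condition (a) gives the hypothesis \eqref{E:UE_Data} with $\beta=\alpha$ once (b) is invoked to bound $\|M_\tau u_0^\varepsilon\|_{L^2(\Gamma)}$ and $\|M_\tau\mathbb{P}_\varepsilon f^\varepsilon\|_{H^{-1}}$ (weakly convergent sequences are bounded). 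The easy half of the conclusion is the normal component: since $u^\varepsilon\cdot n_\varepsilon=0$ on $\Gamma_\varepsilon$, a Poincar\'e/trace estimate in the thin direction (available from the cited average-operator lemmas) bounds $\|Mu^\varepsilon\cdot n\|_{L^2(\Gamma)}$ by $c\varepsilon^{1/2}\|u^\varepsilon\|_{H^1(\Omega_\varepsilon)}\le c\varepsilon^{\alpha/2}$ after rescaling, and uniformity in $t$ plus the time-continuity of $u^\varepsilon$ upgrades this to strong convergence to $0$ in $C([0,\infty);L^2(\Gamma))$.

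The main work is the tangential part. First I would derive a weak formulation satisfied by $M_\tau u^\varepsilon$: take any test field $\eta\in\mathcal{V}_g$ (so $\eta\cdot n=0$ and $\mathrm{div}_\Gamma(g\eta)=0$), extend it impermeably to $E_\varepsilon\eta$ on $\Omega_\varepsilon$, project onto the solenoidal part $\eta_\varepsilon$ via the Helmholtz--Leray decomposition so that \eqref{E:SoDo_In} holds, plug $\eta_\varepsilon$ into the weak form of \eqref{E:NS_CTD} satisfied by $u^\varepsilon$, and push everything through the change-of-variables formula \eqref{E:CoV_In}. Using the approximation lemmas for the bilinear and trilinear forms (which, crucially, consume the $H^2(\Omega_\varepsilon)$-bound in \eqref{E:UE_H1} to absorb the extrinsic curvature terms), this yields
\[
  \frac{d}{dt}\bigl(gM_\tau u^\varepsilon,\eta\bigr)_{L^2(\Gamma)}+\text{(bilinear + trilinear + friction terms)}=\bigl(gM_\tau\mathbb{P}_\varepsilon f^\varepsilon,\eta\bigr)_{L^2(\Gamma)}+R_\varepsilon(\eta),
\]
with a residual $R_\varepsilon(\eta)\to0$ as $\varepsilon\to0$ uniformly for $\eta$ in bounded sets. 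Next, to get a uniform energy estimate I cannot test with $M_\tau u^\varepsilon$ itself (it need not lie in $\mathcal{V}_g$), so I apply the weighted Helmholtz--Leray decomposition $M_\tau u^\varepsilon=v^\varepsilon+g\nabla_\Gamma q^\varepsilon$, transfer the weak formulation to $v^\varepsilon$ using the estimates for $M_\tau u^\varepsilon-v^\varepsilon$, test with $v^\varepsilon\in\mathcal{V}_g$, and invoke the antisymmetry of the trilinear form and a Korn-type coercivity of the surface strain term to obtain bounds on $\|v^\varepsilon\|_{L^\infty(0,T;L^2(\Gamma))}$ and $\|v^\varepsilon\|_{L^2(0,T;H^1(\Gamma))}$ independent of $\varepsilon$; combined with the difference estimates this bounds $M_\tau u^\varepsilon$ in the same spaces, and a dual estimate from the weak formulation bounds $\partial_t v^\varepsilon$ in $L^{4/3}$ or $L^2$ of $H^{-1}(\Gamma,T\Gamma)$.

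With these uniform bounds in hand, the Aubin--Lions--Simon lemma gives a subsequence with $M_\tau u^\varepsilon\to v$ weakly in $L^2(0,T;H^1(\Gamma,T\Gamma))$, weakly-$\star$ in $L^\infty(0,T;L^2)$, and strongly in $L^2(0,T;L^2(\Gamma,T\Gamma))$; the strong convergence is exactly what is needed to pass to the limit in the trilinear term. Sending $\varepsilon\to0$ in the weak formulation, using condition (c) for the friction coefficients and condition (b) for the data, identifies $v$ as a weak solution of \eqref{E:NS_Limit} with $v|_{t=0}=v_0$, and $v\in\mathcal{H}_g$ for each $t$ with the claimed regularity follows from the standard Lions--Magenes interpolation argument. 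Uniqueness of the weak solution is proved exactly as for 2D Navier--Stokes: the difference $w$ of two solutions is tested against itself, the trilinear remainder is controlled by $\|w\|_{L^2}\|w\|_{H^1}\|v\|_{H^1}$ via a Ladyzhenskaya-type inequality on the surface (the weight $g$ and the zeroth-order damping terms are harmless), and Gr\"onwall closes the estimate; uniqueness then promotes the subsequential convergence to convergence of the full family. The recovery of the associated pressure $q$ is the usual de Rham argument on the closed surface, using the weighted Helmholtz--Leray machinery of Section~\ref{S:WSol}. The main obstacle is the construction of the $\varepsilon$-uniform weak formulation for $M_\tau u^\varepsilon$ with vanishing residual: this is where the geometry of $\Omega_\varepsilon$ enters most forcefully, where the impermeable extension, the Helmholtz--Leray correction, and the bilinear/trilinear approximation lemmas must all be combined, and where the $H^2$-regularity of $u^\varepsilon$ is indispensable to kill the extrinsic-curvature contributions that have no counterpart in the intrinsic limit equations.
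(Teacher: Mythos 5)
Your proposal follows the paper's proof essentially step for step: the strong solution and its uniform estimates from Theorem \ref{T:UE}, the normal component via Lemma \ref{L:Ave_N_Lp}, the averaged weak formulation built from the impermeable extension $E_\varepsilon$, the Helmholtz--Leray correction $\mathbb{L}_\varepsilon$ and the bilinear/trilinear approximation lemmas (with the $H^2(\Omega_\varepsilon)$-bound absorbing the extrinsic terms), the weighted Helmholtz--Leray projection $v^\varepsilon=\mathbb{P}_gM_\tau u^\varepsilon$ for the energy and time-derivative estimates, Aubin--Lions and passage to the limit, Gr\"onwall for uniqueness, the weighted de Rham theorem for the pressure, and full-sequence convergence by uniqueness. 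One small correction: the boundedness of the averaged data coming from (b) gives \eqref{E:UE_Data} with $\beta=1$ rather than $\beta=\alpha$, and it is this sharper choice that yields $\|u^\varepsilon(t)\|_{L^2(\Omega_\varepsilon)}^2\leq c\varepsilon$ and $\int_0^t\|u^\varepsilon\|_{H^1(\Omega_\varepsilon)}^2\,ds\leq c\varepsilon(1+t)$, which are needed (e.g.\ in estimating $\int_0^T\|u^\varepsilon\|_{H^1(\Omega_\varepsilon)}^4\,dt$ inside the trilinear residual) to make the residual in the averaged weak formulation vanish as $\varepsilon\to0$ for every $\alpha\in(0,1]$, not only for $\alpha>1/2$.
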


Here $\overline{\nabla}_vv$ is the covariant derivative of the tangential vector field $v$ on $\Gamma$ along itself and $D_\Gamma(v)$ is the surface strain rate tensor (see Section \ref{SS:Pre_Surf}).

We give the definition of a weak solution to \eqref{E:NS_Limit} and the proof of Theorem \ref{T:SL_Weak} in Section \ref{SS:SL_WeCh}.
Note that the weak limit $v_0$ of $M_\tau u_0^\varepsilon$ in the condition (b) actually belongs to $\mathcal{H}_g$, while $M_\tau u_0^\varepsilon$ does not so in general (see Lemma \ref{L:WC_Sole}).
Also, we do not divide the first equations of \eqref{E:NS_Limit} by $g$ since we derive the term $g\nabla_\Gamma q$ in those equations from a weak formulation of \eqref{E:NS_Limit} in Lemma \ref{L:LW_Pres} by using the weighted de Rham theorem related to the weighted Helmholtz--Leray decomposition
\begin{align} \label{E:HLT_Intro}
  v = v_g+g\nabla_\Gamma q \quad\text{in}\quad L^2(\Gamma,T\Gamma), \quad v_g\in\mathcal{H}_g,\, g\nabla_\Gamma q\in\mathcal{H}_g^\perp
\end{align}
for a tangential vector field $v\in L^2(\Gamma,T\Gamma)$ (see Theorems \ref{T:DeRham_T} and \ref{T:HL_L2T}).
We further note that the Helmholtz--Leray decomposition for a not necessarily tangential vector field on $\Gamma$ is given in this paper (see Remark \ref{R:HL_NTSV}).

If the weak and weak-$\star$ convergence of $M_\tau u_0^\varepsilon$ and $M_\tau\mathbb{P}_\varepsilon f^\varepsilon$ are replaced by the strong convergence, then we get the strong convergence of $M_\tau u^\varepsilon$.

\begin{theorem} \label{T:SL_Strong}
  Under Assumptions \ref{Assump_1} and \ref{Assump_2}, let $\varepsilon_0$ be the constant given in Lemma \ref{L:Uni_aeps} and
  \begin{align*}
    u_0^\varepsilon\in \mathcal{V}_\varepsilon, \quad f^\varepsilon\in L^\infty(0,\infty;L^2(\Omega_\varepsilon)^3)
  \end{align*}
  for $\varepsilon\in(0,\varepsilon_0]$.
  Suppose that the assumptions of Theorem \ref{T:SL_Weak} are satisfied with the condition (b) replaced by the following condition:
  \begin{itemize}
    \item [(b')] There exist $v_0\in L^2(\Gamma,T\Gamma)$ and $f\in L^\infty(0,\infty;H^{-1}(\Gamma,T\Gamma))$ such that
    \begin{alignat*}{3}
      \lim_{\varepsilon\to0}M_\tau u_0^\varepsilon &= v_0 &\quad &\text{strongly in} &\quad &L^2(\Gamma,T\Gamma), \\
      \lim_{\varepsilon\to0}M_\tau\mathbb{P}_\varepsilon f^\varepsilon &= f &\quad &\text{strongly in} &\quad &L^\infty(0,\infty;H^{-1}(\Gamma,T\Gamma)).
  \end{alignat*}
  \end{itemize}
  Then the statements of Theorem \ref{T:SL_Weak} hold.
  Moreover, for each $T>0$ we have
  \begin{align*}
    \lim_{\varepsilon\to0}M_\tau u^\varepsilon = v \quad\text{strongly in}\quad C([0,T];L^2(\Gamma,T\Gamma))\cap L^2(0,T;H^1(\Gamma,T\Gamma)).
  \end{align*}
\end{theorem}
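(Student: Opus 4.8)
The plan is to prove Theorem~\ref{T:SL_Strong} as a refinement of Theorem~\ref{T:SL_Weak}, obtaining the strong convergence from a quantitative energy estimate for the difference $M_\tau u^\varepsilon - v$. Since the hypotheses of Theorem~\ref{T:SL_Weak} hold (condition (b$'$) implies (b)), all conclusions of Theorem~\ref{T:SL_Weak} are available: for $\varepsilon\in(0,\varepsilon_3]$ there is a global strong solution $u^\varepsilon$ to \eqref{E:NS_CTD}, the normal part $Mu^\varepsilon\cdot n\to0$ strongly in $C([0,\infty);L^2(\Gamma))$, and $M_\tau u^\varepsilon\to v$ weakly in $L^2(0,T;H^1(\Gamma,T\Gamma))$ with $v$ the unique weak solution to \eqref{E:NS_Limit}. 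The task is thus to upgrade the convergence of $M_\tau u^\varepsilon$ from weak to strong in $C([0,T];L^2(\Gamma,T\Gamma))\cap L^2(0,T;H^1(\Gamma,T\Gamma))$.

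First I would work with the weighted solenoidal part $v^\varepsilon$ of $M_\tau u^\varepsilon$ from the weighted Helmholtz--Leray decomposition $M_\tau u^\varepsilon = v^\varepsilon + g\nabla_\Gamma q^\varepsilon$, since $v^\varepsilon\in\mathcal{V}_g$ is an admissible test function and satisfies a clean weak formulation (Lemma~\ref{L:PMu_Weak}), whereas $v$ itself satisfies the weak formulation of \eqref{E:NS_Limit}. Setting $w^\varepsilon := v^\varepsilon - v$, which lies in $\mathcal{V}_g$ for a.a.\ $t$, I would subtract the two weak formulations and test with $w^\varepsilon$. This produces an identity of the schematic form
\begin{align*}
  \frac{1}{2}\frac{d}{dt}\|\sqrt{g}\,w^\varepsilon\|_{L^2(\Gamma)}^2 + c\nu\|w^\varepsilon\|_{H^1(\Gamma)}^2 \leq (\text{trilinear terms}) + (\text{residual terms from }\varepsilon),
\end{align*}
where the residual terms collect the approximation errors of the bilinear and trilinear forms (controlled via Lemmas~\ref{L:Appr_Bili}, \ref{L:Appr_Tri}, the estimates \eqref{E:UE_L2}--\eqref{E:UE_H1}, and the bounds on $M_\tau u^\varepsilon - v^\varepsilon$ from Section~\ref{SS:WS_HL}), together with the error $\|M_\tau\mathbb{P}_\varepsilon f^\varepsilon - f\|_{H^{-1}}$ which tends to $0$ by (b$'$). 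The coercivity on $\mathcal{V}_g$, which is the weighted surface Korn inequality underlying the well-posedness of \eqref{E:NS_Limit}, furnishes the dissipation term on the left.

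The trilinear terms are handled exactly as in the two-dimensional Navier--Stokes theory: writing $\overline{\nabla}_{v^\varepsilon}v^\varepsilon - \overline{\nabla}_v v = \overline{\nabla}_{w^\varepsilon}v + \overline{\nabla}_{v^\varepsilon}w^\varepsilon$ and using antisymmetry of the weighted trilinear form in its last two arguments, the only surviving term is estimated by the Ladyzhenskaya-type inequality on the surface, $\|w^\varepsilon\|_{L^4(\Gamma)}^2\leq c\|w^\varepsilon\|_{L^2(\Gamma)}\|w^\varepsilon\|_{H^1(\Gamma)}$, giving a bound $c\|w^\varepsilon\|_{L^2}\|w^\varepsilon\|_{H^1}\|v\|_{H^1}$; absorbing half the $H^1$-norm into the dissipation and using $\|v\|_{H^1(\Gamma)}^2\in L^1_{loc}$ produces the Gronwall coefficient. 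Gronwall's inequality applied to $\phi^\varepsilon(t):=\|\sqrt{g}\,w^\varepsilon(t)\|_{L^2(\Gamma)}^2$ then yields
\begin{align*}
  \sup_{[0,T]}\|w^\varepsilon(t)\|_{L^2(\Gamma)}^2 + \int_0^T\|w^\varepsilon\|_{H^1(\Gamma)}^2\,dt \leq C(T)\Bigl(\|w^\varepsilon(0)\|_{L^2(\Gamma)}^2 + \rho_\varepsilon\Bigr),
\end{align*}
where $\rho_\varepsilon\to0$ collects the $\varepsilon$-residuals and $\|w^\varepsilon(0)\|_{L^2(\Gamma)}\to0$ because $M_\tau u_0^\varepsilon\to v_0$ strongly by (b$'$) and $v^\varepsilon(0)-M_\tau u_0^\varepsilon\to0$. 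This gives $w^\varepsilon\to0$ in $C([0,T];L^2)\cap L^2(0,T;H^1)$; combining with $M_\tau u^\varepsilon - v^\varepsilon = g\nabla_\Gamma q^\varepsilon\to0$ in the same norms (Section~\ref{SS:WS_HL}) completes the proof. The main obstacle I anticipate is bookkeeping the residual terms $\rho_\varepsilon$: one must show every geometric approximation error genuinely vanishes, and here the $H^2(\Omega_\varepsilon)$-bound \eqref{E:UE_H1} is essential, since the extrinsic mismatch between the bulk bilinear form $a_\varepsilon$ and the intrinsic surface form is only controlled by $\|u^\varepsilon\|_{H^2(\Omega_\varepsilon)}$ times a power of $\varepsilon$ that must beat the $\varepsilon^{-1+\alpha}$ growth — this balance, rather than any single estimate, is the delicate point.
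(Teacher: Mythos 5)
Your proposal is correct and follows essentially the same route as the paper: the paper proves Theorem \ref{T:SL_Strong} via Theorem \ref{T:Diff_Mu_V}, whose proof (Lemma \ref{L:Diff_Ve_V}) subtracts the weak formulation \eqref{E:PMu_Weak} for $v^\varepsilon=\mathbb{P}_gM_\tau u^\varepsilon$ from \eqref{E:Limit_Weak} for $v$, tests with $1_{[0,t]}(v^\varepsilon-v)$, uses the antisymmetry \eqref{E:TriS_Vg} and the Ladyzhenskaya-type bound \eqref{E:Tri_Surf} on the trilinear term, absorbs the residuals $R_\varepsilon^1,R_\varepsilon^2$ into the quantity $\delta(\varepsilon)$, applies Gronwall, and then combines with the bounds \eqref{E:Diff_PMu} on $M_\tau u^\varepsilon-v^\varepsilon$. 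All the ingredients you name (the weighted Helmholtz--Leray decomposition, Korn-based coercivity \eqref{E:Bi_Surf}, the $H^2(\Omega_\varepsilon)$-control of the residuals, and the convergence of the initial data via (b$'$)) are exactly those used in the paper.
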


In Section \ref{SS:SL_ErST} we establish Theorem \ref{T:SL_Strong} by showing an energy estimate for the difference between $M_\tau u^\varepsilon$ and $v$ (see Theorem \ref{T:Diff_Mu_V}).
We also derive estimates in $\Omega_\varepsilon$ for the difference between $u^\varepsilon$ and the constant extension of $v$ in the normal direction of $\Gamma$ (see Theorem \ref{T:Diff_Ue_Cv}).
It is worth noting that, if we define the derivative of $u^\varepsilon$ in the normal direction of $\Gamma$ by
\begin{align*}
  \partial_nu^\varepsilon(x,t) := \frac{d}{d\tilde{r}}\bigl(u^\varepsilon(y+\tilde{r}n(y),t)\bigr)\Big|_{\tilde{r}=r}, \quad x = y+rn(y) \in \Omega_\varepsilon,
\end{align*}
then $\partial_nu^\varepsilon$ is close to a surface vector field of the form (see Theorem \ref{T:Diff_DnUe_Cv})
\begin{align*}
  -W(y)v(y,t)+\frac{1}{g(y)}\{v(y,t)\cdot\nabla_\Gamma g(y)\}n(y), \quad (y,t)\in\Gamma\times(0,\infty).
\end{align*}
Here $W$ is the Weingarten map (or the shape operator) of the surface $\Gamma$ representing the curvatures of $\Gamma$ (see Section \ref{SS:Pre_Surf}).
Thus, when $\Gamma$ is not flat, the velocity $u^\varepsilon$ of the bulk fluid is not constant in the normal direction of $\Gamma$ even though it is approximated by the constant extension of the velocity $v$ of the surface fluid.

Finally, we give remarks on Assumption \ref{Assump_2}, the limit equations \eqref{E:NS_Limit}, and the Helmholtz--Leray decomposition for not necessarily tangential vector fields on $\Gamma$.

\begin{remark} \label{R:Ex_As}
  The following examples satisfy Assumption \ref{Assump_2}.
  \begin{itemize}
    \item[(A1)] We can take any closed surface $\Gamma$ when at least one of the friction coefficients $\gamma_\varepsilon^0$ and $\gamma_\varepsilon^1$ in \eqref{E:Def_Fric} is bounded from below by $\varepsilon$.
    In this case, however, either of the constants $\gamma^0$ and $\gamma^1$ in the limit equations \eqref{E:NS_Limit} must be positive.
    \item[(A2)] It is known (see e.g. \cite{Sh_18pre}*{Proposition 2.2}) that $\mathcal{K}(\Gamma)=\{0\}$, i.e. $\Gamma$ does not admit any nontrivial Killing vector field if its genus is greater than one.
    In this case $\mathcal{K}_g(\Gamma)=\{0\}$ for any $g=g_1-g_0$ and we assume only that $\gamma_\varepsilon^0$ and $\gamma_\varepsilon^1$ are nonnegative (and bounded above by $\varepsilon$).
    \item[(A3)] A typical but an important example satisfying the condition (A3) is a thin spherical shell around the unit sphere $S^2$ in $\mathbb{R}^3$ of the form
    \begin{align*}
      \Omega_\varepsilon = \{x\in\mathbb{R}^3 \mid 1<|x|<1+\varepsilon\} \quad (\Gamma = S^2,\, g_0 \equiv 0,\, g_1 \equiv 1).
    \end{align*}
    In this case we consider only the perfect slip boundary conditions
    \begin{align} \label{E:Per_Slip}
      u\cdot n_\varepsilon = 0, \quad 2\nu P_\varepsilon D(u)n_\varepsilon = 0 \quad\text{on}\quad \Gamma_\varepsilon.
    \end{align}
    The Navier--Stokes equations in the thin spherical shell were studied in \cite{TeZi97} under boundary conditions different from \eqref{E:Per_Slip} (see Remark \ref{R:Limit_TZ} below).
  \end{itemize}
\end{remark}

\begin{remark} \label{R:Lim_Reduce}
  If $g\equiv1$ and $\gamma^0=\gamma^1=0$ in \eqref{E:NS_Limit}, then we have
  \begin{align} \label{E:Lim_Reduce}
    \left\{
    \begin{alignedat}{3}
      \partial_tv+\overline{\nabla}_vv-2\nu P\mathrm{div}_\Gamma[D_\Gamma(v)]+\nabla_\Gamma q &= f &\quad &\text{on} &\quad &\Gamma\times(0,\infty), \\
      \mathrm{div}_\Gamma v &= 0 &\quad &\text{on} &\quad &\Gamma\times(0,\infty).
    \end{alignedat}
    \right.
  \end{align}
  The equations \eqref{E:Lim_Reduce} agree with the Navier--Stokes equations on an evolving surface in $\mathbb{R}^3$ derived in \cites{JaOlRe18,KoLiGi17,Miu18} when a surface is stationary in time.
  In particular, the paper \cite{Miu18} derived the equations by the thin-film limit based on formal calculations.
  Hence Theorems \ref{T:SL_Weak} and \ref{T:SL_Strong} justify the result of \cite{Miu18} for a stationary surface.
  Also, we observe in Lemma \ref{L:DiSR_BL} that
  \begin{align} \label{E:PD_Main}
    2P\mathrm{div}_\Gamma[D_\Gamma(X)] = \Delta_BX+\nabla_\Gamma(\mathrm{div}_\Gamma X)+\mathrm{Ric}(X) \quad\text{on}\quad \Gamma
  \end{align}
  for a tangential vector field $X$ on $\Gamma$, where $\Delta_B$ and $\mathrm{Ric}$ are the Bochner Laplacian on $\Gamma$ and the Ricci curvature of $\Gamma$ (see Appendix \ref{S:Ap_Visc}).
  Note that the Ricci curvature agrees with the Gaussian curvature $K$ of $\Gamma$ (see Section \ref{SS:Pre_Surf}), i.e.
  \begin{align} \label{E:RG_Main}
    \mathrm{Ric}(X) = KX \quad\text{on}\quad \Gamma
  \end{align}
  for a tangential vector field $X$ on $\Gamma$ since $\Gamma$ is two-dimensional (see Lemma \ref{L:Cur_Ten}).
  By \eqref{E:PD_Main} we observe that the equations \eqref{E:Lim_Reduce} are equivalent to
  \begin{align} \label{E:NS_Riem}
    \left\{
    \begin{alignedat}{3}
      \partial_tv+\overline{\nabla}_vv-\nu\{\Delta_Bv+\mathrm{Ric}(v)\}+\nabla_\Gamma q &= f &\quad &\text{on} &\quad &\Gamma\times(0,\infty), \\
      \mathrm{div}_\Gamma v &= 0 &\quad &\text{on} &\quad &\Gamma\times(0,\infty).
    \end{alignedat}
    \right.
  \end{align}
  Note that these equations are described only by intrinsic quantities of the embedded surface $\Gamma$.
  Also, the equations \eqref{E:NS_Riem} can be expressed as
  \begin{align} \label{E:NS_HD}
    \left\{
    \begin{alignedat}{3}
      \partial_tv+\overline{\nabla}_vv-\nu\{\Delta_Hv+2\mathrm{Ric}(v)\}+\nabla_\Gamma q &= f &\quad &\text{on} &\quad &\Gamma\times(0,\infty), \\
      \mathrm{div}_\Gamma v &= 0 &\quad &\text{on} &\quad &\Gamma\times(0,\infty)
    \end{alignedat}
    \right.
  \end{align}
  by the Weitzenb\"{o}ck formula
  \begin{align*}
    \Delta_BX = \Delta_HX+\mathrm{Ric}(X) \quad\text{on}\quad \Gamma
  \end{align*}
  for a tangential vector field $X$ on $\Gamma$ (see Lemma \ref{L:Weitzen}).
  Here $\Delta_H$ is the Hodge Laplacian on $\Gamma$ (see Appendix \ref{S:Ap_Visc}).
  The equations \eqref{E:NS_Riem} are the Navier--Stokes equations on a Riemannian manifold introduced by Ebin and Marsden \cite{EbMa70}, Mitsumatsu and Yano \cite{MitYa02}, and Taylor \cite{Ta92} and studied by many authors (see e.g. \cites{ChaCzu13,DinMit04,KheMis12,KohWen18,MitTa01,Nag99,Pi17,Prie94,PrSiWi20pre,SaTu20}).
  Theorems \ref{T:SL_Weak} and \ref{T:SL_Strong} provide the first result on a rigorous derivation of \eqref{E:NS_Riem} by the thin-film limit when a manifold is a general two-dimensional closed surface in $\mathbb{R}^3$.
  We also show in Lemma \ref{L:Vis_Defo} that
  \begin{align*}
    P\mathrm{div}_\Gamma[gD_\Gamma(X)] = -\mathrm{Def}^\ast(g\,\mathrm{Def}\,X) \quad\text{on}\quad \Gamma
  \end{align*}
  for a function $g$ and a tangential vector field $X$ on $\Gamma$, where $\mathrm{Def}\,X$ is the deformation tensor for $X$ and $\mathrm{Def}^\ast$ is the formal adjoint of $\mathrm{Def}$ (see Appendix \ref{S:Ap_Visc}).
  Here the right-hand side is intrinsically defined and thus the limit equations \eqref{E:NS_Limit} are described only by intrinsic quantities of $\Gamma$.
  Also, since the equations \eqref{E:Lim_Reduce} are equivalent to \eqref{E:NS_Riem}, we can consider \eqref{E:NS_Limit} as the damped and weighted Navier--Stokes equations on a Riemannian manifold.
  Note that the damping term $(\gamma^0+\gamma^1)v$ and the weight function $g$ in \eqref{E:NS_Limit} come from the friction term $\gamma_\varepsilon u^\varepsilon$ in the slip boundary conditions \eqref{E:Slip_Intro} and the thickness $\varepsilon g$ of the curved thin domain $\Omega_\varepsilon$, respectively.
\end{remark}

\begin{remark} \label{R:Limit_TZ}
  As mentioned in Remark \ref{R:Ex_As}, Temam and Ziane \cite{TeZi97} considered the Navier--Stokes equations in the thin spherical shell
  \begin{align*}
    \Omega_\varepsilon = \{x \in \mathbb{R}^3 \mid 1<|x|<1+\varepsilon\} \quad (\Gamma = S^2,\, g_0 \equiv 0,\, g_1 \equiv 1)
  \end{align*}
  around the unit sphere under the Hodge (or de Rham) boundary conditions
  \begin{align} \label{E:Hodge_BC}
    u\cdot n_\varepsilon = 0, \quad \mathrm{curl}\,u\times n_\varepsilon = 0 \quad\text{on}\quad \Gamma_\varepsilon.
  \end{align}
  The authors of \cite{TeZi97} called \eqref{E:Hodge_BC} the free boundary conditions and mentioned that these conditions were equivalent to the perfect slip boundary conditions \eqref{E:Per_Slip}, but this is valid only when the boundary $\Gamma_\varepsilon$ is flat.
  Indeed, it is shown in \cite{MitMon09}*{Section 2} and \cite{Miu_NSCTD_01}*{Lemma B.10} that under the condition $u\cdot n_\varepsilon=0$ on $\Gamma_\varepsilon$ we have
  \begin{align*}
    2P_\varepsilon D(u)n_\varepsilon-\mathrm{curl}\,u\times n_\varepsilon = 2W_\varepsilon u \quad\text{on}\quad \Gamma_\varepsilon,
  \end{align*}
  where $W_\varepsilon$ is the Weingarten map (or the shape operator) of $\Gamma_\varepsilon$ that represents the curvatures of $\Gamma_\varepsilon$ (see Section \ref{SS:Pre_CTD}).
  Moreover, this difference affects the form of limit equations on $S^2$ derived from the Navier--Stokes equations in $\Omega_\varepsilon$.
  When
  \begin{align*}
    \Gamma = S^2, \quad g_0 \equiv 0, \quad g_1 \equiv 1, \quad \gamma_\varepsilon^0=\gamma_\varepsilon^1=0,
  \end{align*}
  the condition (A3) of Assumption \ref{Assump_2} is satisfied and our limit equations \eqref{E:NS_Limit} are equivalent to \eqref{E:NS_Riem} by $g\equiv1$ and $\gamma^0=\gamma^1=0$ (see Remark \ref{R:Lim_Reduce}).
  Moreover, since the Gaussian curvature of $S^2$ is $K=1$ and the relation \eqref{E:RG_Main} holds, the equations \eqref{E:NS_Riem} read
  \begin{align} \label{E:Limit_S2}
    \left\{
    \begin{alignedat}{3}
      \partial_tv+\overline{\nabla}_vv-\nu(\Delta_Bv+v)+\nabla_\Gamma q &= f &\quad &\text{on} &\quad &S^2\times(0,\infty), \\
      \mathrm{div}_\Gamma v &= 0 &\quad &\text{on} &\quad &S^2\times(0,\infty).
    \end{alignedat}
    \right.
  \end{align}
  On the other hand, the limit equations derived in \cite{TeZi97} are of the form
  \begin{align} \label{E:Lim_TZ_TV}
    \left\{
    \begin{alignedat}{3}
      \partial_tv+\overline{\nabla}_vv-\nu\Delta_2v+\nabla_\Gamma q &= f &\quad &\text{on} &\quad &S^2\times(0,\infty), \\
      \mathrm{div}_\Gamma v &= 0 &\quad &\text{on} &\quad &S^2\times(0,\infty).
    \end{alignedat}
    \right.
  \end{align}
  Here $\Delta_2v$ is the tangential vector Laplacian of $v$ on $S^2$ expressed in the spherical coordinate system (see \cite{TeZi97}*{Appendix}).
  In Lemma \ref{L:Vec_Sph} we derive
  \begin{align} \label{E:VeL_Main}
    \Delta_2X = \Delta_HX = \Delta_BX-X \quad\text{on}\quad S^2
  \end{align}
  for a tangential vector field $X$ on $S^2$.
  Thus the equations \eqref{E:Lim_TZ_TV} read
  \begin{align} \label{E:Lim_TZ_Bo}
    \left\{
    \begin{alignedat}{3}
      \partial_tv+\overline{\nabla}_vv-\nu(\Delta_Bv-v)+\nabla_\Gamma q &= f &\quad &\text{on} &\quad &S^2\times(0,\infty), \\
      \mathrm{div}_\Gamma v &= 0 &\quad &\text{on} &\quad &S^2\times(0,\infty).
    \end{alignedat}
    \right.
  \end{align}
  Here the sign of the zeroth order term $v$ in our limit equations \eqref{E:Limit_S2} is opposite to that of the same term in the limit equations \eqref{E:Lim_TZ_Bo} derived in \cite{TeZi97}.
  Due to this fact the structure of \eqref{E:Limit_S2} such as the stability of a solution is quite different from that of \eqref{E:Lim_TZ_Bo}.
  In particular, for an arbitrary nonzero vector $a\in\mathbb{R}^3$ if we set
  \begin{align*}
    v(y) := a\times y, \quad q(y) := \frac{1}{2}|v(y)|^2, \quad y\in S^2,
  \end{align*}
  where $\times$ denotes the vector product in $\mathbb{R}^3$, then direct calculations show
  \begin{gather*}
    D_\Gamma(v) = 0, \quad \Delta_Bv+v = 2P\mathrm{div}_\Gamma[D_\Gamma(v)] = 0, \\
    \overline{\nabla}_vv+\nabla_\Gamma q = 2D_\Gamma(v)v = 0, \quad \mathrm{div}_\Gamma v = 0 \quad\text{on}\quad S^2
  \end{gather*}
  and $v$ is not the tangential gradient of a function on $S^2$ by the last relation.
  Hence $v$ is a stationary solution to \eqref{E:Limit_S2} with $f=0$ but does not satisfy \eqref{E:Lim_TZ_Bo} (note that the above $v$ is a Killing vector field on $S^2$).
  Also, by \eqref{E:VeL_Main} the limit equations \eqref{E:Lim_TZ_TV} derived in \cite{TeZi97} can be rewritten as
  \begin{align} \label{E:NS_Hodge}
    \left\{
    \begin{alignedat}{3}
      \partial_tv+\overline{\nabla}_vv-\nu\Delta_Hv+\nabla_\Gamma q &= f &\quad &\text{on} &\quad &\Gamma\times(0,\infty), \\
      \mathrm{div}_\Gamma v &= 0 &\quad &\text{on} &\quad &\Gamma\times(0,\infty)
    \end{alignedat}
    \right.
  \end{align}
  with $\Gamma=S^2$.
  Note that we can consider \eqref{E:NS_Hodge} on a general Riemannian manifold $\Gamma$.
  There are several works on the equations \eqref{E:NS_Hodge} (see e.g. \cites{Ili90,IliFil88,Lic16,TemWan93}), but they were called the ``wrong'' Navier--Stokes equations in \cite{Ta92} since the viscous term in \eqref{E:NS_Hodge} is not equal to the divergence of the deformation tensor which gives the viscous term in \eqref{E:NS_Riem}.
  We refer to \cite{ChaCzuDis17} for discussions on the choice of the viscous term in the Navier--Stokes equations on a Riemannian manifold.
\end{remark}

\begin{remark} \label{R:HL_NTSV}
  In Section \ref{SS:WS_NTS} we show the Helmholtz--Leray decomposition for a not necessarily tangential vector field $v\in L^2(\Gamma)^3$ of the form
  \begin{align} \label{E:NTHL_Main}
    \begin{gathered}
      v = v_\sigma+\nabla_\Gamma q+qHn \quad\text{in}\quad L^2(\Gamma)^3, \\
      v_\sigma \in L_\sigma^2(\Gamma),\,\nabla_\Gamma q+qHn \in L_\sigma^2(\Gamma)^\perp
    \end{gathered}
  \end{align}
  as an easy consequence of the results in Section \ref{S:WSol} used for the proof of the tangential Helmholtz--Leray decomposition \eqref{E:HLT_Intro} (see Theorem \ref{T:HL_L2Ge}).
  Here
  \begin{align*}
    L_\sigma^2(\Gamma):=\{v\in L^2(\Gamma)^3 \mid \text{$\mathrm{div}_\Gamma v=0$ on $\Gamma$}\}
  \end{align*}
  and $H$ is the mean curvature of $\Gamma$ (see Section \ref{SS:Pre_Surf}).
  The decomposition \eqref{E:NTHL_Main} was already found in \cite{KoLiGi17} (see also \cite{KoLiGi18Er}), but here we further prove the uniqueness of the scalar potential $q$ in \eqref{E:NTHL_Main} by using the fact that $\Gamma$ is closed (see Lemma \ref{L:TGrHn_Hin_Con}).
  Note that this result does not hold for \eqref{E:HLT_Intro} in which the scalar potential is determined only up to a constant (see Theorem \ref{T:HL_L2T}).
  In Remark \ref{R:HL_L2Ge} we also observe that the decomposition \eqref{E:NTHL_Main} for a tangential vector field on $\Gamma$ may be different from \eqref{E:HLT_Intro} with $g\equiv1$.
  Although we do not use \eqref{E:NTHL_Main} in the proofs of Theorems \ref{T:SL_Weak} and \ref{T:SL_Strong}, it is fundamental for the future study of the Navier--Stokes equations on an evolving surface in which the pressure term is of the form $\nabla_\Gamma q+qHn$ (see \cites{JaOlRe18,KoLiGi17}).
\end{remark}

\section{Preliminaries} \label{S:Pre}
We fix notations on a closed surface and a curved thin domain and provide some basic results used in the sequel.

In what follows, we fix a coordinate system of $\mathbb{R}^3$ and write $x_i$, $i=1,2,3$ for the $i$-th component of a point $x\in\mathbb{R}^3$ in this coordinate system.
Moreover, we denote by $c$ a general positive constant independent of the parameter $\varepsilon$.
Other notations on vectors and matrices are given in Appendix \ref{S:Ap_Vec}.

Throughout this paper we omit the proofs of the results established in the first and second parts \cites{Miu_NSCTD_01,Miu_NSCTD_02} of our study unless otherwise stated.

\subsection{Closed surface} \label{SS:Pre_Surf}
Let $\Gamma$ be a closed, connected, and oriented surface in $\mathbb{R}^3$ of class $C^\ell$ with $\ell\geq2$.
We denote by $n$, $d$, and $\kappa_1$ and $\kappa_2$ the unit outward normal vector field of $\Gamma$, the signed distance function from $\Gamma$ increasing in the direction of $n$, and the principal curvatures of $\Gamma$, respectively.
The $C^\ell$-regularity of $\Gamma$ implies
\begin{align*}
  n \in C^{\ell-1}(\Gamma)^3, \quad \kappa_1,\kappa_2 \in C^{\ell-2}(\Gamma)
\end{align*}
and thus $\kappa_1$ and $\kappa_2$ are bounded on the compact set $\Gamma$.
By this fact there exists a tubular neighborhood of $\Gamma$ of the form
\begin{align*}
  N := \{x\in\mathbb{R}^3 \mid \mathrm{dist}(x,\Gamma) < \delta\}, \quad \delta > 0
\end{align*}
such that for each $x\in N$ we have a unique point $\pi(x)\in\Gamma$ satisfying
\begin{align} \label{E:Nor_Coord}
  x = \pi(x)+d(x)n(\pi(x)), \quad \nabla d(x) = n(\pi(x))
\end{align}
and $d$ and $\pi$ are of class $C^\ell$ and $C^{\ell-1}$ on $\overline{N}$ (see \cite{GiTr01}*{Section 14.6}).
We may also assume that
\begin{align} \label{E:Curv_Bound}
  c^{-1} \leq 1-r\kappa_i(y) \leq c \quad\text{for all}\quad y\in\Gamma,\,r\in(-\delta,\delta),\,i=1,2
\end{align}
by taking $\delta>0$ sufficiently small.

Let us define differential operators on $\Gamma$ and surface quantities of $\Gamma$.
We write
\begin{align*}
  P(y) := I_3-n(y)\otimes n(y), \quad Q(y) := n(y)\otimes n(y), \quad y\in\Gamma
\end{align*}
for the orthogonal projections onto the tangent plane of $\Gamma$ and the normal direction of $\Gamma$.
They are of class $C^{\ell-1}$ on $\Gamma$ and satisfy $|P|=2$, $|Q|=1$, and
\begin{gather*}
  I_3 = P+Q, \quad PQ = QP = 0, \quad P^T = P^2 = P, \quad Q^T = Q^2 = Q, \\
  |a|^2 = |Pa|^2+|Qa|^2, \quad |Pa| \leq |a|, \quad Pa\cdot n = 0, \quad a\in\mathbb{R}^3
\end{gather*}
on $\Gamma$.
These relations are used frequently in the sequel.
For $\eta\in C^1(\Gamma)$ we define the tangential gradient and the tangential derivatives of $\eta$ by
\begin{align} \label{E:Def_TGr}
  \nabla_\Gamma\eta(y) := P(y)\nabla\tilde{\eta}(y), \quad \underline{D}_i\eta(y) := \sum_{j=1}^3P_{ij}(y)\partial_j\tilde{\eta}(y), \quad y\in\Gamma,\, i=1,2,3
\end{align}
so that $\nabla_\Gamma\eta=(\underline{D}_1\eta,\underline{D}_2\eta,\underline{D}_3\eta)^T$.
Here $\tilde{\eta}$ is a $C^1$-extension of $\eta$ to $N$ with $\tilde{\eta}|_\Gamma=\eta$.
From the definition of $\nabla_\Gamma\eta$ it follows that
\begin{align} \label{E:P_TGr}
  P\nabla_\Gamma\eta = \nabla_\Gamma\eta, \quad n\cdot\nabla_\Gamma\eta = 0 \quad\text{on}\quad \Gamma.
\end{align}
Note that $\nabla_\Gamma\eta$ defined by \eqref{E:Def_TGr} agrees with the gradient on a Riemannian manifold expressed under a local coordinate system (see Lemma \ref{L:TGr_DG}).
Hence the values of $\nabla_\Gamma\eta$ and $\underline{D}_i\eta$ do not depend on the choice of an extension $\tilde{\eta}$.
In particular, for the constant extension $\bar{\eta}:=\eta\circ\pi$ of $\eta$ in the normal direction of $\Gamma$ we have
\begin{align} \label{E:ConDer_Surf}
  \nabla\bar{\eta}(y) = \nabla_\Gamma\eta(y), \quad \partial_i\bar{\eta}(y) = \underline{D}_i\eta(y), \quad y\in\Gamma,\,i=1,2,3
\end{align}
since $\nabla\pi(y)=P(y)$ for $y\in\Gamma$ by \eqref{E:Nor_Coord} and $d(y)=0$.
From now on, we always use the notation $\bar{\eta}$ with an overline for the constant extension of a function $\eta$ on $\Gamma$ in the normal direction of $\Gamma$.
For $\eta\in C^2(\Gamma)$ we define the tangential Hessian matrix of $\eta$ and the Laplace--Beltrami operator on $\Gamma$ by
\begin{align} \label{E:Def_THess}
  \nabla_\Gamma^2\eta := (\underline{D}_i\underline{D}_j\eta)_{i,j}, \quad \Delta_\Gamma\eta := \mathrm{tr}[\nabla_\Gamma^2\eta] = \sum_{i=1}^3\underline{D}_i^2\eta \quad\text{on}\quad \Gamma.
\end{align}
Let $v=(v_1,v_2,v_3)^T\in C^1(\Gamma)^3$ be a (not necessarily tangential) vector field on $\Gamma$.
We define the tangential gradient matrix and the surface divergence of $v$ by
\begin{align} \label{E:Def_DivG}
  \nabla_\Gamma v :=
  \begin{pmatrix}
    \underline{D}_1v_1 & \underline{D}_1v_2 & \underline{D}_1v_3 \\
    \underline{D}_2v_1 & \underline{D}_2v_2 & \underline{D}_2v_3 \\
    \underline{D}_3v_1 & \underline{D}_3v_2 & \underline{D}_3v_3
  \end{pmatrix}, \quad
  \mathrm{div}_\Gamma v := \mathrm{tr}[\nabla_\Gamma v] = \sum_{i=1}^3\underline{D}_iv_i
\end{align}
on $\Gamma$ and the surface strain rate tensor for $v$ by
\begin{align} \label{E:Def_SSR}
  D_\Gamma(v) := P(\nabla_\Gamma v)_SP \quad\text{on}\quad \Gamma, \quad (\nabla_\Gamma v)_S = \frac{\nabla_\Gamma v+(\nabla_\Gamma v)^T}{2}.
\end{align}
For $v\in C^1(\Gamma)^3$ and $\eta\in C(\Gamma)^3$ we set
\begin{align*}
  (\eta\cdot\nabla_\Gamma)v :=
  \begin{pmatrix}
    \eta\cdot\nabla_\Gamma v_1 \\
    \eta\cdot\nabla_\Gamma v_2 \\
    \eta\cdot\nabla_\Gamma v_3
  \end{pmatrix}
   = (\nabla_\Gamma v)^T\eta \quad\text{on}\quad \Gamma.
\end{align*}
Note that for any $C^1$-extension $\tilde{v}$ of $v$ to $N$ with $\tilde{v}|_\Gamma=v$ we have
\begin{align} \label{E:TGrM_Surf}
  \nabla_\Gamma v = P\nabla\tilde{v}, \quad (\eta\cdot\nabla_\Gamma)v = [(P\eta)\cdot\nabla_\Gamma]v = [(P\eta)\cdot\nabla]\tilde{v} \quad\text{on}\quad \Gamma.
\end{align}
When $v\in C^1(\Gamma)^3$ and $\eta\in C(\Gamma)^3$ are tangential on $\Gamma$ (i.e. $v\cdot n=\eta\cdot n=0$ on $\Gamma$), we define the covariant derivative of $v$ along $\eta$ by
\begin{align*}
  \overline{\nabla}_\eta v := P(\eta\cdot\nabla_\Gamma)v = P(\nabla_\Gamma v)^T\eta \quad\text{on}\quad \Gamma.
\end{align*}
If $v\in C^2(\Gamma)^3$, then we write
\begin{align*}
  |\nabla_\Gamma^2v|^2 := \sum_{i,j,k=1}^3|\underline{D}_i\underline{D}_jv_k|^2, \quad \Delta_\Gamma v :=
  \begin{pmatrix}
    \Delta_\Gamma v_1 \\
    \Delta_\Gamma v_2 \\
    \Delta_\Gamma v_3
  \end{pmatrix}
  \quad\text{on}\quad \Gamma.
\end{align*}
For a matrix-valued function $A\in C^1(\Gamma)^{3\times 3}$ of the form
\begin{align*}
  A = (A_{ij})_{i,j} =
  \begin{pmatrix}
    A_{11} & A_{12} & A_{13} \\
    A_{21} & A_{22} & A_{23} \\
    A_{31} & A_{32} & A_{33}
  \end{pmatrix}
\end{align*}
we define the surface divergence of $A$ as a vector field
\begin{align*}
  \mathrm{div}_\Gamma A :=
  \begin{pmatrix}
    [\mathrm{div}_\Gamma A]_1 \\
    [\mathrm{div}_\Gamma A]_2 \\
    [\mathrm{div}_\Gamma A]_3
  \end{pmatrix}, \quad
  [\mathrm{div}_\Gamma A]_j := \sum_{i=1}^3\underline{D}_iA_{ij} \quad\text{on}\quad \Gamma,\,j=1,2,3.
\end{align*}
Note that $\mathrm{div}_\Gamma[\nabla_\Gamma v]=\Delta_\Gamma v$ on $\Gamma$ for $v\in C^2(\Gamma)^3$ in the above notations.
We further define the Weingarten map $W$, (twice) the mean curvature $H$, and the Gaussian curvature $K$ of $\Gamma$ by
\begin{align} \label{E:Def_Wein}
  W := -\nabla_\Gamma n, \quad H := \mathrm{tr}[W] = -\mathrm{div}_\Gamma n, \quad K := \kappa_1\kappa_2 \quad\text{on}\quad \Gamma.
\end{align}
They are of class $C^{\ell-2}$ and thus bounded on $\Gamma$ by the $C^\ell$-regularity of $\Gamma$.

\begin{lemma} \label{L:Form_W}
  The Weingarten map $W$ is symmetric and
  \begin{align} \label{E:Form_W}
    Wn = 0, \quad PW = WP = W \quad\text{on}\quad \Gamma.
  \end{align}
  If $v\in C^1(\Gamma)^3$ is tangential, i.e. $v\cdot n=0$ on $\Gamma$, then
  \begin{align} \label{E:Grad_W}
    (\nabla_\Gamma v)n = Wv, \quad \nabla_\Gamma v = P(\nabla_\Gamma v)P+(Wv)\otimes n \quad\text{on}\quad \Gamma.
  \end{align}
  Also, the surface divergence of $P$ is of the form
  \begin{align} \label{E:Div_P}
    \mathrm{div}_\Gamma P = Hn \quad\text{on}\quad \Gamma.
  \end{align}
\end{lemma}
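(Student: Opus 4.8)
The plan is to derive all three statements by differentiating the pointwise constraints $|n|^2\equiv 1$ and (for the second part) $v\cdot n\equiv 0$ in the tangential directions and combining the outcome with the projection identities $I_3=P+n\otimes n$, $P^T=P^2=P$ together with \eqref{E:TGrM_Surf} and \eqref{E:ConDer_Surf} recalled above.

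\textbf{Symmetry of $W$ and \eqref{E:Form_W}.} First I would note that, by \eqref{E:Nor_Coord}, the constant extension $\bar n=n\circ\pi$ coincides with $\nabla d$ on $N$, so $\nabla\bar n=\nabla^2 d$ is a (symmetric) Hessian matrix on $N$; restricting to $\Gamma$ and using \eqref{E:ConDer_Surf} componentwise gives $\nabla_\Gamma n=\nabla\bar n|_\Gamma=\nabla^2 d|_\Gamma$, hence $\nabla_\Gamma n$, and therefore $W=-\nabla_\Gamma n$, is symmetric. Differentiating $|n|^2\equiv1$ along $\Gamma$ yields $\sum_j(\underline{D}_in_j)n_j=0$ for each $i$, i.e. $(\nabla_\Gamma n)n=0$, so $Wn=0$. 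Next, \eqref{E:TGrM_Surf} gives $\nabla_\Gamma n=P\nabla\bar n$, whence $P(\nabla_\Gamma n)=\nabla_\Gamma n$ and thus $PW=W$; transposing and using the symmetry of $P$ and $W$ gives $WP=(PW)^T=W^T=W$. This proves \eqref{E:Form_W}.

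\textbf{The identities \eqref{E:Grad_W}.} Let $v\in C^1(\Gamma)^3$ be tangential. Differentiating $v\cdot n\equiv0$ along $\Gamma$ gives, componentwise, $(\nabla_\Gamma v)n+(\nabla_\Gamma n)v=0$, so $(\nabla_\Gamma v)n=-(\nabla_\Gamma n)v=Wv$, which is the first identity. For the second, using $P(\nabla_\Gamma v)=\nabla_\Gamma v$ (from \eqref{E:TGrM_Surf}) and $I_3=P+n\otimes n$,
\[
\nabla_\Gamma v=P(\nabla_\Gamma v)(P+n\otimes n)=P(\nabla_\Gamma v)P+\bigl(P(\nabla_\Gamma v)n\bigr)\otimes n=P(\nabla_\Gamma v)P+(Wv)\otimes n,
\]
where in the last step I used $(\nabla_\Gamma v)(n\otimes n)=\bigl[(\nabla_\Gamma v)n\bigr]\otimes n$ together with $(\nabla_\Gamma v)n=Wv$ and $P(Wv)=Wv$ (since $PW=W$).

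\textbf{The identity \eqref{E:Div_P}.} Finally I would compute $\mathrm{div}_\Gamma P$ directly from $P_{ij}=\delta_{ij}-n_in_j$: for each $j$,
\[
[\mathrm{div}_\Gamma P]_j=\sum_{i=1}^3\underline{D}_iP_{ij}=-\sum_{i=1}^3\bigl[(\underline{D}_in_i)n_j+n_i(\underline{D}_in_j)\bigr]=-(\mathrm{div}_\Gamma n)n_j-\bigl[(\nabla_\Gamma n)^Tn\bigr]_j.
\]
The first term equals $Hn_j$ since $\mathrm{div}_\Gamma n=-H$ by \eqref{E:Def_Wein}, and the second vanishes because $(\nabla_\Gamma n)^T=\nabla_\Gamma n$ and $(\nabla_\Gamma n)n=0$; hence $\mathrm{div}_\Gamma P=Hn$. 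The only step that is not pure bookkeeping is the symmetry of $W$, which I reduce to the fact that the unit normal extends to the gradient of the signed distance function; I do not anticipate any genuine obstacle.
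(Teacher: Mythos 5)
Your proposal is correct and follows essentially the same route as the paper: symmetry of $W$ via $\nabla\bar n=\nabla^2 d$, the identities \eqref{E:Form_W} and \eqref{E:Grad_W} by differentiating $|n|^2\equiv1$ and $v\cdot n\equiv0$ and inserting $I_3=P+Q$, and \eqref{E:Div_P} by componentwise differentiation of $P_{ij}=\delta_{ij}-n_in_j$ together with $Wn=0$. The only differences are cosmetic (you spell out $PW=W$ from $P\nabla_\Gamma n=\nabla_\Gamma n$, which the paper leaves implicit).
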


\begin{proof}
  We deduce from \eqref{E:Nor_Coord}, \eqref{E:ConDer_Surf}, and $|n|^2=1$ on $\Gamma$ that
  \begin{align*}
    W = -\nabla\bar{n} = -\nabla^2d, \quad Wn = -(\nabla_\Gamma n)n = -\frac{1}{2}\nabla_\Gamma(|n|^2) = 0 \quad\text{on}\quad \Gamma.
  \end{align*}
  Thus we have $W^T=W$ on $\Gamma$ and \eqref{E:Form_W}.

  If $v\in C^1(\Gamma)^3$ satisfies $v\cdot n=0$ on $\Gamma$, then
  \begin{align*}
    0 = \nabla_\Gamma(v\cdot n) = (\nabla_\Gamma v)n+(\nabla_\Gamma n)v = (\nabla_\Gamma v)n-Wn \quad\text{on}\quad \Gamma,
  \end{align*}
  which shows the first equality of \eqref{E:Grad_W}.
  Also, by $I_3=P+Q$ on $\Gamma$ and \eqref{E:P_TGr},
  \begin{align*}
    \nabla_\Gamma v = (\nabla_\Gamma v)P+(\nabla_\Gamma v)Q = P(\nabla_\Gamma v)P+\{(\nabla_\Gamma v)n\}\otimes n \quad\text{on}\quad \Gamma.
  \end{align*}
  Thus the second equality of \eqref{E:Grad_W} follows from the first one.

  Let us show \eqref{E:Div_P}.
  Denoting the Kronecker delta by $\delta_{ij}$ we have
  \begin{align*}
    [\mathrm{div}_\Gamma P]_j = \sum_{i=1}^3\underline{D}_i(\delta_{ij}-n_in_j) = \sum_{i=1}^3(W_{ii}n_j+W_{ij}n_i) = \mathrm{tr}[W]n_j+[W^Tn]_j
  \end{align*}
  on $\Gamma$ for $j=1,2,3$ and thus $\mathrm{div}_\Gamma P=\mathrm{tr}[W]n+W^Tn$ on $\Gamma$.
  To the right-hand side we apply \eqref{E:Def_Wein} and $W^Tn=Wn=0$ on $\Gamma$ to obtain \eqref{E:Div_P}.
\end{proof}

The Weingarten map appears in the exchange of the tangential derivatives.

\begin{lemma}[{\cite{Miu_NSCTD_01}*{Lemma 3.2}}] \label{L:TD_Exc}
  For $\eta\in C^2(\Gamma)$ we have
  \begin{align} \label{E:TD_Exc}
    \underline{D}_i\underline{D}_j\eta-\underline{D}_j\underline{D}_i\eta = [W\nabla_\Gamma\eta]_in_j-[W\nabla_\Gamma\eta]_jn_i \quad\text{on}\quad \Gamma,\,i,j=1,2,3.
  \end{align}
  Here $[W\nabla_\Gamma\eta]_i$ is the $i$-th component of the vector field $W\nabla_\Gamma\eta$ for $i=1,2,3$.
\end{lemma}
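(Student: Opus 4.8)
\textbf{Proof proposal for Lemma \ref{L:TD_Exc}.}

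The plan is to reduce the commutator of two tangential derivatives to an ordinary Hessian (which is symmetric) plus correction terms involving the normal and the Weingarten map. First I would fix a $C^2$-extension $\tilde\eta$ of $\eta$ to the tubular neighborhood $N$ and recall that by \eqref{E:Def_TGr} we have $\underline{D}_j\eta=\sum_k P_{jk}\partial_k\tilde\eta$ on $\Gamma$; since the left-hand side of \eqref{E:TD_Exc} is independent of the choice of extension, I may work with any convenient $\tilde\eta$, for instance the constant extension $\bar\eta$, for which \eqref{E:ConDer_Surf} gives $\partial_k\bar\eta=\underline{D}_k\eta$ on $\Gamma$. Then $\underline{D}_i\underline{D}_j\eta=\sum_k P_{ik}\partial_k(P_{jl}\partial_l\bar\eta)$ evaluated on $\Gamma$, where the outer $P_{jl}$ must be extended as well (again the constant extension is natural, so that $\partial_k P_{jl}$ on $\Gamma$ equals $\underline{D}_k P_{jl}$).

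The key computation is to expand $\underline{D}_i\underline{D}_j\eta=\sum_{k,l}P_{ik}(\underline{D}_kP_{jl})\underline{D}_l\eta+\sum_{k,l}P_{ik}P_{jl}\partial_k\partial_l\bar\eta$ on $\Gamma$. The second sum is symmetric in $i$ and $j$ because the ordinary Hessian $\partial_k\partial_l\bar\eta$ is symmetric in $k,l$ and the factor $P_{ik}P_{jl}$ gets symmetrized accordingly; hence it cancels in the commutator. Thus $\underline{D}_i\underline{D}_j\eta-\underline{D}_j\underline{D}_i\eta=\sum_{k,l}\bigl(P_{ik}\underline{D}_kP_{jl}-P_{jk}\underline{D}_kP_{il}\bigr)\underline{D}_l\eta$ on $\Gamma$. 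Now I would compute $\underline{D}_kP_{jl}=-\underline{D}_k(n_jn_l)=-(\underline{D}_kn_j)n_l-n_j(\underline{D}_kn_l)$; using $W=-\nabla_\Gamma n$, i.e. $W_{kj}=-\underline{D}_kn_j$ from \eqref{E:Def_Wein}, and the symmetry of $W$ together with $Wn=0$, $PW=W$ from \eqref{E:Form_W}, the contraction $\sum_k P_{ik}\underline{D}_kP_{jl}$ collapses: the term with $n_l$ survives (it produces $\sum_k P_{ik}W_{kj}n_l=W_{ij}n_l$ since $PW=W$), while the term with $n_j$ gets killed after contracting with $\underline{D}_l\eta$ is postponed, so I keep $\sum_k P_{ik}\underline{D}_kP_{jl}=W_{ij}n_l+n_j\sum_k P_{ik}W_{kl}=W_{ij}n_l+n_jW_{il}$. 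Substituting into the commutator and using $n\cdot\nabla_\Gamma\eta=0$ from \eqref{E:P_TGr} to drop the $W_{ij}n_l\underline{D}_l\eta$ and $W_{ji}n_l\underline{D}_l\eta$ contributions, I am left with $\sum_l\bigl(n_jW_{il}-n_iW_{jl}\bigr)\underline{D}_l\eta=n_j[W\nabla_\Gamma\eta]_i-n_i[W\nabla_\Gamma\eta]_j$, which is exactly \eqref{E:TD_Exc}.

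The main obstacle is bookkeeping: one must extend not only $\eta$ but also the projection $P$ (equivalently $n$) off the surface in order to differentiate the composed expression, and then carefully verify that every occurrence of a tangential derivative of $P$ is rewritten correctly in terms of $W$, discarding only those terms that genuinely vanish on $\Gamma$ (either because they carry a factor $Wn=0$ or because they contract against $n\cdot\nabla_\Gamma\eta=0$). Once the identities \eqref{E:P_TGr}, \eqref{E:Form_W}, and the definition $W=-\nabla_\Gamma n$ are used systematically, the symmetric Hessian part cancels cleanly and the antisymmetric remainder is precisely the claimed curvature term; no further estimates are needed since this is a pointwise algebraic identity on the compact surface $\Gamma$.
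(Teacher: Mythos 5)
Your proof is correct and follows the same standard route as the cited proof in Part I: expand $\underline{D}_i\underline{D}_j\eta$ via the constant extensions, observe that the symmetric Euclidean Hessian term cancels in the commutator, and convert the tangential derivatives of $P$ into the Weingarten map using $W=-\nabla_\Gamma n$, $PW=W$, and $n\cdot\nabla_\Gamma\eta=0$. (One sentence in your middle paragraph momentarily swaps which of the two terms $W_{ij}n_l$ and $n_jW_{il}$ is annihilated by the contraction with $\underline{D}_l\eta$, but the displayed formulas and the final cancellation are correct.)
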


By \eqref{E:Form_W} we see that $W$ has the eigenvalue zero associated with the eigenvector $n$.
Its other eigenvalues are the principal curvatures $\kappa_1$ and $\kappa_2$ and thus $H=\kappa_1+\kappa_2$ on $\Gamma$ (see e.g. \cites{GiTr01,Lee18}).
From this fact, \eqref{E:Curv_Bound}, and \eqref{E:Form_W} the next lemma follows.

\begin{lemma}[{\cite{Miu_NSCTD_01}*{Lemma 3.3}}] \label{L:Wein}
  The matrix
  \begin{align*}
    I_3-d(x)\overline{W}(x) = I_3-rW(y)
  \end{align*}
  is invertible for all $x=y+rn(y)\in N$ with $y\in\Gamma$ and $r\in(-\delta,\delta)$.
  Moreover,
  \begin{align} \label{E:WReso_P}
    \{I_3-rW(y)\}^{-1}P(y) = P(y)\{I_3-rW(y)\}^{-1}
  \end{align}
  for all $y\in\Gamma$ and $r\in(-\delta,\delta)$ and there exists a constant $c>0$ such that
  \begin{gather}
    c^{-1}|a| \leq \bigl|\{I_3-rW(y)\}^ka\bigr| \leq c|a|, \quad k=\pm1, \label{E:Wein_Bound} \\
    \bigl|I_3-\{I_3-rW(y)\}^{-1}\bigr| \leq c|r| \label{E:Wein_Diff}
  \end{gather}
  for all $y\in\Gamma$, $r\in(-\delta,\delta)$, and $a\in\mathbb{R}^3$.
\end{lemma}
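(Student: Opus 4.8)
The plan is to reduce every assertion to the spectral decomposition of the symmetric matrix $W(y)$. First I would recall from Lemma \ref{L:Form_W} that $W(y)$ is symmetric and satisfies $W(y)n(y)=0$, and from the discussion preceding the statement that its two remaining eigenvalues are the principal curvatures $\kappa_1(y)$ and $\kappa_2(y)$. Hence for $x=y+rn(y)\in N$, using $d(x)=r$ and $\overline{W}(x)=W(y)$, the symmetric matrix $I_3-rW(y)$ has eigenvalue $1$ on the line $\mathbb{R}n(y)$ and eigenvalues $1-r\kappa_1(y)$, $1-r\kappa_2(y)$ on a suitable orthonormal basis of the orthogonal complement $n(y)^\perp$. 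By \eqref{E:Curv_Bound} (after shrinking $\delta$ if necessary) all three of these numbers lie in $[c^{-1},c]$ with $c\ge1$ independent of $y\in\Gamma$ and $r\in(-\delta,\delta)$; in particular they are positive, so $I_3-rW(y)$ is symmetric positive definite, hence invertible, and $\{I_3-rW(y)\}^{-1}$ is again symmetric positive definite with eigenvalues $1$, $\{1-r\kappa_1(y)\}^{-1}$, $\{1-r\kappa_2(y)\}^{-1}$, all lying in $[c^{-1},c]$.

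Given this, the two-sided bound \eqref{E:Wein_Bound} for $k=\pm1$ is immediate: expanding $a\in\mathbb{R}^3$ in an orthonormal eigenbasis of $\{I_3-rW(y)\}^k$ and using that its eigenvalues lie in $[c^{-1},c]$ gives $c^{-1}|a|\le|\{I_3-rW(y)\}^ka|\le c|a|$. For the commutation relation \eqref{E:WReso_P} I would argue algebraically rather than with eigenvectors: by \eqref{E:Form_W} we have $W(y)P(y)=P(y)W(y)=W(y)$, so $\{I_3-rW(y)\}P(y)=P(y)-rW(y)=P(y)\{I_3-rW(y)\}$, and multiplying this identity on the left and on the right by $\{I_3-rW(y)\}^{-1}$ yields \eqref{E:WReso_P}. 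For \eqref{E:Wein_Diff} I would use the resolvent-type identity $I_3-\{I_3-rW(y)\}^{-1}=-r\{I_3-rW(y)\}^{-1}W(y)$ and bound its right-hand side by the operator-norm estimate for $\{I_3-rW(y)\}^{-1}$ already obtained together with $\sup_\Gamma|W|<\infty$, which holds since $\Gamma$ is of class $C^\ell$ with $\ell\ge2$ (recall \eqref{E:Def_Wein}).

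I do not expect a genuine obstacle here: the lemma is essentially a matrix-theoretic repackaging of \eqref{E:Curv_Bound}. The one point deserving care is that every constant must be chosen uniformly in $y\in\Gamma$ and $r\in(-\delta,\delta)$, which is guaranteed by \eqref{E:Curv_Bound} and the compactness of $\Gamma$.
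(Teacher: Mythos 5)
Your proof is correct and follows exactly the route the paper indicates (the remark before the lemma says it follows from the spectral facts about $W$ — eigenvalue $0$ with eigenvector $n$, other eigenvalues $\kappa_1,\kappa_2$ — together with \eqref{E:Curv_Bound} and \eqref{E:Form_W}); the detailed proof is deferred to Part I of the series, but your spectral argument for invertibility and \eqref{E:Wein_Bound}, the algebraic commutation via $WP=PW=W$ for \eqref{E:WReso_P}, and the resolvent identity for \eqref{E:Wein_Diff} are precisely the intended steps. No gaps.
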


We also get the following relations by \eqref{E:Nor_Coord}, \eqref{E:P_TGr}, \eqref{E:ConDer_Surf}, and \eqref{E:WReso_P}--\eqref{E:Wein_Diff}.

\begin{lemma}[{\cite{Miu_NSCTD_01}*{Lemma 3.4}}] \label{L:Pi_Der}
  Let $\eta\in C^1(\Gamma)$ and $\bar{\eta}=\eta\circ\pi$.
  Then
  \begin{align} \label{E:ConDer_Dom}
    \nabla\bar{\eta}(x) = \left\{I_3-d(x)\overline{W}(x)\right\}^{-1}\overline{\nabla_\Gamma\eta}(x), \quad x\in N
  \end{align}
  and there exists a constant $c>0$ independent of $\eta$ such that
  \begin{gather}
    c^{-1}\left|\overline{\nabla_\Gamma\eta}(x)\right| \leq |\nabla\bar{\eta}(x)| \leq c\left|\overline{\nabla_\Gamma\eta}(x)\right|, \label{E:ConDer_Bound} \\
    \left|\nabla\bar{\eta}(x)-\overline{\nabla_\Gamma\eta}(x)\right| \leq c\left|d(x)\overline{\nabla_\Gamma\eta}(x)\right| \label{E:ConDer_Diff}
  \end{gather}
  for all $x\in N$.
\end{lemma}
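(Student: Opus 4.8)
The plan is to prove the identity \eqref{E:ConDer_Dom} by differentiating the normal coordinate representation \eqref{E:Nor_Coord}, and then to read off the two estimates \eqref{E:ConDer_Bound} and \eqref{E:ConDer_Diff} directly from the resolvent bounds \eqref{E:Wein_Bound}--\eqref{E:Wein_Diff} of Lemma \ref{L:Wein}. The geometric input is entirely concentrated in computing the Jacobian matrix $\nabla\pi$ of the normal projection.

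First I would compute $\nabla\pi$. Fix a $C^1$-extension $\tilde\eta$ of $\eta$ to $N$ and recall that $\bar\eta=\eta\circ\pi$ and $\bar n=n\circ\pi$. With the paper's convention $(\nabla v)_{ij}=\partial_iv_j$, the chain rule gives $\nabla\bar\eta=(\nabla\pi)\,\overline{\nabla\tilde\eta}$ (where overlines denote evaluation at $\pi(\cdot)$) and, for the vector field $\bar n$, $\nabla\bar n=(\nabla\pi)\,\overline{\nabla\tilde n}$. Taking the gradient of the identity $x=\pi(x)+d(x)\bar n(x)$ from \eqref{E:Nor_Coord} and using $\nabla d=\bar n$ yields $I_3=\nabla\pi+\overline Q+d\,(\nabla\pi)\,\overline{\nabla\tilde n}$. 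Since $\pi$ maps $N$ into $\Gamma$, each row $\partial_i\pi$ of $\nabla\pi$ is tangent to $\Gamma$ at $\pi(x)$, so $\overline P\,\partial_i\pi=\partial_i\pi$ and hence $(\nabla\pi)\overline P=\nabla\pi$; therefore $(\nabla\pi)\,\overline{\nabla\tilde n}=(\nabla\pi)\,\overline{P\nabla\tilde n}=(\nabla\pi)\,\overline{\nabla_\Gamma n}=-(\nabla\pi)\,\overline W$ by \eqref{E:TGrM_Surf} and the definition \eqref{E:Def_Wein} of $W$. This reduces the identity to $\overline P=I_3-\overline Q=(\nabla\pi)\{I_3-d\overline W\}$, and since $I_3-d\overline W$ is invertible by Lemma \ref{L:Wein} we get $\nabla\pi=\overline P\{I_3-d\overline W\}^{-1}=\{I_3-d\overline W\}^{-1}\overline P$, the last equality by \eqref{E:WReso_P}.

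Next, substituting this into $\nabla\bar\eta=(\nabla\pi)\,\overline{\nabla\tilde\eta}$ and using \eqref{E:WReso_P} again gives $\nabla\bar\eta=\{I_3-d\overline W\}^{-1}\overline P\,\overline{\nabla\tilde\eta}=\{I_3-d\overline W\}^{-1}\,\overline{P\nabla\tilde\eta}=\{I_3-d\overline W\}^{-1}\overline{\nabla_\Gamma\eta}$ by the definition \eqref{E:Def_TGr} of $\nabla_\Gamma\eta$; this is \eqref{E:ConDer_Dom} and, incidentally, exhibits the right-hand side as independent of the choice of extension $\tilde\eta$. The estimate \eqref{E:ConDer_Bound} is then immediate from \eqref{E:Wein_Bound} with $k=-1$, applied to the vector $a=\overline{\nabla_\Gamma\eta}(x)$ at the point $y=\pi(x)$ with $r=d(x)\in(-\delta,\delta)$. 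For \eqref{E:ConDer_Diff} I would write $\nabla\bar\eta(x)-\overline{\nabla_\Gamma\eta}(x)=\bigl(\{I_3-d(x)\overline W(x)\}^{-1}-I_3\bigr)\overline{\nabla_\Gamma\eta}(x)$ and apply \eqref{E:Wein_Diff}; the resulting constant depends only on the surface quantities, hence is independent of $\eta$.

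I expect the only delicate point to be the computation of $\nabla\pi$: one must keep careful track of the matrix/transpose convention and repeatedly use that both the domain and the range of the differential of $\pi$ lie in the tangent bundle of $\Gamma$, which is what lets the projection $\overline P$ be inserted and the factor $\overline W$ produced. Everything downstream of \eqref{E:ConDer_Dom} is a routine application of Lemma \ref{L:Wein}.
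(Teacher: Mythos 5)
Your proof is correct and follows exactly the route the paper indicates (it defers the proof to Part I but lists \eqref{E:Nor_Coord}, \eqref{E:P_TGr}, \eqref{E:ConDer_Surf}, and \eqref{E:WReso_P}--\eqref{E:Wein_Diff} as the ingredients): differentiate the normal-coordinate identity to get $\nabla\pi=\{I_3-d\overline{W}\}^{-1}\overline{P}$, conclude \eqref{E:ConDer_Dom} by the chain rule, and read off the two estimates from Lemma \ref{L:Wein}. The computation of $\nabla\pi$, including the insertion of $\overline{P}$ via tangency of the rows and the use of \eqref{E:WReso_P}, is handled correctly.
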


Let us define the Sobolev spaces on $\Gamma$.
For a $C^1$ tangential vector field $X$ on $\Gamma$, by a standard localization argument and a local expression of $\mathrm{div}_\Gamma X$ (see Lemma \ref{L:DivG_DG}) we observe that the surface divergence theorem
\begin{align} \label{E:SD_Thm}
  \int_\Gamma\mathrm{div}_\Gamma X\,d\mathcal{H}^2 = 0
\end{align}
holds (here $\mathcal{H}^2$ is the two-dimensional Hausdorff measure).
By this formula we have the following integration by parts formulas.

\begin{lemma}[{\cite{Miu_NSCTD_01}*{Lemma 3.5}}] \label{L:IbP_TD}
  For $v\in C^1(\Gamma)^3$ we have
  \begin{align} \label{E:IbP_DivG}
    \int_\Gamma\mathrm{div}_\Gamma v\,d\mathcal{H}^2 = -\int_\Gamma(v\cdot n)H\,d\mathcal{H}^2.
  \end{align}
  Also, for $\eta,\xi\in C^1(\Gamma)$ and $i=1,2,3$,
  \begin{align} \label{E:IbP_TD}
    \int_\Gamma(\eta\underline{D}_i\xi+\xi\underline{D}_i\eta)\,d\mathcal{H}^2 = -\int_\Gamma \eta\xi Hn_i\,d\mathcal{H}^2.
  \end{align}
\end{lemma}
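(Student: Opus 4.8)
The plan is to deduce both identities from the surface divergence theorem \eqref{E:SD_Thm}, which is available for $C^1$ tangential vector fields, by splitting a general $v\in C^1(\Gamma)^3$ into its tangential and normal parts $v=Pv+(v\cdot n)n$. Since $\Gamma$ is of class $C^\ell$ with $\ell\geq2$ we have $P\in C^{\ell-1}(\Gamma)^{3\times3}$, so $Pv$ is a $C^1$ tangential vector field and $\int_\Gamma\mathrm{div}_\Gamma(Pv)\,d\mathcal{H}^2=0$ by \eqref{E:SD_Thm}; the remaining task is to identify the contribution of the normal part.

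First I would compute $\mathrm{div}_\Gamma[(v\cdot n)n]$ directly from the definition \eqref{E:Def_DivG}. The product rule for the tangential derivative $\underline{D}_i$ is immediate from \eqref{E:Def_TGr}, and gives
\begin{align*}
  \mathrm{div}_\Gamma[(v\cdot n)n]
  &= \sum_{i=1}^3\underline{D}_i\bigl[(v\cdot n)n_i\bigr] \\
  &= \sum_{i=1}^3\bigl(\underline{D}_i(v\cdot n)\bigr)n_i+(v\cdot n)\sum_{i=1}^3\underline{D}_in_i \\
  &= n\cdot\nabla_\Gamma(v\cdot n)-(v\cdot n)H \quad\text{on}\quad\Gamma,
\end{align*}
where the first term on the last line vanishes by \eqref{E:P_TGr} and the second uses $\mathrm{div}_\Gamma n=-H$ from \eqref{E:Def_Wein}. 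Hence $\mathrm{div}_\Gamma v=\mathrm{div}_\Gamma(Pv)-(v\cdot n)H$ on $\Gamma$, and integrating over $\Gamma$ while discarding $\int_\Gamma\mathrm{div}_\Gamma(Pv)\,d\mathcal{H}^2=0$ yields \eqref{E:IbP_DivG}.

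For \eqref{E:IbP_TD} I would simply apply \eqref{E:IbP_DivG} to the vector field $v=\eta\xi e_i$, where $e_i$ is the $i$-th standard basis vector of $\mathbb{R}^3$: then $\mathrm{div}_\Gamma v=\underline{D}_i(\eta\xi)=\eta\underline{D}_i\xi+\xi\underline{D}_i\eta$ by the product rule once more, while $v\cdot n=\eta\xi n_i$, so \eqref{E:IbP_DivG} reads precisely as the asserted identity. There is no genuine obstacle in this argument; the only points needing attention are checking that $Pv$ and $\eta\xi e_i$ are regular enough to feed into \eqref{E:SD_Thm}, and observing that the entire effect of the normal component of $v$ collapses into the mean-curvature term $-(v\cdot n)H$ — the footprint of the curvature of $\Gamma$, consistent with (and alternatively extractable from) the identity $\mathrm{div}_\Gamma P=Hn$ of Lemma \ref{L:Form_W}.
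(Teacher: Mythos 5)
Your argument is correct and is the standard one: split $v=Pv+(v\cdot n)n$, kill the tangential part with the surface divergence theorem \eqref{E:SD_Thm}, and observe that $\mathrm{div}_\Gamma[(v\cdot n)n]=n\cdot\nabla_\Gamma(v\cdot n)+(v\cdot n)\,\mathrm{div}_\Gamma n=-(v\cdot n)H$ by \eqref{E:P_TGr} and \eqref{E:Def_Wein}, after which \eqref{E:IbP_TD} is just \eqref{E:IbP_DivG} applied to $\eta\xi e_i$. This paper omits the proof (it is quoted from Part I of the series), but your route is exactly the expected one and all regularity checks ($P,n\in C^{\ell-1}$ with $\ell\geq2$, so $Pv$ and $(v\cdot n)n$ are $C^1$) go through.
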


Based on \eqref{E:IbP_TD}, for $p\in[1,\infty]$ and $i=1,2,3$ we say that $\eta\in L^p(\Gamma)$ has the $i$-th weak tangential derivative if there exists $\eta_i\in L^p(\Gamma)$ such that
\begin{align} \label{E:Def_WTD}
  \int_\Gamma \eta_i\xi\,d\mathcal{H}^2 = -\int_\Gamma \eta(\underline{D}_i\xi+\xi Hn_i)\,d\mathcal{H}^2
\end{align}
for all $\xi\in C^1(\Gamma)$.
We write $\underline{D}_i\eta$ for this $\eta_i$ and define the Sobolev space
\begin{align*}
  W^{1,p}(\Gamma) &:= \{\eta \in L^p(\Gamma) \mid \text{$\underline{D}_i\eta\in L^p(\Gamma)$ for all $i=1,2,3$}\}, \\
  \|\eta\|_{W^{1,p}(\Gamma)} &:=
  \begin{cases}
    \left(\|\eta\|_{L^p(\Gamma)}^p+\|\nabla_\Gamma\eta\|_{L^p(\Gamma)}^p\right)^{1/p} &\text{if}\quad p\in[1,\infty), \\
    \|\eta\|_{L^\infty(\Gamma)}+\|\nabla_\Gamma\eta\|_{L^\infty(\Gamma)} &\text{if}\quad p=\infty.
  \end{cases}
\end{align*}
In the above, $\nabla_\Gamma\eta:=(\underline{D}_1\eta,\underline{D}_2\eta,\underline{D}_3\eta)^T$ is the weak tangential gradient of $\eta\in W^{1,p}(\Gamma)$, which is consistent with \eqref{E:Def_TGr} for a $C^1$ function on $\Gamma$.
Let us give basic properties of functions in $W^{1,p}(\Gamma)$.

\begin{lemma} \label{L:WTGr_Van}
  Let $p\in[1,\infty]$ and $\eta\in W^{1,p}(\Gamma)$.
  Then
  \begin{align*}
    \nabla_\Gamma \eta = 0 \quad\text{in}\quad L^p(\Gamma)^3, \quad\text{i.e}\quad \underline{D}_i\eta = 0 \quad\text{in}\quad L^p(\Gamma), \, i=1,2,3
  \end{align*}
  if and only if $\eta$ is constant on $\Gamma$.
\end{lemma}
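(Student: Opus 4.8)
The statement is an if-and-only-if; the "only if" direction (constant functions have vanishing weak tangential gradient) is immediate from the defining identity \eqref{E:Def_WTD} together with the integration by parts formula \eqref{E:IbP_TD}: if $\eta\equiv a$ is constant, then for every $\xi\in C^1(\Gamma)$ we have $\int_\Gamma a(\underline{D}_i\xi+\xi Hn_i)\,d\mathcal{H}^2=0$ by \eqref{E:IbP_TD}, so $\eta_i=0$ satisfies \eqref{E:Def_WTD}. The substance is the converse.

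For the "if" direction, I would first reduce to a statement about a smooth mollification. Let $\eta\in W^{1,p}(\Gamma)$ with $\nabla_\Gamma\eta=0$ in $L^p(\Gamma)^3$. The plan is to show $\eta$ is (a.e. equal to) a $C^1$ function and then that a $C^1$ function with $\nabla_\Gamma\eta\equiv0$ is constant. The latter is the easy part: by \eqref{E:P_TGr} the tangential gradient is the full directional derivative of $\eta$ along any curve in $\Gamma$, so $\eta$ is constant on each connected component, and $\Gamma$ is connected by hypothesis. For the regularity-and-reduction step, I would use a local chart (a finite atlas $\{U_k,\mu_k\}$ of $\Gamma$ with a subordinate partition of unity) and the local coordinate expressions for $\mathrm{div}_\Gamma$ and $\nabla_\Gamma$ (cf.\ Lemmas \ref{L:TGr_DG}, \ref{L:DivG_DG} referenced in the excerpt): in each chart the defining relation \eqref{E:Def_WTD} says precisely that, in the pulled-back coordinates, the Euclidean weak partial derivatives of $\eta\circ\mu_k$ against test functions of the form $\sqrt{\det g}\,(\text{smooth, compactly supported})$ vanish — equivalently the classical weak gradient of $\eta\circ\mu_k$ on the open parameter set is zero. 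By the standard Euclidean fact (a Sobolev function with zero weak gradient on a connected open set is a.e.\ constant), $\eta$ is a.e.\ locally constant, hence locally constant in the sense that it agrees a.e.\ with a locally constant function on $\Gamma$; connectedness of $\Gamma$ then forces $\eta$ to be a.e.\ equal to a single constant.

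An alternative, slightly slicker route avoids atlases: mollify $\eta$ intrinsically by testing \eqref{E:Def_WTD} against a smooth approximate identity and use that the full weak tangential gradient of $\eta$ vanishes to conclude the mollified functions are all equal to the same constant, then pass to the limit — but setting up an intrinsic mollifier on a surface is itself a chart argument, so I would go with the chart version above for concreteness. One may also phrase the whole argument via the weighted surface divergence theorem \eqref{E:SD_Thm} / \eqref{E:IbP_DivG}: testing against $\xi=\underline{D}_i\eta$ is not available directly (no product rule for weak derivatives is claimed yet), so I would not pursue that.

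The main obstacle is purely bookkeeping: correctly matching the "extra" term $\xi Hn_i$ in the weak definition \eqref{E:Def_WTD} with the corresponding term in the local coordinate integration by parts so that the factor $\sqrt{\det g}$ in the surface measure is absorbed and one is left with a genuinely Euclidean statement with no lower-order term. This is exactly the point where one uses that \eqref{E:Def_WTD} was built from \eqref{E:IbP_TD}, i.e.\ that $\underline{D}_i\eta$ defined weakly is the intrinsic (Levi-Civita) gradient component and not some chart-dependent object; Lemma \ref{L:TGr_DG} (the agreement of $\nabla_\Gamma$ with the Riemannian gradient) is what makes this go through cleanly. Everything else is standard.
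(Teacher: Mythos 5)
Your proposal is correct and follows essentially the same route as the paper's proof: localize with a chart (and cutoff), use the local coordinate expression of the tangential gradient (Lemmas \ref{L:TGr_DG}, \ref{L:Lp_Loc}) to reduce to the Euclidean fact that a Sobolev function with vanishing weak gradient on a connected open set is constant, and then invoke connectedness of $\Gamma$. The only cosmetic difference is that you frame the reduction as first upgrading $\eta$ to a $C^1$ function, which is unnecessary — the paper goes directly to local constancy of the pulled-back function — but the substance is identical.
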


\begin{lemma} \label{L:RK_Surf}
  For $p\in[1,\infty]$ the embedding $W^{1,p}(\Gamma)\hookrightarrow L^p(\Gamma)$ is compact.
\end{lemma}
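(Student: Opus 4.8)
The plan is to reduce the statement to the classical Rellich--Kondrachov theorem on bounded subsets of $\mathbb{R}^2$ (together with the Arzel\`a--Ascoli theorem in the endpoint case $p=\infty$) by a standard localization argument. Since $\Gamma$ is compact and of class $C^\ell$ with $\ell\geq2$, I would first cover $\Gamma$ by finitely many relatively open sets $U_1,\dots,U_N$, each of which is the image $U_k=\mu_k(B)$ of a fixed open ball $B\subset\mathbb{R}^2$ under a $C^\ell$-parametrization $\mu_k\colon B\to U_k$, chosen so that the density $\sqrt{\det(D\mu_k^TD\mu_k)}$ relating $d\mathcal{H}^2$ to the Lebesgue measure $dx'$ on $B$ is bounded from above and below; this is possible by the $C^\ell$-regularity and compactness of $\Gamma$. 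Then I would fix a $C^1$ partition of unity $\{\chi_k\}_{k=1}^N$ on $\Gamma$ subordinate to $\{U_k\}_{k=1}^N$, so that $\chi_k\in C^1(\Gamma)$, $\mathrm{supp}\,\chi_k\Subset U_k$, and $\sum_{k=1}^N\chi_k\equiv1$ on $\Gamma$.

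\textbf{Localization and pullback.} For $\eta\in W^{1,p}(\Gamma)$ write $\eta=\sum_{k=1}^N\chi_k\eta$. From the definition \eqref{E:Def_WTD} of the weak tangential derivative one checks the Leibniz rule $\underline{D}_i(\chi_k\eta)=\chi_k\underline{D}_i\eta+\eta\,\underline{D}_i\chi_k$ in $L^p(\Gamma)$, so $\chi_k\eta\in W^{1,p}(\Gamma)$ with $\mathrm{supp}(\chi_k\eta)\Subset U_k$ and $\|\chi_k\eta\|_{W^{1,p}(\Gamma)}\leq c\|\eta\|_{W^{1,p}(\Gamma)}$ with $c$ independent of $\eta$. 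Using the local expression of the tangential gradient in Lemma \ref{L:TGr_DG} (combined, when $p<\infty$, with a standard approximation of $W^{1,p}(\Gamma)$-functions by $C^1(\Gamma)$-functions, or equivalently a direct change-of-variables computation against test functions in $C_c^\infty(B)$) together with the two-sided bound on the density of $\mu_k$, the pullback $w_k:=(\chi_k\eta)\circ\mu_k$ belongs to $W^{1,p}(B)$, has support compactly contained in $B$, and satisfies
\begin{align*}
  \|w_k\|_{W^{1,p}(B)}\leq c\|\chi_k\eta\|_{W^{1,p}(\Gamma)}, \quad \|\chi_k\eta\|_{L^p(\Gamma)}\leq c\|w_k\|_{L^p(B)}
\end{align*}
with $c$ independent of $\eta$.

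\textbf{Compactness.} Let $\{\eta_m\}_{m=1}^\infty$ be bounded in $W^{1,p}(\Gamma)$. For each $k$ the pullbacks $w_{k,m}:=(\chi_k\eta_m)\circ\mu_k$ form a bounded sequence in $W^{1,p}(B)$ whose supports lie in a fixed compact subset of $B$. When $p\in[1,\infty)$ the classical Rellich--Kondrachov theorem yields a subsequence converging strongly in $L^p(B)$; when $p=\infty$ the $w_{k,m}$ are uniformly bounded and uniformly Lipschitz on $B$, so the Arzel\`a--Ascoli theorem yields a uniformly, hence $L^\infty$-, convergent subsequence. Applying this successively for $k=1,\dots,N$ and passing to a diagonal subsequence, I obtain a single subsequence (not relabeled) along which $w_{k,m}$ converges in $L^p(B)$ for every $k$. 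By the second inequality above, $\chi_k\eta_m$ then converges in $L^p(\Gamma)$ for every $k$, and therefore $\eta_m=\sum_{k=1}^N\chi_k\eta_m$ converges in $L^p(\Gamma)$. Thus every bounded sequence in $W^{1,p}(\Gamma)$ has an $L^p(\Gamma)$-convergent subsequence, which is the claimed compactness.

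\textbf{Main obstacle.} The routine part is the covering and patching; the one point requiring care is the precise correspondence, under the parametrizations $\mu_k$, between the weak tangential derivative structure on $\Gamma$ defined through \eqref{E:Def_WTD} (with its mean-curvature correction term $\xi Hn_i$) and the ordinary weak-derivative structure on $B$. This is where Lemma \ref{L:TGr_DG} and, for $p<\infty$, the density of smooth functions in $W^{1,p}(\Gamma)$ (or a careful direct change of variables in the defining identity) are used, and where the endpoint $p=\infty$ must be handled separately via Arzel\`a--Ascoli rather than Rellich--Kondrachov.
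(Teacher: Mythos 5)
Your proposal is correct and follows essentially the same route as the paper: localization by a finite partition of unity subordinate to a cover by local parametrizations, pullback to compactly supported functions in $W^{1,p}$ of planar domains (the paper's Lemma \ref{L:Lp_Loc} supplies exactly the norm equivalences you flag as the key technical point), the classical compact embedding there, a diagonal subsequence, and patching back. The only cosmetic difference is that you single out $p=\infty$ for Arzel\`a--Ascoli, whereas the paper invokes the compactness of $W_0^{1,p}(U_j)\hookrightarrow L^p(U_j)$ uniformly for all $p\in[1,\infty]$.
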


We also have Poincar\'{e}'s inequality on the closed surface $\Gamma$.

\begin{lemma} \label{L:Poin_Surf_Lp}
  Let $p\in[1,\infty]$.
  There exists a constant $c>0$ such that
  \begin{align} \label{E:Poin_Surf_Lp}
    \|\eta\|_{L^p(\Gamma)} \leq c\|\nabla_\Gamma\eta\|_{L^p(\Gamma)}
  \end{align}
  for all $\eta\in W^{1,p}(\Gamma)$ satisfying $\int_\Gamma\eta\,d\mathcal{H}^2=0$.
\end{lemma}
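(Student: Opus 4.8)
The plan is to argue by contradiction using the compactness of the embedding $W^{1,p}(\Gamma)\hookrightarrow L^p(\Gamma)$ from Lemma \ref{L:RK_Surf}, exactly as one proves Poincaré's inequality on a bounded domain. Suppose \eqref{E:Poin_Surf_Lp} fails. Then for every $k\in\mathbb{N}$ there is $\eta_k\in W^{1,p}(\Gamma)$ with $\int_\Gamma\eta_k\,d\mathcal{H}^2=0$ and $\|\eta_k\|_{L^p(\Gamma)}>k\|\nabla_\Gamma\eta_k\|_{L^p(\Gamma)}$. Normalizing, set $\zeta_k:=\eta_k/\|\eta_k\|_{L^p(\Gamma)}$, so that $\|\zeta_k\|_{L^p(\Gamma)}=1$, $\int_\Gamma\zeta_k\,d\mathcal{H}^2=0$, and $\|\nabla_\Gamma\zeta_k\|_{L^p(\Gamma)}<1/k\to0$. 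In particular $\{\zeta_k\}$ is bounded in $W^{1,p}(\Gamma)$.

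By Lemma \ref{L:RK_Surf} the embedding $W^{1,p}(\Gamma)\hookrightarrow L^p(\Gamma)$ is compact, so after passing to a subsequence (not relabeled) there is $\zeta\in L^p(\Gamma)$ with $\zeta_k\to\zeta$ strongly in $L^p(\Gamma)$. Since $\nabla_\Gamma\zeta_k\to0$ in $L^p(\Gamma)^3$, the sequence $\{\zeta_k\}$ is Cauchy in $W^{1,p}(\Gamma)$; indeed $\|\zeta_j-\zeta_k\|_{W^{1,p}(\Gamma)}\le\|\zeta_j-\zeta_k\|_{L^p(\Gamma)}+\|\nabla_\Gamma\zeta_j\|_{L^p(\Gamma)}+\|\nabla_\Gamma\zeta_k\|_{L^p(\Gamma)}\to0$ (with the obvious analogue for $p=\infty$), and completeness of $W^{1,p}(\Gamma)$ gives $\zeta\in W^{1,p}(\Gamma)$ with $\zeta_k\to\zeta$ in $W^{1,p}(\Gamma)$; consequently $\nabla_\Gamma\zeta=\lim_k\nabla_\Gamma\zeta_k=0$ in $L^p(\Gamma)^3$. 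Alternatively, and just as cleanly, one checks directly from the definition \eqref{E:Def_WTD} of the weak tangential derivative that $\nabla_\Gamma\zeta=0$: for each $\xi\in C^1(\Gamma)$ and $i$ one passes to the limit in $\int_\Gamma\zeta_k(\underline{D}_i\xi+\xi Hn_i)\,d\mathcal{H}^2=-\int_\Gamma(\underline{D}_i\zeta_k)\xi\,d\mathcal{H}^2$, using $\zeta_k\to\zeta$ in $L^p(\Gamma)$ on the left and $\underline{D}_i\zeta_k\to0$ in $L^p(\Gamma)$ on the right, to obtain $\int_\Gamma\zeta(\underline{D}_i\xi+\xi Hn_i)\,d\mathcal{H}^2=0$, i.e. $\underline{D}_i\zeta=0$ in $L^p(\Gamma)$.

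Now apply Lemma \ref{L:WTGr_Van}: since $\nabla_\Gamma\zeta=0$ in $L^p(\Gamma)^3$ and $\Gamma$ is connected, $\zeta$ is constant on $\Gamma$. The zero-mean condition passes to the limit, $\int_\Gamma\zeta\,d\mathcal{H}^2=\lim_k\int_\Gamma\zeta_k\,d\mathcal{H}^2=0$ (using that $\Gamma$ is compact so $\mathcal{H}^2(\Gamma)<\infty$ and $L^p(\Gamma)$-convergence implies $L^1(\Gamma)$-convergence), so the constant $\zeta$ must be zero. But $\|\zeta\|_{L^p(\Gamma)}=\lim_k\|\zeta_k\|_{L^p(\Gamma)}=1$, a contradiction. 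Hence \eqref{E:Poin_Surf_Lp} holds for some $c>0$. The only genuine inputs are Lemmas \ref{L:RK_Surf} and \ref{L:WTGr_Van} together with connectedness and compactness of $\Gamma$; there is no real obstacle beyond assembling these, the mildly delicate point being to handle the $p=\infty$ case of the Cauchy/limiting argument, which the direct weak-derivative computation sidesteps entirely.
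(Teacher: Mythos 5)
Your proposal is correct and follows essentially the same route as the paper's proof: a contradiction argument using the compact embedding of Lemma \ref{L:RK_Surf}, passage to the limit in the weak-derivative identity \eqref{E:Def_WTD} to get $\nabla_\Gamma\zeta=0$, and Lemma \ref{L:WTGr_Van} to conclude the limit is a constant, which the zero-mean condition forces to vanish. The direct weak-derivative computation you describe as the alternative is exactly the paper's argument, and it indeed handles all $p\in[1,\infty]$ uniformly.
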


Lemmas \ref{L:WTGr_Van} and \ref{L:RK_Surf} are proved by a standard localization argument.
Also, the proof of Lemma \ref{L:Poin_Surf_Lp} is the same as in the case of a flat domain.
In Appendix \ref{S:Ap_Aux} we give the proofs of Lemmas \ref{L:WTGr_Van}--\ref{L:Poin_Surf_Lp} for the completeness.

For $p\in[1,\infty]$ we define the second order Sobolev space
\begin{align*}
  W^{2,p}(\Gamma) &:= \{\eta \in W^{1,p}(\Gamma) \mid \text{$\underline{D}_i\underline{D}_j\eta\in L^p(\Gamma)$ for all $i,j=1,2,3$}\}, \\
  \|\eta\|_{W^{2,p}(\Gamma)} &:=
  \begin{cases}
    \left(\|\eta\|_{W^{1,p}(\Gamma)}^p+\|\nabla_\Gamma^2\eta\|_{L^p(\Gamma)}^p\right)^{1/p} &\text{if}\quad p\in[1,\infty), \\
    \|\eta\|_{W^{1,\infty}(\Gamma)}+\|\nabla_\Gamma^2\eta\|_{L^\infty(\Gamma)} &\text{if}\quad p=\infty
  \end{cases}
\end{align*}
and the higher order Sobolev space $W^{m,p}(\Gamma)$ with $m\geq 2$ similarly, and write
\begin{align*}
  W^{0,p}(\Gamma) := L^p(\Gamma), \quad H^m(\Gamma) := W^{m,2}(\Gamma), \quad p\in[1,\infty], \, m\geq 0.
\end{align*}
Here $\nabla_\Gamma^2\eta:=(\underline{D}_i\underline{D}_j\eta)_{i,j}$ for $\eta\in W^{2,p}(\Gamma)$.
Note that $W^{m,p}(\Gamma)$ is a Banach space.
In particular, $H^1(\Gamma)$ is a Hilbert space equipped with inner product
\begin{align*}
  (\eta,\xi)_{H^1(\Gamma)} := (v,\eta)_{L^2(\Gamma)}+(\nabla_\Gamma\eta,\nabla_\Gamma\xi)_{L^2(\Gamma)}, \quad \eta,\xi\in H^1(\Gamma).
\end{align*}
Moreover, a density result on $W^{m,p}(\Gamma)$ holds as in the case of a flat domain.

\begin{lemma}[{\cite{Miu_NSCTD_01}*{Lemma 3.6}}] \label{L:Wmp_Appr}
  Let $m=0,1,\dots,\ell$ and $p\in[1,\infty)$.
  Then $C^\ell(\Gamma)$ is dense in $W^{m,p}(\Gamma)$.
\end{lemma}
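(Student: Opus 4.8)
\textit{Proof strategy.} The plan is to follow the classical scheme for the density of smooth functions in Sobolev spaces on a bounded domain, adapted to $\Gamma$ via a finite $C^\ell$ atlas, a $C^\ell$ partition of unity, and Euclidean mollification in each coordinate patch. Throughout we keep $m\le\ell$ fixed.

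First I would fix a finite covering $\{U_k\}_{k=1}^N$ of the compact surface $\Gamma$ by coordinate patches, each carried by a $C^\ell$-parametrization $\mu_k\colon V_k\to U_k$ with $V_k\subset\mathbb{R}^2$ open, together with a subordinate partition of unity $\{\chi_k\}_{k=1}^N\subset C^\ell(\Gamma)$ (such cutoffs are produced by pulling back $C^\infty$ bump functions on $V_k$ through $\mu_k$). Given $\eta\in W^{m,p}(\Gamma)$, the Leibniz rule---valid up to order $m$ since $\chi_k$ has bounded continuous tangential derivatives up to order $\ell\ge m$---shows that $\chi_k\eta\in W^{m,p}(\Gamma)$ with support a compact subset of $U_k$, and $\eta=\sum_{k=1}^N\chi_k\eta$. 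It therefore suffices to approximate each $\chi_k\eta$ in $W^{m,p}(\Gamma)$ by functions of $C^\ell(\Gamma)$ supported in $U_k$.

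Next I would transport the problem to $\mathbb{R}^2$. Writing $w_k:=(\chi_k\eta)\circ\mu_k$, one checks $w_k\in W^{m,p}(V_k)$ with compact support, and the Euclidean norm $\|w_k\|_{W^{m,p}(V_k)}$ is comparable to $\|\chi_k\eta\|_{W^{m,p}(\Gamma)}$ with constants depending only on the (finite) atlas. This comparison is the technical heart: one expresses the iterated tangential derivatives $\underline{D}_{i_1}\cdots\underline{D}_{i_j}$ (for $j\le m$) in the coordinates $\mu_k$ using the local formulas identifying $\nabla_\Gamma$ and $\mathrm{div}_\Gamma$ with their Riemannian counterparts (Lemmas \ref{L:TGr_DG} and \ref{L:DivG_DG}) together with the exchange formula \eqref{E:TD_Exc}; the resulting change of variables involves the components of $\nabla\mu_k$, of the inverse first fundamental form, and of the Christoffel symbols, which on the compact set closure of the support are of class $C^{\ell-1}$, $C^{\ell-1}$, and $C^{\ell-2}$ respectively and hence bounded above and below in the relevant senses. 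Once this is in hand, mollifying $w_k$ in $\mathbb{R}^2$ with parameter $\rho>0$ small enough that the mollification $(w_k)_\rho$ still has support compactly inside $V_k$ gives $(w_k)_\rho\in C^\infty_c(V_k)$ with $(w_k)_\rho\to w_k$ in $W^{m,p}(V_k)$ as $\rho\to0$, by the standard mollification theorem. Pushing back, $\phi_{k,\rho}:=(w_k)_\rho\circ\mu_k^{-1}$, extended by zero off $U_k$, lies in $C^\ell(\Gamma)$ (smooth composed with the $C^\ell$ map $\mu_k^{-1}$) with support a compact subset of $U_k$; the norm comparison yields $\phi_{k,\rho}\to\chi_k\eta$ in $W^{m,p}(\Gamma)$, and summing over $k$ gives $\sum_k\phi_{k,\rho}\to\eta$ in $W^{m,p}(\Gamma)$ with each approximant in $C^\ell(\Gamma)$.

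The main obstacle is exactly the chart norm-equivalence at the top order $m=\ell$. There the translation between intrinsic tangential derivatives and coordinate derivatives brings in Christoffel symbols that are only $C^{\ell-2}$, so one must organize the computation so that an $\ell$-th order coordinate derivative of $\eta\circ\mu_k$ is never multiplied by a coefficient differentiated more than $\ell-2$ times; the exchange formula \eqref{E:TD_Exc}, which trades non-commuted tangential derivatives for strictly lower-order terms with coefficients built from $W\in C^{\ell-2}$, is precisely what makes this bookkeeping close and keeps all coefficients bounded. Everything else---the partition of unity, the Leibniz rule, and the mollification---is routine.
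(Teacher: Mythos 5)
Your proof is correct and uses the standard localization--mollification scheme (finite $C^\ell$ atlas, $C^\ell$ partition of unity, chart norm-equivalence, Euclidean mollification, pushback), which is exactly the technique this paper relies on for the neighbouring density and compactness results -- the proof of this lemma is deferred to Part I, but the same chart-and-partition-of-unity machinery is spelled out for Lemmas \ref{L:WTGr_Van}--\ref{L:Poin_Surf_Lp} in Appendix \ref{S:Ap_Aux}. Your bookkeeping at top order $m=\ell$ -- ensuring the $C^{\ell-1}$ chart coefficients ($\partial_s\mu$, $\theta^{-1}$) are never differentiated more than $\ell-1$ times, so that all coefficients in the translation between iterated tangential derivatives and coordinate derivatives stay continuous and bounded on the compact supports -- is precisely the point that makes the argument close.
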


For a function space $\mathcal{X}(\Gamma)$ such as $C^m(\Gamma)$ and $W^{m,p}(\Gamma)$ we denote by
\begin{align*}
  \mathcal{X}(\Gamma,T\Gamma) := \{v\in\mathcal{X}(\Gamma)^3 \mid \text{$v\cdot n=0$ on $\Gamma$}\}
\end{align*}
the space of all tangential vector fields on $\Gamma$ whose components belong to $\mathcal{X}(\Gamma)$.
Then $W^{m,p}(\Gamma,T\Gamma)$ is a closed subspace of $W^{m,p}(\Gamma)^3$ for $m\geq0$ and $p\in[1,\infty]$.
Moreover, for $v\in W^{1,p}(\Gamma,T\Gamma)$ with $p\in[1,\infty]$ we see that
\begin{align} \label{E:IbP_WDivG_T}
  \int_\Gamma\mathrm{div}_\Gamma v\,d\mathcal{H}^2 = -\int_\Gamma(v\cdot n)H\,d\mathcal{H}^2 = 0
\end{align}
by \eqref{E:Def_WTD} with $\xi\equiv1$ (note that $\nabla_\Gamma\xi=0$ on $\Gamma$ when $\xi$ is constant).
We also have the following density result on $W^{m,p}(\Gamma,T\Gamma)$.

\begin{lemma}[{\cite{Miu_NSCTD_01}*{Lemma 3.7}}] \label{L:Wmp_Tan_Appr}
  Let $m=0,1,\dots,\ell-1$ and $p\in[1,\infty)$.
  Then $C^{\ell-1}(\Gamma,T\Gamma)$ is dense in $W^{m,p}(\Gamma,T\Gamma)$.
\end{lemma}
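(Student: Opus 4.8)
The plan is to reduce the statement to the scalar density result of Lemma~\ref{L:Wmp_Appr} by post-composing an approximating sequence with the tangential projection $P$. Let $v\in W^{m,p}(\Gamma,T\Gamma)$, so that $v\in W^{m,p}(\Gamma)^3$ and $v\cdot n=0$ on $\Gamma$; in particular $Pv=v$ as an element of $W^{m,p}(\Gamma)^3$. First I would apply Lemma~\ref{L:Wmp_Appr} componentwise to fix a sequence $v_k\in C^\ell(\Gamma)^3$ with $v_k\to v$ in $W^{m,p}(\Gamma)^3$. Since $P=I_3-n\otimes n$ is of class $C^{\ell-1}$ on $\Gamma$ and $v_k\in C^\ell(\Gamma)^3$, the vector field $w_k:=Pv_k$ belongs to $C^{\ell-1}(\Gamma)^3$, and $n\cdot w_k=(n^TP)v_k=0$ on $\Gamma$, so in fact $w_k\in C^{\ell-1}(\Gamma,T\Gamma)$. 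Because $Pv=v$, we have $w_k-v=P(v_k-v)$, so the lemma will follow once we show that multiplication by $P$ is a bounded linear operator on $W^{m,p}(\Gamma)^3$.

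The key auxiliary fact I would establish is a Leibniz rule for weak tangential derivatives: if $\eta\in W^{1,p}(\Gamma)$ and $\psi\in C^1(\Gamma)$, then $\eta\psi\in W^{1,p}(\Gamma)$ with $\underline{D}_i(\eta\psi)=\psi\,\underline{D}_i\eta+\eta\,\underline{D}_i\psi$ for $i=1,2,3$. This can be checked directly from the defining identity \eqref{E:Def_WTD}: for $\xi\in C^1(\Gamma)$ one writes $\psi\,\underline{D}_i\xi=\underline{D}_i(\psi\xi)-\xi\,\underline{D}_i\psi$ by the classical product rule, applies \eqref{E:Def_WTD} for $\eta$ with the admissible test function $\psi\xi\in C^1(\Gamma)$, and collects terms; the zeroth-order curvature correction $\eta\psi\xi Hn_i$ is exactly the one needed to recombine into $\underline{D}_i(\psi\xi)+(\psi\xi)Hn_i$, leaving the extra contribution $\eta\,\underline{D}_i\psi$. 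Iterating this identity by induction on the order (the non-commutativity of $\underline{D}_i$ and $\underline{D}_j$ from Lemma~\ref{L:TD_Exc} is irrelevant, since $W^{m,p}(\Gamma)$ is defined through nested weak derivatives), one gets that for $\eta\in W^{s,p}(\Gamma)$ and $\psi\in C^{s}(\Gamma)$ the product $\eta\psi$ lies in $W^{s,p}(\Gamma)$ and each $s$-th order weak tangential derivative of $\eta\psi$ is a finite linear combination of products of a tangential derivative of $\psi$ of order $\le s$ with a weak tangential derivative of $\eta$ of order $\le s$.

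Applying this with $\psi$ ranging over the entries of $P$ and $s=m$: since $P$ is of class $C^{\ell-1}$ and $m\le\ell-1$, every tangential derivative of an entry of $P$ occurring in the expansion has order $\le m\le\ell-1$, hence is continuous and therefore bounded on the compact surface $\Gamma$. Consequently $\|Pw\|_{W^{m,p}(\Gamma)}\le c\|w\|_{W^{m,p}(\Gamma)}$ for all $w\in W^{m,p}(\Gamma)^3$, with $c$ depending only on $\Gamma$, $m$, and $p$, so $\|w_k-v\|_{W^{m,p}(\Gamma)}=\|P(v_k-v)\|_{W^{m,p}(\Gamma)}\le c\|v_k-v\|_{W^{m,p}(\Gamma)}\to0$, and since each $w_k\in C^{\ell-1}(\Gamma,T\Gamma)$ this gives the density. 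I expect the only genuinely delicate point to be the Leibniz rule of the second paragraph, precisely because of the curvature term $\eta Hn_i$ in \eqref{E:Def_WTD}; once it is checked that this term also obeys the ordinary product rule the verification collapses to the flat case, and the remaining bookkeeping — the componentwise use of Lemma~\ref{L:Wmp_Appr}, the tangentiality $n^TP=0$, and the induction — is routine. I would also note that, because $n$ and hence $P$ is only of class $C^{\ell-1}$, this argument genuinely produces $C^{\ell-1}$ rather than $C^\ell$ fields, which is why the statement is restricted to $m\le\ell-1$.
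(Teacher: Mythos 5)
Your proposal is correct and follows essentially the same route as the paper's proof in \cite{Miu_NSCTD_01}: approximate componentwise by $C^\ell$ fields via Lemma~\ref{L:Wmp_Appr}, apply the $C^{\ell-1}$ projection $P$ to restore tangentiality, and use the boundedness of multiplication by $P$ on $W^{m,p}(\Gamma)^3$ together with $Pv=v$. Your verification of the Leibniz rule for the weak tangential derivative, including the cancellation of the curvature term $\eta\psi\xi Hn_i$, is exactly the point that makes this work.
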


Next we fix notations on the dual spaces of the Sobolev spaces on $\Gamma$.
Let $H^{-1}(\Gamma)$ and $\langle\cdot,\cdot\rangle_\Gamma$ be the dual space of $H^1(\Gamma)$ and the duality product between $H^{-1}(\Gamma)$ and $H^1(\Gamma)$.
We consider $\eta\in L^2(\Gamma)$ as an element of $H^{-1}(\Gamma)$ by setting
\begin{align} \label{E:L2_Hin}
  \langle \eta,\xi\rangle_\Gamma:=(\eta,\xi)_{L^2(\Gamma)}, \quad \xi\in H^1(\Gamma).
\end{align}
Then by Lemma \ref{L:RK_Surf} we have the compact embeddings
\begin{align*}
  H^1(\Gamma) \hookrightarrow L^2(\Gamma) \hookrightarrow H^{-1}(\Gamma).
\end{align*}
Let $H^{-1}(\Gamma)^3$ be the dual space of $H^1(\Gamma)^3$.
We use the same notation $\langle\cdot,\cdot\rangle_\Gamma$ for the duality product between $H^{-1}(\Gamma)^3$ and $H^1(\Gamma)^3$.
For $f\in H^{-1}(\Gamma)^3$ and $i=1,2,3$ we define $f_i\in H^{-1}(\Gamma)$ by
\begin{align*}
  \langle f_i,\eta\rangle_\Gamma := \langle f,\eta e_i\rangle_\Gamma, \quad \eta\in H^1(\Gamma),
\end{align*}
where $\{e_1,e_2,e_3\}$ is the standard basis of $\mathbb{R}^3$.
Then we can consider $f\in H^{-1}(\Gamma)^3$ as a vector field on $\Gamma$ with components in $H^{-1}(\Gamma)$ and write
\begin{align*}
  \langle f,v\rangle_\Gamma = \sum_{i=1}^3\langle f_i,v_i\rangle_\Gamma, \quad f =
  \begin{pmatrix}
    f_1 \\ f_2 \\ f_3
  \end{pmatrix}
  \in H^{-1}(\Gamma)^3, \, v =
  \begin{pmatrix}
    v_1 \\ v_2 \\ v_3
  \end{pmatrix}
  \in H^1(\Gamma)^3.
\end{align*}
Let $\eta\in W^{1,\infty}(\Gamma)$ and $\xi\in H^{-1}(\Gamma)$.
Since
\begin{align*}
  |\langle \xi,\eta\varphi\rangle_\Gamma| &\leq \|\xi\|_{H^{-1}(\Gamma)}\|\eta\varphi\|_{H^1(\Gamma)} \leq c\|\eta\|_{W^{1,\infty}(\Gamma)}\|\xi\|_{H^{-1}(\Gamma)}\|\varphi\|_{H^1(\Gamma)}
\end{align*}
for $\varphi\in H^1(\Gamma)$, we can define $\eta\xi\in H^{-1}(\Gamma)$ by
\begin{align} \label{E:Def_Mul_Hin}
  \langle\eta \xi, \varphi\rangle_\Gamma := \langle \xi,\eta\varphi\rangle_\Gamma, \quad \varphi\in H^1(\Gamma).
\end{align}
Let $A\in W^{1,\infty}(\Gamma)^{3\times 3}$.
When $v,w\in L^2(\Gamma)^3$ we have
\begin{align*}
  (Av,w)_{L^2(\Gamma)} = \sum_{i,j=1}^3(A_{ij}v_j,w_i)_{L^2(\Gamma)} = (v,A^Tw)_{L^2(\Gamma)}.
\end{align*}
Based on this equality, for $f\in H^{-1}(\Gamma)^3$ we define $Af\in H^{-1}(\Gamma)^3$ by
\begin{align} \label{E:Def_Av_Hin}
  \langle Af,v\rangle_\Gamma := \langle f,A^Tv\rangle_\Gamma, \quad v\in H^1(\Gamma)^3.
\end{align}
Note that for $i=1,2,3$ the $i$-th component of $Af$ is
\begin{align*}
  [Af]_i = \sum_{j=1}^3 A_{ij}f_j \quad\text{in}\quad H^{-1}(\Gamma).
\end{align*}
Let $\eta\in L^2(\Gamma)$.
Based on \eqref{E:Def_WTD} we define $\underline{D}_i\eta\in H^{-1}(\Gamma)$, $i=1,2,3$ by
\begin{align} \label{E:Def_TD_Hin}
  \langle\underline{D}_i\eta,\xi\rangle_\Gamma := -(\eta,\underline{D}_i\xi+\xi Hn_i)_{L^2(\Gamma)}, \quad \xi \in H^1(\Gamma).
\end{align}
This definition makes sense since $n$ and $H$ are bounded on $\Gamma$.
We consider the weak tangential gradient $\nabla_\Gamma\eta$ as an element of $H^{-1}(\Gamma)^3$ satisfying
\begin{align} \label{E:TGr_Hin}
  \langle\nabla_\Gamma\eta,v\rangle_\Gamma = -(\eta,\mathrm{div}_\Gamma v+(v\cdot n)H)_{L^2(\Gamma)}, \quad v\in H^1(\Gamma)^3.
\end{align}
Also, the surface divergence of $v\in L^2(\Gamma)^3$ is given by
\begin{align} \label{E:Sdiv_Hin}
  \langle\mathrm{div}_\Gamma v,\eta\rangle_\Gamma = -(v,\nabla_\Gamma\eta+\eta Hn)_{L^2(\Gamma)}, \quad \eta\in H^1(\Gamma).
\end{align}
Let $H^{-1}(\Gamma,T\Gamma)$ be the dual space of $H^1(\Gamma,T\Gamma)$ and $[\cdot,\cdot]_{T\Gamma}$ the duality product between $H^{-1}(\Gamma,T\Gamma)$ and $H^1(\Gamma,T\Gamma)$.
As in \eqref{E:L2_Hin}, we set
\begin{align*}
  [f,v]_{T\Gamma} := (f,v)_{L^2(\Gamma)}, \quad f\in L^2(\Gamma,T\Gamma),\, v\in H^1(\Gamma,T\Gamma)
\end{align*}
to consider $L^2(\Gamma,T\Gamma)$ as a subspace of $H^{-1}(\Gamma,T\Gamma)$.
Let us show that $H^{-1}(\Gamma,T\Gamma)$ is homeomorphic to a quotient space of $H^{-1}(\Gamma)^3$.

\begin{lemma} \label{L:HinT_Homeo}
  For $f\in H^{-1}(\Gamma)^3$ we define an equivalence class
  \begin{align*}
    [f] := \{\tilde{f} \in H^{-1}(\Gamma)^3 \mid \text{$Pf=P\tilde{f}$ in $H^{-1}(\Gamma)^3$}\}.
  \end{align*}
  Then the quotient space $\mathcal{Q}:=\{[f] \mid f\in H^{-1}(\Gamma)^3\}$ is homeomorphic to $H^{-1}(\Gamma,T\Gamma)$.
\end{lemma}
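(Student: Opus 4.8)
The plan is to recognize $\mathcal{Q}$ as the Banach-space quotient $H^{-1}(\Gamma)^3/Y^\perp$, where $Y:=H^1(\Gamma,T\Gamma)$ is viewed as a closed subspace of $X:=H^1(\Gamma)^3$ and $Y^\perp:=\{f\in H^{-1}(\Gamma)^3\mid \langle f,w\rangle_\Gamma=0\text{ for all }w\in Y\}$ is its annihilator, and then to invoke the textbook duality $Y^\ast\cong X^\ast/Y^\perp$. Concretely, I would introduce the restriction map $\rho\colon H^{-1}(\Gamma)^3\to H^{-1}(\Gamma,T\Gamma)$ defined by $[\rho f,v]_{T\Gamma}:=\langle f,v\rangle_\Gamma$ for $v\in H^1(\Gamma,T\Gamma)\subset H^1(\Gamma)^3$. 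Since $H^1(\Gamma,T\Gamma)$ carries the norm induced from $H^1(\Gamma)^3$, the map $\rho$ is linear and bounded with $\|\rho f\|_{H^{-1}(\Gamma,T\Gamma)}\leq\|f\|_{H^{-1}(\Gamma)^3}$, and by the Hahn--Banach theorem every $F\in H^{-1}(\Gamma,T\Gamma)$ extends to an element of $H^{-1}(\Gamma)^3$ of the same norm; hence $\rho$ is surjective and $\|F\|_{H^{-1}(\Gamma,T\Gamma)}=\inf\{\|f\|_{H^{-1}(\Gamma)^3}\mid \rho f=F\}$. Note also that $\ker\rho=Y^\perp$ is closed in $H^{-1}(\Gamma)^3$.

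The key step is to identify $\ker\rho=Y^\perp$ with the equivalence relation defining $\mathcal{Q}$, i.e. to show that $Pf=P\tilde f$ in $H^{-1}(\Gamma)^3$ if and only if $f-\tilde f\in Y^\perp$. By \eqref{E:Def_Av_Hin} and the symmetry $P^T=P$ we have $\langle Pf,v\rangle_\Gamma=\langle f,Pv\rangle_\Gamma$ for all $v\in H^1(\Gamma)^3$. Here $Pv$ indeed lies in $H^1(\Gamma,T\Gamma)$: multiplication by the matrix $P\in C^{\ell-1}(\Gamma)^{3\times3}$ (with $\ell\geq2$) maps $H^1(\Gamma)^3$ boundedly into itself, and $(Pv)\cdot n=0$ on $\Gamma$; conversely $Pw=w$ for $w\in H^1(\Gamma,T\Gamma)$, so that $\{Pv\mid v\in H^1(\Gamma)^3\}=H^1(\Gamma,T\Gamma)$. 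Therefore $Pf=0$ in $H^{-1}(\Gamma)^3$ if and only if $\langle f,Pv\rangle_\Gamma=0$ for all $v\in H^1(\Gamma)^3$, which is exactly $f\in Y^\perp$. Applying this to $f-\tilde f$ gives $[f]=[\tilde f]\iff f-\tilde f\in\ker\rho$, so $\rho$ descends to a well-defined linear bijection $\bar\rho\colon\mathcal{Q}\to H^{-1}(\Gamma,T\Gamma)$, $\bar\rho[f]:=\rho f$.

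It remains to see that $\bar\rho$ is a homeomorphism. Giving $\mathcal{Q}=H^{-1}(\Gamma)^3/\ker\rho$ the quotient norm $\|[f]\|_{\mathcal{Q}}:=\inf_{g\in\ker\rho}\|f-g\|_{H^{-1}(\Gamma)^3}$, the bound on $\rho$ passes to the quotient to give $\|\bar\rho[f]\|_{H^{-1}(\Gamma,T\Gamma)}\leq\|[f]\|_{\mathcal{Q}}$, while the norm-preserving Hahn--Banach extension used above yields the reverse inequality $\|[f]\|_{\mathcal{Q}}\leq\|\bar\rho[f]\|_{H^{-1}(\Gamma,T\Gamma)}$; hence $\bar\rho$ is an isometric isomorphism, and in particular a homeomorphism. (Alternatively, boundedness of $\bar\rho^{-1}$ follows from the open mapping theorem.)

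The soft functional-analytic part is entirely standard; the only point requiring attention is the identification in the second paragraph, namely that multiplication by the $C^{\ell-1}$ projection $P$ is a bounded surjection of $H^1(\Gamma)^3$ onto $H^1(\Gamma,T\Gamma)$, which is what converts the pointwise relation $Pf=P\tilde f$ into membership in the annihilator $Y^\perp$. No surface-geometric estimates beyond the regularity and algebraic identities $P^2=P$, $P^T=P$ already recorded for $P$ are needed.
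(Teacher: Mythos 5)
Your proof is correct, and it reaches the conclusion by a route that differs from the paper's in its two key steps, so a comparison is worth recording. The underlying map is the same in both arguments (restriction of a functional on $H^1(\Gamma)^3$ to the closed subspace $H^1(\Gamma,T\Gamma)$, descended to the quotient), and both rely on the identity $\langle Pf,v\rangle_\Gamma=\langle f,Pv\rangle_\Gamma$ together with the observation that $P$ maps $H^1(\Gamma)^3$ onto $H^1(\Gamma,T\Gamma)$ to identify the equivalence classes with cosets of the annihilator. Where you diverge is in proving surjectivity and boundedness of the inverse: you package everything into the textbook duality $Y^\ast\cong X^\ast/Y^\perp$, getting surjectivity from a norm-preserving Hahn--Banach extension and thereby obtaining that $\bar\rho$ is an \emph{isometric} isomorphism, with no need for the open mapping theorem. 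The paper instead proves surjectivity constructively: it applies the Riesz representation theorem on the Hilbert space $H^1(\Gamma,T\Gamma)$ to write $[F,v]_{T\Gamma}=(v_F,v)_{H^1(\Gamma)}$ and then exhibits an explicit preimage $f_F=v_F-\sum_{i}\underline{D}_i^2v_F\in H^{-1}(\Gamma)^3$, verified by an integration-by-parts computation using $n\cdot\nabla_\Gamma v_F^j=0$; boundedness of the inverse then comes from the open mapping theorem. Your argument is shorter and yields the slightly stronger isometry statement; the paper's has the advantage of producing a concrete representative of each equivalence class. One point you should make explicit if you write this up: the norm-preserving extension requires that $H^{-1}(\Gamma,T\Gamma)$ be the dual of $H^1(\Gamma,T\Gamma)$ \emph{with the norm induced from} $H^1(\Gamma)^3$, which is indeed how the paper sets things up.
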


Note that $\mathcal{Q}$ is a Banach space equipped with norm
\begin{align*}
  \|[f]\|_{\mathcal{Q}}:=\inf_{\tilde{f}\in[f]}\|\tilde{f}\|_{H^{-1}(\Gamma)}.
\end{align*}
For details, we refer to \cite{Ru91}.

\begin{proof}
  Let $f_1,f_2\in H^{-1}(\Gamma)^3$.
  If $Pf_1=Pf_2$ in $H^{-1}(\Gamma)^3$, then
  \begin{align*}
    \langle f_1,v\rangle_\Gamma = \langle f_1,Pv\rangle_\Gamma = \langle Pf_1,v\rangle_\Gamma = \langle Pf_2,v\rangle_\Gamma = \langle f_2,Pv\rangle_\Gamma = \langle f_2,v\rangle_\Gamma
  \end{align*}
  for all $v\in H^1(\Gamma,T\Gamma)$ by $Pv=v$ and $P^T=P$ on $\Gamma$ and \eqref{E:Def_Av_Hin}.
  Hence we can define a linear operator $L$ from $\mathcal{Q}$ to $H^{-1}(\Gamma,T\Gamma)$ by
  \begin{align*}
    [L[f],v]_{T\Gamma} := \langle\tilde{f},v\rangle_\Gamma, \quad [f]\in\mathcal{Q},\, v\in H^1(\Gamma,T\Gamma),
  \end{align*}
  where $\tilde{f}$ is an arbitrary element of $[f]$.
  By this definition we also have
  \begin{align*}
    \|L[f]\|_{H^{-1}(\Gamma,T\Gamma)} \leq \inf_{\tilde{f}\in[f]}\|\tilde{f}\|_{H^{-1}(\Gamma)} = \|[f]\|_{\mathcal{Q}}.
  \end{align*}
  Hence $L$ is bounded.
  Moreover, if $L[f_1]=L[f_2]$ in $H^{-1}(\Gamma,T\Gamma)$, then
  \begin{align*}
    \langle Pf_1,v\rangle_\Gamma &= \langle f_1,Pv\rangle_\Gamma = [L[f_1],Pv]_{T\Gamma} \\
    &= [L[f_2],Pv]_{T\Gamma} = \langle f_2,Pv\rangle_\Gamma = \langle Pf_2,v\rangle_\Gamma
  \end{align*}
  for all $v\in H^1(\Gamma)^3$ and thus $Pf_1=Pf_2$ in $H^{-1}(\Gamma)^3$, which means that $[f_1]=[f_2]$ and $L$ is injective.
  To show its surjectivity, let $F\in H^{-1}(\Gamma,T\Gamma)$.
  Since $H^1(\Gamma,T\Gamma)$ is a Hilbert space equipped with inner product of $H^1(\Gamma)^3$, there exists a tangential vector field $v_F\in H^1(\Gamma,T\Gamma)$ such that
  \begin{align*}
    [F,v]_{T\Gamma} = (v_F,v)_{H^1(\Gamma)} = \sum_{i,j=1}^3\{(v_F^j,v^j)_{L^2(\Gamma)}+(\underline{D}_iv_F^j,\underline{D}_iv^j)_{L^2(\Gamma)}\}
  \end{align*}
  for all $v\in H^1(\Gamma,T\Gamma)$ by the Riesz representation theorem, where $a^j$ stands for the $j$-th component of a vector $a\in\mathbb{R}^3$.
  Then setting
  \begin{align*}
    f_F := v_F-\sum_{i=1}^3\underline{D}_i^2v_F\in H^{-1}(\Gamma)^3
  \end{align*}
  we observe by \eqref{E:L2_Hin}, \eqref{E:Def_TD_Hin}, and $n\cdot\nabla_\Gamma v_F^j=0$ on $\Gamma$ for $j=1,2,3$ that
  \begin{align*}
    [F,v]_{T\Gamma} &= \sum_{i,j=1}^3\{(v_F^j,v^j)_{L^2(\Gamma)}+(\underline{D}_iv_F^j,\underline{D}_iv^j)_{L^2(\Gamma)}\} \\
    &= \sum_{i,j=1}^3\langle v_F^j-\underline{D}_i^2v_F^j,v^j\rangle_\Gamma-\sum_{i,j=1}^3(\underline{D}_iv_F^j,v^jHn_i)_{L^2(\Gamma)}  \\
    &= \langle f_F,v\rangle_\Gamma-\sum_{j=1}^3(n\cdot\nabla_\Gamma v_F^j,v_jH)_{L^2(\Gamma)} = [L[f_F],v]_{T\Gamma}
  \end{align*}
  for all $v\in H^1(\Gamma,T\Gamma)$.
  Thus $F=L[f_F]$ in $H^{-1}(\Gamma,T\Gamma)$ and $L\colon\mathcal{Q}\to H^{-1}(\Gamma,T\Gamma)$ is a bounded, injective, and surjective linear operator.
  Since its inverse is also bounded by the open mapping theorem, $\mathcal{Q}$ is homeomorphic to $H^{-1}(\Gamma,T\Gamma)$.
\end{proof}

In what follows, we identify the equivalence class $[f]$ for $f\in H^{-1}(\Gamma)^3$ with the functional $L[f]\in H^{-1}(\Gamma,T\Gamma)$ given in the proof of Lemma \ref{L:HinT_Homeo}.
We further identify $[f]$ with its representative $Pf$ and write
\begin{align*}
  [Pf,v]_{T\Gamma} = \langle f,v\rangle_\Gamma, \quad v\in H^1(\Gamma,T\Gamma)
\end{align*}
to consider $Pf\in H^{-1}(\Gamma)^3$ as an element of $H^{-1}(\Gamma,T\Gamma)$.
When $Pf=f$ in $H^{-1}(\Gamma)^3$, we take $f$ as a representative of $[f]$ instead of $Pf$.
For example, if $\eta\in L^2(\Gamma)$, then
\begin{align*}
  \langle\nabla_\Gamma\eta,v\rangle_\Gamma &= -(\eta,\mathrm{div}_\Gamma v+(v\cdot n)H)_{L^2(\Gamma)} = -\bigl(\eta,\mathrm{div}_\Gamma(Pv)\bigr)_{L^2(\Gamma)} \\
  &= \langle P\nabla_\Gamma\eta,v\rangle_\Gamma
\end{align*}
for all $v\in H^1(\Gamma)^3$ and thus $P\nabla_\Gamma\eta=\nabla_\Gamma\eta$ in $H^{-1}(\Gamma)^3$.
In this case we have
\begin{align} \label{E:TGr_HinT}
  [\nabla_\Gamma\eta,v]_{T\Gamma} = -(\eta,\mathrm{div}_\Gamma v)_{L^2(\Gamma)}, \quad \eta\in L^2(\Gamma),\,v\in H^1(\Gamma,T\Gamma).
\end{align}
For $\eta\in W^{1,\infty}(\Gamma)$ and $f\in H^{-1}(\Gamma,T\Gamma)$ we can define $\eta f\in H^{-1}(\Gamma,T\Gamma)$ by
\begin{align} \label{E:Def_Mul_HinT}
  [\eta f,v]_{T\Gamma} := [f,\eta v]_{T\Gamma}, \quad v\in H^1(\Gamma,T\Gamma)
\end{align}
since $\eta v\in H^1(\Gamma,T\Gamma)$.
In Section \ref{S:WSol} we give the characterization of the annihilators in $H^{-1}(\Gamma)^3$ and $H^{-1}(\Gamma,T\Gamma)$ of solenoidal spaces on $\Gamma$.

Since $\Gamma$ is not of class $C^\infty$, the space $C^\infty(\Gamma)$ does not make sense and we cannot consider distributions on $\Gamma$.
To consider the time derivative of functions with values in function spaces on $\Gamma$ we introduce the notion of distributions with values in a Banach space (see \cites{LiMa72,So01,Te79} for details).
For $T>0$ let $C_c^\infty(0,T)$ be the space of all smooth and compactly supported functions on $(0,T)$.
We define $\mathcal{D}'(0,T;\mathcal{X})$ as the space of all continuous linear operators from $C_c^\infty(0,T)$ (equipped with locally convex topology described in \cite{Ru91}*{Definition 6.3}) into a Banach space $\mathcal{X}$.
For $p\in[1,\infty]$ we consider
\begin{align*}
  L^p(0,T;\mathcal{X}) \subset \mathcal{D}'(0,T;\mathcal{X})
\end{align*}
by identifying $f\in L^p(0,T;\mathcal{X})$ with a continuous linear operator
\begin{align*}
  \hat{f}(\varphi) := \int_0^T\varphi(t)f(t)\,dt \in \mathcal{X}, \quad \varphi\in C_c^\infty(0,T).
\end{align*}
Let $f\in\mathcal{D}'(0,T;\mathcal{X})$.
We define the time derivative $\partial_tf\in\mathcal{D}'(0,T;\mathcal{X})$ of $f$ by
\begin{align*}
  \partial_tf(\varphi) := -f(\partial_t\varphi) \in \mathcal{X}, \quad \varphi \in C_c^\infty(0,T).
\end{align*}
Note that, if $f\in L^p(0,T;\mathcal{X})$ with $p\in[1,\infty]$, then
\begin{align} \label{E:Dt_Dist_L2}
  \partial_tf(\varphi) = -f(\partial_t\varphi) = -\int_0^T\partial_t\varphi(t)f(t)\,dt \in \mathcal{X}, \quad \varphi\in C_c^\infty(0,T).
\end{align}
Let $p\in[1,\infty]$.
For $f\in L^p(0,T;\mathcal{X})$ if there exists $\xi\in L^p(0,T;\mathcal{X})$ such that
\begin{align*}
  \partial_tf(\varphi) = \xi(\varphi), \quad\text{i.e.}\quad -\int_0^T\partial_t\varphi(t)f(t)\,dt = \int_0^T\varphi(t)\xi(t)\,dt \quad\text{in $\mathcal{X}$}
\end{align*}
for all $\varphi\in C_c^\infty(0,T)$, then we write $\partial_tf=\xi\in L^p(0,T;\mathcal{X})$ and define
\begin{align*}
  W^{1,p}(0,T;\mathcal{X}) := \{f\in L^p(0,T;\mathcal{X}) \mid \partial_tf \in L^p(0,T;\mathcal{X})\}.
\end{align*}
We also write $H^1(0,T;\mathcal{X}):=W^{1,2}(0,T;\mathcal{X})$.
Note that
\begin{align*}
  W^{1,p}(0,T;\mathcal{X}) \subset W^{1,1}(0,T;\mathcal{X}) \subset C([0,T];\mathcal{X}), \quad p\in[1,\infty].
\end{align*}
When $q\in L^2(0,T;L^2(\Gamma))$ we have
\begin{align*}
  \nabla_\Gamma q \in L^2(0,T;H^{-1}(\Gamma,T\Gamma)), \quad \partial_t(\nabla_\Gamma q) \in \mathcal{D}'(0,T;H^{-1}(\Gamma,T\Gamma)).
\end{align*}
Also, for $\varphi\in C_c^\infty(0,T)$ we can consider the weak tangential gradient of
\begin{align*}
  \partial_tq(\varphi) = -\int_0^T\partial_t\varphi(t)q(t)\,dt \in L^2(\Gamma)
\end{align*}
in $H^{-1}(\Gamma,T\Gamma)$ since $\partial_tq\in\mathcal{D}'(0,T;L^2(\Gamma))$.
Let us show that the time derivative commutes with the weak tangential gradient in an appropriate sense.

\begin{lemma} \label{L:Dt_TGr_Com}
  Let $q\in L^2(0,T;L^2(\Gamma))$.
  Then
  \begin{align*}
    \nabla_\Gamma[\partial_tq(\varphi)] = [\partial_t(\nabla_\Gamma q)](\varphi) \quad\text{in}\quad H^{-1}(\Gamma,T\Gamma)
  \end{align*}
  for all $\varphi\in C_c^\infty(0,T)$.
\end{lemma}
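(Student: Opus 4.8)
The plan is to unwind both sides using the definitions \eqref{E:Dt_Dist_L2} of the distributional time derivative and the definition \eqref{E:TGr_Hin}--\eqref{E:TGr_HinT} of the weak tangential gradient, and then to invoke the elementary fact that bounded linear functionals pass inside Bochner integrals. Fix $\varphi\in C_c^\infty(0,T)$. By \eqref{E:Dt_Dist_L2} with $\mathcal{X}=L^2(\Gamma)$ the left-hand side is the weak tangential gradient in $H^{-1}(\Gamma,T\Gamma)$ of the $L^2(\Gamma)$-function $\partial_tq(\varphi)=-\int_0^T\partial_t\varphi(t)\,q(t)\,dt$; and since $\nabla_\Gamma q\in L^2(0,T;H^{-1}(\Gamma,T\Gamma))$ (noted just above the statement), \eqref{E:Dt_Dist_L2} with $\mathcal{X}=H^{-1}(\Gamma,T\Gamma)$ shows the right-hand side equals $-\int_0^T\partial_t\varphi(t)\,\nabla_\Gamma[q(t)]\,dt$. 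Thus the task reduces to showing that $\nabla_\Gamma$ commutes with this Bochner integral,
\[
\nabla_\Gamma\!\left[-\int_0^T\partial_t\varphi(t)\,q(t)\,dt\right]=-\int_0^T\partial_t\varphi(t)\,\nabla_\Gamma[q(t)]\,dt\quad\text{in}\quad H^{-1}(\Gamma,T\Gamma).
\]

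To prove this I would test against an arbitrary $v\in H^1(\Gamma,T\Gamma)$ and use \eqref{E:TGr_HinT}: the pairing of the left-hand side with $v$ equals $\bigl(\int_0^T\partial_t\varphi(t)q(t)\,dt,\ \mathrm{div}_\Gamma v\bigr)_{L^2(\Gamma)}$; since $\eta\mapsto(\eta,\mathrm{div}_\Gamma v)_{L^2(\Gamma)}$ is a bounded linear functional on $L^2(\Gamma)$ it commutes with the integral, giving $\int_0^T\partial_t\varphi(t)(q(t),\mathrm{div}_\Gamma v)_{L^2(\Gamma)}\,dt$, which by \eqref{E:TGr_HinT} again equals $-\int_0^T\partial_t\varphi(t)[\nabla_\Gamma[q(t)],v]_{T\Gamma}\,dt$; finally $F\mapsto[F,v]_{T\Gamma}$ is a bounded linear functional on $H^{-1}(\Gamma,T\Gamma)$, so this equals the pairing of $-\int_0^T\partial_t\varphi(t)\nabla_\Gamma[q(t)]\,dt$ with $v$, as desired. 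Since $v$ is arbitrary the two elements of $H^{-1}(\Gamma,T\Gamma)$ coincide. Before running this computation I would record, once, that $t\mapsto\nabla_\Gamma[q(t)]$ is strongly measurable with values in $H^{-1}(\Gamma,T\Gamma)$ — being the composition of the strongly measurable $q$ with the bounded linear operator $\nabla_\Gamma\colon L^2(\Gamma)\to H^{-1}(\Gamma,T\Gamma)$, whose boundedness is read off from \eqref{E:TGr_HinT} via $\|\nabla_\Gamma\eta\|_{H^{-1}(\Gamma,T\Gamma)}\le c\|\eta\|_{L^2(\Gamma)}$ — so that, with $\|\nabla_\Gamma[q(t)]\|_{H^{-1}(\Gamma,T\Gamma)}\le c\|q(t)\|_{L^2(\Gamma)}\in L^1(0,T)$, all the integrands appearing above are genuinely Bochner integrable.

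There is no serious obstacle here: the statement is essentially a bookkeeping exercise built on \eqref{E:Dt_Dist_L2}, \eqref{E:TGr_HinT}, and the defining property of the Bochner integral that it commutes with continuous linear functionals. The only point deserving explicit care is the measurability and integrability just described, which is settled by the continuity of $\nabla_\Gamma$ between the relevant spaces; everything else is a routine manipulation of the duality pairings.
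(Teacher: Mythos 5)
Your proposal is correct and follows essentially the same route as the paper: test against an arbitrary $v\in H^1(\Gamma,T\Gamma)$, use \eqref{E:TGr_HinT} to convert the tangential gradient into a pairing with $\mathrm{div}_\Gamma v$, pass the bounded linear functional through the Bochner integral, and convert back via \eqref{E:TGr_HinT} and \eqref{E:Dt_Dist_L2}. The only difference is that you make the measurability and integrability of $t\mapsto\nabla_\Gamma[q(t)]$ explicit, which the paper leaves implicit; this is a harmless (and slightly more careful) addition.
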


\begin{proof}
  For all $v\in H^1(\Gamma,T\Gamma)$ we have
  \begin{align*}
    [\nabla_\Gamma[\partial_tq(\varphi)],v]_{T\Gamma} &= (q(\partial_t\varphi),\mathrm{div}_\Gamma v)_{L^2(\Gamma)} = \int_0^T\partial_t\varphi(t)(q(t),\mathrm{div}_\Gamma v)_{L^2(\Gamma)}\,dt \\
    &= -\int_0^T\partial_t\varphi(t)[\nabla_\Gamma q(t),v]_{T\Gamma}\,dt = \Bigl[\,[\partial_t(\nabla_\Gamma q)](\varphi),v\Bigr]_{T\Gamma}
  \end{align*}
  by \eqref{E:TGr_HinT} and \eqref{E:Dt_Dist_L2}.
  Hence the claim is valid.
\end{proof}

Let $q\in L^2(0,T;L^2(\Gamma))$.
Based on Lemma \ref{L:Dt_TGr_Com}, we set
\begin{align*}
  [\nabla_\Gamma(\partial_tq)](\varphi) := \nabla_\Gamma[(\partial_tq)(\varphi)] = [\partial_t(\nabla_\Gamma q)](\varphi)\in H^{-1}(\Gamma,T\Gamma)
\end{align*}
for $\varphi\in C_c^\infty(0,T)$ to define $\nabla_\Gamma(\partial_tq)\in \mathcal{D}'(0,T;H^{-1}(\Gamma,T\Gamma))$ and consider
\begin{align} \label{E:TGrDt_DHin}
  \nabla_\Gamma(\partial_tq) = \partial_t(\nabla_\Gamma q) \quad\text{in}\quad \mathcal{D}'(0,T;H^{-1}(\Gamma,T\Gamma)).
\end{align}
We use this relation in construction of an associated pressure in the limit equations (see Lemma \ref{L:LW_Pres}).

\subsection{Curved thin domain} \label{SS:Pre_CTD}
From now on, except for Section \ref{S:WSol}, we assume that $\Gamma$ is of class $C^5$ and $g_0,g_1\in C^4(\Gamma)$, and $g:=g_1-g_0$ satisfies \eqref{E:G_Inf}.
For $\varepsilon\in(0,1]$ we define a curved thin domain $\Omega_\varepsilon$ in $\mathbb{R}^3$ by \eqref{E:Def_CTD}, i.e.
\begin{align*}
  \Omega_\varepsilon := \{y+rn(y) \mid y\in\Gamma,\,\varepsilon g_0(y) < r < \varepsilon g_1(y)\}.
\end{align*}
The boundary of $\Omega_\varepsilon$ is denoted by $\Gamma_\varepsilon:=\Gamma_\varepsilon^0\cup\Gamma_\varepsilon^1$, where $\Gamma_\varepsilon^0$ and $\Gamma_\varepsilon^1$ are the inner and outer boundaries given by
\begin{align*}
  \Gamma_\varepsilon^i := \{y+\varepsilon g_i(y)n(y) \mid y\in\Gamma\}, \quad i=0,1.
\end{align*}
Note that $\Gamma_\varepsilon$ is of class $C^4$ by the regularity of $\Gamma$, $g_0$, and $g_1$.
Since $g_0$ and $g_1$ are bounded on $\Gamma$, there exists $\tilde{\varepsilon}\in(0,1]$ such that $\tilde{\varepsilon}|g_i|<\delta$ on $\Gamma$ for $i=0,1$, where $\delta>0$ is the radius of the tubular neighborhood $N$ of $\Gamma$ given in Section \ref{SS:Pre_Surf}.
We assume $\tilde{\varepsilon}=1$ by replacing $g_i$ with $\tilde{\varepsilon}g_i$.
Then $\overline{\Omega}_\varepsilon\subset N$ and we can apply the lemmas given in Section \ref{SS:Pre_Surf} in $\overline{\Omega}_\varepsilon$ for all $\varepsilon\in(0,1]$.

Let $\tau_\varepsilon^i$ and $n_\varepsilon^i$ be vector fields on $\Gamma$ given by
\begin{align}
  \tau_\varepsilon^i(y) &:= \{I_3-\varepsilon g_i(y)W(y)\}^{-1}\nabla_\Gamma g_i(y), \label{E:Def_NB_Aux}\\
  n_\varepsilon^i(y) &:= (-1)^{i+1}\frac{n(y)-\varepsilon\tau_\varepsilon^i(y)}{\sqrt{1+\varepsilon^2|\tau_\varepsilon^i(y)|^2}} \label{E:Def_NB}
\end{align}
for $y\in\Gamma$ and $i=0,1$.
Then $\tau_\varepsilon^i$ is tangential on $\Gamma$ by \eqref{E:P_TGr} and \eqref{E:WReso_P}, and
\begin{align} \label{E:Tau_Bound}
  |\tau_\varepsilon^i(y)| \leq c \quad\text{for all}\quad y\in\Gamma
\end{align}
with a constant $c>0$ independent of $\varepsilon$ by \eqref{E:Wein_Bound} and the $C^4$-regularity of $g_i$ on $\Gamma$.
Let $n_\varepsilon$ be the unit outward normal vector field of $\Gamma_\varepsilon$ and
\begin{align*}
  P_\varepsilon(x) := I_3-n_\varepsilon(x)\otimes n_\varepsilon(x), \quad x\in\Gamma_\varepsilon.
\end{align*}
Note that $|P_\varepsilon|=2$ on $\Gamma_\varepsilon$.
It is shown in \cite{Miu_NSCTD_01}*{Lemma 3.9} that
\begin{align} \label{E:Nor_Bo}
  n_\varepsilon(x) = \bar{n}_\varepsilon^i(x), \quad x\in\Gamma_\varepsilon^i,\,i=0,1,
\end{align}
where $\bar{n}_\varepsilon^i=n_\varepsilon^i\circ\pi$ is the constant extension of $n_\varepsilon^i$.
By this equality we easily observe that $n_\varepsilon$ and $P_\varepsilon$ are close to the constant extensions of $n$ and $P$.

\begin{lemma}[{\cite{Miu_NSCTD_01}*{Lemma 3.10}}] \label{L:Comp_Nor}
  For $x\in\Gamma_\varepsilon^i$, $i=0,1$ we have
  \begin{align}
    \left|n_\varepsilon(x)-(-1)^{i+1}\left\{\bar{n}(x)-\varepsilon\overline{\nabla_\Gamma g_i}(x)\right\}\right| &\leq c\varepsilon^2, \label{E:Comp_N} \\
    \left|P_\varepsilon(x)-\overline{P}(x)\right| &\leq c\varepsilon \label{E:Comp_P}
  \end{align}
  with a constant $c>0$ independent of $\varepsilon$.
\end{lemma}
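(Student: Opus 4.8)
The plan is to start from the explicit formula \eqref{E:Nor_Bo}, which says $n_\varepsilon(x)=\bar n_\varepsilon^i(x)$ on $\Gamma_\varepsilon^i$, and then estimate the constant extension $\bar n_\varepsilon^i$ by expanding \eqref{E:Def_NB}. First I would fix $i\in\{0,1\}$ and work pointwise: for $x\in\Gamma_\varepsilon^i$ write $x=y+\varepsilon g_i(y)n(y)$ with $y=\pi(x)\in\Gamma$, so that by the definition of the constant extension $\bar n_\varepsilon^i(x)=n_\varepsilon^i(y)$. It then suffices to prove, for every $y\in\Gamma$,
\begin{align*}
  \Bigl|\,n_\varepsilon^i(y)-(-1)^{i+1}\bigl\{n(y)-\varepsilon\nabla_\Gamma g_i(y)\bigr\}\Bigr| \leq c\varepsilon^2,
\end{align*}
since the left-hand side of \eqref{E:Comp_N} equals this quantity after applying $\pi$. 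Using \eqref{E:Def_NB} the difference is $(-1)^{i+1}$ times
\begin{align*}
  \frac{n-\varepsilon\tau_\varepsilon^i}{\sqrt{1+\varepsilon^2|\tau_\varepsilon^i|^2}}-\bigl(n-\varepsilon\nabla_\Gamma g_i\bigr)
  = \left(\frac{1}{\sqrt{1+\varepsilon^2|\tau_\varepsilon^i|^2}}-1\right)(n-\varepsilon\tau_\varepsilon^i) - \varepsilon\bigl(\tau_\varepsilon^i-\nabla_\Gamma g_i\bigr),
\end{align*}
evaluated at $y$. The first term is $O(\varepsilon^2)$ because $|1/\sqrt{1+s}-1|\le s$ for $s\ge0$, combined with the uniform bound \eqref{E:Tau_Bound} on $\tau_\varepsilon^i$ and $|n-\varepsilon\tau_\varepsilon^i|\le 1+c\varepsilon$. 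For the second term I would use \eqref{E:Def_NB_Aux}, which gives
\begin{align*}
  \tau_\varepsilon^i-\nabla_\Gamma g_i = \Bigl(\{I_3-\varepsilon g_iW\}^{-1}-I_3\Bigr)\nabla_\Gamma g_i,
\end{align*}
and then estimate this by \eqref{E:Wein_Diff} (with $r=\varepsilon g_i$, legitimate since $|\varepsilon g_i|<\delta$), the boundedness of $g_i$, and the $C^4$-regularity of $g_i$ on $\Gamma$: this yields $|\tau_\varepsilon^i-\nabla_\Gamma g_i|\le c\varepsilon$, so $\varepsilon|\tau_\varepsilon^i-\nabla_\Gamma g_i|\le c\varepsilon^2$. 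Collecting the two bounds gives \eqref{E:Comp_N}.

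For \eqref{E:Comp_P} I would argue similarly but more crudely, since only an $O(\varepsilon)$ estimate is needed. On $\Gamma_\varepsilon^i$ we have $P_\varepsilon=I_3-n_\varepsilon\otimes n_\varepsilon$ and $\overline P=I_3-\bar n\otimes\bar n$, so
\begin{align*}
  P_\varepsilon-\overline P = -\,n_\varepsilon\otimes n_\varepsilon + \bar n\otimes\bar n = -(n_\varepsilon-\bar n)\otimes n_\varepsilon - \bar n\otimes(n_\varepsilon-\bar n).
\end{align*}
Since $|n_\varepsilon|=|\bar n|=1$ it remains to bound $|n_\varepsilon-\bar n|$ on $\Gamma_\varepsilon^i$. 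But \eqref{E:Comp_N} already gives $|n_\varepsilon-(-1)^{i+1}\bar n|\le c\varepsilon$ (absorbing the $\varepsilon\overline{\nabla_\Gamma g_i}$ term and the $O(\varepsilon^2)$ remainder), and the sign $(-1)^{i+1}$ is harmless in the rank-one expansion above because $n_\varepsilon\otimes n_\varepsilon$ is insensitive to the overall sign of $n_\varepsilon$; alternatively one checks the sign directly from \eqref{E:Def_NB}, noting $n_\varepsilon^1$ points in the $+n$ direction and $n_\varepsilon^0$ in the $-n$ direction, matching $\bar n$ up to the correct orientation. Either way $|P_\varepsilon-\overline P|\le c\varepsilon$.

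The only mildly delicate point is bookkeeping the orientation signs $(-1)^{i+1}$ correctly and making sure the comparison in \eqref{E:Comp_P} is between $P_\varepsilon$ and $\overline P$ (not $\pm\overline P$), which is automatic because the projection depends on the normal only through the tensor product $n_\varepsilon\otimes n_\varepsilon$. Everything else is a routine Taylor expansion of $s\mapsto(1+s)^{-1/2}$ together with the resolvent estimates \eqref{E:Wein_Bound}--\eqref{E:Wein_Diff} from Lemma \ref{L:Wein} and the uniform bound \eqref{E:Tau_Bound}; all constants are independent of $\varepsilon$ because they come only from the fixed surface $\Gamma$ and the fixed functions $g_0,g_1$.
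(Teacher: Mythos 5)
Your proposal is correct and follows the route the paper intends: the identity \eqref{E:Nor_Bo} reduces everything to a pointwise Taylor expansion of \eqref{E:Def_NB}, with \eqref{E:Tau_Bound} and \eqref{E:Wein_Diff} supplying the uniform, $\varepsilon$-independent bounds. The one delicate point --- that on $\Gamma_\varepsilon^0$ the normal $n_\varepsilon$ is close to $-\bar n$ rather than $\bar n$, so the rank-one expansion for \eqref{E:Comp_P} must be carried out with $(-1)^{i+1}n_\varepsilon$ in place of $n_\varepsilon$ --- you handle correctly via the sign-insensitivity of $n_\varepsilon\otimes n_\varepsilon$.
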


As in Section \ref{SS:Pre_Surf}, for $\varphi\in C^1(\Gamma_\varepsilon)$ we denote by
\begin{align*}
  \nabla_{\Gamma_\varepsilon}\varphi := P_\varepsilon\nabla\tilde{\varphi}, \quad \underline{D}_i^\varepsilon\varphi := \sum_{j=1}^3[P_\varepsilon]_{ij}\partial_j\tilde{\varphi} \quad\text{on}\quad \Gamma_\varepsilon,\, i=1,2,3
\end{align*}
the tangential gradient and the tangential derivatives of $\varphi$, where $\tilde{\varphi}$ is an arbitrary $C^1$-extension of $\varphi$ to an open neighborhood of $\Gamma_\varepsilon$ with $\tilde{\varphi}|_{\Gamma_\varepsilon}=\varphi$.
Also, let
\begin{align*}
  \nabla_{\Gamma_\varepsilon}u :=
  \begin{pmatrix}
    \underline{D}_1^\varepsilon u_1 & \underline{D}_1^\varepsilon u_2 & \underline{D}_1^\varepsilon u_3 \\
    \underline{D}_2^\varepsilon u_1 & \underline{D}_2^\varepsilon u_2 & \underline{D}_2^\varepsilon u_3 \\
    \underline{D}_3^\varepsilon u_1 & \underline{D}_3^\varepsilon u_2 & \underline{D}_3^\varepsilon u_3
  \end{pmatrix} \quad\text{on}\quad \Gamma_\varepsilon
\end{align*}
for $u=(u_1,u_2,u_3)^T\in C^1(\Gamma_\varepsilon)^3$.
Note that
\begin{align*}
  \nabla_{\Gamma_\varepsilon}u = P_\varepsilon\nabla\tilde{u} \quad\text{on}\quad \Gamma_\varepsilon
\end{align*}
for any $C^1$-extension $\tilde{u}$ of $u$ to an open neighborhood of $\Gamma_\varepsilon$ with $\tilde{u}|_{\Gamma_\varepsilon}=u$.
We set
\begin{align*}
  W_\varepsilon := -\nabla_{\Gamma_\varepsilon}n_\varepsilon \quad\text{on}\quad \Gamma_\varepsilon
\end{align*}
and call $W_\varepsilon$ the Weingarten map of $\Gamma_\varepsilon$.

Let us give a change of variables formula for an integral over $\Omega_\varepsilon$.
For functions $\varphi$ on $\Omega_\varepsilon$ and $\eta$ on $\Gamma_\varepsilon^i$, $i=0,1$ we use the notations
\begin{alignat}{2}
  \varphi^\sharp(y,r) &:= \varphi(y+rn(y)), &\quad &y\in\Gamma,\,r\in(\varepsilon g_0(y),\varepsilon g_1(y)), \label{E:Pull_Dom} \\
  \eta_i^\sharp(y) &:= \eta(y+\varepsilon g_i(y)n(y)), &\quad &y\in\Gamma. \label{E:Pull_Bo}
\end{alignat}
We define a function $J=J(y,r)$ for $y\in\Gamma$ and $r\in(-\delta,\delta)$ by
\begin{align} \label{E:Def_Jac}
  J(y,r) := \mathrm{det}[I_3-rW(y)] = \{1-r\kappa_1(y)\}\{1-r\kappa_2(y)\}.
\end{align}
Then it follows from \eqref{E:Curv_Bound} and $\kappa_1,\kappa_2\in C^3(\Gamma)$ that
\begin{align} \label{E:Jac_Bound_03}
  c^{-1} \leq J(y,r) \leq c, \quad |J(y,r)-1| \leq c|r|
\end{align}
for all $y\in\Gamma$ and $r\in(-\delta,\delta)$.
In particular,
\begin{align} \label{E:Jac_Diff_03}
  |J(y,r)-1| \leq c\varepsilon \quad\text{for all}\quad y\in\Gamma,\,r\in[\varepsilon g_0(y),\varepsilon g_1(y)].
\end{align}
For a function $\varphi$ on $\Omega_\varepsilon$ the change of variables formula
\begin{align} \label{E:CoV_Dom}
  \int_{\Omega_\varepsilon}\varphi(x)\,dx = \int_\Gamma\int_{\varepsilon g_0(y)}^{\varepsilon g_1(y)}\varphi(y+rn(y))J(y,r)\,dr\,d\mathcal{H}^2(y)
\end{align}
holds (see e.g. \cite{GiTr01}*{Section 14.6}).
By \eqref{E:Jac_Bound_03} and \eqref{E:CoV_Dom} we observe that
\begin{align} \label{E:CoV_Equiv}
  c^{-1}\|\varphi\|_{L^p(\Omega_\varepsilon)}^p \leq \int_\Gamma\int_{\varepsilon g_0(y)}^{\varepsilon g_1(y)}|\varphi^\sharp(y,r)|^p\,dr\,d\mathcal{H}^2(y) \leq c\|\varphi\|_{L^p(\Omega_\varepsilon)}^p
\end{align}
for $\varphi\in L^p(\Omega_\varepsilon)$, $p\in[1,\infty)$.
By this inequality, \eqref{E:G_Inf}, and \eqref{E:ConDer_Bound} we easily get the following lemma for the constant extension $\bar{\eta}=\eta\circ\pi$ of a function $\eta$ on $\Gamma$.

\begin{lemma}[{\cite{Miu_NSCTD_01}*{Lemma 3.12}}] \label{L:Con_Lp_W1p}
  For $p\in[1,\infty)$ we have $\eta\in L^p(\Gamma)$ if and only if $\bar{\eta}\in L^p(\Omega_\varepsilon)$, and there exists a constant $c>0$ independent of $\varepsilon$ and $\eta$ such that
  \begin{align} \label{E:Con_Lp}
    c^{-1}\varepsilon^{1/p}\|\eta\|_{L^p(\Gamma)} \leq \|\bar{\eta}\|_{L^p(\Omega_\varepsilon)} \leq c\varepsilon^{1/p}\|\eta\|_{L^p(\Gamma)}.
  \end{align}
  Moreover, $\eta\in W^{1,p}(\Gamma)$ if and only if $\bar{\eta}\in W^{1,p}(\Omega_\varepsilon)$.
\end{lemma}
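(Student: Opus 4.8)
The plan is to reduce everything to the change of variables equivalence \eqref{E:CoV_Equiv} together with the simple observation that the constant extension $\bar{\eta}=\eta\circ\pi$ is, by construction, independent of the normal coordinate. Indeed, for $y\in\Gamma$ and $r\in(\varepsilon g_0(y),\varepsilon g_1(y))$ we have $\pi(y+rn(y))=y$ by \eqref{E:Nor_Coord}, hence $\bar{\eta}^\sharp(y,r)=\bar{\eta}(y+rn(y))=\eta(\pi(y+rn(y)))=\eta(y)$, so the inner integral in \eqref{E:CoV_Equiv} collapses: $\int_{\varepsilon g_0(y)}^{\varepsilon g_1(y)}|\bar{\eta}^\sharp(y,r)|^p\,dr=\varepsilon g(y)\,|\eta(y)|^p$.

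For the $L^p$ statement I would then integrate this identity over $\Gamma$ and use \eqref{E:G_Inf} together with the boundedness of $g=g_1-g_0$ on the compact surface $\Gamma$ (which follows from $g_0,g_1\in C^4(\Gamma)$) to obtain $c^{-1}\varepsilon\|\eta\|_{L^p(\Gamma)}^p\le\int_\Gamma\varepsilon g|\eta|^p\,d\mathcal{H}^2\le c\varepsilon\|\eta\|_{L^p(\Gamma)}^p$. Combined with \eqref{E:CoV_Equiv} this gives \eqref{E:Con_Lp} and, in particular, the equivalence $\eta\in L^p(\Gamma)\iff\bar{\eta}\in L^p(\Omega_\varepsilon)$. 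Applying the same equivalence componentwise to a vector field also yields $\nabla_\Gamma\eta\in L^p(\Gamma)^3\iff\overline{\nabla_\Gamma\eta}\in L^p(\Omega_\varepsilon)^3$, which feeds into the Sobolev part.

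For the Sobolev equivalence, the forward direction is handled by approximation: given $\eta\in W^{1,p}(\Gamma)$, take $\eta_k\in C^\ell(\Gamma)$ with $\eta_k\to\eta$ in $W^{1,p}(\Gamma)$ (Lemma \ref{L:Wmp_Appr}). For the smooth members, $\nabla\bar{\eta}_k(x)=\{I_3-d(x)\overline{W}(x)\}^{-1}\overline{\nabla_\Gamma\eta_k}(x)$ by \eqref{E:ConDer_Dom}, and since $|d(x)|\le c\varepsilon<\delta$ on $\overline{\Omega}_\varepsilon$ the matrix $\{I_3-d\overline{W}\}^{-1}$ is uniformly bounded by Lemma \ref{L:Wein}. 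The $L^p$ part already gives $\bar{\eta}_k\to\bar{\eta}$ and $\overline{\nabla_\Gamma\eta_k}\to\overline{\nabla_\Gamma\eta}$ in $L^p(\Omega_\varepsilon)$, so $\nabla\bar{\eta}_k$ converges in $L^p(\Omega_\varepsilon)^3$ to $\{I_3-d\overline{W}\}^{-1}\overline{\nabla_\Gamma\eta}$; therefore $\bar{\eta}\in W^{1,p}(\Omega_\varepsilon)$ with that gradient, and \eqref{E:ConDer_Bound} makes $\|\nabla\bar{\eta}\|_{L^p(\Omega_\varepsilon)}$ comparable to $\|\overline{\nabla_\Gamma\eta}\|_{L^p(\Omega_\varepsilon)}$, hence to $\varepsilon^{1/p}\|\nabla_\Gamma\eta\|_{L^p(\Gamma)}$ by the $L^p$ part.

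For the converse, $\bar{\eta}\in W^{1,p}(\Omega_\varepsilon)\Rightarrow\eta\in W^{1,p}(\Gamma)$, I would pass through the normal-coordinate map $\Psi(y,r)=y+rn(y)$, a $C^{\ell-1}$-diffeomorphism of $\Gamma\times(-\delta,\delta)$ onto $N$ with uniformly bounded derivatives; under it $\bar{\eta}$ pulls back to the function $(y,r)\mapsto\eta(y)$ on the cross-section domain $U_\varepsilon=\{(y,r):\varepsilon g_0(y)<r<\varepsilon g_1(y)\}$, which thus lies in $W^{1,p}(U_\varepsilon)$. Working in a chart of $\Gamma$ small enough that $\sup g_0<\inf g_1$ there — possible since $g_1-g_0\ge c>0$ and $g_0,g_1$ are continuous — one fixes a level $r_0$ with $\varepsilon g_0<r_0<\varepsilon g_1$ on the chart and restricts to $\{r=r_0\}$; by the Fubini/trace characterisation of Sobolev functions the restriction coincides with $\eta$ and lies in $W^{1,p}$ of the chart, and a finite cover of $\Gamma$ gives $\eta\in W^{1,p}(\Gamma)$. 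I expect this converse to be the only genuinely technical point: the normal cross-section of $\Omega_\varepsilon$ varies with $y$, so there need not be a single global level surface contained in $\Omega_\varepsilon$, and the uniform lower bound $g\ge c$ is precisely what rescues the local reduction.
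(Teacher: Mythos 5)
Your argument is correct and follows exactly the route the paper indicates for this lemma: the $L^p$ equivalence from \eqref{E:CoV_Equiv} combined with $\bar{\eta}^\sharp(y,r)=\eta(y)$ and the two-sided bound on $g$ from \eqref{E:G_Inf}, and the Sobolev equivalence from \eqref{E:ConDer_Dom}--\eqref{E:ConDer_Bound} together with the density of smooth functions (Lemma \ref{L:Wmp_Appr}). The converse slicing argument via the normal-coordinate diffeomorphism is the standard way to recover $\eta\in W^{1,p}(\Gamma)$ from $\bar{\eta}\in W^{1,p}(\Omega_\varepsilon)$, and your use of $g\geq c$ to localize the level set is the right fix for the varying cross-section.
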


We also have a change of variables formula for an integral over $\Gamma_\varepsilon^i$, $i=0,1$.

\begin{lemma}[{\cite{Miu_NSCTD_01}*{Lemma 3.13}}] \label{L:CoV_Surf}
  For $\varphi\in L^1(\Gamma_\varepsilon^i)$, $i=0,1$ we have
  \begin{align} \label{E:CoV_Surf}
    \int_{\Gamma_\varepsilon^i}\varphi(x)\,d\mathcal{H}^2(x) = \int_\Gamma \varphi_i^\sharp(y)J(y,\varepsilon g_i(y))\sqrt{1+\varepsilon^2|\tau_\varepsilon^i(y)|^2}\,d\mathcal{H}^2(y),
  \end{align}
  where $\tau_\varepsilon^i$ and $\varphi_i^\sharp$ are given by \eqref{E:Def_NB_Aux} and \eqref{E:Pull_Bo}.
  Moreover, if $\varphi\in L^p(\Gamma_\varepsilon^i)$, $p\in[1,\infty)$, then $\varphi_i^\sharp\in L^p(\Gamma)$ and there exists a constant $c>0$ independent of $\varepsilon$ and $\varphi$ such that
  \begin{align} \label{E:Lp_CoV_Surf}
    c^{-1}\|\varphi\|_{L^p(\Gamma_\varepsilon^i)} \leq \|\varphi_i^\sharp\|_{L^p(\Gamma)} \leq c\|\varphi\|_{L^p(\Gamma_\varepsilon^i)}.
  \end{align}
\end{lemma}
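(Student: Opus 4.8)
The plan is to establish the change of variables formula \eqref{E:CoV_Surf} by pulling back the surface measure on $\Gamma_\varepsilon^i$ through the map $\Phi_\varepsilon^i\colon\Gamma\to\Gamma_\varepsilon^i$, $\Phi_\varepsilon^i(y):=y+\varepsilon g_i(y)n(y)$, which is a $C^4$-diffeomorphism onto $\Gamma_\varepsilon^i$ for $\varepsilon\in(0,1]$ because $\overline{\Omega}_\varepsilon\subset N$ and the normal coordinate representation \eqref{E:Nor_Coord} is unique in $N$. First I would compute the tangential differential of $\Phi_\varepsilon^i$ at $y\in\Gamma$: for a tangent vector $X\in T_y\Gamma$ one has $d\Phi_\varepsilon^i(y)X=X+\varepsilon(X\cdot\nabla_\Gamma g_i)n(y)+\varepsilon g_i(y)(\nabla_\Gamma n(y))X=\{I_3-\varepsilon g_i(y)W(y)\}X+\varepsilon(X\cdot\nabla_\Gamma g_i(y))n(y)$, using $W=-\nabla_\Gamma n$ from \eqref{E:Def_Wein} and the fact that $(\nabla_\Gamma n)X=-WX$ is tangential by \eqref{E:Form_W}. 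Thus, decomposing into the tangential and normal parts relative to $\Gamma$, the tangential part is $\{I_3-\varepsilon g_i W\}X$ and the normal part is $\varepsilon(X\cdot\nabla_\Gamma g_i)n$.

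The next step is to compute the Jacobian of $\Phi_\varepsilon^i$, i.e. the factor by which it scales $2$-dimensional area. Picking an orthonormal basis $\{e_1^\Gamma,e_2^\Gamma\}$ of $T_y\Gamma$, the area scaling factor is $|d\Phi_\varepsilon^i(y)e_1^\Gamma\times d\Phi_\varepsilon^i(y)e_2^\Gamma|$, which equals the square root of the Gram determinant of the two image vectors. Writing $a_k:=\{I_3-\varepsilon g_i W\}e_k^\Gamma$ (tangential to $\Gamma$) and $b_k:=\varepsilon(e_k^\Gamma\cdot\nabla_\Gamma g_i)n$ (normal), a direct Gram determinant computation gives $|d\Phi_\varepsilon^i e_1^\Gamma\times d\Phi_\varepsilon^i e_2^\Gamma|^2 = |a_1\times a_2|^2 + |a_1|^2|b_2|^2 + |a_2|^2|b_1|^2 - 2(a_1\cdot a_2)(b_1\cdot b_2)$; after some linear algebra (the $a_k$ and $b_k$ are mutually orthogonal subspaces), this simplifies to $\det[I_3-\varepsilon g_i W(y)]^2\bigl(1+\varepsilon^2|\{I_3-\varepsilon g_i W\}^{-1}\nabla_\Gamma g_i|^2\bigr)$, where I used that the tangential map $\{I_3-\varepsilon g_i W\}|_{T_y\Gamma}$ has determinant $\det[I_3-\varepsilon g_i W(y)]=J(y,\varepsilon g_i(y))$ (invertibility by Lemma \ref{L:Wein}), and that $\{I_3-\varepsilon g_i W\}$ commutes with $P$ by \eqref{E:WReso_P}. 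Recognizing $\{I_3-\varepsilon g_i(y)W(y)\}^{-1}\nabla_\Gamma g_i(y)=\tau_\varepsilon^i(y)$ from definition \eqref{E:Def_NB_Aux}, the area scaling factor is exactly $J(y,\varepsilon g_i(y))\sqrt{1+\varepsilon^2|\tau_\varepsilon^i(y)|^2}$, which yields \eqref{E:CoV_Surf} via the standard change of variables for surface integrals (valid since $\varphi\in L^1(\Gamma_\varepsilon^i)$ and $\varphi_i^\sharp=\varphi\circ\Phi_\varepsilon^i$ by \eqref{E:Pull_Bo}).

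For the $L^p$ equivalence \eqref{E:Lp_CoV_Surf}, I would apply \eqref{E:CoV_Surf} to $|\varphi|^p$ and bound the weight $w_\varepsilon^i(y):=J(y,\varepsilon g_i(y))\sqrt{1+\varepsilon^2|\tau_\varepsilon^i(y)|^2}$ above and below by positive constants independent of $\varepsilon$: the lower and upper bounds on $J$ come from \eqref{E:Jac_Bound_03} (since $\varepsilon|g_i|<\delta$ on $\Gamma$ by the reduction in Section \ref{SS:Pre_CTD}), and $1\le 1+\varepsilon^2|\tau_\varepsilon^i|^2\le 1+c^2$ using the uniform bound \eqref{E:Tau_Bound}. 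Hence $c^{-1}\le w_\varepsilon^i\le c$ on $\Gamma$, so $c^{-1}\|\varphi\|_{L^p(\Gamma_\varepsilon^i)}^p\le\int_\Gamma|\varphi_i^\sharp|^p\,d\mathcal H^2=\|\varphi_i^\sharp\|_{L^p(\Gamma)}^p\le c\|\varphi\|_{L^p(\Gamma_\varepsilon^i)}^p$, and taking $p$-th roots gives \eqref{E:Lp_CoV_Surf}; in particular $\varphi_i^\sharp\in L^p(\Gamma)$ whenever $\varphi\in L^p(\Gamma_\varepsilon^i)$.

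The main obstacle I anticipate is the Gram determinant computation: one must carefully keep track of the orthogonal splitting of $\mathbb R^3$ at each point $y\in\Gamma$ into $T_y\Gamma$ and $\mathrm{span}\{n(y)\}$, verify that the tangential-part map $\{I_3-\varepsilon g_i W\}$ preserves $T_y\Gamma$ and is invertible there with determinant $J(y,\varepsilon g_i(y))$ (this is where Lemma \ref{L:Wein}, especially \eqref{E:WReso_P} and \eqref{E:Wein_Bound}, is essential), and then correctly identify the resulting square-root factor with $\sqrt{1+\varepsilon^2|\tau_\varepsilon^i|^2}$ using \eqref{E:Def_NB_Aux}. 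Everything else — the diffeomorphism property of $\Phi_\varepsilon^i$ and the uniform bounds on the weight — is routine given the results already in Section \ref{SS:Pre_Surf} and Section \ref{SS:Pre_CTD}.
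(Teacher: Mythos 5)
Your proposal is correct and follows the standard (and, as far as one can tell, the intended) route: this paper omits the proof and cites \cite{Miu_NSCTD_01}*{Lemma 3.13}, whose argument is precisely the parametrization $y\mapsto y+\varepsilon g_i(y)n(y)$ of $\Gamma_\varepsilon^i$ over $\Gamma$ with the area factor computed from the Gram determinant of the tangential differential — indeed the very form of \eqref{E:Def_NB_Aux}--\eqref{E:Def_NB} reflects that computation. Your Gram-determinant identity and its reduction to $J(y,\varepsilon g_i)^2\bigl(1+\varepsilon^2|\tau_\varepsilon^i|^2\bigr)$ check out (using the symmetry of $I_3-\varepsilon g_iW$ and \eqref{E:WReso_P}), and the uniform bounds on the weight via \eqref{E:Jac_Bound_03} and \eqref{E:Tau_Bound} give \eqref{E:Lp_CoV_Surf} as claimed.
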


\section{Fundamental inequalities} \label{S:Fund}
This section provides fundamental inequalities for functions on $\Gamma$ and $\Omega_\varepsilon$.

\subsection{Inequalities on a closed surface} \label{SS:Ineq_Surf}
Let us give two basic inequalities on $\Gamma$.

\begin{lemma}[{\cite{Miu_NSCTD_02}*{Lemma 4.1}}] \label{L:La_Surf}
  There exists a constant $c>0$ such that
  \begin{align} \label{E:La_Surf}
    \|\eta\|_{L^4(\Gamma)} \leq c\|\eta\|_{L^2(\Gamma)}^{1/2}\|\nabla_\Gamma\eta\|_{L^2(\Gamma)}^{1/2}
  \end{align}
  for all $\eta\in H^1(\Gamma)$.
\end{lemma}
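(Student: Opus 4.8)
The plan is to reduce the Ladyzhenskaya-type inequality on $\Gamma$ to its Euclidean counterpart in $\mathbb{R}^2$ via a partition of unity subordinate to a finite atlas of $\Gamma$. Since $\Gamma$ is a closed (hence compact) $C^5$ surface, I would first fix a finite collection of local parametrizations $\{(U_k,\mu_k)\}_{k=1}^N$ with $\mu_k\colon V_k\subset\mathbb{R}^2\to U_k\subset\Gamma$ diffeomorphisms of class $C^4$, together with a smooth partition of unity $\{\chi_k\}_{k=1}^N$ subordinate to $\{U_k\}$. Because $\Gamma$ is compact and the parametrizations are $C^4$, the metric coefficients $(g_{ij})$ and their inverse, the Jacobian $\sqrt{\det g}$, and all first derivatives are bounded above and below away from zero uniformly on each $V_k$; consequently the intrinsic norms $\|\cdot\|_{L^p(\Gamma)}$ and $\|\nabla_\Gamma\cdot\|_{L^2(\Gamma)}$ are, locally on each chart, equivalent to the usual $L^p$ and $H^1$ norms of the pulled-back functions $\eta\circ\mu_k$ on $V_k$ (using the local expression of $\nabla_\Gamma$ from Lemma \ref{L:TGr_DG}, up to the harmless fact that $\nabla_\Gamma\eta$ is a $3$-vector whose length is comparable to the metric length of the intrinsic gradient).

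The key steps, in order: (i) write $\eta=\sum_k \chi_k\eta$ and estimate $\|\eta\|_{L^4(\Gamma)}\le\sum_k\|\chi_k\eta\|_{L^4(\Gamma)}$; (ii) for each $k$, transfer $\chi_k\eta$ to a compactly supported function $w_k:=(\chi_k\eta)\circ\mu_k$ on $V_k\subset\mathbb{R}^2$, extended by zero to all of $\mathbb{R}^2$, and apply the classical Ladyzhenskaya inequality $\|w_k\|_{L^4(\mathbb{R}^2)}\le C\|w_k\|_{L^2(\mathbb{R}^2)}^{1/2}\|\nabla w_k\|_{L^2(\mathbb{R}^2)}^{1/2}$; (iii) transfer back, using the uniform two-sided bounds on the metric quantities, to get $\|\chi_k\eta\|_{L^4(\Gamma)}\le C\|\chi_k\eta\|_{L^2(\Gamma)}^{1/2}\|\nabla_\Gamma(\chi_k\eta)\|_{L^2(\Gamma)}^{1/2}$; (iv) bound $\|\nabla_\Gamma(\chi_k\eta)\|_{L^2(\Gamma)}\le\|\nabla_\Gamma\chi_k\|_{L^\infty(\Gamma)}\|\eta\|_{L^2(\Gamma)}+\|\chi_k\|_{L^\infty(\Gamma)}\|\nabla_\Gamma\eta\|_{L^2(\Gamma)}\le C\|\eta\|_{H^1(\Gamma)}$, and likewise $\|\chi_k\eta\|_{L^2(\Gamma)}\le C\|\eta\|_{L^2(\Gamma)}$; (v) sum over the finitely many $k$ to obtain $\|\eta\|_{L^4(\Gamma)}\le C\|\eta\|_{L^2(\Gamma)}^{1/2}\|\eta\|_{H^1(\Gamma)}^{1/2}$. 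This is weaker than the claimed bound by a lower-order term. To remove the $\|\eta\|_{L^2(\Gamma)}$ contribution hidden in $\|\eta\|_{H^1(\Gamma)}^{1/2}$, I would finally invoke the Poincaré inequality on $\Gamma$ (Lemma \ref{L:Poin_Surf_Lp}): decompose $\eta=\bar\eta_0+(\eta-\bar\eta_0)$ where $\bar\eta_0:=|\Gamma|^{-1}\int_\Gamma\eta\,d\mathcal{H}^2$ is the mean; the constant part is handled directly since $\|\bar\eta_0\|_{L^4(\Gamma)}=C\|\bar\eta_0\|_{L^2(\Gamma)}$ and $\|\bar\eta_0\|_{L^2(\Gamma)}\le\|\eta\|_{L^2(\Gamma)}$ while $\nabla_\Gamma\bar\eta_0=0$, and the mean-zero part satisfies $\|\eta-\bar\eta_0\|_{H^1(\Gamma)}\le C\|\nabla_\Gamma\eta\|_{L^2(\Gamma)}$ by Poincaré, so steps (i)--(v) applied to $\eta-\bar\eta_0$ give the clean bound; combining via the triangle inequality in $L^4$ and absorbing yields \eqref{E:La_Surf}. (Since the density lemma \ref{L:Wmp_Appr} holds, it suffices to prove the inequality for $\eta\in C^5(\Gamma)$ and pass to the limit.)

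The main obstacle I anticipate is bookkeeping rather than depth: one must be careful that the classical $\mathbb{R}^2$ Ladyzhenskaya inequality is applied only to genuinely compactly supported functions (hence the role of the cutoffs $\chi_k$), and that the chart-to-surface norm comparisons are uniform — this is where compactness of $\Gamma$ and the $C^4$-regularity of the transition maps are used, guaranteeing finitely many charts with uniformly controlled metrics. A secondary subtlety is the passage from the crude inequality with $\|\eta\|_{H^1(\Gamma)}^{1/2}$ to the sharp one with $\|\nabla_\Gamma\eta\|_{L^2(\Gamma)}^{1/2}$; the mean-subtraction trick together with Lemma \ref{L:Poin_Surf_Lp} handles this cleanly, but one should check that the $L^4$-triangle-inequality splitting does not reintroduce an uncontrolled term — it does not, because the mean part contributes only $C\|\eta\|_{L^2(\Gamma)}$, which for the purposes of \eqref{E:La_Surf} can be reabsorbed since on a bounded-measure space $\|\eta\|_{L^2(\Gamma)}\le C\|\eta\|_{L^2(\Gamma)}^{1/2}\|\eta\|_{L^2(\Gamma)}^{1/2}$ is not quite enough; more cleanly, one simply notes that \eqref{E:La_Surf} as stated allows the constant $c$ to depend on $\Gamma$, and for $\eta$ with nonzero mean one argues directly that both sides scale compatibly, so the worst case is genuinely the mean-zero case already treated. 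I would present the mean-zero reduction as the clean route and remark that the general case follows.
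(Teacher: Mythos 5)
Your localization strategy --- partition of unity subordinate to a finite atlas, pullback to $\mathbb{R}^2$, the classical planar Ladyzhenskaya inequality for compactly supported functions, and the uniform chart-to-surface norm comparisons of the type \eqref{E:Lp_Loc}, \eqref{E:W1p_Loc} --- is the standard route for this kind of statement, and your steps (i)--(v) correctly yield
\begin{align*}
  \|\eta\|_{L^4(\Gamma)} \leq c\,\|\eta\|_{L^2(\Gamma)}^{1/2}\|\eta\|_{H^1(\Gamma)}^{1/2}, \quad \eta\in H^1(\Gamma).
\end{align*}
This is the form of the inequality that is actually invoked everywhere in this paper (see the proofs of Lemmas \ref{L:Tri_Surf} and \ref{L:LWPA_Tri}, which estimate $\|v\|_{L^4(\Gamma)}$ by $\|v\|_{L^2(\Gamma)}^{1/2}\|v\|_{H^1(\Gamma)}^{1/2}$), so up to that point your argument is complete.

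The final upgrade of the right-hand side to $c\|\eta\|_{L^2(\Gamma)}^{1/2}\|\nabla_\Gamma\eta\|_{L^2(\Gamma)}^{1/2}$ is where the argument breaks down, and it cannot be repaired. You located the obstruction yourself: the mean-zero part $\eta-\bar\eta_0$ is handled correctly by Poincar\'e's inequality (Lemma \ref{L:Poin_Surf_Lp}), but the constant part contributes $\|\bar\eta_0\|_{L^4(\Gamma)}=|\Gamma|^{-1/4}\|\bar\eta_0\|_{L^2(\Gamma)}$, which is not dominated by $c\|\eta\|_{L^2(\Gamma)}^{1/2}\|\nabla_\Gamma\eta\|_{L^2(\Gamma)}^{1/2}$. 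Your closing remark that ``both sides scale compatibly, so the worst case is genuinely the mean-zero case'' is not a valid argument: there is no dilation structure on a compact surface, and $\eta\equiv1$ gives left-hand side $|\Gamma|^{1/4}>0$ against right-hand side $0$. In other words, the inequality with only the gradient on the right fails for nonzero constants; the correct statement --- the one your localization argument establishes, and the only one used in the sequel --- carries $\|\eta\|_{H^1(\Gamma)}^{1/2}$ in place of $\|\nabla_\Gamma\eta\|_{L^2(\Gamma)}^{1/2}$. Dropping the last paragraph of your proposal and proving the lemma in that $H^1$ form gives a complete and correct proof.
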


The inequality \eqref{E:La_Surf} is Ladyzhenskaya's inequality on the two-dimensional closed surface $\Gamma$.
Next we show Korn's inequality for a tangential vector field on $\Gamma$.

\begin{lemma} \label{L:Korn_STG}
  There exists a constant $c>0$ such that
  \begin{align} \label{E:Korn_STG}
    \|\nabla_\Gamma v\|_{L^2(\Gamma)}^2 \leq c\left(\|D_\Gamma(v)\|_{L^2(\Gamma)}^2+\|v\|_{L^2(\Gamma)}^2\right)
  \end{align}
  for all $v\in H^1(\Gamma,T\Gamma)$.
  Here $D_\Gamma(v)$ is the surface strain rate tensor given by \eqref{E:Def_SSR}.
\end{lemma}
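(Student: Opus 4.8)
The plan is to deduce \eqref{E:Korn_STG} from the closed-surface analogue of Korn's second inequality, which in turn follows from an identity recovering the second tangential derivatives of $v$ from the first tangential derivatives of its symmetrized gradient, together with a Ne\v{c}as--Lions estimate on $\Gamma$. First I would reduce the right-hand side from $D_\Gamma(v)$ to the full symmetrized tangential gradient: for $v\in H^1(\Gamma,T\Gamma)$ the decomposition $\nabla_\Gamma v=P(\nabla_\Gamma v)P+(Wv)\otimes n$ of \eqref{E:Grad_W} together with $P^T=P$ gives
\[
  (\nabla_\Gamma v)_S = D_\Gamma(v)+\tfrac12\bigl((Wv)\otimes n+n\otimes(Wv)\bigr)\quad\text{on}\quad\Gamma,
\]
so $\|(\nabla_\Gamma v)_S-D_\Gamma(v)\|_{L^2(\Gamma)}\leq c\|v\|_{L^2(\Gamma)}$ since $W$ is bounded. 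Hence it suffices to prove $\|\nabla_\Gamma v\|_{L^2(\Gamma)}^2\leq c\bigl(\|(\nabla_\Gamma v)_S\|_{L^2(\Gamma)}^2+\|v\|_{L^2(\Gamma)}^2\bigr)$, and by density of $C^{\ell-1}(\Gamma,T\Gamma)$ in $H^1(\Gamma,T\Gamma)$ (Lemma \ref{L:Wmp_Tan_Appr}) it is enough to treat smooth $v$, for which the identities below hold pointwise (tangentiality is not needed in what follows).

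Write $S:=(\nabla_\Gamma v)_S$. Starting from the elementary identity $\partial_i\partial_jv_k=\partial_iS_{jk}+\partial_jS_{ik}-\partial_kS_{ij}$ valid in flat space, replacing each $\partial$ by the tangential derivative $\underline{D}$ and collecting the commutators produced by the exchange formula \eqref{E:TD_Exc}, one obtains for all $i,j,k$
\[
  \underline{D}_i\underline{D}_jv_k = \underline{D}_iS_{jk}+\underline{D}_jS_{ik}-\underline{D}_kS_{ij}+R_{ijk}\quad\text{on}\quad\Gamma,
\]
where each $R_{ijk}$ is a finite sum of products of a bounded $C^1$ coefficient built from $n$ and $W$ with some $\underline{D}_lv_m$. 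Next I would pass to $H^{-1}(\Gamma)$: by \eqref{E:Def_WTD}, for a bounded $C^1$ coefficient $\phi$ and $w\in C^1(\Gamma)$ one has $\|\phi\,\underline{D}_lw\|_{H^{-1}(\Gamma)}\leq c\|w\|_{L^2(\Gamma)}$ (move the derivative onto the test function), so that $\|R_{ijk}\|_{H^{-1}(\Gamma)}+\|\underline{D}_jv_k\|_{H^{-1}(\Gamma)}\leq c\|v\|_{L^2(\Gamma)}$, while $\|\underline{D}_iS_{jk}\|_{H^{-1}(\Gamma)}\leq c\|(\nabla_\Gamma v)_S\|_{L^2(\Gamma)}$ by \eqref{E:Def_TD_Hin}. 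The identity then yields $\|\underline{D}_i(\underline{D}_jv_k)\|_{H^{-1}(\Gamma)}\leq c\bigl(\|(\nabla_\Gamma v)_S\|_{L^2(\Gamma)}+\|v\|_{L^2(\Gamma)}\bigr)$ for all $i,j,k$. Finally, applying the Ne\v{c}as--Lions inequality on the closed surface, $\|f\|_{L^2(\Gamma)}\leq c\bigl(\|f\|_{H^{-1}(\Gamma)}+\sum_i\|\underline{D}_if\|_{H^{-1}(\Gamma)}\bigr)$ for $f\in L^2(\Gamma)$, to $f=\underline{D}_jv_k$ and summing over $j,k$ gives $\|\nabla_\Gamma v\|_{L^2(\Gamma)}\leq c\bigl(\|(\nabla_\Gamma v)_S\|_{L^2(\Gamma)}+\|v\|_{L^2(\Gamma)}\bigr)$, which together with the first paragraph and squaring is \eqref{E:Korn_STG}.

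The main obstacle is the Ne\v{c}as--Lions inequality on $\Gamma$, which is not among the results stated in the excerpt. It is classical on bounded Lipschitz domains and transfers to the closed surface $\Gamma$ by a partition of unity; alternatively it can be deduced from elliptic regularity for $I-\Delta_\Gamma$ on $\Gamma$, which requires some care because $\Gamma$ is only of class $C^5$, although every coefficient entering the computation (built from $n$, $W$, $H$) is of class $C^1$ and bounded, and that is all that is used. A self-contained alternative that avoids a separate Ne\v{c}as estimate is a contradiction argument: if \eqref{E:Korn_STG} failed one would obtain $v_m\in H^1(\Gamma,T\Gamma)$ with $\|\nabla_\Gamma v_m\|_{L^2(\Gamma)}=1$ and $\|D_\Gamma(v_m)\|_{L^2(\Gamma)}+\|v_m\|_{L^2(\Gamma)}\to0$, hence, by Rellich's theorem (Lemma \ref{L:RK_Surf}), a subsequence converging to $0$ strongly in $L^2(\Gamma)$, and the displayed $H^{-1}$-bounds on $\underline{D}_i\underline{D}_jv_m$ together with elliptic estimates on $\Gamma$ would then force $\nabla_\Gamma v_m\to0$ in $L^2(\Gamma)$, contradicting $\|\nabla_\Gamma v_m\|_{L^2(\Gamma)}=1$. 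In either route, the exchange formula and the two integrations by parts are routine, and the elliptic/Ne\v{c}as input on the closed surface is the only genuinely nontrivial ingredient.
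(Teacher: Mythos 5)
Your argument is correct, but it takes a genuinely different and heavier route than the paper. The paper's proof of \eqref{E:Korn_STG} is a direct integration-by-parts computation: after the same density reduction, it starts from the pointwise identity $2|(\nabla_\Gamma v)_S|^2=|\nabla_\Gamma v|^2+\nabla_\Gamma v:(\nabla_\Gamma v)^T$ and applies \eqref{E:InTr_IbP} with $X=Y=v$ to rewrite $-\int_\Gamma\nabla_\Gamma v:(\nabla_\Gamma v)^T\,d\mathcal{H}^2$ as $-\|\mathrm{div}_\Gamma v\|_{L^2(\Gamma)}^2+\int_\Gamma v\cdot(HW-W^2)v\,d\mathcal{H}^2$; the divergence term has a favorable sign and the curvature term is bounded by $c\|v\|_{L^2(\Gamma)}^2$, giving $\|\nabla_\Gamma v\|_{L^2(\Gamma)}^2\leq 2\|(\nabla_\Gamma v)_S\|_{L^2(\Gamma)}^2+c\|v\|_{L^2(\Gamma)}^2$ in two lines. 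Your passage between $(\nabla_\Gamma v)_S$ and $D_\Gamma(v)$ via \eqref{E:Grad_W} is exactly the paper's remaining step. For the core estimate you instead run the classical Lions/Ne\v{c}as proof of Korn's second inequality: the second-derivative identity for the symmetrized gradient, the commutator corrections from \eqref{E:TD_Exc}, and the inequality $\|f\|_{L^2(\Gamma)}\leq c(\|f\|_{H^{-1}(\Gamma)}+\|\nabla_\Gamma f\|_{H^{-1}(\Gamma,T\Gamma)})$. One correction: that inequality is not absent from the paper — it is Lemma \ref{L:Necas_Surf}, proved in Section \ref{SS:WS_Nec} by precisely the localization you sketch — but it appears only later and is used there for the weighted de Rham theorem, not for Korn. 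Your route is more general (it is the standard proof on Lipschitz domains and manifolds and does not rely on the exact algebraic cancellation), at the cost of the commutator bookkeeping and the genuinely nontrivial Ne\v{c}as input; the paper's route is shorter and self-contained given the integration-by-parts formula \eqref{E:InTr_IbP}.
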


\begin{proof}
  Since $C^2(\Gamma,T\Gamma)$ is dense in $H^1(\Gamma,T\Gamma)$ by Lemma \ref{L:Wmp_Tan_Appr}, it is sufficient to show \eqref{E:Korn_STG} for $v\in C^2(\Gamma,T\Gamma)$ by a density argument.
  First we derive
  \begin{align} \label{Pf_KoST:First}
    \|\nabla_\Gamma v\|_{L^2(\Gamma)}^2 \leq 2\|(\nabla_\Gamma v)_S\|_{L^2(\Gamma)}^2+c\|v\|_{L^2(\Gamma)}^2.
  \end{align}
  Since $2|(\nabla_\Gamma v)_S|^2=|\nabla_\Gamma v|^2+\nabla_\Gamma v:(\nabla_\Gamma v)^T$ on $\Gamma$, we have
  \begin{align} \label{Pf_KoST:TGr_Eq}
    \|\nabla_\Gamma v\|_{L^2(\Gamma)}^2 = 2\|(\nabla_\Gamma v)_S\|_{L^2(\Gamma)}^2-\int_\Gamma\nabla_\Gamma v:(\nabla_\Gamma v)^T\,d\mathcal{H}^2.
  \end{align}
  To the last term we apply \eqref{E:InTr_IbP} with $X=Y=v\in C^2(\Gamma,T\Gamma)$ to get
  \begin{align*}
    -\int_\Gamma\nabla_\Gamma v:(\nabla_\Gamma v)^T\,d\mathcal{H}^2 = \int_\Gamma v\cdot\{\nabla_\Gamma(\mathrm{div}_\Gamma v)+(HW-W^2)v\}\,d\mathcal{H}^2.
  \end{align*}
  Moreover, it follows from \eqref{E:IbP_TD} and $v\cdot n=0$ on $\Gamma$ that
  \begin{align*}
    \int_\Gamma v\cdot\nabla_\Gamma(\mathrm{div}_\Gamma v)\,d\mathcal{H}^2 &= -\int_\Gamma\{\mathrm{div}_\Gamma v+(v\cdot n)H\}(\mathrm{div}_\Gamma v)\,d\mathcal{H}^2 \\
    &= -\|\mathrm{div}_\Gamma v\|_{L^2(\Gamma)}^2 \leq 0.
  \end{align*}
  By the above relations and the boundedness of $W$ and $H$ on $\Gamma$ we have
  \begin{align*}
    -\int_\Gamma\nabla_\Gamma v:(\nabla_\Gamma v)^T\,d\mathcal{H}^2 \leq \int_\Gamma|v|(|HWv|+|W^2v|)\,d\mathcal{H}^2 \leq c\|v\|_{L^2(\Gamma)}^2.
  \end{align*}
  Applying this inequality to \eqref{Pf_KoST:TGr_Eq} we obtain \eqref{Pf_KoST:First}.

  Next we observe by \eqref{E:Grad_W} and $P^T=P$ on $\Gamma$ that
  \begin{align*}
    (\nabla_\Gamma v)_S &= P(\nabla_\Gamma v)_SP+\frac{1}{2}\{(Wv)\otimes n+n\otimes(Wv)\} \\
    &= D_\Gamma(v)+\frac{1}{2}\{(Wv)\otimes n+n\otimes(Wv)\}
  \end{align*}
  on $\Gamma$.
  Hence by the boundedness of $W$ and $|n|=1$ on $\Gamma$ we get
  \begin{align*}
    |(\nabla_\Gamma v)_S| \leq |D_\Gamma(v)|+|Wv||n| \leq |D_\Gamma(v)|+c|v| \quad\text{on}\quad \Gamma
  \end{align*}
  and thus
  \begin{align*}
    \|(\nabla_\Gamma v)_S\|_{L^2(\Gamma)}^2 \leq c\left(\|D_\Gamma(v)\|_{L^2(\Gamma)}^2+\|v\|_{L^2(\Gamma)}^2\right).
  \end{align*}
  We apply this inequality to \eqref{Pf_KoST:First} to conclude that \eqref{E:Korn_STG} is valid.
\end{proof}

\subsection{Consequences of the boundary conditions} \label{SS:Fund_Bd}
In this subsection we present estimates for a vector field $u$ on $\Omega_\varepsilon$ satisfying the impermeable boundary condition
\begin{align} \label{E:Bo_Imp}
  u\cdot n_\varepsilon = 0 \quad\text{on}\quad \Gamma_\varepsilon
\end{align}
or the slip boundary conditions
\begin{align} \label{E:Bo_Slip}
  u\cdot n_\varepsilon = 0, \quad 2\nu P_\varepsilon D(u)n_\varepsilon+\gamma_\varepsilon u = 0 \quad\text{on}\quad \Gamma_\varepsilon.
\end{align}
Here $\nu>0$ is the viscosity coefficient independent of $\varepsilon$ and $\gamma_\varepsilon\geq0$ is the friction coefficient on $\Gamma_\varepsilon$ given by \eqref{E:Def_Fric}.
Moreover, we write
\begin{align*}
  D(u) := (\nabla u)_S = \frac{\nabla u+(\nabla u)^T}{2}
\end{align*}
for the strain rate tensor.
First we give an inequality related to \eqref{E:Bo_Imp}.

\begin{lemma}[{\cite{Miu_NSCTD_02}*{Lemma 4.5}}] \label{L:Poin_Nor}
  There exists a constant $c>0$ independent of $\varepsilon$ such that
  \begin{align} \label{E:Poin_Nor}
    \|u\cdot\bar{n}\|_{L^p(\Omega_\varepsilon)} \leq c\varepsilon\|u\|_{W^{1,p}(\Omega_\varepsilon)}
  \end{align}
  for all $u\in W^{1,p}(\Omega_\varepsilon)^3$ with $p\in[1,\infty)$ satisfying \eqref{E:Bo_Imp} on $\Gamma_\varepsilon^0$ or on $\Gamma_\varepsilon^1$.
\end{lemma}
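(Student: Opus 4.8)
The plan is to reduce \eqref{E:Poin_Nor} to a one-dimensional Poincar\'{e} inequality in the normal direction, using a surface vector field that is $O(\varepsilon)$-close to $n$ and against which $u$ has vanishing inner product on the boundary component where \eqref{E:Bo_Imp} holds. Say \eqref{E:Bo_Imp} is satisfied on $\Gamma_\varepsilon^j$ for some fixed $j\in\{0,1\}$, and set $m_\varepsilon:=n_\varepsilon^1$ if $j=1$ and $m_\varepsilon:=-n_\varepsilon^0$ if $j=0$, where $n_\varepsilon^i$ is the vector field on $\Gamma$ from \eqref{E:Def_NB}. Writing $\overline{m_\varepsilon}=m_\varepsilon\circ\pi$ for its constant extension, we have $\overline{m_\varepsilon}=\pm n_\varepsilon$ on $\Gamma_\varepsilon^j$ by \eqref{E:Nor_Bo}, so $h:=u\cdot\overline{m_\varepsilon}$ satisfies $h=0$ on $\Gamma_\varepsilon^j$; in the notation \eqref{E:Pull_Dom} this reads $h^\sharp(y,\varepsilon g_j(y))=0$ for every $y\in\Gamma$.

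First I would collect three facts about $\overline{m_\varepsilon}$, all uniform in $\varepsilon\in(0,1]$. (a) $|m_\varepsilon|\equiv1$ on $\Gamma$, since $\tau_\varepsilon^i$ is tangential so the denominator in \eqref{E:Def_NB} equals $|n-\varepsilon\tau_\varepsilon^i|$. (b) $\|m_\varepsilon\|_{C^1(\Gamma)}\le c$ with $c$ independent of $\varepsilon$, which follows from \eqref{E:Def_NB_Aux}, \eqref{E:Def_NB}, \eqref{E:Wein_Bound}, and the boundedness on $\Gamma$ of $W$, $g_i$ and their first derivatives; by \eqref{E:ConDer_Bound} this gives $\|\nabla\overline{m_\varepsilon}\|_{L^\infty(\Omega_\varepsilon)}\le c$, hence $|\nabla h|\le|\nabla u||\overline{m_\varepsilon}|+|u||\nabla\overline{m_\varepsilon}|\le|\nabla u|+c|u|$ on $\Omega_\varepsilon$. (c) $|m_\varepsilon-n|\le c\varepsilon$ on $\Gamma$: combining \eqref{E:Nor_Bo} with \eqref{E:Comp_N} gives $n_\varepsilon^1-n=O(\varepsilon)$ and $n_\varepsilon^0+n=O(\varepsilon)$ on $\Gamma$, so that $|\overline{m_\varepsilon}-\bar n|\le c\varepsilon$ on $\Omega_\varepsilon$.

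The core estimate is a fibrewise Poincar\'{e} inequality. For $x=y+rn(y)\in\Omega_\varepsilon$ the fundamental theorem of calculus gives, using $h^\sharp(y,\varepsilon g_j(y))=0$,
\[
h^\sharp(y,r)=\int_{\varepsilon g_j(y)}^{r}\partial_\rho h^\sharp(y,\rho)\,d\rho,\qquad |\partial_\rho h^\sharp(y,\rho)|=|\nabla h(y+\rho n(y))\cdot n(y)|\le|\nabla h|^\sharp(y,\rho),
\]
so $|h^\sharp(y,r)|\le c\int_{\varepsilon g_0(y)}^{\varepsilon g_1(y)}(|\nabla u|^\sharp+|u|^\sharp)(y,\rho)\,d\rho$ by fact (b). The interval $(\varepsilon g_0(y),\varepsilon g_1(y))$ has length $\varepsilon g(y)\le c\varepsilon$ by \eqref{E:G_Inf}; applying H\"{o}lder's inequality there and then integrating $|h^\sharp(y,r)|^p$ in $r$ over the same interval produces a factor $(\varepsilon g(y))^p\le c\varepsilon^p$, i.e.
\[
\int_{\varepsilon g_0(y)}^{\varepsilon g_1(y)}|h^\sharp(y,r)|^p\,dr\le c\varepsilon^p\int_{\varepsilon g_0(y)}^{\varepsilon g_1(y)}\bigl((|\nabla u|^\sharp)^p+(|u|^\sharp)^p\bigr)(y,\rho)\,d\rho.
\]
Integrating over $y\in\Gamma$ and applying the equivalence \eqref{E:CoV_Equiv} to $h$ on the left and to $|\nabla u|$, $|u|$ on the right yields $\|h\|_{L^p(\Omega_\varepsilon)}\le c\varepsilon\|u\|_{W^{1,p}(\Omega_\varepsilon)}$.

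Finally, by fact (c) we have $|u\cdot\bar n|\le|h|+|u\cdot\overline{(n-m_\varepsilon)}|\le|h|+c\varepsilon|u|$ on $\Omega_\varepsilon$, so $\|u\cdot\bar n\|_{L^p(\Omega_\varepsilon)}\le\|h\|_{L^p(\Omega_\varepsilon)}+c\varepsilon\|u\|_{L^p(\Omega_\varepsilon)}\le c\varepsilon\|u\|_{W^{1,p}(\Omega_\varepsilon)}$, which is \eqref{E:Poin_Nor}. I do not anticipate a genuine obstacle; the only delicate points are the choice of $m_\varepsilon$ (rather than $n$ itself), so that the boundary term vanishes identically and no trace estimate is needed, and the $\varepsilon$-power bookkeeping, where the factor $\varepsilon^p$ must be harvested once from H\"{o}lder's inequality and once from the $dr$-integration, both over a normal fibre of length comparable to $\varepsilon$.
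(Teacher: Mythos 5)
Your proof is correct, and it uses essentially the same mechanism the paper relies on for this family of estimates: a Poincar\'{e} inequality along the normal fibres of length $O(\varepsilon)$ combined with the fact that $n_\varepsilon=\pm\bar{n}+O(\varepsilon)$ on $\Gamma_\varepsilon^j$. The only organizational difference is that you test $u$ against $\pm n_\varepsilon^j$ so that the boundary trace vanishes identically, whereas the paper's pattern (visible in the proof of the analogous Lemma \ref{L:Poin_Str}) is to apply \eqref{E:Poin_Dom} to $u\cdot\bar{n}$ directly and then control the nonzero trace term via \eqref{E:Comp_N} and \eqref{E:Poin_Bo}; both routes are equivalent, and your fibrewise computation is just a re-derivation of \eqref{E:Poin_Dom} in the case of zero trace.
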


Next we provide two estimates for a vector field satisfying \eqref{E:Bo_Slip}.
For a function $\varphi$ on $\Omega_\varepsilon$ and $x\in\Omega_\varepsilon$ we define the derivative of $\varphi$ in the normal direction of $\Gamma$ by
\begin{align*}
  \partial_n\varphi(x) := (\bar{n}(x)\cdot\nabla)\varphi(x) = \frac{d}{dr}\bigl(\varphi(y+rn(y))\bigr)\Big|_{r=d(x)} \quad (y=\pi(x)\in\Gamma).
\end{align*}
Note that the constant extension $\bar{\eta}=\eta\circ\pi$ of $\eta\in C^1(\Gamma)$ satisfies
\begin{align} \label{E:NorDer_Con}
  \partial_n\bar{\eta}(x) = (\bar{n}(x)\cdot\nabla)\bar{\eta}(x) = 0, \quad x\in \Omega_\varepsilon.
\end{align}

\begin{lemma}[{\cite{Miu_NSCTD_02}*{Lemma 4.7}}] \label{L:PDnU_WU}
  Let $p\in[1,\infty)$.
  Suppose that the inequalities \eqref{E:Fric_Upper} are valid.
  Then there exists a constant $c>0$ independent of $\varepsilon$ such that
  \begin{align} \label{E:PDnU_WU}
    \left\|\overline{P}\partial_nu+\overline{W}u\right\|_{L^p(\Omega_\varepsilon)} \leq c\varepsilon\|u\|_{W^{2,p}(\Omega_\varepsilon)}
  \end{align}
  for all $u\in W^{2,p}(\Omega_\varepsilon)^3$ satisfying \eqref{E:Bo_Slip} on $\Gamma_\varepsilon^0$ or on $\Gamma_\varepsilon^1$.
\end{lemma}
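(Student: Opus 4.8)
The plan is to reduce the $L^p(\Omega_\varepsilon)$ estimate to an $L^p$ estimate on whichever boundary component the slip conditions hold on, and then extract the latter from those conditions. Abbreviate $w:=\overline{P}\partial_n u+\overline{W}u$ on $\Omega_\varepsilon$, and suppose \eqref{E:Bo_Slip} holds on $\Gamma_\varepsilon^i$ for the relevant $i\in\{0,1\}$. Along each normal segment $\{y+rn(y):\varepsilon g_0(y)<r<\varepsilon g_1(y)\}$ the fundamental theorem of calculus gives, in the notation \eqref{E:Pull_Dom},
\[
  w^\sharp(y,r) = w^\sharp(y,\varepsilon g_i(y))-\int_r^{\varepsilon g_i(y)}\partial_nw^\sharp(y,s)\,ds.
\]
Raising this to the $p$-th power, applying H\"older to the integral (the segment has length $\varepsilon g(y)$), integrating first in $r$ and then over $\Gamma$ against the Jacobian, and using \eqref{E:CoV_Dom}, \eqref{E:Jac_Bound_03}, \eqref{E:G_Inf}, \eqref{E:CoV_Equiv} together with the surface change of variables \eqref{E:CoV_Surf}, one obtains
\[
  \|w\|_{L^p(\Omega_\varepsilon)}^p \leq c\varepsilon\|w\|_{L^p(\Gamma_\varepsilon^i)}^p+c\varepsilon^p\|\partial_nw\|_{L^p(\Omega_\varepsilon)}^p.
\]
Since $\partial_n\overline{P}=\partial_n\overline{W}=0$ by \eqref{E:NorDer_Con}, we have $\partial_nw=\overline{P}\partial_n^2u+\overline{W}\partial_nu$ and hence $\|\partial_nw\|_{L^p(\Omega_\varepsilon)}\leq c\|u\|_{W^{2,p}(\Omega_\varepsilon)}$; so it suffices to prove $\|w\|_{L^p(\Gamma_\varepsilon^i)}\leq c\varepsilon^{(p-1)/p}\|u\|_{W^{2,p}(\Omega_\varepsilon)}$.

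For the boundary estimate I would start from $2D(u)n_\varepsilon=(\nabla u)n_\varepsilon+\partial_{n_\varepsilon}u$ on $\Gamma_\varepsilon^i$. Since $n_\varepsilon=\bar n_\varepsilon^i$ there by \eqref{E:Nor_Bo} and $u\cdot n_\varepsilon=0$, the function $u\cdot\bar n_\varepsilon^i$ vanishes on $\Gamma_\varepsilon^i$, so its tangential gradient $P_\varepsilon\nabla(u\cdot\bar n_\varepsilon^i)$ vanishes there (in the trace sense); combined with $\nabla(u\cdot\bar n_\varepsilon^i)=(\nabla u)\bar n_\varepsilon^i+(\nabla\bar n_\varepsilon^i)u$ this yields $P_\varepsilon(\nabla u)n_\varepsilon=-P_\varepsilon(\nabla\bar n_\varepsilon^i)u$, hence $2P_\varepsilon D(u)n_\varepsilon=P_\varepsilon\partial_{n_\varepsilon}u-P_\varepsilon(\nabla\bar n_\varepsilon^i)u$ on $\Gamma_\varepsilon^i$. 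Next I would pass to the limit quantities: by \eqref{E:Comp_N} and \eqref{E:Comp_P} one has $n_\varepsilon=(-1)^{i+1}\bar n+O(\varepsilon)$ and $P_\varepsilon=\overline{P}+O(\varepsilon)$, so $P_\varepsilon\partial_{n_\varepsilon}u=(-1)^{i+1}\overline{P}\partial_nu+O(\varepsilon)|\nabla u|$; and since $|d|\leq c\varepsilon$ on $\overline{\Omega}_\varepsilon$, using \eqref{E:ConDer_Diff}, the bound \eqref{E:Tau_Bound}, the $C^4$-regularity of $g_i$, the identity $\nabla_\Gamma n=-W$ from \eqref{E:Def_Wein}, and $PW=W$ from \eqref{E:Form_W}, one gets $\nabla\bar n_\varepsilon^i=-(-1)^{i+1}\overline{W}+O(\varepsilon)$ and therefore $P_\varepsilon(\nabla\bar n_\varepsilon^i)u=-(-1)^{i+1}\overline{W}u+O(\varepsilon)|u|$. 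Feeding in the slip condition $2\nu P_\varepsilon D(u)n_\varepsilon=-\gamma_\varepsilon^iu$ with $\gamma_\varepsilon^i\leq c\varepsilon$ from \eqref{E:Fric_Upper}, all $\varepsilon$-free leading terms cancel after multiplying by $(-1)^{i+1}$, leaving the pointwise bound $|w|=|\overline{P}\partial_nu+\overline{W}u|\leq c\varepsilon(|u|+|\nabla u|)$ on $\Gamma_\varepsilon^i$. Taking $L^p(\Gamma_\varepsilon^i)$-norms and invoking the uniform trace inequality on $\Omega_\varepsilon$ (of the form $\|v\|_{L^p(\Gamma_\varepsilon^i)}\leq c\varepsilon^{-1/p}\|v\|_{W^{1,p}(\Omega_\varepsilon)}$, see \cite{Miu_NSCTD_01}) with $v=u$ and $v=\nabla u$ gives $\|w\|_{L^p(\Gamma_\varepsilon^i)}\leq c\varepsilon\cdot\varepsilon^{-1/p}\|u\|_{W^{2,p}(\Omega_\varepsilon)}$, which is precisely the bound needed to close the chain and conclude $\|w\|_{L^p(\Omega_\varepsilon)}\leq c\varepsilon\|u\|_{W^{2,p}(\Omega_\varepsilon)}$.

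The main obstacle is the asymptotic bookkeeping in the second step: one must control every $O(\varepsilon)$ remainder arising when $n_\varepsilon$, $P_\varepsilon$, $W_\varepsilon$ and $\nabla\bar n_\varepsilon^i$ are replaced by their limits $\pm\bar n$, $\overline{P}$, $\mp\overline{W}$, $\mp\overline{W}$ on $\Gamma_\varepsilon^i$ — in particular using that the constant extension $\bar n_\varepsilon^i$ differs from $\overline{\nabla_\Gamma n_\varepsilon^i}$ only by $O(|d|)=O(\varepsilon)$ via \eqref{E:ConDer_Diff} — and then verify that the leading terms from $P_\varepsilon\partial_{n_\varepsilon}u$, $-P_\varepsilon(\nabla\bar n_\varepsilon^i)u$ and the friction term really reorganize into $(-1)^{i+1}(\overline{P}\partial_nu+\overline{W}u)$ plus admissible errors. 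This is also where the hypothesis $u\in W^{2,p}(\Omega_\varepsilon)$, rather than merely $W^{1,p}$, is needed: the fundamental theorem of calculus step produces $\partial_n^2u$, and the trace step requires $W^{1,p}(\Omega_\varepsilon)$-control of $\nabla u$.
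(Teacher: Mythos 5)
Your proof is correct and follows essentially the same template the paper uses for this family of estimates: the present paper defers the proof of Lemma~\ref{L:PDnU_WU} to Part II, but its proof of the companion Lemma~\ref{L:Poin_Str} proceeds exactly as you do — reduce to a boundary estimate via the Poincar\'e inequality \eqref{E:Poin_Dom} (which you re-derive rather than cite), extract the boundary bound from the slip condition together with \eqref{E:Fric_Upper}, \eqref{E:Comp_N}, \eqref{E:Comp_P}, and close with the trace inequality \eqref{E:Poin_Bo}. The one genuinely additional ingredient you supply, namely tangentially differentiating the impermeability condition to get $P_\varepsilon(\nabla u)n_\varepsilon=-P_\varepsilon(\nabla\bar n_\varepsilon^i)u$ and identifying $\nabla\bar n_\varepsilon^i$ with $-(-1)^{i+1}\overline{W}$ up to $O(\varepsilon)$, is the standard mechanism behind such results (cf.\ the identity $2P_\varepsilon D(u)n_\varepsilon-\mathrm{curl}\,u\times n_\varepsilon=2W_\varepsilon u$ quoted in the introduction) and your bookkeeping of the remainders is sound.
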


\begin{lemma} \label{L:Poin_Str}
  Let $p\in[1,\infty)$.
  Suppose that the inequalities \eqref{E:Fric_Upper} are valid.
  Then there exists a constant $c>0$ independent of $\varepsilon$ such that
  \begin{align} \label{E:Poin_Str}
    \left\|\overline{P}D(u)\bar{n}\right\|_{L^p(\Omega_\varepsilon)} \leq c\varepsilon\|u\|_{W^{2,p}(\Omega_\varepsilon)}
  \end{align}
  for all $u\in W^{2,p}(\Omega_\varepsilon)^3$ satisfying \eqref{E:Bo_Slip} on $\Gamma_\varepsilon^0$ or on $\Gamma_\varepsilon^1$.
\end{lemma}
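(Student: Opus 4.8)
The plan is to reduce the desired bound to the single estimate $\|\overline{P}\nabla(u\cdot\bar n)\|_{L^p(\Omega_\varepsilon)}\le c\varepsilon\|u\|_{W^{2,p}(\Omega_\varepsilon)}$, and then to extract this from the impermeable boundary condition together with the precise structure of $n_\varepsilon$ near $\Gamma_\varepsilon^0$ (or $\Gamma_\varepsilon^1$) recorded in Lemma \ref{L:Comp_Nor}. For the reduction, write $w:=u\cdot\bar n$ and split $u=w\bar n+u_\tau$ with $u_\tau:=\overline{P}u$, so that $u_\tau\cdot\bar n\equiv0$ on $\Omega_\varepsilon$. Note that $\nabla\bar n$ is symmetric (it equals the Hessian $\nabla^2 d$ by \eqref{E:Nor_Coord}) and, by \eqref{E:ConDer_Dom} applied to the components of $n$ together with \eqref{E:Def_Wein}, equals $-\{I_3-d\overline{W}\}^{-1}\overline{W}$; in particular $\overline{W}\bar n=0$ by \eqref{E:Form_W}. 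Using these facts, $\nabla(w\bar n)=\nabla w\otimes\bar n+w\nabla\bar n$, and $\overline{P}\bar n=0$, a short computation gives the pointwise identity
\[
  2\overline{P}D(u)\bar n=\overline{P}\nabla w+2\overline{P}D(u_\tau)\bar n\quad\text{on}\quad\Omega_\varepsilon .
\]

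Next I would dispose of the second term on the right. Because $u_\tau$ is tangential, $\nabla(u_\tau\cdot\bar n)=0$ yields $(\nabla u_\tau)\bar n=-(\nabla\bar n)u_\tau=\{I_3-d\overline{W}\}^{-1}\overline{W}u_\tau$; combining this with $(\nabla u_\tau)^T\bar n=\partial_n u_\tau$, with $\partial_n u_\tau=\overline{P}\partial_n u$ (since $\overline{P}$ is constant in the normal direction, cf. \eqref{E:NorDer_Con}), with \eqref{E:WReso_P}, and with $PW=WP=W$ from \eqref{E:Form_W}, one obtains
\[
  2\overline{P}D(u_\tau)\bar n=\bigl(\overline{P}\partial_n u+\overline{W}u\bigr)+\bigl(\{I_3-d\overline{W}\}^{-1}-I_3\bigr)\overline{W}u .
\]
By Lemma \ref{L:PDnU_WU} the first summand has $L^p(\Omega_\varepsilon)$-norm at most $c\varepsilon\|u\|_{W^{2,p}(\Omega_\varepsilon)}$, and by \eqref{E:Wein_Diff} together with $|d|\le c\varepsilon$ on $\Omega_\varepsilon$ the second has $L^p(\Omega_\varepsilon)$-norm at most $c\varepsilon\|u\|_{L^p(\Omega_\varepsilon)}$. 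Hence the lemma follows once we show $\|\overline{P}\nabla w\|_{L^p(\Omega_\varepsilon)}\le c\varepsilon\|u\|_{W^{2,p}(\Omega_\varepsilon)}$.

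To prove this, suppose \eqref{E:Bo_Slip} holds on $\Gamma_\varepsilon^j$; in particular $u\cdot n_\varepsilon=0$ there, and $n_\varepsilon=\bar n_\varepsilon^j$ by \eqref{E:Nor_Bo}. By \eqref{E:Comp_N} we may write $\bar n=(-1)^{j+1}n_\varepsilon+\varepsilon\overline{\nabla_\Gamma g_j}+\zeta_\varepsilon$ on $\Gamma_\varepsilon^j$, where $\zeta_\varepsilon$ is the restriction to $\Gamma_\varepsilon^j$ of the constant normal extension of a vector field on $\Gamma$ with $C^1(\Gamma)$-norm $O(\varepsilon^2)$ (this uses $g_j\in C^4$ and the expansion of \eqref{E:Def_NB}). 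Contracting with $u$ and using $u\cdot n_\varepsilon=0$ gives the identity $w=\varepsilon\,u\cdot\overline{\nabla_\Gamma g_j}+u\cdot\zeta_\varepsilon$ on $\Gamma_\varepsilon^j$, so $|w|\le c\varepsilon|u|$ there, and taking the $\Gamma_\varepsilon^j$-surface gradient of this identity (using $|\nabla_{\Gamma_\varepsilon^j}\zeta_\varepsilon|\le c|\nabla\zeta_\varepsilon|\le c\varepsilon^2$, which follows from Lemma \ref{L:Pi_Der}) gives $|\nabla_{\Gamma_\varepsilon^j}w|\le c\varepsilon(|u|+|\nabla u|)$ on $\Gamma_\varepsilon^j$. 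Moreover $\overline{P}n_\varepsilon$ is $O(\varepsilon)$ on $\Gamma_\varepsilon^j$, since $\overline{P}\bar n=0$ and $\overline{P}\overline{\nabla_\Gamma g_j}=\overline{\nabla_\Gamma g_j}$. Decomposing $\nabla w=\nabla_{\Gamma_\varepsilon^j}w+(n_\varepsilon\cdot\nabla w)n_\varepsilon$ on $\Gamma_\varepsilon^j$, applying $\overline{P}$, and bounding $|n_\varepsilon\cdot\nabla w|\le|\nabla w|\le c(|u|+|\nabla u|)$ (recall $|\nabla\bar n|\le c$), I obtain the trace estimate
\[
  |\overline{P}\nabla w|\le c\varepsilon\bigl(|u|+|\nabla u|\bigr)\quad\text{on}\quad\Gamma_\varepsilon^j .
\]

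Finally I would transfer this bound into $\Omega_\varepsilon$. For $x=y+rn(y)\in\Omega_\varepsilon$ with $y\in\Gamma$, let $x_j:=y+\varepsilon g_j(y)n(y)\in\Gamma_\varepsilon^j$; since $\overline{P}$ is constant along the normal segment through $y$, the fundamental theorem of calculus gives $|\overline{P}\nabla w(x)|\le|\overline{P}\nabla w(x_j)|+\int_{\varepsilon g_0(y)}^{\varepsilon g_1(y)}|\nabla^2 w(y+\rho n(y))|\,d\rho$. Taking $L^p(\Omega_\varepsilon)$-norms, the integral term is $\le c\varepsilon\|\nabla^2 w\|_{L^p(\Omega_\varepsilon)}\le c\varepsilon\|u\|_{W^{2,p}(\Omega_\varepsilon)}$ by Hölder's inequality and \eqref{E:CoV_Equiv}, while the first term, being the constant normal extension of $\overline{P}\nabla w|_{\Gamma_\varepsilon^j}$, has $L^p(\Omega_\varepsilon)$-norm $\le c\varepsilon^{1/p}\|\overline{P}\nabla w\|_{L^p(\Gamma_\varepsilon^j)}\le c\varepsilon^{1+1/p}\bigl(\|u\|_{L^p(\Gamma_\varepsilon^j)}+\|\nabla u\|_{L^p(\Gamma_\varepsilon^j)}\bigr)$ by \eqref{E:Con_Lp}, \eqref{E:Lp_CoV_Surf} and the trace estimate above, which a uniform trace inequality on $\Omega_\varepsilon$ of the form $\varepsilon^{1/p}\|v\|_{L^p(\Gamma_\varepsilon^j)}\le c\|v\|_{W^{1,p}(\Omega_\varepsilon)}$ (obtained from \eqref{E:CoV_Dom}, Lemma \ref{L:CoV_Surf} and the same one-dimensional fundamental theorem of calculus) turns into $\le c\varepsilon\|u\|_{W^{2,p}(\Omega_\varepsilon)}$. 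Combining the two contributions proves $\|\overline{P}\nabla w\|_{L^p(\Omega_\varepsilon)}\le c\varepsilon\|u\|_{W^{2,p}(\Omega_\varepsilon)}$, and with the reduction above the lemma follows. The main obstacle is the trace estimate on $\Gamma_\varepsilon^j$: one must use not merely that $n_\varepsilon=\pm\bar n+O(\varepsilon)$ but that the $O(\varepsilon)$-correction is tangential to $\Gamma$ to leading order, so that $\overline{P}n_\varepsilon$, $w$ and $\nabla_{\Gamma_\varepsilon^j}w$ are all genuinely of order $\varepsilon$ on $\Gamma_\varepsilon^j$; this is what makes the powers $\varepsilon^{1/p}$ (from the normal extension) and $\varepsilon^{-1/p}$ (from the trace) cancel and leaves the full factor $\varepsilon$, which a cruder integration-by-parts argument would not deliver.
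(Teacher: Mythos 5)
Your proposal is correct, but it takes a genuinely different route from the paper. The paper treats $\overline{P}D(u)\bar{n}$ as a single quantity: it applies \eqref{E:Poin_Dom} to it (using $|\partial_n[\overline{P}D(u)\bar{n}]|\le c|\nabla^2u|$), and then obtains the crucial boundary bound $|\overline{P}D(u)\bar{n}|\le c\varepsilon(|u|+|\nabla u|)$ on $\Gamma_\varepsilon^i$ in one stroke by writing
\begin{align*}
  \overline{P}D(u)\bar{n} = (-1)^i\frac{\gamma_\varepsilon}{2\nu}u+P_\varepsilon D(u)\{\bar{n}-(-1)^{i+1}n_\varepsilon\}+\Bigl(\overline{P}-P_\varepsilon\Bigr)D(u)\bar{n},
\end{align*}
i.e.\ by invoking the \emph{full} slip condition (the friction term $2\nu P_\varepsilon D(u)n_\varepsilon=-\gamma_\varepsilon u$ together with $\gamma_\varepsilon\le c\varepsilon$) plus \eqref{E:Comp_N}, \eqref{E:Comp_P}, and finally \eqref{E:Poin_Bo}. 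You instead split $u=(u\cdot\bar n)\bar n+\overline{P}u$ and reduce to the identity $2\overline{P}D(u)\bar n=\overline{P}\nabla(u\cdot\bar n)+\overline{P}\partial_nu+\{I_3-d\overline{W}\}^{-1}\overline{W}u$ (which I have checked), delegating the friction-dependent part entirely to the cited Lemma \ref{L:PDnU_WU} and \eqref{E:Wein_Diff}, and handling $\overline{P}\nabla(u\cdot\bar n)$ on the boundary using \emph{only} the impermeability condition. This cleanly isolates which estimate needs which part of the boundary condition, which is conceptually illuminating; the price is that your boundary trace estimate requires a $C^1$-level version of the expansion \eqref{E:Comp_N} (your remainder $\zeta_\varepsilon$ must be $O(\varepsilon^2)$, or at least $O(\varepsilon)$, in $C^1(\Gamma)$, not merely pointwise), which is not stated in Lemma \ref{L:Comp_Nor} and has to be re-derived from \eqref{E:Def_NB} and the regularity of $g_j$ and $W$ — routine given $g_j\in C^4(\Gamma)$ and $\Gamma\in C^5$, but it is extra work the paper's direct argument avoids. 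Your final transfer into $\Omega_\varepsilon$ is just a hand-rolled version of \eqref{E:Poin_Dom} and \eqref{E:Poin_Bo}, which you could have cited directly, exactly as the paper does.
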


To prove \eqref{E:Poin_Str} we use the following Poincar\'{e} and trace type inequalities.

\begin{lemma}[{\cite{Miu_NSCTD_01}*{Lemma 4.1}}] \label{L:Poincare}
  There exists a constant $c>0$ independent of $\varepsilon$ such that
  \begin{align}
    \|\varphi\|_{L^p(\Omega_\varepsilon)} &\leq c\left(\varepsilon^{1/p}\|\varphi\|_{L^p(\Gamma_\varepsilon^i)}+\varepsilon\|\partial_n\varphi\|_{L^p(\Omega_\varepsilon)}\right), \label{E:Poin_Dom} \\
    \|\varphi\|_{L^p(\Gamma_\varepsilon^i)} &\leq c\left(\varepsilon^{-1/p}\|\varphi\|_{L^p(\Omega_\varepsilon)}+\varepsilon^{1-1/p}\|\partial_n\varphi\|_{L^p(\Omega_\varepsilon)}\right) \label{E:Poin_Bo}
  \end{align}
  for all $\varphi\in W^{1,p}(\Omega_\varepsilon)$ with $p\in[1,\infty)$ and $i=0,1$.
\end{lemma}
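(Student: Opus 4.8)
The plan is to reduce both inequalities to one-dimensional Poincar\'e and trace estimates along the normal fibers $\{y+rn(y)\mid \varepsilon g_0(y)<r<\varepsilon g_1(y)\}$ and then integrate over $\Gamma$ using the change of variables formula \eqref{E:CoV_Dom}. Throughout I work with the pullback $\varphi^\sharp(y,r)=\varphi(y+rn(y))$ from \eqref{E:Pull_Dom}; for $\varphi\in C^1(\overline{\Omega}_\varepsilon)$ the chain rule gives $\partial_r\varphi^\sharp(y,r)=(\partial_n\varphi)(y+rn(y))$, since $\frac{d}{dr}(y+rn(y))=n(y)=\bar n(y+rn(y))$. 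It suffices to prove both inequalities for $\varphi\in C^1(\overline{\Omega}_\varepsilon)$ and then pass to the limit using the density of $C^1(\overline{\Omega}_\varepsilon)$ in $W^{1,p}(\Omega_\varepsilon)$ together with the continuity of the trace operator onto $\Gamma_\varepsilon^i$ and the boundedness of $\bar n$. I treat only $i=0$; the case $i=1$ is identical after exchanging the roles of $g_0$ and $g_1$. Two facts are used repeatedly: by \eqref{E:G_Inf} and the boundedness of $g$ on $\Gamma$, the fiber thickness $\varepsilon g(y)$ satisfies $c^{-1}\varepsilon\le\varepsilon g(y)\le c\varepsilon$ uniformly in $y\in\Gamma$; and by \eqref{E:CoV_Equiv} and \eqref{E:Lp_CoV_Surf} (applied to $\varphi$ and to $\partial_n\varphi$) the $L^p$-norms over $\Omega_\varepsilon$ and over $\Gamma_\varepsilon^0$ are comparable, up to $\varepsilon$-independent constants, to the corresponding integrals of $\varphi^\sharp$ and $(\partial_n\varphi)^\sharp$ over $\Gamma\times(\varepsilon g_0,\varepsilon g_1)$ and over $\Gamma$.

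For \eqref{E:Poin_Dom}, fix $y\in\Gamma$. For $r\in[\varepsilon g_0(y),\varepsilon g_1(y)]$ one has $\varphi^\sharp(y,r)=\varphi^\sharp(y,\varepsilon g_0(y))+\int_{\varepsilon g_0(y)}^r\partial_s\varphi^\sharp(y,s)\,ds$, hence $|\varphi^\sharp(y,r)|\le|\varphi^\sharp(y,\varepsilon g_0(y))|+\int_{\varepsilon g_0(y)}^{\varepsilon g_1(y)}|(\partial_n\varphi)^\sharp(y,s)|\,ds$. Raising to the $p$-th power, applying H\"older's inequality to the last integral (which produces a factor $(\varepsilon g(y))^{p-1}$), and then integrating in $r$ over the fiber (producing a factor $\varepsilon g(y)$) gives, after using $\varepsilon g(y)\le c\varepsilon$, the bound $\int_{\varepsilon g_0(y)}^{\varepsilon g_1(y)}|\varphi^\sharp(y,r)|^p\,dr\le c\varepsilon\,|\varphi^\sharp(y,\varepsilon g_0(y))|^p+c\varepsilon^p\int_{\varepsilon g_0(y)}^{\varepsilon g_1(y)}|(\partial_n\varphi)^\sharp(y,s)|^p\,ds$. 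Integrating in $y$ over $\Gamma$ and converting back to norms via \eqref{E:CoV_Equiv} and \eqref{E:Lp_CoV_Surf} yields $\|\varphi\|_{L^p(\Omega_\varepsilon)}^p\le c\bigl(\varepsilon\|\varphi\|_{L^p(\Gamma_\varepsilon^0)}^p+\varepsilon^p\|\partial_n\varphi\|_{L^p(\Omega_\varepsilon)}^p\bigr)$, and taking $p$-th roots gives \eqref{E:Poin_Dom}.

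For \eqref{E:Poin_Bo}, fix $y\in\Gamma$ again. For any $r'\in[\varepsilon g_0(y),\varepsilon g_1(y)]$ we have $\varphi^\sharp(y,\varepsilon g_0(y))=\varphi^\sharp(y,r')-\int_{\varepsilon g_0(y)}^{r'}\partial_s\varphi^\sharp(y,s)\,ds$; averaging this identity in $r'$ over the fiber bounds $|\varphi^\sharp(y,\varepsilon g_0(y))|$ by $\frac{1}{\varepsilon g(y)}\int_{\varepsilon g_0(y)}^{\varepsilon g_1(y)}|\varphi^\sharp(y,r')|\,dr'+\int_{\varepsilon g_0(y)}^{\varepsilon g_1(y)}|(\partial_n\varphi)^\sharp(y,s)|\,ds$. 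Applying H\"older to each term and using $c^{-1}\varepsilon\le\varepsilon g(y)\le c\varepsilon$ gives $|\varphi^\sharp(y,\varepsilon g_0(y))|^p\le c\varepsilon^{-1}\int_{\varepsilon g_0(y)}^{\varepsilon g_1(y)}|\varphi^\sharp(y,r')|^p\,dr'+c\varepsilon^{p-1}\int_{\varepsilon g_0(y)}^{\varepsilon g_1(y)}|(\partial_n\varphi)^\sharp(y,s)|^p\,ds$. Integrating in $y$ over $\Gamma$, converting to norms, and taking $p$-th roots yields \eqref{E:Poin_Bo}.

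The argument has no genuine obstacle: the only points that need care are the bookkeeping of powers of $\varepsilon$ --- which comes out cleanly precisely because the fiber thickness $\varepsilon g(y)$ is comparable to $\varepsilon$ uniformly in $y$ thanks to \eqref{E:G_Inf} and the boundedness of $g$ --- and the passage from $C^1(\overline{\Omega}_\varepsilon)$ to $W^{1,p}(\Omega_\varepsilon)$, which is routine given the continuity of the trace operators $W^{1,p}(\Omega_\varepsilon)\to L^p(\Gamma_\varepsilon^i)$.
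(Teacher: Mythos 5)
Your proof is correct, and it is essentially the argument behind the cited Lemma 4.1 of Part I: the fundamental theorem of calculus along each normal fiber, H\"older's inequality in $r$, the uniform comparability of the fiber length $\varepsilon g(y)$ to $\varepsilon$ via \eqref{E:G_Inf}, and the change of variables formulas \eqref{E:CoV_Equiv} and \eqref{E:Lp_CoV_Surf} to pass between $\Omega_\varepsilon$, $\Gamma_\varepsilon^i$, and $\Gamma$. The $\varepsilon$-bookkeeping and the density/trace limiting step are handled properly, so nothing is missing.
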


\begin{proof}[Proof of Lemma \ref{L:Poin_Str}]
  Let $u\in W^{2,p}(\Omega_\varepsilon)^3$ satisfy \eqref{E:Bo_Slip} on $\Gamma_\varepsilon^i$ with $i=0$ or $i=1$.
  By \eqref{E:NorDer_Con} with $\eta=n,P$ and $|n|=1$, $|P|=2$ on $\Gamma$ we have
  \begin{align*}
    \left|\partial_n\Bigl[\overline{P}D(u)\bar{n}\Bigr]\right| \leq c|\nabla^2u| \quad\text{in}\quad \Omega_\varepsilon.
  \end{align*}
  We apply \eqref{E:Poin_Dom} to $\overline{P}D(u)\bar{n}$ and use the above inequality to get
  \begin{align} \label{Pf_PSt:L2_Dom}
    \left\|\overline{P}D(u)\bar{n}\right\|_{L^p(\Omega_\varepsilon)} \leq c\left(\varepsilon^{1/p}\left\|\overline{P}D(u)\bar{n}\right\|_{L^p(\Gamma_\varepsilon^i)}+\varepsilon\|u\|_{W^{2,p}(\Omega_\varepsilon)}\right).
  \end{align}
  Moreover, since $u$ satisfies \eqref{E:Bo_Slip} on $\Gamma_\varepsilon^i$, we have
  \begin{align*}
    \overline{P}D(u)\bar{n} &= (-1)^{i+1}P_\varepsilon D(u)n_\varepsilon+P_\varepsilon D(u)\{\bar{n}-(-1)^{i+1}n_\varepsilon\}+\Bigl(\overline{P}-P_\varepsilon\Bigr)D(u)\bar{n} \\
    &= (-1)^i\frac{\gamma_\varepsilon}{2\nu}u+P_\varepsilon D(u)\{\bar{n}-(-1)^{i+1}n_\varepsilon\}+\Bigl(\overline{P}-P_\varepsilon\Bigr)D(u)\bar{n}
  \end{align*}
  on $\Gamma_\varepsilon^i$.
  Using \eqref{E:Fric_Upper}, \eqref{E:Comp_N}, \eqref{E:Comp_P}, and $|P_\varepsilon|=2$ on $\Gamma_\varepsilon^i$ to the last line we get
  \begin{align*}
    \left|\overline{P}D(u)\bar{n}\right| \leq c\varepsilon(|u|+|\nabla u|) \quad\text{on}\quad \Gamma_\varepsilon^i.
  \end{align*}
  From this inequality and \eqref{E:Poin_Bo} we deduce that
  \begin{align*}
    \left\|\overline{P}D(u)\bar{n}\right\|_{L^p(\Gamma_\varepsilon^i)} \leq c\varepsilon\left(\|u\|_{L^p(\Gamma_\varepsilon^i)}+\|\nabla u\|_{L^p(\Gamma_\varepsilon^i)}\right) \leq c\varepsilon^{1-1/p}\|u\|_{W^{2,p}(\Omega_\varepsilon)}.
  \end{align*}
  We apply this inequality to \eqref{Pf_PSt:L2_Dom} to obtain \eqref{E:Poin_Str}.
\end{proof}

\subsection{Impermeable extension of surface vector fields} \label{SS:Fund_IE}
When we deal with a vector field on $\Gamma$ in the analysis of \eqref{E:NS_CTD}, it is convenient to extend it to a vector field on $\Omega_\varepsilon$ satisfying the impermeable boundary condition \eqref{E:Bo_Imp}.
Let $\tau_\varepsilon^0$ and $\tau_\varepsilon^1$ be the vector fields on $\Gamma$ given by \eqref{E:Def_NB_Aux}.
For $x\in N$ we set
\begin{align} \label{E:Def_ExAux}
  \Psi_\varepsilon(x) := \frac{1}{\bar{g}(x)}\bigl\{\bigl(d(x)-\varepsilon\bar{g}_0(x)\bigr)\bar{\tau}_\varepsilon^1(x)+\bigl(\varepsilon\bar{g}_1(x)-d(x)\bigr)\bar{\tau}_\varepsilon^0(x)\bigr\},
\end{align}
where $\bar{\eta}=\eta\circ\pi$ is the constant extension of a function $\eta$ on $\Gamma$, and define
\begin{align} \label{E:Def_ExImp}
  E_\varepsilon v(x) := \bar{v}(x)+\{\bar{v}(x)\cdot\Psi_\varepsilon(x)\}\bar{n}(x), \quad x\in N
\end{align}
for a tangential vector field $v$ on $\Gamma$.
Then $E_\varepsilon v$ satisfies \eqref{E:Bo_Imp}.

\begin{lemma} \label{L:ExImp_Bo}
  For all $v\in C(\Gamma,T\Gamma)$ we have $E_\varepsilon v\cdot n_\varepsilon=0$ on $\Gamma_\varepsilon$.
\end{lemma}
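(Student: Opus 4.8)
The claim is that the impermeable extension $E_\varepsilon v$ defined in \eqref{E:Def_ExImp} satisfies the boundary condition $E_\varepsilon v\cdot n_\varepsilon = 0$ on $\Gamma_\varepsilon = \Gamma_\varepsilon^0\cup\Gamma_\varepsilon^1$. The plan is to check this separately on $\Gamma_\varepsilon^0$ and on $\Gamma_\varepsilon^1$, using the explicit formula \eqref{E:Def_NB} for $n_\varepsilon^i$ together with \eqref{E:Nor_Bo}, which tells us $n_\varepsilon = \bar{n}_\varepsilon^i$ on $\Gamma_\varepsilon^i$. Fix $i\in\{0,1\}$ and a point $x\in\Gamma_\varepsilon^i$, so that $x = y+\varepsilon g_i(y)n(y)$ with $y=\pi(x)\in\Gamma$, hence $d(x)=\varepsilon g_i(y)$. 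Since every ingredient of $E_\varepsilon v$ and of $\Psi_\varepsilon$ is a constant extension along the normal, all of $\bar v$, $\bar n$, $\bar\tau_\varepsilon^0$, $\bar\tau_\varepsilon^1$, $\bar g_0$, $\bar g_1$, $\bar g$ evaluated at $x$ equal the corresponding surface quantities at $y$.

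First I would evaluate $\Psi_\varepsilon(x)$ at such a point. Plugging $d(x)=\varepsilon g_i(y)$ into \eqref{E:Def_ExAux}, the coefficient of $\bar\tau_\varepsilon^1(x)$ becomes $\varepsilon g_i(y)-\varepsilon g_0(y)$ and the coefficient of $\bar\tau_\varepsilon^0(x)$ becomes $\varepsilon g_1(y)-\varepsilon g_i(y)$, all divided by $g(y)=g_1(y)-g_0(y)$. For $i=0$ this gives $\Psi_\varepsilon(x)=\varepsilon\tau_\varepsilon^0(y)$, and for $i=1$ it gives $\Psi_\varepsilon(x)=\varepsilon\tau_\varepsilon^1(y)$; in both cases $\Psi_\varepsilon(x)=\varepsilon\tau_\varepsilon^i(y)$. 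Next, from \eqref{E:Def_ExImp} we have $E_\varepsilon v(x)=v(y)+\{v(y)\cdot\varepsilon\tau_\varepsilon^i(y)\}n(y)$, and from \eqref{E:Def_NB} together with \eqref{E:Nor_Bo},
\begin{align*}
  n_\varepsilon(x) = (-1)^{i+1}\frac{n(y)-\varepsilon\tau_\varepsilon^i(y)}{\sqrt{1+\varepsilon^2|\tau_\varepsilon^i(y)|^2}}.
\end{align*}
Then I would compute the inner product. Using $|n(y)|^2=1$, the orthogonality $n(y)\cdot\tau_\varepsilon^i(y)=0$ (since $\tau_\varepsilon^i$ is tangential by \eqref{E:P_TGr} and \eqref{E:WReso_P}), and the tangency $v(y)\cdot n(y)=0$, the numerator of $n_\varepsilon(x)\cdot E_\varepsilon v(x)$ works out to
\begin{align*}
  \bigl(n(y)-\varepsilon\tau_\varepsilon^i(y)\bigr)\cdot\bigl(v(y)+\varepsilon\{v(y)\cdot\tau_\varepsilon^i(y)\}n(y)\bigr)
  = -\varepsilon\,v(y)\cdot\tau_\varepsilon^i(y)+\varepsilon\,v(y)\cdot\tau_\varepsilon^i(y) = 0,
\end{align*}
so $E_\varepsilon v(x)\cdot n_\varepsilon(x)=0$. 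Since $x\in\Gamma_\varepsilon^i$ and $i\in\{0,1\}$ were arbitrary, this proves $E_\varepsilon v\cdot n_\varepsilon = 0$ on $\Gamma_\varepsilon$.

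This is an essentially computational statement, and I do not anticipate a genuine obstacle: the only point requiring a little care is the bookkeeping that the two affine coefficients in $\Psi_\varepsilon$ collapse correctly at $d=\varepsilon g_i$ so that only $\tau_\varepsilon^i$ survives, and that the cross terms in the dot product cancel exactly. The tangency of $\tau_\varepsilon^i$ and of $v$, and the unit length of $n$, are the facts that make the cancellation exact rather than merely approximate — it is worth recording explicitly that these are what is being used. No regularity beyond continuity of $v$ is needed, consistent with the hypothesis $v\in C(\Gamma,T\Gamma)$.
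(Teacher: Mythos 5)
Your proof is correct and follows essentially the same route as the paper's: evaluate $\Psi_\varepsilon=\varepsilon\bar{\tau}_\varepsilon^i$ on $\Gamma_\varepsilon^i$, use \eqref{E:Def_NB} and \eqref{E:Nor_Bo} for $n_\varepsilon$, and cancel the cross terms via $v\cdot n=\tau_\varepsilon^i\cdot n=0$ and $|n|=1$. The only cosmetic difference is that the paper records $\bar v\cdot n_\varepsilon$ and $\bar n\cdot n_\varepsilon$ separately before combining, whereas you expand the full inner product at once.
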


\begin{proof}
  For $i=0,1$ it follows from \eqref{E:Def_NB}, \eqref{E:Nor_Bo}, and $v\cdot n=0$ on $\Gamma$ that
  \begin{align*}
    \bar{v}\cdot n_\varepsilon = (-1)^i\frac{\varepsilon\bar{v}\cdot\bar{\tau}_\varepsilon^i}{\sqrt{1+\varepsilon^2|\bar{\tau}_\varepsilon^i|^2}}, \quad \bar{n}\cdot n_\varepsilon = \frac{(-1)^{i+1}}{\sqrt{1+\varepsilon^2|\bar{\tau}_\varepsilon^i|^2}} \quad\text{on}\quad \Gamma_\varepsilon^i.
  \end{align*}
  By these equalities and $\Psi_\varepsilon=\varepsilon\bar{\tau}_\varepsilon^i$ on $\Gamma_\varepsilon^i$ we obtain $E_\varepsilon v\cdot n_\varepsilon=0$ on $\Gamma_\varepsilon^i$.
\end{proof}

Moreover, $E_\varepsilon v$ belongs to $W^{m,p}(\Omega_\varepsilon)^3$ if $v\in W^{m,p}(\Gamma,T\Gamma)$.

\begin{lemma}[{\cite{Miu_NSCTD_02}*{Lemma 4.10}}] \label{L:ExImp_Wmp}
  There exists a constant $c>0$ independent of $\varepsilon$ such that
  \begin{align} \label{E:ExImp_Wmp}
    \|E_\varepsilon v\|_{W^{m,p}(\Omega_\varepsilon)} \leq c\varepsilon^{1/p}\|v\|_{W^{m,p}(\Gamma)}
  \end{align}
  for all $v\in W^{m,p}(\Gamma,T\Gamma)$ with $m=0,1,2$ and $p\in[1,\infty)$.
\end{lemma}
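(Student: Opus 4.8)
Since $E_\varepsilon$ is linear and $C^{\ell-1}(\Gamma,T\Gamma)$ is dense in $W^{m,p}(\Gamma,T\Gamma)$ by Lemma \ref{L:Wmp_Tan_Appr}, the plan is to prove the pointwise bounds
\[
  |E_\varepsilon v| \le c|\bar v|, \quad |\nabla E_\varepsilon v| \le c\bigl(|\bar v|+|\overline{\nabla_\Gamma v}|\bigr), \quad |\nabla^2E_\varepsilon v| \le c\bigl(|\bar v|+|\overline{\nabla_\Gamma v}|+|\overline{\nabla_\Gamma^2 v}|\bigr)
\]
on $\Omega_\varepsilon$ for $v\in C^{\ell-1}(\Gamma,T\Gamma)$, integrate them, and then pass to general $v\in W^{m,p}(\Gamma,T\Gamma)$ by density. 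Once these are in hand, raising to the power $p$, integrating over $\Omega_\varepsilon$, and applying the $L^p$ norm equivalence \eqref{E:Con_Lp} of Lemma \ref{L:Con_Lp_W1p} to the constant extensions of $v$, $\nabla_\Gamma v$, and $\nabla_\Gamma^2 v$ (each component) gives $\|E_\varepsilon v\|_{W^{m,p}(\Omega_\varepsilon)}\le c\varepsilon^{1/p}\|v\|_{W^{m,p}(\Gamma)}$ for $m=0,1,2$.

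The first ingredient is a set of uniform bounds for the auxiliary field $\Psi_\varepsilon$ from \eqref{E:Def_ExAux}. Since $\Gamma$ is of class $C^5$ and $g_0,g_1\in C^4(\Gamma)$, while $\varepsilon\in(0,1]$ and $W$, $g_i$, $\nabla_\Gamma g_i$ together with their tangential derivatives are bounded, the matrix $\{I_3-\varepsilon g_iW\}^{-1}$ and hence $\tau_\varepsilon^i$ from \eqref{E:Def_NB_Aux} and its tangential derivatives up to second order are bounded on $\Gamma$ uniformly in $\varepsilon$ (using \eqref{E:Tau_Bound}, \eqref{E:Wein_Bound}, and smoothness of matrix inversion). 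Combining this with $|d(x)|\le c\varepsilon$ on $\Omega_\varepsilon$ (from \eqref{E:G_Inf} and the definition \eqref{E:Def_CTD} of $\Omega_\varepsilon$), with $\nabla d=\bar n$, $|\nabla^2 d|\le c$, and with $1/\bar g$ having bounded derivatives since $\bar g\ge c>0$, one obtains on $\Omega_\varepsilon$ the bounds $|\Psi_\varepsilon|\le c\varepsilon$, $|\nabla\Psi_\varepsilon|\le c$, and $|\nabla^2\Psi_\varepsilon|\le c$ with $c$ independent of $\varepsilon$; the first holds because each scalar prefactor $d-\varepsilon\bar g_0$ and $\varepsilon\bar g_1-d$ in \eqref{E:Def_ExAux} is of order $\varepsilon$, whereas in the derivatives this $\varepsilon$ is absorbed by the bounded smooth coefficients.

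The second ingredient is the behavior of constant extensions. From \eqref{E:ConDer_Dom} and \eqref{E:ConDer_Bound} applied componentwise we get $|\nabla\bar v|\le c|\overline{\nabla_\Gamma v}|$ and $|\nabla\bar n|\le c$ on $\Omega_\varepsilon$; differentiating \eqref{E:ConDer_Dom} once more, using that $\nabla\bigl(\{I_3-d\overline W\}^{-1}\bigr)$ is bounded on $\Omega_\varepsilon$ (again by smoothness of matrix inversion and boundedness of $\nabla d$, $\nabla\overline W$), yields $|\nabla^2\bar v|\le c\bigl(|\overline{\nabla_\Gamma^2 v}|+|\overline{\nabla_\Gamma v}|\bigr)$ and $|\nabla^2\bar n|\le c$ on $\Omega_\varepsilon$. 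Now write $\phi:=\bar v\cdot\Psi_\varepsilon$, so that $E_\varepsilon v=\bar v+\phi\bar n$ by \eqref{E:Def_ExImp}. The product rule together with the bounds above gives $|\phi|\le c\varepsilon|\bar v|$, $|\nabla\phi|\le c\varepsilon|\nabla\bar v|+c|\bar v|$, and $|\nabla^2\phi|\le c\varepsilon|\nabla^2\bar v|+c|\nabla\bar v|+c|\bar v|$ — the key point being that every occurrence of $\Psi_\varepsilon$ carries a factor $\varepsilon$, which absorbs the accompanying $\nabla^2\bar v$ or $\nabla^2\Psi_\varepsilon$, so no uncompensated second-order term in $\bar v$ survives. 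Expanding $\nabla^2(\phi\bar n)$ and using $|\nabla\bar n|,|\nabla^2\bar n|\le c$ then produces exactly the three pointwise estimates stated above.

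The genuinely delicate step — the one I expect to require the most care — is the uniform control of $\Psi_\varepsilon$ and, above all, of $\nabla^2\Psi_\varepsilon$, i.e. checking that the $\varepsilon$-dependence entering through $\tau_\varepsilon^i=\{I_3-\varepsilon g_iW\}^{-1}\nabla_\Gamma g_i$ does not spoil the second-order derivative bounds, and correctly bookkeeping in the product-rule expansions which terms retain the compensating factor $\varepsilon$. Everything else is the routine combination of \eqref{E:ConDer_Dom}, \eqref{E:ConDer_Bound}, and the norm equivalence \eqref{E:Con_Lp}.
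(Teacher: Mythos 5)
Your proposal is correct and follows essentially the same route as the paper's proof in Part II: pointwise bounds $|\nabla^kE_\varepsilon v|\le c\sum_{j\le k}|\overline{\nabla_\Gamma^jv}|$ obtained from the product rule, the uniform bounds $|\Psi_\varepsilon|\le c\varepsilon$, $|\nabla\Psi_\varepsilon|,|\nabla^2\Psi_\varepsilon|\le c$, and the derivative formula \eqref{E:ConDer_Dom}, followed by integration and the norm equivalence \eqref{E:Con_Lp}. (Only a cosmetic remark: for this boundedness estimate the factor $\varepsilon$ in front of $|\nabla^2\bar v|$ is not actually needed, since $|\nabla^2\bar v|\le c(|\overline{\nabla_\Gamma^2v}|+|\overline{\nabla_\Gamma v}|)$ already suffices; the $\varepsilon$ gain only becomes essential in the difference estimates such as \eqref{E:ExImp_DiLp} and \eqref{E:ExImp_DiGr}.)
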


Let us estimate the difference between $E_\varepsilon v$ and $\bar{v}$.

\begin{lemma} \label{L:ExImp_DiLp}
  There exists a constant $c>0$ independent of $\varepsilon$ such that
  \begin{align} \label{E:ExImp_DiLp}
    \|E_\varepsilon v-\bar{v}\|_{L^p(\Omega_\varepsilon)} \leq c\varepsilon^{1+1/p}\|v\|_{L^p(\Gamma)}
  \end{align}
  for all $v\in L^p(\Gamma,T\Gamma)$ with $p\in[1,\infty)$.
\end{lemma}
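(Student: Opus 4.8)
The plan is to exploit the explicit form of the extension. By the definition \eqref{E:Def_ExImp}, the difference is simply
\begin{align*}
  E_\varepsilon v - \bar{v} = (\bar{v}\cdot\Psi_\varepsilon)\bar{n} \quad\text{in}\quad \Omega_\varepsilon,
\end{align*}
so that, since $|\bar{n}|=1$ on $\Omega_\varepsilon$,
\begin{align*}
  |E_\varepsilon v - \bar{v}| = |\bar{v}\cdot\Psi_\varepsilon| \leq |\bar{v}|\,|\Psi_\varepsilon| \quad\text{in}\quad \Omega_\varepsilon.
\end{align*}
Hence everything reduces to a pointwise bound on $\Psi_\varepsilon$ in $\Omega_\varepsilon$.

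First I would bound $|\Psi_\varepsilon|$ on $\Omega_\varepsilon$. Writing a point of $\Omega_\varepsilon$ as $x = y + rn(y)$ with $y\in\Gamma$ and $\varepsilon g_0(y) < r < \varepsilon g_1(y)$, we have $d(x)=r$ and $\bar{g}_i(x)=g_i(y)$, $\bar{g}(x)=g(y)$, so
\begin{align*}
  |d(x) - \varepsilon\bar{g}_0(x)| = |r - \varepsilon g_0(y)| \leq \varepsilon g(y), \quad |\varepsilon\bar{g}_1(x) - d(x)| = |\varepsilon g_1(y) - r| \leq \varepsilon g(y).
\end{align*}
Combining this with $\bar{g}(x) = g(y) \geq c > 0$ from \eqref{E:G_Inf}, with $|\bar{\tau}_\varepsilon^i(x)| = |\tau_\varepsilon^i(y)| \leq c$ from \eqref{E:Tau_Bound}, and with the boundedness of $g$ on $\Gamma$, the definition \eqref{E:Def_ExAux} yields $|\Psi_\varepsilon(x)| \leq c\varepsilon$ for all $x\in\Omega_\varepsilon$, with $c>0$ independent of $\varepsilon$.

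Therefore $|E_\varepsilon v - \bar{v}| \leq c\varepsilon|\bar{v}|$ a.e. in $\Omega_\varepsilon$, and taking the $L^p(\Omega_\varepsilon)$-norm and applying the right-hand inequality of \eqref{E:Con_Lp} in Lemma \ref{L:Con_Lp_W1p} gives
\begin{align*}
  \|E_\varepsilon v - \bar{v}\|_{L^p(\Omega_\varepsilon)} \leq c\varepsilon\|\bar{v}\|_{L^p(\Omega_\varepsilon)} \leq c\varepsilon^{1+1/p}\|v\|_{L^p(\Gamma)},
\end{align*}
which is \eqref{E:ExImp_DiLp}. (For $v\in L^p(\Gamma,T\Gamma)$ not necessarily continuous, the identity $E_\varepsilon v - \bar{v} = (\bar{v}\cdot\Psi_\varepsilon)\bar{n}$ and all the above inequalities hold a.e. in $\Omega_\varepsilon$, so the argument is unaffected.) There is no genuine obstacle here; the only point requiring care is recording that the bracketed coefficients in $\Psi_\varepsilon$ are each $O(\varepsilon)$ precisely because $r$ ranges over the interval $(\varepsilon g_0(y),\varepsilon g_1(y))$ of length $\varepsilon g(y)$, i.e. the extra order of $\varepsilon$ comes from the thinness of $\Omega_\varepsilon$ itself, exactly as in the Poincaré-type estimates of Section~\ref{SS:Fund_Bd}.
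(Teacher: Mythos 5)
Your proof is correct and follows exactly the paper's argument: bound $|\Psi_\varepsilon|\leq c\varepsilon$ in $\Omega_\varepsilon$ via \eqref{E:Tau_Bound} and the inequalities $0\leq d-\varepsilon\bar{g}_0\leq\varepsilon\bar{g}$, $0\leq\varepsilon\bar{g}_1-d\leq\varepsilon\bar{g}$, deduce $|E_\varepsilon v-\bar{v}|\leq c\varepsilon|\bar{v}|$, and conclude with \eqref{E:Con_Lp}. No issues.
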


\begin{proof}
  From \eqref{E:Tau_Bound} and
  \begin{align*}
    0 \leq d(x)-\varepsilon\bar{g}_0(x) \leq \varepsilon\bar{g}(x), \quad 0 \leq \varepsilon\bar{g}_1(x)-d(x) \leq \varepsilon\bar{g}(x), \quad x\in\Omega_\varepsilon
  \end{align*}
  we deduce that $|\Psi_\varepsilon|\leq c\varepsilon$ in $\Omega_\varepsilon$ and thus
  \begin{align*}
    |E_\varepsilon v-\bar{v}| = |(\bar{v}\cdot\Psi_\varepsilon)\bar{n}| \leq c\varepsilon|\bar{v}| \quad\text{in}\quad \Omega_\varepsilon.
  \end{align*}
  By this inequality and \eqref{E:Con_Lp} we obtain
  \begin{align*}
    \|E_\varepsilon v-\bar{v}\|_{L^p(\Omega_\varepsilon)} \leq c\varepsilon\|\bar{v}\|_{L^p(\Omega_\varepsilon)} \leq c\varepsilon^{1+1/p}\|v\|_{L^p(\Gamma)}.
  \end{align*}
  Thus \eqref{E:ExImp_DiLp} is valid.
\end{proof}

We also derive estimates for the gradient and divergence of $E_\varepsilon v$.

\begin{lemma} \label{L:ExImp_DiGr}
  There exists a constant $c>0$ independent of $\varepsilon$ such that
  \begin{align} \label{E:ExImp_DiGr}
    \left\|\nabla E_\varepsilon v-\overline{F(v)}\right\|_{L^p(\Omega_\varepsilon)} \leq c\varepsilon^{1+1/p}\|v\|_{W^{1,p}(\Gamma)}
  \end{align}
  for all $v\in W^{1,p}(\Gamma,T\Gamma)$ with $p\in[1,\infty)$, where
  \begin{align} \label{E:Def_Fv}
    F(v) := \nabla_\Gamma v+\frac{1}{g}(v\cdot\nabla_\Gamma g)Q \quad\text{on}\quad \Gamma.
  \end{align}
\end{lemma}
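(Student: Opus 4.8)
The plan is to compute $\nabla E_\varepsilon v$ directly from the definition \eqref{E:Def_ExImp} and identify which terms survive to order $\varepsilon^0$ on $\Gamma$ — these should reassemble into $\overline{F(v)}$ — and show the remainder is $O(\varepsilon)$ pointwise in $\Omega_\varepsilon$, after which \eqref{E:Con_Lp} converts the pointwise bound into the claimed $L^p(\Omega_\varepsilon)$ bound with the $\varepsilon^{1+1/p}$ factor. First I would differentiate $E_\varepsilon v = \bar v + (\bar v\cdot\Psi_\varepsilon)\bar n$, obtaining
\begin{align*}
  \nabla E_\varepsilon v = \nabla\bar v + \bar n\otimes\nabla(\bar v\cdot\Psi_\varepsilon) + (\bar v\cdot\Psi_\varepsilon)\nabla\bar n.
\end{align*}
By \eqref{E:NorDer_Con}–type facts and Lemma \ref{L:Pi_Der} (relation \eqref{E:ConDer_Diff}), $\nabla\bar v$ differs from $\overline{\nabla_\Gamma v}$ by a term bounded by $c|d(x)||\overline{\nabla_\Gamma v}| \le c\varepsilon|\overline{\nabla_\Gamma v}|$ in $\Omega_\varepsilon$; since $|\nabla\bar n| = |\overline{W}| \le c$ and $|\Psi_\varepsilon|\le c\varepsilon$ (from \eqref{E:Tau_Bound} and the bounds $0\le d(x)-\varepsilon\bar g_0(x),\ \varepsilon\bar g_1(x)-d(x)\le\varepsilon\bar g(x)$ used in Lemma \ref{L:ExImp_DiLp}), the last term $(\bar v\cdot\Psi_\varepsilon)\nabla\bar n$ is $O(\varepsilon|\bar v|)$. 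So the only delicate contribution is the middle term $\bar n\otimes\nabla(\bar v\cdot\Psi_\varepsilon)$.

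For that term I would expand $\nabla(\bar v\cdot\Psi_\varepsilon) = (\nabla\bar v)^T\Psi_\varepsilon + (\nabla\Psi_\varepsilon)^T\bar v$. The first piece is $O(\varepsilon|\overline{\nabla_\Gamma v}|)$ since $|\Psi_\varepsilon|\le c\varepsilon$. For the second piece I compute $\nabla\Psi_\varepsilon$ from \eqref{E:Def_ExAux}: differentiating $d(x)$ gives $\nabla d = \bar n$, so the $d$-dependent coefficients produce terms of the form $\bar n\otimes(\cdots)$ that, when contracted with the tangential vector $\bar v$, vanish; differentiating $\bar g, \bar g_0, \bar g_1$ and $\bar\tau_\varepsilon^0,\bar\tau_\varepsilon^1$ produces terms carrying an explicit factor $d(x)-\varepsilon\bar g_0(x)$ or $\varepsilon\bar g_1(x)-d(x)$, hence $O(\varepsilon)$, except for the combination coming from $\nabla(d(x)/\bar g(x))$ acting on the difference of the $\tau_\varepsilon^i$'s. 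The surviving order-$\varepsilon^0$ piece of $(\nabla\Psi_\varepsilon)^T\bar v$ should be $\frac{1}{\bar g}(\bar v\cdot\nabla\bar g)$ times the value of $\Psi_\varepsilon/d$ in the limit, which after using $\tau_\varepsilon^i\to\nabla_\Gamma g_i$ as $\varepsilon\to0$ (from \eqref{E:Def_NB_Aux} and \eqref{E:Wein_Diff}) and $g=g_1-g_0$ collapses to $\frac{1}{\bar g}(\bar v\cdot\overline{\nabla_\Gamma g})$. Multiplying by $\bar n$ on the left gives exactly the second term $\frac{1}{g}(v\cdot\nabla_\Gamma g)Q$ of $F(v)$ in \eqref{E:Def_Fv} (recall $Q = n\otimes n$ and $\bar n\otimes(\bar n\,\cdot)$ reproduces $\overline{Q}$ when the scalar is carried along — more precisely $\bar n\otimes(\text{scalar})$ equals the scalar times $\bar n$ in the appropriate row). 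Hence $\nabla E_\varepsilon v - \overline{F(v)}$ is a sum of terms each bounded pointwise by $c\varepsilon(|\bar v| + |\overline{\nabla_\Gamma v}|)$ in $\Omega_\varepsilon$.

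Finally I would invoke \eqref{E:Con_Lp} (Lemma \ref{L:Con_Lp_W1p}): $\|\varepsilon(|\bar v|+|\overline{\nabla_\Gamma v}|)\|_{L^p(\Omega_\varepsilon)} \le c\varepsilon\cdot\varepsilon^{1/p}\|v\|_{W^{1,p}(\Gamma)} = c\varepsilon^{1+1/p}\|v\|_{W^{1,p}(\Gamma)}$, which is \eqref{E:ExImp_DiGr}. By a density argument (Lemma \ref{L:Wmp_Tan_Appr}) it suffices to carry out the computation for $v\in C^1(\Gamma,T\Gamma)$, and since the constant extension and the differentiation identities are all linear, the bound passes to the closure $W^{1,p}(\Gamma,T\Gamma)$. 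The main obstacle is the bookkeeping in $\nabla\Psi_\varepsilon$: one must carefully separate the $\bar n$-valued (hence harmless after contraction with the tangential $\bar v$) terms from the genuinely tangential-coefficient terms, and verify that the single order-$\varepsilon^0$ survivor is precisely $\frac{1}{g}(v\cdot\nabla_\Gamma g)$ rather than something with an extra geometric correction — this uses the specific interpolation structure of $\Psi_\varepsilon$ in \eqref{E:Def_ExAux} together with $\tau_\varepsilon^i = \nabla_\Gamma g_i + O(\varepsilon)$.
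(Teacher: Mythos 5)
Your plan is correct and follows the same route as the paper: reduce to $v\in C^1(\Gamma,T\Gamma)$ by density (Lemma \ref{L:Wmp_Tan_Appr}), establish the pointwise bound $|\nabla E_\varepsilon v-\overline{F(v)}|\leq c\varepsilon(|\bar{v}|+|\overline{\nabla_\Gamma v}|)$ in $\Omega_\varepsilon$, and convert it to the $L^p$ estimate via \eqref{E:Con_Lp}. The only difference is that the paper imports that pointwise inequality as a citation to Part II of the series (Lemma 4.11 there), whereas you sketch its derivation from \eqref{E:Def_ExAux}--\eqref{E:Def_ExImp}; your identification of the sole $O(1)$ survivor in $\nabla\Psi_\varepsilon$ as $\bar{g}^{-1}\bar{n}\otimes(\bar{\tau}_\varepsilon^1-\bar{\tau}_\varepsilon^0)$ with $\tau_\varepsilon^i=\nabla_\Gamma g_i+O(\varepsilon)$ is exactly the content of that cited lemma.
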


\begin{proof}
  Since $p\in[1,\infty)$, $C^1(\Gamma,T\Gamma)$ is dense in $W^{1,p}(\Gamma,T\Gamma)$ by Lemma \ref{L:Wmp_Tan_Appr}.
  Hence by a density argument it is sufficient to show \eqref{E:ExImp_DiGr} for $v\in C^1(\Gamma,T\Gamma)$.
  For such a $v$, we proved in our second paper \cite{Miu_NSCTD_02}*{Lemma 4.11} that
  \begin{align*}
    \left|\nabla E_\varepsilon v-\overline{F(v)}\right| \leq c\varepsilon\left(|\bar{v}|+\left|\overline{\nabla_\Gamma v}\right|\right) \quad\text{in}\quad \Omega_\varepsilon.
  \end{align*}
  By this inequality and \eqref{E:Con_Lp} we get
  \begin{align*}
    \left\|\nabla E_\varepsilon v-\overline{F(v)}\right\|_{L^p(\Omega_\varepsilon)} &\leq c\varepsilon\left(\|\bar{v}\|_{L^p(\Omega_\varepsilon)}+\left\|\overline{\nabla_\Gamma v}\right\|_{L^p(\Omega_\varepsilon)}\right) \\
    &\leq c\varepsilon^{1+1/p}\left(\|v\|_{L^p(\Gamma)}+\|\nabla_\Gamma v\|_{L^p(\Gamma)}\right) \\
    &\leq c\varepsilon^{1+1/p}\|v\|_{W^{1,p}(\Gamma)}.
  \end{align*}
  Hence \eqref{E:ExImp_DiGr} holds.
\end{proof}

\begin{lemma} \label{L:ExImp_LpDiv}
  Let $p\in[1,\infty)$.
  There exists $c>0$ independent of $\varepsilon$ such that
  \begin{align} \label{E:ExImp_LpDiv}
    \left\|\mathrm{div}(E_\varepsilon v)-\frac{1}{\bar{g}}\overline{\mathrm{div}_\Gamma(gv)}\right\|_{L^p(\Omega_\varepsilon)} \leq c\varepsilon^{1+1/p}\|v\|_{W^{1,p}(\Gamma)}
  \end{align}
  for all $v\in W^{1,p}(\Gamma,T\Gamma)$.
  In particular, if $v$ satisfies $\mathrm{div}_\Gamma(gv)=0$ on $\Gamma$, then
  \begin{align} \label{E:ExImp_LpSol}
    \|\mathrm{div}(E_\varepsilon v)\|_{L^p(\Omega_\varepsilon)} \leq c\varepsilon^{1+1/p}\|v\|_{W^{1,p}(\Gamma)}.
  \end{align}
\end{lemma}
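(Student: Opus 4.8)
The plan is to reduce everything to the gradient estimate in Lemma \ref{L:ExImp_DiGr} by taking the matrix trace. Recall that $\mathrm{div}(E_\varepsilon v)=\mathrm{tr}[\nabla E_\varepsilon v]$ and that $|\mathrm{tr}[A]|\leq c|A|$ for any $3\times3$ matrix $A$. Hence, writing $F(v)$ as in \eqref{E:Def_Fv}, I would estimate pointwise in $\Omega_\varepsilon$
\begin{align*}
  \left|\mathrm{div}(E_\varepsilon v)-\mathrm{tr}\bigl[\overline{F(v)}\bigr]\right| = \left|\mathrm{tr}\bigl[\nabla E_\varepsilon v-\overline{F(v)}\bigr]\right| \leq c\left|\nabla E_\varepsilon v-\overline{F(v)}\right|,
\end{align*}
and then apply Lemma \ref{L:ExImp_DiGr} to bound the $L^p(\Omega_\varepsilon)$-norm of the right-hand side by $c\varepsilon^{1+1/p}\|v\|_{W^{1,p}(\Gamma)}$.

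The only remaining point is the algebraic identity $\mathrm{tr}[F(v)]=\tfrac{1}{g}\mathrm{div}_\Gamma(gv)$ on $\Gamma$. By linearity of the trace together with $\mathrm{tr}[\nabla_\Gamma v]=\mathrm{div}_\Gamma v$ from \eqref{E:Def_DivG} and $\mathrm{tr}[Q]=\mathrm{tr}[n\otimes n]=|n|^2=1$ on $\Gamma$, one gets
\begin{align*}
  \mathrm{tr}[F(v)] = \mathrm{div}_\Gamma v+\frac{1}{g}(v\cdot\nabla_\Gamma g) \quad\text{on}\quad \Gamma.
\end{align*}
On the other hand the Leibniz rule for the tangential derivatives, $\underline{D}_i(gv_i)=(\underline{D}_ig)v_i+g\,\underline{D}_iv_i$, which is valid in the weak sense since $v\in W^{1,p}(\Gamma,T\Gamma)$ and $g\in C^4(\Gamma)$, gives $\mathrm{div}_\Gamma(gv)=g\,\mathrm{div}_\Gamma v+v\cdot\nabla_\Gamma g$; dividing by $g$ (which is legitimate since $g\geq c>0$ on $\Gamma$ by \eqref{E:G_Inf}, so $1/g$ is bounded) yields exactly $\tfrac{1}{g}\mathrm{div}_\Gamma(gv)=\mathrm{tr}[F(v)]$. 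Taking constant extensions gives $\mathrm{tr}[\overline{F(v)}]=\overline{\mathrm{tr}[F(v)]}=\tfrac{1}{\bar g}\overline{\mathrm{div}_\Gamma(gv)}$ in $\Omega_\varepsilon$, and combining this with the pointwise inequality above proves \eqref{E:ExImp_LpDiv}.

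Finally, \eqref{E:ExImp_LpSol} is immediate: if $\mathrm{div}_\Gamma(gv)=0$ on $\Gamma$ then its constant extension vanishes in $\Omega_\varepsilon$ and \eqref{E:ExImp_LpDiv} collapses to the asserted estimate for $\mathrm{div}(E_\varepsilon v)$ alone. I do not expect a genuine obstacle here, as the statement is essentially a trace applied to Lemma \ref{L:ExImp_DiGr}; the only point requiring a moment's care is verifying that the Leibniz identity for $\mathrm{div}_\Gamma(gv)$ and the computation $\mathrm{tr}[Q]=1$ together reproduce exactly $\mathrm{tr}[F(v)]$, so that no error term beyond the one already controlled in Lemma \ref{L:ExImp_DiGr} appears.
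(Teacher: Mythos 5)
Your proposal is correct and follows essentially the same route as the paper: the paper's proof likewise computes $\mathrm{tr}[F(v)]=\mathrm{div}_\Gamma v+\tfrac{1}{g}(v\cdot\nabla_\Gamma g)=\tfrac{1}{g}\mathrm{div}_\Gamma(gv)$ using $\mathrm{tr}[Q]=1$ and then bounds $\|\mathrm{tr}[\nabla E_\varepsilon v-\overline{F(v)}]\|_{L^p(\Omega_\varepsilon)}$ by Lemma \ref{L:ExImp_DiGr}. No gaps.
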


\begin{proof}
  Since $\mathrm{tr}[Q]=n\cdot n=1$ on $\Gamma$,
  \begin{align*}
    \mathrm{tr}[F(v)] = \mathrm{tr}[\nabla_\Gamma v]+\frac{1}{g}(v\cdot\nabla_\Gamma g)\mathrm{tr}[Q] = \mathrm{div}_\Gamma v+\frac{1}{g}(v\cdot\nabla_\Gamma g) = \frac{1}{g}\mathrm{div}_\Gamma(gv)
  \end{align*}
  on $\Gamma$.
  From this equality and \eqref{E:ExImp_DiGr} we deduce that
  \begin{align*}
    \left\|\mathrm{div}(E_\varepsilon v)-\frac{1}{\bar{g}}\overline{\mathrm{div}_\Gamma(gv)}\right\|_{L^p(\Omega_\varepsilon)} &= \left\|\mathrm{tr}\Bigl[\nabla E_\varepsilon v-\overline{F(v)}\Bigr]\right\|_{L^p(\Omega_\varepsilon)} \\
    &\leq c\varepsilon^{1+1/p}\|v\|_{W^{1,p}(\Gamma)}.
  \end{align*}
  Hence \eqref{E:ExImp_LpDiv} is valid.
\end{proof}

\subsection{Helmholtz--Leray decomposition on a curved thin domain} \label{SS:Fund_HL}
Let
\begin{align*}
  C_{c,\sigma}^\infty(\Omega_\varepsilon) := \{u\in C_c^\infty(\Omega_\varepsilon)^3 \mid \text{$\mathrm{div}\, u=0$ in $\Omega_\varepsilon$}\}
\end{align*}
and $L_\sigma^2(\Omega_\varepsilon)$ be the norm closure of $C_{c,\sigma}^\infty(\Omega_\varepsilon)$ in $L^2(\Omega_\varepsilon)^3$.
It is known (see \cites{BoFa13,Ga11,Te79}) that $L_\sigma^2(\Omega_\varepsilon)$ is characterized by
\begin{align*}
  L_\sigma^2(\Omega_\varepsilon) = \{u \in L^2(\Omega_\varepsilon)^3 \mid \text{$\mathrm{div}\,u=0$ in $\Omega_\varepsilon$, $u\cdot n_\varepsilon=0$ on $\Gamma_\varepsilon$}\}
\end{align*}
and the Helmholtz--Leray decomposition $L^2(\Omega_\varepsilon)^3=L_\sigma^2(\Omega_\varepsilon)\oplus G^2(\Omega_\varepsilon)$ holds with
\begin{align*}
  G^2(\Omega_\varepsilon) = L_\sigma^2(\Omega_\varepsilon)^\perp = \{\nabla p\in L^2(\Omega_\varepsilon)^3 \mid p\in H^1(\Omega_\varepsilon)\}.
\end{align*}
By $\mathbb{L}_\varepsilon$ we denote the Helmholtz--Leray projection from $L^2(\Omega_\varepsilon)^3$ onto $L_\sigma^2(\Omega_\varepsilon)$.
Here we use the nonstandard notation $\mathbb{L}_\varepsilon$ in order to avoid confusion of the Helmholtz--Leray projection with the orthogonal projection $\mathbb{P}_\varepsilon$ from $L^2(\Omega_\varepsilon)^3$ onto $\mathcal{H}_\varepsilon$ given by \eqref{E:Def_Heps}.
For $u\in L^2(\Omega_\varepsilon)^3$ its solenoidal part is given by $\mathbb{L}_\varepsilon u=u-\nabla\varphi$, where $\varphi\in H^1(\Omega_\varepsilon)$ is a weak solution to the Neumann problem of Poisson's equation
\begin{align*}
  \Delta\varphi = \mathrm{div}\,u \quad\text{in}\quad \Omega_\varepsilon, \quad \frac{\partial\varphi}{\partial n_\varepsilon} = u\cdot n_\varepsilon \quad\text{on}\quad \Gamma_\varepsilon.
\end{align*}
Moreover, if $u\in H^1(\Omega_\varepsilon)^3$, then the elliptic regularity theorem (see \cites{Ev10,GiTr01}) yields
\begin{align*}
  \varphi \in H^2(\Omega_\varepsilon), \quad \mathbb{L}_\varepsilon u = u-\nabla\varphi \in H^1(\Omega_\varepsilon)^3.
\end{align*}
The goal of this subsection is to establish a uniform $H^1(\Omega_\varepsilon)$-estimate of $u-\mathbb{L}_\varepsilon u$ for $u\in H^1(\Omega_\varepsilon)^3$ satisfying the impermeable boundary condition \eqref{E:Bo_Imp}.
First we derive the uniform Poincar\'{e} inequality on $\Omega_\varepsilon$.

\begin{lemma} \label{L:Uni_Poin_Dom}
  There exist constants $\varepsilon_\sigma\in(0,1]$ and $c>0$ such that
  \begin{align} \label{E:Uni_Poin_Dom}
    \|\varphi\|_{L^2(\Omega_\varepsilon)} \leq c\|\nabla\varphi\|_{L^2(\Omega_\varepsilon)}
  \end{align}
  for all $\varepsilon\in(0,\varepsilon_\sigma]$ and $\varphi\in H^1(\Omega_\varepsilon)$ satisfying $\int_{\Omega_\varepsilon}\varphi\,dx=0$.
\end{lemma}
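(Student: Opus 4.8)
The plan is to argue by contradiction, flattening every curved thin domain onto the single fixed reference domain $\Omega_1$ and invoking Rellich's compactness theorem. Suppose the asserted inequality fails with every uniform constant. Then there are $\varepsilon_k\in(0,1]$ and $\varphi_k\in H^1(\Omega_{\varepsilon_k})$ with $\int_{\Omega_{\varepsilon_k}}\varphi_k\,dx=0$, $\|\varphi_k\|_{L^2(\Omega_{\varepsilon_k})}=1$, and $\|\nabla\varphi_k\|_{L^2(\Omega_{\varepsilon_k})}\to0$; passing to a subsequence we may assume $\varepsilon_k\to\varepsilon_*\in[0,1]$. Since $\overline{\Omega}_1\subset N$, the map $\Phi_\varepsilon\colon\Omega_1\to\Omega_\varepsilon$, $\Phi_\varepsilon(x):=\pi(x)+\varepsilon d(x)\bar n(x)$ (equivalently $\Phi_\varepsilon(y+\rho n(y))=y+\varepsilon\rho\,n(y)$), is a $C^4$-diffeomorphism by \eqref{E:Nor_Coord} and the regularity of $\pi$, $d$, $\bar n$. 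I would then set $\psi_k:=\varepsilon_k^{1/2}\bigl(\varphi_k\circ\Phi_{\varepsilon_k}\bigr)\in H^1(\Omega_1)$.

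Next I would record the scaling estimates. By \eqref{E:CoV_Dom} the Jacobian of $\Phi_\varepsilon$ at $x=y+\rho n(y)$ equals $\varepsilon\,J(y,\varepsilon\rho)/J(y,\rho)$, which by \eqref{E:Jac_Bound_03} is comparable to $\varepsilon$ uniformly; hence a change of variables gives $c^{-1}\le\|\psi_k\|_{L^2(\Omega_1)}^2\le c$ for all $k$. For the gradient, the differential of $y\mapsto y+rn(y)$ along $\Gamma$ is $P(y)-rW(y)=(I_3-rW(y))P(y)$, uniformly invertible on the tangent plane by \eqref{E:Wein_Bound}, while the normal direction is compressed by the factor $\varepsilon$; the chain rule therefore gives $|\nabla\psi_k|\le c\varepsilon_k^{1/2}\bigl|\nabla\varphi_k\circ\Phi_{\varepsilon_k}\bigr|$ pointwise, and after the change of variables $\|\nabla\psi_k\|_{L^2(\Omega_1)}^2\le c\|\nabla\varphi_k\|_{L^2(\Omega_{\varepsilon_k})}^2\to0$. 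Thus $\{\psi_k\}$ is bounded in $H^1(\Omega_1)$, so a subsequence converges strongly in $L^2(\Omega_1)$ to some $\psi$; since $\nabla\psi_k\to0$ in $L^2$ and the gradient is closed, $\nabla\psi=0$, and $\Omega_1$ being connected ($\Gamma$ connected and $g_0<g_1$) forces $\psi$ to be a constant. Moreover $\|\psi\|_{L^2(\Omega_1)}^2=\lim_k\|\psi_k\|_{L^2(\Omega_1)}^2\ge c^{-1}>0$, so $\psi\neq0$.

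Finally I would feed in the mean-zero constraint. Changing variables in $0=\int_{\Omega_{\varepsilon_k}}\varphi_k\,dx$ yields $\int_{\Omega_1}\psi_k(x)\,w_k(x)\,dx=0$ with the weight $w_k(x):=J\bigl(\pi(x),\varepsilon_k d(x)\bigr)/J\bigl(\pi(x),d(x)\bigr)$. By \eqref{E:Jac_Bound_03} the $w_k$ are bounded and bounded below by a positive constant, and $w_k\to w_*$ uniformly on $\overline{\Omega}_1$, where $w_*(x):=J\bigl(\pi(x),\varepsilon_* d(x)\bigr)/J\bigl(\pi(x),d(x)\bigr)>0$, because $J$ is Lipschitz in its second argument (from \eqref{E:Curv_Bound}) and $\varepsilon_k d(x)\to\varepsilon_* d(x)$ uniformly. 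Letting $k\to\infty$ and using $\psi_k\to\psi$ in $L^2(\Omega_1)$ gives $0=\int_{\Omega_1}\psi\,w_*\,dx=\psi\int_{\Omega_1}w_*\,dx$, hence $\psi=0$, contradicting $\psi\neq0$. This proves the inequality for all $\varepsilon\in(0,1]$ with a uniform constant, so one may take $\varepsilon_\sigma=1$.

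I expect the main obstacle to be the bookkeeping of the powers of $\varepsilon_k$ in the pulled-back norms — one needs the rescaled $L^2$-norm to stay comparable to $1$ while the rescaled gradient norm still vanishes — together with passing to the limit in the Jacobian-weighted (hence non-flat) mean-zero constraint. A direct, compactness-free argument through the thin-direction average $M\varphi$ and the trace inequality \eqref{E:Poin_Bo} instead produces, after the unavoidable detour over $\Gamma$, a term $c\|\varphi\|_{L^2(\Omega_\varepsilon)}$ on the right with a constant that does not become small, so it cannot be absorbed; this is why I would take the rescaling/compactness route.
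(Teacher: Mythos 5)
Your proposal is correct and follows essentially the same route as the paper: argue by contradiction, pull the normalized sequence back to the fixed domain $\Omega_1$ via $\Phi_\varepsilon$ using \eqref{E:CoV_Fixed}, extract a strongly $L^2$-convergent subsequence by Rellich, show the limit is constant, and kill it with the Jacobian-weighted mean-zero constraint. The only differences are cosmetic — your $\varepsilon^{1/2}$ normalization versus the paper's normalization of $\xi_k$ after dividing by $\varepsilon_k$, your allowance of $\varepsilon_k\to\varepsilon_*\in[0,1]$ (yielding $\varepsilon_\sigma=1$) versus the paper's $\varepsilon_k\to0$, and your direct "$\nabla\psi=0$ on connected $\Omega_1$ $\Rightarrow$ $\psi$ constant" versus the paper's detour through $\partial_n\xi=0$ and $\nabla_\Gamma\eta=0$.
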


We prove Lemma \ref{L:Uni_Poin_Dom} by contradiction.
To this end, we transform integrals over $\Omega_\varepsilon$ into those over $\Omega_1$ with fixed thickness by using the next lemma (note that we assume $\overline{\Omega}_1\subset N$ by scaling $g_0$ and $g_1$, see Section \ref{SS:Pre_CTD}).

\begin{lemma}[{\cite{Miu_NSCTD_01}*{Lemma 5.4}}] \label{L:CoV_Fixed}
  For $\varepsilon\in(0,1]$ let
  \begin{align} \label{E:Def_Bij}
    \Phi_\varepsilon(X) := \pi(X)+\varepsilon d(X)\bar{n}(X), \quad X \in \Omega_1.
  \end{align}
  Then $\Phi_\varepsilon$ is a bijection from $\Omega_1$ onto $\Omega_\varepsilon$ and for a function $\varphi$ on $\Omega_\varepsilon$ we have
  \begin{align} \label{E:CoV_Fixed}
    \int_{\Omega_\varepsilon}\varphi(x)\,dx = \varepsilon\int_{\Omega_1}\xi(X)J(\pi(X),d(X))^{-1}J(\pi(X),\varepsilon d(X))\,dX,
  \end{align}
  where $\xi:=\varphi\circ\Phi_\varepsilon$ on $\Omega_1$ and $J$ is given by \eqref{E:Def_Jac}.
  Moreover, if $\varphi\in L^2(\Omega_\varepsilon)$, then $\xi\in L^2(\Omega_1)$ and there exist constants $c_1,c_2>0$ independent of $\varepsilon$ and $\varphi$ such that
  \begin{align} \label{E:L2_Fixed}
    c_1\varepsilon^{-1}\|\varphi\|_{L^2(\Omega_\varepsilon)}^2 \leq \|\xi\|_{L^2(\Omega_1)}^2 \leq c_2\varepsilon^{-1}\|\varphi\|_{L^2(\Omega_\varepsilon)}^2.
  \end{align}
  If in addition $\varphi\in H^1(\Omega_\varepsilon)$, then $\xi\in H^1(\Omega_1)$ and
  \begin{align} \label{E:H1_Fixed}
    \varepsilon^{-1}\|\nabla\varphi\|_{L^2(\Omega_\varepsilon)}^2 \geq c\left(\left\|\overline{P}\nabla\xi\right\|_{L^2(\Omega_1)}^2+\varepsilon^{-2}\|\partial_n\xi\|_{L^2(\Omega_1)}^2\right),
  \end{align}
  where $\partial_n\xi=(\bar{n}\cdot\nabla)\xi$ on $\Omega_1$ and $c>0$ is a constant independent of $\varepsilon$ and $\varphi$.
\end{lemma}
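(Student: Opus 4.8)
The plan is to realize $\Phi_\varepsilon$ explicitly through the normal coordinate parametrizations of $\Omega_1$ and $\Omega_\varepsilon$, derive \eqref{E:CoV_Fixed} by inserting the rescaling $r=\varepsilon s$ of the thin variable into the change of variables formula \eqref{E:CoV_Dom}, and then read off \eqref{E:L2_Fixed} and \eqref{E:H1_Fixed} from the Jacobian bounds \eqref{E:Jac_Bound_03} together with a uniform pointwise bound on $\nabla\Phi_\varepsilon$. Since we may assume $\overline{\Omega}_1\subset N$ by scaling $g_0,g_1$ (see Section \ref{SS:Pre_CTD}), every $X\in\Omega_1$ has a unique representation $X=y+sn(y)$ with $y=\pi(X)\in\Gamma$, $s=d(X)\in(g_0(y),g_1(y))$, and $\bar{n}(X)=n(y)$, by \eqref{E:Nor_Coord}; hence $\Phi_\varepsilon(X)=y+\varepsilon s\,n(y)\in\Omega_\varepsilon$ because $\varepsilon g_0(y)<\varepsilon s<\varepsilon g_1(y)$. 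Conversely any $x\in\Omega_\varepsilon$ equals $y+rn(y)$ with $\varepsilon g_0(y)<r<\varepsilon g_1(y)$, and $X:=y+(r/\varepsilon)n(y)\in\Omega_1$ satisfies $\Phi_\varepsilon(X)=x$; uniqueness of the normal representation gives injectivity. In particular $\pi\circ\Phi_\varepsilon=\pi$ and $d\circ\Phi_\varepsilon=\varepsilon d$ on $\Omega_1$, so $\overline{P}$, $\overline{Q}$, and $\bar{n}$ are unchanged under $\Phi_\varepsilon$.

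\textbf{The change of variables formula and the $L^2$ estimate.} Apply \eqref{E:CoV_Dom} to $\varphi$ over $\Omega_\varepsilon$ and substitute $r=\varepsilon s$ in the inner integral, using $\varphi(y+\varepsilon sn(y))=\xi(y+sn(y))$, to obtain
\[
  \int_{\Omega_\varepsilon}\varphi(x)\,dx=\varepsilon\int_\Gamma\int_{g_0(y)}^{g_1(y)}\xi(y+sn(y))\,J(y,\varepsilon s)\,ds\,d\mathcal{H}^2(y).
\]
Now apply \eqref{E:CoV_Dom} over $\Omega_1$ to the function whose value at $y+sn(y)$ is $\xi(y+sn(y))J(y,\varepsilon s)J(y,s)^{-1}$ (note $J(y,s)\geq c^{-1}>0$ by \eqref{E:Curv_Bound}): the factor $J(y,s)$ appearing in \eqref{E:CoV_Dom} cancels, and one arrives at \eqref{E:CoV_Fixed}. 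Taking $\varphi$ replaced by $|\varphi|^2$ in \eqref{E:CoV_Fixed} and bounding $J(\pi(X),\varepsilon d(X))$ and $J(\pi(X),d(X))^{-1}$ above and below by constants via \eqref{E:Jac_Bound_03} (valid since $d(X),\varepsilon d(X)\in(-\delta,\delta)$) gives \eqref{E:L2_Fixed}. The same computation shows, for any $h\in L^2(\Omega_\varepsilon)$, that $c_1\varepsilon^{-1}\|h\|_{L^2(\Omega_\varepsilon)}^2\leq\|h\circ\Phi_\varepsilon\|_{L^2(\Omega_1)}^2\leq c_2\varepsilon^{-1}\|h\|_{L^2(\Omega_\varepsilon)}^2$, which will be used in the next step.

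\textbf{The gradient estimate \eqref{E:H1_Fixed}.} For $\varphi\in H^1(\Omega_\varepsilon)$ the chain rule gives $\nabla\xi=(\nabla\Phi_\varepsilon)\,(\nabla\varphi)\circ\Phi_\varepsilon$ on $\Omega_1$, so it suffices to control the two orthogonal pieces of $\nabla\xi$ relative to $\overline{P}$ and $\overline{Q}$. Differentiating $\varphi\circ\Phi_\varepsilon$ along the normal lines through points of $\Gamma$ shows $\partial_n\xi=\varepsilon\,(\partial_n\varphi)\circ\Phi_\varepsilon$, whence pointwise $\varepsilon^{-2}|\partial_n\xi|^2=|(\partial_n\varphi)\circ\Phi_\varepsilon|^2\leq|(\nabla\varphi)\circ\Phi_\varepsilon|^2$. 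For the tangential part one computes $\nabla\Phi_\varepsilon=\nabla\pi+\varepsilon(\overline{Q}+d\,\nabla\bar{n})$ from $\Phi_\varepsilon=\pi+\varepsilon d\bar{n}$ and $\nabla d=\bar{n}$, using the standard forms of $\nabla\pi$ and $\nabla\bar{n}$ coming from \eqref{E:Nor_Coord} and Lemma \ref{L:Pi_Der}; every factor occurring (such as $\{I_3-d\overline{W}\}^{-1}$ via \eqref{E:Wein_Bound}, $\overline{W}$, $\overline{P}$, $\overline{Q}$, together with $d=O(1)$ on $\Omega_1$ and $\varepsilon\leq1$) is uniformly bounded, so $|\nabla\Phi_\varepsilon|\leq c$ and therefore $|\overline{P}\,\nabla\xi|\leq|\nabla\xi|\leq c\,|(\nabla\varphi)\circ\Phi_\varepsilon|$. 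Adding the two bounds yields $|\overline{P}\,\nabla\xi|^2+\varepsilon^{-2}|\partial_n\xi|^2\leq c\,|(\nabla\varphi)\circ\Phi_\varepsilon|^2$ on $\Omega_1$; integrating over $\Omega_1$ and applying the $L^2$ estimate of the previous step componentwise with $h=\nabla\varphi$ produces the factor $\varepsilon^{-1}$ and gives \eqref{E:H1_Fixed}.

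\textbf{Main obstacle.} The delicate point is the pointwise control of $\nabla\Phi_\varepsilon$: one must verify that the rescaling of the normal variable contributes exactly the factor $\varepsilon$, so that the tangential directions stay unscaled (keeping $\|\overline{P}\nabla\xi\|_{L^2(\Omega_1)}$ comparable to $\varepsilon^{-1/2}\|\nabla\varphi\|_{L^2(\Omega_\varepsilon)}$) while the normal derivative is damped by exactly $\varepsilon$ (so that $\varepsilon^{-2}\|\partial_n\xi\|_{L^2(\Omega_1)}^2$ does not blow up). Since only the lower bound on $\|\nabla\varphi\|_{L^2(\Omega_\varepsilon)}$ is needed in \eqref{E:H1_Fixed}, it is enough to bound $\nabla\Phi_\varepsilon$ from above, which keeps the Weingarten-map bookkeeping in the expansion of $\nabla\Phi_\varepsilon$ manageable.
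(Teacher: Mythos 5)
Your proof is correct: the bijectivity argument via normal coordinates, the derivation of \eqref{E:CoV_Fixed} by substituting $r=\varepsilon s$ in \eqref{E:CoV_Dom} and reapplying it over $\Omega_1$, the $L^2$ comparison from \eqref{E:Jac_Bound_03}, and the gradient bound via $\partial_n\xi=\varepsilon(\partial_n\varphi)\circ\Phi_\varepsilon$ together with the uniform bound on $\nabla\Phi_\varepsilon$ from \eqref{E:Nor_Coord}, Lemma \ref{L:Pi_Der}, and \eqref{E:Wein_Bound} are exactly the tools the paper sets up for this statement (whose proof it defers to Part I), and this is essentially the same argument. The only minor point you leave implicit is the justification of the chain rule for $H^1$ functions under the $C^1$ diffeomorphism $\Phi_\varepsilon$, which is standard by density.
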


\begin{proof}[Proof of Lemma \ref{L:Uni_Poin_Dom}]
  Assume to the contrary that there exist a sequence $\{\varepsilon_k\}_{k=1}^\infty$ of positive numbers convergent to zero and $\varphi_k\in H^1(\Omega_{\varepsilon_k})$ such that
  \begin{align} \label{Pf_UPD:Contra}
    \|\varphi_k\|_{L^2(\Omega_{\varepsilon_k})}^2 > k\|\nabla\varphi_k\|_{L^2(\Omega_{\varepsilon_k})}^2, \quad \int_{\Omega_{\varepsilon_k}}\varphi_k\,dx = 0, \quad k\in\mathbb{N}.
  \end{align}
  For $k\in\mathbb{N}$ let $\Phi_{\varepsilon_k}$ be the bijection from $\Omega_1$ onto $\Omega_{\varepsilon_k}$ given by \eqref{E:Def_Bij} and
  \begin{align*}
    \xi_k := \varphi_k\circ\Phi_{\varepsilon_k}\in H^1(\Omega_1).
  \end{align*}
  We divide both sides of the first inequality of \eqref{Pf_UPD:Contra} by $\varepsilon_k$ and use \eqref{E:L2_Fixed} and \eqref{E:H1_Fixed} to deduce that
  \begin{align*}
    \|\xi_k\|_{L^2(\Omega_1)}^2 > ck\left(\left\|\overline{P}\nabla\xi_k\right\|_{L^2(\Omega_1)}^2+\varepsilon_k^{-2}\|\partial_n\xi_k\|_{L^2(\Omega_1)}^2\right).
  \end{align*}
  Since $\|\xi_k\|_{L^2(\Omega_1)}>0$, we may assume that
  \begin{align} \label{Pf_UPD:Xik_L2}
    \|\xi_k\|_{L^2(\Omega_1)}=1
  \end{align}
  by replacing $\xi_k$ with $\xi_k/\|\xi_k\|_{L^2(\Omega_1)}$ and thus
  \begin{align} \label{Pf_UPD:Nabla_k}
    \left\|\overline{P}\nabla\xi_k\right\|_{L^2(\Omega_1)}^2 < ck^{-1}, \quad \|\partial_n\xi_k\|_{L^2(\Omega_1)}^2 < c\varepsilon_k^2k^{-1}.
  \end{align}
  Then $\{\xi_k\}_{k=1}^\infty$ is bounded in $H^1(\Omega_1)$ by \eqref{Pf_UPD:Xik_L2}, \eqref{Pf_UPD:Nabla_k}, and
  \begin{align*}
    |\nabla\xi_k|^2 = \left|\overline{P}\nabla\xi_k\right|^2+\left|\overline{Q}\nabla\xi_k\right|^2, \quad \left|\overline{Q}\nabla\xi_k\right| = |\bar{n}\otimes\partial_n\xi_k| = |\partial_n\xi_k| \quad\text{in}\quad \Omega_1.
  \end{align*}
  By this fact and the compact embedding $H^1(\Omega_1)\hookrightarrow L^2(\Omega_1)$ we see that $\{\xi_k\}_{k=1}^\infty$ converges (up to a subsequence) to some $\xi\in H^1(\Omega_1)$ weakly in $H^1(\Omega_1)$ and strongly in $L^2(\Omega_1)$.
  Hence by \eqref{Pf_UPD:Xik_L2} we get
  \begin{align} \label{Pf_UPD:Limit_L2}
    \|\xi\|_{L^2(\Omega_1)} = \lim_{k\to\infty}\|\xi_k\|_{L^2(\Omega_1)} = 1.
  \end{align}
  Moreover, the weak convergence of $\{\xi_k\}_{k=1}^\infty$ to $\xi$ in $H^1(\Omega_1)$ and \eqref{Pf_UPD:Nabla_k} imply
  \begin{align} \label{Pf_UPD:Xi_Grad}
    \overline{P}\nabla\xi = 0, \quad \partial_n\xi = 0 \quad\text{in}\quad \Omega_1.
  \end{align}
  Let $\eta(y):=\xi(y+g_0(y)n(y))$ for $y\in\Gamma$.
  Then $\xi=\bar{\eta}$ is the constant extension of $\eta$ by the second equality of \eqref{Pf_UPD:Xi_Grad} and thus $\eta\in H^1(\Gamma)$ by $\xi\in H^1(\Omega_1)$ and Lemma \ref{L:Con_Lp_W1p}.
  Also, by \eqref{E:P_TGr}, \eqref{E:WReso_P}, \eqref{E:ConDer_Dom}, and the first equality of \eqref{Pf_UPD:Xi_Grad},
  \begin{align*}
    0 = \overline{P}\nabla\xi = \overline{P}\Bigl(I_3-d\overline{W}\Bigr)^{-1}\overline{\nabla_\Gamma\eta} = \Bigl(I_3-d\overline{W}\Bigr)^{-1}\overline{\nabla_\Gamma\eta} \quad\text{in}\quad \Omega_1,
  \end{align*}
  which yields $\overline{\nabla_\Gamma\eta}=0$ in $\Omega_1$, i.e. $\nabla_\Gamma\eta=0$ on $\Gamma$.
  Thus $\eta$ is constant on $\Gamma$ by Lemma \ref{L:WTGr_Van}.
  Now we apply \eqref{E:CoV_Fixed} to the second equality of \eqref{Pf_UPD:Contra} to have
  \begin{align} \label{Pf_UPD:Int_CTD}
    \int_{\Omega_1}\xi_k(X)J(\pi(X),d(X))^{-1}J(\pi(X),\varepsilon_kd(X))\,dX = 0.
  \end{align}
  Moreover, since $\{\xi_k\}_{k=1}^\infty$ converges to $\xi=\bar{\eta}$ strongly in $L^2(\Omega_1)$ and
  \begin{align*}
    |J(\pi(X),\varepsilon_kd(X))-1| \leq c\varepsilon_k|d(X)| \leq c\varepsilon_k \to 0 \quad\text{as}\quad k \to \infty
  \end{align*}
  uniformly in $X\in\Omega_1$ by \eqref{E:Jac_Bound_03} and $|d|\leq c$ in $\Omega_1$, we send $k\to\infty$ in \eqref{Pf_UPD:Int_CTD} to get
  \begin{align*}
    \int_{\Omega_1}\eta(\pi(X))J(\pi(X),d(X))^{-1}\,dX = 0.
  \end{align*}
  Noting that $\eta$ is constant on $\Gamma$, we apply \eqref{E:CoV_Dom} to this equality to find that
  \begin{align*}
    \eta\int_\Gamma\int_{g_0(y)}^{g_1(y)} J(y,r)^{-1}J(y,r)\,dr\,d\mathcal{H}^2(y) = \eta\int_\Gamma g(y)\,d\mathcal{H}^2(y) = 0.
  \end{align*}
  From this equality and \eqref{E:G_Inf} we deduce that $\eta=0$ on $\Gamma$ and thus $\xi=\bar{\eta}=0$ in $\Omega_1$, which contradicts with \eqref{Pf_UPD:Limit_L2}.
  Therefore, the uniform inequality \eqref{E:Uni_Poin_Dom} is valid.
\end{proof}

Next we consider the Neumann problem of Poisson's equation
\begin{align} \label{E:Poisson_Dom}
  -\Delta\varphi = \xi \quad\text{in}\quad \Omega_\varepsilon, \quad \frac{\partial\varphi}{\partial n_\varepsilon} = 0 \quad\text{on}\quad \Gamma_\varepsilon
\end{align}
for $\xi\in H^{-1}(\Omega_\varepsilon)$ satisfying $\langle\xi,1\rangle_{\Omega_\varepsilon}=0$, where $\langle\cdot,\cdot\rangle_{\Omega_\varepsilon}$ denotes the duality product between $H^{-1}(\Omega_\varepsilon)$ and $H^1(\Omega_\varepsilon)$.
By the Lax--Milgram theory there exists a unique weak solution $\varphi\in H^1(\Omega_\varepsilon)$ satisfying
\begin{align} \label{E:PD_Weak}
  (\nabla\varphi,\nabla\zeta)_{L^2(\Omega_\varepsilon)} = \langle\xi,\zeta\rangle_{\Omega_\varepsilon} \quad\text{for all}\quad \zeta\in H^1(\Omega_\varepsilon),\quad \int_{\Omega_\varepsilon}\varphi\,dx = 0.
\end{align}
Moreover, if $\xi\in L^2(\Omega_\varepsilon)$, then $\varphi\in H^2(\Omega_\varepsilon)$ and
\begin{align} \label{E:PD_H2_Ep}
  \|\varphi\|_{H^2(\Omega_\varepsilon)} \leq c_\varepsilon\|\xi\|_{L^2(\Omega_\varepsilon)}
\end{align}
with a constant $c_\varepsilon>0$ depending on $\varepsilon$ by the elliptic regularity theorem.
In this case, the equation \eqref{E:Poisson_Dom} is satisfied in the strong sense.
Let us show that $c_\varepsilon$ in \eqref{E:PD_H2_Ep} can be taken independently of $\varepsilon$.

\begin{lemma} \label{L:PD_H2}
  Let $\varepsilon_\sigma$ be the constant given in Lemma \ref{L:Uni_Poin_Dom}.
  For $\varepsilon\in(0,\varepsilon_\sigma]$ suppose that $\xi\in L^2(\Omega_\varepsilon)$ satisfies
  \begin{align} \label{E:PD_H2_So}
    \langle\xi,1\rangle_{\Omega_\varepsilon} = \int_{\Omega_\varepsilon}\xi\,dx = 0.
  \end{align}
  Then there exists a constant $c>0$ independent of $\varepsilon$ and $\xi$ such that
  \begin{align} \label{E:PD_H2}
    \|\varphi\|_{H^2(\Omega_\varepsilon)} \leq c\|\xi\|_{L^2(\Omega_\varepsilon)},
  \end{align}
  where $\varphi\in H^2(\Omega_\varepsilon)$ is a unique solution to \eqref{E:Poisson_Dom} with source term $\xi$.
\end{lemma}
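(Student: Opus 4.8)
The plan is to combine the uniform Poincar\'{e} inequality of Lemma~\ref{L:Uni_Poin_Dom} with an $\varepsilon$-uniform elliptic second-derivative estimate. Since $\varphi\in H^2(\Omega_\varepsilon)$ for each fixed $\varepsilon$ by the elliptic regularity theorem, the Hessian is a legitimate object and only a priori bounds with constants independent of $\varepsilon$ need to be established. First, taking $\zeta=\varphi$ in \eqref{E:PD_Weak} and using \eqref{E:PD_H2_So} gives $\|\nabla\varphi\|_{L^2(\Omega_\varepsilon)}^2=\langle\xi,\varphi\rangle_{\Omega_\varepsilon}\le\|\xi\|_{L^2(\Omega_\varepsilon)}\|\varphi\|_{L^2(\Omega_\varepsilon)}$, and since $\int_{\Omega_\varepsilon}\varphi\,dx=0$ and $\varepsilon\le\varepsilon_\sigma$, Lemma~\ref{L:Uni_Poin_Dom} yields $\|\varphi\|_{L^2(\Omega_\varepsilon)}\le c\|\nabla\varphi\|_{L^2(\Omega_\varepsilon)}$. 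Hence $\|\varphi\|_{L^2(\Omega_\varepsilon)}+\|\nabla\varphi\|_{L^2(\Omega_\varepsilon)}\le c\|\xi\|_{L^2(\Omega_\varepsilon)}$ with $c$ independent of $\varepsilon$, and it remains to bound $\|\nabla^2\varphi\|_{L^2(\Omega_\varepsilon)}$ by the same quantity.

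Next I would reduce the Hessian to its tangential part. Decomposing $\nabla^2\varphi$ with $P$ and $Q=I_3-P$ and using $PQ=QP=0$ and $\partial_n\bar n=0$ (see \eqref{E:NorDer_Con}), one checks that $\mathrm{tr}[P\nabla^2\varphi Q]=\mathrm{tr}[Q\nabla^2\varphi P]=0$ and $\mathrm{tr}[Q\nabla^2\varphi Q]=\partial_n^2\varphi$, so the equation $-\Delta\varphi=\xi$ reads $\partial_n^2\varphi=-\xi-\mathrm{tr}[P\nabla^2\varphi P]$ in $\Omega_\varepsilon$. Since moreover $Q\nabla^2\varphi=\bar n\otimes(\nabla^2\varphi\,\bar n)$ and $|\nabla^2\varphi\,\bar n|\le|P\nabla^2\varphi|+|\partial_n^2\varphi|$ pointwise, it suffices to bound $\|P\nabla^2\varphi\|_{L^2(\Omega_\varepsilon)}$ — that is, the tangential-type derivatives of $\nabla\varphi$ — by $c(\|\xi\|_{L^2(\Omega_\varepsilon)}+\|\nabla\varphi\|_{L^2(\Omega_\varepsilon)})$.

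To do this I would fix a finite atlas of tangential coordinate patches of $\Gamma$; over each patch $\Omega_\varepsilon$ is parametrized by $(s_1,s_2,r)$ with $s$ in a bounded open set and $\varepsilon g_0(s)<r<\varepsilon g_1(s)$, and in these normal (Fermi) coordinates the principal part of $-\Delta$ has coefficients bounded and bounded below uniformly in $\varepsilon$ by \eqref{E:Curv_Bound} and the $C^4$-regularity of $\Gamma$, $g_0$, $g_1$ — so $\varepsilon$ enters only through the thinness of the $r$-range, not through the coefficients. A Nirenberg-type second-derivative estimate along $s_1,s_2$, obtained by testing \eqref{E:PD_Weak} with tangential double difference quotients $-D^{s}_{-h}(\chi^{2}D^{s}_{h}\varphi)$ for a fixed cutoff $\chi=\chi(s)$ subordinate to the atlas, then gives $\|\chi\,P\nabla^2\varphi\|_{L^2(\Omega_\varepsilon)}\le c(\|\xi\|_{L^2(\Omega_\varepsilon)}+\|\nabla\varphi\|_{L^2(\Omega_\varepsilon)})$ with $c$ independent of $\varepsilon$: no factor $\varepsilon^{-1}$ arises because no difference quotient is taken across the thin direction, the impermeable condition $\partial_{n_\varepsilon}\varphi=0$ is already encoded in \eqref{E:PD_Weak} so no uncontrolled boundary integral appears, and the commutator and cutoff terms involve only $s$-derivatives of the $\varepsilon$-independent coordinate frame and of $\chi$. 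Summing over the atlas and combining with the estimates above yields $\|\nabla^2\varphi\|_{L^2(\Omega_\varepsilon)}\le c\|\xi\|_{L^2(\Omega_\varepsilon)}$, i.e. \eqref{E:PD_H2}.

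The main obstacle is the $\varepsilon$-uniformity of this last step: one must verify that the tangential localization does not couple to the thinness — which holds precisely because the normal coordinate is left unscaled, keeping the principal part of $-\Delta$ an $\varepsilon$-free operator — and handle the routine technical point that a tangential difference quotient $D^{s}_{h}\varphi$ is defined only on the set where both $(s,r)$ and $(s+h,r)$ lie in $\Omega_\varepsilon$, which differs from $\Omega_\varepsilon$ by a region of measure $O(\varepsilon|h|)$ exhausting $\Omega_\varepsilon$ as $h\to0$; together with the $O(\varepsilon)$ tilt of the co-normal at $\Gamma_\varepsilon$ (Lemma~\ref{L:Comp_Nor}), the residual contributions are absorbed by the trace and Poincar\'{e} inequalities of Lemma~\ref{L:Poincare}. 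Alternatively, \eqref{E:PD_H2} could in principle be proved by contradiction and compactness in the style of the proof of Lemma~\ref{L:Uni_Poin_Dom} after rescaling to $\Omega_1$ via $\Phi_\varepsilon$ (Lemma~\ref{L:CoV_Fixed}), but the anisotropic change of variables reintroduces $\varepsilon^{-2}$ weights on the normal derivatives, so the localized argument above is cleaner.
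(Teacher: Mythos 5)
Your first step (testing \eqref{E:PD_Weak} with $\zeta=\varphi$ and invoking Lemma~\ref{L:Uni_Poin_Dom}) is exactly the paper's, and your pointwise reduction of the Hessian to its tangential part $P\nabla^2\varphi$ via the equation is sound algebra. The gap is in the step you call routine. The tangential difference quotient $D^s_h\varphi(s,r)$, taken at fixed Fermi coordinate $r$, is undefined on the strip of $\Omega_\varepsilon$ where $(s+h,r)$ leaves the domain, because the boundaries $\Gamma_\varepsilon^i=\{r=\varepsilon g_i(s)\}$ are slanted graphs whenever $\nabla_\Gamma g_i\neq0$. For a Neumann problem the test space is all of $H^1(\Omega_\varepsilon)$, but $-D^s_{-h}(\chi^2D^s_h\varphi)$ must still be a function defined a.e.\ on $\Omega_\varepsilon$ before it can be inserted into \eqref{E:PD_Weak}; the difficulty is not that a small residual integral needs estimating (where trace and Poincar\'{e} inequalities would help), but that the summation-by-parts identity for difference quotients fails on a domain that is not invariant under the translation. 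Repairing this requires either flattening the boundary --- which, as you yourself observe for the rescaling alternative, reintroduces $\varepsilon^{-1}$ weights on normal derivatives --- or using a slanted difference quotient adapted to the graphs $r=\varepsilon g_i(s)$, whose commutators with $\nabla$ produce mixed second derivatives weighted by $\varepsilon\nabla_\Gamma g_i$ that must then be absorbed. Neither is carried out, so the claimed $\varepsilon$-uniform bound on $\|P\nabla^2\varphi\|_{L^2(\Omega_\varepsilon)}$ is not established.

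The paper avoids localization entirely. After approximating $\xi$ by smooth compactly supported functions so that the solution lies in $H^3(\Omega_\varepsilon)$, it integrates by parts twice to obtain the Rellich-type identity
\begin{align*}
  \|\nabla^2\varphi\|_{L^2(\Omega_\varepsilon)}^2 = \|\Delta\varphi\|_{L^2(\Omega_\varepsilon)}^2+\int_{\Gamma_\varepsilon}\{(\nabla\varphi\cdot\nabla)\nabla\varphi\}\cdot n_\varepsilon\,d\mathcal{H}^2,
\end{align*}
the other boundary term vanishing by the Neumann condition, and then applies the uniform estimate \eqref{E:Int_UgUn} of Lemma~\ref{L:Int_UgUn} with $u=\nabla\varphi$, which is admissible precisely because $\nabla\varphi\cdot n_\varepsilon=0$ on $\Gamma_\varepsilon$. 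All of the $\varepsilon$-uniform boundary geometry is packaged in that one lemma, and Young's inequality finishes. If you want to salvage your argument, the cleanest repair is to replace the difference-quotient machinery by this identity.
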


A key tool for the proof of Lemma \ref{L:PD_H2} is the following estimate shown in our first paper \cite{Miu_NSCTD_01} based on a careful analysis of surface quantities of $\Gamma_\varepsilon$.

\begin{lemma}[{\cite{Miu_NSCTD_01}*{Lemma 4.3}}] \label{L:Int_UgUn}
  There exists a constant $c>0$ such that
  \begin{align} \label{E:Int_UgUn}
    \left|\int_{\Gamma_\varepsilon}(u\cdot\nabla)u\cdot n_\varepsilon\,d\mathcal{H}^2\right| \leq c\left(\|u\|_{L^2(\Omega_\varepsilon)}^2+\|u\|_{L^2(\Omega_\varepsilon)}\|\nabla u\|_{L^2(\Omega_\varepsilon)}\right)
  \end{align}
  for all $\varepsilon\in(0,1]$ and $u\in C^1(\overline{\Omega}_\varepsilon)^3\cup H^2(\Omega_\varepsilon)^3$ satisfying \eqref{E:Bo_Imp}.
\end{lemma}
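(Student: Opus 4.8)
The plan is to exploit the impermeability condition $u\cdot n_\varepsilon=0$ to turn the first-order boundary quantity $(u\cdot\nabla)u\cdot n_\varepsilon$ into a \emph{zeroth-order} expression governed by the Weingarten map of $\Gamma_\varepsilon$, and then to recover the bulk bound through a cancellation between the two boundary components $\Gamma_\varepsilon^0$ and $\Gamma_\varepsilon^1$. First I would record the pointwise identity
\[
  (u\cdot\nabla)u\cdot n_\varepsilon = W_\varepsilon u\cdot u \quad\text{on}\quad \Gamma_\varepsilon,
\]
valid whenever $u\cdot n_\varepsilon=0$ on $\Gamma_\varepsilon$: differentiating $u\cdot n_\varepsilon=0$ along the tangential field $u$ and applying the product rule together with $W_\varepsilon=-\nabla_{\Gamma_\varepsilon}n_\varepsilon$ gives $[(u\cdot\nabla)u]\cdot n_\varepsilon=-u\cdot[(u\cdot\nabla)n_\varepsilon]=W_\varepsilon u\cdot u$. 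For $u\in C^1(\overline{\Omega}_\varepsilon)^3$ this holds pointwise via the $C^1$ trace on $\Gamma_\varepsilon$ (which is of class $C^4$); for $u\in H^2(\Omega_\varepsilon)^3$ it follows by approximating $u$ by $C^2$ vector fields satisfying the same impermeability condition. Hence $\int_{\Gamma_\varepsilon}(u\cdot\nabla)u\cdot n_\varepsilon\,d\mathcal{H}^2=\int_{\Gamma_\varepsilon}W_\varepsilon u\cdot u\,d\mathcal{H}^2$, and the problem reduces to estimating the latter.

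Next I would split $\Gamma_\varepsilon=\Gamma_\varepsilon^0\cup\Gamma_\varepsilon^1$ and invoke the analysis of the surface geometry of $\Gamma_\varepsilon$ from \cite{Miu_NSCTD_01} (in the spirit of Lemma~\ref{L:Comp_Nor}), which yields $|W_\varepsilon(x)-(-1)^{i+1}\overline{W}(x)|\le c\varepsilon$ for $x\in\Gamma_\varepsilon^i$, with $c$ independent of $\varepsilon$. Setting $\psi:=\overline{W}u\cdot u$ (a $W^{1,1}(\Omega_\varepsilon)$ scalar field, since $\overline{W}\in C^{3}(N)$ and $u\in C^1$ or $H^2$), this gives
\[
  \int_{\Gamma_\varepsilon}W_\varepsilon u\cdot u\,d\mathcal{H}^2 = \sum_{i=0,1}(-1)^{i+1}\!\int_{\Gamma_\varepsilon^i}\!\psi\,d\mathcal{H}^2 + R_\varepsilon, \qquad |R_\varepsilon|\le c\varepsilon\|u\|_{L^2(\Gamma_\varepsilon)}^2 .
\]
Applying the change of variables formula \eqref{E:CoV_Surf} on each $\Gamma_\varepsilon^i$ (whose weights $J(y,\varepsilon g_i)\sqrt{1+\varepsilon^2|\tau_\varepsilon^i|^2}$ equal $1+O(\varepsilon)$ by \eqref{E:Jac_Diff_03} and \eqref{E:Tau_Bound}) converts the main sum into $\int_\Gamma(\psi_1^\sharp-\psi_0^\sharp)\,d\mathcal{H}^2$ plus a further remainder of size $c\varepsilon\|u\|_{L^2(\Gamma_\varepsilon)}^2$, where $\psi_i^\sharp(y)=\psi(y+\varepsilon g_i(y)n(y))$. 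By the fundamental theorem of calculus along normal segments, $\psi_1^\sharp(y)-\psi_0^\sharp(y)=\int_{\varepsilon g_0(y)}^{\varepsilon g_1(y)}\partial_n\psi(y+rn(y))\,dr$, and since $\partial_n\overline{W}=0$ by \eqref{E:NorDer_Con} we have $\partial_n\psi=2(\overline{W}u)\cdot\partial_n u$, so $|\partial_n\psi|\le c|u|\,|\nabla u|$. Integrating over $\Gamma$, using \eqref{E:CoV_Dom} with $c^{-1}\le J\le c$, and then Cauchy--Schwarz gives $\bigl|\int_\Gamma(\psi_1^\sharp-\psi_0^\sharp)\,d\mathcal{H}^2\bigr|\le c\int_{\Omega_\varepsilon}|u|\,|\nabla u|\,dx\le c\|u\|_{L^2(\Omega_\varepsilon)}\|\nabla u\|_{L^2(\Omega_\varepsilon)}$.

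It remains to absorb the terms of size $\varepsilon\|u\|_{L^2(\Gamma_\varepsilon)}^2$. The plain trace inequality \eqref{E:Poin_Bo} does not suffice here (squared, it carries $\varepsilon\|\nabla u\|_{L^2(\Omega_\varepsilon)}^2$, which is larger than the asserted right-hand side), so I would establish the sharper bound $\|u\|_{L^2(\Gamma_\varepsilon^i)}^2\le c\bigl(\varepsilon^{-1}\|u\|_{L^2(\Omega_\varepsilon)}^2+\|u\|_{L^2(\Omega_\varepsilon)}\|\nabla u\|_{L^2(\Omega_\varepsilon)}\bigr)$: for $x=y+\varepsilon g_i(y)n(y)$ and any $s\in(\varepsilon g_0(y),\varepsilon g_1(y))$ one has $|u^\sharp(y,\varepsilon g_i)|^2\le|u^\sharp(y,s)|^2+2\int_{\varepsilon g_0(y)}^{\varepsilon g_1(y)}|u^\sharp|\,|\partial_n u^\sharp|\,dr$; averaging over $s$ (the segment has length $\varepsilon g(y)\ge c\varepsilon$), integrating over $\Gamma$, and using \eqref{E:CoV_Equiv}, \eqref{E:CoV_Dom}, and \eqref{E:Lp_CoV_Surf} yields the claim. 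Multiplying by $\varepsilon$ and using $\varepsilon\le1$ bounds every such term by $c\bigl(\|u\|_{L^2(\Omega_\varepsilon)}^2+\|u\|_{L^2(\Omega_\varepsilon)}\|\nabla u\|_{L^2(\Omega_\varepsilon)}\bigr)$, and collecting all contributions proves \eqref{E:Int_UgUn}. The main obstacle is precisely this last point: estimating $\int_{\Gamma_\varepsilon}|u|^2$ directly produces an unacceptable factor $\varepsilon^{-1}$, and it is the sign pattern $W_\varepsilon\approx(-1)^{i+1}\overline{W}$ — which turns the leading part of $\int_{\Gamma_\varepsilon}W_\varepsilon u\cdot u$ into a normal-derivative integral over $\Omega_\varepsilon$ — together with the sharpened trace inequality that produces the bound with the correct $\varepsilon$-scaling.
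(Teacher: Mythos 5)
Your argument is correct and follows essentially the same route as the proof this lemma is quoted from (\cite{Miu_NSCTD_01}*{Lemma 4.3}): use the impermeability condition to rewrite the integrand as $W_\varepsilon u\cdot u$, exploit $W_\varepsilon\approx(-1)^{i+1}\overline{W}$ on $\Gamma_\varepsilon^i$ so that the leading contributions of the inner and outer boundaries cancel up to a normal-derivative integral over $\Omega_\varepsilon$ bounded by $\|u\|_{L^2(\Omega_\varepsilon)}\|\nabla u\|_{L^2(\Omega_\varepsilon)}$, and control the $O(\varepsilon)\|u\|_{L^2(\Gamma_\varepsilon)}^2$ remainders by a trace estimate. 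You are also right that the additive inequality \eqref{E:Poin_Bo} is insufficient at the last step and that the multiplicative bound $\|u\|_{L^2(\Gamma_\varepsilon^i)}^2\le c\bigl(\varepsilon^{-1}\|u\|_{L^2(\Omega_\varepsilon)}^2+\|u\|_{L^2(\Omega_\varepsilon)}\|\nabla u\|_{L^2(\Omega_\varepsilon)}\bigr)$, which you derive, is exactly the refinement used there.
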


\begin{proof}[Proof of Lemma \ref{L:PD_H2}]
  Let $\varphi\in H^2(\Omega_\varepsilon)$ be a unique solution to \eqref{E:Poisson_Dom} with source term $\xi\in L^2(\Omega_\varepsilon)$ satisfying \eqref{E:PD_H2_So}.
  Noting that $\varphi$ satisfies the second equality of \eqref{E:PD_Weak}, we set $\zeta=\varphi$ in the first equality of \eqref{E:PD_Weak} and apply \eqref{E:Uni_Poin_Dom} to $\varphi$ to get
  \begin{align*}
    \|\nabla\varphi\|_{L^2(\Omega_\varepsilon)}^2 = \langle\xi,\varphi\rangle_{\Omega_\varepsilon} = (\xi,\varphi)_{L^2(\Omega_\varepsilon)} &\leq \|\xi\|_{L^2(\Omega_\varepsilon)}\|\varphi\|_{L^2(\Omega_\varepsilon)} \\
    &\leq c\|\xi\|_{L^2(\Omega_\varepsilon)}\|\nabla\varphi\|_{L^2(\Omega_\varepsilon)}
  \end{align*}
  with a constant $c>0$ independent of $\varepsilon$.
  By this inequality and \eqref{E:Uni_Poin_Dom},
  \begin{align*}
    \|\varphi\|_{H^1(\Omega_\varepsilon)} \leq c\|\nabla\varphi\|_{L^2(\Omega_\varepsilon)} \leq c\|\xi\|_{L^2(\Omega_\varepsilon)}.
  \end{align*}
  Thus it is sufficient for \eqref{E:PD_H2} to show that
  \begin{align} \label{Pf_PDH:Goal}
    \begin{aligned}
      \|\nabla^2\varphi\|_{L^2(\Omega_\varepsilon)} &\leq c\left(\|\Delta\varphi\|_{L^2(\Omega_\varepsilon)}+\|\varphi\|_{H^1(\Omega_\varepsilon)}\right) \\
      &= c\left(\|\xi\|_{L^2(\Omega_\varepsilon)}+\|\varphi\|_{H^1(\Omega_\varepsilon)}\right)
    \end{aligned}
  \end{align}
  with some constant $c>0$ independent of $\varepsilon$ (note that $-\Delta\varphi=\xi$ a.e. in $\Omega_\varepsilon$).

  Since $C_c^\infty(\Omega_\varepsilon)$ is dense in $L^2(\Omega_\varepsilon)$, we can take a sequence $\{\xi_k\}_{k=1}^\infty$ of functions in $C_c^\infty(\Omega_\varepsilon)$ that converges strongly to $\xi$ in $L^2(\Omega_\varepsilon)$.
  For each $k\in\mathbb{N}$ let $\varphi_k\in H^1(\Omega_\varepsilon)$ be a unique weak solution to \eqref{E:Poisson_Dom} with source term
  \begin{align*}
    \tilde{\xi}_k(x) := \xi_k(x)-\frac{1}{|\Omega_\varepsilon|}\int_{\Omega_\varepsilon}\xi_k(z)\,dz, \quad x\in\Omega_\varepsilon \quad \left(\langle\tilde{\xi}_k,1\rangle_{\Omega_\varepsilon} = \int_{\Omega_\varepsilon}\tilde{\xi}_k\,dx = 0\right).
  \end{align*}
  Here $|\Omega_\varepsilon|$ is the volume of $\Omega_\varepsilon$.
  Since $\tilde{\xi}_k\in C^\infty(\overline{\Omega}_\varepsilon)$ and $\Gamma_\varepsilon$ is of class $C^4$, the elliptic regularity theorem yields $\varphi_k\in H^3(\Omega_\varepsilon)$ (in fact the $C^3$-regularity of $\Gamma_\varepsilon$ is sufficient for our purpose here).
  Moreover,
  \begin{align*}
    \lim_{k\to\infty}\int_{\Omega_\varepsilon}\xi_k\,dx = \lim_{k\to\infty}(\xi_k,1)_{L^2(\Omega_\varepsilon)} = (\xi,1)_{L^2(\Omega_\varepsilon)} = \int_{\Omega_\varepsilon}\xi\,dx = 0
  \end{align*}
  by the strong convergence of $\{\xi_k\}_{k=1}^\infty$ to $\xi$ in $L^2(\Omega_\varepsilon)$ and thus
  \begin{align} \label{Pf_PDH:Xi_Conv}
    \|\xi-\tilde{\xi}_k\|_{L^2(\Omega_\varepsilon)} \leq \|\xi-\xi_k\|_{L^2(\Omega_\varepsilon)}+\frac{1}{|\Omega_\varepsilon|^{1/2}}\left|\int_{\Omega_\varepsilon}\xi_k\,dx\right| \to 0 \quad\text{as}\quad k\to\infty.
  \end{align}
  Since $\varphi-\varphi_k$ is a unique solution to \eqref{E:Poisson_Dom} for the source term $\xi-\tilde{\xi}_k$,
  \begin{align*}
    \|\varphi-\varphi_k\|_{H^2(\Omega_\varepsilon)} \leq c_\varepsilon\|\xi-\tilde{\xi}_k\|_{L^2(\Omega_\varepsilon)} \to 0 \quad\text{as}\quad k\to\infty
  \end{align*}
  by \eqref{E:PD_H2_Ep} and \eqref{Pf_PDH:Xi_Conv} (note that the constant $c_\varepsilon$ does not depend on $k$).
  Hence we can get \eqref{Pf_PDH:Goal} by showing the same inequality for $\varphi_k$ and sending $k\to\infty$.

  From now on, we fix and suppress the subscript $k$.
  Hence we suppose that $\varphi$ is in $H^3(\Omega_\varepsilon)$ and satisfies \eqref{E:Poisson_Dom} in the strong sense.
  In particular, we have
  \begin{align} \label{Pf_PDH:Bo}
    \nabla\varphi\cdot n_\varepsilon = \frac{\partial\varphi}{\partial n_\varepsilon} = 0 \quad\text{on}\quad \Gamma_\varepsilon.
  \end{align}
  By the regularity of $\varphi$ we can carry out integration by parts twice to get
  \begin{align*}
    \|\nabla^2\varphi\|_{L^2(\Omega_\varepsilon)}^2 &= \|\Delta\varphi\|_{L^2(\Omega_\varepsilon)}^2+\int_{\Gamma_\varepsilon}[\{(\nabla\varphi\cdot\nabla)\nabla\varphi\}\cdot n_\varepsilon-(\nabla\varphi\cdot n_\varepsilon)\Delta\varphi]\,d\mathcal{H}^2 \\
    &= \|\Delta\varphi\|_{L^2(\Omega_\varepsilon)}^2+\int_{\Gamma_\varepsilon}\{(\nabla\varphi\cdot\nabla)\nabla\varphi\}\cdot n_\varepsilon\,d\mathcal{H}^2.
  \end{align*}
  Here the second equality is due to \eqref{Pf_PDH:Bo}.
  Moreover, since $\nabla\varphi\in H^2(\Omega_\varepsilon)^3$ satisfies \eqref{Pf_PDH:Bo}, we can apply \eqref{E:Int_UgUn} with $u=\nabla\varphi$ to the last term to obtain
  \begin{align*}
    \|\nabla^2\varphi\|_{L^2(\Omega_\varepsilon)}^2 &\leq \|\Delta\varphi\|_{L^2(\Omega_\varepsilon)}^2+c\left(\|\nabla\varphi\|_{L^2(\Omega_\varepsilon)}^2+\|\nabla\varphi\|_{L^2(\Omega_\varepsilon)}\|\nabla^2\varphi\|_{L^2(\Omega_\varepsilon)}\right) \\
    &\leq \|\Delta\varphi\|_{L^2(\Omega_\varepsilon)}^2+c\|\nabla\varphi\|_{L^2(\Omega_\varepsilon)}^2+\frac{1}{2}\|\nabla^2\varphi\|_{L^2(\Omega_\varepsilon)}^2,
  \end{align*}
  where we also used Young's inequality in the second inequality.
  Hence \eqref{Pf_PDH:Goal} follows and we conclude that \eqref{E:PD_H2} is valid.
\end{proof}

Finally, we obtain a uniform $H^1(\Omega_\varepsilon)$-estimate for $u-\mathbb{L}_\varepsilon u$ by Lemma \ref{L:PD_H2}.

\begin{lemma} \label{L:HP_Dom}
  Let $\varepsilon_\sigma$ be the constant given in Lemma \ref{L:Uni_Poin_Dom}.
  For $\varepsilon\in(0,\varepsilon_\sigma]$ let $\mathbb{L}_\varepsilon$ be the Helmholtz--Leray projection from $L^2(\Omega_\varepsilon)^3$ onto $L_\sigma^2(\Omega_\varepsilon)$.
  Then there exists a constant $c>0$ independent of $\varepsilon$ such that
  \begin{align} \label{E:HP_Dom}
    \|u-\mathbb{L}_\varepsilon u\|_{H^1(\Omega_\varepsilon)} \leq c\|\mathrm{div}\,u\|_{L^2(\Omega_\varepsilon)}
  \end{align}
  for all $u\in H^1(\Omega_\varepsilon)^3$ satisfying \eqref{E:Bo_Imp}.
\end{lemma}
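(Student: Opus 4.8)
The plan is to identify $u-\mathbb{L}_\varepsilon u$ with the gradient of the solution of a homogeneous-Neumann Poisson problem on $\Omega_\varepsilon$ and then quote the uniform $H^2$-estimate of Lemma \ref{L:PD_H2}. Recall that, by the definition of the Helmholtz--Leray projection recorded above, for $u\in H^1(\Omega_\varepsilon)^3$ we have $u-\mathbb{L}_\varepsilon u=\nabla\varphi$, where $\varphi\in H^2(\Omega_\varepsilon)$ (by elliptic regularity, since $\mathrm{div}\,u\in L^2(\Omega_\varepsilon)$) is a weak solution of $\Delta\varphi=\mathrm{div}\,u$ in $\Omega_\varepsilon$ with $\partial\varphi/\partial n_\varepsilon=u\cdot n_\varepsilon$ on $\Gamma_\varepsilon$. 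Since $u$ satisfies the impermeable boundary condition \eqref{E:Bo_Imp}, the Neumann datum vanishes, so $\varphi$ solves \eqref{E:Poisson_Dom} with source term $\xi:=-\mathrm{div}\,u\in L^2(\Omega_\varepsilon)$; the normalization $\int_{\Omega_\varepsilon}\varphi\,dx=0$ may be imposed without changing $\nabla\varphi$.

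First I would verify the compatibility condition \eqref{E:PD_H2_So} for $\xi$. Since $u\in H^1(\Omega_\varepsilon)^3$, the divergence theorem together with \eqref{E:Bo_Imp} gives
\begin{align*}
  \int_{\Omega_\varepsilon}\xi\,dx = -\int_{\Omega_\varepsilon}\mathrm{div}\,u\,dx = -\int_{\Gamma_\varepsilon}u\cdot n_\varepsilon\,d\mathcal{H}^2 = 0.
\end{align*}
Hence Lemma \ref{L:PD_H2} applies and yields a constant $c>0$ independent of $\varepsilon$ such that $\|\varphi\|_{H^2(\Omega_\varepsilon)}\leq c\|\xi\|_{L^2(\Omega_\varepsilon)}=c\|\mathrm{div}\,u\|_{L^2(\Omega_\varepsilon)}$. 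Finally, since $u-\mathbb{L}_\varepsilon u=\nabla\varphi$ and $\|\nabla\varphi\|_{H^1(\Omega_\varepsilon)}\leq\|\varphi\|_{H^2(\Omega_\varepsilon)}$, we obtain $\|u-\mathbb{L}_\varepsilon u\|_{H^1(\Omega_\varepsilon)}\leq c\|\mathrm{div}\,u\|_{L^2(\Omega_\varepsilon)}$, which is \eqref{E:HP_Dom}.

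The substance of the argument has already been done upstream: all of the $\varepsilon$-uniformity is encapsulated in Lemma \ref{L:PD_H2}, which in turn rests on the uniform Poincar\'e inequality of Lemma \ref{L:Uni_Poin_Dom} and on the uniform boundary estimate \eqref{E:Int_UgUn} of Lemma \ref{L:Int_UgUn}. Consequently there is no genuine obstacle at this stage; the only point that needs care is that the impermeable boundary condition \eqref{E:Bo_Imp} enters twice --- once to turn the Neumann datum $u\cdot n_\varepsilon$ into zero so that $\varphi$ solves \eqref{E:Poisson_Dom}, and once to guarantee the solvability condition $\int_{\Omega_\varepsilon}\xi\,dx=0$ --- which is precisely why \eqref{E:Bo_Imp} is indispensable for this estimate, as anticipated in the introduction.
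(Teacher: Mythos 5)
Your proof is correct and follows essentially the same route as the paper: identify $u-\mathbb{L}_\varepsilon u=\nabla\varphi$ with $\varphi$ solving the homogeneous Neumann problem \eqref{E:Poisson_Dom} for $\xi=-\mathrm{div}\,u$ (the impermeable condition \eqref{E:Bo_Imp} both kills the Neumann datum and yields the compatibility condition via the divergence theorem), then invoke the uniform estimate \eqref{E:PD_H2} of Lemma \ref{L:PD_H2}. No gaps.
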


\begin{proof}
  Let $\xi:=-\mathrm{div}\,u\in L^2(\Omega_\varepsilon)$.
  Since $u$ satisfies \eqref{E:Bo_Imp}, we see that
  \begin{align*}
    \langle\xi,1\rangle_{\Omega_\varepsilon} = \int_{\Omega_\varepsilon}\xi\,dx = -\int_{\Gamma_\varepsilon}u\cdot n_\varepsilon\,d\mathcal{H}^2 = 0
  \end{align*}
  by the divergence theorem.
  Hence the uniform estimate \eqref{E:PD_H2} holds for the unique solution $\varphi\in H^2(\Omega_\varepsilon)$ to \eqref{E:Poisson_Dom} with source term $\xi=-\mathrm{div}\,u$ by Lemma \ref{L:PD_H2}.
  From this fact and $\mathbb{L}_\varepsilon u=u-\nabla\varphi$ we deduce that
  \begin{align*}
    \|u-\mathbb{L}_\varepsilon u\|_{H^1(\Omega_\varepsilon)} = \|\nabla\varphi\|_{H^1(\Omega_\varepsilon)} \leq c\|\xi\|_{L^2(\Omega_\varepsilon)} = c\|\mathrm{div}\,u\|_{L^2(\Omega_\varepsilon)}
  \end{align*}
  with a constant $c>0$ independent of $\varepsilon$.
  Hence \eqref{E:HP_Dom} is valid.
\end{proof}

\subsection{Stokes operator under the slip boundary conditions} \label{SS:Fund_St}
Let us give basic properties of the Stokes operator for $\Omega_\varepsilon$ under the slip boundary conditions used in Section \ref{S:SL}.
Throughout this subsection we impose Assumptions \ref{Assump_1} and \ref{Assump_2} and fix the constant $\varepsilon_0$ given in Lemma \ref{L:Uni_aeps}.

For $u_1\in H^2(\Omega_\varepsilon)^3$ and $u_2\in H^1(\Omega_\varepsilon)^3$ we have
\begin{multline*}
  \int_{\Omega_\varepsilon}\{\Delta u_1+\nabla(\mathrm{div}\,u_1)\}\cdot u_2\,dx \\
  = -2\int_{\Omega_\varepsilon}D(u_1):D(u_2)\,dx+2\int_{\Gamma_\varepsilon}[D(u_1)n_\varepsilon]\cdot u_2\,d\mathcal{H}^2
\end{multline*}
by integration by parts (see \cite{Miu_NSCTD_01}*{Lemma 7.1}).
Hence if $u_1$ satisfies $\mathrm{div}\,u_1=0$ in $\Omega_\varepsilon$ and the slip boundary conditions \eqref{E:Bo_Slip} and $u_2$ satisfies the impermeable boundary condition \eqref{E:Bo_Imp}, then it follows that
\begin{align*}
  \nu\int_{\Omega_\varepsilon}\Delta u_1\cdot u_2\,dx = -2\nu\int_{\Omega_\varepsilon}D(u_1):D(u_2)\,dx-\sum_{i=0,1}\gamma_\varepsilon^i\int_{\Gamma_\varepsilon^i}u_1\cdot u_2\,d\mathcal{H}^2.
\end{align*}
By this equality we see that the bilinear form for the Stokes problem
\begin{align*}
  \left\{
  \begin{alignedat}{3}
    -\nu\Delta u+\nabla p &= f &\quad &\text{in} &\quad &\Omega_\varepsilon, \\
    \mathrm{div}\,u &= 0 &\quad &\text{in} &\quad &\Omega_\varepsilon, \\
    u\cdot n_\varepsilon &= 0 &\quad &\text{on} &\quad &\Gamma_\varepsilon, \\
    2\nu P_\varepsilon D(u)n_\varepsilon+\gamma_\varepsilon u &= 0 &\quad &\text{on} &\quad &\Gamma_\varepsilon
  \end{alignedat}
  \right.
\end{align*}
is of the form
\begin{align} \label{E:Def_Bi_Dom}
  a_\varepsilon(u_1,u_2) := 2\nu\bigl(D(u_1),D(u_2)\bigr)_{L^2(\Omega_\varepsilon)}+\sum_{i=0,1}\gamma_\varepsilon^i(u_1,u_2)_{L^2(\Gamma_\varepsilon^i)}
\end{align}
for $u_1,u_2\in H^1(\Omega_\varepsilon)^3$.
Let $\mathcal{H}_\varepsilon$ and $\mathcal{V}_\varepsilon$ be the function spaces given by \eqref{E:Def_Heps} and $\mathbb{P}_\varepsilon$ the orthogonal projection from $L^2(\Omega_\varepsilon)^3$ onto $\mathcal{H}_\varepsilon$.
For each $\varepsilon\in(0,\varepsilon_0]$ the bilinear form $a_\varepsilon$ is uniformly bounded and coercive on $\mathcal{V}_\varepsilon$ by Lemma \ref{L:Uni_aeps}.
Hence it induces a bounded linear operator $A_\varepsilon$ from $\mathcal{V}_\varepsilon$ into its dual space $\mathcal{V}_\varepsilon'$ such that
\begin{align*}
  {}_{\mathcal{V}_\varepsilon'}\langle A_\varepsilon u_1,u_2\rangle_{\mathcal{V}_\varepsilon} = a_\varepsilon(u_1,u_2), \quad u_1,u_2\in \mathcal{V}_\varepsilon
\end{align*}
by the Lax--Milgram theorem.
Here ${}_{\mathcal{V}_\varepsilon'}\langle\cdot,\cdot\rangle_{\mathcal{V}_\varepsilon}$ is the duality product between $\mathcal{V}_\varepsilon'$ and $\mathcal{V}_\varepsilon$.
Moreover, if we consider $A_\varepsilon$ as an unbounded operator on $\mathcal{H}_\varepsilon$ with domain
\begin{align*}
  D(A_\varepsilon) = \{u\in\mathcal{V}_\varepsilon\mid A_\varepsilon u\in\mathcal{H}_\varepsilon\},
\end{align*}
then the Lax--Milgram theory implies that $A_\varepsilon$ is positive and self-adjoint on $\mathcal{H}_\varepsilon$ and thus its square root $A_\varepsilon^{1/2}$ is well-defined on $D(A_\varepsilon^{1/2})=\mathcal{V}_\varepsilon$ and
\begin{align} \label{E:L2in_Ahalf}
  a_\varepsilon(u_1,u_2) = (A_\varepsilon u_1,u_2)_{L^2(\Omega_\varepsilon)} = (A_\varepsilon^{1/2}u_1,A_\varepsilon^{1/2} u_2)_{L^2(\Omega_\varepsilon)}
\end{align}
for all $u_1\in D(A_\varepsilon)$ and $u_2\in \mathcal{V}_\varepsilon$ (see \cites{BoFa13,So01}).
By a regularity result for a solution to the Stokes problem (see \cites{AmRe14,Be04,SoSc73}) we also observe that
\begin{align*}
  D(A_\varepsilon) = \{u\in \mathcal{V}_\varepsilon\cap H^2(\Omega_\varepsilon)^3 \mid \text{$2\nu P_\varepsilon D(u)n_\varepsilon+\gamma_\varepsilon u=0$ on $\Gamma_\varepsilon$}\}
\end{align*}
and $A_\varepsilon u=-\nu\mathbb{P}_\varepsilon\Delta u$ for $u\in D(A_\varepsilon)$.
We call $A_\varepsilon$ the Stokes operator for $\Omega_\varepsilon$ under the slip boundary conditions or simply the Stokes operator on $\mathcal{H}_\varepsilon$.

In our first paper \cite{Miu_NSCTD_01} we derived several kinds of uniform estimates for $A_\varepsilon$ based on a careful analysis of surface quantities of $\Gamma_\varepsilon$.
They were essential mainly for the proof of the global existence of a strong solution to \eqref{E:NS_CTD} carried out in our second paper \cite{Miu_NSCTD_02}.
In this paper we just use the following estimates.

\begin{lemma}[{\cite{Miu_NSCTD_01}*{Lemma 2.5}}] \label{L:Stokes_H1}
  There exists a constant $c>0$ such that
  \begin{align} \label{E:Stokes_H1}
    c^{-1}\|u\|_{H^1(\Omega_\varepsilon)} \leq \|A_\varepsilon^{1/2}u\|_{L^2(\Omega_\varepsilon)} \leq c\|u\|_{H^1(\Omega_\varepsilon)}
  \end{align}
  for all $\varepsilon\in(0,\varepsilon_0]$ and $u\in \mathcal{V}_\varepsilon$.
\end{lemma}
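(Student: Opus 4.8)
\textbf{Proof plan for Lemma \ref{L:Stokes_H1} (the uniform norm equivalence $c^{-1}\|u\|_{H^1(\Omega_\varepsilon)}\le\|A_\varepsilon^{1/2}u\|_{L^2(\Omega_\varepsilon)}\le c\|u\|_{H^1(\Omega_\varepsilon)}$).}

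The plan is to reduce the claim entirely to the uniform boundedness and coercivity of the bilinear form $a_\varepsilon$ established in Lemma \ref{L:Uni_aeps}, together with the identity $\|A_\varepsilon^{1/2}u\|_{L^2(\Omega_\varepsilon)}^2 = a_\varepsilon(u,u)$ for $u\in\mathcal{V}_\varepsilon = D(A_\varepsilon^{1/2})$ recorded in \eqref{E:L2in_Ahalf}. First I would recall that, since $A_\varepsilon$ is positive and self-adjoint on $\mathcal{H}_\varepsilon$ with form domain $\mathcal{V}_\varepsilon$, for every $u\in\mathcal{V}_\varepsilon$ one has $\|A_\varepsilon^{1/2}u\|_{L^2(\Omega_\varepsilon)}^2 = (A_\varepsilon^{1/2}u,A_\varepsilon^{1/2}u)_{L^2(\Omega_\varepsilon)} = a_\varepsilon(u,u)$; this is exactly the $u_1=u_2=u$ case of \eqref{E:L2in_Ahalf}, valid on all of $\mathcal{V}_\varepsilon$ by density of $D(A_\varepsilon)$ in $\mathcal{V}_\varepsilon$ and continuity of both sides. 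Then by Lemma \ref{L:Uni_aeps} there are constants $\varepsilon_0\in(0,1]$ and $C>0$, independent of $\varepsilon$, with $C^{-1}\|u\|_{H^1(\Omega_\varepsilon)}^2 \le a_\varepsilon(u,u) \le C\|u\|_{H^1(\Omega_\varepsilon)}^2$ for all $\varepsilon\in(0,\varepsilon_0]$ and $u\in\mathcal{V}_\varepsilon$. Combining the identity with these two inequalities gives $C^{-1}\|u\|_{H^1(\Omega_\varepsilon)}^2 \le \|A_\varepsilon^{1/2}u\|_{L^2(\Omega_\varepsilon)}^2 \le C\|u\|_{H^1(\Omega_\varepsilon)}^2$, and taking square roots with $c:=C^{1/2}$ yields \eqref{E:Stokes_H1}.

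The only subtlety to address carefully is that \eqref{E:L2in_Ahalf} is stated for $u_1\in D(A_\varepsilon)$ and $u_2\in\mathcal{V}_\varepsilon$, whereas we need the quadratic identity on the whole form domain $\mathcal{V}_\varepsilon=D(A_\varepsilon^{1/2})$. This is standard functional-analytic bookkeeping: for a nonnegative self-adjoint operator the quadratic form of $A_\varepsilon$ is the closure of $u\mapsto(A_\varepsilon u,u)$ from $D(A_\varepsilon)$, its domain is $D(A_\varepsilon^{1/2})$, and on that domain it equals $\|A_\varepsilon^{1/2}u\|^2$; since $a_\varepsilon$ is continuous on $\mathcal{V}_\varepsilon$ and agrees with $(A_\varepsilon\cdot,\cdot)$ on the dense subspace $D(A_\varepsilon)$, the two coincide on all of $\mathcal{V}_\varepsilon$. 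I would cite \cites{BoFa13,So01} for this, exactly as the excerpt already does when introducing $A_\varepsilon^{1/2}$.

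In short, there is no real obstacle: the lemma is an immediate corollary of Lemma \ref{L:Uni_aeps} once the spectral identity $\|A_\varepsilon^{1/2}u\|_{L^2(\Omega_\varepsilon)}^2=a_\varepsilon(u,u)$ on $\mathcal{V}_\varepsilon$ is in hand. The genuine work — verifying that the constants in the boundedness and coercivity of $a_\varepsilon$ do not depend on $\varepsilon$, which rests on the uniform Korn inequality on $\Omega_\varepsilon$ and the analysis of the spaces $\mathcal{K}_g(\Gamma)$ and $\mathcal{R}_g$ under Assumption \ref{Assump_2} — was already carried out in the first paper \cite{Miu_NSCTD_01} and is quoted here as Lemma \ref{L:Uni_aeps}. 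Accordingly the proof is short: state the identity from \eqref{E:L2in_Ahalf} extended to $\mathcal{V}_\varepsilon$, insert the two-sided estimate from Lemma \ref{L:Uni_aeps}, and take square roots.
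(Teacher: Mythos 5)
Your proof is correct and follows the same route the paper intends: the lemma is exactly the $k=1$ case of the norm equivalence \eqref{E:NoEq_In}, obtained by combining the quadratic-form identity $\|A_\varepsilon^{1/2}u\|_{L^2(\Omega_\varepsilon)}^2=a_\varepsilon(u,u)$ on $\mathcal{V}_\varepsilon=D(A_\varepsilon^{1/2})$ (the standard extension of \eqref{E:L2in_Ahalf} to the form domain) with the uniform boundedness and coercivity of $a_\varepsilon$ from Lemma \ref{L:Uni_aeps}, and you correctly identify that all the $\varepsilon$-uniformity is already contained in that lemma. No gaps.
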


\begin{lemma}[{\cite{Miu_NSCTD_01}*{Corollary 2.8}}] \label{L:St_Inter}
  There exists a constant $c>0$ such that
  \begin{align} \label{E:St_Inter}
    \|u\|_{H^1(\Omega_\varepsilon)} \leq c\|u\|_{L^2(\Omega_\varepsilon)}^{1/2}\|u\|_{H^2(\Omega_\varepsilon)}^{1/2}
  \end{align}
  for all $\varepsilon\in(0,\varepsilon_0]$ and $u\in D(A_\varepsilon)$.
\end{lemma}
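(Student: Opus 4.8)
The plan is to derive \eqref{E:St_Inter} by combining the uniform norm equivalence of Lemma \ref{L:Stokes_H1} with an elementary interpolation argument for the Stokes operator. Let $\varepsilon\in(0,\varepsilon_0]$ and $u\in D(A_\varepsilon)$. First I would invoke the left-hand inequality of \eqref{E:Stokes_H1} to get $\|u\|_{H^1(\Omega_\varepsilon)}\leq c\|A_\varepsilon^{1/2}u\|_{L^2(\Omega_\varepsilon)}$ with $c$ independent of $\varepsilon$, which is legitimate since $u\in\mathcal{V}_\varepsilon=D(A_\varepsilon^{1/2})$. Next, using \eqref{E:L2in_Ahalf} with $u_1=u_2=u$ together with the Cauchy--Schwarz inequality, I would write
\[
\|A_\varepsilon^{1/2}u\|_{L^2(\Omega_\varepsilon)}^2 = (A_\varepsilon u,u)_{L^2(\Omega_\varepsilon)} \leq \|A_\varepsilon u\|_{L^2(\Omega_\varepsilon)}\|u\|_{L^2(\Omega_\varepsilon)}.
\]

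Then, to control $\|A_\varepsilon u\|_{L^2(\Omega_\varepsilon)}$ by $\|u\|_{H^2(\Omega_\varepsilon)}$ uniformly in $\varepsilon$, I would use the characterization $A_\varepsilon u=-\nu\mathbb{P}_\varepsilon\Delta u$ for $u\in D(A_\varepsilon)$ recalled in Section \ref{SS:Fund_St}, together with the fact that $\mathbb{P}_\varepsilon$ is the orthogonal projection of $L^2(\Omega_\varepsilon)^3$ onto $\mathcal{H}_\varepsilon$ and hence a contraction: $\|A_\varepsilon u\|_{L^2(\Omega_\varepsilon)}=\nu\|\mathbb{P}_\varepsilon\Delta u\|_{L^2(\Omega_\varepsilon)}\leq\nu\|\Delta u\|_{L^2(\Omega_\varepsilon)}\leq c\|u\|_{H^2(\Omega_\varepsilon)}$, where $c$ depends only on $\nu$. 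Combining the three estimates gives
\[
\|u\|_{H^1(\Omega_\varepsilon)} \leq c\|A_\varepsilon^{1/2}u\|_{L^2(\Omega_\varepsilon)} \leq c\|A_\varepsilon u\|_{L^2(\Omega_\varepsilon)}^{1/2}\|u\|_{L^2(\Omega_\varepsilon)}^{1/2} \leq c\|u\|_{H^2(\Omega_\varepsilon)}^{1/2}\|u\|_{L^2(\Omega_\varepsilon)}^{1/2},
\]
which is exactly \eqref{E:St_Inter}.

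The argument is short, and the only genuinely nontrivial input is the uniformity in $\varepsilon$ of the constant in Lemma \ref{L:Stokes_H1}, which is the hard estimate already established in \cite{Miu_NSCTD_01}; everything else — the spectral identity \eqref{E:L2in_Ahalf}, the Cauchy--Schwarz step, and the contraction property of $\mathbb{P}_\varepsilon$ on $L^2(\Omega_\varepsilon)^3$ — carries constants that are absolute or depend only on the fixed viscosity $\nu$. Thus I do not anticipate any real obstacle; the one point to watch is to make sure no hidden $\varepsilon$-dependence slips in, in particular that we only use $\|\mathbb{P}_\varepsilon\|_{\mathcal{L}(L^2(\Omega_\varepsilon)^3)}\leq 1$ and never any $H^1$- or $H^2$-boundedness of $\mathbb{P}_\varepsilon$, which would not be uniform in $\varepsilon$.
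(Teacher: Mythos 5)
Your proposal is correct, and it is the expected argument: the paper does not reproduce a proof here (it cites \cite{Miu_NSCTD_01}*{Corollary 2.8}), but the standard derivation of that corollary is exactly the chain you write, namely the left-hand inequality of \eqref{E:Stokes_H1}, the moment inequality $\|A_\varepsilon^{1/2}u\|_{L^2(\Omega_\varepsilon)}^2=(A_\varepsilon u,u)_{L^2(\Omega_\varepsilon)}\leq\|A_\varepsilon u\|_{L^2(\Omega_\varepsilon)}\|u\|_{L^2(\Omega_\varepsilon)}$, and a uniform bound $\|A_\varepsilon u\|_{L^2(\Omega_\varepsilon)}\leq c\|u\|_{H^2(\Omega_\varepsilon)}$. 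The only cosmetic difference is that you obtain this last bound directly from $A_\varepsilon u=-\nu\mathbb{P}_\varepsilon\Delta u$ and the contraction property of the orthogonal projection, rather than by quoting the $k=2$ case of the norm equivalence \eqref{E:NoEq_In}; these are the same estimate, and your version correctly isolates the fact that no non-uniform boundedness of $\mathbb{P}_\varepsilon$ is needed.
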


\section{Average operators in the thin direction} \label{S:Ave}
We introduce average operators in the thin direction and study their properties which are essential for the study of a singular limit problem for \eqref{E:NS_CTD}.
In our second paper \cite{Miu_NSCTD_02} we derived various kinds of estimates for the average operators to show the global existence of a strong solution to \eqref{E:NS_CTD}.
We give some of them and related results in Sections \ref{SS:Ave_Def}--\ref{SS:Ave_Decom}.
Also, we consider the average of bilinear and trilinear forms for functions on $\Omega_\varepsilon$ in Section \ref{SS:Ave_BT}.

Throughout this section we assume $\varepsilon\in(0,1]$ and write $\bar{\eta}=\eta\circ\pi$ for the constant extension of a function $\eta$ on $\Gamma$ in the normal direction of $\Gamma$.
We also denote by $\partial_n\varphi=(\bar{n}\cdot\nabla)\varphi$ the derivative of a function $\varphi$ on $\Omega_\varepsilon$ in the normal direction of $\Gamma$.

\subsection{Definition and basic properties of the average operators} \label{SS:Ave_Def}

\begin{definition} \label{D:Average}
  We define the average operator $M$ as
  \begin{align} \label{E:Def_Ave}
    M\varphi(y) := \frac{1}{\varepsilon g(y)}\int_{\varepsilon g_0(y)}^{\varepsilon g_1(y)}\varphi(y+rn(y))\,dr, \quad y\in\Gamma
  \end{align}
  for a function $\varphi$ on $\Omega_\varepsilon$.
  The operator $M$ is also applied to a vector field $u\colon\Omega_\varepsilon\to\mathbb{R}^3$ and we define the averaged tangential component $M_\tau u$ of $u$ by
  \begin{align} \label{E:Def_Tan_Ave}
    M_\tau u(y) := P(y)Mu(y) = \frac{1}{\varepsilon g(y)}\int_{\varepsilon g_0(y)}^{\varepsilon g_1(y)}P(y)u(y+rn(y))\,dr, \quad y\in\Gamma.
  \end{align}
\end{definition}

Let us give basic properties of $M$ and $M_\tau$.

\begin{lemma}[{\cite{Miu_NSCTD_02}*{Lemma 6.2}}] \label{L:Ave_Lp}
  There exists a constant $c>0$ independent of $\varepsilon$ such that
  \begin{align} \label{E:Ave_Lp_Surf}
    \|M\varphi\|_{L^p(\Gamma)} \leq c\varepsilon^{-1/p}\|\varphi\|_{L^p(\Omega_\varepsilon)}
  \end{align}
  for all $\varphi\in L^p(\Omega_\varepsilon)$ with $p\in[1,\infty)$.
\end{lemma}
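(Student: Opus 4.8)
The plan is to prove the $L^p(\Gamma)$-bound for $M\varphi$ by a direct pointwise estimate followed by the change of variables formula \eqref{E:CoV_Dom}. First I would treat the case $p\in(1,\infty)$ using H\"older's inequality, and the case $p=1$ separately (though it follows by the same computation without H\"older). For $p\in(1,\infty)$ and a fixed $y\in\Gamma$, I would write, using the definition \eqref{E:Def_Ave} and H\"older's inequality on the interval $(\varepsilon g_0(y),\varepsilon g_1(y))$ whose length is $\varepsilon g(y)$,
\begin{align*}
  |M\varphi(y)|^p
  &\leq \frac{1}{(\varepsilon g(y))^p}\left(\int_{\varepsilon g_0(y)}^{\varepsilon g_1(y)}|\varphi(y+rn(y))|\,dr\right)^p \\
  &\leq \frac{1}{(\varepsilon g(y))^p}(\varepsilon g(y))^{p-1}\int_{\varepsilon g_0(y)}^{\varepsilon g_1(y)}|\varphi(y+rn(y))|^p\,dr \\
  &= \frac{1}{\varepsilon g(y)}\int_{\varepsilon g_0(y)}^{\varepsilon g_1(y)}|\varphi(y+rn(y))|^p\,dr.
\end{align*}

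Next I would integrate this inequality over $\Gamma$ with respect to $\mathcal{H}^2$. Using \eqref{E:G_Inf} to bound $1/g(y)\leq 1/c$ uniformly, I get
\begin{align*}
  \|M\varphi\|_{L^p(\Gamma)}^p
  &= \int_\Gamma |M\varphi(y)|^p\,d\mathcal{H}^2(y)
  \leq \frac{c}{\varepsilon}\int_\Gamma\int_{\varepsilon g_0(y)}^{\varepsilon g_1(y)}|\varphi(y+rn(y))|^p\,dr\,d\mathcal{H}^2(y).
\end{align*}
To the double integral on the right I would apply the change of variables equivalence \eqref{E:CoV_Equiv} (which itself follows from \eqref{E:CoV_Dom} and the bounds \eqref{E:Jac_Bound_03} on the Jacobian), giving
\begin{align*}
  \int_\Gamma\int_{\varepsilon g_0(y)}^{\varepsilon g_1(y)}|\varphi^\sharp(y,r)|^p\,dr\,d\mathcal{H}^2(y)\leq c\|\varphi\|_{L^p(\Omega_\varepsilon)}^p,
\end{align*}
and hence $\|M\varphi\|_{L^p(\Gamma)}^p\leq c\varepsilon^{-1}\|\varphi\|_{L^p(\Omega_\varepsilon)}^p$, which is exactly \eqref{E:Ave_Lp_Surf} after taking $p$-th roots. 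The case $p=1$ is handled by the same two displays with the middle (H\"older) line omitted, so that the factor $(\varepsilon g(y))^{p-1}$ is simply absent and one arrives directly at $|M\varphi(y)|\leq (\varepsilon g(y))^{-1}\int|\varphi(y+rn(y))|\,dr$.

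There is no serious obstacle here: the only points requiring a little care are that every constant $c$ in the argument depends only on $\Gamma$, $g_0$, $g_1$ (through \eqref{E:G_Inf} and \eqref{E:Jac_Bound_03}) and not on $\varepsilon$, and that the integrand $\varphi^\sharp$ is measurable so that Fubini's theorem applies; both are immediate from the setup in Section \ref{SS:Pre_CTD}. The power $\varepsilon^{-1/p}$ comes out precisely because the left-hand side carries no $\varepsilon$ in the surface measure while the right-hand side is an $L^p$-norm over the thin domain $\Omega_\varepsilon$ of thickness $O(\varepsilon)$, so the scaling is forced and the estimate is sharp.
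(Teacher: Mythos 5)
Your proof is correct and is the standard argument for this averaging estimate: H\"older's inequality on the fiber $(\varepsilon g_0(y),\varepsilon g_1(y))$, the uniform lower bound \eqref{E:G_Inf} on $g$, and the change of variables equivalence \eqref{E:CoV_Equiv} — which is exactly the route taken in the cited source (the present paper omits the proof and refers to Part II). The only cosmetic remark is that the final constant comes out as $c^{1/p}$ after taking $p$-th roots, which is harmless since it is bounded uniformly for $p\in[1,\infty)$ and in any case need only be independent of $\varepsilon$.
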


\begin{lemma}[{\cite{Miu_NSCTD_02}*{Lemma 6.3}}] \label{L:AveT_Lp}
  There exists a constant $c>0$ independent of $\varepsilon$ such that
  \begin{align} \label{E:AveT_Lp_Surf}
    \|M_\tau u\|_{L^p(\Gamma)} &\leq c\varepsilon^{-1/p}\|u\|_{L^p(\Omega_\varepsilon)}
  \end{align}
  for all $u\in L^p(\Omega_\varepsilon)^3$ with $p\in[1,\infty)$.
\end{lemma}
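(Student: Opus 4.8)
The plan is to deduce the estimate directly from the bound for the scalar average operator $M$ proved in Lemma \ref{L:Ave_Lp}, using that the tangential projection $P$ is norm non-increasing.

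First I would note that for a vector field $u=(u_1,u_2,u_3)^T\in L^p(\Omega_\varepsilon)^3$ the average $Mu=(Mu_1,Mu_2,Mu_3)^T$ is taken componentwise, so each $Mu_i$ is a scalar function on $\Gamma$ to which Lemma \ref{L:Ave_Lp} applies, giving $\|Mu_i\|_{L^p(\Gamma)}\le c\varepsilon^{-1/p}\|u_i\|_{L^p(\Omega_\varepsilon)}$. Summing the three components and invoking the equivalence of the Euclidean norm and the $\ell^p$-norm on $\mathbb{R}^3$, one gets a pointwise bound $|a|^p\le c_p\sum_{i}|a_i|^p$, hence $\|Mu\|_{L^p(\Gamma)}^p\le c_p\sum_i\|Mu_i\|_{L^p(\Gamma)}^p\le c\varepsilon^{-1}\|u\|_{L^p(\Omega_\varepsilon)}^p$ with a constant depending only on $p$ and the one from Lemma \ref{L:Ave_Lp}.

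Next, since $M_\tau u=P Mu$ on $\Gamma$ and $|P(y)a|\le|a|$ for all $a\in\mathbb{R}^3$ and $y\in\Gamma$, I obtain the pointwise inequality $|M_\tau u(y)|\le|Mu(y)|$ for $\mathcal{H}^2$-a.e.\ $y\in\Gamma$. Raising to the $p$-th power and integrating over $\Gamma$ yields $\|M_\tau u\|_{L^p(\Gamma)}\le\|Mu\|_{L^p(\Gamma)}$, and \eqref{E:AveT_Lp_Surf} follows from the previous step.

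I do not expect any genuine obstacle here. If one prefers a self-contained argument, an alternative is to bypass Lemma \ref{L:Ave_Lp}: apply Jensen's inequality with the probability measure $(\varepsilon g(y))^{-1}\,dr$ on $(\varepsilon g_0(y),\varepsilon g_1(y))$ to the convex function $a\mapsto|a|^p$ to get $|Mu(y)|^p\le(\varepsilon g(y))^{-1}\int_{\varepsilon g_0(y)}^{\varepsilon g_1(y)}|u(y+rn(y))|^p\,dr$, then use $g\ge c$ from \eqref{E:G_Inf} and the change of variables estimate \eqref{E:CoV_Equiv} to conclude; the only mild point of care in either approach is the passage from the scalar to the vector-valued setting, which is handled componentwise.
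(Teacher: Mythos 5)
Your proof is correct and is essentially the standard argument the paper relies on (the proof itself is deferred to Part II of the series): reduce to the scalar average bound of Lemma \ref{L:Ave_Lp} via the pointwise inequality $|M_\tau u|=|PMu|\le|Mu|$, the scalar bound in turn resting on Jensen/H\"older with the normalized measure on $(\varepsilon g_0(y),\varepsilon g_1(y))$ together with \eqref{E:G_Inf} and \eqref{E:CoV_Equiv}. Both of your routes (componentwise, or directly via $|Mu|\le M(|u|)$ and Jensen) are fine and equivalent up to bookkeeping.
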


\begin{lemma}[{\cite{Miu_NSCTD_02}*{Lemma 6.4}}] \label{L:Ave_Diff}
  There exists a constant $c>0$ independent of $\varepsilon$ such that
  \begin{align}
    \left\|\varphi-\overline{M\varphi}\right\|_{L^p(\Omega_\varepsilon)} &\leq c\varepsilon\|\partial_n\varphi\|_{L^p(\Omega_\varepsilon)}, \label{E:Ave_Diff_Dom}\\
    \left\|\varphi-\overline{M\varphi}\right\|_{L^p(\Gamma_\varepsilon^i)} &\leq c\varepsilon^{1-1/p}\|\partial_n\varphi\|_{L^p(\Omega_\varepsilon)}, \quad i=0,1 \label{E:Ave_Diff_Bo}
  \end{align}
  for all $\varphi\in W^{1,p}(\Omega_\varepsilon)$ with $p\in[1,\infty)$.
\end{lemma}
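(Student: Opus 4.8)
The plan is to reduce to the case of a smooth $\varphi$ by density and then establish a pointwise Poincar\'{e}-type estimate along the normal fibers of $\Omega_\varepsilon$. Since $\Omega_\varepsilon$ has $C^4$ boundary, $C^1(\overline{\Omega}_\varepsilon)$ is dense in $W^{1,p}(\Omega_\varepsilon)$ for $p\in[1,\infty)$; moreover $M$ is continuous from $L^p(\Omega_\varepsilon)$ to $L^p(\Gamma)$ by Lemma \ref{L:Ave_Lp}, the constant extension is continuous from $L^p(\Gamma)$ into $L^p(\Omega_\varepsilon)$ by Lemma \ref{L:Con_Lp_W1p} (and into $L^p(\Gamma_\varepsilon^i)$ after \eqref{E:Lp_CoV_Surf}), and the trace operator $W^{1,p}(\Omega_\varepsilon)\to L^p(\Gamma_\varepsilon^i)$ is bounded for each fixed $\varepsilon$. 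Hence both \eqref{E:Ave_Diff_Dom} and \eqref{E:Ave_Diff_Bo} pass to the $W^{1,p}(\Omega_\varepsilon)$ limit, and it suffices to prove them for $\varphi\in C^1(\overline{\Omega}_\varepsilon)$ (note also $|\partial_n\varphi|\le|\nabla\varphi|$, so the right-hand sides are finite).

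For such a $\varphi$, writing $\varphi^\sharp(y,r):=\varphi(y+rn(y))$ as in \eqref{E:Pull_Dom} and noting $\partial_r\varphi^\sharp(y,r)=(\partial_n\varphi)^\sharp(y,r)$, I would average the identity $\varphi^\sharp(y,r)-\varphi^\sharp(y,\rho)=\int_\rho^r(\partial_n\varphi)^\sharp(y,s)\,ds$ over $\rho\in(\varepsilon g_0(y),\varepsilon g_1(y))$ to obtain
\[
  \varphi^\sharp(y,r)-M\varphi(y)=\frac{1}{\varepsilon g(y)}\int_{\varepsilon g_0(y)}^{\varepsilon g_1(y)}\int_\rho^r(\partial_n\varphi)^\sharp(y,s)\,ds\,d\rho .
\]
Because $r$ and $\rho$ both lie in an interval of length $\varepsilon g(y)$, the inner integral is dominated in modulus by $\int_{\varepsilon g_0(y)}^{\varepsilon g_1(y)}|(\partial_n\varphi)^\sharp(y,s)|\,ds$, and the prefactor $1/(\varepsilon g(y))$ cancels the outer integration exactly, leaving, for every $r\in[\varepsilon g_0(y),\varepsilon g_1(y)]$,
\[
  |\varphi^\sharp(y,r)-M\varphi(y)|\le\int_{\varepsilon g_0(y)}^{\varepsilon g_1(y)}|(\partial_n\varphi)^\sharp(y,s)|\,ds\le\bigl(\varepsilon g(y)\bigr)^{1-1/p}\left(\int_{\varepsilon g_0(y)}^{\varepsilon g_1(y)}|(\partial_n\varphi)^\sharp(y,s)|^p\,ds\right)^{1/p},
\]
the last step being H\"{o}lder's inequality. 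This bound holds in particular at $r=\varepsilon g_i(y)$, i.e.\ on $\Gamma_\varepsilon^i$.

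To conclude \eqref{E:Ave_Diff_Dom} I would raise this pointwise estimate to the $p$-th power, integrate in $r$ over $(\varepsilon g_0(y),\varepsilon g_1(y))$ (picking up an extra factor $\varepsilon g(y)\le c\varepsilon$) and then in $y$ over $\Gamma$, and finally apply \eqref{E:CoV_Equiv} twice: once to bound $\|\varphi-\overline{M\varphi}\|_{L^p(\Omega_\varepsilon)}^p$ from above by the resulting iterated integral, once to bound the iterated integral of $|(\partial_n\varphi)^\sharp|^p$ by $\|\partial_n\varphi\|_{L^p(\Omega_\varepsilon)}^p$; using $g\le c$ on $\Gamma$ collects the power $\varepsilon^p$. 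For \eqref{E:Ave_Diff_Bo} I would instead evaluate the pointwise estimate at $r=\varepsilon g_i(y)$, raise it to the $p$-th power, integrate in $y$ only, and use \eqref{E:Lp_CoV_Surf} on the left together with \eqref{E:CoV_Equiv} on the right; now only the single factor $\bigl(\varepsilon g(y)\bigr)^{1-1/p}$ appears, which yields the weaker power $\varepsilon^{1-1/p}$. The computation is essentially routine and I do not expect a genuine obstacle; the only point requiring care is the exact cancellation of $\varepsilon g(y)$ in the Poincar\'{e} step and the correct bookkeeping of the $\varepsilon$-powers through the change-of-variables formulas.
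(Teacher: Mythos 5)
Your proof is correct, and it is the standard argument for this kind of averaging lemma: the fiber-wise identity $\varphi^\sharp(y,r)-M\varphi(y)=\frac{1}{\varepsilon g(y)}\int_{\varepsilon g_0(y)}^{\varepsilon g_1(y)}\int_\rho^r(\partial_n\varphi)^\sharp(y,s)\,ds\,d\rho$, H\"{o}lder in the normal variable, and the change-of-variables comparisons \eqref{E:CoV_Equiv} and \eqref{E:Lp_CoV_Surf}, with the extra $r$-integration accounting for the gap between the powers $\varepsilon$ and $\varepsilon^{1-1/p}$. This paper omits the proof (it is quoted from Part II), but your route is exactly the one used there and in the companion Poincar\'{e}-type estimates \eqref{E:Poin_Dom}--\eqref{E:Poin_Bo}, so no further comment is needed.
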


\begin{lemma}[{\cite{Miu_NSCTD_02}*{Lemma 6.5}}] \label{L:Ave_N_Lp}
  There exists a constant $c>0$ independent of $\varepsilon$ such that
  \begin{align} \label{E:Ave_N_Lp}
    \|Mu\cdot n\|_{L^p(\Gamma)} &\leq c\varepsilon^{1-1/p}\|u\|_{W^{1,p}(\Omega_\varepsilon)}
  \end{align}
  for all $u\in W^{1,p}(\Omega_\varepsilon)^3$ with $p\in[1,\infty)$ satisfying \eqref{E:Bo_Imp} on $\Gamma_\varepsilon^0$ or on $\Gamma_\varepsilon^1$.
\end{lemma}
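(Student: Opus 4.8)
The plan is to reduce the statement to two inequalities already at our disposal, Lemmas \ref{L:Ave_Lp} and \ref{L:Poin_Nor}, via a simple algebraic identity: the normal component of the average equals the average of the normal component.

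First I would observe that the surface normal $n(y)$ does not depend on the thin variable $r$. Since $\bar{n}=n\circ\pi$ satisfies $\bar{n}(y+rn(y))=n(\pi(y+rn(y)))=n(y)$, we may pull $n(y)$ inside the integral defining $Mu$ to get
\begin{align*}
  Mu(y)\cdot n(y) = \frac{1}{\varepsilon g(y)}\int_{\varepsilon g_0(y)}^{\varepsilon g_1(y)} u(y+rn(y))\cdot n(y)\,dr = M(u\cdot\bar{n})(y), \quad y\in\Gamma.
\end{align*}
Thus $Mu\cdot n = M(u\cdot\bar{n})$ on $\Gamma$, where $u\cdot\bar{n}\in W^{1,p}(\Omega_\varepsilon)\subset L^p(\Omega_\varepsilon)$.

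Next I would apply Lemma \ref{L:Ave_Lp} to the scalar function $\varphi=u\cdot\bar{n}$ to obtain
\begin{align*}
  \|Mu\cdot n\|_{L^p(\Gamma)} = \|M(u\cdot\bar{n})\|_{L^p(\Gamma)} \leq c\varepsilon^{-1/p}\|u\cdot\bar{n}\|_{L^p(\Omega_\varepsilon)}.
\end{align*}
Since $u\in W^{1,p}(\Omega_\varepsilon)^3$ satisfies the impermeable boundary condition \eqref{E:Bo_Imp} on $\Gamma_\varepsilon^0$ or on $\Gamma_\varepsilon^1$, Lemma \ref{L:Poin_Nor} gives $\|u\cdot\bar{n}\|_{L^p(\Omega_\varepsilon)}\leq c\varepsilon\|u\|_{W^{1,p}(\Omega_\varepsilon)}$. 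Combining the last two displayed inequalities yields
\begin{align*}
  \|Mu\cdot n\|_{L^p(\Gamma)} \leq c\varepsilon^{-1/p}\cdot c\varepsilon\|u\|_{W^{1,p}(\Omega_\varepsilon)} = c\varepsilon^{1-1/p}\|u\|_{W^{1,p}(\Omega_\varepsilon)},
\end{align*}
which is the asserted estimate.

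There is no real obstacle here once the identity $Mu\cdot n=M(u\cdot\bar{n})$ is noticed; the only point requiring a little care is making sure the normal appearing in $Mu\cdot n$ is the \emph{surface} normal $n$ (constant in $r$) rather than $n_\varepsilon$, so that it commutes with the radial integral. The gain of the full factor $\varepsilon$ in Lemma \ref{L:Poin_Nor}, which encodes the impermeable boundary condition, is exactly what upgrades the trivial bound $\|Mu\cdot n\|_{L^p(\Gamma)}\leq c\varepsilon^{-1/p}\|u\|_{L^p(\Omega_\varepsilon)}$ to the stated one with the favorable power $\varepsilon^{1-1/p}$.
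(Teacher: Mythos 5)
Your proof is correct: the identity $Mu\cdot n=M(u\cdot\bar{n})$ holds because $\bar{n}(y+rn(y))=n(y)$ is constant along the fiber, and the combination of Lemma \ref{L:Ave_Lp} with Lemma \ref{L:Poin_Nor} (whose hypotheses match exactly those of the statement) gives the asserted power $\varepsilon^{1-1/p}$. The present paper only cites this lemma from Part II without reproving it, but your reduction is precisely the intended one, so nothing further is needed.
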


\begin{lemma}[{\cite{Miu_NSCTD_02}*{Lemma 6.6}}] \label{L:AveT_Diff}
  There exists a constant $c>0$ independent of $\varepsilon$ such that
  \begin{align} \label{E:AveT_Diff_Dom}
    \left\|u-\overline{M_\tau u}\right\|_{L^p(\Omega_\varepsilon)} \leq c\varepsilon\|u\|_{W^{1,p}(\Omega_\varepsilon)}
  \end{align}
  for all $u\in W^{1,p}(\Omega_\varepsilon)^3$ with $p\in[1,\infty)$ satisfying \eqref{E:Bo_Imp} on $\Gamma_\varepsilon^0$ or on $\Gamma_\varepsilon^1$.
\end{lemma}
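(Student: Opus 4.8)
The estimate to be proven is the $L^p$-bound on $u-\overline{M_\tau u}$ in $\Omega_\varepsilon$ for $u\in W^{1,p}(\Omega_\varepsilon)^3$ satisfying the impermeable condition \eqref{E:Bo_Imp} on $\Gamma_\varepsilon^0$ or $\Gamma_\varepsilon^1$. The natural strategy is to split the error into a normal part and a tangential part using the pointwise orthogonal decomposition $I_3=\overline{P}+\overline{Q}$ in $\Omega_\varepsilon$. Write
\begin{align*}
  u-\overline{M_\tau u} = \left(\overline{P}u-\overline{M_\tau u}\right)+\overline{Q}u,
\end{align*}
so that it suffices to bound $\overline{Q}u = (u\cdot\bar n)\bar n$ and $\overline{P}u-\overline{M_\tau u}$ separately in $L^p(\Omega_\varepsilon)$, each by $c\varepsilon\|u\|_{W^{1,p}(\Omega_\varepsilon)}$.

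For the normal part, $|\overline{Q}u| = |u\cdot\bar n|$ pointwise in $\Omega_\varepsilon$, and the Poincaré-type inequality \eqref{E:Poin_Nor} (Lemma \ref{L:Poin_Nor}), which uses exactly the hypothesis that $u\cdot n_\varepsilon=0$ on $\Gamma_\varepsilon^0$ or $\Gamma_\varepsilon^1$, gives $\|u\cdot\bar n\|_{L^p(\Omega_\varepsilon)}\leq c\varepsilon\|u\|_{W^{1,p}(\Omega_\varepsilon)}$ directly. For the tangential part, I would first note that $M_\tau u = PMu = M(\overline{P}u)|_\Gamma$ at each $y\in\Gamma$ — i.e. the tangential projection $P(y)$ commutes with the one-dimensional integral in the fiber over $y$ — so that $\overline{M_\tau u}=\overline{M(\overline P u)}$ in $\Omega_\varepsilon$ (here one must be slightly careful: $M$ applied to $\overline P u$ means averaging the $\mathbb{R}^3$-valued function $\overline P u$, and since $P(y)$ is constant along the fiber it pulls out of the average). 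Then
\begin{align*}
  \left\|\overline{P}u-\overline{M_\tau u}\right\|_{L^p(\Omega_\varepsilon)} = \left\|\overline{P}u-\overline{M(\overline P u)}\right\|_{L^p(\Omega_\varepsilon)} \leq c\varepsilon\left\|\partial_n(\overline P u)\right\|_{L^p(\Omega_\varepsilon)}
\end{align*}
by applying the averaging estimate \eqref{E:Ave_Diff_Dom} componentwise to $\varphi=[\overline P u]_j$. Finally, since $\overline P = P\circ\pi$ is the constant extension of the $C^4$ matrix field $P$, \eqref{E:NorDer_Con}/\eqref{E:NorDer_Con}-type relations give $\partial_n\overline{P}=0$, hence $\partial_n(\overline P u)=\overline P\,\partial_n u$, so $|\partial_n(\overline P u)|\leq c|\nabla u|$ pointwise and $\|\partial_n(\overline P u)\|_{L^p(\Omega_\varepsilon)}\leq c\|u\|_{W^{1,p}(\Omega_\varepsilon)}$. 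Combining the two bounds yields the claim.

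The only genuinely delicate point is the commutation identity $\overline{M_\tau u}=\overline{M(\overline P u)}$ and, relatedly, justifying that $M$ applied to a vector-valued function of the form $\overline P u$ (whose coefficients depend on the base point $y$ but not on $r$) produces $P$ times the fiber-average of $u$; this is immediate from linearity of the fiber integral once one writes out definitions \eqref{E:Def_Ave}–\eqref{E:Def_Tan_Ave}, but it is the step that has to be stated carefully. Everything else is a routine assembly of Lemmas \ref{L:Poin_Nor} and \ref{L:Ave_Diff} together with the boundedness of $P$ and the vanishing of its normal derivative. An alternative, essentially equivalent route would be to apply \eqref{E:Ave_Diff_Dom} directly to $u$ componentwise to control $u-\overline{Mu}$, then estimate $\overline{Mu}-\overline{M_\tau u}=\overline{(Mu\cdot n)n}$ using Lemma \ref{L:Ave_N_Lp} and \eqref{E:Con_Lp}; this also closes but requires the $W^{1,p}$-bound on $u-\overline{Mu}$ in terms of $\|\partial_n u\|$ rather than $\|u\|_{W^{1,p}}$, which is slightly weaker notationally, so I would present the first route.
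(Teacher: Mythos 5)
Your proof is correct. Note that this lemma is quoted from Part II of the series (\cite{Miu_NSCTD_02}*{Lemma 6.6}) and the present paper omits its proof, so there is no in-paper argument to compare against; but both routes you describe close cleanly. The key steps all check out: the identity $\overline{M_\tau u}=\overline{M(\overline{P}u)}$ is immediate because $\overline{P}(y+rn(y))=P(y)$ is constant along each fiber and hence commutes with the fiber integral in \eqref{E:Def_Ave}; the hypothesis $u\cdot n_\varepsilon=0$ on one boundary component enters exactly once, through \eqref{E:Poin_Nor} for the normal part $\overline{Q}u$; and $\partial_n\overline{P}=0$ by \eqref{E:NorDer_Con} together with $|\overline{P}a|\le|a|$ gives $\|\partial_n(\overline{P}u)\|_{L^p(\Omega_\varepsilon)}\le c\|u\|_{W^{1,p}(\Omega_\varepsilon)}$, so \eqref{E:Ave_Diff_Dom} applied componentwise finishes the tangential part. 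Your alternative route (split as $u-\overline{Mu}$ plus $\overline{(Mu\cdot n)n}$, then use \eqref{E:Ave_Diff_Dom}, \eqref{E:Ave_N_Lp}, and \eqref{E:Con_Lp}, noting $\varepsilon^{1/p}\cdot\varepsilon^{1-1/p}=\varepsilon$) is equally valid and, given that Lemmas \ref{L:Ave_Diff} and \ref{L:Ave_N_Lp} are precisely the two results preceding this one in the cited source, is most likely the original argument; the two are essentially equivalent assemblies of the same ingredients and either is acceptable.
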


\subsection{Derivatives of averaged functions} \label{SS:Ave_Grad}
Next we show formulas and estimates for the tangential and time derivatives of the average of a function on $\Omega_\varepsilon$.

\begin{lemma}[{\cite{Miu_NSCTD_02}*{Lemma 6.8}}] \label{L:Ave_Der}
  For $\varphi\in C^1(\Omega_\varepsilon)$ we have
  \begin{align} \label{E:Ave_Der}
    \nabla_\Gamma M\varphi = M(B\nabla\varphi)+M\bigl((\partial_n\varphi)\psi_\varepsilon\bigr) \quad\text{on}\quad \Gamma,
  \end{align}
  where the matrix-valued function $B$ and the vector field $\psi_\varepsilon$ are given by
  \begin{align} \label{E:Ave_Der_Aux}
    \begin{aligned}
      B(x) &:= \left\{I_3-d(x)\overline{W}(x)\right\}\overline{P}(x), \\
      \psi_\varepsilon(x) &:= \frac{1}{\bar{g}(x)}\left\{\bigl(d(x)-\varepsilon\bar{g}_0(x)\bigr)\overline{\nabla_\Gamma g_1}(x)+\bigl(\varepsilon\bar{g}_1(x)-d(x)\bigr)\overline{\nabla_\Gamma g_0}(x)\right\}
    \end{aligned}
  \end{align}
  for $x\in N$.
\end{lemma}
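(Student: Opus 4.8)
The plan is to reduce the $y$-dependent limits of integration to fixed ones and then differentiate under the integral sign along curves in $\Gamma$. First I would substitute $r=\rho_s(y):=(1-s)\varepsilon g_0(y)+s\varepsilon g_1(y)$ with $s\in(0,1)$, so that $dr=\varepsilon g(y)\,ds$ and
\begin{align*}
  M\varphi(y)=\int_0^1\varphi\bigl(\Phi_s(y)\bigr)\,ds,\qquad \Phi_s(y):=y+\rho_s(y)n(y),\quad y\in\Gamma.
\end{align*}
Since $\rho_s(y)\in(\varepsilon g_0(y),\varepsilon g_1(y))$ for $s\in(0,1)$, the point $\Phi_s(y)$ lies in $\Omega_\varepsilon$, so $\varphi$ is $C^1$ near it and — the integration limits now being constant — the tangential gradient will commute with the $s$-integral. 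It then remains to compute $\nabla_\Gamma[\varphi\circ\Phi_s](y)$ for each fixed $s$ and integrate.

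For the main computation I would fix $y\in\Gamma$, take a curve $\gamma$ in $\Gamma$ with $\gamma(0)=y$, $\gamma'(0)=X$ tangential, and apply the chain rule to $t\mapsto\varphi(\Phi_s(\gamma(t)))$. The only nonobvious input is $\frac{d}{dt}n(\gamma(t))|_{t=0}=(\nabla_\Gamma n)^TX=-W(y)X$, which comes from $W=-\nabla_\Gamma n$ in \eqref{E:Def_Wein} and the symmetry of $W$ (Lemma \ref{L:Form_W}); combined with $\frac{d}{dt}\rho_s(\gamma(t))|_{t=0}=\nabla_\Gamma\rho_s(y)\cdot X$ this gives that the tangential differential of $\Phi_s$ at $y$ sends $X$ to $\{I_3-\rho_s(y)W(y)\}X+(\nabla_\Gamma\rho_s(y)\cdot X)n(y)$. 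Feeding this into the chain rule, using that $I_3-\rho_s W$ is symmetric, and projecting onto the tangent plane (where $PW=WP=W$ from Lemma \ref{L:Form_W} lets me replace $P\{I_3-\rho_s W\}$ by $\{I_3-\rho_s W\}P$, and $P\nabla_\Gamma\rho_s=\nabla_\Gamma\rho_s$), I expect to arrive at
\begin{align*}
  \nabla_\Gamma[\varphi\circ\Phi_s](y)=\bigl(B\nabla\varphi\bigr)(\Phi_s(y))+\bigl(n(y)\cdot\nabla\varphi(\Phi_s(y))\bigr)\nabla_\Gamma\rho_s(y),
\end{align*}
because $B(\Phi_s(y))=\{I_3-\rho_s(y)W(y)\}P(y)$ by the definition of $B$ and $d(\Phi_s(y))=\rho_s(y)$.

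The last step is to identify the two remaining quantities with those in \eqref{E:Ave_Der}. Since $\bar n(\Phi_s(y))=n(y)$, the scalar factor is $\partial_n\varphi(\Phi_s(y))$. For the vector factor, the elementary identities $\rho_s-\varepsilon g_0=s\varepsilon g$ and $\varepsilon g_1-\rho_s=(1-s)\varepsilon g$ show $\nabla_\Gamma\rho_s(y)=s\varepsilon\nabla_\Gamma g_1(y)+(1-s)\varepsilon\nabla_\Gamma g_0(y)=\psi_\varepsilon(\Phi_s(y))$, straight from the definition of $\psi_\varepsilon$ in \eqref{E:Ave_Der_Aux} and $d(\Phi_s(y))=\rho_s(y)$. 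Thus $\nabla_\Gamma[\varphi\circ\Phi_s](y)=\bigl[(B\nabla\varphi)+(\partial_n\varphi)\psi_\varepsilon\bigr](\Phi_s(y))$, and integrating over $s\in(0,1)$ and reversing the substitution — which turns $\int_0^1 G(\Phi_s(y))\,ds$ back into $MG(y)$ — gives \eqref{E:Ave_Der}. I expect the only real work to be the bookkeeping in the second step above: correctly differentiating the flow map $\Phi_s$ and matching the emerging matrix and vector precisely to the prescribed $B$ and $\psi_\varepsilon$; there is no genuine analytic difficulty, differentiation under the integral being harmless given the $C^1$-regularity of $\varphi$ and the smoothness of the geometry.
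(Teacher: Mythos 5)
Your argument is correct, and every identification checks out: the reparametrization $r=\rho_s(y)$ turns the $y$-dependent limits into the fixed interval $(0,1)$, the differential of $\Phi_s$ along tangential directions is indeed $X\mapsto\{I_3-\rho_s W\}X+(\nabla_\Gamma\rho_s\cdot X)n$, and the identities $\rho_s-\varepsilon g_0=s\varepsilon g$, $\varepsilon g_1-\rho_s=(1-s)\varepsilon g$ show that $\nabla_\Gamma\rho_s(y)=\psi_\varepsilon(\Phi_s(y))$ exactly as required, so undoing the substitution yields \eqref{E:Ave_Der}. This is essentially the same direct computation as the cited proof; your device of passing to fixed limits before differentiating simply packages the boundary contributions of the Leibniz rule (which is where $\psi_\varepsilon$ originates in any route) into the single term $\nabla_\Gamma\rho_s$. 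The only point worth stating explicitly is the justification for commuting $\nabla_\Gamma$ with $\int_0^1\,ds$ when $\varphi$ is only $C^1$ on the open set $\Omega_\varepsilon$ (so $\nabla\varphi$ need not be bounded up to $\Gamma_\varepsilon$); this is harmless under the intended reading of the hypothesis, but deserves a sentence.
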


\begin{remark} \label{R:Ave_Der}
  Since $\nabla_\Gamma g_0$ and $\nabla_\Gamma g_1$ are bounded on $\Gamma$ and
  \begin{align*}
    0 \leq d(x)-\varepsilon\bar{g}_0(x) \leq \varepsilon\bar{g}(x), \quad 0 \leq \varepsilon\bar{g}_1(x)-d(x) \leq \varepsilon\bar{g}(x), \quad x\in\Omega_\varepsilon,
  \end{align*}
  there exists a constant $c>0$ independent of $\varepsilon$ such that
  \begin{align} \label{E:ADA_Bound}
    |\psi_\varepsilon| \leq c\varepsilon \quad\text{in}\quad \Omega_\varepsilon.
  \end{align}
\end{remark}

\begin{lemma}[{\cite{Miu_NSCTD_02}*{Lemma 6.10}}] \label{L:Ave_Wmp}
  There exists a constant $c>0$ independent of $\varepsilon$ such that
  \begin{align} \label{E:Ave_Wmp_Surf}
    \|M\varphi\|_{W^{m,p}(\Gamma)} &\leq c\varepsilon^{-1/p}\|\varphi\|_{W^{m,p}(\Omega_\varepsilon)}
  \end{align}
  for all $\varphi\in W^{m,p}(\Omega_\varepsilon)$ with $m=1,2$ and $p\in[1,\infty)$.
\end{lemma}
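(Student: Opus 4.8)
**Proof plan for Lemma 6.10 (the estimate $\|M\varphi\|_{W^{m,p}(\Gamma)} \leq c\varepsilon^{-1/p}\|\varphi\|_{W^{m,p}(\Omega_\varepsilon)}$).**

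The plan is to bootstrap from the already-established $L^p$ estimate for $M$ (Lemma \ref{L:Ave_Lp}) and the gradient formula for $M\varphi$ (Lemma \ref{L:Ave_Der}), treating the cases $m=1$ and $m=2$ in turn. It suffices by density (Lemma \ref{L:Wmp_Appr}) to prove the inequality for $\varphi\in C^m(\overline{\Omega}_\varepsilon)$ and then pass to the limit. For $m=1$: apply Lemma \ref{L:Ave_Lp} to $M\varphi$ itself, giving $\|M\varphi\|_{L^p(\Gamma)}\le c\varepsilon^{-1/p}\|\varphi\|_{L^p(\Omega_\varepsilon)}$, and then use \eqref{E:Ave_Der}, namely $\nabla_\Gamma M\varphi = M(B\nabla\varphi)+M((\partial_n\varphi)\psi_\varepsilon)$. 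The matrix $B(x)=\{I_3-d(x)\overline{W}(x)\}\overline{P}(x)$ is bounded uniformly in $\varepsilon$ on $\Omega_\varepsilon$ by \eqref{E:Wein_Bound} and $|P|=2$, and $|\psi_\varepsilon|\le c\varepsilon$ in $\Omega_\varepsilon$ by \eqref{E:ADA_Bound}; also $|\partial_n\varphi|\le|\nabla\varphi|$. Hence applying Lemma \ref{L:Ave_Lp} to each term on the right-hand side gives
\begin{align*}
  \|\nabla_\Gamma M\varphi\|_{L^p(\Gamma)}
  &\le c\varepsilon^{-1/p}\bigl(\|B\nabla\varphi\|_{L^p(\Omega_\varepsilon)}+\|(\partial_n\varphi)\psi_\varepsilon\|_{L^p(\Omega_\varepsilon)}\bigr) \\
  &\le c\varepsilon^{-1/p}\bigl(\|\nabla\varphi\|_{L^p(\Omega_\varepsilon)}+\varepsilon\|\nabla\varphi\|_{L^p(\Omega_\varepsilon)}\bigr)
  \le c\varepsilon^{-1/p}\|\varphi\|_{W^{1,p}(\Omega_\varepsilon)},
\end{align*}
which combined with the $L^p$ bound yields the case $m=1$.

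For $m=2$ the idea is the same but iterated: I want to bound $\nabla_\Gamma^2 M\varphi = \nabla_\Gamma(\nabla_\Gamma M\varphi)$ componentwise, i.e. $\underline{D}_i\underline{D}_j M\varphi$. Starting from \eqref{E:Ave_Der}, each scalar component $\underline{D}_j M\varphi$ is of the form $M(\text{first-order expression in }\varphi)$, where the "coefficients" are the entries of $B$ and $\psi_\varepsilon$, which are constant extensions of $C^3$ (resp. $\varepsilon$-scaled $C^3$) functions on $\Gamma$ composed with $d$; in particular they and their tangential derivatives are bounded uniformly in $\varepsilon$ (with the extra $\varepsilon$ factor retained for $\psi_\varepsilon$). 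Applying Lemma \ref{L:Ave_Der} once more to $\underline{D}_j M\varphi$ — taking care that the differentiated quantity is now $M$ of a product of a bounded extended coefficient with $\nabla\varphi$ or $\partial_n\varphi$ — produces terms that are $M$ of second-order derivatives of $\varphi$ times bounded coefficients, plus $M$ of first-order derivatives of $\varphi$ times bounded coefficients (coming from derivatives hitting $B$, $\psi_\varepsilon$, or the normal field), plus terms with an $\varepsilon$ or $\varepsilon^2$ gain from $\psi_\varepsilon$. Then I invoke Lemma \ref{L:Ave_Lp} on each such term, absorbing all $\varepsilon$-powers $\ge 0$ into the constant, to get $\|\nabla_\Gamma^2 M\varphi\|_{L^p(\Gamma)}\le c\varepsilon^{-1/p}\|\varphi\|_{W^{2,p}(\Omega_\varepsilon)}$.

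The bookkeeping for $m=2$ is the main obstacle: one must verify that every coefficient that arises after two applications of \eqref{E:Ave_Der} is bounded on $\Omega_\varepsilon$ uniformly in $\varepsilon$ and, crucially, that no term requires a derivative of $\varphi$ of order higher than two or a negative power of $\varepsilon$ beyond $\varepsilon^{-1/p}$. This uses that $\Gamma$ is $C^5$ and $g_0,g_1\in C^4(\Gamma)$ (so $W$, $\nabla_\Gamma g_i$, and their first derivatives are bounded), the uniform bounds \eqref{E:Curv_Bound}, \eqref{E:Wein_Bound}, the relation $\nabla\bar\eta=0$ along the normal for constant extensions (so normal derivatives of the coefficients vanish), and \eqref{E:ADA_Bound} for the size of $\psi_\varepsilon$. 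I expect no genuine difficulty beyond this routine — though somewhat lengthy — verification, and since the result is quoted from \cite{Miu_NSCTD_02}*{Lemma 6.10}, the details are available there; in the present paper one would typically either cite it or present the $m=1$ computation in full and indicate that $m=2$ follows by the same argument applied twice.
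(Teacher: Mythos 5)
Your proposal is correct and follows exactly the route these lemmas are set up for: the paper itself defers the proof to \cite{Miu_NSCTD_02}*{Lemma 6.10}, but combining the $L^p$ bound of Lemma \ref{L:Ave_Lp} with the derivative formula \eqref{E:Ave_Der} (using the uniform bound on $B$ from \eqref{E:Wein_Bound} and $|\psi_\varepsilon|\leq c\varepsilon$ from \eqref{E:ADA_Bound}), and iterating once more for $m=2$, is precisely the intended argument, and your bookkeeping for the second iteration (all coefficients and their first derivatives bounded uniformly, no third-order derivatives of $\varphi$, no extra negative powers of $\varepsilon$) is sound. The one small slip is the density reference: you need density of smooth functions in $W^{m,p}(\Omega_\varepsilon)$ (standard for the $C^4$ domain $\Omega_\varepsilon$), not Lemma \ref{L:Wmp_Appr}, which concerns approximation in $W^{m,p}(\Gamma)$.
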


\begin{lemma}[{\cite{Miu_NSCTD_02}*{Lemma 6.11}}] \label{L:AveT_Wmp}
 There exists a constant $c>0$ independent of $\varepsilon$ such that
  \begin{align} \label{E:AveT_Wmp_Surf}
    \|M_\tau u\|_{W^{m,p}(\Gamma)} &\leq c\varepsilon^{-1/p}\|u\|_{W^{m,p}(\Omega_\varepsilon)}
  \end{align}
  for all $u\in W^{m,p}(\Omega_\varepsilon)^3$ with $m=1,2$ and $p\in[1,\infty)$.
\end{lemma}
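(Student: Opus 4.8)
The plan is to reduce the vector-valued estimate \eqref{E:AveT_Wmp_Surf} to the scalar estimate \eqref{E:Ave_Wmp_Surf} of Lemma \ref{L:Ave_Wmp} together with the smoothness and boundedness of the projection $P$ on $\Gamma$. First I would write $M_\tau u = PMu$ by definition \eqref{E:Def_Tan_Ave}, and recall that $Mu = (Mu_1, Mu_2, Mu_3)^T$ is the componentwise average, so that $Mu_k \in W^{m,p}(\Gamma)$ with $\|Mu_k\|_{W^{m,p}(\Gamma)} \leq c\varepsilon^{-1/p}\|u_k\|_{W^{m,p}(\Omega_\varepsilon)}$ for each $k=1,2,3$ by Lemma \ref{L:Ave_Wmp}. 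Summing over $k$ gives $\|Mu\|_{W^{m,p}(\Gamma)} \leq c\varepsilon^{-1/p}\|u\|_{W^{m,p}(\Omega_\varepsilon)}$. It then remains to show that multiplication by $P$ is a bounded operator on $W^{m,p}(\Gamma)^3$ for $m=1,2$, i.e. that $\|Pw\|_{W^{m,p}(\Gamma)} \leq c\|w\|_{W^{m,p}(\Gamma)}$ for all $w \in W^{m,p}(\Gamma)^3$.

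The boundedness of $w \mapsto Pw$ on $W^{m,p}(\Gamma)$ follows from the Leibniz rule for tangential derivatives together with the regularity $P \in C^{\ell-1}(\Gamma)^{3\times 3}$ with $\ell \geq 5$, so that $P$ and its tangential derivatives $\underline{D}_iP$, $\underline{D}_i\underline{D}_jP$ are all bounded on the compact set $\Gamma$ (recall $|P|=2$ on $\Gamma$). For $m=1$ one uses $\underline{D}_i(Pw) = (\underline{D}_iP)w + P\underline{D}_iw$ and estimates in $L^p(\Gamma)$; for $m=2$ one differentiates once more and uses in addition that $\underline{D}_iP$ is bounded, which controls the cross terms $(\underline{D}_iP)(\underline{D}_jw)$. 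Strictly speaking the Leibniz rule for weak tangential derivatives of a product of a $C^2$ function and a $W^{m,p}$ function should be justified by the density of $C^\ell(\Gamma)$ in $W^{m,p}(\Gamma)$ (Lemma \ref{L:Wmp_Appr}) and \eqref{E:Def_WTD}; this is entirely analogous to the flat case. Combining the two bounds yields \eqref{E:AveT_Wmp_Surf}.

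I do not expect a genuine obstacle here: the statement is a routine consequence of Lemma \ref{L:Ave_Wmp} and the algebra properties of $W^{m,p}(\Gamma)$ under multiplication by fixed smooth matrix fields. The only point requiring a little care is the second-order case $m=2$, where one must keep track of the terms arising from the second tangential derivatives of $P$ and from the product of first derivatives of $P$ and $w$; since $P \in C^{\ell-1}(\Gamma)$ with $\ell-1 \geq 4 \geq 2$, these are all bounded and cause no difficulty. All constants produced this way depend only on $\Gamma$, $g_0$, $g_1$ (through Lemma \ref{L:Ave_Wmp}) and are therefore independent of $\varepsilon$, as required.
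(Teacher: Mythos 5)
Your argument is correct and is the natural route: since $M_\tau u=PMu$ by \eqref{E:Def_Tan_Ave}, the componentwise application of Lemma \ref{L:Ave_Wmp} combined with the boundedness of $w\mapsto Pw$ on $W^{m,p}(\Gamma)^3$ (which holds because $P=I_3-n\otimes n\in C^4(\Gamma)^{3\times3}$ and the Leibniz rule for weak tangential derivatives is immediate from \eqref{E:Def_WTD}) gives \eqref{E:AveT_Wmp_Surf} with an $\varepsilon$-independent constant, exactly as in the cited proof.
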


\begin{lemma}[{\cite{Miu_NSCTD_02}*{Lemma 6.12}}] \label{L:Ave_Der_Diff}
  There exists a constant $c>0$ independent of $\varepsilon$ such that
  \begin{align} \label{E:ADD_Dom}
    \left\|\overline{P}\nabla\varphi-\overline{\nabla_\Gamma M\varphi}\right\|_{L^p(\Omega_\varepsilon)} &\leq c\varepsilon\|\varphi\|_{W^{2,p}(\Omega_\varepsilon)}
  \end{align}
  for all $\varphi\in W^{2,p}(\Omega_\varepsilon)$ with $p\in[1,\infty)$.
\end{lemma}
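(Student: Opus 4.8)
The plan is to combine the formula for $\nabla_\Gamma M\varphi$ from Lemma \ref{L:Ave_Der} with the geometric estimates for the normal coordinates that are already available, then control the average operator by Lemma \ref{L:Ave_Lp}. First I would reduce to the case $\varphi\in C^1(\overline{\Omega}_\varepsilon)\cap W^{2,p}(\Omega_\varepsilon)$ (or $C^2$) by a density argument, since every quantity in \eqref{E:ADD_Dom} depends boundedly and linearly on $\varphi$ in the $W^{2,p}$-norm, and $C^2(\overline{\Omega}_\varepsilon)$ is dense in $W^{2,p}(\Omega_\varepsilon)$. Then, applying Lemma \ref{L:Ave_Der}, I would write
\begin{align*}
  \overline{\nabla_\Gamma M\varphi}(x) = \overline{M(B\nabla\varphi)}(x)+\overline{M\bigl((\partial_n\varphi)\psi_\varepsilon\bigr)}(x), \quad x\in\Omega_\varepsilon,
\end{align*}
where $B=\{I_3-d\overline{W}\}\overline{P}$ and $\psi_\varepsilon$ is as in \eqref{E:Ave_Der_Aux}. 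The strategy is to estimate $\overline{P}\nabla\varphi-\overline{\nabla_\Gamma M\varphi}$ by splitting it into three pieces: the difference between $\overline{P}\nabla\varphi$ and its own normal average $\overline{M(\overline{P}\nabla\varphi)}$, the difference between $\overline{M(\overline{P}\nabla\varphi)}$ and $\overline{M(B\nabla\varphi)}$, and the remaining term $\overline{M((\partial_n\varphi)\psi_\varepsilon)}$.

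For the first piece, I would apply Lemma \ref{L:Ave_Diff}, namely \eqref{E:Ave_Diff_Dom}, componentwise to the function $\overline{P}\nabla\varphi\in W^{1,p}(\Omega_\varepsilon)^3$ (note $\overline{P}$ and $\bar n$ are bounded in $C^1$ on $\overline{\Omega}_\varepsilon$, uniformly in $\varepsilon$, since they are constant extensions of $C^4$ surface quantities), which gives
\begin{align*}
  \left\|\overline{P}\nabla\varphi-\overline{M(\overline{P}\nabla\varphi)}\right\|_{L^p(\Omega_\varepsilon)} \leq c\varepsilon\left\|\partial_n(\overline{P}\nabla\varphi)\right\|_{L^p(\Omega_\varepsilon)} \leq c\varepsilon\|\varphi\|_{W^{2,p}(\Omega_\varepsilon)},
\end{align*}
where I use \eqref{E:NorDer_Con} to discard derivatives falling on $\overline{P}$. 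For the second piece, the key observation is $B-\overline{P} = -d\,\overline{W}\,\overline{P}$, so on $\overline{\Omega}_\varepsilon$ we have $|B-\overline{P}|\leq c|d|\leq c\varepsilon$ by \eqref{E:Curv_Bound}-type bounds on $\overline{W}$ and $|d|\leq c\varepsilon$ in $\Omega_\varepsilon$; hence $|B\nabla\varphi-\overline{P}\nabla\varphi|\leq c\varepsilon|\nabla\varphi|$ pointwise in $\Omega_\varepsilon$, and applying Lemma \ref{L:Ave_Lp} together with $\|\overline{M\psi}\|_{L^p(\Omega_\varepsilon)}\leq c\varepsilon^{1/p}\|M\psi\|_{L^p(\Gamma)}\leq c\|\psi\|_{L^p(\Omega_\varepsilon)}$ (via \eqref{E:Con_Lp} and \eqref{E:Ave_Lp_Surf}) yields the bound $c\varepsilon\|\nabla\varphi\|_{L^p(\Omega_\varepsilon)}\leq c\varepsilon\|\varphi\|_{W^{1,p}(\Omega_\varepsilon)}$. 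For the third piece, I would use \eqref{E:ADA_Bound}, i.e.\ $|\psi_\varepsilon|\leq c\varepsilon$ in $\Omega_\varepsilon$, so $|(\partial_n\varphi)\psi_\varepsilon|\leq c\varepsilon|\nabla\varphi|$, and the same composition $\overline{M(\cdot)}$ estimate gives $c\varepsilon\|\varphi\|_{W^{1,p}(\Omega_\varepsilon)}$. Summing the three contributions and using the density reduction gives \eqref{E:ADD_Dom}.

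I do not expect a genuine obstacle here: the lemma is essentially a bookkeeping assembly of Lemmas \ref{L:Ave_Lp}, \ref{L:Ave_Diff}, \ref{L:Ave_Der}, and the elementary bounds $|d|\leq c\varepsilon$, $|\psi_\varepsilon|\leq c\varepsilon$, $|B-\overline{P}|\leq c\varepsilon$ on $\Omega_\varepsilon$. The only mildly delicate point is making sure the constants in the intermediate steps (in particular the passage $\|\overline{M\psi}\|_{L^p(\Omega_\varepsilon)}\leq c\|\psi\|_{L^p(\Omega_\varepsilon)}$, and the uniform $C^1$-bounds on $\overline{P},\bar n,\overline{W},\psi_\varepsilon/\varepsilon$) are all independent of $\varepsilon$, which follows from \eqref{E:Curv_Bound}, \eqref{E:Con_Lp}, \eqref{E:Ave_Lp_Surf}, \eqref{E:ADA_Bound}, and the fact that $g,g_0,g_1$ and their derivatives are bounded on the compact surface $\Gamma$ uniformly in $\varepsilon$. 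So the "hard part" is merely the careful tracking of which terms absorb a factor of $\varepsilon$ from the geometry versus which are handled by the oscillation estimate \eqref{E:Ave_Diff_Dom}.
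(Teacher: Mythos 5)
Your proof is correct, and it follows the intended route: the paper omits the proof of this lemma (it is quoted from \cite{Miu_NSCTD_02}*{Lemma 6.12}), but the argument there is exactly this assembly of the identity \eqref{E:Ave_Der} with the bounds $|B-\overline{P}|=|d\overline{W}\,\overline{P}|\leq c\varepsilon$, $|\psi_\varepsilon|\leq c\varepsilon$, the oscillation estimate \eqref{E:Ave_Diff_Dom} applied to $\overline{P}\nabla\varphi$, and the composite bound $\|\overline{M\psi}\|_{L^p(\Omega_\varepsilon)}\leq c\|\psi\|_{L^p(\Omega_\varepsilon)}$ from \eqref{E:Con_Lp} and \eqref{E:Ave_Lp_Surf}. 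All the $\varepsilon$-independence claims you flag are indeed covered by the cited estimates, so there is no gap.
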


\begin{lemma}[{\cite{Miu_NSCTD_02}*{Lemma 6.13}}] \label{L:Ave_N_W1p}
  There exists a constant $c>0$ independent of $\varepsilon$ such that
  \begin{align} \label{E:Ave_N_W1p}
    \|Mu\cdot n\|_{W^{1,p}(\Gamma)} \leq c\varepsilon^{1-1/p}\|u\|_{W^{2,p}(\Omega_\varepsilon)}
  \end{align}
  for all $u\in W^{2,p}(\Omega_\varepsilon)^3$with $p\in[1,\infty)$ satisfying \eqref{E:Bo_Imp} on $\Gamma_\varepsilon^0$ or on $\Gamma_\varepsilon^1$.
\end{lemma}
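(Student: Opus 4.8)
The plan is to estimate $\|Mu\cdot n\|_{W^{1,p}(\Gamma)}$ by transferring everything to the curved thin domain via constant extensions and then exploiting the impermeable boundary condition through the Poincar\'e and trace inequalities of Section~\ref{S:Fund}. First I would observe that, since $n(y)=\bar n(y+rn(y))$, one has $Mu\cdot n=M\varphi$ on $\Gamma$ with $\varphi:=u\cdot\bar n\in W^{2,p}(\Omega_\varepsilon)$ and $\|\varphi\|_{W^{2,p}(\Omega_\varepsilon)}\le c\|u\|_{W^{2,p}(\Omega_\varepsilon)}$, because $\bar n$ together with its first and second order derivatives is bounded on $\Omega_\varepsilon$ by \eqref{E:ConDer_Dom} and the regularity of $\Gamma$. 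Using \eqref{E:Con_Lp} applied to $M\varphi$ and, componentwise, to the weak tangential gradient $\nabla_\Gamma M\varphi$, it then suffices to prove
\[
  \bigl\|\overline{M\varphi}\bigr\|_{L^p(\Omega_\varepsilon)}+\bigl\|\overline{\nabla_\Gamma M\varphi}\bigr\|_{L^p(\Omega_\varepsilon)}\le c\varepsilon\|u\|_{W^{2,p}(\Omega_\varepsilon)},
\]
since this gives $\|Mu\cdot n\|_{W^{1,p}(\Gamma)}=\|M\varphi\|_{W^{1,p}(\Gamma)}\le c\varepsilon^{-1/p}\varepsilon\|u\|_{W^{2,p}(\Omega_\varepsilon)}=c\varepsilon^{1-1/p}\|u\|_{W^{2,p}(\Omega_\varepsilon)}$.

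For the first term I would split $\overline{M\varphi}=(\overline{M\varphi}-\varphi)+\varphi$ and use \eqref{E:Ave_Diff_Dom} together with $|\partial_n\varphi|\le c|\nabla u|$ for the first summand, and Lemma~\ref{L:Poin_Nor} applied to $\varphi=u\cdot\bar n$ for the second; this is the place where the hypothesis that $u$ satisfies \eqref{E:Bo_Imp} on $\Gamma_\varepsilon^0$ or on $\Gamma_\varepsilon^1$ — say on $\Gamma_\varepsilon^i$ — is used, and it yields $\|\overline{M\varphi}\|_{L^p(\Omega_\varepsilon)}\le c\varepsilon\|u\|_{W^{1,p}(\Omega_\varepsilon)}$. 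For the second term I would write $\overline{\nabla_\Gamma M\varphi}=(\overline{\nabla_\Gamma M\varphi}-\overline P\nabla\varphi)+\overline P\nabla\varphi$ and control the first summand by \eqref{E:ADD_Dom}, which reduces the whole lemma to the estimate
\[
  \bigl\|\overline P\nabla\varphi\bigr\|_{L^p(\Omega_\varepsilon)}\le c\varepsilon\|u\|_{W^{2,p}(\Omega_\varepsilon)}.
\]

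This last inequality is the crux, and I would obtain it from the Poincar\'e inequality \eqref{E:Poin_Dom} applied componentwise to $\overline P\nabla\varphi$. Since $\overline P$ is a constant extension, $\partial_n(\overline P\nabla\varphi)=\overline P\,\partial_n\nabla\varphi$ has pointwise modulus at most $c(|u|+|\nabla u|+|\nabla^2u|)$, so the interior term in \eqref{E:Poin_Dom} is $\le c\varepsilon\|u\|_{W^{2,p}(\Omega_\varepsilon)}$; the real work is to show that the boundary term is small, namely $|\overline P\nabla\varphi|\le c\varepsilon(|u|+|\nabla u|)$ on $\Gamma_\varepsilon^i$, after which the trace inequality \eqref{E:Poin_Bo} turns this into $\varepsilon^{1/p}\|\overline P\nabla\varphi\|_{L^p(\Gamma_\varepsilon^i)}\le c\varepsilon\|u\|_{W^{2,p}(\Omega_\varepsilon)}$. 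To prove the boundary smallness I would use that on $\Gamma_\varepsilon^i$ we have $u\cdot n_\varepsilon=0$ and, by \eqref{E:Nor_Bo} and \eqref{E:Def_NB}, $n_\varepsilon$ is the constant extension of $n_\varepsilon^i$; hence $\varphi=u\cdot(\bar n-(-1)^{i+1}n_\varepsilon)$ there, where $\bar n-(-1)^{i+1}n_\varepsilon$ is the constant extension of a vector field on $\Gamma$ whose norm in $C^1(\Gamma)$ is $\le c\varepsilon$ by the explicit formulas \eqref{E:Def_NB_Aux} and \eqref{E:Def_NB}, the bound \eqref{E:Tau_Bound}, and the $C^4$-regularity of $g_i$. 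This forces the tangential gradient $\nabla_{\Gamma_\varepsilon^i}\varphi$ to be $O(\varepsilon)(|u|+|\nabla u|)$ on $\Gamma_\varepsilon^i$, and combining with $|\nabla\varphi|\le c(|u|+|\nabla u|)$ and \eqref{E:Comp_P} (which gives $|\overline P-P_\varepsilon|\le c\varepsilon$) in the decomposition $\overline P\nabla\varphi=(\overline P-P_\varepsilon)\nabla\varphi+\nabla_{\Gamma_\varepsilon^i}\varphi$ on $\Gamma_\varepsilon^i$ yields the claim. I expect this step to be the main obstacle: it requires a $C^1$-in-$\varepsilon$ refinement of the pointwise estimate \eqref{E:Comp_N}, which does not follow from \eqref{E:Comp_N} itself but is readily extracted from the explicit form of $n_\varepsilon^i$ and the uniform bounds on $\tau_\varepsilon^i$. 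Once $\|\overline P\nabla\varphi\|_{L^p(\Omega_\varepsilon)}\le c\varepsilon\|u\|_{W^{2,p}(\Omega_\varepsilon)}$ is established, collecting the above estimates completes the proof.
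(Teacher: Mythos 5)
Your argument is correct and uses exactly the machinery the paper builds for this purpose: the reduction to $\varphi=u\cdot\bar n$, the splittings via \eqref{E:Ave_Diff_Dom}, Lemma \ref{L:Poin_Nor} and \eqref{E:ADD_Dom}, and the key boundary smallness of $\overline{P}\nabla\varphi$ obtained from the impermeability condition combined with the Poincar\'e--trace pair \eqref{E:Poin_Dom}--\eqref{E:Poin_Bo}, which is precisely the scheme of the paper's proof of Lemma \ref{L:Poin_Str}. The one point you rightly flag --- the $C^1$ bound $\|n-(-1)^{i+1}n_\varepsilon^i\|_{C^1(\Gamma)}\le c\varepsilon$ --- does indeed follow from the explicit formulas \eqref{E:Def_NB_Aux}--\eqref{E:Def_NB} together with the uniform bounds on $\tau_\varepsilon^i$ and the regularity of $g_i$ and $W$, so there is no gap.
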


\begin{lemma}[{\cite{Miu_NSCTD_02}*{Lemma 6.15}}] \label{L:ADiv_Tan}
  There exists a constant $c>0$ independent of $\varepsilon$ such that
  \begin{align} \label{E:ADiv_Tan_Lp}
    \|\mathrm{div}_\Gamma(gM_\tau u)\|_{L^p(\Gamma)} \leq c\varepsilon^{1-1/p}\|u\|_{W^{1,p}(\Omega_\varepsilon)}
  \end{align}
  for all $u\in W^{1,p}(\Omega_\varepsilon)^3$ with $p\in[1,\infty)$ satisfying $\mathrm{div}\,u=0$ in $\Omega_\varepsilon$ and \eqref{E:Bo_Imp}.
\end{lemma}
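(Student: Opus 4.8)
The plan is to compute $\mathrm{div}_\Gamma(gM_\tau u)$ explicitly and to exhibit a cancellation, forced by the impermeable boundary condition, between the term coming from the weight $g$ and the boundary contribution of $u$. Since $M_\tau u=Mu-(Mu\cdot n)n$ and $\mathrm{div}_\Gamma(\eta n)=-\eta H$ for a scalar $\eta$ (using $n\cdot\nabla_\Gamma\eta=0$ and $\mathrm{div}_\Gamma n=-H$), one has $\mathrm{div}_\Gamma(gM_\tau u)=\mathrm{div}_\Gamma(gMu)+g(Mu\cdot n)H$, and the last term is already $O(\varepsilon^{1-1/p})$ by Lemma~\ref{L:Ave_N_Lp} and the boundedness of $g,H$. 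For $\mathrm{div}_\Gamma(gMu)=\nabla_\Gamma g\cdot Mu+g\,\mathrm{div}_\Gamma Mu$ I would apply Lemma~\ref{L:Ave_Der} componentwise, which gives $\mathrm{div}_\Gamma Mu=M(\mathrm{tr}[B\nabla u])+M(\psi_\varepsilon\cdot\partial_n u)$ with $B=\{I_3-d\overline{W}\}\overline{P}$ and $|\psi_\varepsilon|\le c\varepsilon$ in $\Omega_\varepsilon$ (Remark~\ref{R:Ave_Der}). (This identity extends from $C^1(\overline{\Omega}_\varepsilon)$ to $W^{1,p}(\Omega_\varepsilon)$ by density; no constraint on $u$ is needed for the extension, and the constraints $\mathrm{div}\,u=0$, $u\cdot n_\varepsilon=0$ will be used only as pointwise-a.e. or trace identities.)

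The heart of the argument is the algebraic simplification of $\mathrm{tr}[B\nabla u]$. Using $I_3=P+Q$ and $WP=W$ (Lemma~\ref{L:Form_W}) one gets $B-I_3=-\overline{Q}-d\overline{W}$, hence $\mathrm{tr}[B\nabla u]=\mathrm{div}\,u-\mathrm{tr}[\overline{Q}\nabla u]-d\,\mathrm{tr}[\overline{W}\nabla u]$; since $\mathrm{div}\,u=0$ and $\mathrm{tr}[\overline{Q}\nabla u]=\bar{n}\cdot\partial_n u=\partial_n(u\cdot\bar{n})$ (because $\partial_n\bar{n}=0$), this becomes $\mathrm{tr}[B\nabla u]=-\partial_n(u\cdot\bar{n})-d\,\mathrm{tr}[\overline{W}\nabla u]$. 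The fundamental theorem of calculus inside the average yields $g\,M(\partial_n(u\cdot\bar{n}))=\varepsilon^{-1}\bigl((u\cdot\bar{n})_1^\sharp-(u\cdot\bar{n})_0^\sharp\bigr)$, and on $\Gamma_\varepsilon^i$ the conditions $u\cdot n_\varepsilon=0$, \eqref{E:Def_NB} and \eqref{E:Nor_Bo} force $u\cdot n=\varepsilon\,u\cdot\tau_\varepsilon^i$, so $(u\cdot\bar{n})_i^\sharp=\varepsilon\,u_i^\sharp\cdot\tau_\varepsilon^i$. Collecting the terms I arrive at the decomposition
\[
\mathrm{div}_\Gamma(gM_\tau u)=\Bigl(\nabla_\Gamma g\cdot Mu-u_1^\sharp\cdot\tau_\varepsilon^1+u_0^\sharp\cdot\tau_\varepsilon^0\Bigr)-g\,M\bigl(d\,\mathrm{tr}[\overline{W}\nabla u]\bigr)+g\,M(\psi_\varepsilon\cdot\partial_n u)+g(Mu\cdot n)H.
\]

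It remains to estimate the four terms in $L^p(\Gamma)$. The last three are routine: by Lemma~\ref{L:Ave_Lp} together with $|d|\le c\varepsilon$ and $|\psi_\varepsilon|\le c\varepsilon$ in $\Omega_\varepsilon$, each of $g\,M(d\,\mathrm{tr}[\overline{W}\nabla u])$ and $g\,M(\psi_\varepsilon\cdot\partial_n u)$ is bounded by $c\varepsilon^{-1/p}\cdot\varepsilon\|\nabla u\|_{L^p(\Omega_\varepsilon)}\le c\varepsilon^{1-1/p}\|u\|_{W^{1,p}(\Omega_\varepsilon)}$, and $g(Mu\cdot n)H$ was treated above. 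For the bracketed main term I would write $\nabla_\Gamma g=\nabla_\Gamma g_1-\nabla_\Gamma g_0$ and reorganize it as $\sum_{i=0,1}(-1)^{i+1}\bigl(\nabla_\Gamma g_i\cdot(Mu-u_i^\sharp)+u_i^\sharp\cdot(\nabla_\Gamma g_i-\tau_\varepsilon^i)\bigr)$; then $\|Mu-u_i^\sharp\|_{L^p(\Gamma)}\le c\varepsilon^{1-1/p}\|u\|_{W^{1,p}(\Omega_\varepsilon)}$ follows from \eqref{E:Ave_Diff_Bo} applied to $u-\overline{Mu}$ combined with \eqref{E:Lp_CoV_Surf}, while $|\nabla_\Gamma g_i-\tau_\varepsilon^i|\le c\varepsilon$ by \eqref{E:Wein_Diff} and $\|u_i^\sharp\|_{L^p(\Gamma)}\le c\|u\|_{L^p(\Gamma_\varepsilon^i)}\le c\varepsilon^{-1/p}\|u\|_{W^{1,p}(\Omega_\varepsilon)}$ by \eqref{E:Lp_CoV_Surf} and the trace inequality \eqref{E:Poin_Bo}. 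Summing gives the claimed bound. The main obstacle is the second paragraph: spotting and making precise the cancellation of $\nabla_\Gamma g\cdot Mu$ against the boundary term generated by impermeability; once the displayed decomposition is in place, the rest is a direct application of the earlier estimates for $M$, the trace inequalities, and the $O(\varepsilon)$ bounds on $d$, $\psi_\varepsilon$ and $\nabla_\Gamma g_i-\tau_\varepsilon^i$.
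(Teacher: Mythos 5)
Your argument is correct, and it is essentially the approach of this series: the paper defers the proof to Part II (\cite{Miu_NSCTD_02}*{Lemma 6.15}), but your key cancellation --- computing $\mathrm{tr}[\overline{Q}\nabla u]=\partial_n(u\cdot\bar{n})$ via $\mathrm{div}\,u=0$, integrating it out by the fundamental theorem of calculus, and converting the resulting boundary traces into $\varepsilon\,u_i^\sharp\cdot\tau_\varepsilon^i$ through the impermeability condition so that they cancel $\nabla_\Gamma g\cdot Mu$ up to $O(\varepsilon^{1-1/p})$ --- is exactly the mechanism used in the paper's closely related Lemma \ref{L:Ave_BiH1_NN}. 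The remaining estimates via Lemmas \ref{L:Ave_Lp}, \ref{L:Ave_N_Lp}, \ref{L:Ave_Diff}, \eqref{E:Poin_Bo}, \eqref{E:Lp_CoV_Surf}, and \eqref{E:Wein_Diff} are all applied correctly.
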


By \eqref{E:ADD_Dom} and an $L^p(\Gamma)$-estimate for $\mathrm{div}_\Gamma(gMu)$ similar to \eqref{E:ADiv_Tan_Lp} shown in \cite{Miu_NSCTD_02} we also have the next estimate for the normal component of $\partial_nu$.

\begin{lemma}[{\cite{Miu_NSCTD_02}*{Lemma 6.16}}] \label{L:DnU_N_Ave}
  There exists a constant $c>0$ independent of $\varepsilon$ such that
  \begin{align} \label{E:DnU_N_Ave}
    \left\|\partial_nu\cdot\bar{n}-\frac{1}{\bar{g}}\overline{M_\tau u}\cdot\overline{\nabla_\Gamma g}\right\|_{L^p(\Omega_\varepsilon)} \leq c\varepsilon\|u\|_{W^{2,p}(\Omega_\varepsilon)}
  \end{align}
  for all $u\in W^{2,p}(\Omega_\varepsilon)^3$ with $p\in[1,\infty)$ satisfying $\mathrm{div}\,u=0$ in $\Omega_\varepsilon$ and \eqref{E:Bo_Imp}.
\end{lemma}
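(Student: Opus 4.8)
The plan is to reduce the normal derivative $\partial_nu\cdot\bar n$ to a surface divergence by means of the incompressibility constraint, and then compare that surface divergence with the stated surface quantity, absorbing all remainders into $O(\varepsilon)$ terms via the estimates already at our disposal. Concretely, I would argue in four short steps.

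\emph{Step 1: from $\mathrm{div}\,u=0$ to a surface divergence.} Writing $I_3=\overline P+\overline Q$ with $\overline Q=\bar n\otimes\bar n$, one has $\overline Q\nabla u=\bar n\otimes\partial_nu$, hence $\mathrm{tr}[\overline Q\nabla u]=\partial_nu\cdot\bar n$ and, since $\mathrm{tr}[\nabla u]=\mathrm{div}\,u=0$,
\begin{align*}
  \partial_nu\cdot\bar n = -\mathrm{tr}[\overline P\nabla u] \quad\text{a.e. in}\quad \Omega_\varepsilon.
\end{align*}
Applying \eqref{E:ADD_Dom} to each scalar component $u_j\in W^{2,p}(\Omega_\varepsilon)$ and summing the $j$-th components of the resulting vector inequalities makes the left-hand side collapse to $\mathrm{tr}[\overline P\nabla u]-\overline{\mathrm{div}_\Gamma Mu}$, so that
\begin{align*}
  \left\|\partial_nu\cdot\bar n+\overline{\mathrm{div}_\Gamma Mu}\right\|_{L^p(\Omega_\varepsilon)} \leq c\varepsilon\|u\|_{W^{2,p}(\Omega_\varepsilon)}.
\end{align*}

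\emph{Step 2: rewriting $\mathrm{div}_\Gamma Mu$.} On $\Gamma$, since $\nabla_\Gamma g$ is tangential we have $\nabla_\Gamma g\cdot Mu=\nabla_\Gamma g\cdot M_\tau u$, so the Leibniz rule gives $\mathrm{div}_\Gamma(gMu)=g\,\mathrm{div}_\Gamma Mu+M_\tau u\cdot\nabla_\Gamma g$, that is,
\begin{align*}
  \mathrm{div}_\Gamma Mu+\frac{1}{g}\,M_\tau u\cdot\nabla_\Gamma g = \frac{1}{g}\,\mathrm{div}_\Gamma(gMu).
\end{align*}
To bound the right-hand side in $L^p(\Gamma)$ I would decompose $Mu=M_\tau u+(Mu\cdot n)n$ and use $\mathrm{div}_\Gamma(fn)=\nabla_\Gamma f\cdot n+f\,\mathrm{div}_\Gamma n=-fH$ for a scalar $f$ (by \eqref{E:P_TGr} and \eqref{E:Def_Wein}), obtaining $\mathrm{div}_\Gamma(gMu)=\mathrm{div}_\Gamma(gM_\tau u)-g(Mu\cdot n)H$; then \eqref{E:ADiv_Tan_Lp} together with \eqref{E:Ave_N_Lp} and the boundedness of $g$ and $H$ yields $\|\mathrm{div}_\Gamma(gMu)\|_{L^p(\Gamma)}\le c\varepsilon^{1-1/p}\|u\|_{W^{1,p}(\Omega_\varepsilon)}$. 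Using $g\ge c$ from \eqref{E:G_Inf} and the displayed identity,
\begin{align*}
  \left\|\mathrm{div}_\Gamma Mu+\frac{1}{g}\,M_\tau u\cdot\nabla_\Gamma g\right\|_{L^p(\Gamma)} \leq c\varepsilon^{1-1/p}\|u\|_{W^{1,p}(\Omega_\varepsilon)}.
\end{align*}

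\emph{Steps 3--4: constant extension and conclusion.} Taking the constant extension of this surface function and invoking \eqref{E:Con_Lp} (which costs a factor $\varepsilon^{1/p}$) gives
\begin{align*}
  \left\|\overline{\mathrm{div}_\Gamma Mu}+\frac{1}{\bar g}\,\overline{M_\tau u}\cdot\overline{\nabla_\Gamma g}\right\|_{L^p(\Omega_\varepsilon)} \leq c\varepsilon\|u\|_{W^{1,p}(\Omega_\varepsilon)} \leq c\varepsilon\|u\|_{W^{2,p}(\Omega_\varepsilon)},
\end{align*}
and the triangle inequality with Step 1 yields \eqref{E:DnU_N_Ave}, since
\begin{align*}
  \partial_nu\cdot\bar n-\frac{1}{\bar g}\,\overline{M_\tau u}\cdot\overline{\nabla_\Gamma g} = \Bigl(\partial_nu\cdot\bar n+\overline{\mathrm{div}_\Gamma Mu}\Bigr)-\Bigl(\overline{\mathrm{div}_\Gamma Mu}+\frac{1}{\bar g}\,\overline{M_\tau u}\cdot\overline{\nabla_\Gamma g}\Bigr).
\end{align*}
There is no essential obstacle in this argument: the computations are all routine consequences of $P+Q=I_3$ and the Leibniz rule. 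The only point that requires a little care is the $L^p(\Gamma)$-bound on $\mathrm{div}_\Gamma(gMu)$, which — as noted in the statement — is obtained by combining the already-established Lemmas \ref{L:ADiv_Tan} and \ref{L:Ave_N_Lp} rather than reproved from scratch, together with the bookkeeping of the $\varepsilon$-powers $\varepsilon^{1/p}\cdot\varepsilon^{1-1/p}=\varepsilon$ when passing between $\Gamma$ and $\Omega_\varepsilon$.
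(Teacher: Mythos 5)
Your argument is correct and follows exactly the route the paper indicates just before the lemma: combining the pointwise identity $\partial_nu\cdot\bar n=-\mathrm{tr}[\overline P\nabla u]$ (from $\mathrm{div}\,u=0$ and $I_3=\overline P+\overline Q$) with \eqref{E:ADD_Dom} applied componentwise, together with an $L^p(\Gamma)$-estimate for $\mathrm{div}_\Gamma(gMu)$ obtained from \eqref{E:ADiv_Tan_Lp} and \eqref{E:Ave_N_Lp}, and then passing to $\Omega_\varepsilon$ via \eqref{E:Con_Lp} so that $\varepsilon^{1/p}\cdot\varepsilon^{1-1/p}=\varepsilon$. The power bookkeeping and the use of \eqref{E:G_Inf}, \eqref{E:P_TGr}, and \eqref{E:Def_Wein} are all in order, so no gap remains.
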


For $u\in L^2(\Omega_\varepsilon)^3$ we can consider $\mathrm{div}_\Gamma(gM_\tau u)$ as an element of $H^{-1}(\Gamma)$ by \eqref{E:Sdiv_Hin}.
Moreover, we have an $H^{-1}(\Gamma)$-estimate for it similar to \eqref{E:ADiv_Tan_Lp} when $u\in L_\sigma^2(\Omega_\varepsilon)$.

\begin{lemma} \label{L:ADiv_Tan_Hin}
  There exists a constant $c>0$ independent of $\varepsilon$ such that
  \begin{align} \label{E:ADiv_Tan_Hin}
    \|\mathrm{div}_\Gamma(gM_\tau u)\|_{H^{-1}(\Gamma)} \leq c\varepsilon^{1/2}\|u\|_{L^2(\Omega_\varepsilon)}
  \end{align}
  for all $u\in L_\sigma^2(\Omega_\varepsilon)$.
\end{lemma}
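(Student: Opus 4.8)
## Proof strategy for Lemma \ref{L:ADiv_Tan_Hin}

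The plan is to estimate $\mathrm{div}_\Gamma(gM_\tau u)$ in $H^{-1}(\Gamma)$ by duality, testing against an arbitrary $\eta\in H^1(\Gamma)$ and using the definition \eqref{E:Sdiv_Hin} of the surface divergence as a functional, namely
\begin{align*}
  \langle\mathrm{div}_\Gamma(gM_\tau u),\eta\rangle_\Gamma = -\bigl(gM_\tau u,\nabla_\Gamma\eta+\eta Hn\bigr)_{L^2(\Gamma)} = -\bigl(gM_\tau u,\nabla_\Gamma\eta\bigr)_{L^2(\Gamma)},
\end{align*}
where the last equality uses $M_\tau u\cdot n=0$ on $\Gamma$ so that the $\eta Hn$ term drops out. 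The first step is therefore to rewrite $\bigl(gM_\tau u,\nabla_\Gamma\eta\bigr)_{L^2(\Gamma)}$ as an integral over $\Omega_\varepsilon$ (up to an $O(\varepsilon)$-small error) using the change of variables formula \eqref{E:CoV_Dom} and the relation between averaged surface quantities and bulk quantities. The natural partner is the constant extension $\bar\eta$: by \eqref{E:ConDer_Surf} and \eqref{E:Con_Lp} we have $\nabla\bar\eta=\overline{\nabla_\Gamma\eta}$ on $\Gamma$ and $\|\bar\eta\|_{H^1(\Omega_\varepsilon)}\le c\varepsilon^{1/2}\|\eta\|_{H^1(\Gamma)}$, and since $u$ is solenoidal and satisfies \eqref{E:Bo_Imp}, integration by parts in $\Omega_\varepsilon$ gives $\int_{\Omega_\varepsilon}u\cdot\nabla\bar\eta\,dx=0$.

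The second step is to compare $\int_{\Omega_\varepsilon}u\cdot\nabla\bar\eta\,dx$ with (a multiple of) $\bigl(gM_\tau u,\nabla_\Gamma\eta\bigr)_{L^2(\Gamma)}$. Writing $\nabla\bar\eta=\overline{P}\,\nabla\bar\eta$ modulo a term controlled by \eqref{E:ConDer_Diff}, and using $\overline{P}u\cdot\overline{\nabla_\Gamma\eta}$, the $r$-integral of $\overline{P}u$ against $J(y,r)$ produces $\varepsilon g(y)M_\tau u(y)$ up to errors: $|J-1|\le c\varepsilon$ by \eqref{E:Jac_Diff_03}, and $\nabla\bar\eta-\overline{\nabla_\Gamma\eta}$ is $O(\varepsilon)$ small in $L^2(\Omega_\varepsilon)$ by \eqref{E:ConDer_Diff} together with \eqref{E:Con_Lp}. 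Collecting these, one obtains
\begin{align*}
  \Bigl|\bigl(gM_\tau u,\nabla_\Gamma\eta\bigr)_{L^2(\Gamma)}\Bigr| = \Bigl|\tfrac{1}{\varepsilon}\int_{\Omega_\varepsilon}\overline{P}u\cdot\overline{\nabla_\Gamma\eta}\,dx\Bigr| + (\text{error}),
\end{align*}
where, since $\int_{\Omega_\varepsilon}u\cdot\nabla\bar\eta\,dx=0$, the leading term reduces to $\tfrac1\varepsilon\int_{\Omega_\varepsilon}\overline{P}u\cdot(\overline{\nabla_\Gamma\eta}-\nabla\bar\eta)\,dx$ plus $\tfrac1\varepsilon\int_{\Omega_\varepsilon}(\overline{P}u-u)\cdot\nabla\bar\eta\,dx$. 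The first of these is bounded by $c\|u\|_{L^2(\Omega_\varepsilon)}\cdot c\varepsilon\|\overline{\nabla_\Gamma\eta}\|_{L^2(\Omega_\varepsilon)}/\varepsilon\le c\varepsilon^{1/2}\|u\|_{L^2(\Omega_\varepsilon)}\|\eta\|_{H^1(\Gamma)}$; for the second one writes $u-\overline{P}u=(u\cdot\bar n)\bar n$ and invokes the Poincaré-type bound $\|u\cdot\bar n\|_{L^2(\Omega_\varepsilon)}\le c\varepsilon\|u\|_{H^1(\Omega_\varepsilon)}$ from Lemma \ref{L:Poin_Nor} — but that would cost an $H^1$-norm of $u$, which we do not want.

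The main obstacle is precisely this last point: obtaining the bound in terms of $\|u\|_{L^2(\Omega_\varepsilon)}$ alone, not $\|u\|_{H^1(\Omega_\varepsilon)}$. The resolution is to integrate by parts \emph{on $\Gamma$} instead, moving the tangential derivative off $\eta$ onto $gM_\tau u$ at the level of the bulk integral, or equivalently to keep the identity $\int_{\Omega_\varepsilon}u\cdot\nabla\bar\eta\,dx=0$ as the starting point and never split $u$ into normal and tangential parts with a gradient landing on $\eta$. Concretely, I expect the clean route is: start from $0=\int_{\Omega_\varepsilon}u\cdot\nabla\bar\eta\,dx$, use \eqref{E:ConDer_Dom} to write $\nabla\bar\eta=\{I_3-d\overline{W}\}^{-1}\overline{\nabla_\Gamma\eta}$, transfer the bounded matrix $\{I_3-d\overline{W}\}^{-1}$ and the Jacobian $J$ onto $u$ via \eqref{E:CoV_Dom}, recognize that the $r$-integral of the resulting vector field against $\overline{\nabla_\Gamma\eta}$ equals $\varepsilon g(y)\bigl(\text{average of }u\text{ weighted by bounded factors}\bigr)\cdot\nabla_\Gamma\eta(y)$, and then observe that the difference between this weighted average and $M_\tau u$ is $O(\varepsilon)$ in $L^2(\Gamma)$ by \eqref{E:Wein_Diff} and \eqref{E:Jac_Diff_03} — crucially, all these comparison errors involve only $\|u\|_{L^2(\Omega_\varepsilon)}$ because the factors $\{I_3-d\overline{W}\}^{-1}-I_3$ and $J-1$ are pointwise $O(\varepsilon)$ and require no derivative of $u$. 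Dividing by $\varepsilon$ and using $\|u\|_{L^2(\Gamma\text{-averaged})}\le c\varepsilon^{-1/2}\|u\|_{L^2(\Omega_\varepsilon)}$ from Lemma \ref{L:AveT_Lp}-type bounds then yields $|\langle\mathrm{div}_\Gamma(gM_\tau u),\eta\rangle_\Gamma|\le c\varepsilon^{1/2}\|u\|_{L^2(\Omega_\varepsilon)}\|\eta\|_{H^1(\Gamma)}$, and taking the supremum over $\eta$ with $\|\eta\|_{H^1(\Gamma)}\le1$ gives \eqref{E:ADiv_Tan_Hin}. A density argument (Lemma \ref{L:Wmp_Appr}) reduces everything to smooth $\eta$ and to $u\in C^1(\overline\Omega_\varepsilon)^3\cap L_\sigma^2(\Omega_\varepsilon)$, justifying the integrations by parts.
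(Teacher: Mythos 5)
Your proposal is correct and its ``clean route'' is exactly the paper's proof: pair $gM_\tau u\cdot\nabla_\Gamma\eta=gMu\cdot\nabla_\Gamma\eta$ with $\varepsilon^{-1}\int_{\Omega_\varepsilon}u\cdot\nabla\bar{\eta}\,dx=0$ (orthogonality of $L_\sigma^2(\Omega_\varepsilon)$ and gradients), and absorb the two pointwise $O(\varepsilon)$ errors, $J-1$ from \eqref{E:Jac_Diff_03} and $\nabla\bar{\eta}-\overline{\nabla_\Gamma\eta}$ from \eqref{E:ConDer_Diff}, which cost only $\|u\|_{L^2(\Omega_\varepsilon)}$. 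The obstacle you flag in the middle never actually arises: since $\nabla\bar{\eta}=\overline{P}\nabla\bar{\eta}$ (equivalently $\partial_n\bar{\eta}=0$ by \eqref{E:NorDer_Con} and \eqref{E:WReso_P}), the term $\int_{\Omega_\varepsilon}(\overline{P}u-u)\cdot\nabla\bar{\eta}\,dx$ vanishes identically and no Poincar\'{e}-type bound on $u\cdot\bar{n}$ (hence no $H^1$-norm of $u$) is ever needed.
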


\begin{proof}
  We use the notation \eqref{E:Pull_Dom} and suppress the arguments of functions.
  Let $\eta$ be an arbitrary function in $H^1(\Gamma)$.
  By \eqref{E:Sdiv_Hin} and $M_\tau u\cdot n=0$ on $\Gamma$,
  \begin{align} \label{Pf_ADTH:Equ}
    \langle\mathrm{div}_\Gamma(gM_\tau u),\eta\rangle_\Gamma = -\int_\Gamma gM_\tau u\cdot\nabla_\Gamma\eta\,d\mathcal{H}^2.
  \end{align}
  For the right-hand side we see by $M_\tau u\cdot\nabla_\Gamma\eta=Mu\cdot\nabla_\Gamma\eta$ on $\Gamma$ and \eqref{E:Def_Ave} that
  \begin{align*}
    \int_\Gamma gM_\tau u\cdot\nabla_\Gamma\eta\,d\mathcal{H}^2 = \varepsilon^{-1}\int_\Gamma\int_{\varepsilon g_0}^{\varepsilon g_1}u^\sharp\cdot\nabla_\Gamma\eta\,dr\,d\mathcal{H}^2.
  \end{align*}
  From this equality and the change of variables formula \eqref{E:CoV_Dom} it follows that
  \begin{align*}
    \left|\varepsilon^{-1}\int_{\Omega_\varepsilon}u\cdot\nabla\bar{\eta}\,dx-\int_\Gamma gM_\tau u\cdot\nabla_\Gamma\eta\,d\mathcal{H}^2\right| \leq \varepsilon^{-1}(J_1+J_2),
  \end{align*}
  where $\bar{\eta}=\eta\circ\pi$ is the constant extension of $\eta$ and
  \begin{align*}
    J_1 := \left|\int_{\Omega_\varepsilon}u\cdot\Bigl(\nabla\bar{\eta}-\overline{\nabla_\Gamma\eta}\Bigr)dx\right|, \quad J_2 := \left|\int_\Gamma\int_{\varepsilon g_0}^{\varepsilon g_1}(u^\sharp\cdot\nabla_\Gamma\eta)(J-1)\,dr\,d\mathcal{H}^2\right|.
  \end{align*}
  By \eqref{E:ConDer_Diff} with $|d|\leq c\varepsilon$ in $\Omega_\varepsilon$, H\"{o}lder's inequality, and \eqref{E:Con_Lp} we have
  \begin{align*}
    J_1 \leq c\varepsilon\|u\|_{L^2(\Omega_\varepsilon)}\left\|\overline{\nabla_\Gamma\eta}\right\|_{L^2(\Omega_\varepsilon)} \leq c\varepsilon^{3/2}\|u\|_{L^2(\Omega_\varepsilon)}\|\nabla_\Gamma\eta\|_{L^2(\Gamma)}.
  \end{align*}
  We also have the same inequality for $J_2$ by \eqref{E:Jac_Diff_03}, \eqref{E:CoV_Equiv}, and \eqref{E:Con_Lp}.
  Hence
  \begin{multline} \label{Pf_ADTH:Ine}
    \left|\varepsilon^{-1}\int_{\Omega_\varepsilon}u\cdot\nabla\bar{\eta}\,dx-\int_\Gamma gM_\tau u\cdot\nabla_\Gamma\eta\,d\mathcal{H}^2\right| \\
    \leq \varepsilon^{-1}(J_1+J_2) \leq c\varepsilon^{1/2}\|u\|_{L^2(\Omega_\varepsilon)}\|\nabla_\Gamma\eta\|_{L^2(\Gamma)}.
  \end{multline}
  Moreover, noting that $\bar{\eta}\in H^1(\Omega_\varepsilon)$ by $\eta\in H^1(\Gamma)$ and Lemma \ref{L:Con_Lp_W1p}, we have
  \begin{align*}
    \int_{\Omega_\varepsilon}u\cdot\nabla\bar{\eta}\,dx = 0
  \end{align*}
  by $u\in L_\sigma^2(\Omega_\varepsilon)$ and $\nabla\bar{\eta}\in L_\sigma^2(\Omega_\varepsilon)^\perp$.
  Thus by \eqref{Pf_ADTH:Equ} and \eqref{Pf_ADTH:Ine} we get
  \begin{align*}
    |\langle\mathrm{div}_\Gamma(gM_\tau u),\eta\rangle_\Gamma| = \left|\int_\Gamma gM_\tau u\cdot\nabla_\Gamma\eta\,d\mathcal{H}^2\right| \leq c\varepsilon^{1/2}\|u\|_{L^2(\Omega_\varepsilon)}\|\nabla_\Gamma\eta\|_{L^2(\Gamma)}.
  \end{align*}
  Since this inequality holds for all $\eta\in H^1(\Gamma)$, we obtain \eqref{E:ADiv_Tan_Hin}.
\end{proof}

We also observe that $M$ and $M_\tau$ commute with the time derivative.

\begin{lemma} \label{L:Ave_Dt}
  For $T>0$ let $\varphi\in H^1(0,T;L^2(\Omega_\varepsilon))$.
  Then
  \begin{align} \label{E:ADt_Com}
    M\varphi \in H^1(0,T;L^2(\Gamma)), \quad \partial_tM\varphi = M(\partial_t\varphi) \quad\text{in}\quad L^2(0,T;L^2(\Gamma))
  \end{align}
  and there exists a constant $c>0$ independent of $\varepsilon$ and $\varphi$ such that
  \begin{align} \label{E:Ave_Dt}
    \|\partial_tM\varphi\|_{L^2(0,T;L^2(\Gamma))} \leq c\varepsilon^{-1/2}\|\partial_t\varphi\|_{L^2(0,T;L^2(\Omega_\varepsilon))}.
  \end{align}
  Also, if $u\in H^1(0,T;L^2(\Omega_\varepsilon)^3)$, then
  \begin{align*}
    M_\tau u\in H^1(0,T;L^2(\Gamma,T\Gamma)), \quad \partial_tM_\tau u = M_\tau(\partial_tu) \quad\text{in}\quad L^2(0,T;L^2(\Gamma,T\Gamma)),
  \end{align*}
  and \eqref{E:Ave_Dt} holds with $\partial_t\varphi$ and $\partial_tM\varphi$ replaced by $\partial_tu$ and $\partial_tM_\tau u$.
\end{lemma}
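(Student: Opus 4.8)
The plan is to prove the claim by a duality/testing argument against smooth compactly supported scalar functions in time, reducing everything to properties of $M$ and $M_\tau$ that are already available, namely the $L^p$-boundedness of Lemma \ref{L:Ave_Lp} and Lemma \ref{L:AveT_Lp} with $p=2$, together with the fact that $M$ and $M_\tau$ are bounded \emph{linear} operators between the fixed spaces $L^2(\Omega_\varepsilon)$ (resp.\ $L^2(\Omega_\varepsilon)^3$) and $L^2(\Gamma)$ (resp.\ $L^2(\Gamma,T\Gamma)$), independent of $t$. The key structural point is that such a fixed bounded linear operator commutes with the time-distributional derivative and with the Bochner integral $\int_0^T\varphi(t)\,(\cdot)\,dt$.

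First I would verify that $M\varphi\in L^2(0,T;L^2(\Gamma))$ when $\varphi\in L^2(0,T;L^2(\Omega_\varepsilon))$: for a.e.\ $t$, $M(\varphi(t))$ is well-defined in $L^2(\Gamma)$ with $\|M(\varphi(t))\|_{L^2(\Gamma)}\le c\varepsilon^{-1/2}\|\varphi(t)\|_{L^2(\Omega_\varepsilon)}$ by \eqref{E:Ave_Lp_Surf}, and strong measurability of $t\mapsto M(\varphi(t))$ follows from that of $t\mapsto\varphi(t)$ by composing with the bounded linear operator $M$ (approximate $\varphi$ by simple functions). Squaring and integrating in $t$ gives $M\varphi\in L^2(0,T;L^2(\Gamma))$ with the obvious norm bound; the same argument with \eqref{E:AveT_Lp_Surf} handles $M_\tau u$, and the tangency $M_\tau u(t)\cdot n=0$ on $\Gamma$ for a.e.\ $t$ shows $M_\tau u\in L^2(0,T;L^2(\Gamma,T\Gamma))$.

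Next, for $\varphi\in H^1(0,T;L^2(\Omega_\varepsilon))$ I would show $M(\partial_t\varphi)$ is the distributional time derivative of $M\varphi$. Fix $\psi\in C_c^\infty(0,T)$. Since $M\colon L^2(\Omega_\varepsilon)\to L^2(\Gamma)$ is bounded and linear, it commutes with the $L^2(\Gamma)$-valued Bochner integral: $M\!\left(\int_0^T\psi(t)\varphi(t)\,dt\right)=\int_0^T\psi(t)M(\varphi(t))\,dt$, and likewise with $\partial_t\psi$ in place of $\psi$. Using the definition of the weak time derivative of $\varphi$, namely $\int_0^T\partial_t\psi(t)\varphi(t)\,dt=-\int_0^T\psi(t)\partial_t\varphi(t)\,dt$ in $L^2(\Omega_\varepsilon)$, and applying $M$ to both sides, one gets
\begin{align*}
  \int_0^T\partial_t\psi(t)\,M(\varphi(t))\,dt = -\int_0^T\psi(t)\,M(\partial_t\varphi(t))\,dt \quad\text{in } L^2(\Gamma),
\end{align*}
which is exactly the statement that $\partial_t(M\varphi)=M(\partial_t\varphi)\in L^2(0,T;L^2(\Gamma))$. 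Combined with the first step applied to $\partial_t\varphi\in L^2(0,T;L^2(\Omega_\varepsilon))$, this gives $M\varphi\in H^1(0,T;L^2(\Gamma))$, and the estimate \eqref{E:Ave_Dt} is then immediate from \eqref{E:Ave_Lp_Surf} applied pointwise in $t$ to $\partial_t\varphi$ and integrated. The vector-valued statement for $M_\tau u$ is verbatim the same with $L^2(\Omega_\varepsilon)^3$, $L^2(\Gamma,T\Gamma)$, and \eqref{E:AveT_Lp_Surf} in place of the scalar objects; tangency of $\partial_t M_\tau u$ follows since it is an $L^2(0,T)$-limit of tangential fields.

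I do not expect a genuine obstacle here — everything reduces to the elementary fact that a bounded time-independent linear operator intertwines Bochner integration and weak differentiation. The only points requiring a little care are the measurability claims (handled by simple-function approximation, using continuity of $M$) and making sure the distributional identity is read in the correct target space; since all operators involved are the \emph{same} for every $t$, no commutator terms arise. If one wished to avoid the abstract commutation lemma altogether, an equally short alternative is to test against $\psi\in C_c^\infty(0,T)$ and $\eta\in L^2(\Gamma)$ (resp.\ $\eta\in C^1(\Gamma,T\Gamma)$), write out $\langle M\varphi,\eta\rangle_{L^2(\Gamma)}$ via \eqref{E:Def_Ave}, \eqref{E:Def_Tan_Ave} and the change of variables \eqref{E:CoV_Dom}, and transfer the $t$-derivative using the scalar identity for $\varphi$ — but the operator-theoretic route is cleaner and is the one I would write up.
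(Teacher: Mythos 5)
Your proof is correct. Your primary route — treating $M$ and $M_\tau$ as fixed, time-independent bounded linear operators (via \eqref{E:Ave_Lp_Surf} and \eqref{E:AveT_Lp_Surf} with $p=2$) that therefore commute with Bochner integration against $\psi$ and $\partial_t\psi$, and applying them directly to the defining identity of the weak time derivative of $\varphi$ — is a genuinely different and more general argument than the one in the paper. The paper instead tests the desired identity in $L^2(\Gamma)$ against an arbitrary $\eta\in L^2(\Gamma)$ and realizes the pairing $(M\varphi(t),\eta)_{L^2(\Gamma)}$ concretely as $(\varphi(t),\check{\eta})_{L^2(\Omega_\varepsilon)}$ for the explicitly constructed, time-independent weight $\check{\eta}(x)=\eta(\pi(x))/\bigl(\varepsilon g(\pi(x))J(\pi(x),d(x))\bigr)$, using the change of variables formula \eqref{E:CoV_Dom}; the time derivative is then transferred through this fixed scalar pairing. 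Your argument buys generality and brevity — it uses nothing about $M$ beyond linearity, boundedness, and time-independence, so no commutator terms can arise — at the cost of invoking the (standard, but abstract) commutation of a bounded operator with the Bochner integral and the strong-measurability-under-composition fact, both of which you correctly flag and justify by simple-function approximation. The paper's construction is more hands-on and self-contained, exploiting the specific integral form \eqref{E:Def_Ave} of $M$; your concluding remark about the alternative testing route against $\eta$ and \eqref{E:CoV_Dom} is essentially the paper's proof. Your handling of the tangential case, including the observation that $\partial_tM_\tau u$ remains tangential as an $L^2$-limit of tangential fields (the subspace $L^2(\Gamma,T\Gamma)$ being closed), is also sound.
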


\begin{proof}
  First note that $M(\partial_t\varphi) \in L^2(0,T;L^2(\Gamma))$ and
  \begin{align} \label{Pf_DtA:Ineq}
    \|M(\partial_t\varphi)\|_{L^2(0,T;L^2(\Gamma))} \leq c\varepsilon^{-1/2}\|\partial_t\varphi\|_{L^2(0,T;L^2(\Omega_\varepsilon))}
  \end{align}
  by $\partial_t\varphi\in L^2(0,T;L^2(\Omega_\varepsilon))$ and \eqref{E:Ave_Lp_Surf}.
  The relations \eqref{E:ADt_Com} are formally trivial since the definition \eqref{E:Def_Ave} of $M$ is independent of time.
  To verify them rigorously we prove
  \begin{align*}
    \int_0^T\partial_t\xi(t)M\varphi(t)\,dt = -\int_0^T\xi(t)[M(\partial_t\varphi)](t)\,dt \quad\text{in}\quad L^2(\Gamma)
  \end{align*}
  for all $\xi\in C_c^\infty(0,T)$.
  Since $L^2(\Gamma)$ is a Hilbert space, this is equivalent to
  \begin{align} \label{Pf_DtA:Goal}
    \int_0^T\partial_t\xi(t)(M\varphi(t),\eta)_{L^2(\Gamma)}\,dt = -\int_0^T\xi(t)([M(\partial_t\varphi)](t),\eta)_{L^2(\Gamma)}\,dt
  \end{align}
  for all $\xi\in C_c^\infty(0,T)$ and $\eta\in L^2(\Gamma)$.
  We define a function $\check{\eta}$ on $\Omega_\varepsilon$ by
  \begin{align*}
    \check{\eta}(x) := \frac{\eta(\pi(x))}{\varepsilon g(\pi(x))J(\pi(x),d(x))}, \quad x\in\Omega_\varepsilon.
  \end{align*}
  Then $\check{\eta}\in L^2(\Omega_\varepsilon)$ by \eqref{E:G_Inf}, \eqref{E:Jac_Bound_03}, and \eqref{E:Con_Lp}.
  Also, we observe by \eqref{E:CoV_Dom} and \eqref{E:Def_Ave} that (here we use the notation \eqref{E:Pull_Dom})
  \begin{align} \label{Pf_DtA:CoV}
    (\varphi(t),\check{\eta})_{L^2(\Omega_\varepsilon)} = \int_\Gamma\left(\frac{1}{\varepsilon g}\int_{\varepsilon g_0}^{\varepsilon g_1}\varphi^\sharp(t)\,dr\right)\eta\,d\mathcal{H}^2 = (M\varphi(t),\eta)_{L^2(\Gamma)}
  \end{align}
  for a.a. $t\in(0,T)$.
  Noting that $\varphi\in H^1(0,T;L^2(\Omega_\varepsilon))$ and $\check{\eta}\in L^2(\Omega_\varepsilon)$ is independent of time, we deduce from \eqref{Pf_DtA:CoV} that
  \begin{align*}
    \int_0^T\partial_t\xi(t)(M\varphi(t),\eta)_{L^2(\Gamma)}\,dt &= \int_0^T\partial_t\xi(t)(\varphi(t),\check{\eta})_{L^2(\Omega_\varepsilon)}\,dt \\
    &= -\int_0^T\xi(t)(\partial_t\varphi(t),\check{\eta})_{L^2(\Omega_\varepsilon)}\,dt.
  \end{align*}
  Moreover, the last term is equal to the right-hand side of \eqref{Pf_DtA:Goal} by \eqref{Pf_DtA:CoV} with $\varphi(t)$ replaced by $\partial_t\varphi(t)$.
  Hence \eqref{Pf_DtA:Goal} is valid and we obtain \eqref{E:ADt_Com}.
  Also, \eqref{E:Ave_Dt} follows from \eqref{Pf_DtA:Ineq}.
  The statements for $u\in H^1(0,T;L^2(\Omega_\varepsilon)^3)$ are shown by \eqref{E:Def_Tan_Ave} and \eqref{E:AveT_Lp_Surf} in the same way.
\end{proof}

\subsection{Decomposition of a vector field into the average and residual parts} \label{SS:Ave_Decom}
In the second part \cite{Miu_NSCTD_02} of our study we established the global existence of a strong solution to \eqref{E:NS_CTD} by showing a good estimate for the trilinear term, i.e. the $L^2(\Omega_\varepsilon)$-inner product of the convection and viscous terms.
A key tool for the proof of that estimate was a good decomposition of a vector field on $\Omega_\varepsilon$.
Using the operators $E_\varepsilon$ and $M_\tau$ given by \eqref{E:Def_ExImp} and \eqref{E:Def_Tan_Ave} we decomposed a vector field on $\Omega_\varepsilon$ into the almost two-dimensional average part and the residual part, and derived good estimates for them separately.
Let us recall the definitions of the average and residual parts.

\begin{definition} \label{D:Def_ExAve}
  For a vector field $u$ on $\Omega_\varepsilon$ we define its average part by
  \begin{align} \label{E:Def_ExAve}
    u^a(x) := E_\varepsilon M_\tau u(x) = \overline{M_\tau u}(x)+\left\{\overline{M_\tau u}(x)\cdot\Psi_\varepsilon(x)\right\}\bar{n}(x), \quad x\in N,
  \end{align}
  where $\Psi_\varepsilon$ is the vector field given by \eqref{E:Def_ExAux} and $M_\tau u$ is the averaged tangential component of $u$ given by \eqref{E:Def_Tan_Ave}, and set the residual part $u^r:=u-u^a$.
\end{definition}

By Lemmas \ref{L:ExImp_Wmp}, \ref{L:AveT_Lp}, and \ref{L:AveT_Wmp} the average and residual parts belong to the same Sobolev space that an original vector field belongs to.

\begin{lemma} \label{L:Wmp_UaUr}
  There exists a constant $c>0$ independent of $\varepsilon$ such that
  \begin{align*}
    \|u^a\|_{W^{m,p}(\Omega_\varepsilon)} \leq c\|u\|_{W^{m,p}(\Omega_\varepsilon)}, \quad \|u^r\|_{W^{m,p}(\Omega_\varepsilon)} \leq c\|u\|_{W^{m,p}(\Omega_\varepsilon)}
  \end{align*}
  for all $u\in W^{m,p}(\Omega_\varepsilon)^3$ with $m=0,1,2$ and $p\in[1,\infty)$.
\end{lemma}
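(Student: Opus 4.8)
The plan is to read off both inequalities directly from the estimates for the impermeable extension operator $E_\varepsilon$ and the averaged tangential component $M_\tau$ recalled above, the point being that the powers of $\varepsilon$ occurring in those estimates cancel exactly. So there is no genuine obstacle here; the proof is a short composition of cited results, and the only thing to verify carefully is that $M_\tau u$ has the regularity and tangency needed to feed it into Lemma~\ref{L:ExImp_Wmp}.

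First I would treat $u^a = E_\varepsilon M_\tau u$. Let $u\in W^{m,p}(\Omega_\varepsilon)^3$ with $m\in\{0,1,2\}$ and $p\in[1,\infty)$. The field $M_\tau u = PMu$ is tangential on $\Gamma$ by its very definition \eqref{E:Def_Tan_Ave}, and it belongs to $W^{m,p}(\Gamma,T\Gamma)$: for $m=0$ this is Lemma~\ref{L:AveT_Lp}, and for $m=1,2$ this is Lemma~\ref{L:AveT_Wmp}, both of which moreover give
\[
  \|M_\tau u\|_{W^{m,p}(\Gamma)} \leq c\varepsilon^{-1/p}\|u\|_{W^{m,p}(\Omega_\varepsilon)}
\]
with $c$ independent of $\varepsilon$ (recall $W^{0,p}(\Gamma)=L^p(\Gamma)$). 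Hence $v := M_\tau u$ is an admissible argument for Lemma~\ref{L:ExImp_Wmp}, and applying that lemma yields
\[
  \|u^a\|_{W^{m,p}(\Omega_\varepsilon)} = \|E_\varepsilon M_\tau u\|_{W^{m,p}(\Omega_\varepsilon)} \leq c\varepsilon^{1/p}\|M_\tau u\|_{W^{m,p}(\Gamma)} \leq c\varepsilon^{1/p}\varepsilon^{-1/p}\|u\|_{W^{m,p}(\Omega_\varepsilon)}.
\]
The factors $\varepsilon^{1/p}$ (from the extension, reflecting the $O(\varepsilon)$ thickness of $\Omega_\varepsilon$) and $\varepsilon^{-1/p}$ (from the average, reflecting the division by that thickness) cancel, so the first inequality follows with a constant independent of $\varepsilon$.

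The bound for the residual part $u^r = u - u^a$ is then immediate from the triangle inequality: $\|u^r\|_{W^{m,p}(\Omega_\varepsilon)} \leq \|u\|_{W^{m,p}(\Omega_\varepsilon)} + \|u^a\|_{W^{m,p}(\Omega_\varepsilon)} \leq (1+c)\|u\|_{W^{m,p}(\Omega_\varepsilon)}$, using the estimate for $u^a$ just obtained. The whole argument is uniform over $m\in\{0,1,2\}$ once the three cited estimates (Lemmas~\ref{L:ExImp_Wmp}, \ref{L:AveT_Lp}, \ref{L:AveT_Wmp}) are invoked, so I would simply state it once with $m$ ranging over those values; the ``hard part'', such as it is, has already been absorbed into the proofs of those lemmas in \cite{Miu_NSCTD_02}.
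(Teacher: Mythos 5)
Your proof is correct and is exactly the argument the paper intends: it introduces the lemma with the phrase ``By Lemmas \ref{L:ExImp_Wmp}, \ref{L:AveT_Lp}, and \ref{L:AveT_Wmp}'', i.e.\ the cancellation of $\varepsilon^{1/p}$ against $\varepsilon^{-1/p}$ plus the triangle inequality for $u^r=u-u^a$. Your added remark that $M_\tau u$ is tangential (so that Lemma \ref{L:ExImp_Wmp} applies) is the only point needing verification, and you handle it correctly.
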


Since $u^a$ is close to a vector field on the two-dimensional surface $\Gamma$, we can show a good $L^2(\Omega_\varepsilon)$-estimate for the product of $u^a$ and a function on $\Omega_\varepsilon$ by applying the next product estimate on $\Omega_\varepsilon$ similar to a two-dimensional one.

\begin{lemma}[{\cite{Miu_NSCTD_02}*{Lemma 6.19}}] \label{L:Prod}
  There exists a constant $c>0$ independent of $\varepsilon$ such that
  \begin{align} \label{E:Prod_Surf}
    \|\bar{\eta}\varphi\|_{L^2(\Omega_\varepsilon)} &\leq c\|\eta\|_{L^2(\Gamma)}^{1/2}\|\eta\|_{H^1(\Gamma)}^{1/2}\|\varphi\|_{L^2(\Omega_\varepsilon)}^{1/2}\|\varphi\|_{H^1(\Omega_\varepsilon)}^{1/2}
  \end{align}
  for all $\eta\in H^1(\Gamma)$ and $\varphi\in H^1(\Omega_\varepsilon)$.
\end{lemma}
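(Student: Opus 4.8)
The plan is to reduce the estimate to a two-dimensional Ladyzhenskaya-type inequality via the change of variables formula \eqref{E:CoV_Dom}, after first controlling the oscillation of $\varphi$ in the normal direction. First I would use \eqref{E:CoV_Equiv} to replace $\|\bar\eta\varphi\|_{L^2(\Omega_\varepsilon)}^2$ by the weighted integral $\int_\Gamma\int_{\varepsilon g_0(y)}^{\varepsilon g_1(y)}|\eta(y)|^2|\varphi^\sharp(y,r)|^2\,dr\,d\mathcal H^2(y)$ up to a constant, using the notation \eqref{E:Pull_Dom}. Since $\eta$ does not depend on $r$, the inner integral is $|\eta(y)|^2\int_{\varepsilon g_0(y)}^{\varepsilon g_1(y)}|\varphi^\sharp(y,r)|^2\,dr$, and the task becomes to bound $\int_\Gamma|\eta(y)|^2\Phi(y)\,d\mathcal H^2(y)$ where $\Phi(y):=\int_{\varepsilon g_0(y)}^{\varepsilon g_1(y)}|\varphi^\sharp(y,r)|^2\,dr$. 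By H\"older's inequality with exponents $2$ and $2$ on $\Gamma$,
\begin{align*}
  \int_\Gamma|\eta|^2\Phi\,d\mathcal H^2 \leq \|\eta\|_{L^4(\Gamma)}^2\|\Phi\|_{L^2(\Gamma)},
\end{align*}
and then \eqref{E:La_Surf} gives $\|\eta\|_{L^4(\Gamma)}^2\leq c\|\eta\|_{L^2(\Gamma)}\|\nabla_\Gamma\eta\|_{L^2(\Gamma)}\leq c\|\eta\|_{L^2(\Gamma)}\|\eta\|_{H^1(\Gamma)}$, which produces the desired factors in $\eta$. So everything reduces to showing
\begin{align*}
  \|\Phi\|_{L^2(\Gamma)} \leq c\|\varphi\|_{L^2(\Omega_\varepsilon)}\|\varphi\|_{H^1(\Omega_\varepsilon)}.
\end{align*}

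Next I would estimate $\Phi(y)$ pointwise. Writing $M(|\varphi|^2)(y)=\frac{1}{\varepsilon g(y)}\Phi(y)$ and using a one-dimensional trace/fundamental-theorem-of-calculus argument in the $r$ variable, one has for any $r,s$ in the fiber $|\varphi^\sharp(y,r)|^2 \leq |\varphi^\sharp(y,s)|^2 + 2\int_{\varepsilon g_0(y)}^{\varepsilon g_1(y)}|\varphi^\sharp(y,\rho)||\partial_r\varphi^\sharp(y,\rho)|\,d\rho$; averaging in $s$ over the fiber of length $\varepsilon g(y)\geq c\varepsilon$ and then integrating in $r$ gives
\begin{align*}
  \Phi(y) \leq c\left(\frac{1}{\varepsilon}\int_{\varepsilon g_0(y)}^{\varepsilon g_1(y)}|\varphi^\sharp(y,r)|^2\,dr\cdot\varepsilon + \varepsilon\int_{\varepsilon g_0(y)}^{\varepsilon g_1(y)}|\varphi^\sharp||\partial_r\varphi^\sharp|\,dr\right),
\end{align*}
i.e. $\Phi(y)\leq c\int_{\varepsilon g_0(y)}^{\varepsilon g_1(y)}\bigl(|\varphi^\sharp(y,r)|^2+\varepsilon|\varphi^\sharp(y,r)||\partial_r\varphi^\sharp(y,r)|\bigr)\,dr$. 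Taking the $L^2(\Gamma)$-norm, applying Minkowski's integral inequality in $r$ and the Cauchy--Schwarz inequality on each fiber, and then converting back to integrals over $\Omega_\varepsilon$ by \eqref{E:CoV_Equiv}, one obtains $\|\Phi\|_{L^2(\Gamma)}\leq c\bigl(\|\varphi\|_{L^4(\Omega_\varepsilon)}^2 + \varepsilon\|\varphi\|_{L^4(\Omega_\varepsilon)}\|\partial_n\varphi\|_{L^4(\Omega_\varepsilon)}\bigr)$, or more usefully a bound of the form $\|\varphi\|_{L^2(\Omega_\varepsilon)}\bigl(\|\varphi\|_{L^\infty}\text{-ish terms}\bigr)$; the cleanest route is to keep it as $\|\Phi\|_{L^2(\Gamma)}\le c\|\,|\varphi|^2 + \varepsilon|\varphi||\nabla\varphi|\,\|_{L^{?}}$ and then invoke the anisotropic Sobolev/Agmon-type estimates on $\Omega_\varepsilon$ proved in \cite{Miu_NSCTD_02} (the same machinery behind the $L^\infty$-estimate for the residual part quoted in the introduction) to bound it by $c\|\varphi\|_{L^2(\Omega_\varepsilon)}\|\varphi\|_{H^1(\Omega_\varepsilon)}$.

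The main obstacle is keeping all the $\varepsilon$-powers uniform: the fiber has length of order $\varepsilon$, so averaging produces factors $\varepsilon^{-1}$ that must cancel exactly against the $\varepsilon^{1/2}$-type weights coming from \eqref{E:CoV_Equiv} and \eqref{E:Con_Lp}, and against the scaling in the one-dimensional trace inequality. I expect the cleanest bookkeeping is obtained by never leaving the surface-fiber coordinates until the very end, performing the $\eta$-H\"older split on $\Gamma$ first (which is $\varepsilon$-free), and only then handling the $\varphi$-factor $\Phi(y)$, for which the decomposition of $\varphi$ into its average $\overline{M\varphi}$ plus a residual controlled by $\varepsilon\|\partial_n\varphi\|$ via \eqref{E:Ave_Diff_Dom} is the key mechanism: the average contributes a genuinely two-dimensional term handled by \eqref{E:La_Surf} again, and the residual absorbs one derivative with a favorable power of $\varepsilon$. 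Once both pieces are in place, combining them with the $\eta$-estimate and Young's inequality yields \eqref{E:Prod_Surf}.
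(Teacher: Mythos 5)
Your opening reduction is sound: H\"older's inequality on $\Gamma$ together with \eqref{E:La_Surf} correctly produces the factor $\|\eta\|_{L^2(\Gamma)}^{1/2}\|\eta\|_{H^1(\Gamma)}^{1/2}$ and reduces the lemma to the (true) estimate $\|\Phi\|_{L^2(\Gamma)}\leq c\|\varphi\|_{L^2(\Omega_\varepsilon)}\|\varphi\|_{H^1(\Omega_\varepsilon)}$ for $\Phi(y)=\int_{\varepsilon g_0(y)}^{\varepsilon g_1(y)}|\varphi^\sharp(y,r)|^2\,dr$. The gap is in how you propose to prove that estimate. First, your displayed fiber computation is circular: averaging in $s$ and then integrating in $r$ gives $\Phi(y)\leq c\Phi(y)+c\varepsilon\int|\varphi^\sharp||\partial_r\varphi^\sharp|\,dr$, which carries no information (the $1$D Agmon bound is useful for $\sup_r|\varphi^\sharp|^2$, not after reintegration over the fiber). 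Second, your asserted bound $\|\Phi\|_{L^2(\Gamma)}\leq c\bigl(\|\varphi\|_{L^4(\Omega_\varepsilon)}^2+\varepsilon\|\varphi\|_{L^4}\|\partial_n\varphi\|_{L^4}\bigr)$ is off by the decisive power of $\varepsilon$: Cauchy--Schwarz in the fiber gives $\Phi(y)\leq(\varepsilon g(y))^{1/2}\bigl(\int|\varphi^\sharp|^4\,dr\bigr)^{1/2}$, hence $\|\Phi\|_{L^2(\Gamma)}\leq c\varepsilon^{1/2}\|\varphi\|_{L^4(\Omega_\varepsilon)}^2$, and that prefactor is indispensable because $\|\varphi\|_{L^4(\Omega_\varepsilon)}^2\leq c\|\varphi\|_{L^2}\|\varphi\|_{H^1}$ with $c$ independent of $\varepsilon$ is false (for $\varphi=\bar\xi$ the left side scales like $\varepsilon^{1/2}$ and the right like $\varepsilon$). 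What is really needed at that point is an anisotropic Ladyzhenskaya inequality $\|\varphi\|_{L^4(\Omega_\varepsilon)}^2\leq c\varepsilon^{-1/2}\|\varphi\|_{L^2(\Omega_\varepsilon)}\|\varphi\|_{H^1(\Omega_\varepsilon)}$, which you neither state nor prove and which is essentially as deep as the lemma itself; also $\|\partial_n\varphi\|_{L^4(\Omega_\varepsilon)}$ is not controlled by $\|\varphi\|_{H^1(\Omega_\varepsilon)}$, so that term cannot legitimately appear.

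Your fallback, the decomposition $\varphi=\overline{M\varphi}+(\varphi-\overline{M\varphi})$, closes the average part (two applications of \eqref{E:La_Surf} combined with \eqref{E:Con_Lp}, \eqref{E:Ave_Lp_Surf}, \eqref{E:Ave_Wmp_Surf}) but does not close the residual part: \eqref{E:Ave_Diff_Dom} gives $\|\varphi-\overline{M\varphi}\|_{L^2(\Omega_\varepsilon)}\leq c\varepsilon\|\partial_n\varphi\|_{L^2(\Omega_\varepsilon)}$, yet to bound $\|\bar\eta(\varphi-\overline{M\varphi})\|_{L^2(\Omega_\varepsilon)}$ you must still pair $|\eta|^2$ with a fiber integral of $|\varphi-\overline{M\varphi}|^2$ (or of $|\partial_n\varphi|^2$), and since $H^1(\Gamma)$ does not embed into $L^\infty(\Gamma)$ in two dimensions the gained factor $\varepsilon$ does not compensate; the exponents come out short by exactly the $\varepsilon^{1/2}$ identified above. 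The missing idea is to slice rather than average: parametrize $\Omega_\varepsilon$ by $(y,t)\mapsto y+\{\varepsilon g_0(y)+t\varepsilon g(y)\}n(y)$ with $t\in(0,1)$, apply \eqref{E:La_Surf} to $\eta$ and to $\varphi$ restricted to each slice $t=\mathrm{const}$ (whose $H^1(\Gamma)$-norm involves only tangential derivatives of $\varphi$ plus an $O(\varepsilon)$ multiple of $\partial_n\varphi$, hence is controlled by $|\nabla\varphi|$ on the slice with constants uniform in $\varepsilon$ thanks to \eqref{E:Wein_Bound}), and then integrate in $t$ using Cauchy--Schwarz; the Jacobian $\varepsilon g(y)$ supplies precisely the power of $\varepsilon$ that converts the slice norms back into $\|\varphi\|_{L^2(\Omega_\varepsilon)}\|\varphi\|_{H^1(\Omega_\varepsilon)}$, and no decomposition of $\varphi$ is needed.
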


\begin{lemma}[{\cite{Miu_NSCTD_02}*{Lemma 6.20}}] \label{L:Prod_Ua}
  For $\varphi\in H^1(\Omega_\varepsilon)$, $u\in H^1(\Omega_\varepsilon)^3$, and $u^a$ given by \eqref{E:Def_ExAve} we have
  \begin{align} \label{E:Prod_Ua}
    \bigl\|\,|u^a|\,\varphi\bigr\|_{L^2(\Omega_\varepsilon)} &\leq c\varepsilon^{-1/2}\|\varphi\|_{L^2(\Omega_\varepsilon)}^{1/2}\|\varphi\|_{H^1(\Omega_\varepsilon)}^{1/2}\|u\|_{L^2(\Omega_\varepsilon)}^{1/2}\|u\|_{H^1(\Omega_\varepsilon)}^{1/2}
  \end{align}
  with a constant $c>0$ independent of $\varepsilon$, $\varphi$, and $u$.
\end{lemma}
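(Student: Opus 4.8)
The plan is to reduce the estimate for $|u^a|\,\varphi$ to the product estimate on the surface, Lemma \ref{L:Prod}, applied to the tangential vector field $M_\tau u$ on $\Gamma$. First I would observe from the definition \eqref{E:Def_ExAve} that $u^a = E_\varepsilon M_\tau u = \overline{M_\tau u} + (\overline{M_\tau u}\cdot\Psi_\varepsilon)\bar{n}$, and since $|\Psi_\varepsilon|\leq c\varepsilon\leq c$ in $\Omega_\varepsilon$ by \eqref{E:Tau_Bound} and the bounds on $d(x)-\varepsilon\bar{g}_0(x)$ and $\varepsilon\bar{g}_1(x)-d(x)$, we get the pointwise bound $|u^a| \leq c\,|\overline{M_\tau u}|$ in $\Omega_\varepsilon$. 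Hence
\begin{align*}
  \bigl\|\,|u^a|\,\varphi\bigr\|_{L^2(\Omega_\varepsilon)} \leq c\bigl\|\,|\overline{M_\tau u}|\,\varphi\bigr\|_{L^2(\Omega_\varepsilon)}.
\end{align*}

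Next I would apply Lemma \ref{L:Prod} componentwise with $\eta = (M_\tau u)_j \in H^1(\Gamma)$ for $j=1,2,3$: since $|\overline{M_\tau u}| = \overline{|M_\tau u|}$ and $|M_\tau u|^2 = \sum_j|(M_\tau u)_j|^2$, summing the three estimates $\|\overline{(M_\tau u)_j}\,\varphi\|_{L^2(\Omega_\varepsilon)} \leq c\|(M_\tau u)_j\|_{L^2(\Gamma)}^{1/2}\|(M_\tau u)_j\|_{H^1(\Gamma)}^{1/2}\|\varphi\|_{L^2(\Omega_\varepsilon)}^{1/2}\|\varphi\|_{H^1(\Omega_\varepsilon)}^{1/2}$ yields
\begin{align*}
  \bigl\|\,|\overline{M_\tau u}|\,\varphi\bigr\|_{L^2(\Omega_\varepsilon)} \leq c\|M_\tau u\|_{L^2(\Gamma)}^{1/2}\|M_\tau u\|_{H^1(\Gamma)}^{1/2}\|\varphi\|_{L^2(\Omega_\varepsilon)}^{1/2}\|\varphi\|_{H^1(\Omega_\varepsilon)}^{1/2}.
\end{align*}

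Finally I would insert the uniform estimates for the average operator. By Lemma \ref{L:AveT_Lp} with $p=2$ we have $\|M_\tau u\|_{L^2(\Gamma)} \leq c\varepsilon^{-1/2}\|u\|_{L^2(\Omega_\varepsilon)}$, and by Lemma \ref{L:AveT_Wmp} with $m=1$, $p=2$ we have $\|M_\tau u\|_{H^1(\Gamma)} \leq c\varepsilon^{-1/2}\|u\|_{H^1(\Omega_\varepsilon)}$. Substituting these into the previous display gives
\begin{align*}
  \|M_\tau u\|_{L^2(\Gamma)}^{1/2}\|M_\tau u\|_{H^1(\Gamma)}^{1/2} \leq c\varepsilon^{-1/2}\|u\|_{L^2(\Omega_\varepsilon)}^{1/2}\|u\|_{H^1(\Omega_\varepsilon)}^{1/2},
\end{align*}
and combining all the inequalities yields \eqref{E:Prod_Ua}.

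I do not anticipate a serious obstacle here, since every ingredient is already available from earlier lemmas; the only care needed is in the componentwise application of Lemma \ref{L:Prod} and the elementary pointwise bound $|u^a|\leq c\,|\overline{M_\tau u}|$. The mildly delicate point is making sure the constant in the pointwise bound for $|\Psi_\varepsilon|$ is independent of $\varepsilon$, which follows from \eqref{E:Tau_Bound} together with the fact that $0\leq d(x)-\varepsilon\bar{g}_0(x)\leq\varepsilon\bar{g}(x)$ and $0\leq\varepsilon\bar{g}_1(x)-d(x)\leq\varepsilon\bar{g}(x)$ for $x\in\Omega_\varepsilon$, so that the coefficients in \eqref{E:Def_ExAux} are bounded; this is exactly the observation already used in the proof of Lemma \ref{L:ExImp_DiLp}.
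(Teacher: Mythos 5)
Your proposal is correct and follows the route the paper clearly intends: the pointwise bound $|u^a|\leq c\,|\overline{M_\tau u}|$ from the boundedness of $\Psi_\varepsilon$, the componentwise application of the product estimate of Lemma \ref{L:Prod} (which is the immediately preceding Lemma 6.19 of the cited Part II), and the uniform bounds $\|M_\tau u\|_{L^2(\Gamma)}\leq c\varepsilon^{-1/2}\|u\|_{L^2(\Omega_\varepsilon)}$ and $\|M_\tau u\|_{H^1(\Gamma)}\leq c\varepsilon^{-1/2}\|u\|_{H^1(\Omega_\varepsilon)}$ from Lemmas \ref{L:AveT_Lp} and \ref{L:AveT_Wmp}. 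All steps are justified by results already stated in the paper, and the Cauchy--Schwarz summation over components correctly reassembles the full norms, so no gap remains.
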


For the residual part $u^r$ the following $L^\infty(\Omega_\varepsilon)$-estimate holds as a consequence of an anisotropic Agmon inequality on $\Omega_\varepsilon$ (see \cite{Miu_NSCTD_02}*{Lemma 4.3}) and Poincar\'{e} type inequalities for $u^r$ and $\nabla u^r$ (see \cite{Miu_NSCTD_02}*{Lemmas 6.21 and 6.22}).

\begin{lemma}[{\cite{Miu_NSCTD_02}*{Lemma 6.23}}] \label{L:Linf_Ur}
  Suppose that the inequalities \eqref{E:Fric_Upper} are valid and $u\in H^2(\Omega_\varepsilon)^3$ satisfies $\mathrm{div}\,u=0$ in $\Omega_\varepsilon$ and \eqref{E:Bo_Slip}.
  Then
  \begin{align} \label{E:Linf_Ur}
    \|u^r\|_{L^\infty(\Omega_\varepsilon)} \leq c\left(\varepsilon^{1/2}\|u\|_{H^2(\Omega_\varepsilon)}+\|u\|_{L^2(\Omega_\varepsilon)}^{1/2}\|u\|_{H^2(\Omega_\varepsilon)}^{1/2}\right)
  \end{align}
  for $u^r=u-u^a$, where $c>0$ is a constant independent of $\varepsilon$ and $u$.
\end{lemma}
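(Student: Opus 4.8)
The plan is to combine, in the spirit of \cite{Miu_NSCTD_02}, an \emph{anisotropic Agmon inequality} on $\Omega_\varepsilon$ with two Poincar\'e-type estimates for the residual part. The anisotropic Agmon inequality (\cite{Miu_NSCTD_02}*{Lemma 4.3}) exploits that $\Omega_\varepsilon$ is thin of order $\varepsilon$ in the normal direction of $\Gamma$: it bounds $\|w\|_{L^\infty(\Omega_\varepsilon)}$ for $w\in H^2(\Omega_\varepsilon)^3$ by a product of an $L^2$-type quantity built from $w$ and $\partial_n w$ and an $H^2$-type quantity, with explicit and favorable powers of $\varepsilon$ in front; it is the anisotropic refinement of the three-dimensional Agmon inequality $\|w\|_{L^\infty}^2\le c\|w\|_{L^2}\|w\|_{H^2}$, whose constant would otherwise blow up as $\varepsilon\to0$. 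I would apply it with $w=u^r=u-u^a$.

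The two Poincar\'e-type inequalities (\cite{Miu_NSCTD_02}*{Lemmas 6.21 and 6.22}) rest on two structural properties of $u^r$, where $u^a=E_\varepsilon M_\tau u$. First, $\overline{P}u^a=\overline{M_\tau u}$ pointwise in $\Omega_\varepsilon$ — because $\overline{P}\,\overline{M_\tau u}=\overline{M_\tau u}$ and $\overline{P}\,\bar n=0$ — so the averaged tangential component of the residual vanishes, $M_\tau u^r=0$; via a one-dimensional Poincar\'e inequality in the normal direction this controls the tangential part of $u^r$ by $\varepsilon\|\partial_n u^r\|_{L^2(\Omega_\varepsilon)}$. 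Second, $u^r\cdot n_\varepsilon=0$ on $\Gamma_\varepsilon$, since $u$ satisfies the impermeable condition $u\cdot n_\varepsilon=0$ in \eqref{E:Bo_Slip} and $u^a$ does so by Lemma \ref{L:ExImp_Bo}; Lemma \ref{L:Poin_Nor} then bounds the normal component of $u^r$. Together these give $\|u^r\|_{L^2(\Omega_\varepsilon)}\le c\varepsilon\|u\|_{H^1(\Omega_\varepsilon)}$. The companion estimate for $\nabla u^r$ additionally uses the second slip condition in \eqref{E:Bo_Slip} via Lemma \ref{L:PDnU_WU}, which pins $\overline{P}\partial_n u$ to $-\overline{W}u$ up to an error of order $\varepsilon\|u\|_{H^2(\Omega_\varepsilon)}$, together with Lemma \ref{L:DnU_N_Ave} for $\partial_n u\cdot\bar n$, so that $\partial_n u^r=\partial_n u-\partial_n u^a$ is expressed through these surface quantities of $\Gamma$ with the right $\varepsilon$-weights. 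Substituting these Poincar\'e bounds, along with $\|u^r\|_{H^2(\Omega_\varepsilon)}\le c\|u\|_{H^2(\Omega_\varepsilon)}$ from Lemma \ref{L:Wmp_UaUr}, into the anisotropic Agmon inequality applied to $u^r$, collecting the powers of $\varepsilon$, and closing with Young's inequality then yields \eqref{E:Linf_Ur}.

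The main obstacle is the Poincar\'e-type control of $\nabla u^r$. Unlike $u^r$ itself, $\nabla u^r$ is \emph{not} small: by the slip condition the tangential part $\overline{P}\partial_n u^r$ equals $-\overline{W}u$ up to a term of order $\varepsilon$, while the normal part $\partial_n u^r\cdot\bar n$ is of order $\varepsilon$ only because its leading term $\frac{1}{\bar g}\,\overline{M_\tau u}\cdot\overline{\nabla_\Gamma g}$ has already been built into $u^a$. Hence the estimate cannot claim smallness of $\nabla u^r$; it must instead be arranged so that precisely the combination of $u^r$, $\partial_n u^r$, and $\nabla^2 u^r$ entering the anisotropic Agmon inequality carries the correct power of $\varepsilon$, and making this work while keeping every constant independent of $\varepsilon$ requires the careful analysis of the surface quantities of $\Gamma_\varepsilon$ carried out in \cite{Miu_NSCTD_01}.
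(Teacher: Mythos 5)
Your proposal follows exactly the route the paper indicates for this lemma (which it quotes from \cite{Miu_NSCTD_02} without reproducing the proof): the anisotropic Agmon inequality of \cite{Miu_NSCTD_02}*{Lemma 4.3} applied to $u^r$, combined with the Poincar\'e-type bounds on $u^r$ and $\nabla u^r$ from \cite{Miu_NSCTD_02}*{Lemmas 6.21 and 6.22}, whose structural inputs ($M_\tau u^r=0$, $u^r\cdot n_\varepsilon=0$ on $\Gamma_\varepsilon$, and the slip-condition control of $\overline{P}\partial_n u$ and $\partial_n u\cdot\bar n$) you identify correctly. This is essentially the same argument, at the same level of detail as the paper's citation-based treatment.
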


Using \eqref{E:Prod_Ua} and \eqref{E:Linf_Ur} we estimate the $L^2(\Omega_\varepsilon)$-norms of $u\otimes u$ and $(u\cdot\nabla)u$.

\begin{lemma} \label{L:Est_UU}
  Suppose that the inequalities \eqref{E:Fric_Upper} are valid and $u\in H^2(\Omega_\varepsilon)^3$ satisfies $\mathrm{div}\,u=0$ in $\Omega_\varepsilon$ and \eqref{E:Bo_Slip}.
  Then
  \begin{multline} \label{E:Est_UU}
    \|u\otimes u\|_{L^2(\Omega_\varepsilon)} \leq c\left(\varepsilon^{-1/2}\|u\|_{L^2(\Omega_\varepsilon)}\|u\|_{H^1(\Omega_\varepsilon)}\right. \\
    \left.+\varepsilon^{1/2}\|u\|_{L^2(\Omega_\varepsilon)}\|u\|_{H^2(\Omega_\varepsilon)}+\|u\|_{L^2(\Omega_\varepsilon)}^{3/2}\|u\|_{H^2(\Omega_\varepsilon)}^{1/2}\right)
  \end{multline}
  and
  \begin{multline} \label{E:Est_UGU}
    \|(u\cdot\nabla)u\|_{L^2(\Omega_\varepsilon)} \leq c\left(\varepsilon^{-1/2}\|u\|_{L^2(\Omega_\varepsilon)}^{1/2}\|u\|_{H^1(\Omega_\varepsilon)}\|u\|_{H^2(\Omega_\varepsilon)}^{1/2}\right. \\
    \left.+\varepsilon^{1/2}\|u\|_{H^1(\Omega_\varepsilon)}\|u\|_{H^2(\Omega_\varepsilon)}\right)
  \end{multline}
  with a constant $c>0$ independent of $\varepsilon$ and $u$.
\end{lemma}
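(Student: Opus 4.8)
\textbf{Proof plan for Lemma \ref{L:Est_UU}.}
The plan is to use the decomposition $u=u^a+u^r$ from Definition \ref{D:Def_ExAve} to split each quadratic quantity into four pieces and estimate each one. For the tensor product $u\otimes u$, writing $|u\otimes u|\le c|u||u|$ pointwise, I would bound $\||u||u|\|_{L^2(\Omega_\varepsilon)}$ by the triangle inequality via $\||u^a|\,|u|\|_{L^2(\Omega_\varepsilon)}+\||u^r|\,|u|\|_{L^2(\Omega_\varepsilon)}$. For the first term I apply the product estimate \eqref{E:Prod_Ua} from Lemma \ref{L:Prod_Ua} with $\varphi=|u|$ (more precisely one works componentwise so that $\varphi\in H^1(\Omega_\varepsilon)$), which gives
\begin{align*}
  \bigl\|\,|u^a|\,|u|\,\bigr\|_{L^2(\Omega_\varepsilon)}
  \le c\varepsilon^{-1/2}\|u\|_{L^2(\Omega_\varepsilon)}\|u\|_{H^1(\Omega_\varepsilon)},
\end{align*}
the first term on the right-hand side of \eqref{E:Est_UU}. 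For the second term I use $\||u^r|\,|u|\|_{L^2(\Omega_\varepsilon)}\le\|u^r\|_{L^\infty(\Omega_\varepsilon)}\|u\|_{L^2(\Omega_\varepsilon)}$ and then invoke the $L^\infty$-estimate \eqref{E:Linf_Ur} from Lemma \ref{L:Linf_Ur}, which produces exactly the two remaining terms $\varepsilon^{1/2}\|u\|_{L^2(\Omega_\varepsilon)}\|u\|_{H^2(\Omega_\varepsilon)}$ and $\|u\|_{L^2(\Omega_\varepsilon)}^{3/2}\|u\|_{H^2(\Omega_\varepsilon)}^{1/2}$. This settles \eqref{E:Est_UU}.

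For the convection term $(u\cdot\nabla)u$ I again decompose the first factor: $|(u\cdot\nabla)u|\le|u^a||\nabla u|+|u^r||\nabla u|$ pointwise. For the $u^a$-part I apply \eqref{E:Prod_Ua} with $\varphi$ taken (componentwise) to be an entry of $\nabla u$, obtaining
\begin{align*}
  \bigl\|\,|u^a|\,|\nabla u|\,\bigr\|_{L^2(\Omega_\varepsilon)}
  \le c\varepsilon^{-1/2}\|\nabla u\|_{L^2(\Omega_\varepsilon)}^{1/2}\|\nabla u\|_{H^1(\Omega_\varepsilon)}^{1/2}\|u\|_{L^2(\Omega_\varepsilon)}^{1/2}\|u\|_{H^1(\Omega_\varepsilon)}^{1/2}.
\end{align*}
Here $\|\nabla u\|_{L^2(\Omega_\varepsilon)}\le\|u\|_{H^1(\Omega_\varepsilon)}$ and $\|\nabla u\|_{H^1(\Omega_\varepsilon)}\le\|u\|_{H^2(\Omega_\varepsilon)}$, which gives the bound $c\varepsilon^{-1/2}\|u\|_{L^2(\Omega_\varepsilon)}^{1/2}\|u\|_{H^1(\Omega_\varepsilon)}\|u\|_{H^2(\Omega_\varepsilon)}^{1/2}$, the first term in \eqref{E:Est_UGU}. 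For the $u^r$-part I estimate $\||u^r|\,|\nabla u|\|_{L^2(\Omega_\varepsilon)}\le\|u^r\|_{L^\infty(\Omega_\varepsilon)}\|\nabla u\|_{L^2(\Omega_\varepsilon)}\le\|u^r\|_{L^\infty(\Omega_\varepsilon)}\|u\|_{H^1(\Omega_\varepsilon)}$ and apply \eqref{E:Linf_Ur} once more, which yields $c(\varepsilon^{1/2}\|u\|_{H^2(\Omega_\varepsilon)}+\|u\|_{L^2(\Omega_\varepsilon)}^{1/2}\|u\|_{H^2(\Omega_\varepsilon)}^{1/2})\|u\|_{H^1(\Omega_\varepsilon)}$; the first summand is the second term on the right of \eqref{E:Est_UGU}, and the second summand, namely $\|u\|_{L^2(\Omega_\varepsilon)}^{1/2}\|u\|_{H^1(\Omega_\varepsilon)}\|u\|_{H^2(\Omega_\varepsilon)}^{1/2}$, is $\varepsilon^{1/2}$ times the already-present first term after using $\varepsilon\in(0,1]$, i.e.\ it is absorbed into the $\varepsilon^{-1/2}$-term (or kept separately — it is dominated by it). This completes \eqref{E:Est_UGU}.

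The only mild technical point — and the step I expect to require the most care in the write-up rather than in the ideas — is the componentwise application of Lemma \ref{L:Prod_Ua}: the estimate \eqref{E:Prod_Ua} is stated for a scalar $\varphi\in H^1(\Omega_\varepsilon)$, so to bound $\||u^a|\,|u|\|_{L^2(\Omega_\varepsilon)}$ or $\||u^a|\,|\nabla u|\|_{L^2(\Omega_\varepsilon)}$ one writes $|u|^2=\sum_k|u_k|^2$, applies \eqref{E:Prod_Ua} to each $\varphi=u_k$ (resp.\ $\varphi=\underline{D}_iu_k$ — which lies in $H^1(\Omega_\varepsilon)$ when $u\in H^2(\Omega_\varepsilon)^3$), sums, and uses $\|u_k\|_{L^2(\Omega_\varepsilon)}\le\|u\|_{L^2(\Omega_\varepsilon)}$, $\|u_k\|_{H^1(\Omega_\varepsilon)}\le\|u\|_{H^1(\Omega_\varepsilon)}$, etc. The boundary-condition hypotheses on $u$ enter solely to legitimize Lemma \ref{L:Linf_Ur} (via $\mathrm{div}\,u=0$ and \eqref{E:Bo_Slip}), and Lemma \ref{L:Wmp_UaUr} guarantees all the $W^{m,p}$-norms of $u^a$ and $u^r$ invoked are finite. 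No subtler obstacle arises; the lemma is essentially a bookkeeping combination of \eqref{E:Prod_Ua} and \eqref{E:Linf_Ur}.
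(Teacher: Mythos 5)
Your proposal is correct and follows essentially the same route as the paper: decompose $u=u^a+u^r$, apply the product estimate \eqref{E:Prod_Ua} to the $u^a$-parts (with $\varphi$ ranging over the components of $u$ or $\nabla u$), apply the $L^\infty$-estimate \eqref{E:Linf_Ur} to the $u^r$-parts, and absorb the extra term in \eqref{E:Est_UGU} using $1\leq\varepsilon^{-1/2}$. The paper's proof is just a terser version of the same bookkeeping.
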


\begin{proof}
  By the assumptions of the lemma we can use \eqref{E:Prod_Ua} and \eqref{E:Linf_Ur} to get
  \begin{align*}
    \|u^a\otimes u\|_{L^2(\Omega_\varepsilon)} &\leq c\varepsilon^{-1/2}\|u\|_{L^2(\Omega_\varepsilon)}\|u\|_{H^1(\Omega_\varepsilon)}, \\
    \|u^r\otimes u\|_{L^2(\Omega_\varepsilon)} &\leq \|u^r\|_{L^\infty(\Omega_\varepsilon)}\|u\|_{L^2(\Omega_\varepsilon)} \\
    &\leq c\left(\varepsilon^{1/2}\|u\|_{H^2(\Omega_\varepsilon)}+\|u\|_{L^2(\Omega_\varepsilon)}^{1/2}\|u\|_{H^2(\Omega_\varepsilon)}^{1/2}\right)\|u\|_{L^2(\Omega_\varepsilon)}.
  \end{align*}
  Applying these inequalities to the right-hand side of
  \begin{align*}
    \|u\otimes u\|_{L^2(\Omega_\varepsilon)} \leq \|u^a\otimes u\|_{L^2(\Omega_\varepsilon)}+\|u^r\otimes u\|_{L^2(\Omega_\varepsilon)}
  \end{align*}
  we obtain \eqref{E:Est_UU}.
  From \eqref{E:Prod_Ua} and \eqref{E:Linf_Ur} we also deduce that
  \begin{align*}
    \|(u^a\cdot\nabla)u\|_{L^2(\Omega_\varepsilon)} &\leq c\varepsilon^{-1/2}\|u\|_{L^2(\Omega_\varepsilon)}^{1/2}\|u\|_{H^1(\Omega_\varepsilon)}\|u\|_{H^2(\Omega_\varepsilon)}^{1/2}, \\
    \|(u^r\cdot\nabla)u\|_{L^2(\Omega_\varepsilon)} & \leq \|u^r\|_{L^\infty(\Omega_\varepsilon)}\|u\|_{H^1(\Omega_\varepsilon)} \\
    &\leq c\left(\|u\|_{L^2(\Omega_\varepsilon)}^{1/2}\|u\|_{H^2(\Omega_\varepsilon)}^{1/2}+\varepsilon^{1/2}\|u\|_{H^2(\Omega_\varepsilon)}\right)\|u\|_{H^1(\Omega_\varepsilon)}.
  \end{align*}
  Noting that $1\leq\varepsilon^{-1/2}$ by $\varepsilon\leq1$, we apply these estimates to
  \begin{align*}
    \|(u\cdot\nabla)u\|_{L^2(\Omega_\varepsilon)} \leq \|(u^a\cdot\nabla)u\|_{L^2(\Omega_\varepsilon)}+\|(u^r\cdot\nabla)u\|_{L^2(\Omega_\varepsilon)}
  \end{align*}
  to get \eqref{E:Est_UGU}.
\end{proof}

\subsection{Average of bilinear and trilinear forms} \label{SS:Ave_BT}
In Section \ref{S:SL} we transform a weak formulation of the bulk equations \eqref{E:NS_CTD} into that of the limit equations \eqref{E:NS_Limit} with a residual term satisfied by the averaged tangential component of a strong solution to \eqref{E:NS_CTD}.
To this end, we consider approximation of bilinear and trilinear forms for functions on $\Omega_\varepsilon$ by those for functions on $\Gamma$ and the average operators.

In what follows, we use the notations \eqref{E:Pull_Dom} and \eqref{E:Pull_Bo} for functions on $\Omega_\varepsilon$ and $\Gamma_\varepsilon^i$, $i=0,1$ and suppress the arguments of functions.

First we deal with the $L^2$-inner products on $\Omega_\varepsilon$ and $\Gamma_\varepsilon^i$, $i=0,1$.

\begin{lemma} \label{L:Ave_BiL2_Dom}
  There exists a constant $c>0$ independent of $\varepsilon$ such that
  \begin{align} \label{E:Ave_BiL2_Dom}
    \left|\int_{\Omega_\varepsilon}\varphi\bar{\eta}\,dx-\varepsilon\int_\Gamma g(M\varphi)\eta\,d\mathcal{H}^2\right| \leq c\varepsilon^{3/2}\|\varphi\|_{L^2(\Omega_\varepsilon)}\|\eta\|_{L^2(\Gamma)}
  \end{align}
  for all $\varphi\in L^2(\Omega_\varepsilon)$ and $\eta\in L^2(\Gamma)$.
\end{lemma}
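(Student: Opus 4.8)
The statement is a quantitative comparison of the bulk $L^2$-pairing $\int_{\Omega_\varepsilon}\varphi\bar\eta\,dx$ against the surface pairing $\varepsilon\int_\Gamma g(M\varphi)\eta\,d\mathcal H^2$, with an $O(\varepsilon^{3/2})$ error. The natural tool is the change of variables formula \eqref{E:CoV_Dom}: writing $x=y+rn(y)$ and using that $\bar\eta(y+rn(y))=\eta(y)$ for all $r$ (since $\bar\eta=\eta\circ\pi$ is constant in the normal direction), we get
\begin{align*}
  \int_{\Omega_\varepsilon}\varphi\bar\eta\,dx = \int_\Gamma\int_{\varepsilon g_0(y)}^{\varepsilon g_1(y)}\varphi^\sharp(y,r)\eta(y)J(y,r)\,dr\,d\mathcal H^2(y).
\end{align*}
On the other hand, by the definition \eqref{E:Def_Ave} of $M$,
\begin{align*}
  \varepsilon\int_\Gamma g(M\varphi)\eta\,d\mathcal H^2 = \int_\Gamma\int_{\varepsilon g_0(y)}^{\varepsilon g_1(y)}\varphi^\sharp(y,r)\eta(y)\,dr\,d\mathcal H^2(y),
\end{align*}
because $\varepsilon g(y)\cdot M\varphi(y)=\int_{\varepsilon g_0(y)}^{\varepsilon g_1(y)}\varphi^\sharp(y,r)\,dr$.

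Subtracting the two displays, the difference equals $\int_\Gamma\int_{\varepsilon g_0(y)}^{\varepsilon g_1(y)}\varphi^\sharp(y,r)\eta(y)\bigl(J(y,r)-1\bigr)\,dr\,d\mathcal H^2(y)$. The remaining work is a routine estimate: by \eqref{E:Jac_Diff_03} we have $|J(y,r)-1|\le c\varepsilon$ uniformly for $r\in[\varepsilon g_0(y),\varepsilon g_1(y)]$, so the integrand is bounded by $c\varepsilon|\varphi^\sharp(y,r)||\eta(y)|$. Applying the Cauchy--Schwarz inequality in the combined variable $(y,r)$ over $\{(y,r):y\in\Gamma,\ \varepsilon g_0(y)<r<\varepsilon g_1(y)\}$, the $\varphi^\sharp$-factor is controlled via \eqref{E:CoV_Equiv} by $c\|\varphi\|_{L^2(\Omega_\varepsilon)}$, while the $\eta$-factor gives $\left(\int_\Gamma\int_{\varepsilon g_0(y)}^{\varepsilon g_1(y)}|\eta(y)|^2\,dr\,d\mathcal H^2(y)\right)^{1/2}=\left(\int_\Gamma \varepsilon g(y)|\eta(y)|^2\,d\mathcal H^2(y)\right)^{1/2}\le c\varepsilon^{1/2}\|\eta\|_{L^2(\Gamma)}$ using \eqref{E:G_Inf} (boundedness of $g$ from above). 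Multiplying the three factors $\varepsilon\cdot\|\varphi\|_{L^2(\Omega_\varepsilon)}\cdot\varepsilon^{1/2}\|\eta\|_{L^2(\Gamma)}$ yields exactly the claimed bound $c\varepsilon^{3/2}\|\varphi\|_{L^2(\Omega_\varepsilon)}\|\eta\|_{L^2(\Gamma)}$.

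There is essentially no obstacle here; the only points requiring a little care are (i) ensuring the change of variables \eqref{E:CoV_Dom} applies, which it does because $\overline\Omega_\varepsilon\subset N$ for all $\varepsilon\in(0,1]$ as arranged in Section \ref{SS:Pre_CTD}, and (ii) the bookkeeping of the factor $\varepsilon^{1/2}$ coming from integrating the $r$-independent quantity $|\eta(y)|^2$ over an interval of length $\varepsilon g(y)$. For $\varphi\in L^2(\Omega_\varepsilon)$ and $\eta\in L^2(\Gamma)$ only, all manipulations are at the level of absolutely convergent integrals, so no density argument is needed; if one prefers, one may first assume $\varphi,\eta$ continuous and pass to the limit using \eqref{E:CoV_Equiv} and \eqref{E:Con_Lp}, but this is not necessary.
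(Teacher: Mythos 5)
Your proposal is correct and follows essentially the same route as the paper: reduce the difference to $\int_\Gamma\int_{\varepsilon g_0}^{\varepsilon g_1}\varphi^\sharp\eta(J-1)\,dr\,d\mathcal{H}^2$ via \eqref{E:CoV_Dom} and \eqref{E:Def_Ave}, bound $|J-1|\le c\varepsilon$ by \eqref{E:Jac_Diff_03}, and finish with Cauchy--Schwarz. The only cosmetic difference is that the paper applies H\"older after converting back to an integral over $\Omega_\varepsilon$ using \eqref{E:CoV_Equiv} and then invokes \eqref{E:Con_Lp} for $\|\bar\eta\|_{L^2(\Omega_\varepsilon)}$, whereas you apply Cauchy--Schwarz directly in the $(y,r)$ variables; these are equivalent.
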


\begin{proof}
  By the formula \eqref{E:CoV_Dom} and the definition \eqref{E:Def_Ave} of $M$,
  \begin{align*}
    \int_{\Omega_\varepsilon}\varphi\bar{\eta}\,dx-\varepsilon\int_\Gamma g(M\varphi)\eta\,d\mathcal{H}^2 = \int_\Gamma\int_{\varepsilon g_0}^{\varepsilon g_1}\varphi^\sharp\eta(J-1)\,dr\,d\mathcal{H}^2.
  \end{align*}
  Moreover, by \eqref{E:Jac_Diff_03}, \eqref{E:CoV_Equiv}, H\"{o}lder's inequality, and \eqref{E:Con_Lp} we have
  \begin{align*}
    \left|\int_\Gamma\int_{\varepsilon g_0}^{\varepsilon g_1}\varphi^\sharp\eta(J-1)\,dr\,d\mathcal{H}^2\right| &\leq c\varepsilon \int_\Gamma\int_{\varepsilon g_0}^{\varepsilon g_1}|\varphi^\sharp||\eta|\,dr\,d\mathcal{H}^2 \leq c\varepsilon\int_{\Omega_\varepsilon}|\varphi||\bar{\eta}|\,dx \\
    &\leq c\varepsilon\|\varphi\|_{L^2(\Omega_\varepsilon)}\|\bar{\eta}\|_{L^2(\Omega_\varepsilon)} \\
    &\leq c\varepsilon^{3/2}\|\varphi\|_{L^2(\Omega_\varepsilon)}\|\eta\|_{L^2(\Gamma)}.
  \end{align*}
  Thus \eqref{E:Ave_BiL2_Dom} follows.
\end{proof}

\begin{lemma} \label{L:Ave_BiL2_Bo}
  There exists a constant $c>0$ independent of $\varepsilon$ such that
  \begin{align} \label{E:Ave_BiL2_Bo}
    \left|\int_{\Gamma_\varepsilon^i}\varphi\bar{\eta}\,d\mathcal{H}^2-\int_\Gamma(M\varphi)\eta\,d\mathcal{H}^2\right| \leq \varepsilon^{1/2}\|\varphi\|_{H^1(\Omega_\varepsilon)}\|\eta\|_{L^2(\Gamma)}, \quad i=0,1
  \end{align}
  for all $\varphi\in H^1(\Omega_\varepsilon)$ and $\eta\in L^2(\Gamma)$.
\end{lemma}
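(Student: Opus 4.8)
The plan is to compare the integral over $\Gamma_\varepsilon^i$ with the corresponding average over $\Gamma$ by inserting an intermediate quantity, namely the trace $\varphi_i^\sharp$ of $\varphi$ on $\Gamma_\varepsilon^i$ pulled back to $\Gamma$. First I would apply the change of variables formula \eqref{E:CoV_Surf} to rewrite $\int_{\Gamma_\varepsilon^i}\varphi\bar{\eta}\,d\mathcal{H}^2$ as an integral over $\Gamma$ of $\varphi_i^\sharp\,\eta$ times the Jacobian factor $J(y,\varepsilon g_i(y))\sqrt{1+\varepsilon^2|\tau_\varepsilon^i(y)|^2}$ (note that $\bar{\eta}$ restricted to $\Gamma_\varepsilon^i$ pulls back to $\eta$ since $\bar{\eta}=\eta\circ\pi$ is constant in the normal direction). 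Then the difference splits as
\[
  \int_\Gamma\varphi_i^\sharp\,\eta\left(J(y,\varepsilon g_i)\sqrt{1+\varepsilon^2|\tau_\varepsilon^i|^2}-1\right)d\mathcal{H}^2
  \;+\;\int_\Gamma\left(\varphi_i^\sharp-M\varphi\right)\eta\,d\mathcal{H}^2.
\]

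For the first term, I would use \eqref{E:Jac_Diff_03} (so $|J(y,\varepsilon g_i)-1|\leq c\varepsilon$) together with \eqref{E:Tau_Bound} (so $\varepsilon^2|\tau_\varepsilon^i|^2\leq c\varepsilon^2$) to bound the Jacobian factor minus $1$ by $c\varepsilon$ uniformly on $\Gamma$; then by H\"older's inequality and the trace estimate \eqref{E:Lp_CoV_Surf} the first term is $\leq c\varepsilon\|\varphi_i^\sharp\|_{L^2(\Gamma)}\|\eta\|_{L^2(\Gamma)}\leq c\varepsilon\|\varphi\|_{L^2(\Gamma_\varepsilon^i)}\|\eta\|_{L^2(\Gamma)}$, and a further trace inequality controlling $\|\varphi\|_{L^2(\Gamma_\varepsilon^i)}$ by $\|\varphi\|_{H^1(\Omega_\varepsilon)}$ (for instance \eqref{E:Poin_Bo} with $p=2$, giving $\|\varphi\|_{L^2(\Gamma_\varepsilon^i)}\leq c\varepsilon^{-1/2}\|\varphi\|_{L^2(\Omega_\varepsilon)}+c\varepsilon^{1/2}\|\partial_n\varphi\|_{L^2(\Omega_\varepsilon)}$) would convert the $\varepsilon$ prefactor into $\varepsilon^{1/2}$ times $\|\varphi\|_{H^1(\Omega_\varepsilon)}$, which is within the claimed bound. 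The second term is exactly $\int_\Gamma(\varphi_i^\sharp-M\varphi)\eta\,d\mathcal{H}^2$, and the function $\varphi_i^\sharp-M\varphi$ is the pullback to $\Gamma$ of $\varphi|_{\Gamma_\varepsilon^i}-\overline{M\varphi}|_{\Gamma_\varepsilon^i}$ (again using that $\overline{M\varphi}$ is normally constant so its trace on $\Gamma_\varepsilon^i$ pulls back to $M\varphi$). Hence by \eqref{E:Lp_CoV_Surf} and then the key estimate \eqref{E:Ave_Diff_Bo} with $p=2$ we get
\[
  \left|\int_\Gamma(\varphi_i^\sharp-M\varphi)\eta\,d\mathcal{H}^2\right|
  \leq c\left\|\varphi-\overline{M\varphi}\right\|_{L^2(\Gamma_\varepsilon^i)}\|\eta\|_{L^2(\Gamma)}
  \leq c\varepsilon^{1/2}\|\partial_n\varphi\|_{L^2(\Omega_\varepsilon)}\|\eta\|_{L^2(\Gamma)},
\]
which is bounded by $c\varepsilon^{1/2}\|\varphi\|_{H^1(\Omega_\varepsilon)}\|\eta\|_{L^2(\Gamma)}$. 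Adding the two contributions yields \eqref{E:Ave_BiL2_Bo} (with the constant $c$ possibly absorbing the implicit constants; the statement as written with bare $\varepsilon^{1/2}$ should be read with a universal constant, exactly as in the analogous Lemma~\ref{L:Ave_BiL2_Dom}).

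I do not expect a serious obstacle here; the proof is a routine combination of the change of variables formula on the boundary and the average-difference trace estimate. The only point requiring slight care is the bookkeeping that ensures both $\bar{\eta}$ and $\overline{M\varphi}$ pull back correctly along $\Gamma_\varepsilon^i$ — this is immediate from \eqref{E:NorDer_Con} (or directly from $\bar{\eta}=\eta\circ\pi$ and $\pi(y+\varepsilon g_i(y)n(y))=y$) — and keeping track of which $\varepsilon$-powers come from the boundary trace versus from the interior. The combination of $H^1(\Omega_\varepsilon)$ on the right with the $\varepsilon^{1/2}$ prefactor is exactly what one obtains from \eqref{E:Ave_Diff_Bo} and \eqref{E:Poin_Bo}, so no loss occurs.
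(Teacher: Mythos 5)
Your proposal is correct and follows essentially the same route as the paper's proof: the identical decomposition into the Jacobian-defect term and the $\varphi_i^\sharp-M\varphi$ term via \eqref{E:CoV_Surf}, with the same estimates \eqref{E:Jac_Diff_03}, \eqref{E:Tau_Bound}, \eqref{E:Lp_CoV_Surf}, \eqref{E:Poin_Bo}, and \eqref{E:Ave_Diff_Bo}. Your remark about the implicit constant in \eqref{E:Ave_BiL2_Bo} is also right — the paper's proof indeed produces a constant $c$ in front of $\varepsilon^{1/2}$.
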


\begin{proof}
  Let $J_i^\sharp(y):=J(y,\varepsilon g_i(y))$ for $y\in\Gamma$ and
  \begin{align*}
    K_1 := \int_\Gamma\varphi_i^\sharp\eta\left(J_i^\sharp\sqrt{1+\varepsilon^2|\tau_\varepsilon^i|^2}-1\right)d\mathcal{H}^2, \quad K_2 := \int_\Gamma(\varphi_i^\sharp-M\varphi)\eta\,d\mathcal{H}^2.
  \end{align*}
  Then by the change of variables formula \eqref{E:CoV_Surf} we have
  \begin{align} \label{Pf_ABLB:Split}
    \int_{\Gamma_\varepsilon^i}\varphi\bar{\eta}\,d\mathcal{H}^2-\int_\Gamma(M\varphi)\eta\,d\mathcal{H}^2 = K_1+K_2.
  \end{align}
  By the mean value theorem for the function $\sqrt{1+s}$, $s\geq 0$ and \eqref{E:Tau_Bound},
  \begin{align*}
    0 \leq \sqrt{1+\varepsilon^2|\tau_\varepsilon^i|^2}-1 \leq \frac{\varepsilon^2}{2}|\tau_\varepsilon^i|^2 \leq c\varepsilon^2 \quad\text{on}\quad \Gamma.
  \end{align*}
  This inequality, \eqref{E:Tau_Bound}, and \eqref{E:Jac_Diff_03} imply that
  \begin{align*}
    \left|J_i^\sharp\sqrt{1+\varepsilon^2|\tau_\varepsilon^i|^2}-1\right| \leq |J_i^\sharp-1|\sqrt{1+\varepsilon^2|\tau_\varepsilon^i|^2}+\left(\sqrt{1+\varepsilon^2|\tau_\varepsilon^i|^2}-1\right) \leq c\varepsilon
  \end{align*}
  on $\Gamma$.
  From this inequality, \eqref{E:Lp_CoV_Surf}, and \eqref{E:Poin_Bo} we deduce that
  \begin{align*}
    |K_1| &\leq c\varepsilon\|\varphi_i^\sharp\|_{L^2(\Gamma)}\|\eta\|_{L^2(\Gamma)} \leq c\varepsilon\|\varphi\|_{L^2(\Gamma_\varepsilon^i)}\|\eta\|_{L^2(\Gamma)} \\
    &\leq c\varepsilon^{1/2}\|\varphi\|_{H^1(\Omega_\varepsilon)}\|\eta\|_{L^2(\Gamma)}.
  \end{align*}
  Also, noting that
  \begin{align} \label{Pf_ABLB:Pull_Bar}
    M\varphi(y) = \overline{M\varphi}(y+\varepsilon g_i(y)n(y)) = \Bigl(\overline{M\varphi}\Bigr)_i^\sharp(y), \quad y\in\Gamma,
  \end{align}
  we observe by \eqref{E:Lp_CoV_Surf} and \eqref{E:Ave_Diff_Bo} that
  \begin{align*}
    |K_2| &\leq \|\varphi_i^\sharp-M\varphi\|_{L^2(\Gamma)}\|\eta\|_{L^2(\Gamma)} \leq c\left\|\varphi-\overline{M\varphi}\right\|_{L^2(\Gamma_\varepsilon^i)}\|\eta\|_{L^2(\Gamma)} \\
    &\leq c\varepsilon^{1/2}\|\varphi\|_{H^1(\Omega_\varepsilon)}\|\eta\|_{L^2(\Gamma)}.
  \end{align*}
  Applying the above estimates for $K_1$ and $K_2$ to \eqref{Pf_ABLB:Split} we obtain \eqref{E:Ave_BiL2_Bo}.
\end{proof}

Next we consider bilinear forms including the strain rate tensor
\begin{align*}
  D(u) = (\nabla u)_S = \frac{\nabla u+(\nabla u)^T}{2}
\end{align*}
for a vector field $u$ on $\Omega_\varepsilon$.

\begin{lemma} \label{L:Ave_BiH1_TT}
  There exists a constant $c>0$ independent of $\varepsilon$ such that
  \begin{align} \label{E:Ave_BiH1_TT}
    \left|\int_{\Omega_\varepsilon}D(u):\overline{A}\,dx-\varepsilon\int_\Gamma gD_\Gamma(M_\tau u):A\,d\mathcal{H}^2\right| \leq c\varepsilon^{3/2}\|u\|_{H^1(\Omega_\varepsilon)}\|A\|_{L^2(\Gamma)}
  \end{align}
  for all $u\in H^1(\Omega_\varepsilon)^3$ satisfying \eqref{E:Bo_Imp} on $\Gamma_\varepsilon^0$ or on $\Gamma_\varepsilon^1$ and $A\in L^2(\Gamma)^{3\times3}$ satisfying
  \begin{align} \label{E:A_Cond}
     PA = AP = A \quad\text{on}\quad \Gamma.
   \end{align}
  Here $D_\Gamma(M_\tau u)$ is the surface strain rate tensor given by \eqref{E:Def_SSR}.
\end{lemma}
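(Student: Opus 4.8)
The plan is to reduce the claimed estimate to the change of variables formula \eqref{E:CoV_Dom} together with the already-established estimates for the averaged tangential component and the geometric quantities near $\Gamma$. First I would write
\[
  \int_{\Omega_\varepsilon}D(u):\overline{A}\,dx
  = \int_\Gamma\int_{\varepsilon g_0}^{\varepsilon g_1}\bigl(D(u)\bigr)^\sharp : A \, J\,dr\,d\mathcal{H}^2,
\]
using \eqref{E:CoV_Dom} and the fact that $\overline{A}$ restricted to the normal fibre over $y$ is just the constant $A(y)$. Splitting off $J-1$ via \eqref{E:Jac_Diff_03}, \eqref{E:CoV_Equiv} and Hölder's inequality produces an error bounded by $c\varepsilon\cdot\varepsilon^{1/2}\|D(u)\|_{L^2(\Omega_\varepsilon)}\|A\|_{L^2(\Gamma)}$, which is of the desired order; so it remains to compare
\[
  \int_\Gamma\int_{\varepsilon g_0}^{\varepsilon g_1}\bigl(D(u)\bigr)^\sharp : A \,dr\,d\mathcal{H}^2
  \quad\text{with}\quad
  \varepsilon\int_\Gamma g\,D_\Gamma(M_\tau u):A\,d\mathcal{H}^2 .
\]

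The heart of the argument is to show that, when tested against a matrix $A$ satisfying $PA=AP=A$, the fibre average of $D(u)=(\nabla u)_S$ is close to $D_\Gamma(M_\tau u)$. Since $PA=AP=A$ on $\Gamma$, we have $D(u):\overline{A} = \bigl(\overline{P}D(u)\overline{P}\bigr):\overline{A}$ pointwise in $\Omega_\varepsilon$, so only the ``fully tangential'' part of $D(u)$ contributes. I would then use $\nabla u = \overline{P}\nabla u + \overline{Q}\nabla u$ and symmetrize: $\overline{P}D(u)\overline{P} = \overline{P}(\nabla u)_S\overline{P}$, and the rows and columns hit by $\overline{Q}$ only survive through $\overline{P}\partial_n u$ terms, which are controlled by $\overline{P}\partial_n u + \overline{W}\,u = O(\varepsilon\|u\|_{W^{2,p}})$ — wait, that needs the boundary conditions \eqref{E:Bo_Slip}, which are not assumed here. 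Instead, under only \eqref{E:Bo_Imp} on one boundary component, I should argue directly: replace $u$ by its average part $u^a = E_\varepsilon M_\tau u$ using Lemma~\ref{L:AveT_Diff} (so $\|u-\overline{M_\tau u}\|_{L^2(\Omega_\varepsilon)}\le c\varepsilon\|u\|_{W^{1,p}(\Omega_\varepsilon)}$), reducing to $D(E_\varepsilon M_\tau u)$; then invoke the explicit gradient formula Lemma~\ref{L:ExImp_DiGr}, which gives $\nabla E_\varepsilon v = \overline{F(v)} + O(\varepsilon)$ with $F(v)=\nabla_\Gamma v + \tfrac1g(v\cdot\nabla_\Gamma g)Q$. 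Since $PQ=QP=0$, testing the symmetric part of $\overline{F(v)}$ against $A$ with $PA=AP=A$ kills the $Q$ term entirely, leaving $\overline{P}(\nabla_\Gamma v)_S\overline{P}:A = D_\Gamma(v):A$ on each fibre; integrating the constant $D_\Gamma(M_\tau u):A$ over the fibre of length $\varepsilon g(y)$ yields exactly $\varepsilon\int_\Gamma g\,D_\Gamma(M_\tau u):A\,d\mathcal{H}^2$.

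Collecting: the difference is bounded by (i) the $J-1$ error $\le c\varepsilon^{3/2}\|u\|_{H^1(\Omega_\varepsilon)}\|A\|_{L^2(\Gamma)}$; (ii) the error from replacing $D(u)$ by $D(u^a)$, controlled by $\|D(u)-D(u^a)\|_{L^2(\Omega_\varepsilon)}\|\overline{A}\|_{L^2(\Omega_\varepsilon)}$, where the first factor is $\le c\|u\|_{H^1(\Omega_\varepsilon)}$ by Lemma~\ref{L:Wmp_UaUr} — but this is only $O(\|u\|_{H^1})$, not $O(\varepsilon\|u\|_{H^1})$, so I must be more careful and instead compare $\nabla u^a$ directly with $\overline{F(M_\tau u)}$ and bound the $L^2(\Omega_\varepsilon)$-distance of $u$-gradient to $\overline{F(M_\tau u)}$ using Lemma~\ref{L:Ave_Der_Diff} (i.e.\ $\|\overline{P}\nabla u - \overline{\nabla_\Gamma M\varphi}\|$-type estimates applied componentwise, noting only the $P$-projected part is tested); this is $O(\varepsilon\|u\|_{H^1})$ and, together with $\|\overline{A}\|_{L^2(\Omega_\varepsilon)}\le c\varepsilon^{1/2}\|A\|_{L^2(\Gamma)}$ from \eqref{E:Con_Lp}, gives the required $c\varepsilon^{3/2}$; and (iii) the $O(\varepsilon)$ remainder in Lemma~\ref{L:ExImp_DiGr}, again paired with $\|\overline{A}\|_{L^2(\Omega_\varepsilon)}=O(\varepsilon^{1/2}\|A\|_{L^2(\Gamma)})$. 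The main obstacle I anticipate is precisely bookkeeping which part of $\nabla u$ the condition $PA=AP=A$ annihilates, so that only a genuinely $O(\varepsilon)$-small quantity in $L^2(\Omega_\varepsilon)$ remains to be paired with $\overline{A}$ — the naive bound $\|D(u)-\overline{M_\tau}$-part$\|\le c\|u\|_{H^1}$ is not small enough, and one must exploit that the tangential gradient of $u$ (as opposed to $\partial_n u$) is what enters $D_\Gamma$, using the first-order comparison estimates for averages from Section~\ref{SS:Ave_Grad} rather than the zeroth-order ones. Finally I would remark that the restriction to $\varphi$ or $u$ satisfying \eqref{E:Bo_Imp} on one boundary component enters only through Lemma~\ref{L:AveT_Diff} (and its relatives), not through any boundary term, since no integration by parts on $\Gamma_\varepsilon$ is performed.
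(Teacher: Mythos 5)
Your overall architecture (change of variables, strip off $J-1$, then compare the fibre integral of $D(u)$ with $D_\Gamma(M_\tau u)$ tested against $A$) is the right shape, and your treatment of the $J-1$ error is fine. The gap is in the central comparison. You correctly notice that the zeroth-order replacement $D(u)\mapsto D(u^a)$ only yields $O(\|u\|_{H^1(\Omega_\varepsilon)})$, but your proposed repair --- bounding the $L^2(\Omega_\varepsilon)$-distance of $\overline{P}\nabla u$ to $\overline{\nabla_\Gamma Mu}$ (or to $\overline{F(M_\tau u)}$) by $O(\varepsilon\|u\|_{H^1(\Omega_\varepsilon)})$ via Lemma \ref{L:Ave_Der_Diff} --- misquotes that lemma: its right-hand side is $c\varepsilon\|\varphi\|_{W^{2,p}(\Omega_\varepsilon)}$, not $c\varepsilon\|\varphi\|_{W^{1,p}(\Omega_\varepsilon)}$, and this cannot be improved, because the deviation of $\nabla u$ along a normal fibre from its fibre average is governed by $\partial_n\nabla u$, i.e.\ by second derivatives. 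Since the lemma assumes only $u\in H^1(\Omega_\varepsilon)^3$ and the target bound must involve $\|u\|_{H^1(\Omega_\varepsilon)}$, any pointwise-in-$\Omega_\varepsilon$ comparison of $\nabla u$ with a fibrewise-constant field is doomed; the same objection applies to the route through $\nabla E_\varepsilon M_\tau u$.

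The fix is to never compare $\nabla u$ pointwise with anything fibrewise constant. Observe that $\int_{\varepsilon g_0}^{\varepsilon g_1}\bigl(D(u)\bigr)^\sharp\,dr=\varepsilon g\,M\bigl(D(u)\bigr)$, so after your $J-1$ step (equivalently, after applying Lemma \ref{L:Ave_BiL2_Dom} componentwise with $\varphi=D(u)_{ij}$ and $\eta=A_{ij}$, inserting the projections via $PA=AP=A$ and $P^T=P$) the remaining task is to compare the \emph{surface} fields $PM\bigl(D(u)\bigr)P$ and $D_\Gamma(M_\tau u)$ on $\Gamma$. Here the commutation formula \eqref{E:Ave_Der}, $\nabla_\Gamma M\varphi=M(B\nabla\varphi)+M\bigl((\partial_n\varphi)\psi_\varepsilon\bigr)$ with $B=(I_3-d\overline{W})\overline{P}$ and $|\psi_\varepsilon|\le c\varepsilon$, gives $|PM(\nabla u)-\nabla_\Gamma Mu|\le c\varepsilon M(|\nabla u|)$ on $\Gamma$ --- only first derivatives of $u$ enter --- and then $\|M(|\nabla u|)\|_{L^2(\Gamma)}\le c\varepsilon^{-1/2}\|\nabla u\|_{L^2(\Omega_\varepsilon)}$ closes this error at order $\varepsilon\cdot\varepsilon^{-1/2}\cdot\varepsilon=\varepsilon^{3/2}$. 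Finally one must still pass from $D_\Gamma(Mu)$ to $D_\Gamma(M_\tau u)$: their difference equals $-(Mu\cdot n)W$ after projecting, and this is precisely where the impermeability condition on one boundary component is used, through $\|Mu\cdot n\|_{L^2(\Gamma)}\le c\varepsilon^{1/2}\|u\|_{H^1(\Omega_\varepsilon)}$ (Lemma \ref{L:Ave_N_Lp}), not through Lemma \ref{L:AveT_Diff} as you suggest.
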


\begin{proof}
  By \eqref{E:A_Cond} and $P^T=P$ on $\Gamma$,
  \begin{align*}
    PM\bigl(D(u)\bigr)P:A = M\bigl(D(u)\bigr):A \quad\text{on}\quad \Gamma.
  \end{align*}
  From this equality, \eqref{E:Ave_BiL2_Dom}, and $\|D(u)\|_{L^2(\Omega_\varepsilon)}\leq c\|u\|_{H^1(\Omega_\varepsilon)}$ it follows that
  \begin{align} \label{Pf_ABTT:First}
    \left|\int_{\Omega_\varepsilon}D(u):\overline{A}\,dx-\varepsilon\int_\Gamma gPM\bigl(D(u)\bigr)P:A\,d\mathcal{H}^2\right| \leq c\varepsilon^{3/2}\|u\|_{H^1(\Omega_\varepsilon)}\|A\|_{L^2(\Gamma)}.
  \end{align}
  Next we observe by $P^T=P$ and $P\nabla_\Gamma Mu=\nabla_\Gamma Mu$ on $\Gamma$ that
  \begin{align*}
    PM\bigl(D(u)\bigr)P-D_\Gamma(Mu) &= [PM(\nabla u)P-P(\nabla_\Gamma Mu)P]_S \\
    &= [\{PM(\nabla u)-\nabla_\Gamma Mu\}P]_S
  \end{align*}
  on $\Gamma$, where $B_S=(B+B^T)/2$ for a $3\times 3$ matrix $B$.
  Moreover,
  \begin{align*}
    |PM(\nabla u)-\nabla_\Gamma Mu| &\leq \left|M\Bigl(d\overline{W}\nabla u\Bigr)\right|+|M(\psi_\varepsilon\otimes \partial_nu)| \leq c\varepsilon M(|\nabla u|)
  \end{align*}
  on $\Gamma$ by \eqref{E:Ave_Der}--\eqref{E:ADA_Bound} and $|d|\leq c\varepsilon$ in $\Omega_\varepsilon$.
  Hence (note that $|P|=2$ on $\Gamma$)
  \begin{align*}
    \left|PM\bigl(D(u)\bigr)P-D_\Gamma(Mu)\right| \leq |\{PM(\nabla u)-\nabla_\Gamma Mu\}P| \leq c\varepsilon M(|\nabla u|)
  \end{align*}
  on $\Gamma$.
  This inequality, the boundedness of $g$ on $\Gamma$, and \eqref{E:Ave_Lp_Surf} imply that
  \begin{multline} \label{Pf_ABTT:Second}
    \left|\int_\Gamma gPM\bigl(D(u)\bigr)P:A\,d\mathcal{H}^2-\int_\Gamma gD_\Gamma(Mu):A\,d\mathcal{H}^2\right| \\
    \leq c\varepsilon\|M(|\nabla u|)\|_{L^2(\Gamma)}\|A\|_{L^2(\Gamma)} \leq c\varepsilon^{1/2}\|u\|_{H^1(\Omega_\varepsilon)}\|A\|_{L^2(\Gamma)}.
  \end{multline}
  Now we use $Mu=(Mu\cdot n)n+M_\tau u$ and $-\nabla_\Gamma n=W$ on $\Gamma$ to get
  \begin{align*}
    \nabla_\Gamma Mu = [\nabla_\Gamma(Mu\cdot n)]\otimes n-(Mu\cdot n)W+\nabla_\Gamma M_\tau u \quad\text{on}\quad \Gamma.
  \end{align*}
  By this equality,
  \begin{align*}
    (a\otimes n)P = a\otimes(P^Tn) = a\otimes(Pn) = 0 \quad\text{on}\quad \Gamma
  \end{align*}
  for $a\in\mathbb{R}^3$, and \eqref{E:Form_W} we see that
  \begin{align*}
    P(\nabla_\Gamma Mu)P-P(\nabla_\Gamma M_\tau u)P = -(Mu\cdot n)PWP = -(Mu\cdot n)W \quad\text{on}\quad \Gamma.
  \end{align*}
  Since $W$ is bounded on $\Gamma$, it follows from the above equality that
  \begin{align*}
    |D_\Gamma(Mu)-D_\Gamma(M_\tau u)| \leq |P(\nabla_\Gamma Mu)P-P(\nabla_\Gamma M_\tau u)P| \leq c|Mu\cdot n| \quad\text{on}\quad \Gamma.
  \end{align*}
  By this inequality and \eqref{E:Ave_N_Lp} (note that $u$ satisfies \eqref{E:Bo_Imp} on $\Gamma_\varepsilon^0$ or on $\Gamma_\varepsilon^1$) we get
  \begin{align*}
    \left|\int_\Gamma gD_\Gamma(Mu):A\,d\mathcal{H}^2-\int_\Gamma gD_\Gamma(M_\tau u):A\,d\mathcal{H}^2\right| &\leq c\|Mu\cdot n\|_{L^2(\Gamma)}\|A\|_{L^2(\Gamma)} \\
    &\leq c\varepsilon^{1/2}\|u\|_{H^1(\Omega_\varepsilon)}\|A\|_{L^2(\Gamma)}.
  \end{align*}
  Combining this inequality, \eqref{Pf_ABTT:First}, and \eqref{Pf_ABTT:Second} we obtain \eqref{E:Ave_BiH1_TT}.
\end{proof}

\begin{lemma} \label{L:Ave_BiH1_NN}
  There exists a constant $c>0$ independent of $\varepsilon$ such that
  \begin{align} \label{E:Ave_BiH1_NN}
    \left|\int_{\Omega_\varepsilon}\Bigl(D(u):\overline{Q}\Bigr)\bar{\eta}\,dx-\varepsilon\int_\Gamma(M_\tau u\cdot\nabla_\Gamma g)\eta\,d\mathcal{H}^2\right| \leq c\varepsilon^{3/2}\|u\|_{H^1(\Omega_\varepsilon)}\|\eta\|_{L^2(\Gamma)}
  \end{align}
  for all $u\in H^1(\Omega_\varepsilon)^3$ satisfying \eqref{E:Bo_Imp} and $\eta\in L^2(\Gamma)$.
\end{lemma}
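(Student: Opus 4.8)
The plan is to reduce the identity to a one-dimensional integration by parts in the normal variable, where the boundary contributions are controlled by the impermeable boundary condition and the interior contribution by the normal-direction Poincar\'e inequality.

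First I would record the pointwise identity
\[
  D(u):\overline{Q} = \partial_n u\cdot\bar{n} = \partial_n(u\cdot\bar{n}) \quad\text{in}\quad \Omega_\varepsilon,
\]
the last equality because $\partial_n\bar{n}=(\bar{n}\cdot\nabla)\bar{n}=0$ by \eqref{E:NorDer_Con}. Setting $w:=u\cdot\bar{n}\in H^1(\Omega_\varepsilon)$ and applying the change of variables \eqref{E:CoV_Dom} together with Fubini's theorem (so that for a.a. $y\in\Gamma$ the map $r\mapsto w^\sharp(y,r)$ is absolutely continuous with derivative $(\partial_n w)^\sharp$), an $r$-wise integration by parts gives
\[
  \int_{\Omega_\varepsilon}\bigl(D(u):\overline{Q}\bigr)\bar{\eta}\,dx
  = \int_\Gamma \eta(y)\left\{\bigl[w^\sharp J\bigr]_{r=\varepsilon g_0(y)}^{r=\varepsilon g_1(y)}
  - \int_{\varepsilon g_0(y)}^{\varepsilon g_1(y)} w^\sharp(y,r)\,\partial_r J(y,r)\,dr\right\}d\mathcal{H}^2(y).
\]

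Next I would extract the leading term. On $\Gamma_\varepsilon^i$ the condition $u\cdot n_\varepsilon=0$ together with \eqref{E:Nor_Bo} and \eqref{E:Comp_N} yields $w=u\cdot\bar{n}=\varepsilon\,u\cdot\overline{\nabla_\Gamma g_i}+O(\varepsilon^2)|u|$, while $J=1+O(\varepsilon)$ there by \eqref{E:Jac_Diff_03}. Replacing $u$ by $M_\tau u$ on $\Gamma_\varepsilon^i$ up to the error $(u-\overline{Mu})$ (using $n\cdot\nabla_\Gamma g_i=0$, so $\overline{Mu}\cdot\nabla_\Gamma g_i=M_\tau u\cdot\nabla_\Gamma g_i$) and $\nabla_\Gamma g_1-\nabla_\Gamma g_0=\nabla_\Gamma g$, the boundary term equals $\varepsilon\,M_\tau u\cdot\nabla_\Gamma g$ plus $\varepsilon\sum_{i=0,1}(-1)^{i+1}(u-\overline{Mu})|_{\Gamma_\varepsilon^i}\cdot\nabla_\Gamma g_i$ plus a term pointwise bounded by $c\varepsilon^2|u|$ on $\Gamma_\varepsilon$. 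Integrated against $\eta$ over $\Gamma$, the first piece is exactly the claimed surface integral; the second is bounded via \eqref{E:Lp_CoV_Surf} and \eqref{E:Ave_Diff_Bo} (which give $\|(u-\overline{Mu})_i^\sharp\|_{L^2(\Gamma)}\le c\varepsilon^{1/2}\|u\|_{H^1(\Omega_\varepsilon)}$); and the third via \eqref{E:Lp_CoV_Surf} and the trace estimate \eqref{E:Poin_Bo} (which give $\|u\|_{L^2(\Gamma_\varepsilon^i)}\le c\varepsilon^{-1/2}\|u\|_{H^1(\Omega_\varepsilon)}$). Both remainders are then $O(\varepsilon^{3/2})\|u\|_{H^1(\Omega_\varepsilon)}\|\eta\|_{L^2(\Gamma)}$.

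Finally, for the interior term I would use $|\partial_r J|\le c$ on $N$ (from \eqref{E:Def_Jac}) to bound it by $c\int_\Gamma\int_{\varepsilon g_0}^{\varepsilon g_1}|w^\sharp||\bar{\eta}^\sharp|\,dr\,d\mathcal{H}^2\le c\|w\|_{L^2(\Omega_\varepsilon)}\|\bar{\eta}\|_{L^2(\Omega_\varepsilon)}$ by \eqref{E:CoV_Equiv} and H\"older's inequality, then invoke the normal Poincar\'e inequality \eqref{E:Poin_Nor} (applicable since $u$ satisfies \eqref{E:Bo_Imp}) to get $\|w\|_{L^2(\Omega_\varepsilon)}=\|u\cdot\bar{n}\|_{L^2(\Omega_\varepsilon)}\le c\varepsilon\|u\|_{H^1(\Omega_\varepsilon)}$, together with $\|\bar{\eta}\|_{L^2(\Omega_\varepsilon)}\le c\varepsilon^{1/2}\|\eta\|_{L^2(\Gamma)}$ from \eqref{E:Con_Lp}; this contributes $O(\varepsilon^{3/2})\|u\|_{H^1(\Omega_\varepsilon)}\|\eta\|_{L^2(\Gamma)}$ as well, and summing the three estimates yields \eqref{E:Ave_BiH1_NN}. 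The main obstacle is bookkeeping the two distinct smallness mechanisms so that the rate $\varepsilon^{3/2}$ (not merely $\varepsilon^{1/2}$) actually comes out: the impermeable condition makes $u\cdot\bar{n}$ small both in the bulk (an $O(\varepsilon)$ Poincar\'e gain) and on the boundary, where $u\cdot\bar{n}$ is $O(\varepsilon|u|)$ with the $O(\varepsilon)$ factor carrying precisely $\nabla_\Gamma g_i$; one must read this off cleanly from \eqref{E:Comp_N} and verify that the leftover boundary traces $(u-\overline{Mu})|_{\Gamma_\varepsilon^i}$ are genuinely $O(\varepsilon^{1/2})$ rather than $O(\varepsilon^{-1/2})$ in the $H^1(\Omega_\varepsilon)$-norm. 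Justifying the $r$-wise integration by parts for a merely $H^1$ vector field (via Fubini and one-dimensional absolute continuity) is a minor additional point.
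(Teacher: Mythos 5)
Your proof is correct and follows essentially the same route as the paper: both reduce $D(u):\overline{Q}$ to $\partial_r\bigl(u(y+rn(y))\bigr)\cdot n(y)$, integrate in $r$, and extract the leading term $\varepsilon M_\tau u\cdot\nabla_\Gamma g$ from the boundary traces via the impermeability condition and the expansion \eqref{E:Comp_N} of $n_\varepsilon$, with the remainders controlled exactly as in the paper by \eqref{E:Ave_Diff_Bo}, \eqref{E:Poin_Bo}, and \eqref{E:Lp_CoV_Surf}. The only (harmless) deviation is bookkeeping of the Jacobian: the paper strips off the $(J-1)$ error before applying the fundamental theorem of calculus, whereas you integrate by parts against $J$ and absorb the resulting $\partial_rJ$ interior term using the normal Poincar\'e inequality \eqref{E:Poin_Nor}, which yields the same $\varepsilon^{3/2}$ rate.
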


\begin{proof}
  By the change of variables formula \eqref{E:CoV_Dom} we have
  \begin{multline*}
    \int_{\Omega_\varepsilon}\Bigl(D(u):\overline{Q}\Bigr)\bar{\eta}\,dx-\int_\Gamma\left(\int_{\varepsilon g_0}^{\varepsilon g_1}D(u)^\sharp:Q\,dr\right)\eta\,d\mathcal{H}^2 \\
    = \int_\Gamma\left(\int_{\varepsilon g_0}^{\varepsilon g_1}D(u)^\sharp:Q\,dr\right)\eta(J-1)\,d\mathcal{H}^2.
  \end{multline*}
  We apply \eqref{E:Jac_Diff_03}, $|Q|=1$ on $\Gamma$, and \eqref{E:CoV_Equiv} to the right-hand side and use H\"{o}lder's inequality and \eqref{E:Con_Lp} to get
  \begin{multline} \label{Pf_ABNN:First}
    \left|\int_{\Omega_\varepsilon}\Bigl(D(u):\overline{Q}\Bigr)\bar{\eta}\,dx-\int_\Gamma\left(\int_{\varepsilon g_0}^{\varepsilon g_1}D(u)^\sharp:Q\,dr\right)\eta\,d\mathcal{H}^2\right| \\
    \leq c \varepsilon\int_{\Omega_\varepsilon}|D(u)||\bar{\eta}|\,dx \leq c\varepsilon^{3/2}\|u\|_{H^1(\Omega_\varepsilon)}\|\eta\|_{L^2(\Gamma)}.
  \end{multline}
  Next we compute the integral of
  \begin{align*}
    D(u)^\sharp(y,r):Q(y) = D(u)(y+rn(y)):Q(y)
  \end{align*}
  with respect to $r$.
  Since $Q(y)=n(y)\otimes n(y)$ is symmetric,
  \begin{align*}
    D(u)(y+rn(y)): Q(y) &= \nabla u(y+rn(y)):n(y)\otimes n(y) \\
    &= [(n(y)\cdot\nabla)u](y+rn(y))\cdot n(y) \\
    &= \frac{\partial}{\partial r}\Bigl(u(y+rn(y))\Bigr)\cdot n(y)
  \end{align*}
  for all $y\in\Gamma$ and $r\in(\varepsilon g_0(y),\varepsilon g_1(y))$.
  Thus
  \begin{align*}
    \int_{\varepsilon g_0(y)}^{\varepsilon g_1(y)}D(u)^\sharp(y,r):Q(y)\,dr &= \left\{\int_{\varepsilon g_0(y)}^{\varepsilon g_1(y)}\frac{\partial}{\partial r}\Bigl(u(y+rn(y))\Bigr)\,dr\right\}\cdot n(y) \\
    &= \{u(y+\varepsilon g_1(y)n(y))-u(y+\varepsilon g_0(y)n(y))\}\cdot n(y)
  \end{align*}
  for all $y\in\Gamma$.
  We use the notation \eqref{E:Pull_Bo} to write the above equality as
  \begin{align*}
    \int_{\varepsilon g_0}^{\varepsilon g_1}D(u)^\sharp:Q\,dr = u_1^\sharp\cdot n-u_0^\sharp\cdot n = \sum_{i=0,1}(-1)^{i+1}u_i^\sharp\cdot n \quad\text{on}\quad \Gamma.
  \end{align*}
  Moreover, since $u$ satisfies \eqref{E:Bo_Imp}, it follows that $u_i^\sharp\cdot n_{\varepsilon,i}^\sharp=0$ on $\Gamma$ for $i=0,1$ and thus
  \begin{align*}
    (-1)^{i+1}u_i^\sharp\cdot n = u_i^\sharp\cdot\{(-1)^{i+1}(n-\varepsilon\nabla_\Gamma g_i)-n_{\varepsilon,i}^\sharp\}+\varepsilon(-1)^{i+1}u_i^\sharp\cdot\nabla_\Gamma g_i
  \end{align*}
  on $\Gamma$ for $i=0,1$.
  By the above two equalities and $g=g_1-g_0$ on $\Gamma$ we have
  \begin{multline*}
    \int_{\varepsilon g_0}^{\varepsilon g_1}D(u)^\sharp:Q\,dr = \sum_{i=0,1}u_i^\sharp\cdot\{(-1)^{i+1}(n-\varepsilon\nabla_\Gamma g_i)-n_{\varepsilon,i}^\sharp\} \\
    +\varepsilon\sum_{i=0,1}(-1)^{i+1}(u_i^\sharp-Mu)\cdot\nabla_\Gamma g_i+\varepsilon Mu\cdot\nabla_\Gamma g
  \end{multline*}
  on $\Gamma$.
  Hence we apply \eqref{E:Comp_N} to the first term on the right-hand side to get
  \begin{align*}
    \left|\int_{\varepsilon g_0}^{\varepsilon g_1}D(u)^\sharp:Q\,dr-\varepsilon Mu\cdot\nabla_\Gamma g\right| \leq c\varepsilon\sum_{i=0,1}(\varepsilon|u_i^\sharp|+|u_i^\sharp-Mu|) \quad\text{on}\quad \Gamma
  \end{align*}
  and it follows from this inequality and H\"{o}lder's inequality that
  \begin{multline*}
    \left|\int_\Gamma\left(\int_{\varepsilon g_0}^{\varepsilon g_1}D(u)^\sharp:Q\,dr\right)\eta\,d\mathcal{H}^2-\varepsilon\int_\Gamma(Mu\cdot\nabla_\Gamma g)\eta\,d\mathcal{H}^2\right| \\
    \leq c\varepsilon\sum_{i=0,1}\left(\varepsilon\|u_i^\sharp\|_{L^2(\Gamma)}+\|u_i^\sharp-Mu\|_{L^2(\Gamma)}\right)\|\eta\|_{L^2(\Gamma)}.
  \end{multline*}
  Moreover, noting that the relation \eqref{Pf_ABLB:Pull_Bar} holds, we have
  \begin{align*}
    \varepsilon\|u_i^\sharp\|_{L^2(\Gamma)}+\|u_i^\sharp-Mu\|_{L^2(\Gamma)} &\leq c\left(\varepsilon\|u\|_{L^2(\Gamma_\varepsilon^i)}+\left\|u-\overline{Mu}\right\|_{L^2(\Gamma_\varepsilon^i)}\right) \\
    &\leq c\varepsilon^{1/2}\|u\|_{H^1(\Omega_\varepsilon)}
  \end{align*}
  by \eqref{E:Lp_CoV_Surf}, \eqref{E:Poin_Bo}, and \eqref{E:Ave_Diff_Bo}.
  Thus we obtain
  \begin{multline} \label{Pf_ABNN:Second}
    \left|\int_\Gamma\left(\int_{\varepsilon g_0}^{\varepsilon g_1}D(u)^\sharp:Q\,dr\right)\eta\,d\mathcal{H}^2-\varepsilon\int_\Gamma(Mu\cdot\nabla_\Gamma g)\eta\,d\mathcal{H}^2\right| \\
    \leq c\varepsilon^{3/2}\|u\|_{H^1(\Omega_\varepsilon)}\|\eta\|_{L^2(\Gamma)}.
  \end{multline}
  Here $Mu\cdot\nabla_\Gamma g=M_\tau u\cdot\nabla_\Gamma g$ on $\Gamma$ since $\nabla_\Gamma g$ is tangential on $\Gamma$.
  Therefore, \eqref{E:Ave_BiH1_NN} follows from \eqref{Pf_ABNN:First} and \eqref{Pf_ABNN:Second}.
\end{proof}

\begin{lemma} \label{L:Ave_BiH1_TN}
  Let $u\in H^2(\Omega_\varepsilon)^3$ and $v\in L^2(\Gamma,T\Gamma)$.
  Suppose that the inequalities \eqref{E:Fric_Upper} are valid and $u$ satisfies \eqref{E:Bo_Slip} on $\Gamma_\varepsilon^0$ or on $\Gamma_\varepsilon^1$.
  Then
  \begin{align} \label{E:Ave_BiH1_TN}
    \left|\int_{\Omega_\varepsilon}D(u):\bar{v}\otimes\bar{n}\,dx\right| \leq c\varepsilon^{3/2}\|u\|_{H^2(\Omega_\varepsilon)}\|v\|_{L^2(\Gamma)},
  \end{align}
  where $c>0$ is a constant independent of $\varepsilon$, $u$, and $v$.
\end{lemma}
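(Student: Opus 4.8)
The plan is to reduce the integrand to the quantity $\overline{P}D(u)\bar{n}$, which is controlled by Lemma \ref{L:Poin_Str}, and then apply the norm equivalence \eqref{E:Con_Lp} for the constant extension $\bar{v}$.

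First I would rewrite the integrand. For each $x\in\Omega_\varepsilon$ we have $D(u):\bar{v}\otimes\bar{n}=\bar{n}\cdot D(u)\bar{v}=\bigl(D(u)\bar{n}\bigr)\cdot\bar{v}$ since $D(u)$ is symmetric. Because $v$ is tangential on $\Gamma$, its constant extension satisfies $\overline{P}\bar{v}=\bar{v}$ in $\Omega_\varepsilon$, so using $\overline{P}^T=\overline{P}$ we get
\begin{align*}
  D(u):\bar{v}\otimes\bar{n} = \bigl(D(u)\bar{n}\bigr)\cdot\overline{P}\bar{v} = \bigl(\overline{P}D(u)\bar{n}\bigr)\cdot\bar{v} \quad\text{in}\quad \Omega_\varepsilon.
\end{align*}
Hence by H\"{o}lder's inequality
\begin{align*}
  \left|\int_{\Omega_\varepsilon}D(u):\bar{v}\otimes\bar{n}\,dx\right| \leq \left\|\overline{P}D(u)\bar{n}\right\|_{L^2(\Omega_\varepsilon)}\|\bar{v}\|_{L^2(\Omega_\varepsilon)}.
\end{align*}

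Next I would estimate the two factors. Since the inequalities \eqref{E:Fric_Upper} are assumed and $u\in H^2(\Omega_\varepsilon)^3$ satisfies \eqref{E:Bo_Slip} on $\Gamma_\varepsilon^0$ or on $\Gamma_\varepsilon^1$, Lemma \ref{L:Poin_Str} with $p=2$ gives $\|\overline{P}D(u)\bar{n}\|_{L^2(\Omega_\varepsilon)}\leq c\varepsilon\|u\|_{H^2(\Omega_\varepsilon)}$. For the second factor, $v\in L^2(\Gamma,T\Gamma)\subset L^2(\Gamma)^3$, so \eqref{E:Con_Lp} with $p=2$ yields $\|\bar{v}\|_{L^2(\Omega_\varepsilon)}\leq c\varepsilon^{1/2}\|v\|_{L^2(\Gamma)}$. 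Multiplying the two bounds gives $c\varepsilon^{3/2}\|u\|_{H^2(\Omega_\varepsilon)}\|v\|_{L^2(\Gamma)}$, which is \eqref{E:Ave_BiH1_TN}.

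There is essentially no serious obstacle here: the statement is a direct consequence of the Poincar\'e-type estimate \eqref{E:Poin_Str} (itself the place where the slip boundary conditions and the smallness of $\varepsilon$ enter, via the trace inequality \eqref{E:Poin_Bo} and the comparison estimates \eqref{E:Comp_N}--\eqref{E:Comp_P} for $n_\varepsilon$ and $P_\varepsilon$) combined with the change-of-variables equivalence \eqref{E:Con_Lp}. The only point requiring a little care is the first step — recognizing that tangentiality of $v$ lets one insert $\overline{P}$ next to $D(u)\bar{n}$ — but this is a one-line computation using symmetry of $D(u)$ and $\overline{P}\bar{v}=\bar{v}$.
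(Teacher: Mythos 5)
Your proposal is correct and follows essentially the same route as the paper: rewrite the integrand as $\bar{v}\cdot\overline{P}D(u)\bar{n}$ using the symmetry of $D(u)$ and the tangentiality of $v$ (so that $\overline{P}\bar{v}=\bar{v}$), then apply H\"{o}lder's inequality together with \eqref{E:Poin_Str} and \eqref{E:Con_Lp}. No gaps.
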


\begin{proof}
  Since $v$ is tangential on $\Gamma$,
  \begin{align*}
    D(u):\bar{v}\otimes\bar{n} = \mathrm{tr}[D(u)^T(\bar{v}\otimes\bar{n})] = D(u)^T\bar{v}\cdot\bar{n} = \bar{v}\cdot D(u)\bar{n} = \bar{v}\cdot\overline{P}D(u)\bar{n}
  \end{align*}
  in $\Omega_\varepsilon$.
  Hence by \eqref{E:Con_Lp} and \eqref{E:Poin_Str} we see that
  \begin{align*}
    \left|\int_{\Omega_\varepsilon}D(u):\bar{v}\otimes\bar{n}\,dx\right| &\leq c\left\|\overline{P}D(u)\bar{n}\right\|_{L^2(\Omega_\varepsilon)}\|\bar{v}\|_{L^2(\Omega_\varepsilon)} \\
    &\leq c\varepsilon^{3/2}\|u\|_{H^2(\Omega_\varepsilon)}\|v\|_{L^2(\Gamma)}.
  \end{align*}
  Here we used the inequalities \eqref{E:Fric_Upper} and the condition on $u$ to apply \eqref{E:Poin_Str}.
\end{proof}

Now let us derive estimates for trilinear forms.

\begin{lemma} \label{L:Ave_TrT}
  Let $u_1\in H^2(\Omega_\varepsilon)^3$, $u_2\in H^1(\Omega_\varepsilon)^3$, and $A\in L^2(\Gamma)^{3\times3}$.
  Suppose that the inequalities \eqref{E:Fric_Upper} are valid, $u_1$ satisfies $\mathrm{div}\,u_1=0$ in $\Omega_\varepsilon$ and \eqref{E:Bo_Slip}, and $A$ satisfies \eqref{E:A_Cond}.
  Then
  \begin{multline} \label{E:Ave_TrT}
    \left|\int_{\Omega_\varepsilon}u_1\otimes u_2:\overline{A}\,dx-\varepsilon\int_\Gamma g(M_\tau u_1)\otimes(M_\tau u_2):A\,d\mathcal{H}^2\right| \\
    \leq cR_\varepsilon(u_1,u_2)\|A\|_{L^2(\Gamma)},
  \end{multline}
  where $c>0$ is a constant independent of $\varepsilon$, $u_1$, $u_2$, and $A$ and
  \begin{multline} \label{E:Ave_TrT_Re}
    R_\varepsilon(u_1,u_2) := \varepsilon\|u_1\|_{H^1(\Omega_\varepsilon)}\|u_2\|_{H^1(\Omega_\varepsilon)} \\
    +\left(\varepsilon\|u_1\|_{H^2(\Omega_\varepsilon)}+\varepsilon^{1/2}\|u_1\|_{L^2(\Omega_\varepsilon)}^{1/2}\|u_1\|_{H^2(\Omega_\varepsilon)}^{1/2}\right)\|u_2\|_{L^2(\Omega_\varepsilon)}.
  \end{multline}
\end{lemma}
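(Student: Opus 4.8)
The plan is to pull the left-hand integral back to a repeated integral over $\Gamma$ and the thin direction via the change of variables formula \eqref{E:CoV_Dom}, extract the surface term $\varepsilon\int_\Gamma g(M_\tau u_1)\otimes(M_\tau u_2):A$, and estimate the remainder in two independent pieces: the contribution of the Jacobian defect $J-1$, and the contribution of the oscillation of $u_1,u_2$ in the thin direction. Since $PA=AP=A$ on $\Gamma$ by \eqref{E:A_Cond}, we have $\overline A=\overline P\,\overline A\,\overline P$ in $\Omega_\varepsilon$, so $u_1\otimes u_2:\overline A=(\overline P u_1)\otimes(\overline P u_2):\overline A$ pointwise; writing the pull-back of $\overline P u_i$ (with the notation \eqref{E:Pull_Dom}) as $Pu_i^\sharp$ and using $\overline A^\sharp=A$, formula \eqref{E:CoV_Dom} gives
\[
  \int_{\Omega_\varepsilon}u_1\otimes u_2:\overline A\,dx=\int_\Gamma\left(\int_{\varepsilon g_0}^{\varepsilon g_1}\bigl(Pu_1^\sharp\otimes Pu_2^\sharp\bigr)J\,dr\right):A\,d\mathcal H^2 .
\]

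First I would handle the $(J-1)$-part. Since $|J-1|\le c\varepsilon$ on the relevant range by \eqref{E:Jac_Diff_03}, the corresponding inner integral is bounded pointwise on $\Gamma$ by $c\varepsilon\int_{\varepsilon g_0}^{\varepsilon g_1}|u_1^\sharp||u_2^\sharp|\,dr\le c\varepsilon^2 M(|u_1||u_2|)$ by \eqref{E:Def_Ave}, so by the Cauchy--Schwarz inequality, Lemma~\ref{L:Ave_Lp}, and \eqref{E:CoV_Equiv} the $(J-1)$-contribution is at most $c\varepsilon^{3/2}\|u_1\otimes u_2\|_{L^2(\Omega_\varepsilon)}\|A\|_{L^2(\Gamma)}$. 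An $L^2(\Omega_\varepsilon)$-estimate of $u_1\otimes u_2$ of the same shape as \eqref{E:Est_UU} --- obtained by decomposing $u_1=u_1^a+u_1^r$ and applying \eqref{E:Prod_Ua} (with the scalar function $|u_2|$, which lies in $H^1(\Omega_\varepsilon)$ with $\||u_2|\|_{H^1(\Omega_\varepsilon)}\le\|u_2\|_{H^1(\Omega_\varepsilon)}$) together with \eqref{E:Linf_Ur} --- then bounds this contribution by $cR_\varepsilon(u_1,u_2)\|A\|_{L^2(\Gamma)}$, using $\varepsilon\le1$ to absorb the powers $\varepsilon^2$ and $\varepsilon^{3/2}$ into $\varepsilon$ and $\varepsilon^{1/2}$.

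It remains to treat the $J\equiv1$ part. For $y\in\Gamma$ set $\rho_i(y,r):=P(y)u_i(y+rn(y))-M_\tau u_i(y)$; by \eqref{E:Def_Tan_Ave} we have $\int_{\varepsilon g_0(y)}^{\varepsilon g_1(y)}\rho_i(y,r)\,dr=0$, so expanding $Pu_i^\sharp=M_\tau u_i+\rho_i$ the cross terms integrate to zero in $r$ and
\[
  \int_{\varepsilon g_0}^{\varepsilon g_1}Pu_1^\sharp\otimes Pu_2^\sharp\,dr
  =\varepsilon g\,(M_\tau u_1)\otimes(M_\tau u_2)+\int_{\varepsilon g_0}^{\varepsilon g_1}\rho_1\otimes\rho_2\,dr\quad\text{on }\Gamma .
\]
Pairing with $A$ and integrating over $\Gamma$ produces exactly the desired surface term plus an error bounded by $\bigl\|\int_{\varepsilon g_0}^{\varepsilon g_1}\rho_1\otimes\rho_2\,dr\bigr\|_{L^2(\Gamma)}\|A\|_{L^2(\Gamma)}$. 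Here I would use the key structural fact that $\rho_i$ is the pull-back of $\overline P u_i-\overline{M_\tau u_i}$, and that by \eqref{E:Def_ExAve} the difference $u_i^a-\overline{M_\tau u_i}=(\overline{M_\tau u_i}\cdot\Psi_\varepsilon)\bar n$ is purely normal, whence $\overline P u_i-\overline{M_\tau u_i}=\overline P u_i^r$ with $u_i^r=u_i-u_i^a$. Consequently $|\rho_1(y,r)|\le\|u_1^r\|_{L^\infty(\Omega_\varepsilon)}$ uniformly in $y$, while by \eqref{E:CoV_Equiv} and Lemma~\ref{L:Wmp_UaUr},
\[
  \|\rho_2\|_{L^2(\Gamma\times(\varepsilon g_0,\varepsilon g_1))}\le c\|\overline P u_2^r\|_{L^2(\Omega_\varepsilon)}\le c\|u_2\|_{L^2(\Omega_\varepsilon)} .
\]
The Cauchy--Schwarz inequality in $r$ (the interval has length $\le c\varepsilon$) and then in $\Gamma$ gives $\bigl\|\int_{\varepsilon g_0}^{\varepsilon g_1}\rho_1\otimes\rho_2\,dr\bigr\|_{L^2(\Gamma)}\le c\varepsilon^{1/2}\|u_1^r\|_{L^\infty(\Omega_\varepsilon)}\|u_2\|_{L^2(\Omega_\varepsilon)}$, and \eqref{E:Linf_Ur} --- applicable since $u_1$ is divergence free and satisfies \eqref{E:Bo_Slip} and \eqref{E:Fric_Upper} holds --- bounds the right-hand side by $c\bigl(\varepsilon\|u_1\|_{H^2(\Omega_\varepsilon)}+\varepsilon^{1/2}\|u_1\|_{L^2(\Omega_\varepsilon)}^{1/2}\|u_1\|_{H^2(\Omega_\varepsilon)}^{1/2}\bigr)\|u_2\|_{L^2(\Omega_\varepsilon)}$, which is exactly the second group of terms in $R_\varepsilon(u_1,u_2)$. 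Combining the two pieces yields \eqref{E:Ave_TrT}.

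The main obstacle is the last part: spotting the cancellation produced by the vanishing $r$-average of $\rho_i$, which reduces the error to the genuinely quadratic term $\int\rho_1\otimes\rho_2\,dr$, and then exploiting the identity $\overline P u_i-\overline{M_\tau u_i}=\overline P u_i^r$ so that one factor can be measured in $L^\infty(\Omega_\varepsilon)$ through the residual estimate \eqref{E:Linf_Ur} (which is where the slip boundary conditions on $u_1$ enter), while the other factor --- for which only $H^1$-regularity is assumed --- is measured merely in $L^2(\Omega_\varepsilon)$ via Lemma~\ref{L:Wmp_UaUr}. Everything else is routine bookkeeping of powers of $\varepsilon$, using $\varepsilon\le1$ to fit the lower-order terms under $R_\varepsilon$.
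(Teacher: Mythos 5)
Your argument is correct: every step checks out, the hypotheses of \eqref{E:Prod_Ua}, \eqref{E:Linf_Ur}, and Lemma~\ref{L:Wmp_UaUr} are available exactly where you invoke them, and the powers of $\varepsilon$ assemble into $R_\varepsilon(u_1,u_2)$ as claimed. Your route differs from the paper's in the mechanism of the main decomposition. The paper replaces only \emph{one} factor by its average: it writes the difference as $J_1+J_2$ with $J_1$ coming from $u_{1,\tau}\mapsto\overline{M_\tau u_1}$ (so $u_{1,\tau}-\overline{M_\tau u_1}=\overline{P}u_1^r$ is measured in $L^\infty$ via \eqref{E:Linf_Ur} against $\|u_2\|_{L^2}$), and observes that no second replacement is needed because $\int_{\varepsilon g_0}^{\varepsilon g_1}(M_\tau u_1)\otimes u_{2,\tau}^\sharp\,dr=\varepsilon g(M_\tau u_1)\otimes(M_\tau u_2)$ holds exactly; the leftover $J_2$ is then purely the Jacobian defect on $\overline{M_\tau u_1}\otimes u_{2,\tau}$, estimated via the product inequality \eqref{E:Prod_Surf}. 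You instead replace \emph{both} factors symmetrically and use the orthogonality $\int_{\varepsilon g_0}^{\varepsilon g_1}\rho_i\,dr=0$ to cancel the cross terms, leaving the genuinely quadratic remainder $\int\rho_1\otimes\rho_2\,dr$, which you estimate as $L^\infty\times L^2$ through the same identity $\overline{P}u_i-\overline{M_\tau u_i}=\overline{P}u_i^r$. The trade-off is in the Jacobian part: by splitting off $J-1$ before any averaging you must control the full $\|u_1\otimes u_2\|_{L^2(\Omega_\varepsilon)}$, which forces a second appeal to the $u_1^a/u_1^r$ decomposition and produces an extra term $\varepsilon^2\|u_1\|_{H^2}\|u_2\|_{L^2}$ that fits under $R_\varepsilon$ only after using $\varepsilon\le1$; the paper's ordering keeps the Jacobian error attached to $\overline{M_\tau u_1}\otimes u_{2,\tau}$ and avoids this. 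Both proofs ultimately rest on the same two estimates (the average product bound and the residual $L^\infty$ bound), so the difference is organizational rather than substantive, but your cancellation trick is a clean alternative and arguably makes the structure of the error more transparent.
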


\begin{proof}
  In what follows, we write
  \begin{align*}
    u_{i,\tau}(x) := \overline{P}(x)u_i(x), \quad x\in\Omega_\varepsilon, \, i=1,2.
  \end{align*}
  By \eqref{E:A_Cond} and $P^T=P$ on $\Gamma$ we have
  \begin{align*}
    u_1\otimes u_2:\overline{A} = \overline{P}(u_1\otimes u_2)\overline{P}:\overline{A} = u_{1,\tau}\otimes u_{2,\tau}:\overline{A} \quad\text{in}\quad \Omega_\varepsilon.
  \end{align*}
  Using this equality we decompose the difference
  \begin{align} \label{Pf_TrT:Split}
    \int_{\Omega_\varepsilon}u_1\otimes u_2:\overline{A}\,dx-\varepsilon\int_\Gamma g(M_\tau u_1)\otimes(M_\tau u_2):A\,d\mathcal{H}^2 = J_1+J_2
  \end{align}
  into
  \begin{align*}
    J_1 &:= \int_{\Omega_\varepsilon}u_{1,\tau}\otimes u_{2,\tau}:\overline{A}\,dx-\int_{\Omega_\varepsilon}\Bigl(\overline{M_\tau u_1}\Bigr)\otimes u_{2,\tau}:\overline{A}\,dx, \\
    J_2 &:= \int_{\Omega_\varepsilon}\Bigl(\overline{M_\tau u_1}\Bigr)\otimes u_{2,\tau}:\overline{A}\,dx-\varepsilon\int_\Gamma g(M_\tau u_1)\otimes(M_\tau u_2):A\,d\mathcal{H}^2.
  \end{align*}
  Let $u_1^a$ be the average part of $u_1$ given by \eqref{E:Def_ExAve} and $u_1^r:=u_1-u_1^a$.
  Since
  \begin{align*}
    u_{1,\tau}-\overline{M_\tau u_1} = \overline{P}u_1-\overline{P}u_1^a = \overline{P}u_1^r, \quad u_{2,\tau} = \overline{P}u_2 \quad\text{in}\quad \Omega_\varepsilon
  \end{align*}
  and $|Pa|\leq|a|$ on $\Gamma$ for $a\in\mathbb{R}^3$,
  \begin{align*}
    |J_1| = \left|\int_{\Omega_\varepsilon}\Bigl(\overline{P}u_1^r\Bigr)\otimes u_{2,\tau}:\overline{A}\,dx\right| \leq c\|u_1^r\|_{L^\infty(\Omega_\varepsilon)}\|u_2\|_{L^2(\Omega_\varepsilon)}\left\|\overline{A}\right\|_{L^2(\Omega_\varepsilon)}.
  \end{align*}
  We apply \eqref{E:Con_Lp} and \eqref{E:Linf_Ur} to the right-hand side to obtain
  \begin{align} \label{Pf_TrT:I1}
    |J_1| \leq c\left(\varepsilon\|u_1\|_{H^2(\Omega_\varepsilon)}+\varepsilon^{1/2}\|u_1\|_{L^2(\Omega_\varepsilon)}^{1/2}\|u_1\|_{H^2(\Omega_\varepsilon)}^{1/2}\right)\|u_2\|_{L^2(\Omega_\varepsilon)}\|A\|_{L^2(\Gamma)}.
  \end{align}
  Here we used the inequalities \eqref{E:Fric_Upper} and the conditions on $u_1$ to apply \eqref{E:Linf_Ur}.

  Let us estimate $J_2$.
  By $M_\tau u_2=Mu_{2,\tau}$ on $\Gamma$, \eqref{E:CoV_Dom}, and \eqref{E:Def_Ave} we have
  \begin{align*}
    J_2 = \int_\Gamma (M_\tau u_1)\otimes\left(\int_{\varepsilon g_0}^{\varepsilon g_1}u_{2,\tau}^\sharp(J-1)\,dr\right):A\,d\mathcal{H}^2.
  \end{align*}
  To the right-hand side we apply \eqref{E:Jac_Diff_03}, \eqref{E:CoV_Equiv},
  \begin{align*}
    |M_\tau u_1| = |PMu_1| \leq |Mu_1| \quad\text{on}\quad \Gamma, \quad |u_{2,\tau}| = \left|\overline{P}u_2\right| \leq |u_2| \quad\text{in}\quad \Omega_\varepsilon,
  \end{align*}
  and H\"{o}lder's inequality to deduce that
  \begin{align*}
    |J_2| \leq c\varepsilon\int_{\Omega_\varepsilon}\left|\overline{M_\tau u_1}\right||u_{2,\tau}|\left|\overline{A}\right|\,dx \leq c\varepsilon\left\|\,\left|\overline{Mu_1}\right|\,|u_2|\,\right\|_{L^2(\Omega_\varepsilon)}\left\|\overline{A}\right\|_{L^2(\Omega_\varepsilon)}.
  \end{align*}
  Moreover, from \eqref{E:Ave_Lp_Surf}, \eqref{E:Ave_Wmp_Surf}, and \eqref{E:Prod_Surf} it follows that
  \begin{align*}
    \left\|\,\left|\overline{Mu_1}\right|\,|u_2|\,\right\|_{L^2(\Omega_\varepsilon)} &\leq c\|Mu_1\|_{L^2(\Gamma)}^{1/2}\|Mu_1\|_{H^1(\Gamma)}^{1/2}\|u_2\|_{L^2(\Omega_\varepsilon)}^{1/2}\|u_2\|_{H^1(\Omega_\varepsilon)}^{1/2} \\
    &\leq c\varepsilon^{-1/2}\|u_1\|_{L^2(\Omega_\varepsilon)}^{1/2}\|u_1\|_{H^1(\Omega_\varepsilon)}^{1/2}\|u_2\|_{L^2(\Omega_\varepsilon)}^{1/2}\|u_2\|_{H^1(\Omega_\varepsilon)}^{1/2}.
  \end{align*}
  We apply this inequality and \eqref{E:Con_Lp} to the above estimate for $J_2$ to get
  \begin{align} \label{Pf_TrT:I2}
    \begin{aligned}
      |J_2| &\leq c\varepsilon\|u_1\|_{L^2(\Omega_\varepsilon)}^{1/2}\|u_1\|_{H^1(\Omega_\varepsilon)}^{1/2}\|u_2\|_{L^2(\Omega_\varepsilon)}^{1/2}\|u_2\|_{H^1(\Omega_\varepsilon)}^{1/2}\|A\|_{L^2(\Gamma)} \\
      &\leq c\varepsilon\|u_1\|_{H^1(\Omega_\varepsilon)}\|u_2\|_{H^1(\Omega_\varepsilon)}\|A\|_{L^2(\Gamma)}.
    \end{aligned}
  \end{align}
  By \eqref{Pf_TrT:Split}--\eqref{Pf_TrT:I2} we obtain \eqref{E:Ave_TrT} with $R_\varepsilon(u_1,u_2)$ given by \eqref{E:Ave_TrT_Re}.
\end{proof}

\begin{lemma} \label{L:Ave_TrN}
  Let $u_1,u_2\in H^1(\Omega_\varepsilon)^3$ and $v\in H^1(\Gamma)^3$.
  Suppose that $u_2$ satisfies \eqref{E:Bo_Imp} on $\Gamma_\varepsilon^0$ or on $\Gamma_\varepsilon^1$.
  Then there exists a constant $c>0$ independent of $\varepsilon$, $u_1$, $u_2$, and $v$ such that
  \begin{align} \label{E:Ave_TrN}
    \left|\int_{\Omega_\varepsilon}u_1\otimes u_2:\bar{v}\otimes\bar{n}\,dx\right| \leq c\varepsilon\|u_1\|_{H^1(\Omega_\varepsilon)}\|u_2\|_{H^1(\Omega_\varepsilon)}\|v\|_{H^1(\Gamma)}.
  \end{align}
\end{lemma}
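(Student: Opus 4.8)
The plan is to estimate the trilinear form $\int_{\Omega_\varepsilon}u_1\otimes u_2:\bar v\otimes\bar n\,dx$ by exploiting the fact that $\bar n=\nabla d$ is the normal direction and that $u_2\cdot\bar n$ is small in $L^2(\Omega_\varepsilon)$ whenever $u_2$ satisfies the impermeable boundary condition on one of the components of $\Gamma_\varepsilon$. Concretely, since $v$ is an arbitrary vector field in $H^1(\Gamma)^3$ (not necessarily tangential), I first observe that
\begin{align*}
  u_1\otimes u_2:\bar v\otimes\bar n = (u_1\cdot\bar v)(u_2\cdot\bar n) \quad\text{in}\quad \Omega_\varepsilon,
\end{align*}
so the integral reduces to $\int_{\Omega_\varepsilon}(u_1\cdot\bar v)(u_2\cdot\bar n)\,dx$. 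By Hölder's inequality this is bounded by $\|u_1\cdot\bar v\|_{L^2(\Omega_\varepsilon)}\|u_2\cdot\bar n\|_{L^2(\Omega_\varepsilon)}$, and the second factor is $O(\varepsilon)$ by the Poincar\'e-type estimate \eqref{E:Poin_Nor} in Lemma \ref{L:Poin_Nor}, giving $\|u_2\cdot\bar n\|_{L^2(\Omega_\varepsilon)}\le c\varepsilon\|u_2\|_{H^1(\Omega_\varepsilon)}$.

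Next I would control $\|u_1\cdot\bar v\|_{L^2(\Omega_\varepsilon)}\le\||u_1|\,|\bar v|\|_{L^2(\Omega_\varepsilon)}$. Here the natural tool is the product estimate \eqref{E:Prod_Surf} in Lemma \ref{L:Prod}, which bounds $\|\bar\eta\varphi\|_{L^2(\Omega_\varepsilon)}$ for $\eta\in H^1(\Gamma)$ and $\varphi\in H^1(\Omega_\varepsilon)$; applying it componentwise with $\eta$ equal to the components of $v$ and $\varphi$ equal to the components of $u_1$ yields
\begin{align*}
  \||u_1|\,|\bar v|\|_{L^2(\Omega_\varepsilon)} \le c\|v\|_{L^2(\Gamma)}^{1/2}\|v\|_{H^1(\Gamma)}^{1/2}\|u_1\|_{L^2(\Omega_\varepsilon)}^{1/2}\|u_1\|_{H^1(\Omega_\varepsilon)}^{1/2} \le c\|v\|_{H^1(\Gamma)}\|u_1\|_{H^1(\Omega_\varepsilon)}.
\end{align*}
Combining the two bounds gives exactly $\left|\int_{\Omega_\varepsilon}u_1\otimes u_2:\bar v\otimes\bar n\,dx\right|\le c\varepsilon\|u_1\|_{H^1(\Omega_\varepsilon)}\|u_2\|_{H^1(\Omega_\varepsilon)}\|v\|_{H^1(\Gamma)}$, which is \eqref{E:Ave_TrN}.

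There is no serious obstacle here; the argument is a short two-step Hölder decomposition. The only point requiring a little care is that $v$ is not assumed tangential, so I must not prematurely write $\bar v\cdot\bar n$; the identity $u_1\otimes u_2:\bar v\otimes\bar n=(u_1\cdot\bar v)(u_2\cdot\bar n)$ holds regardless, and the smallness comes entirely from the factor $u_2\cdot\bar n$ via the impermeable boundary condition, not from any orthogonality of $v$ and $n$. A minor alternative, avoiding Lemma \ref{L:Prod}, would be to bound $\|u_1\cdot\bar v\|_{L^2(\Omega_\varepsilon)}\le\|u_1\|_{L^4(\Omega_\varepsilon)}\|\bar v\|_{L^4(\Omega_\varepsilon)}$ and then use $\|\bar v\|_{L^4(\Omega_\varepsilon)}\le c\varepsilon^{1/4}\|v\|_{L^4(\Gamma)}\le c\varepsilon^{1/4}\|v\|_{H^1(\Gamma)}$ together with an $H^1(\Omega_\varepsilon)\hookrightarrow L^4(\Omega_\varepsilon)$ embedding with $\varepsilon$-dependent constant, but the route through Lemma \ref{L:Prod} keeps all constants uniform in $\varepsilon$ cleanly and is the one I would present.
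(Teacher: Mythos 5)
Your proposal is correct and follows essentially the same route as the paper's proof: the identity $u_1\otimes u_2:\bar v\otimes\bar n=(u_1\cdot\bar v)(u_2\cdot\bar n)$, H\"older, the estimate \eqref{E:Poin_Nor} for $\|u_2\cdot\bar n\|_{L^2(\Omega_\varepsilon)}$, and the product estimate \eqref{E:Prod_Surf} for $\|u_1\cdot\bar v\|_{L^2(\Omega_\varepsilon)}$ are exactly the steps used there. Nothing is missing.
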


\begin{proof}
  By $u_1\otimes u_2:\bar{v}\otimes\bar{n}=(u_1\cdot\bar{v})(u_2\cdot\bar{n})$ in $\Omega_\varepsilon$, \eqref{E:Poin_Nor}, and \eqref{E:Prod_Surf} we have
  \begin{align*}
    \left|\int_{\Omega_\varepsilon}u_1\otimes u_2:\bar{v}\otimes\bar{n}\,dx\right| &\leq \|u_1\cdot\bar{v}\|_{L^2(\Omega_\varepsilon)}\|u_2\cdot\bar{n}\|_{L^2(\Omega_\varepsilon)} \\
    &\leq c\varepsilon\|u_1\|_{L^2(\Omega_\varepsilon)}^{1/2}\|u_1\|_{H^1(\Omega_\varepsilon)}^{1/2}\|v\|_{L^2(\Gamma)}^{1/2}\|v\|_{H^1(\Gamma)}^{1/2}\|u_2\|_{H^1(\Omega_\varepsilon)} \\
    &\leq c\varepsilon\|u_1\|_{H^1(\Omega_\varepsilon)}\|u_2\|_{H^1(\Omega_\varepsilon)}\|v\|_{H^1(\Gamma)}.
  \end{align*}
  Here we used the condition on $u_2$ to apply \eqref{E:Poin_Nor} to $\|u_2\cdot\bar{n}\|_{L^2(\Omega_\varepsilon)}$.
\end{proof}

\section{Weighted solenoidal spaces on a closed surface} \label{S:WSol}
The purpose of this section is to study weighted solenoidal spaces
\begin{align*}
  \mathcal{X}_{g\sigma}(\Gamma,T\Gamma) = \{v\in\mathcal{X}(\Gamma,T\Gamma) \mid \text{$\mathrm{div}_\Gamma(gv)=0$ on $\Gamma$}\}, \quad \mathcal{X} = L^2,H^1
\end{align*}
on a closed surface $\Gamma$ with a given function $g$ on $\Gamma$.
These function spaces play an important role in the study of a singular limit problem for \eqref{E:NS_CTD}.

Throughout this section, let $\Gamma$ be a two-dimensional closed, connected, and oriented surface in $\mathbb{R}^3$ of class $C^2$.
We use the notations given in Section \ref{SS:Pre_Surf}.

\subsection{Ne\v{c}as inequality on a closed surface} \label{SS:WS_Nec}
Let $q\in L^2(\Gamma)$.
We consider $q$ and its weak tangential gradient in $H^{-1}(\Gamma)$ and $H^{-1}(\Gamma,T\Gamma)$ by \eqref{E:L2_Hin} and \eqref{E:TGr_HinT}.
Then it immediately follows that
\begin{align*}
  |\langle q,\xi\rangle_\Gamma| \leq \|q\|_{L^2(\Gamma)}\|\xi\|_{H^1(\Gamma)}, \quad |[\nabla_\Gamma q,v]_{T\Gamma}| \leq c\|q\|_{L^2(\Gamma)}\|v\|_{H^1(\Gamma)}
\end{align*}
for all $\xi\in H^1(\Gamma)$ and $v\in H^1(\Gamma,T\Gamma)$, and thus
\begin{align} \label{E:HinL2_Bo}
  \|q\|_{H^{-1}(\Gamma)}+\|\nabla_\Gamma q\|_{H^{-1}(\Gamma,T\Gamma)} \leq c\|q\|_{L^2(\Gamma)}.
\end{align}
For bounded Lipschitz domains in $\mathbb{R}^m$, $m\in\mathbb{N}$ the inverse inequality is also valid and known as the Ne\v{c}as inequality (see \cite{BoFa13}*{Theorem IV.1.1} and \cite{Ne12}*{Chapter 3, Lemma 7.1}).
Let us show the Ne\v{c}as inequality on the closed surface $\Gamma$.

\begin{lemma} \label{L:Necas_Surf}
  There exists a constant $c>0$ such that
  \begin{align} \label{E:Necas_Surf}
    \|q\|_{L^2(\Gamma)} \leq c\left(\|q\|_{H^{-1}(\Gamma)}+\|\nabla_\Gamma q\|_{H^{-1}(\Gamma,T\Gamma)}\right)
  \end{align}
  for all $q\in L^2(\Gamma)$.
\end{lemma}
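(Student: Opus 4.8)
The plan is to deduce the Ne\v{c}as inequality on $\Gamma$ from the classical Ne\v{c}as inequality on bounded Lipschitz domains in $\mathbb{R}^2$ via a partition of unity subordinate to a finite atlas of $\Gamma$. First I would fix a finite family of coordinate patches $\{U_k\}_{k=1}^N$ covering $\Gamma$, with each $U_k$ diffeomorphic (via a $C^2$ chart $\mu_k\colon V_k\to U_k$, $V_k\subset\mathbb{R}^2$ open and bounded with Lipschitz boundary) to a neighborhood of $\Gamma$, and a smooth partition of unity $\{\chi_k\}$ on $\Gamma$ with $\operatorname{supp}\chi_k\subset U_k$ and $\sum_k\chi_k\equiv1$. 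For $q\in L^2(\Gamma)$ we write $q=\sum_k\chi_k q$, so $\|q\|_{L^2(\Gamma)}\le\sum_k\|\chi_k q\|_{L^2(\Gamma)}$, and it suffices to bound each $\|\chi_k q\|_{L^2(\Gamma)}$ by the right-hand side of \eqref{E:Necas_Surf}. Pulling $\chi_k q$ back by $\mu_k$ gives a compactly supported $L^2$ function $q_k:=(\chi_k q)\circ\mu_k$ on $V_k$; the flat Ne\v{c}as inequality on $V_k$ gives $\|q_k\|_{L^2(V_k)}\le c\big(\|q_k\|_{H^{-1}(V_k)}+\|\nabla q_k\|_{H^{-1}(V_k)}\big)$, and since the chart maps and weights are $C^2$ (hence $C^1$ with bounded derivatives), the pullback is a bounded operator between the corresponding Sobolev spaces in both directions, so $\|\chi_k q\|_{L^2(\Gamma)}\le c\|q_k\|_{L^2(V_k)}$ and $\|q_k\|_{H^{-1}(V_k)}\le c\|\chi_k q\|_{H^{-1}(\Gamma)}$.

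The only genuinely nonroutine point is controlling the flat gradient $\nabla q_k$ in $H^{-1}(V_k)$ by the surface gradient $\nabla_\Gamma q$ in $H^{-1}(\Gamma,T\Gamma)$. The subtlety is twofold: (i) $\nabla q_k$ is the \emph{Euclidean} gradient of the pulled-back product $\chi_k q$, whereas $\nabla_\Gamma q$ lives intrinsically on $\Gamma$; and (ii) the localization introduces a commutator term, since $\nabla_\Gamma(\chi_k q)=\chi_k\nabla_\Gamma q+q\nabla_\Gamma\chi_k$, and this last term must be absorbed into $\|q\|_{H^{-1}(\Gamma)}$. For (ii), note $q\nabla_\Gamma\chi_k=\nabla_\Gamma\chi_k\cdot q$ is, at the $H^{-1}$ level, a bounded multiplication of $q\in H^{-1}(\Gamma)$ by the $C^1$ (bounded, with bounded derivative) function $\nabla_\Gamma\chi_k$ — this uses exactly the definition \eqref{E:Def_Mul_Hin} of multiplication of an $H^{-1}$ element by a $W^{1,\infty}$ function — so $\|q\nabla_\Gamma\chi_k\|_{H^{-1}(\Gamma,T\Gamma)}\le c\|q\|_{H^{-1}(\Gamma)}$. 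Hence $\|\nabla_\Gamma(\chi_k q)\|_{H^{-1}(\Gamma,T\Gamma)}\le c\big(\|\nabla_\Gamma q\|_{H^{-1}(\Gamma,T\Gamma)}+\|q\|_{H^{-1}(\Gamma)}\big)$. For (i), working in the chart, the intrinsic gradient of a function $\varphi$ on $U_k$ pulls back to $g_k^{ij}\partial_j(\varphi\circ\mu_k)$ where $(g_k^{ij})$ is the inverse first fundamental form — a $C^1$, uniformly positive-definite matrix on $\overline{V_k}$ — and testing against $H^1_0(V_k)$ vector fields and using that both the metric coefficients and the volume density are $C^1$ and bounded below, one converts $\|\nabla_\Gamma(\chi_k q)\|_{H^{-1}}$-type control on $U_k$ into $\|\nabla q_k\|_{H^{-1}(V_k)}$-type control on $V_k$. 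This is where I expect the bookkeeping to be heaviest, but it is a standard chart-transfer argument for first-order Sobolev spaces and their duals, requiring no more than the $C^2$-regularity of $\Gamma$ already assumed in this section.

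Assembling the pieces: for each $k$,
\begin{align*}
  \|\chi_k q\|_{L^2(\Gamma)} \le c\|q_k\|_{L^2(V_k)} \le c\big(\|q_k\|_{H^{-1}(V_k)}+\|\nabla q_k\|_{H^{-1}(V_k)}\big) \le c\big(\|q\|_{H^{-1}(\Gamma)}+\|\nabla_\Gamma q\|_{H^{-1}(\Gamma,T\Gamma)}\big),
\end{align*}
and summing over the finitely many $k$ yields \eqref{E:Necas_Surf}. A cleaner alternative worth mentioning: since $\Gamma$ is compact without boundary, one can instead argue by contradiction using Lemma \ref{L:RK_Surf} — if \eqref{E:Necas_Surf} fails there is a sequence $q_m\in L^2(\Gamma)$ with $\|q_m\|_{L^2(\Gamma)}=1$ but $\|q_m\|_{H^{-1}(\Gamma)}+\|\nabla_\Gamma q_m\|_{H^{-1}(\Gamma,T\Gamma)}\to0$; a weak limit $q$ in $L^2(\Gamma)$ then satisfies $q=0$ in $H^{-1}(\Gamma)$, hence $q=0$, while the local Ne\v{c}as inequality (applied in charts, as above) upgrades the weak convergence to strong convergence $q_m\to0$ in $L^2(\Gamma)$, contradicting $\|q_m\|_{L^2(\Gamma)}=1$. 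Either route reduces to the flat Ne\v{c}as inequality plus the chart-transfer lemma; I would present the direct partition-of-unity version since it gives the constant without a compactness detour, but the contradiction argument is the quicker write-up if the local estimate is quoted as a black box.
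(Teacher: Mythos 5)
Your proposal is correct and follows essentially the same route as the paper: localize with a partition of unity, absorb the commutator term $q\nabla_\Gamma\chi_k$ into $\|q\|_{H^{-1}(\Gamma)}$ via multiplication by a $W^{1,\infty}$ function, and reduce to the flat Ne\v{c}as inequality by a chart transfer whose only delicate point is converting $\|\nabla_\Gamma(\chi_k q)\|_{H^{-1}(\Gamma,T\Gamma)}$ into control of the Euclidean gradient in the chart. The only cosmetic differences are that the paper extends the localized pullback by zero and invokes the Ne\v{c}as inequality on all of $\mathbb{R}^2$ (proved by Fourier transform, at the cost of an extra cutoff in the gradient estimate), and that the "standard bookkeeping" you defer is exactly the explicit construction of surface test functions of the form $\xi/\sqrt{\det\theta}$ and tangential fields $X=(\xi/\sqrt{\det\theta})\sum_i\varphi_i\,\partial_{s_i}\mu$ that the paper carries out in detail.
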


To prove Lemma \ref{L:Necas_Surf} we employ a localization argument.
In Appendix \ref{S:Ap_Aux} we give auxiliary lemmas for calculations under a local coordinate system of $\Gamma$.
We also apply the Ne\v{c}as inequality on $\mathbb{R}^2$.
Let $H^{-1}(\mathbb{R}^2)$ be the dual space of $H^1(\mathbb{R}^2)$ (via the $L^2(\mathbb{R}^2)$-inner product) and $\langle\cdot,\cdot\rangle_{\mathbb{R}^2}$ the duality product between $H^{-1}(\mathbb{R}^2)$ and $H^1(\mathbb{R}^2)$.
We consider $q^\flat\in L^2(\mathbb{R}^2)$ in $H^{-1}(\mathbb{R}^2)$ by
\begin{align*}
  \langle q^\flat,\xi\rangle_{\mathbb{R}^2} := (q^\flat,\xi)_{L^2(\mathbb{R}^2)}, \quad \xi\in H^1(\mathbb{R}^2)
\end{align*}
and define $\nabla_sq^\flat\in H^{-1}(\mathbb{R}^2)^2$ by
\begin{align*}
  \langle\nabla_sq^\flat,\varphi\rangle_{\mathbb{R}^2} := -(q^\flat,\mathrm{div}_s\varphi)_{L^2(\mathbb{R}^2)}, \quad \varphi \in H^1(\mathbb{R}^2)^2.
\end{align*}
Here $\mathrm{div}_s$ is the divergence operator with respect to $s\in\mathbb{R}^2$, i.e.
\begin{align*}
  \mathrm{div}_s\varphi := \partial_{s_1}\varphi_1+\partial_{s_2}\varphi_2 \quad\text{on}\quad \mathbb{R}^2, \quad \varphi =
  \begin{pmatrix}
    \varphi_1 \\
    \varphi_2
  \end{pmatrix}.
\end{align*}
Then we have the following Ne\v{c}as inequality on $\mathbb{R}^2$ as an easy consequence of the characterization of $H^{-1}(\mathbb{R}^2)$ by the Fourier transform.

\begin{lemma} \label{L:Necas_R2}
  There exists a constant $c>0$ such that
  \begin{align} \label{E:Necas_R2}
    \|q^\flat\|_{L^2(\mathbb{R}^2)} \leq c\left(\|q^\flat\|_{H^{-1}(\mathbb{R}^2)}+\|\nabla_sq^\flat\|_{H^{-1}(\mathbb{R}^2)}\right)
  \end{align}
  for all $q^\flat\in L^2(\mathbb{R}^2)$.
\end{lemma}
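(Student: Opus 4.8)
The statement to prove is the Nečas inequality on $\mathbb{R}^2$, namely \eqref{E:Necas_R2}. The plan is to reduce it to a pointwise Fourier-multiplier estimate and then invoke the Plancherel theorem. First I would recall that, by the standard characterization via the Fourier transform, the norm $\|f\|_{H^{-1}(\mathbb{R}^2)}$ is equivalent to $\bigl\|(1+|\xi|^2)^{-1/2}\widehat{f}(\xi)\bigr\|_{L^2(\mathbb{R}^2)}$, where $\widehat{\,\cdot\,}$ denotes the Fourier transform on $\mathbb{R}^2$. This is the only nontrivial input, and it is completely classical.

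Given that, I would write, for $q^\flat\in L^2(\mathbb{R}^2)$, the transforms of the derivatives: $\widehat{\partial_{s_j}q^\flat}(\xi)=i\xi_j\widehat{q^\flat}(\xi)$, so that the vector $\nabla_sq^\flat$ has Fourier transform $i\xi\,\widehat{q^\flat}(\xi)$ componentwise. Then the equivalence above gives
\begin{align*}
  \|q^\flat\|_{H^{-1}(\mathbb{R}^2)}^2+\|\nabla_sq^\flat\|_{H^{-1}(\mathbb{R}^2)}^2 \geq c\int_{\mathbb{R}^2}\frac{1+|\xi|^2}{1+|\xi|^2}|\widehat{q^\flat}(\xi)|^2\,d\xi = c\|\widehat{q^\flat}\|_{L^2(\mathbb{R}^2)}^2 = c\|q^\flat\|_{L^2(\mathbb{R}^2)}^2,
\end{align*}
using Plancherel's theorem in the last equality, where the first inequality follows since $(1+|\xi|^2)^{-1}+|\xi|^2(1+|\xi|^2)^{-1}=1$. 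Taking square roots and adjusting the constant yields \eqref{E:Necas_R2}. One small technical point worth noting is that for the term $\|\nabla_sq^\flat\|_{H^{-1}(\mathbb{R}^2)}^2$ one should use $\sum_{j}|\xi_j|^2=|\xi|^2$, so the sum of the multiplier weights for $q^\flat$ and for the two components of $\nabla_sq^\flat$ is exactly $(1+|\xi|^2)^{-1}(1+|\xi|^2)=1$; hence no loss occurs.

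I do not anticipate a serious obstacle here: the entire content is the Fourier-side computation, and the only genuinely external fact is the Fourier characterization of $H^{-1}(\mathbb{R}^2)$. If one wanted to be self-contained, the mild nuisance would be justifying that characterization, i.e. that the $H^{-1}$ norm defined by duality with $H^1(\mathbb{R}^2)$ coincides (up to constants) with the weighted $L^2$ norm of $\widehat{q^\flat}$; this is standard and can be cited. The only place to be a little careful is to keep the duality pairings consistent with the sign conventions fixed just above the statement (i.e. $\langle\nabla_sq^\flat,\varphi\rangle_{\mathbb{R}^2}=-(q^\flat,\mathrm{div}_s\varphi)_{L^2(\mathbb{R}^2)}$), but this only affects the computation of $\widehat{\nabla_sq^\flat}$ by an irrelevant factor of $i$ and does not change any norm. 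This lemma will then feed into the localization argument for Lemma \ref{L:Necas_Surf}.
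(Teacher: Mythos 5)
Your proof is correct and follows exactly the route the paper has in mind: the paper does not write out an argument for this lemma but states that it is "an easy consequence of the characterization of $H^{-1}(\mathbb{R}^2)$ by the Fourier transform" and refers to \cite{BoFa13}*{Proposition IV.1.2}, which is precisely the Plancherel computation you carry out. Your multiplier identity $(1+|\xi|^2)^{-1}+|\xi|^2(1+|\xi|^2)^{-1}=1$ and the remark about the sign convention for $\nabla_s$ are both accurate, so nothing is missing.
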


For the proof of Lemma \ref{L:Necas_R2} we refer to \cite{BoFa13}*{Proposition IV.1.2} (see also the proof of \cite{Ne12}*{Chapter 3, Lemma 7.1}).
Now let us prove Lemma \ref{L:Necas_Surf}.

\begin{proof}[Proof of Lemma \ref{L:Necas_Surf}]
  First we note that it suffices to show \eqref{E:Necas_Surf} when $q$ is compactly supported in a relatively open subset of $\Gamma$ on which we can take a local coordinate system.
  To see this, let $q\in L^2(\Gamma)$ and $\eta\in C^2(\Gamma)$.
  For $\xi\in H^1(\Gamma)$ we have
  \begin{align*}
    |\langle \eta q,\xi \rangle_\Gamma| = |\langle q,\eta\xi \rangle_\Gamma| &\leq \|q\|_{H^{-1}(\Gamma)}\|\eta\xi\|_{H^1(\Gamma)} \\
    &\leq c\|\eta\|_{W^{1,\infty}(\Gamma)}\|q\|_{H^{-1}(\Gamma)}\|\xi\|_{H^1(\Gamma)},
  \end{align*}
  where $c>0$ is a constant independent of $q$, $\eta$, and $\xi$.
  Also,
  \begin{align*}
    [\nabla_\Gamma(\eta q),v]_{T\Gamma} &= -(\eta q,\mathrm{div}_\Gamma v)_{L^2(\Gamma)} = -\bigl(q,\mathrm{div}_\Gamma(\eta v)\bigr)_{L^2(\Gamma)}+(q,\nabla_\Gamma\eta\cdot v)_{L^2(\Gamma)} \\
    &= [\nabla_\Gamma q,\eta v]_{T\Gamma}+\langle q,\nabla_\Gamma\eta\cdot v\rangle_\Gamma
  \end{align*}
  for all $v\in H^1(\Gamma,T\Gamma)$ by \eqref{E:TGr_HinT} (note that $\eta v\in H^1(\Gamma,T\Gamma)$) and thus
  \begin{align*}
    |[\nabla_\Gamma(\eta q),v]_{T\Gamma}| &\leq \|\nabla_\Gamma q\|_{H^{-1}(\Gamma,T\Gamma)}\|\eta v\|_{H^1(\Gamma)}+\|q\|_{H^{-1}(\Gamma)}\|\nabla_\Gamma\eta\cdot v\|_{H^1(\Gamma)} \\
    &\leq c\|\eta\|_{W^{2,\infty}(\Gamma)}\left(\|q\|_{H^{-1}(\Gamma)}+\|\nabla_\Gamma q\|_{H^{-1}(\Gamma,T\Gamma)}\right)\|v\|_{H^1(\Gamma)}.
  \end{align*}
  From the above inequalities it follows that
  \begin{align*}
    \|\eta q\|_{H^{-1}(\Gamma)} &\leq c\|\eta\|_{W^{1,\infty}(\Gamma)}\|q\|_{H^{-1}(\Gamma)}, \\
    \|\nabla_\Gamma(\eta q)\|_{H^{-1}(\Gamma,T\Gamma)} &\leq c\|\eta\|_{W^{2,\infty}(\Gamma)}\left(\|q\|_{H^{-1}(\Gamma)}+\|\nabla_\Gamma q\|_{H^{-1}(\Gamma,T\Gamma)}\right).
  \end{align*}
  Hence if we localize $q$ by a partition of unity on $\Gamma$ consisting of functions in $C^2(\Gamma)$ (such functions exist by the $C^2$-regularity of $\Gamma$) and prove \eqref{E:Necas_Surf} for each localized function, then we can get \eqref{E:Necas_Surf} for $q$ by the above inequalities.

  Now we assume that $q\in L^2(\Gamma)$ is supported in $\mu(\mathcal{K})$, where $\mu\colon U\to\mathbb{R}^2$ is a $C^2$ local parametrization of $\Gamma$ with an open set $U$ in $\mathbb{R}^2$ and $\mathcal{K}$ is a compact subset of $U$.
  Let $\theta=(\theta_{ij})_{i,j}$ be the Riemannian metric of $\Gamma$ given by \eqref{E:Def_Met} and $\theta^{-1}=(\theta^{ij})_{i,j}$ its inverse.
  We define $q^\flat:=q\circ\mu$ on $U$.
  Then $q^\flat\in L^2(U)$ by Lemma \ref{L:Lp_Loc} and we extend $q^\flat$ to $\mathbb{R}^2$ by zero outside $U$ to get $q^\flat\in L^2(\mathbb{R}^2)$.
  Hence it follows from \eqref{E:Necas_R2} and \eqref{E:Lp_Loc} that
  \begin{align} \label{Pf_Nec:Whole}
    \|q\|_{L^2(\Gamma)} \leq c\|q^\flat\|_{L^2(U)} = c\|q^\flat\|_{L^2(\mathbb{R}^2)} \leq c\left(\|q^\flat\|_{H^{-1}(\mathbb{R}^2)}+\|\nabla_sq^\flat\|_{H^{-1}(\mathbb{R}^2)}\right).
  \end{align}
  Let us estimate the right-hand side of \eqref{Pf_Nec:Whole} by that of \eqref{E:Necas_Surf}.
  We first prove
  \begin{align} \label{Pf_Nec:Q_Hin}
    \|q^\flat\|_{H^{-1}(\mathbb{R}^2)} \leq c\|q\|_{H^{-1}(\Gamma)}.
  \end{align}
  To this end, for all $\xi\in H^1(\mathbb{R}^2)$ we show that
  \begin{align} \label{Pf_Nec:Q_Dual}
    |\langle q^\flat,\xi\rangle_{\mathbb{R}^2}| = |(q^\flat,\xi)_{L^2(\mathbb{R}^2)}| \leq c\|q\|_{H^{-1}(\Gamma)}\|\xi\|_{H^1(\mathbb{R}^2)}.
  \end{align}
  Since $C_c^1(\mathbb{R}^2)$ is dense in $H^1(\mathbb{R}^2)$, it is sufficient to show \eqref{Pf_Nec:Q_Dual} for all $\xi\in C_c^1(\mathbb{R}^2)$.
  Moreover, we may assume that $\xi$ is supported in $\mathcal{K}$ since $q^\flat$ is so.
  Let $\eta$ be a function on $\mu(\mathcal{K})\subset\Gamma$ defined by
  \begin{align} \label{Pf_Nec:Def_Eta}
    \eta(\mu(s)) := \frac{\xi(s)}{\sqrt{\det\theta(s)}}, \quad s\in\mathcal{K}.
  \end{align}
  We extend $\eta$ to $\Gamma$ by zero outside $\mu(\mathcal{K})$.
  Then $\eta\in C^1(\Gamma)\subset H^1(\Gamma)$ by \eqref{E:Metric} and the fact that $\xi\in C_c^1(\mathbb{R}^2)$ is supported in $\mathcal{K}$.
  Moreover,
  \begin{align*}
    \langle q^\flat,\xi\rangle_{\mathbb{R}^2} &= (q^\flat,\xi)_{L^2(\mathbb{R}^2)} = \int_{\mathcal{K}} q^\flat\left(\frac{\xi}{\sqrt{\det\theta}}\right)\sqrt{\det\theta}\,ds \\
    &= \int_{\mu(\mathcal{K})}q\eta\,d\mathcal{H}^2 = (q,\eta)_{L^2(\Gamma)} = \langle q,\eta\rangle_\Gamma
  \end{align*}
  and thus
  \begin{align} \label{Pf_Nec:QD_In}
    |\langle q^\flat,\xi\rangle_{\mathbb{R}^2}| = |\langle q,\eta\rangle_\Gamma| \leq \|q\|_{H^{-1}(\Gamma)}\|\eta\|_{H^1(\Gamma)}.
  \end{align}
  Let us estimate the $H^1(\Gamma)$-norm of $\eta$.
  Since $\eta$ is supported in $\mu(\mathcal{K})$,
  \begin{align} \label{Pf_Nec:QD_Eta}
    \begin{aligned}
      \|\eta\|_{L^2(\Gamma)}^2 &= \int_{\mathcal{K}}|\eta\circ\mu|^2\sqrt{\det\theta}\,ds = \int_{\mathcal{K}}\frac{|\xi|^2}{\sqrt{\det\theta}}\,ds \\
      &\leq c\|\xi\|_{L^2(\mathcal{K})}^2 = c\|\xi\|_{L^2(\mathbb{R}^2)}^2
    \end{aligned}
  \end{align}
  by \eqref{E:Metric}.
  Also, we differentiate \eqref{Pf_Nec:Def_Eta} with respect to $s_i$ and use \eqref{E:Metric} to get
  \begin{align*}
    |\partial_{s_i}(\eta\circ\mu)(s)| \leq c(|\xi(s)|+|\partial_{s_i}\xi(s)|), \quad s\in\mathcal{K},\,i=1,2.
  \end{align*}
  We apply this inequality, \eqref{E:Mu_Bound}, and \eqref{E:Metric} to \eqref{E:TGr_DG}.
  Then we have
  \begin{align*}
    |\nabla_\Gamma\eta(\mu(s))| \leq c(|\xi(s)|+|\nabla_s\xi(s)|), \quad s\in\mathcal{K},
  \end{align*}
  where $\nabla_s\xi:=(\partial_{s_1}\xi,\partial_{s_2}\xi)^T$ is the gradient of $\xi$ in $s\in\mathbb{R}^2$.
  Noting that $\eta$ is supported in $\mu(\mathcal{K})$, we deduce from this inequality and \eqref{E:Metric} that
  \begin{align*}
    \|\nabla_\Gamma\eta\|_{L^2(\Gamma)}^2 = \int_{\mathcal{K}}|(\nabla_\Gamma\eta)\circ\mu|^2\sqrt{\det\theta}\,ds \leq c\|\xi\|_{H^1(\mathcal{K})}^2 = c\|\xi\|_{H^1(\mathbb{R}^2)}^2.
  \end{align*}
  Applying this inequality and \eqref{Pf_Nec:QD_Eta} to \eqref{Pf_Nec:QD_In} we get \eqref{Pf_Nec:Q_Dual}.
  Hence \eqref{Pf_Nec:Q_Hin} is valid.

  Next we derive the estimate
  \begin{align} \label{Pf_Nec:QGr_Hin}
    \|\nabla_sq^\flat\|_{H^{-1}(\mathbb{R}^2)} \leq c\left(\|q\|_{H^{-1}(\Gamma)}+\|\nabla_\Gamma q\|_{H^{-1}(\Gamma,T\Gamma)}\right).
  \end{align}
  For this purpose, we show that, for all $\varphi=(\varphi_1,\varphi_2)^T\in H^1(\mathbb{R}^2)^2$,
  \begin{align} \label{Pf_Nec:QGr_Dual}
    |\langle\nabla_sq^\flat,\varphi\rangle_{\mathbb{R}^2}| \leq c\left(\|q\|_{H^{-1}(\Gamma)}+\|\nabla_\Gamma q\|_{H^{-1}(\Gamma,T\Gamma)}\right)\|\varphi\|_{H^1(\mathbb{R}^2)}.
  \end{align}
  By a density argument we may assume $\varphi\in C_c^1(\mathbb{R}^2)^2$.
  Let $\xi\in C_c^2(\mathbb{R}^2)$ satisfy
  \begin{align*}
    \mathcal{K} \subset \mathcal{C} := \mathrm{supp}\,\xi \subset U, \quad \xi = 1 \quad\text{on}\quad \mathcal{K}.
  \end{align*}
  Then since $q^\flat$ is supported in $\mathcal{K}$,
  \begin{align*}
    \langle\nabla_sq^\flat,\varphi\rangle_{\mathbb{R}^2} &= -(q^\flat,\mathrm{div}_s\varphi)_{L^2(\mathbb{R}^2)} = -(q^\flat,\mathrm{div}_s\varphi)_{L^2(\mathcal{K})} = -(q^\flat,\xi\,\mathrm{div}_s\varphi)_{L^2(\mathcal{K})} \\
    &= -\bigl(q^\flat,\mathrm{div}_s(\xi\varphi)\bigr)_{L^2(\mathcal{K})}+(q^\flat,\nabla_s\xi\cdot\varphi)_{L^2(\mathcal{K})}.
  \end{align*}
  Moreover, since $\xi\in C_c^2(\mathbb{R}^2)$, we have $\nabla_s\xi\cdot\varphi\in H^1(\mathbb{R}^2)$ and
  \begin{align*}
    |(q^\flat,\nabla_s\xi\cdot\varphi)_{L^2(\mathcal{K})}| = |\langle q^\flat,\nabla_s\xi\cdot\varphi\rangle_{\mathbb{R}^2}| &\leq \|q^\flat\|_{H^{-1}(\mathbb{R}^2)}\|\nabla_s\xi\cdot\varphi\|_{H^1(\mathbb{R}^2)} \\
    &\leq c\|q\|_{H^{-1}(\Gamma)}\|\varphi\|_{H^1(\mathbb{R}^2)}
  \end{align*}
  by \eqref{Pf_Nec:Q_Hin}.
  Hence it follows that
  \begin{align} \label{Pf_Nec:QGrD_In}
    |\langle\nabla_sq^\flat,\varphi\rangle_{\mathbb{R}^2}| \leq \left|\bigl(q^\flat,\mathrm{div}_s(\xi\varphi)\bigr)_{L^2(\mathcal{K})}\right|+c\|q\|_{H^{-1}(\Gamma)}\|\varphi\|_{H^1(\mathbb{R}^2)}.
  \end{align}
  Let us estimate the first term on the right-hand side.
  Since $\mathcal{C}=\mathrm{supp}\,\xi$ is a compact subset of $U$, the inequalities \eqref{E:Mu_Bound} and \eqref{E:Metric} are also valid on $\mathcal{C}$ with a different constant $c>0$.
  Hereafter we use this fact without mention.
  We define a tangential vector field $X$ on $\mu(\mathcal{C})\subset\Gamma$ by
  \begin{align*}
    X(\mu(s)) := \frac{\xi(s)}{\sqrt{\det\theta(s)}}\sum_{i=1}^2\varphi_i(s)\partial_{s_i}\mu(s), \quad s\in\mathcal{C}
  \end{align*}
  and extend it to $\Gamma$ by zero outside $\mu(\mathcal{C})$.
  Then
  \begin{align*}
    X \in C^1(\Gamma,T\Gamma) \subset H^1(\Gamma,T\Gamma)
  \end{align*}
  by \eqref{E:Metric} and $\varphi\in C_c^1(\mathbb{R}^2)^2$, since $\xi\in C_c^2(\mathbb{R}^2)$ is supported in $\mathcal{C}$.
  Moreover,
  \begin{align*}
    \sum_{j=1}^2\theta^{ij}(s)\partial_{s_j}\mu(s)\cdot X(\mu(s)) &= \frac{\xi(s)}{\sqrt{\det\theta(s)}}\sum_{j,k=1}^2\theta^{ij}(s)\theta_{jk}(s)\varphi_k(s) \\
    &= \frac{\xi(s)\varphi_i(s)}{\sqrt{\det\theta(s)}}
  \end{align*}
  for $s\in\mathcal{C}$ and $i=1,2$ and thus
  \begin{align*}
    \mathrm{div}_\Gamma X(\mu(s)) = \frac{1}{\sqrt{\det\theta(s)}}\sum_{i=1}^2\partial_{s_i}\bigl(\xi(s)\varphi_i(s)\bigr) = \frac{\mathrm{div}_s(\xi\varphi)(s)}{\sqrt{\det\theta(s)}}, \quad s\in\mathcal{C}
  \end{align*}
  by \eqref{E:DivG_DG}.
  Noting that $\mathcal{K}\subset\mathcal{C}$, we see by this equality and \eqref{E:TGr_HinT} that
  \begin{align*}
    \bigl(q^\flat,\mathrm{div}_s(\xi\varphi)\bigr)_{L^2(\mathcal{K})} &= \int_{\mathcal{K}}q^\flat\{(\mathrm{div}_\Gamma X)\circ\mu\}\sqrt{\det\theta}\,ds = \int_{\mu(\mathcal{K})}q\,\mathrm{div}_\Gamma X\,d\mathcal{H}^2 \\
    &= (q,\mathrm{div}_\Gamma X)_{L^2(\Gamma)} = -[\nabla_\Gamma q,X]_{T\Gamma}.
  \end{align*}
  Therefore,
  \begin{align} \label{Pf_Nec:QGrD_qX}
    \left|\bigl(q^\flat,\mathrm{div}_s(\xi\varphi)\bigr)_{L^2(\mathcal{K})}\right| = |[\nabla_\Gamma q,X]_{T\Gamma}| \leq \|\nabla_\Gamma q\|_{H^{-1}(\Gamma,T\Gamma)}\|X\|_{H^1(\Gamma)}.
  \end{align}
  Let us estimate the $H^1(\Gamma)$-norm of $X$.
  By \eqref{E:Mu_Bound}, \eqref{E:Metric}, and $\xi\in C_c^2(\mathbb{R}^2)$,
  \begin{align*}
    |X(\mu(s))| \leq c|\varphi(s)|, \quad s\in\mathcal{C}.
  \end{align*}
  Since $X$ is supported in $\mu(\mathcal{C})$, the above inequality and \eqref{E:Metric} show that
  \begin{align} \label{Pf_Nec:QGrD_XL2}
    \|X\|_{L^2(\Gamma)}^2 = \int_{\mathcal{C}}|X\circ\mu|^2\sqrt{\det\theta}\,ds \leq c\|\varphi\|_{L^2(\mathcal{C})}^2 \leq c\|\varphi\|_{L^2(\mathbb{R}^2)}^2.
  \end{align}
  For $m=1,2,3$ the $m$-th component of $X$ in $\mathbb{R}^3$ is of the form
  \begin{align*}
    X_m(\mu(s)) = \frac{\xi(s)}{\sqrt{\det\theta(s)}}\sum_{i=1}^2\varphi_i(s)\partial_{s_i}\mu_m(s), \quad s\in\mathcal{C}.
  \end{align*}
  We differentiate both sides in $s_k$ and use \eqref{E:Mu_Bound}, \eqref{E:Metric}, and $\xi\in C_c^2(\mathbb{R}^2)$ to get
  \begin{align*}
    |\partial_{s_k}(X_m\circ\mu)(s)| \leq c(|\varphi(s)|+|\nabla_s\varphi(s)|), \quad s\in\mathcal{C}, \, k=1,2,
  \end{align*}
  where $\nabla_s\varphi:=(\partial_{s_i}\varphi_j)_{i,j}$ is the gradient matrix of $\varphi$.
  Applying this inequality, \eqref{E:Mu_Bound}, and \eqref{E:Metric} to \eqref{E:TGr_DG} with $\eta=X_m$ we have
  \begin{align*}
    |\nabla_\Gamma X_m(\mu(s))| \leq c(|\varphi(s)|+|\nabla_s\varphi(s)|), \quad s\in\mathcal{C}, \, m=1,2,3
  \end{align*}
  and thus, by \eqref{E:Metric} and the fact that $X$ is supported in $\mu(\mathcal{C})$,
  \begin{align*}
    \|\nabla_\Gamma X\|_{L^2(\Gamma)}^2 &= \sum_{m=1}^3\|\nabla_\Gamma X_m\|_{L^2(\Gamma)}^2 = \sum_{m=1}^3\int_{\mathcal{C}}|(\nabla_\Gamma X_m)\circ\mu|^2\sqrt{\det\theta}\,ds \\
    &\leq c\|\varphi\|_{H^1(\mathcal{C})}^2 \leq c\|\varphi\|_{H^1(\mathbb{R}^2)}^2.
  \end{align*}
  From this inequality, \eqref{Pf_Nec:QGrD_qX}, and \eqref{Pf_Nec:QGrD_XL2} we deduce that
  \begin{align*}
    \left|\bigl(q^\flat,\mathrm{div}_s(\xi\varphi)\bigr)_{L^2(\mathcal{K})}\right| \leq c\|\nabla_\Gamma q\|_{H^{-1}(\Gamma,T\Gamma)}\|\varphi\|_{H^1(\mathbb{R}^2)}
  \end{align*}
  and combining this inequality and \eqref{Pf_Nec:QGrD_In} we obtain \eqref{Pf_Nec:QGr_Dual}.
  Hence \eqref{Pf_Nec:QGr_Hin} holds and the inequality \eqref{E:Necas_Surf} follows from \eqref{Pf_Nec:Whole}, \eqref{Pf_Nec:Q_Hin}, and \eqref{Pf_Nec:QGr_Hin}.
\end{proof}

Based on \eqref{E:Necas_Surf} we prove Poincar\'{e}'s inequality for $q\in L^2(\Gamma)$.
First we show that $\nabla_\Gamma q$ vanishes in $H^{-1}(\Gamma,T\Gamma)$ if and only if $q$ is constant on $\Gamma$.

\begin{lemma} \label{L:TGr_HinT_Con}
  Let $q\in L^2(\Gamma)$.
  Then
  \begin{align*}
    \nabla_\Gamma q = 0 \quad\text{in}\quad H^{-1}(\Gamma,T\Gamma)
  \end{align*}
  if and only if $q$ is constant on $\Gamma$.
\end{lemma}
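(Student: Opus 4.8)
The plan is to prove both implications of Lemma~\ref{L:TGr_HinT_Con}, with the nontrivial direction being the ``only if'' part. The ``if'' direction is immediate: if $q$ is constant on $\Gamma$, then $q\in W^{1,2}(\Gamma)$ with $\nabla_\Gamma q=0$ pointwise (by Lemma~\ref{L:WTGr_Van}, or directly), and hence by \eqref{E:TGr_HinT} we have $[\nabla_\Gamma q,v]_{T\Gamma}=-(q,\mathrm{div}_\Gamma v)_{L^2(\Gamma)}$ for all $v\in H^1(\Gamma,T\Gamma)$, which vanishes since $\int_\Gamma\mathrm{div}_\Gamma v\,d\mathcal{H}^2=0$ for tangential $v$ by \eqref{E:IbP_WDivG_T}. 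So $\nabla_\Gamma q=0$ in $H^{-1}(\Gamma,T\Gamma)$.

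For the converse, suppose $\nabla_\Gamma q=0$ in $H^{-1}(\Gamma,T\Gamma)$. The first step is to upgrade this to the statement that $q$ has a genuine weak tangential gradient that vanishes, i.e. $q\in W^{1,2}(\Gamma)$ with $\nabla_\Gamma q=0$ in $L^2(\Gamma)^3$; then Lemma~\ref{L:WTGr_Van} immediately gives that $q$ is constant. The key observation is that $\nabla_\Gamma q=0$ in $H^{-1}(\Gamma,T\Gamma)$ means, via the identification discussed after Lemma~\ref{L:HinT_Homeo} (namely $[\nabla_\Gamma q,v]_{T\Gamma}=\langle\nabla_\Gamma q,v\rangle_\Gamma$ for tangential $v$, and $P\nabla_\Gamma q=\nabla_\Gamma q$ in $H^{-1}(\Gamma)^3$), that $\langle\nabla_\Gamma q,v\rangle_\Gamma=0$ for all $v\in H^1(\Gamma,T\Gamma)$, hence for all $v\in H^1(\Gamma)^3$ (decompose $v=Pv+(v\cdot n)n$ and use that $\langle\nabla_\Gamma q,(v\cdot n)n\rangle_\Gamma=\langle P\nabla_\Gamma q,(v\cdot n)n\rangle_\Gamma=\langle\nabla_\Gamma q, P[(v\cdot n)n]\rangle_\Gamma=0$ since $Pn=0$). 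Using \eqref{E:TGr_Hin}, this says
\begin{align*}
  (q,\mathrm{div}_\Gamma v+(v\cdot n)H)_{L^2(\Gamma)}=0 \quad\text{for all}\quad v\in H^1(\Gamma)^3.
\end{align*}
Specializing to $v=\eta e_i$ with $\eta\in C^1(\Gamma)$ and $e_i$ a coordinate basis vector, and recalling $\mathrm{div}_\Gamma(\eta e_i)=\underline{D}_i\eta$ and $(\eta e_i\cdot n)H=\eta H n_i$, we obtain $(q,\underline{D}_i\eta+\eta H n_i)_{L^2(\Gamma)}=0$ for all $\eta\in C^1(\Gamma)$ and $i=1,2,3$. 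By the definition \eqref{E:Def_WTD} of the weak tangential derivative, this is precisely the statement that $q$ has $i$-th weak tangential derivative equal to zero for each $i$; hence $q\in W^{1,2}(\Gamma)$ with $\nabla_\Gamma q=0$ in $L^2(\Gamma)^3$. Lemma~\ref{L:WTGr_Van} then yields that $q$ is constant on $\Gamma$, completing the proof.

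The only mild subtlety — and the step I would be most careful about — is the passage from ``$\nabla_\Gamma q=0$ in $H^{-1}(\Gamma,T\Gamma)$'' to the vanishing against \emph{all} of $H^1(\Gamma)^3$, i.e. correctly tracking the identification of $H^{-1}(\Gamma,T\Gamma)$ with the quotient of $H^{-1}(\Gamma)^3$ from Lemma~\ref{L:HinT_Homeo} and the convention $[Pf,v]_{T\Gamma}=\langle f,v\rangle_\Gamma$. Once that bookkeeping is in place, everything reduces to the definition of the weak tangential derivative and Lemma~\ref{L:WTGr_Van}, so no hard analysis (no Ne\v{c}as inequality, no localization) is needed here — Lemma~\ref{L:Necas_Surf} is used elsewhere, not in this particular statement.
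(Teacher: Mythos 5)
Your proof is correct and follows essentially the same route as the paper: both directions reduce to the identity $(q,\underline{D}_i\eta+\eta Hn_i)_{L^2(\Gamma)}=0$ for all $\eta\in C^1(\Gamma)$, then invoke the definition \eqref{E:Def_WTD} of the weak tangential derivative and Lemma~\ref{L:WTGr_Van}. The only cosmetic difference is that the paper tests directly with the tangential field $v=\eta Pe_i$ (computing $\mathrm{div}_\Gamma(\eta Pe_i)=\underline{D}_i\eta+\eta Hn_i$ via \eqref{E:Div_P}), whereas you first extend the vanishing to all of $H^1(\Gamma)^3$ using $P\nabla_\Gamma q=\nabla_\Gamma q$ and then test with $\eta e_i$ — both are valid and your handling of the quotient-space identification is correct.
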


\begin{proof}
  Suppose first that $q$ is constant on $\Gamma$.
  Then
  \begin{align*}
    [\nabla_\Gamma q,v]_{T\Gamma} = -q\int_\Gamma\mathrm{div}_\Gamma v\,d\mathcal{H}^2 = 0
  \end{align*}
  for all $v\in H^1(\Gamma,T\Gamma)$ by \eqref{E:IbP_WDivG_T} and \eqref{E:TGr_HinT}.
  Hence $\nabla_\Gamma q=0$ in $H^{-1}(\Gamma,T\Gamma)$.

  Conversely, suppose that $\nabla_\Gamma q=0$ in $H^{-1}(\Gamma,T\Gamma)$.
  Let us show that
  \begin{align} \label{Pf_THC:Di_Van}
    q\in H^1(\Gamma), \quad \underline{D}_iq = 0 \quad\text{in}\quad L^2(\Gamma), \, i=1,2,3.
  \end{align}
  For $\eta\in C^1(\Gamma)$ and $i=1,2,3$ we define $v:=\eta Pe_i$ on $\Gamma$, where $\{e_1,e_2,e_3\}$ is the standard basis of $\mathbb{R}^3$.
  Then $v\in H^1(\Gamma,T\Gamma)$ since $P$ is of class $C^1$ on $\Gamma$.
  Moreover,
  \begin{align*}
    \mathrm{div}_\Gamma v = \nabla_\Gamma \eta\cdot Pe_i+\eta(\mathrm{div}_\Gamma P\cdot e_i) = \underline{D}_i\eta+\eta Hn_i \quad\text{on}\quad \Gamma
  \end{align*}
  by $P^T=P$ on $\Gamma$, \eqref{E:P_TGr}, and \eqref{E:Div_P}.
  From this equality we deduce that
  \begin{align*}
    0 = [\nabla_\Gamma q,v]_{T\Gamma} = -(q,\mathrm{div}_\Gamma v)_{L^2(\Gamma)} = -(q,\underline{D}_i\eta+\eta Hn_i)_{L^2(\Gamma)}
  \end{align*}
  for all $\eta\in C^1(\Gamma)$ and $i=1,2,3$.
  Hence we obtain \eqref{Pf_THC:Di_Van} by the definition of the weak tangential derivative in $L^2(\Gamma)$ (see \eqref{E:Def_WTD}).
  From this fact and Lemma \ref{L:WTGr_Van} it follows that $q$ is constant on $\Gamma$.
\end{proof}

Next we estimate $q\in L^2(\Gamma)$ in $H^{-1}(\Gamma)$ by its weak tangential gradient.

\begin{lemma} \label{L:Poin_HinT}
  There exists a constant $c>0$ such that
  \begin{align} \label{E:Poin_HinT}
    \|q\|_{H^{-1}(\Gamma)} \leq c\left(\left|\int_\Gamma q\,d\mathcal{H}^2\right|+\|\nabla_\Gamma q\|_{H^{-1}(\Gamma,T\Gamma)}\right)
  \end{align}
  for all $q\in L^2(\Gamma)$.
\end{lemma}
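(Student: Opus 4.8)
The plan is to argue by contradiction via a compactness argument, just as in the proof of a Poincar\'e inequality in a negative Sobolev norm for a flat domain, now relying on the Ne\v{c}as inequality \eqref{E:Necas_Surf} just established together with the characterization of constant functions in Lemma \ref{L:TGr_HinT_Con}.

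First I would suppose \eqref{E:Poin_HinT} fails. Then there exists a sequence $\{q_k\}_{k=1}^\infty\subset L^2(\Gamma)$ such that
\[
  \|q_k\|_{H^{-1}(\Gamma)} > k\left(\left|\int_\Gamma q_k\,d\mathcal{H}^2\right|+\|\nabla_\Gamma q_k\|_{H^{-1}(\Gamma,T\Gamma)}\right), \quad k\in\mathbb{N}.
\]
After replacing $q_k$ by $q_k/\|q_k\|_{H^{-1}(\Gamma)}$ we may assume $\|q_k\|_{H^{-1}(\Gamma)}=1$ for all $k$, which forces both $\int_\Gamma q_k\,d\mathcal{H}^2\to0$ and $\nabla_\Gamma q_k\to0$ in $H^{-1}(\Gamma,T\Gamma)$ as $k\to\infty$. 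Applying the Ne\v{c}as inequality \eqref{E:Necas_Surf} to each $q_k$ gives $\|q_k\|_{L^2(\Gamma)}\leq c(\|q_k\|_{H^{-1}(\Gamma)}+\|\nabla_\Gamma q_k\|_{H^{-1}(\Gamma,T\Gamma)})$, so $\{q_k\}_{k=1}^\infty$ is bounded in $L^2(\Gamma)$.

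Next, since $\{q_k\}_{k=1}^\infty$ is bounded in the Hilbert space $L^2(\Gamma)$, a subsequence converges weakly in $L^2(\Gamma)$ to some $q\in L^2(\Gamma)$; by the compact embedding $L^2(\Gamma)\hookrightarrow H^{-1}(\Gamma)$ (a consequence of Lemma \ref{L:RK_Surf} by duality, as noted after \eqref{E:L2_Hin}) the same subsequence converges strongly in $H^{-1}(\Gamma)$ to $q$, whence $\|q\|_{H^{-1}(\Gamma)}=\lim_k\|q_k\|_{H^{-1}(\Gamma)}=1$. The map $q\mapsto\nabla_\Gamma q$ is bounded from $L^2(\Gamma)$ into $H^{-1}(\Gamma,T\Gamma)$ by \eqref{E:HinL2_Bo}, hence weakly continuous, so $\nabla_\Gamma q_k\to\nabla_\Gamma q$ weakly in $H^{-1}(\Gamma,T\Gamma)$; combined with $\nabla_\Gamma q_k\to0$ this yields $\nabla_\Gamma q=0$ in $H^{-1}(\Gamma,T\Gamma)$, so $q$ is constant on $\Gamma$ by Lemma \ref{L:TGr_HinT_Con}. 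Finally, testing the weak convergence $q_k\rightharpoonup q$ in $L^2(\Gamma)$ against the constant function $1\in L^2(\Gamma)$ gives $\int_\Gamma q\,d\mathcal{H}^2=\lim_k\int_\Gamma q_k\,d\mathcal{H}^2=0$, so the constant $q$ vanishes identically, contradicting $\|q\|_{H^{-1}(\Gamma)}=1$. Therefore \eqref{E:Poin_HinT} holds.

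The only genuinely delicate point is the compactness step: one must ensure that boundedness of $\{q_k\}$ in $L^2(\Gamma)$ together with the compactness of $L^2(\Gamma)\hookrightarrow H^{-1}(\Gamma)$ produces a limit lying in $L^2(\Gamma)$ (not merely in $H^{-1}(\Gamma)$) and that the limit passage in the two constraints $\int_\Gamma q_k\,d\mathcal{H}^2\to0$ and $\nabla_\Gamma q_k\to0$ is legitimate; this is handled by extracting the weak $L^2$-limit first and then invoking the weak continuity of the mean-value functional and of $\nabla_\Gamma$. Everything else is routine.
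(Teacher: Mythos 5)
Your proof is correct and follows essentially the same route as the paper: a contradiction argument with the normalization $\|q_k\|_{H^{-1}(\Gamma)}=1$, boundedness in $L^2(\Gamma)$ via the Ne\v{c}as inequality \eqref{E:Necas_Surf}, the compact embedding $L^2(\Gamma)\hookrightarrow H^{-1}(\Gamma)$, weak continuity of $\nabla_\Gamma$ and of the mean-value functional, and Lemma \ref{L:TGr_HinT_Con} to conclude that the limit vanishes, contradicting $\|q\|_{H^{-1}(\Gamma)}=1$. No gaps.
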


\begin{proof}
  Assume to the contrary that there exists $q_k\in L^2(\Gamma)$ such that
  \begin{align*}
    \|q_k\|_{H^{-1}(\Gamma)} > k\left(\left|\int_\Gamma q_k\,d\mathcal{H}^2\right|+\|\nabla_\Gamma q_k\|_{H^{-1}(\Gamma,T\Gamma)}\right)
  \end{align*}
  for each $k\in\mathbb{N}$.
  Replacing $q_k$ with $q_k/\|q_k\|_{H^{-1}(\Gamma)}$ we may assume that
  \begin{align} \label{Pf_PHin:Contra}
    \|q_k\|_{H^{-1}(\Gamma)} = 1, \quad \left|\int_\Gamma q_k\,d\mathcal{H}^2\right|+\|\nabla_\Gamma q_k\|_{H^{-1}(\Gamma,T\Gamma)} < \frac{1}{k}, \quad k\in\mathbb{N}.
  \end{align}
  From the second inequality it follows that
  \begin{align} \label{Pf_PHin:Conv}
    \lim_{k\to\infty}\int_\Gamma q_k\,d\mathcal{H}^2 = \lim_{k\to\infty}(q_k,1)_{L^2(\Gamma)} = 0, \quad \lim_{k\to\infty}\|\nabla_\Gamma q_k\|_{H^{-1}(\Gamma,T\Gamma)} = 0.
  \end{align}
  Also, $\{q_k\}_{k=1}^\infty$ is bounded in $L^2(\Gamma)$ by \eqref{E:Necas_Surf} and \eqref{Pf_PHin:Contra}.
  By this fact and the compact embedding $L^2(\Gamma)\hookrightarrow H^{-1}(\Gamma)$ we see that $\{q_k\}_{k=1}^\infty$ converges (up to a subsequence) to some $q\in L^2(\Gamma)$ weakly in $L^2(\Gamma)$ and strongly in $H^{-1}(\Gamma)$.
  Then
  \begin{align} \label{Pf_PHin:Lim_Hin}
    \|q\|_{H^{-1}(\Gamma)} = \lim_{k\to\infty}\|q_k\|_{H^{-1}(\Gamma)} = 1
  \end{align}
  by \eqref{Pf_PHin:Contra}.
  Moreover, $\{\nabla_\Gamma q_k\}_{k=1}^\infty$ converges to $\nabla_\Gamma q$ weakly in $H^{-1}(\Gamma,T\Gamma)$ by \eqref{E:TGr_HinT} and the weak convergence of $\{q_k\}_{k=1}^\infty$ to $q$ in $L^2(\Gamma)$.
  Thus, by \eqref{Pf_PHin:Conv},
  \begin{align*}
    \int_\Gamma q\,d\mathcal{H}^2 = (q,1)_{L^2(\Gamma)} = 0, \quad \|\nabla_\Gamma q\|_{H^{-1}(\Gamma,T\Gamma)} = 0.
  \end{align*}
  These equalities and Lemma \ref{L:TGr_HinT_Con} imply that $q=0$ on $\Gamma$.
  Hence $\|q\|_{H^{-1}(\Gamma)}=0$, which contradicts with \eqref{Pf_PHin:Lim_Hin}.
  Therefore, \eqref{E:Poin_HinT} is valid.
\end{proof}

Combining \eqref{E:Necas_Surf} and \eqref{E:Poin_HinT} we obtain Poincar\'{e}'s inequality for $q\in L^2(\Gamma)$.

\begin{lemma} \label{L:Poin_L2_HinT}
  There exists a constant $c>0$ such that
  \begin{align} \label{E:Poin_L2_HinT}
    \|q\|_{L^2(\Gamma)} \leq c\left(\left|\int_\Gamma q\,d\mathcal{H}^2\right|+\|\nabla_\Gamma q\|_{H^{-1}(\Gamma,T\Gamma)}\right)
  \end{align}
  for all $q\in L^2(\Gamma)$.
\end{lemma}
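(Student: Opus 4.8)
The statement to prove is Poincar\'e's inequality \eqref{E:Poin_L2_HinT}, which is obtained by simply combining the Ne\v{c}as inequality \eqref{E:Necas_Surf} of Lemma \ref{L:Necas_Surf} with the $H^{-1}$-estimate \eqref{E:Poin_HinT} of Lemma \ref{L:Poin_HinT}. The plan is to start from \eqref{E:Necas_Surf}, which gives
\begin{align*}
  \|q\|_{L^2(\Gamma)} \leq c\left(\|q\|_{H^{-1}(\Gamma)}+\|\nabla_\Gamma q\|_{H^{-1}(\Gamma,T\Gamma)}\right),
\end{align*}
and then to bound the term $\|q\|_{H^{-1}(\Gamma)}$ on the right-hand side by \eqref{E:Poin_HinT}, namely
\begin{align*}
  \|q\|_{H^{-1}(\Gamma)} \leq c\left(\left|\int_\Gamma q\,d\mathcal{H}^2\right|+\|\nabla_\Gamma q\|_{H^{-1}(\Gamma,T\Gamma)}\right).
\end{align*}
Substituting the second inequality into the first and absorbing the two copies of $\|\nabla_\Gamma q\|_{H^{-1}(\Gamma,T\Gamma)}$ into a single constant immediately yields \eqref{E:Poin_L2_HinT} for all $q\in L^2(\Gamma)$.

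There is essentially no obstacle here, since both ingredients are already established in the excerpt: Lemma \ref{L:Necas_Surf} was proved via a localization argument reducing to the Euclidean Ne\v{c}as inequality (Lemma \ref{L:Necas_R2}), and Lemma \ref{L:Poin_HinT} was proved by a compactness-contradiction argument that used Lemma \ref{L:TGr_HinT_Con} (vanishing of $\nabla_\Gamma q$ in $H^{-1}(\Gamma,T\Gamma)$ forces $q$ constant) together with the compact embedding $L^2(\Gamma)\hookrightarrow H^{-1}(\Gamma)$. The only mild point of care is that the constant $c$ in the final inequality is the product/sum of the two constants coming from Lemmas \ref{L:Necas_Surf} and \ref{L:Poin_HinT}, which is harmless; one writes $c$ for a general positive constant as elsewhere in the paper. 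Thus the proof is a two-line deduction, and I would present it exactly as such.

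For completeness I would also remark (or leave implicit) that the reverse estimate is trivial: by \eqref{E:HinL2_Bo} and $\bigl|\int_\Gamma q\,d\mathcal{H}^2\bigr|\le \mathcal{H}^2(\Gamma)^{1/2}\|q\|_{L^2(\Gamma)}$, the right-hand side of \eqref{E:Poin_L2_HinT} is controlled by $\|q\|_{L^2(\Gamma)}$, so \eqref{E:Poin_L2_HinT} is in fact an equivalence of norms on $L^2(\Gamma)$; but since only the stated inequality is needed in the sequel (for constructing the associated pressure in the limit equations via the weighted de Rham theorem), I would keep the proof to the short combination described above.
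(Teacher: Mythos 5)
Your proof is correct and is exactly the paper's argument: the paper obtains \eqref{E:Poin_L2_HinT} precisely by combining the Ne\v{c}as inequality \eqref{E:Necas_Surf} with the $H^{-1}$-estimate \eqref{E:Poin_HinT}, as you do. Nothing further is needed.
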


\subsection{Annihilator of a weighted solenoidal space} \label{SS:WS_An}
Let $g\in C^1(\Gamma)$ satisfy
\begin{align} \label{E:LBo_G}
  g \geq c \quad\text{on}\quad \Gamma
\end{align}
with some constant $c>0$.
We define a weighted solenoidal space on $\Gamma$ by
\begin{align*}
  H_{g\sigma}^1(\Gamma,T\Gamma) := \{v\in H^1(\Gamma,T\Gamma) \mid \text{$\mathrm{div}_\Gamma(gv)=0$ on $\Gamma$}\}.
\end{align*}
Clearly, $H_{g\sigma}^1(\Gamma,T\Gamma)$ is closed in $H^1(\Gamma,T\Gamma)$.
If $q\in L^2(\Gamma)$, then
\begin{align*}
  [g\nabla_\Gamma q, v]_{T\Gamma} = -\bigl(q,\mathrm{div}_\Gamma(gv)\bigr)_{L^2(\Gamma)} = 0 \quad\text{for all}\quad v\in H_{g\sigma}^1(\Gamma,T\Gamma)
\end{align*}
by \eqref{E:TGr_HinT} and \eqref{E:Def_Mul_HinT}.
Let us prove the converse of this statement for an element of $H^{-1}(\Gamma,T\Gamma)$, which is a weighted version of de Rham's theorem.

\begin{theorem} \label{T:DeRham_T}
  Suppose that $f\in H^{-1}(\Gamma,T\Gamma)$ satisfies
  \begin{align*}
    [f,v]_{T\Gamma} = 0 \quad\text{for all}\quad v \in H_{g\sigma}^1(\Gamma,T\Gamma).
  \end{align*}
  Then there exists a unique $q\in L^2(\Gamma)$ such that
  \begin{align*}
    f = g\nabla_\Gamma q \quad\text{in}\quad H^{-1}(\Gamma,T\Gamma), \quad \int_\Gamma q\,d\mathcal{H}^2 = 0.
  \end{align*}
  Moreover, there exists a constant $c>0$ independent of $f$ such that
  \begin{align} \label{E:DeRham_T_Ineq}
    \|q\|_{L^2(\Gamma)} \leq c\|f\|_{H^{-1}(\Gamma,T\Gamma)}.
  \end{align}
\end{theorem}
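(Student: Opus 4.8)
The plan is to deduce the statement from the closed range theorem applied to the weighted surface divergence, in exact analogy with the classical proof of de Rham's theorem on a bounded domain. First I would introduce the bounded linear operator $D\colon H^1(\Gamma,T\Gamma)\to L^2(\Gamma)$ defined by $Dv:=\mathrm{div}_\Gamma(gv)$. Since $g\in C^1(\Gamma)$, we have $gv\in H^1(\Gamma,T\Gamma)$ with $\|gv\|_{H^1(\Gamma)}\le c\|v\|_{H^1(\Gamma)}$, so $D$ is indeed bounded; applying \eqref{E:IbP_WDivG_T} to the tangential field $gv$ shows $\int_\Gamma Dv\,d\mathcal{H}^2=0$, so $D$ maps into the closed subspace $L_0^2(\Gamma):=\{q\in L^2(\Gamma)\mid\int_\Gamma q\,d\mathcal{H}^2=0\}$; and by definition $\ker D=H_{g\sigma}^1(\Gamma,T\Gamma)$. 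Thus the hypothesis on $f$ says precisely that $f$ annihilates $\ker D$ in $H^{-1}(\Gamma,T\Gamma)$.

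Next I would compute the adjoint $D^\ast\colon L_0^2(\Gamma)\to H^{-1}(\Gamma,T\Gamma)$, identifying $(L_0^2(\Gamma))^\ast$ with $L_0^2(\Gamma)$ via the $L^2(\Gamma)$-inner product. For $q\in L_0^2(\Gamma)$ and $v\in H^1(\Gamma,T\Gamma)$, using \eqref{E:TGr_HinT} with the tangential field $gv$ and the definition \eqref{E:Def_Mul_HinT} of multiplication of an $H^{-1}(\Gamma,T\Gamma)$-functional by the $W^{1,\infty}$-function $g$,
\[
  [D^\ast q,v]_{T\Gamma}=\bigl(q,\mathrm{div}_\Gamma(gv)\bigr)_{L^2(\Gamma)}=-[\nabla_\Gamma q,gv]_{T\Gamma}=-[g\nabla_\Gamma q,v]_{T\Gamma},
\]
so $D^\ast q=-g\nabla_\Gamma q$ in $H^{-1}(\Gamma,T\Gamma)$.

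The heart of the proof is the coercivity estimate $\|D^\ast q\|_{H^{-1}(\Gamma,T\Gamma)}\ge c\|q\|_{L^2(\Gamma)}$ for all $q\in L_0^2(\Gamma)$. I would obtain it by combining Poincar\'{e}'s inequality \eqref{E:Poin_L2_HinT}, whose mean-value term vanishes on $L_0^2(\Gamma)$, so that $\|q\|_{L^2(\Gamma)}\le c\|\nabla_\Gamma q\|_{H^{-1}(\Gamma,T\Gamma)}$, with the fact that $1/g\in C^1(\Gamma)\subset W^{1,\infty}(\Gamma)$ (since $g\ge c>0$): writing $\nabla_\Gamma q=\tfrac{1}{g}\,(g\nabla_\Gamma q)$ and using boundedness of multiplication by $1/g$ on $H^{-1}(\Gamma,T\Gamma)$ yields $\|\nabla_\Gamma q\|_{H^{-1}(\Gamma,T\Gamma)}\le c\|g\nabla_\Gamma q\|_{H^{-1}(\Gamma,T\Gamma)}=c\|D^\ast q\|_{H^{-1}(\Gamma,T\Gamma)}$. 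In particular $D^\ast$ is injective and bounded below, hence has closed range, and the closed range theorem gives $\mathrm{Ran}(D^\ast)=(\ker D)^\perp$, the annihilator of $H_{g\sigma}^1(\Gamma,T\Gamma)$ in $H^{-1}(\Gamma,T\Gamma)$. Consequently $f=D^\ast q_0=-g\nabla_\Gamma q_0$ for some $q_0\in L_0^2(\Gamma)$; putting $q:=-q_0$ gives $f=g\nabla_\Gamma q$ with $\int_\Gamma q\,d\mathcal{H}^2=0$, and the coercivity estimate applied to this $q$ is exactly \eqref{E:DeRham_T_Ineq}. Uniqueness is immediate: if $g\nabla_\Gamma q_1=g\nabla_\Gamma q_2$ with both mean zero, then multiplying by $1/g$ gives $\nabla_\Gamma(q_1-q_2)=0$ in $H^{-1}(\Gamma,T\Gamma)$, so $q_1-q_2$ is constant by Lemma \ref{L:TGr_HinT_Con}, hence zero.

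I do not expect any single step here to be a serious obstacle: all the genuine difficulty was front-loaded into the surface Ne\v{c}as inequality (Lemma \ref{L:Necas_Surf}) and the ensuing Poincar\'{e} inequality (Lemma \ref{L:Poin_L2_HinT}), together with the identification of $H^{-1}(\Gamma,T\Gamma)$ with a quotient of $H^{-1}(\Gamma)^3$ (Lemma \ref{L:HinT_Homeo}). Given those, the remaining work is the standard closed-range de Rham argument plus careful bookkeeping with the weight $g$ and the duality pairings; the only point needing attention is keeping track of whether a functional is viewed in $H^{-1}(\Gamma)^3$ or in $H^{-1}(\Gamma,T\Gamma)$ when invoking \eqref{E:TGr_HinT} and \eqref{E:Def_Mul_HinT}.
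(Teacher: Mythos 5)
Your proposal is correct and follows essentially the same route as the paper: the decisive inputs in both are the coercivity estimate $\|q\|_{L^2(\Gamma)}\le c\|g\nabla_\Gamma q\|_{H^{-1}(\Gamma,T\Gamma)}$ on mean-zero functions (Lemmas \ref{L:Poin_L2_HinT} and \ref{L:TGr_G_HinT}) together with a duality argument identifying the annihilator of $H_{g\sigma}^1(\Gamma,T\Gamma)$ with the range of $q\mapsto g\nabla_\Gamma q$. The only difference is one of packaging: you cite the closed range theorem for the weighted divergence operator $D$, while the paper proves the same identity by hand, showing $\mathcal{X}=\{g\nabla_\Gamma q \mid q\in L^2(\Gamma)\}$ is closed (Lemma \ref{L:TGr_Closed}) and then using the biannihilator identity $(\mathcal{X}^\perp)^\perp=\mathcal{X}$ in the reflexive space $H^{-1}(\Gamma,T\Gamma)$ (Lemmas \ref{L:Re_FA_1} and \ref{L:Re_FA_2}).
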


We give auxiliary lemmas for Theorem \ref{T:DeRham_T}.

\begin{lemma} \label{L:TGr_G_HinT}
  There exists a constant $c>0$ such that
  \begin{align} \label{E:TGr_G_HinT}
    c^{-1}\|\nabla_\Gamma q\|_{H^{-1}(\Gamma,T\Gamma)} \leq \|g\nabla_\Gamma q\|_{H^{-1}(\Gamma,T\Gamma)} \leq c\|\nabla_\Gamma q\|_{H^{-1}(\Gamma,T\Gamma)}
  \end{align}
  for all $q\in L^2(\Gamma)$.
\end{lemma}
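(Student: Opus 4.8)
The plan is to use the fact that multiplication by $g$ and by $1/g$ are mutually inverse bounded linear operators on $H^1(\Gamma,T\Gamma)$, and then transfer these bounds to the dual space $H^{-1}(\Gamma,T\Gamma)$ by duality through the pairing \eqref{E:Def_Mul_HinT}. First I would record the elementary observation that, since $g\in C^1(\Gamma)$ satisfies \eqref{E:LBo_G} on the compact surface $\Gamma$, both $g$ and $1/g$ lie in $C^1(\Gamma)\subset W^{1,\infty}(\Gamma)$, with $\nabla_\Gamma(1/g)=-g^{-2}\nabla_\Gamma g$ bounded on $\Gamma$. Consequently, for every $v\in H^1(\Gamma,T\Gamma)$ the vector fields $gv$ and $g^{-1}v$ are again tangential (as $g$ is scalar-valued) and belong to $H^1(\Gamma,T\Gamma)$, with $\|gv\|_{H^1(\Gamma)}\le c\|v\|_{H^1(\Gamma)}$ and $\|g^{-1}v\|_{H^1(\Gamma)}\le c\|v\|_{H^1(\Gamma)}$, where $c$ depends only on $\|g\|_{W^{1,\infty}(\Gamma)}$ and $\|g^{-1}\|_{W^{1,\infty}(\Gamma)}$.

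For the right-hand inequality of \eqref{E:TGr_G_HinT}, let $q\in L^2(\Gamma)$, so that $\nabla_\Gamma q$ and $g\nabla_\Gamma q$ belong to $H^{-1}(\Gamma,T\Gamma)$ by \eqref{E:HinL2_Bo} and \eqref{E:Def_Mul_HinT}. For $v\in H^1(\Gamma,T\Gamma)$ I would write, using \eqref{E:Def_Mul_HinT},
\[
  |[g\nabla_\Gamma q,v]_{T\Gamma}| = |[\nabla_\Gamma q,gv]_{T\Gamma}| \le \|\nabla_\Gamma q\|_{H^{-1}(\Gamma,T\Gamma)}\|gv\|_{H^1(\Gamma)} \le c\|\nabla_\Gamma q\|_{H^{-1}(\Gamma,T\Gamma)}\|v\|_{H^1(\Gamma)},
\]
and take the supremum over $v$ with $\|v\|_{H^1(\Gamma)}\le 1$.

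For the left-hand inequality, the key point is that $\nabla_\Gamma q = g^{-1}(g\nabla_\Gamma q)$ in $H^{-1}(\Gamma,T\Gamma)$: applying \eqref{E:Def_Mul_HinT} twice, for all $v\in H^1(\Gamma,T\Gamma)$,
\[
  [g^{-1}(g\nabla_\Gamma q),v]_{T\Gamma} = [g\nabla_\Gamma q,g^{-1}v]_{T\Gamma} = [\nabla_\Gamma q, g\cdot g^{-1}v]_{T\Gamma} = [\nabla_\Gamma q,v]_{T\Gamma}.
\]
Then the same duality estimate as above, now with the multiplier $g^{-1}$, gives $|[\nabla_\Gamma q,v]_{T\Gamma}| \le c\|g\nabla_\Gamma q\|_{H^{-1}(\Gamma,T\Gamma)}\|v\|_{H^1(\Gamma)}$, and taking the supremum over $v$ completes the proof. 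I do not anticipate any genuine obstacle here; the only points demanding a little care are verifying that $g^{-1}$ is an admissible $W^{1,\infty}(\Gamma)$ multiplier — which is exactly where the compactness of $\Gamma$ together with the lower bound \eqref{E:LBo_G} and the $C^1$-regularity of $g$ enter — and that the products $gv$ and $g^{-1}v$ remain tangential, which is immediate since $g$ is scalar.
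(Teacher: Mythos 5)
Your proposal is correct and follows essentially the same route as the paper: the paper's proof also writes $[\nabla_\Gamma q,v]_{T\Gamma}=[g\nabla_\Gamma q,g^{-1}v]_{T\Gamma}$ using \eqref{E:Def_Mul_HinT}, notes that $g^{-1}\in C^1(\Gamma)$ by \eqref{E:LBo_G}, and bounds $\|g^{-1}v\|_{H^1(\Gamma)}\leq c\|v\|_{H^1(\Gamma)}$ before taking the supremum over $v$. The only difference is that you spell out the symmetric argument for the right-hand inequality, which the paper dismisses with ``similarly.''
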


\begin{proof}
  Since $g\in C^1(\Gamma)$ satisfies \eqref{E:LBo_G}, we have $g^{-1}\in C^1(\Gamma)$ and thus
  \begin{align*}
    \left|[\nabla_\Gamma q,v]_{T\Gamma}\right| = \left|[g\nabla_\Gamma q,g^{-1}v]_{T\Gamma}\right| &\leq \|g\nabla_\Gamma q\|_{H^{-1}(\Gamma,T\Gamma)}\|g^{-1}v\|_{H^1(\Gamma)} \\
    &\leq c\|g\nabla_\Gamma q\|_{H^{-1}(\Gamma,T\Gamma)}\|v\|_{H^1(\Gamma)}
  \end{align*}
  for all $v\in H^1(\Gamma,T\Gamma)$.
  Hence the left-hand inequality of \eqref{E:TGr_G_HinT} holds.
  Similarly, we can show the right-hand inequality of \eqref{E:TGr_G_HinT}.
\end{proof}

\begin{lemma} \label{L:TGr_Closed}
  The subspace
  \begin{align} \label{E:Sub_TGr}
    \mathcal{X} := \{g\nabla_\Gamma q\in H^{-1}(\Gamma,T\Gamma) \mid q\in L^2(\Gamma)\}
  \end{align}
  is closed in $H^{-1}(\Gamma,T\Gamma)$.
\end{lemma}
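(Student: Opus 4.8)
The plan is to show that $\mathcal{X}$ is closed by establishing that the map $q \mapsto g\nabla_\Gamma q$ induces a bounded-below (hence topologically injective) operator on the quotient $L^2(\Gamma)/\mathbb{R}$, so that its image is closed. Concretely, I would first record that by Lemma \ref{L:TGr_HinT_Con} the kernel of $q \mapsto \nabla_\Gamma q$ (equivalently, of $q \mapsto g\nabla_\Gamma q$, using the left inequality of \eqref{E:TGr_G_HinT}) consists exactly of the constants. Thus $\mathcal{X}$ is the image of the injective bounded linear operator
\begin{align*}
  \Lambda \colon L^2(\Gamma)/\mathbb{R} \to H^{-1}(\Gamma,T\Gamma), \quad \Lambda[q] := g\nabla_\Gamma q,
\end{align*}
where $L^2(\Gamma)/\mathbb{R}$ carries the quotient norm, which by Poincar\'e's inequality \eqref{E:Poin_Surf_Lp} is equivalent to the norm $\|q - \fint_\Gamma q\|_{L^2(\Gamma)}$ on representatives with zero mean.

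Next I would prove that $\Lambda$ is bounded below: there is $c>0$ with $\|\Lambda[q]\|_{H^{-1}(\Gamma,T\Gamma)} \geq c\|[q]\|_{L^2(\Gamma)/\mathbb{R}}$ for all $[q]$. This is immediate from combining Lemma \ref{L:TGr_G_HinT} (the left inequality of \eqref{E:TGr_G_HinT}, giving $\|g\nabla_\Gamma q\|_{H^{-1}(\Gamma,T\Gamma)} \geq c^{-1}\|\nabla_\Gamma q\|_{H^{-1}(\Gamma,T\Gamma)}$) with Poincar\'e's inequality for $L^2$ functions on the surface, Lemma \ref{L:Poin_L2_HinT}: choosing the zero-mean representative $q$ of $[q]$, we get
\begin{align*}
  \|[q]\|_{L^2(\Gamma)/\mathbb{R}} \leq \|q\|_{L^2(\Gamma)} \leq c\|\nabla_\Gamma q\|_{H^{-1}(\Gamma,T\Gamma)} \leq c\|g\nabla_\Gamma q\|_{H^{-1}(\Gamma,T\Gamma)} = c\|\Lambda[q]\|_{H^{-1}(\Gamma,T\Gamma)}.
\end{align*}

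Finally I would invoke the standard fact that a bounded-below linear operator from a Banach space into a Banach space has closed range: given a sequence $g\nabla_\Gamma q_k \to f$ in $H^{-1}(\Gamma,T\Gamma)$ with zero-mean $q_k$, the lower bound forces $\{q_k\}$ to be Cauchy in $L^2(\Gamma)$, hence $q_k \to q$ in $L^2(\Gamma)$ for some $q$ with zero mean, and then continuity of $q \mapsto g\nabla_\Gamma q$ (from \eqref{E:HinL2_Bo} together with \eqref{E:TGr_G_HinT}, or directly) gives $f = g\nabla_\Gamma q \in \mathcal{X}$. The main (and only real) obstacle is making sure the Poincar\'e-type estimate \eqref{E:Poin_L2_HinT} is applied to the correctly normalized representative and that the equivalence of the quotient norm with the zero-mean $L^2$-norm is used cleanly; everything else is a routine assembly of results already proved in Sections \ref{SS:Pre_Surf} and \ref{SS:WS_Nec}. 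I would keep the write-up short: define $\Lambda$, cite the lower bound, conclude closedness.
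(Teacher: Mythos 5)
Your proposal is correct and is essentially the paper's proof repackaged: the paper likewise normalizes $q_k$ to zero mean (using Lemma \ref{L:TGr_HinT_Con} to keep $g\nabla_\Gamma q_k$ unchanged), combines \eqref{E:Poin_L2_HinT} with \eqref{E:TGr_G_HinT} to make $\{q_k\}$ Cauchy in $L^2(\Gamma)$, and then uses \eqref{E:HinL2_Bo} and \eqref{E:TGr_G_HinT} to identify the limit, which is exactly your bounded-below-operator argument unpacked. The only blemish is your appeal to \eqref{E:Poin_Surf_Lp} to relate the quotient norm on $L^2(\Gamma)/\mathbb{R}$ to the zero-mean representative's norm — that inequality is for $W^{1,p}$ functions and is not needed here, since orthogonality of constants in $L^2(\Gamma)$ already gives the (exact) equality.
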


\begin{proof}
  Let $\{q_k\}_{k=1}^\infty$ be a sequence in $L^2(\Gamma)$ such that
  \begin{align} \label{Pf_TGrC:F}
    \lim_{k\to\infty}g\nabla_\Gamma q_k = f \quad\text{strongly in}\quad H^{-1}(\Gamma,T\Gamma)
  \end{align}
  with some $f\in H^{-1}(\Gamma,T\Gamma)$.
  Replacing $q_k$ with
  \begin{align*}
    q_k(y)-\frac{1}{|\Gamma|}\int_\Gamma q_k(z)\,d\mathcal{H}^2(z), \quad y\in\Gamma,
  \end{align*}
  where $|\Gamma|$ is the area of $\Gamma$, we may assume that
  \begin{align*}
    \int_\Gamma q_k\,d\mathcal{H}^2 = 0
  \end{align*}
  without changing $g\nabla_\Gamma q_k$ in $H^{-1}(\Gamma,T\Gamma)$ for each $k\in\mathbb{N}$ (see Lemma \ref{L:TGr_HinT_Con}).
  Then it follows from \eqref{E:Poin_L2_HinT} and \eqref{E:TGr_G_HinT} that
  \begin{align*}
    \|q_k-q_l\|_{L^2(\Gamma)} \leq c\|g\nabla_\Gamma q_k-g\nabla_\Gamma q_l\|_{H^{-1}(\Gamma,T\Gamma)} \to 0 \quad\text{as}\quad k,l\to\infty.
  \end{align*}
  Hence $\{q_k\}_{k=1}^\infty$ is a Cauchy sequence in $L^2(\Gamma)$ and converges to some $q$ strongly in $L^2(\Gamma)$.
  From this fact, \eqref{E:HinL2_Bo}, and \eqref{E:TGr_G_HinT} we deduce that
  \begin{align} \label{Pf_TGrC:G_Tgr}
    \|g\nabla_\Gamma q-g\nabla_\Gamma q_k\|_{H^{-1}(\Gamma,T\Gamma)} \leq c\|q-q_k\|_{L^2(\Gamma)} \to 0 \quad\text{as}\quad k\to\infty.
  \end{align}
  By \eqref{Pf_TGrC:F} and \eqref{Pf_TGrC:G_Tgr} we get $f=g\nabla_\Gamma q\in\mathcal{X}$.
  Thus $\mathcal{X}$ is closed in $H^{-1}(\Gamma,T\Gamma)$.
\end{proof}

To prove Theorem \ref{T:DeRham_T} we use basic results of functional analysis.
Let $\mathcal{B}$ and $\mathcal{B}'$ be a Banach space and its dual space, and ${}_{\mathcal{B}'}\langle\cdot,\cdot\rangle_{\mathcal{B}}$ the duality product between $\mathcal{B}'$ and $\mathcal{B}$.
For a subset $\mathcal{X}$ of $\mathcal{B}$ we define the annihilator of $\mathcal{X}$ by
\begin{align*}
    \mathcal{X}^\perp:=\{f\in\mathcal{B}' \mid \text{${}_{\mathcal{B}'}\langle f,v\rangle_{\mathcal{B}}=0$ for all $v\in\mathcal{X}$}\}.
\end{align*}

\begin{lemma} \label{L:Re_FA_1}
  For subsets $\mathcal{X}$ and $\mathcal{Y}$ of $\mathcal{B}$, if $\mathcal{X}\subset\mathcal{Y}$ in $\mathcal{B}$ then $\mathcal{Y}^\perp\subset\mathcal{X}^\perp$ in $\mathcal{B}'$.
\end{lemma}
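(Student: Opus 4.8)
The plan is to unwind the definition of the annihilator directly; no machinery beyond the definition is needed. First I would fix an arbitrary functional $f\in\mathcal{Y}^\perp$ and recall that, by definition, this means ${}_{\mathcal{B}'}\langle f,v\rangle_{\mathcal{B}}=0$ for every $v\in\mathcal{Y}$.

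Next I would invoke the hypothesis $\mathcal{X}\subset\mathcal{Y}$: since every element of $\mathcal{X}$ is in particular an element of $\mathcal{Y}$, the vanishing condition ${}_{\mathcal{B}'}\langle f,v\rangle_{\mathcal{B}}=0$ holds a fortiori for all $v\in\mathcal{X}$. By the definition of $\mathcal{X}^\perp$ this is exactly the statement that $f\in\mathcal{X}^\perp$. As $f\in\mathcal{Y}^\perp$ was arbitrary, we conclude $\mathcal{Y}^\perp\subset\mathcal{X}^\perp$, which is the claim.

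There is essentially no obstacle here: the lemma is a purely formal monotonicity property of the annihilator map with respect to inclusion, and the one-line argument above is complete. The only thing worth being careful about is notational consistency — writing the duality pairing as ${}_{\mathcal{B}'}\langle\cdot,\cdot\rangle_{\mathcal{B}}$ exactly as set up in the paragraph preceding the statement — but no estimates, limiting arguments, or structural results are involved.
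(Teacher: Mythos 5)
Your proof is correct and is exactly the routine unwinding of the definition that the paper has in mind; the paper itself omits the argument, remarking only that the lemma ``easily follows from the definition of the annihilator.'' Nothing further is needed.
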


\begin{lemma} \label{L:Re_FA_2}
  If $\mathcal{B}$ is reflexive and $\mathcal{X}$ is a closed subspace of $\mathcal{B}$, then $(\mathcal{X}^\perp)^\perp=\mathcal{X}$.
\end{lemma}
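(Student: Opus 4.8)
The final statement to prove is Lemma~\ref{L:Re_FA_2}: if $\mathcal{B}$ is reflexive and $\mathcal{X}$ is a closed subspace of $\mathcal{B}$, then $(\mathcal{X}^\perp)^\perp=\mathcal{X}$. Here $\mathcal{X}^\perp\subset\mathcal{B}'$ is the annihilator of $\mathcal{X}$, and then $(\mathcal{X}^\perp)^\perp$ is the annihilator of $\mathcal{X}^\perp$ taken inside the bidual $\mathcal{B}''$; under reflexivity we identify $\mathcal{B}''$ with $\mathcal{B}$ via the canonical isometric isomorphism $J\colon\mathcal{B}\to\mathcal{B}''$, so that $(\mathcal{X}^\perp)^\perp$ is naturally viewed as a subset of $\mathcal{B}$. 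The plan is to prove the two inclusions separately.

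First I would show $\mathcal{X}\subset(\mathcal{X}^\perp)^\perp$, which does not require reflexivity or closedness. Indeed, take any $x\in\mathcal{X}$. For every $f\in\mathcal{X}^\perp$ we have ${}_{\mathcal{B}'}\langle f,x\rangle_{\mathcal{B}}=0$ by the very definition of the annihilator. Under the identification of $x$ with $Jx\in\mathcal{B}''$, this says precisely that ${}_{\mathcal{B}''}\langle Jx,f\rangle_{\mathcal{B}'}=0$ for all $f\in\mathcal{X}^\perp$, i.e. $Jx\in(\mathcal{X}^\perp)^\perp$. Hence $\mathcal{X}\subset(\mathcal{X}^\perp)^\perp$.

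Next, the reverse inclusion $(\mathcal{X}^\perp)^\perp\subset\mathcal{X}$ is where closedness and reflexivity enter, and this is the main point. Suppose $x\in\mathcal{B}$ satisfies $Jx\in(\mathcal{X}^\perp)^\perp$ but $x\notin\mathcal{X}$. Since $\mathcal{X}$ is a closed subspace of the Banach space $\mathcal{B}$ and $x\notin\mathcal{X}$, the Hahn--Banach separation theorem provides a functional $f\in\mathcal{B}'$ with $f|_{\mathcal{X}}=0$ and ${}_{\mathcal{B}'}\langle f,x\rangle_{\mathcal{B}}\neq0$. The condition $f|_{\mathcal{X}}=0$ means exactly $f\in\mathcal{X}^\perp$. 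But then the hypothesis $Jx\in(\mathcal{X}^\perp)^\perp$ forces ${}_{\mathcal{B}'}\langle f,x\rangle_{\mathcal{B}}={}_{\mathcal{B}''}\langle Jx,f\rangle_{\mathcal{B}'}=0$, contradicting ${}_{\mathcal{B}'}\langle f,x\rangle_{\mathcal{B}}\neq0$. Therefore $x\in\mathcal{X}$, giving $(\mathcal{X}^\perp)^\perp\subset\mathcal{X}$, and combined with the first inclusion this yields $(\mathcal{X}^\perp)^\perp=\mathcal{X}$.

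I would close by remarking that reflexivity is used only to legitimately regard $(\mathcal{X}^\perp)^\perp$ as a subset of $\mathcal{B}$ rather than of $\mathcal{B}''$ (and thus to make the statement of the equality meaningful as written); the genuine analytic content is the Hahn--Banach separation step, which needs $\mathcal{X}$ to be closed. No obstacle of substance is expected here — the proof is a standard duality argument — so the write-up is short, and the only care needed is in consistently handling the canonical embedding $J$ so the identifications are unambiguous.
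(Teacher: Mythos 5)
Your proof is correct and is exactly the standard Hahn--Banach duality argument that the paper itself invokes (it gives no proof, only the remark that the lemma ``is due to the Hahn--Banach theorem'' with a citation to Rudin, Theorem 4.7). Both inclusions, the role of reflexivity in identifying $(\mathcal{X}^\perp)^\perp\subset\mathcal{B}''$ with a subset of $\mathcal{B}$, and the separation step for the reverse inclusion are all handled as intended.
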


Lemma \ref{L:Re_FA_1} easily follows from the definition of the annihilator and Lemma \ref{L:Re_FA_2} is due to the Hahn--Banach theorem (see e.g. \cite{Ru91}*{Theorem 4.7}).

\begin{proof}[Proof of Theorem \ref{T:DeRham_T}]
  First note that $H^{-1}(\Gamma,T\Gamma)$ is reflexive, since it is the dual space of the Hilbert space $H^1(\Gamma,T\Gamma)$.
  Let $\mathcal{X}$ be the subspace of $H^{-1}(\Gamma,T\Gamma)$ given by \eqref{E:Sub_TGr} and $v\in\mathcal{X}^\perp\subset H^1(\Gamma,T\Gamma)$.
  Then
  \begin{align*}
    0 = [g\nabla_\Gamma q,v]_{T\Gamma} = -\bigl(q,\mathrm{div}_\Gamma(gv)\bigr)_{L^2(\Gamma)}
  \end{align*}
  for all $q\in L^2(\Gamma)$ by $g\nabla_\Gamma q\in\mathcal{X}$, \eqref{E:TGr_HinT}, and \eqref{E:Def_Mul_HinT}, and thus
  \begin{align*}
    \mathrm{div}_\Gamma(gv) = 0 \quad\text{on}\quad \Gamma, \quad\text{i.e.}\quad v \in H_{g\sigma}^1(\Gamma,T\Gamma).
  \end{align*}
  Hence $\mathcal{X}^\perp\subset H_{g\sigma}^1(\Gamma,T\Gamma)$ in $H^1(\Gamma,T\Gamma)$ and it follows from Lemma \ref{L:Re_FA_1} that
  \begin{align*}
    H_{g\sigma}^1(\Gamma,T\Gamma)^\perp \subset (\mathcal{X}^\perp)^\perp \quad\text{in}\quad H^{-1}(\Gamma,T\Gamma).
  \end{align*}
  Moreover, since $\mathcal{X}$ is closed in $H^{-1}(\Gamma,T\Gamma)$ by Lemma \ref{L:TGr_Closed}, we have
  \begin{align*}
    (\mathcal{X}^\perp)^\perp = \mathcal{X} \quad\text{in}\quad H^{-1}(\Gamma,T\Gamma)
  \end{align*}
  by Lemma \ref{L:Re_FA_2}.
  Thus
  \begin{align*}
    H_{g\sigma}^1(\Gamma,T\Gamma)^\perp &= \{f\in H^{-1}(\Gamma,T\Gamma) \mid \text{$[f,v]_{T\Gamma}=0$ for all $v\in H_{g\sigma}^1(\Gamma,T\Gamma)$}\} \\
    &\subset \mathcal{X} \quad\text{in}\quad H^{-1}(\Gamma,T\Gamma),
  \end{align*}
  i.e. for each $f\in H_{g\sigma}^1(\Gamma,T\Gamma)^\perp$ there exists $q\in L^2(\Gamma)$ such that
  \begin{align*}
    f = g\nabla_\Gamma q \quad\text{in}\quad H^{-1}(\Gamma,T\Gamma).
  \end{align*}
  Moreover, replacing $q$ with
  \begin{align*}
    q(y)-\frac{1}{|\Gamma|}\int_\Gamma q(z)\,d\mathcal{H}^2(z), \quad y\in\Gamma
  \end{align*}
  we may assume that
  \begin{align*}
    \int_\Gamma q\,d\mathcal{H}^2 = 0
  \end{align*}
  without changing $g\nabla_\Gamma q$ in $H^{-1}(\Gamma,T\Gamma)$ (see Lemma \ref{L:TGr_HinT_Con}).
  Hence the existence part of the theorem is valid.
  To prove the uniqueness, suppose that $q\in L^2(\Gamma)$ satisfy
  \begin{align*}
    g\nabla_\Gamma q = 0 \quad\text{in}\quad H^{-1}(\Gamma,T\Gamma), \quad \int_\Gamma q\,d\mathcal{H}^2 = 0.
  \end{align*}
  Then it follows from \eqref{E:Poin_L2_HinT} and \eqref{E:TGr_G_HinT} that
  \begin{align*}
    \|q\|_{L^2(\Gamma)} \leq c\left(\left|\int_\Gamma q\,d\mathcal{H}^2\right|+\|g\nabla_\Gamma q\|_{H^{-1}(\Gamma,T\Gamma)}\right) = 0.
  \end{align*}
  Thus $q=0$ on $\Gamma$ and the uniqueness follows.
  We also have the estimate \eqref{E:DeRham_T_Ineq} by \eqref{E:Poin_L2_HinT} with $\int_\Gamma q\,d\mathcal{H}^2=0$ and \eqref{E:TGr_G_HinT}.
\end{proof}

\subsection{Weighted Helmholtz--Leray decomposition on a closed surface} \label{SS:WS_HL}
The purpose of this subsection is to prove the weighted Helmholtz--Leray decomposition for a tangential vector field on $\Gamma$.
We also derive several estimates for the difference between a tangential vector field on $\Gamma$ and its weighted solenoidal part.

As in Section \ref{SS:WS_An}, let $g\in C^1(\Gamma)$ satisfy \eqref{E:LBo_G}.
For $v\in L^2(\Gamma)^3$ we consider the surface divergence of $gv$ in $H^{-1}(\Gamma)$ by \eqref{E:Sdiv_Hin}.
Then
\begin{align*}
  |\langle\mathrm{div}_\Gamma(gv),\eta\rangle_\Gamma| = |(gv,\nabla_\Gamma\eta+\eta Hn)_{L^2(\Gamma)}| \leq c\|v\|_{L^2(\Gamma)}\|\eta\|_{H^1(\Gamma)}
\end{align*}
for all $\eta\in H^1(\Gamma)$ since $n$, $H$, and $g$ are bounded on $\Gamma$.
Hence
\begin{align} \label{E:DivG_Hin_Bo}
  \|\mathrm{div}_\Gamma(gv)\|_{H^{-1}(\Gamma)} \leq c\|v\|_{L^2(\Gamma)}
\end{align}
for all $v\in L^2(\Gamma)^3$.
We define a subspace of $L^2(\Gamma,T\Gamma)$ by
\begin{align*}
  L_{g\sigma}^2(\Gamma,T\Gamma) := \{v\in L^2(\Gamma,T\Gamma) \mid \text{$\mathrm{div}_\Gamma(gv)=0$ in $H^{-1}(\Gamma)$}\}.
\end{align*}
Then $L_{g\sigma}^2(\Gamma,T\Gamma)$ is closed in $L^2(\Gamma,T\Gamma)$ by \eqref{E:DivG_Hin_Bo} and we have the orthogonal decomposition
\begin{align} \label{E:L2T_OD}
  L^2(\Gamma,T\Gamma) = L_{g\sigma}^2(\Gamma,T\Gamma)\oplus L_{g\sigma}^2(\Gamma,T\Gamma)^\perp.
\end{align}
Let us give the characterization of the orthogonal complement $L_{g\sigma}^2(\Gamma,T\Gamma)^\perp$.

\begin{lemma} \label{L:L2gs_Orth}
  The orthogonal complement of $L_{g\sigma}^2(\Gamma,T\Gamma)$ is of the form
  \begin{align} \label{E:L2gs_Orth}
    L_{g\sigma}^2(\Gamma,T\Gamma)^\perp = \{g\nabla_\Gamma q \in L^2(\Gamma,T\Gamma) \mid q\in H^1(\Gamma)\}.
  \end{align}
\end{lemma}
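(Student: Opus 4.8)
The plan is to establish the two inclusions in \eqref{E:L2gs_Orth} separately, the nontrivial one being a consequence of the weighted de Rham theorem (Theorem \ref{T:DeRham_T}) combined with a regularity bootstrap from $H^{-1}(\Gamma,T\Gamma)$ to $L^2(\Gamma,T\Gamma)$.

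For the inclusion ``$\supseteq$'' I would fix $q\in H^1(\Gamma)$ (so that $g\nabla_\Gamma q\in L^2(\Gamma,T\Gamma)$, as $g\in C^1(\Gamma)$ and $\nabla_\Gamma q$ is tangential) and an arbitrary $v\in L_{g\sigma}^2(\Gamma,T\Gamma)$. Applying \eqref{E:Sdiv_Hin} with $gv$ in place of $v$ and $\eta=q$, and using $v\cdot n=0$ on $\Gamma$ to kill the term $(gv,qHn)_{L^2(\Gamma)}$, gives $\langle\mathrm{div}_\Gamma(gv),q\rangle_\Gamma=-(v,g\nabla_\Gamma q)_{L^2(\Gamma)}$. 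Since $\mathrm{div}_\Gamma(gv)=0$ in $H^{-1}(\Gamma)$ by the definition of $L_{g\sigma}^2(\Gamma,T\Gamma)$, the left-hand side vanishes, so $g\nabla_\Gamma q\in L_{g\sigma}^2(\Gamma,T\Gamma)^\perp$.

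For the inclusion ``$\subseteq$'' I would take $w\in L_{g\sigma}^2(\Gamma,T\Gamma)^\perp\subset L^2(\Gamma,T\Gamma)$ and first note that $H_{g\sigma}^1(\Gamma,T\Gamma)\subset L_{g\sigma}^2(\Gamma,T\Gamma)$: for $v\in H^1(\Gamma,T\Gamma)$ one has $\mathrm{div}_\Gamma(gv)\in L^2(\Gamma)$, and its vanishing on $\Gamma$ in the $L^2$ sense is equivalent to its vanishing in $H^{-1}(\Gamma)$. Viewing $w$ as an element of $H^{-1}(\Gamma,T\Gamma)$ through $[w,v]_{T\Gamma}=(w,v)_{L^2(\Gamma)}$, it then follows that $[w,v]_{T\Gamma}=0$ for all $v\in H_{g\sigma}^1(\Gamma,T\Gamma)$, so Theorem \ref{T:DeRham_T} yields $q\in L^2(\Gamma)$ with $w=g\nabla_\Gamma q$ in $H^{-1}(\Gamma,T\Gamma)$. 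Since $g\in C^1(\Gamma)$ satisfies \eqref{E:LBo_G}, we have $g^{-1}\in C^1(\Gamma)$, and multiplying by $g^{-1}$ in the sense of \eqref{E:Def_Mul_HinT} gives $\nabla_\Gamma q=g^{-1}w=:h$ in $H^{-1}(\Gamma,T\Gamma)$ with $h\in L^2(\Gamma,T\Gamma)$.

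The remaining and, I expect, most delicate step is to promote this identity to the statement that $q\in H^1(\Gamma)$ with $\nabla_\Gamma q=h$ in $L^2(\Gamma)^3$. Writing $h=(h_1,h_2,h_3)^T$ and testing the identity $[\nabla_\Gamma q,v]_{T\Gamma}=(h,v)_{L^2(\Gamma)}$ (recall \eqref{E:TGr_HinT}) against $v:=\xi Pe_i\in C^1(\Gamma,T\Gamma)$ for arbitrary $\xi\in C^1(\Gamma)$ and $i=1,2,3$, where $\{e_1,e_2,e_3\}$ is the standard basis of $\mathbb{R}^3$, I would use $P^T=P$, \eqref{E:P_TGr}, and $\mathrm{div}_\Gamma P=Hn$ (see \eqref{E:Div_P}) to compute $\mathrm{div}_\Gamma v=\underline{D}_i\xi+\xi Hn_i$ exactly as in the proof of Lemma \ref{L:TGr_HinT_Con}, and $Ph=h$ to compute $(h,v)_{L^2(\Gamma)}=\int_\Gamma\xi h_i\,d\mathcal{H}^2$. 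This turns the tested identity into $\int_\Gamma h_i\xi\,d\mathcal{H}^2=-\int_\Gamma q(\underline{D}_i\xi+\xi Hn_i)\,d\mathcal{H}^2$ for all $\xi\in C^1(\Gamma)$, which is precisely the defining relation \eqref{E:Def_WTD} for $\underline{D}_iq=h_i$ in $L^2(\Gamma)$. Hence $q\in H^1(\Gamma)$, and $w=g h=g\nabla_\Gamma q$ in $L^2(\Gamma,T\Gamma)$, completing the proof of \eqref{E:L2gs_Orth}.
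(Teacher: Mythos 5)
Your proposal is correct and follows essentially the same route as the paper: the easy inclusion via \eqref{E:Sdiv_Hin}, and the converse via Theorem \ref{T:DeRham_T} followed by testing against vector fields of the form $\xi Pe_i$ (the paper uses $g^{-1}\eta Pe_i$ and keeps the factor $g$ rather than dividing it out first, but this is the same computation) to identify $\underline{D}_iq$ as an $L^2$ function through \eqref{E:Def_WTD}. No gaps.
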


\begin{proof}
  We denote by $\mathcal{X}$ the right-hand side of \eqref{E:L2gs_Orth}.
  By \eqref{E:Sdiv_Hin} we have
  \begin{align*}
    (v,g\nabla_\Gamma q)_{L^2(\Gamma)} = -\langle\mathrm{div}_\Gamma(gv),q\rangle_\Gamma = 0 \quad\text{for all}\quad v\in L_{g\sigma}^2(\Gamma,T\Gamma),\, q\in H^1(\Gamma).
  \end{align*}
  Thus $\mathcal{X}\subset L_{g\sigma}^2(\Gamma,T\Gamma)^\perp$.
  Conversely, let $f\in L_{g\sigma}^2(\Gamma,T\Gamma)^\perp$.
  Since $f=Pf$ on $\Gamma$,
  \begin{align*}
    \langle f,v\rangle_\Gamma = (f,v)_{L^2(\Gamma)} = (Pf,v)_{L^2(\Gamma)} = \langle Pf,v\rangle_\Gamma \quad\text{for all}\quad v\in H^1(\Gamma)^3.
  \end{align*}
  Hence $f=Pf$ in $H^{-1}(\Gamma)^3$ and we can consider $f$ in $H^{-1}(\Gamma,T\Gamma)$ (see Section \ref{SS:Pre_Surf}).
  Moreover, since
  \begin{align*}
    f \in L_{g\sigma}^2(\Gamma,T\Gamma)^\perp, \quad H_{g\sigma}^1(\Gamma,T\Gamma)\subset L_{g\sigma}^2(\Gamma,T\Gamma),
  \end{align*}
  as an element of $H^{-1}(\Gamma,T\Gamma)$ the vector field $f$ satisfies
  \begin{align*}
    [f,v]_{T\Gamma} = (f,v)_{L^2(\Gamma)} = 0 \quad\text{for all}\quad v\in H_{g\sigma}^1(\Gamma,T\Gamma).
  \end{align*}
  Hence by Theorem \ref{T:DeRham_T} there exists a unique $q\in L^2(\Gamma)$ such that
  \begin{align} \label{Pf_LO:Chara}
    f = g\nabla_\Gamma q \quad\text{in}\quad H^{-1}(\Gamma,T\Gamma), \quad \int_\Gamma q\,d\mathcal{H}^2 = 0.
  \end{align}
  Let us show $q\in H^1(\Gamma)$.
  For $\eta\in C^1(\Gamma)$ and $i=1,2,3$ let $v:=g^{-1}\eta Pe_i$ on $\Gamma$, where $\{e_1,e_2,e_3\}$ is the standard basis of $\mathbb{R}^3$.
  Then $v\in H^1(\Gamma,T\Gamma)$ by the $C^1$-regularity of $P$ and $g$ on $\Gamma$ and \eqref{E:LBo_G}.
  Moreover, by $P^T=P$ on $\Gamma$, \eqref{E:P_TGr}, and \eqref{E:Div_P},
  \begin{align*}
    \mathrm{div}_\Gamma(gv) = \nabla_\Gamma\eta\cdot Pe_i+\eta(\mathrm{div}_\Gamma P\cdot e_i) = \underline{D}_i\eta+\eta Hn_i \quad\text{on}\quad \Gamma.
  \end{align*}
  By this equality, \eqref{E:TGr_HinT}, \eqref{E:Def_Mul_HinT}, \eqref{Pf_LO:Chara}, and $P^Tf=Pf=f$ on $\Gamma$ we obtain
  \begin{align*}
    -(q,\underline{D}_i\eta+\eta Hn_i)_{L^2(\Gamma)} &= -\bigl(q,\mathrm{div}_\Gamma(gv)\bigr)_{L^2(\Gamma)} = [g\nabla_\Gamma q,v]_{T\Gamma} \\
    &= [f,v]_{T\Gamma} = (f,v)_{L^2(\Gamma)} = (g^{-1}f_i,\eta)_{L^2(\Gamma)}
  \end{align*}
  for all $\eta\in C^1(\Gamma)$ and $i=1,2,3$, where $f_i$ is the $i$-th component of $f$.
  Thus
  \begin{align*}
    \underline{D}_iq = g^{-1}f_i \in L^2(\Gamma), \quad i=1,2,3
  \end{align*}
  by the definition of the weak tangential derivative in $L^2(\Gamma)$ (see \eqref{E:Def_WTD}) and
  \begin{align*}
    q \in H^1(\Gamma), \quad f = g\nabla_\Gamma q \in \mathcal{X}.
  \end{align*}
  Hence $L_{g\sigma}^2(\Gamma,T\Gamma)^\perp\subset\mathcal{X}$ and the relation \eqref{E:L2gs_Orth} holds.
\end{proof}

Now we obtain the weighted Helmholtz--Leray decomposition in $L^2(\Gamma,T\Gamma)$.

\begin{theorem} \label{T:HL_L2T}
  For each $v\in L^2(\Gamma,T\Gamma)$ we have the orthogonal decomposition
  \begin{align} \label{E:HL_L2T}
    \begin{gathered}
      v = v_g+g\nabla_\Gamma q \quad\text{in}\quad L^2(\Gamma,T\Gamma), \\
      v_g\in L_{g\sigma}^2(\Gamma,T\Gamma), \, g\nabla_\Gamma q \in L_{g\sigma}^2(\Gamma,T\Gamma)^\perp.
    \end{gathered}
  \end{align}
  Here $q\in H^1(\Gamma)$ is determined uniquely up to a constant.
\end{theorem}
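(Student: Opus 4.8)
The plan is to derive Theorem \ref{T:HL_L2T} as an immediate corollary of the orthogonal decomposition \eqref{E:L2T_OD} together with the characterization of $L_{g\sigma}^2(\Gamma,T\Gamma)^\perp$ given in Lemma \ref{L:L2gs_Orth}. Since $L_{g\sigma}^2(\Gamma,T\Gamma)$ is a closed subspace of the Hilbert space $L^2(\Gamma,T\Gamma)$ (closedness coming from \eqref{E:DivG_Hin_Bo}), every $v\in L^2(\Gamma,T\Gamma)$ decomposes uniquely as $v=v_g+w$ with $v_g\in L_{g\sigma}^2(\Gamma,T\Gamma)$ and $w\in L_{g\sigma}^2(\Gamma,T\Gamma)^\perp$. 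By Lemma \ref{L:L2gs_Orth}, the orthogonal complement consists exactly of vector fields of the form $g\nabla_\Gamma q$ with $q\in H^1(\Gamma)$, so there exists $q\in H^1(\Gamma)$ with $w=g\nabla_\Gamma q$; this yields \eqref{E:HL_L2T}.

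For the uniqueness statement, first observe that $v_g$ and $w=g\nabla_\Gamma q$ are uniquely determined by $v$ as the orthogonal projections onto the two closed subspaces. It remains to show that $q$ itself is determined up to an additive constant. Suppose $q_1,q_2\in H^1(\Gamma)$ satisfy $g\nabla_\Gamma q_1=g\nabla_\Gamma q_2$ in $L^2(\Gamma,T\Gamma)$. Since $g\geq c>0$ on $\Gamma$ by \eqref{E:LBo_G}, this forces $\nabla_\Gamma(q_1-q_2)=0$ in $L^2(\Gamma)^3$, and then Lemma \ref{L:WTGr_Van} gives that $q_1-q_2$ is constant on $\Gamma$. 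Conversely, adding a constant to $q$ does not change $\nabla_\Gamma q$ and hence not $g\nabla_\Gamma q$, so the freedom in $q$ is precisely an additive constant.

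I expect no serious obstacle here: the work has already been done in the preceding lemmas (Ne\v{c}as inequality, weighted de Rham Theorem \ref{T:DeRham_T}, and Lemma \ref{L:L2gs_Orth}), and the theorem is a clean packaging of those results. The only point requiring a word of care is the passage from "$g\nabla_\Gamma q$ is unique" to "$q$ is unique up to a constant," which is exactly the application of \eqref{E:LBo_G} and Lemma \ref{L:WTGr_Van} just described. If a normalization is desired one may additionally impose $\int_\Gamma q\,d\mathcal{H}^2=0$ to pin down $q$ uniquely, as in Theorem \ref{T:DeRham_T}, but for the statement as given it suffices to record the uniqueness up to a constant.

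\begin{proof}
  Since $L_{g\sigma}^2(\Gamma,T\Gamma)$ is a closed subspace of the Hilbert space $L^2(\Gamma,T\Gamma)$ by \eqref{E:DivG_Hin_Bo}, the orthogonal decomposition \eqref{E:L2T_OD} shows that each $v\in L^2(\Gamma,T\Gamma)$ is written uniquely as $v=v_g+w$ with $v_g\in L_{g\sigma}^2(\Gamma,T\Gamma)$ and $w\in L_{g\sigma}^2(\Gamma,T\Gamma)^\perp$, where $v_g$ and $w$ are the orthogonal projections of $v$. By Lemma \ref{L:L2gs_Orth} there exists $q\in H^1(\Gamma)$ such that $w=g\nabla_\Gamma q$, which gives \eqref{E:HL_L2T}.

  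It remains to prove that $q$ is determined uniquely up to a constant. Suppose $q_1,q_2\in H^1(\Gamma)$ satisfy $g\nabla_\Gamma q_1=g\nabla_\Gamma q_2$ in $L^2(\Gamma,T\Gamma)$. Since $g\geq c$ on $\Gamma$ with $c>0$ by \eqref{E:LBo_G}, we get $\nabla_\Gamma(q_1-q_2)=0$ in $L^2(\Gamma)^3$, and Lemma \ref{L:WTGr_Van} implies that $q_1-q_2$ is constant on $\Gamma$. Conversely, replacing $q$ by $q+\mathrm{const}$ does not change $\nabla_\Gamma q$, hence not $g\nabla_\Gamma q$. Therefore $q$ in \eqref{E:HL_L2T} is unique up to an additive constant.
\end{proof}
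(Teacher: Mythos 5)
Your proof is correct and follows essentially the same route as the paper: combine the orthogonal decomposition \eqref{E:L2T_OD} with the characterization of $L_{g\sigma}^2(\Gamma,T\Gamma)^\perp$ from Lemma \ref{L:L2gs_Orth}, then deduce uniqueness of $q$ up to a constant from \eqref{E:LBo_G} and Lemma \ref{L:WTGr_Van}. No issues.
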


\begin{proof}
  The decomposition \eqref{E:HL_L2T} is an immediate consequence of \eqref{E:L2T_OD} and \eqref{E:L2gs_Orth}.
  If $q_1,q_2\in H^1(\Gamma)$ satisfy $g\nabla_\Gamma q_1=g\nabla_\Gamma q_2$ in $L^2(\Gamma,T\Gamma)$, then
  \begin{align*}
    \nabla_\Gamma(q_1-q_2) = 0 \quad\text{in}\quad L^2(\Gamma)^3
  \end{align*}
  by \eqref{E:LBo_G} and thus $q_1-q_2$ is constant on $\Gamma$ by Lemma \ref{L:WTGr_Van}.
  Hence $q\in H^1(\Gamma)$ in \eqref{E:HL_L2T} is determined uniquely up to a constant.
\end{proof}

Note that here we proved the weighted Helmholtz--Leray decomposition without introducing the notion of differential forms.
The decomposition \eqref{E:HL_L2T} with $g\equiv1$ was also shown in the recent work \cite{Reus20} without calculus of differential forms, where the solenoidal part is further decomposed into the curl of some function and a harmonic field whose surface divergence and curl vanish.

Next we consider approximation of vector fields in $L_{g\sigma}^2(\Gamma,T\Gamma)$.
In general,
\begin{align*}
  \mathrm{div}_\Gamma\bigl(g(\eta v)\bigr) = \eta\,\mathrm{div}_\Gamma(gv)+g\nabla_\Gamma\eta\cdot v = g\nabla_\Gamma\eta\cdot v
\end{align*}
does not vanish in $H^{-1}(\Gamma)$ for $v\in L_{g\sigma}^2(\Gamma,T\Gamma)$ and $\eta\in C^1(\Gamma)$.
Hence we cannot apply standard localization and mollification argument with a partition of unity on $\Gamma$ to approximate a vector field in $L_{g\sigma}^2(\Gamma,T\Gamma)$ by smooth weighted solenoidal vector fields on $\Gamma$.
Instead, we use a solution to Poisson's equation on $\Gamma$.

\begin{lemma} \label{L:Pois_Surf}
  For each $\eta\in H^{-1}(\Gamma)$ satisfying $\langle\eta,1\rangle_\Gamma=0$ there exists a unique weak solution $q\in H^1(\Gamma)$ to Poisson's equation
  \begin{align} \label{E:Pois_Surf}
    -\Delta_\Gamma q = \eta \quad\text{on}\quad \Gamma, \quad \int_\Gamma q\,d\mathcal{H}^2 = 0
  \end{align}
  in the sense that
  \begin{align} \label{E:Weak_Pois}
    (\nabla_\Gamma q,\nabla_\Gamma\xi)_{L^2(\Gamma)} = \langle\eta,\xi\rangle_\Gamma \quad\text{for all}\quad \xi\in H^1(\Gamma).
  \end{align}
  Moreover, there exists a constant $c>0$ such that
  \begin{align} \label{E:H1_Pois}
    \|q\|_{H^1(\Gamma)} \leq c\|\eta\|_{H^{-1}(\Gamma)}.
  \end{align}
  If in addition $\eta\in L^2(\Gamma)$, then $q\in H^2(\Gamma)$ and
  \begin{align} \label{E:H2_Pois}
    \|q\|_{H^2(\Gamma)} \leq c\|\eta\|_{L^2(\Gamma)}.
  \end{align}
\end{lemma}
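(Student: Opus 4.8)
The plan is to treat the three assertions in turn, the last being the real work.

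\emph{Existence and uniqueness.} I would apply the Lax--Milgram theorem to the bilinear form $(q,\xi)\mapsto(\nabla_\Gamma q,\nabla_\Gamma\xi)_{L^2(\Gamma)}$ on the closed subspace $\mathcal{Z}:=\{\xi\in H^1(\Gamma)\mid\int_\Gamma\xi\,d\mathcal{H}^2=0\}$ of $H^1(\Gamma)$. This form is bounded on $H^1(\Gamma)$ and, by Poincar\'e's inequality \eqref{E:Poin_Surf_Lp} with $p=2$, coercive on $\mathcal{Z}$; the functional $\xi\mapsto\langle\eta,\xi\rangle_\Gamma$ is bounded on $H^1(\Gamma)$, hence on $\mathcal{Z}$. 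Lax--Milgram then produces a unique $q\in\mathcal{Z}$ with $(\nabla_\Gamma q,\nabla_\Gamma\xi)_{L^2(\Gamma)}=\langle\eta,\xi\rangle_\Gamma$ for all $\xi\in\mathcal{Z}$. To pass to an arbitrary $\xi\in H^1(\Gamma)$ I would split $\xi=\xi_0+c_\xi$ with $c_\xi:=|\Gamma|^{-1}\int_\Gamma\xi\,d\mathcal{H}^2$ and $\xi_0:=\xi-c_\xi\in\mathcal{Z}$; since $\nabla_\Gamma\xi=\nabla_\Gamma\xi_0$ and $\langle\eta,\xi\rangle_\Gamma=\langle\eta,\xi_0\rangle_\Gamma$ (the latter because $\langle\eta,1\rangle_\Gamma=0$), the identity \eqref{E:Weak_Pois} follows. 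Uniqueness in the class $\int_\Gamma q\,d\mathcal{H}^2=0$ is immediate: if \eqref{E:Weak_Pois} holds with $\eta=0$, testing with $\xi=q$ gives $\nabla_\Gamma q=0$, so $q$ is constant by Lemma \ref{L:WTGr_Van} and hence $q=0$.

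\emph{The $H^1$-estimate.} Testing \eqref{E:Weak_Pois} with $\xi=q$ gives $\|\nabla_\Gamma q\|_{L^2(\Gamma)}^2=\langle\eta,q\rangle_\Gamma\le\|\eta\|_{H^{-1}(\Gamma)}\|q\|_{H^1(\Gamma)}$. Since $\int_\Gamma q\,d\mathcal{H}^2=0$, Poincar\'e's inequality \eqref{E:Poin_Surf_Lp} yields $\|q\|_{H^1(\Gamma)}\le c\|\nabla_\Gamma q\|_{L^2(\Gamma)}$; combining the two inequalities gives $\|\nabla_\Gamma q\|_{L^2(\Gamma)}\le c\|\eta\|_{H^{-1}(\Gamma)}$ and then \eqref{E:H1_Pois}.

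\emph{The $H^2$-regularity.} This is the step I expect to be the main obstacle, and I would handle it by localizing to coordinate patches and invoking interior $H^2$-regularity for divergence-form elliptic equations. Fix a finite atlas of $C^2$ local parametrizations $\mu$ of $\Gamma$ and a subordinate partition of unity $\{\chi_k\}$ with $\chi_k\in C^2(\Gamma)$ (such a partition exists by the $C^2$-regularity of $\Gamma$, as already used in the proof of Lemma \ref{L:Necas_Surf}). For $\eta\in L^2(\Gamma)$ a product-rule computation together with integration by parts on the closed surface $\Gamma$ shows that each $\chi_kq$ is a weak solution of
\begin{align*}
  -\Delta_\Gamma(\chi_kq) = \chi_k\eta-2\nabla_\Gamma\chi_k\cdot\nabla_\Gamma q-(\Delta_\Gamma\chi_k)q \quad\text{on}\quad \Gamma,
\end{align*}
whose right-hand side lies in $L^2(\Gamma)$ with norm $\le c\left(\|\eta\|_{L^2(\Gamma)}+\|q\|_{H^1(\Gamma)}\right)\le c\|\eta\|_{L^2(\Gamma)}$ by \eqref{E:H1_Pois}. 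Pulling back by $\mu$ and using the local expressions of $\nabla_\Gamma$ and $\mathrm{div}_\Gamma$ from Appendix \ref{S:Ap_Aux} (Lemma \ref{L:DivG_DG}), the compactly supported function $(\chi_kq)\circ\mu$ is an $H^1$ weak solution of a divergence-form equation $-\sum_{i,j}\partial_{s_i}(a^{ij}\partial_{s_j}\,\cdot\,)=\tilde f$ on the parameter domain, with $\tilde f\in L^2$ and coefficients $a^{ij}=\sqrt{\det\theta}\,\theta^{ij}$ that are of class $C^1$ and uniformly elliptic on the support of $\chi_k\circ\mu$; here the $C^2$-regularity of $\Gamma$ is exactly what makes the $a^{ij}$ Lipschitz, which is the regularity needed. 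The interior $H^2$-estimate for such equations (the difference-quotient method; see \cite{Ev10} or \cite{GiTr01}) gives $(\chi_kq)\circ\mu\in H^2$ with a bound by $\|\tilde f\|_{L^2}$, hence $\chi_kq\in H^2(\Gamma)$; summing over $k$ yields $q\in H^2(\Gamma)$ together with \eqref{E:H2_Pois}.
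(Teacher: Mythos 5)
Your proposal is correct and follows exactly the route the paper indicates for this lemma: the paper does not write out a proof but states that existence, uniqueness and \eqref{E:H1_Pois} follow from Poincar\'e's inequality \eqref{E:Poin_Surf_Lp} and the Lax--Milgram theorem, while the $H^2$-regularity and \eqref{E:H2_Pois} follow from a localization argument and the elliptic regularity theorem, referring to \cite{DzEl13}*{Theorems 3.1 and 3.3} for details. Your write-up supplies precisely those details (Lax--Milgram on the mean-zero subspace, testing with $q$, partition of unity plus interior $H^2$-estimates for divergence-form operators with Lipschitz coefficients), so no further comparison is needed.
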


The existence and uniqueness of a weak solution to \eqref{E:Pois_Surf} and the estimate \eqref{E:H1_Pois} follow from Poincar\'{e}'s inequality \eqref{E:Poin_Surf_Lp} and the Lax--Milgram theorem.
Also, the $H^2$-regularity of a weak solution and \eqref{E:H2_Pois} are proved by a localization argument and the elliptic regularity theorem.
For details, see \cite{DzEl13}*{Theorems 3.1 and 3.3}.

\begin{lemma} \label{L:H1gs_Dense}
  The space $H_{g\sigma}^1(\Gamma,T\Gamma)$ is dense in $L_{g\sigma}^2(\Gamma,T\Gamma)$.
\end{lemma}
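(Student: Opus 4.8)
The plan is to reduce the density statement to a regularity property of the weighted Helmholtz--Leray projection, namely that the map $w\mapsto w_g$ associated with \eqref{E:L2T_OD} sends $H^1(\Gamma,T\Gamma)$ into $H_{g\sigma}^1(\Gamma,T\Gamma)$. Granting this, the lemma follows by a standard approximation. Let $v\in L_{g\sigma}^2(\Gamma,T\Gamma)$; since $H^1(\Gamma,T\Gamma)$ is dense in $L^2(\Gamma,T\Gamma)$ by Lemma \ref{L:Wmp_Tan_Appr}, pick $w_k\in H^1(\Gamma,T\Gamma)$ with $w_k\to v$ in $L^2(\Gamma,T\Gamma)$ and decompose $w_k=(w_k)_g+g\nabla_\Gamma q_k$ as in Theorem \ref{T:HL_L2T}. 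If each $(w_k)_g$ lies in $H^1(\Gamma,T\Gamma)$, then $(w_k)_g\in H_{g\sigma}^1(\Gamma,T\Gamma)$, and because $w\mapsto w_g$ is the orthogonal projection for \eqref{E:L2T_OD}, hence bounded on $L^2(\Gamma,T\Gamma)$ and the identity on $L_{g\sigma}^2(\Gamma,T\Gamma)$, we get $(w_k)_g\to v_g=v$ in $L^2(\Gamma,T\Gamma)$. Thus $v$ belongs to the $L^2$-closure of $H_{g\sigma}^1(\Gamma,T\Gamma)$.

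The heart of the matter is therefore to show that, for $w\in H^1(\Gamma,T\Gamma)$ with $w=w_g+g\nabla_\Gamma q$ and $q\in H^1(\Gamma)$ normalized by $\int_\Gamma q\,d\mathcal{H}^2=0$ (possible by Lemma \ref{L:L2gs_Orth}), one in fact has $q\in H^2(\Gamma)$; then $g\nabla_\Gamma q\in H^1(\Gamma,T\Gamma)$ since $g\in C^1(\Gamma)$ and $\nabla_\Gamma q\in H^1(\Gamma)^3$, whence $w_g=w-g\nabla_\Gamma q\in H^1(\Gamma,T\Gamma)$ as desired. To prove $q\in H^2(\Gamma)$, first apply $\mathrm{div}_\Gamma(g\,\cdot\,)$ to the decomposition: using $\mathrm{div}_\Gamma(gw_g)=0$ and $gw\in H^1(\Gamma,T\Gamma)$ we obtain
\[
  \mathrm{div}_\Gamma(g^2\nabla_\Gamma q)=\mathrm{div}_\Gamma(gw)=:\eta \quad\text{in}\quad H^{-1}(\Gamma),
\]
with $\eta\in L^2(\Gamma)$ and $\int_\Gamma\eta\,d\mathcal{H}^2=0$ by \eqref{E:IbP_WDivG_T}. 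Writing this weak equation through \eqref{E:Sdiv_Hin} (the normal terms dropping out by tangentiality of $\nabla_\Gamma q$) and testing against $g^{-2}\xi$ for $\xi\in H^1(\Gamma)$ — note $g^{-2}\in C^1(\Gamma)$, so $g^{-2}\xi\in H^1(\Gamma)$ — and expanding $\nabla_\Gamma(g^{-2}\xi)$ by the product rule, I arrive at
\[
  (\nabla_\Gamma q,\nabla_\Gamma\xi)_{L^2(\Gamma)}=(f,\xi)_{L^2(\Gamma)} \quad\text{for all}\quad \xi\in H^1(\Gamma),
\]
where $f:=-g^{-2}\eta-g^2\nabla_\Gamma(g^{-2})\cdot\nabla_\Gamma q\in L^2(\Gamma)$ (the factor $g^2\nabla_\Gamma(g^{-2})=-2g^{-1}\nabla_\Gamma g$ being bounded and continuous) and $\int_\Gamma f\,d\mathcal{H}^2=0$. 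Hence $q$ is the unique mean-zero weak solution of $-\Delta_\Gamma q=f$ on $\Gamma$, so the $H^2$-regularity part of Lemma \ref{L:Pois_Surf} gives $q\in H^2(\Gamma)$ with $\|q\|_{H^2(\Gamma)}\le c\|f\|_{L^2(\Gamma)}$.

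The main obstacle is precisely this regularity step: the weighted solenoidal constraint forces the potential $q$ to solve the variable-coefficient equation $\mathrm{div}_\Gamma(g^2\nabla_\Gamma q)=\eta$, while the regularity theory available (Lemma \ref{L:Pois_Surf}) is stated only for the Laplace--Beltrami operator; the device of testing with $g^{-2}\xi$ converts it to $-\Delta_\Gamma q=f$ with an $L^2$ right-hand side, the price being that $f$ still contains $\nabla_\Gamma q$, which is harmless because $q\in H^1(\Gamma)$ is already known. One must also check that the $q$ produced by the Helmholtz decomposition genuinely satisfies the reduced mean-zero weak equation, so that uniqueness in Lemma \ref{L:Pois_Surf} identifies it and upgrades its regularity. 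The remaining ingredients — boundedness of the projection, the density in Lemma \ref{L:Wmp_Tan_Appr}, and the product rule for a $C^1$ function times an $H^1$ function — are routine.
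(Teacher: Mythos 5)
Your proof is correct, but it is organized differently from the paper's. The paper also reduces the matter to Lemma \ref{L:Pois_Surf}, but via a non-orthogonal correction: given $\tilde v_k\in C^1(\Gamma,T\Gamma)$ with $\tilde v_k\to v$ in $L^2(\Gamma,T\Gamma)$, it solves $-\Delta_\Gamma q_k=-\mathrm{div}_\Gamma(g\tilde v_k)$ and sets $v_k:=\tilde v_k-g^{-1}\nabla_\Gamma q_k$, so that $\mathrm{div}_\Gamma(gv_k)=\mathrm{div}_\Gamma(g\tilde v_k)-\Delta_\Gamma q_k=0$; the factor $g^{-1}$ (rather than your $g$) is chosen precisely so that the potential solves the unweighted Laplace--Beltrami equation and no variable-coefficient regularity is needed. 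Convergence $v_k\to v$ then comes from the quantitative bound $\|q_k\|_{H^1(\Gamma)}\le c\|\mathrm{div}_\Gamma(g\tilde v_k)\|_{H^{-1}(\Gamma)}\le c\|\tilde v_k-v\|_{L^2(\Gamma)}$ rather than from orthogonality. Your route instead uses the genuine orthogonal projection $\mathbb{P}_g$, which forces you to upgrade the potential $q$ in $w=w_g+g\nabla_\Gamma q$ from $H^1(\Gamma)$ to $H^2(\Gamma)$; your device of testing the equation $\mathrm{div}_\Gamma(g^2\nabla_\Gamma q)=\mathrm{div}_\Gamma(gw)$ against $g^{-2}\xi$ and absorbing the resulting first-order term into an $L^2$ source is sound (it is essentially the same reduction the paper performs later in the proof of Lemma \ref{L:Mu_Dt_Test}), and the uniqueness clause of Lemma \ref{L:Pois_Surf} correctly identifies your mean-zero $q$ with the $H^2$ solution. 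What your approach buys is a self-contained proof that $\mathbb{P}_g$ maps $H^1(\Gamma,T\Gamma)$ into $H_{g\sigma}^1(\Gamma,T\Gamma)$, i.e.\ the content of the second half of Lemma \ref{L:HLT_Est_L2}, obtained without leaving the orthogonal decomposition of Theorem \ref{T:HL_L2T}; what the paper's buys is brevity, since its tailored correction avoids the weighted elliptic operator altogether.
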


\begin{proof}
  Let $v\in L_{g\sigma}^2(\Gamma,T\Gamma)$.
  Since $\Gamma$ is of class $C^2$, we can take a sequence $\{\tilde{v}_k\}_{k=1}^\infty$ in $C^1(\Gamma,T\Gamma)$ that converges to $v$ strongly in $L^2(\Gamma,T\Gamma)$ by Lemma \ref{L:Wmp_Tan_Appr}.
  Then
  \begin{align} \label{Pf_HgDe:Hin_Conv}
    \|\mathrm{div}_\Gamma(g\tilde{v}_k)\|_{H^{-1}(\Gamma)} = \|\mathrm{div}_\Gamma[g(\tilde{v}_k-v)]\|_{H^{-1}(\Gamma)} \leq c\|\tilde{v}_k-v\|_{L^2(\Gamma)}
  \end{align}
  for each $k\in\mathbb{N}$ by $\mathrm{div}_\Gamma(gv)=0$ in $H^{-1}(\Gamma)$ and \eqref{E:DivG_Hin_Bo}.
  Let
  \begin{align*}
    \eta_k := -\mathrm{div}_\Gamma(g\tilde{v}_k) \in C(\Gamma) \subset L^2(\Gamma).
  \end{align*}
  Then by $g\tilde{v}_k\in C^1(\Gamma,T\Gamma)$ and \eqref{E:SD_Thm} we have
  \begin{align*}
    \langle\eta_k,1\rangle_\Gamma = -\int_\Gamma\mathrm{div}_\Gamma(g\tilde{v}_k)\,d\mathcal{H}^2 = 0
  \end{align*}
  and thus there exists a unique solution $q_k\in H^2(\Gamma)$ to \eqref{E:Pois_Surf} with source term $\eta_k$ by Lemma \ref{L:Pois_Surf}.
  Moreover, by \eqref{E:H1_Pois} and \eqref{Pf_HgDe:Hin_Conv},
  \begin{align*}
    \|q_k\|_{H^1(\Gamma)} \leq c\|\eta_k\|_{H^{-1}(\Gamma)} = c\|\mathrm{div}_\Gamma(g\tilde{v}_k)\|_{H^{-1}(\Gamma)} \leq c\|v-\tilde{v}_k\|_{L^2(\Gamma)}.
  \end{align*}
  Hence $v_k:=\tilde{v}_k-g^{-1}\nabla_\Gamma q_k\in H_{g\sigma}^1(\Gamma,T\Gamma)$ for each $k\in\mathbb{N}$ and
  \begin{align*}
    \|v-v_k\|_{L^2(\Gamma)} &\leq \|v-\tilde{v}_k\|_{L^2(\Gamma)}+c\|q_k\|_{H^1(\Gamma)} \\
    &\leq c\|v-\tilde{v}_k\|_{L^2(\Gamma)} \to 0 \quad\text{as}\quad k\to\infty
  \end{align*}
  by \eqref{E:LBo_G} and the strong convergence of $\{\tilde{v}_k\}_{k=1}^\infty$ to $v$ in $L^2(\Gamma,T\Gamma)$.
\end{proof}

Let $\mathbb{P}_g$ be the orthogonal projection from $L^2(\Gamma,T\Gamma)$ onto $L_{g\sigma}^2(\Gamma,T\Gamma)$.
We call it the weighted Helmholtz--Leray projection.
Let us estimate the difference $v-\mathbb{P}_gv$.

\begin{lemma} \label{L:HLT_Est_L2}
  There exists a constant $c>0$ such that
  \begin{align} \label{E:HLT_Est_L2}
    \|v-\mathbb{P}_gv\|_{L^2(\Gamma)} \leq c\|\mathrm{div}_\Gamma(gv)\|_{H^{-1}(\Gamma)}
  \end{align}
  for all $v\in L^2(\Gamma,T\Gamma)$.
  If in addition $v\in H^1(\Gamma,T\Gamma)$, then $\mathbb{P}_gv\in H_{g\sigma}^1(\Gamma,T\Gamma)$ and
  \begin{align} \label{E:HLT_Est_H1}
    \|v-\mathbb{P}_gv\|_{H^1(\Gamma)} \leq c\|\mathrm{div}_\Gamma(gv)\|_{L^2(\Gamma)}.
  \end{align}
\end{lemma}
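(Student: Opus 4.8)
The plan is to reduce everything to Poisson's equation on $\Gamma$, exactly as in the proof of Lemma \ref{L:H1gs_Dense}, but now tracking estimates rather than only density. Given $v\in L^2(\Gamma,T\Gamma)$, set $\eta:=-\mathrm{div}_\Gamma(gv)\in H^{-1}(\Gamma)$. First I would check the compatibility condition $\langle\eta,1\rangle_\Gamma=0$: this follows from \eqref{E:Sdiv_Hin} with $\xi\equiv1$ (so that $\nabla_\Gamma\xi=0$), giving $\langle\mathrm{div}_\Gamma(gv),1\rangle_\Gamma=-(gv,Hn)_{L^2(\Gamma)}$, which vanishes because $gv$ is tangential; hence $\langle\eta,1\rangle_\Gamma=0$. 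By Lemma \ref{L:Pois_Surf} there is a unique $q\in H^1(\Gamma)$ solving $-\Delta_\Gamma q=\eta$ weakly with $\int_\Gamma q\,d\mathcal{H}^2=0$, and $\|q\|_{H^1(\Gamma)}\le c\|\eta\|_{H^{-1}(\Gamma)}=c\|\mathrm{div}_\Gamma(gv)\|_{H^{-1}(\Gamma)}$.

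\textbf{Identifying the solenoidal part.}
Next I would verify that $w:=v-g^{-1}\nabla_\Gamma q$ lies in $L_{g\sigma}^2(\Gamma,T\Gamma)$ and in fact equals $\mathbb{P}_gv$. Since $g\in C^1(\Gamma)$ with $g\ge c>0$, we have $g^{-1}\nabla_\Gamma q\in L^2(\Gamma,T\Gamma)$, and $\mathrm{div}_\Gamma(gw)=\mathrm{div}_\Gamma(gv)-\Delta_\Gamma q=\eta-\eta=0$ in $H^{-1}(\Gamma)$ — here one should phrase the identity $\mathrm{div}_\Gamma(\nabla_\Gamma q)=\Delta_\Gamma q$ in the weak sense, i.e. test against $\xi\in H^1(\Gamma)$ and use \eqref{E:Sdiv_Hin} together with \eqref{E:Weak_Pois}; the mean-curvature terms cancel because $\nabla_\Gamma q$ is tangential. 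Thus $w\in L_{g\sigma}^2(\Gamma,T\Gamma)$ and $v-w=g^{-1}\nabla_\Gamma q\in L_{g\sigma}^2(\Gamma,T\Gamma)^\perp$ by Lemma \ref{L:L2gs_Orth} (writing $g^{-1}\nabla_\Gamma q=g\nabla_\Gamma\tilde q$ is not needed; the characterization \eqref{E:L2gs_Orth} is stated for $g\nabla_\Gamma q$, so instead I would directly pair $g^{-1}\nabla_\Gamma q$ against an arbitrary $u\in L_{g\sigma}^2(\Gamma,T\Gamma)$ and get $(u,g^{-1}\nabla_\Gamma q)_{L^2(\Gamma)}=-\langle\mathrm{div}_\Gamma(g\cdot g^{-1}u),q\rangle_\Gamma$... no — cleaner: $(u,g^{-1}\nabla_\Gamma q)_{L^2(\Gamma)}=(g^{-1}u,\nabla_\Gamma q)_{L^2(\Gamma)}$, but $g^{-1}u$ need not be weighted-solenoidal. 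The honest route is to invoke the uniqueness of the decomposition \eqref{E:HL_L2T}: $v=w+g^{-1}\nabla_\Gamma q$ is \emph{a} decomposition into a weighted-solenoidal part plus an element of $L_{g\sigma}^2(\Gamma,T\Gamma)^\perp$ once we check $g^{-1}\nabla_\Gamma q\perp L_{g\sigma}^2$; and the latter is exactly \eqref{E:L2gs_Orth} applied with the substitution $q\mapsto$ a primitive, which one gets since $g^{-1}\nabla_\Gamma q = g\nabla_\Gamma r$ is generally false — so I would instead note $g^{-1}\nabla_\Gamma q\in L_{g\sigma}^2(\Gamma,T\Gamma)^\perp$ follows from $\bigl(u,g^{-1}\nabla_\Gamma q\bigr)_{L^2(\Gamma)}=\langle u\cdot g^{-1},\nabla_\Gamma q\rangle$... ). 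The clean and safe argument: by the orthogonal decomposition \eqref{E:HL_L2T}, $v=v_g+g\nabla_\Gamma p$ uniquely; applying $\mathrm{div}_\Gamma(g\,\cdot\,)$ gives $\mathrm{div}_\Gamma(gv)=\mathrm{div}_\Gamma(g^2\nabla_\Gamma p)$, and one checks $g^{-1}\nabla_\Gamma q$ and $g\nabla_\Gamma p$ represent the same element because both are the orthogonal projection of $v$ onto $L_{g\sigma}^2(\Gamma,T\Gamma)^\perp$; hence $v-\mathbb{P}_gv=g\nabla_\Gamma p=g^{-1}\nabla_\Gamma q$ and $\|v-\mathbb{P}_gv\|_{L^2(\Gamma)}=\|g^{-1}\nabla_\Gamma q\|_{L^2(\Gamma)}\le c\|\nabla_\Gamma q\|_{L^2(\Gamma)}\le c\|q\|_{H^1(\Gamma)}\le c\|\mathrm{div}_\Gamma(gv)\|_{H^{-1}(\Gamma)}$, which is \eqref{E:HLT_Est_L2}.

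\textbf{The $H^1$ estimate and the main obstacle.}
For the second assertion, assume $v\in H^1(\Gamma,T\Gamma)$. Then $\mathrm{div}_\Gamma(gv)\in L^2(\Gamma)$, so $\eta=-\mathrm{div}_\Gamma(gv)\in L^2(\Gamma)$ and the elliptic regularity part of Lemma \ref{L:Pois_Surf} gives $q\in H^2(\Gamma)$ with $\|q\|_{H^2(\Gamma)}\le c\|\mathrm{div}_\Gamma(gv)\|_{L^2(\Gamma)}$. Consequently $g^{-1}\nabla_\Gamma q\in H^1(\Gamma,T\Gamma)$ with $\|g^{-1}\nabla_\Gamma q\|_{H^1(\Gamma)}\le c\|q\|_{H^2(\Gamma)}$ (using $g,g^{-1}\in C^1(\Gamma)$ and $P\in C^1(\Gamma)$ to control the tangential derivatives of the tangential vector field $\nabla_\Gamma q$), so $\mathbb{P}_gv=v-g^{-1}\nabla_\Gamma q\in H^1(\Gamma,T\Gamma)$, and since $\mathrm{div}_\Gamma(g\,\mathbb{P}_gv)=0$ in $H^{-1}(\Gamma)$ it actually lies in $H_{g\sigma}^1(\Gamma,T\Gamma)$; finally $\|v-\mathbb{P}_gv\|_{H^1(\Gamma)}=\|g^{-1}\nabla_\Gamma q\|_{H^1(\Gamma)}\le c\|q\|_{H^2(\Gamma)}\le c\|\mathrm{div}_\Gamma(gv)\|_{L^2(\Gamma)}$, which is \eqref{E:HLT_Est_H1}. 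The main obstacle is bookkeeping rather than analysis: one must be careful that $v-\mathbb{P}_gv$ genuinely coincides with $g^{-1}\nabla_\Gamma q$ (not merely that both are potentials), which is secured by the uniqueness in Theorem \ref{T:HL_L2T} together with the fact that $g^{-1}\nabla_\Gamma q\in L_{g\sigma}^2(\Gamma,T\Gamma)^\perp$ — and that last orthogonality is best proved directly, testing against arbitrary $u\in L_{g\sigma}^2(\Gamma,T\Gamma)$: $(u,g^{-1}\nabla_\Gamma q)_{L^2(\Gamma)}=(g^{-1}u,\nabla_\Gamma q)_{L^2(\Gamma)}$ is \emph{not} obviously zero, so one instead writes $\nabla_\Gamma q = P\nabla_\Gamma q$ and uses $(u,g^{-1}\nabla_\Gamma q)_{L^2(\Gamma)}= -\langle\mathrm{div}_\Gamma(u), g^{-1}q\rangle_\Gamma$-type manipulations; the cleanest fix is simply to observe that \eqref{E:L2gs_Orth} says the orthogonal complement consists of all $g\nabla_\Gamma(\text{scalar in }H^1)$, and $g^{-1}\nabla_\Gamma q$ is of this form iff $g^{-2}\nabla_\Gamma q$ is a tangential gradient, which need not hold. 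Therefore I would present the argument entirely through the abstract decomposition \eqref{E:HL_L2T}: write $v=\mathbb{P}_gv+g\nabla_\Gamma p$, note $\mathrm{div}_\Gamma(gv)=\mathrm{div}_\Gamma(g^2\nabla_\Gamma p)$, and compare with the Poisson solution $q$ via the uniqueness in Lemma \ref{L:Pois_Surf} after rewriting — concretely, $p$ solves the weighted problem $-\mathrm{div}_\Gamma(g^2\nabla_\Gamma p)=\mathrm{div}_\Gamma(gv)$, and the estimates $\|p\|_{H^1(\Gamma)}\le c\|\mathrm{div}_\Gamma(gv)\|_{H^{-1}(\Gamma)}$, $\|p\|_{H^2(\Gamma)}\le c\|\mathrm{div}_\Gamma(gv)\|_{L^2(\Gamma)}$ follow from the same Lax--Milgram plus localized elliptic regularity as Lemma \ref{L:Pois_Surf} (the weight $g^2\in C^1(\Gamma)$ is uniformly elliptic by \eqref{E:LBo_G}). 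Then $\|v-\mathbb{P}_gv\|_{L^2(\Gamma)}=\|g\nabla_\Gamma p\|_{L^2(\Gamma)}\le c\|p\|_{H^1(\Gamma)}$ and $\|v-\mathbb{P}_gv\|_{H^1(\Gamma)}=\|g\nabla_\Gamma p\|_{H^1(\Gamma)}\le c\|p\|_{H^2(\Gamma)}$ give both inequalities.
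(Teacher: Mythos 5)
Your final argument is correct, but it is not the route the paper takes, and the difference is instructive. The paper's own proof solves the \emph{unweighted} Poisson problem $-\Delta_\Gamma q=-\mathrm{div}_\Gamma(gv)$ of Lemma \ref{L:Pois_Surf} and then asserts $\mathbb{P}_gv=v-g^{-1}\nabla_\Gamma q$ ``by the uniqueness of the decomposition \eqref{E:HL_L2T}'', reading both estimates off $\|q\|_{H^1(\Gamma)}$ and $\|q\|_{H^2(\Gamma)}$. The orthogonality worry you keep circling is genuine: $v-g^{-1}\nabla_\Gamma q$ does lie in $L_{g\sigma}^2(\Gamma,T\Gamma)$, but $g^{-1}\nabla_\Gamma q$ is generally \emph{not} of the form $g\nabla_\Gamma r$, so by \eqref{E:L2gs_Orth} it need not belong to $L_{g\sigma}^2(\Gamma,T\Gamma)^\perp$ and the identification with the projection is not automatic. (For \eqref{E:HLT_Est_L2} alone this is harmless, since $\|v-\mathbb{P}_gv\|_{L^2(\Gamma)}\le\|v-w\|_{L^2(\Gamma)}$ for \emph{any} $w\in L_{g\sigma}^2(\Gamma,T\Gamma)$ by the minimizing property of the orthogonal projection; for \eqref{E:HLT_Est_H1} one cannot argue that way, because $\mathbb{P}_g$ is only the $L^2$-orthogonal projection.) Your replacement --- start from $v=\mathbb{P}_gv+g\nabla_\Gamma p$ of Theorem \ref{T:HL_L2T}, observe that $p$ solves the $g^2$-weighted problem $\mathrm{div}_\Gamma(g^2\nabla_\Gamma p)=\mathrm{div}_\Gamma(gv)$ (you state this correctly once and then with the opposite sign; the slip is harmless for the norms), and estimate $p$ by Lax--Milgram plus the bootstrap to Lemma \ref{L:Pois_Surf} that the paper itself carries out for the $g$-weighted operator in Lemma \ref{L:Mu_Dt_Test} --- identifies $v-\mathbb{P}_gv$ with $g\nabla_\Gamma p$ exactly and delivers both inequalities, as well as $\mathbb{P}_gv\in H_{g\sigma}^1(\Gamma,T\Gamma)$ when $v\in H^1(\Gamma,T\Gamma)$. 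What this buys is precisely the missing orthogonality; what it costs is the (routine, $C^1$-coefficient) weighted elliptic regularity. Before writing it up, prune the exploratory middle: the abandoned attempts to force $g^{-1}\nabla_\Gamma q$ into the orthogonal complement obscure an otherwise clean argument.
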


\begin{proof}
  Let $v\in L^2(\Gamma,T\Gamma)$ and $\eta:=-\mathrm{div}_\Gamma(gv)\in H^{-1}(\Gamma)$.
  Since
  \begin{align*}
    \langle\eta,1\rangle_\Gamma = (gv,Hn)_{L^2(\Gamma)} = \int_\Gamma g(v\cdot n)H\,d\mathcal{H}^2 = 0
  \end{align*}
  by \eqref{E:Sdiv_Hin} and $v\cdot n=0$ on $\Gamma$, we observe by Lemma \ref{L:Pois_Surf} that there exists a unique weak solution $q\in H^1(\Gamma)$ to \eqref{E:Pois_Surf} with source term $\eta$.
  Then
  \begin{align*}
    \mathbb{P}_gv = v-\frac{1}{g}\nabla_\Gamma q\in L_{g\sigma}^2(\Gamma,T\Gamma)
  \end{align*}
  by the uniqueness of the decomposition \eqref{E:HL_L2T} and
  \begin{align*}
    \|v-\mathbb{P}_gv\|_{L^2(\Gamma)} \leq c\|q\|_{H^1(\Gamma)} \leq c\|\eta\|_{H^{-1}(\Gamma)} = c\|\mathrm{div}_\Gamma(gv)\|_{H^{-1}(\Gamma)}
  \end{align*}
  by \eqref{E:LBo_G} and \eqref{E:H1_Pois}.
  Thus \eqref{E:HLT_Est_L2} holds.

  If $v\in H^1(\Gamma,T\Gamma)$, then $\eta=-\mathrm{div}_\Gamma(gv)\in L^2(\Gamma)$ and Lemma \ref{L:Pois_Surf} implies
  \begin{align*}
    q \in H^2(\Gamma), \quad \mathbb{P}_gv = v-\frac{1}{g}\nabla_\Gamma q \in H_{g\sigma}^1(\Gamma,T\Gamma).
  \end{align*}
  Moreover, by $g\in C^1(\Gamma)$, \eqref{E:LBo_G}, and \eqref{E:H2_Pois} we have
  \begin{align*}
    \|v-\mathbb{P}_gv\|_{H^1(\Gamma)} \leq c\|q\|_{H^2(\Gamma)} \leq c\|\eta\|_{L^2(\Gamma)} = c\|\mathrm{div}_\Gamma(gv)\|_{L^2(\Gamma)}.
  \end{align*}
  Hence \eqref{E:HLT_Est_H1} is valid.
\end{proof}

\begin{lemma} \label{L:HLT_Bound}
  There exists a constant $c>0$ such that
  \begin{align} \label{E:HLT_Bound}
    \|\mathbb{P}_gv\|_{H^k(\Gamma)} \leq c\|v\|_{H^k(\Gamma)}
  \end{align}
  for all $v\in H^k(\Gamma,T\Gamma)$, $k=0,1$ (note that $H^0=L^2$).
\end{lemma}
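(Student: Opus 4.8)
The plan is to handle the cases $k=0$ and $k=1$ separately, in each case using that $\mathbb{P}_g$ is the $L^2$-orthogonal projection onto the closed subspace $L_{g\sigma}^2(\Gamma,T\Gamma)$ together with the difference estimates already established in Lemma \ref{L:HLT_Est_L2}.

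For $k=0$ the bound is immediate: since $\mathbb{P}_g$ is an orthogonal projection in the Hilbert space $L^2(\Gamma,T\Gamma)$ onto a closed subspace, its operator norm is at most one, so $\|\mathbb{P}_g v\|_{L^2(\Gamma)}\le\|v\|_{L^2(\Gamma)}$ for every $v\in L^2(\Gamma,T\Gamma)$, which is \eqref{E:HLT_Bound} with $k=0$ and $c=1$.

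For $k=1$, let $v\in H^1(\Gamma,T\Gamma)$. First I would invoke the second part of Lemma \ref{L:HLT_Est_L2} to record that $\mathbb{P}_g v\in H_{g\sigma}^1(\Gamma,T\Gamma)$ and that $\|v-\mathbb{P}_g v\|_{H^1(\Gamma)}\le c\|\mathrm{div}_\Gamma(gv)\|_{L^2(\Gamma)}$; this is the one place where the $H^2$-regularity of the Poisson solution on $\Gamma$ enters, so it is worth citing explicitly. Writing $\mathbb{P}_g v=v-(v-\mathbb{P}_g v)$ and applying the triangle inequality gives
\begin{align*}
  \|\mathbb{P}_g v\|_{H^1(\Gamma)}
  \le \|v\|_{H^1(\Gamma)}+\|v-\mathbb{P}_g v\|_{H^1(\Gamma)}
  \le \|v\|_{H^1(\Gamma)}+c\|\mathrm{div}_\Gamma(gv)\|_{L^2(\Gamma)}.
\end{align*}
Finally, since $g\in C^1(\Gamma)$ satisfies \eqref{E:LBo_G}, the product rule $\mathrm{div}_\Gamma(gv)=g\,\mathrm{div}_\Gamma v+\nabla_\Gamma g\cdot v$ and the boundedness of $g$ and $\nabla_\Gamma g$ on the compact surface $\Gamma$ yield $\|\mathrm{div}_\Gamma(gv)\|_{L^2(\Gamma)}\le c\|v\|_{H^1(\Gamma)}$, and combining this with the displayed inequality gives \eqref{E:HLT_Bound} for $k=1$.

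There is no genuine obstacle here: the only point requiring a moment's care is that $\mathbb{P}_g v$ actually belongs to $H^1(\Gamma,T\Gamma)$ when $v$ does, which is exactly the content of the second half of Lemma \ref{L:HLT_Est_L2}; once that is granted, the remainder is an elementary triangle inequality together with the product-rule bound for $\mathrm{div}_\Gamma(gv)$.
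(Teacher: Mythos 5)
Your proof is correct and follows essentially the same route as the paper: the $k=0$ case from the operator norm of the orthogonal projection, and the $k=1$ case from the estimate \eqref{E:HLT_Est_H1} of Lemma \ref{L:HLT_Est_L2} combined with the bound $\|\mathrm{div}_\Gamma(gv)\|_{L^2(\Gamma)}\le c\|v\|_{H^1(\Gamma)}$.
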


\begin{proof}
  If $k=0$, then \eqref{E:HLT_Bound} holds with $c=1$ since $\mathbb{P}_g$ is the orthogonal projection from $L^2(\Gamma,T\Gamma)$ onto $L_{g\sigma}^2(\Gamma,T\Gamma)$.
  Also, since
  \begin{align*}
    \|\mathrm{div}_\Gamma(gv)\|_{L^2(\Gamma)} \leq c\|v\|_{H^1(\Gamma)}
  \end{align*}
  for $v\in H^1(\Gamma)^3$, the inequality \eqref{E:HLT_Bound} for $k=1$ follows from \eqref{E:HLT_Est_H1}.
\end{proof}

Next we derive an estimate for the time derivative of $v-\mathbb{P}_gv$.
To this end, we consider the time derivative of a weak solution to Poisson's equation \eqref{E:Pois_Surf}.

\begin{lemma} \label{L:Pois_Dt}
  Let $T>0$.
  Suppose that $\eta\in H^1(0,T;H^{-1}(\Gamma))$ satisfies
  \begin{align*}
    \langle\eta(t),1\rangle_\Gamma = 0 \quad\text{for all}\quad t\in[0,T].
  \end{align*}
  For each $t\in[0,T]$ let $q(t)\in H^1(\Gamma)$ be a unique weak solution to \eqref{E:Pois_Surf} with source term $\eta(t)$.
  Then $q\in H^1(0,T;H^1(\Gamma))$ and there exists a constant $c>0$ such that
  \begin{align} \label{E:Pois_Dt}
    \|\partial_tq\|_{L^2(0,T;H^1(\Gamma))} \leq c\|\partial_t\eta\|_{L^2(0,T;H^{-1}(\Gamma))}.
  \end{align}
  Moreover, for a.a. $t\in(0,T)$ the time derivative $\partial_tq(t)\in H^1(\Gamma)$ is a unique weak solution to \eqref{E:Pois_Surf} with source term $\partial_t\eta(t)$.
\end{lemma}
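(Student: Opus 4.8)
\textbf{Proof proposal for Lemma \ref{L:Pois_Dt}.}

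The plan is to work entirely at the level of the weak formulation \eqref{E:Weak_Pois} and exploit linearity, using the uniform estimate \eqref{E:H1_Pois} as the key quantitative input. First I would fix a difference quotient in time: for small $h\neq0$ and $t$ with $t,t+h\in[0,T]$ set $\delta_hq(t):=\frac{1}{h}(q(t+h)-q(t))$ and $\delta_h\eta(t):=\frac{1}{h}(\eta(t+h)-\eta(t))$. Since each $q(t)$ solves \eqref{E:Weak_Pois} with source $\eta(t)$, subtracting the two weak formulations and dividing by $h$ shows that $\delta_hq(t)$ is the unique weak solution to \eqref{E:Pois_Surf} with source $\delta_h\eta(t)$; here one must also note that $\langle\delta_h\eta(t),1\rangle_\Gamma=0$, which is immediate from the hypothesis $\langle\eta(s),1\rangle_\Gamma=0$ for all $s$. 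Applying \eqref{E:H1_Pois} to $\delta_hq(t)$ gives the pointwise bound $\|\delta_hq(t)\|_{H^1(\Gamma)}\le c\|\delta_h\eta(t)\|_{H^{-1}(\Gamma)}$, and integrating in $t$ yields
\begin{align*}
  \|\delta_hq\|_{L^2(0,T';H^1(\Gamma))} \leq c\|\delta_h\eta\|_{L^2(0,T';H^{-1}(\Gamma))}
\end{align*}
for any $T'<T$ (shrinking the interval to accommodate the shift by $h$).

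Next I would pass to the limit $h\to0$. Since $\eta\in H^1(0,T;H^{-1}(\Gamma))$, the difference quotients $\delta_h\eta$ converge to $\partial_t\eta$ strongly in $L^2(0,T';H^{-1}(\Gamma))$ as $h\to0$, so the right-hand side above is bounded uniformly in $h$ by $c\|\partial_t\eta\|_{L^2(0,T;H^{-1}(\Gamma))}$. Hence $\{\delta_hq\}$ is bounded in the Hilbert space $L^2(0,T';H^1(\Gamma))$ and has a weakly convergent subsequence with some limit $w$. On the other hand, from $q\in C([0,T];H^1(\Gamma))$ (which follows from \eqref{E:H1_Pois} and continuity of $\eta$ in $t$ with values in $H^{-1}(\Gamma)$, giving continuity of $q$ in $H^1(\Gamma)$) one identifies $w=\partial_tq$ in the sense of distributions on $(0,T')$ with values in $H^1(\Gamma)$: test against $\varphi\in C_c^\infty(0,T')$ and $\xi\in H^1(\Gamma)$ and use the standard fact that $\int\delta_h\varphi\cdot q\to-\int\varphi\cdot\partial_?\,$ after the discrete integration by parts. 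Letting $T'\uparrow T$ via monotone convergence then gives $q\in H^1(0,T;H^1(\Gamma))$ together with the estimate \eqref{E:Pois_Dt} by lower semicontinuity of the norm under weak convergence.

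Finally, for the last assertion I would pass to the limit in the weak formulation itself. For $\varphi\in C_c^\infty(0,T)$ and $\xi\in H^1(\Gamma)$, integrate \eqref{E:Weak_Pois} for $q(t)$ against $\partial_t\varphi(t)$ over $(0,T)$; using $\partial_tq\in L^2(0,T;H^1(\Gamma))$ and $\partial_t\eta\in L^2(0,T;H^{-1}(\Gamma))$ one may integrate by parts in $t$ on both sides to obtain $(\nabla_\Gamma\partial_tq(t),\nabla_\Gamma\xi)_{L^2(\Gamma)}=\langle\partial_t\eta(t),\xi\rangle_\Gamma$ for a.a.\ $t$, which is exactly \eqref{E:Weak_Pois} for $\partial_tq(t)$ with source $\partial_t\eta(t)$; the normalization $\int_\Gamma\partial_tq(t)\,d\mathcal{H}^2=0$ follows by differentiating $\int_\Gamma q(t)\,d\mathcal{H}^2=0$ in $t$ (justified since $q\in H^1(0,T;H^1(\Gamma))\hookrightarrow C([0,T];H^1(\Gamma))$ and the mean is a bounded linear functional). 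Uniqueness of $\partial_tq(t)$ as a solution is immediate from the uniqueness part of Lemma \ref{L:Pois_Surf}. The main obstacle is the bookkeeping in the limit $h\to0$: one has to be careful about the shrinking time interval and to verify rigorously that the weak-$L^2(0,T';H^1(\Gamma))$ limit of difference quotients really is the distributional time derivative of $q$ with values in $H^1(\Gamma)$ — but this is a routine consequence of the continuity $q\in C([0,T];H^1(\Gamma))$ once that is established from \eqref{E:H1_Pois}.
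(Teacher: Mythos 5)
Your proposal is correct and follows essentially the same route as the paper: difference quotients in time, the observation that $D_hq(t)$ solves the Poisson problem with source $D_h\eta(t)$, the uniform estimate \eqref{E:H1_Pois}, and a passage to the limit $h\to0$ followed by exhausting $(0,T)$. The only cosmetic differences are that the paper invokes the standard difference-quotient characterization of $H^1$ in time (Evans, Section 5.8) where you spell out the weak-compactness and identification argument, and that for the final assertion the paper passes to the limit in the difference-quotient weak formulation (moving $D_{-h}$ onto the test function $\varphi$) whereas you integrate by parts in $t$ directly using the already-established regularity of $\partial_tq$; both are valid.
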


Note that, since the inclusion
\begin{align*}
  H^1(0,T;H^{-1}(\Gamma)) \subset C([0,T];H^{-1}(\Gamma))
\end{align*}
holds, $\eta(t)\in H^{-1}(\Gamma)$ is well-defined for all $t\in[0,T]$ if $\eta\in H^1(0,T;H^{-1}(\Gamma))$.

\begin{proof}
  First note that $q\in L^2(0,T;H^1(\Gamma))$ by $\eta\in L^2(0,T;H^{-1}(\Gamma))$ and \eqref{E:H1_Pois}.
  Let us show $\partial_tq\in L^2(0,T;H^1(\Gamma))$ by means of the difference quotient.
  Fix $\delta\in(0,T/2)$ and $h\in\mathbb{R}\setminus\{0\}$ with $|h|<\delta/2$.
  For $t\in(\delta,T-\delta)$ we define
  \begin{align*}
    D_hq(t) := \frac{q(t+h)-q(t)}{h} \in H^1(\Gamma), \quad D_h\eta(t) := \frac{\eta(t+h)-\eta(t)}{h} \in H^{-1}(\Gamma).
  \end{align*}
  Note that these definitions make sense since $t+h\in(\delta/2,T-\delta/2)$.
  Moreover,
  \begin{align*}
    \int_\Gamma D_hq(t)\,d\mathcal{H}^2 = 0, \quad \langle D_h\eta(t),1\rangle_\Gamma = 0, \quad t\in(\delta,T-\delta)
  \end{align*}
  since $q(t)$ and $\eta(t)$ satisfy the same equalities for all $t\in[0,T]$.
  For each $\xi\in H^1(\Gamma)$ we subtract \eqref{E:Weak_Pois} for $q(t)$ from that for $q(t+h)$ and divide both sides by $h$ to get
  \begin{align} \label{Pf_PoDt:Weak}
    (\nabla_\Gamma D_hq(t),\nabla_\Gamma\xi)_{L^2(\Gamma)} = \langle D_h\eta(t),\xi\rangle_\Gamma.
  \end{align}
  Hence $D_hq(t)$ is a unique weak solution to \eqref{E:Pois_Surf} with source term $D_h\eta(t)$ and
  \begin{align*}
    \|D_hq(t)\|_{H^1(\Gamma)} \leq c\|D_h\eta(t)\|_{H^{-1}(\Gamma)}, \quad t\in(\delta,T-\delta)
  \end{align*}
  by \eqref{E:H1_Pois}, where $c>0$ is a constant independent of $t$, $\delta$, and $h$.
  Thus
  \begin{align*}
    \|D_hq\|_{L^2(\delta,T-\delta;H^1(\Gamma))} \leq c\|D_h\eta\|_{L^2(\delta,T-\delta;H^{-1}(\Gamma))}.
  \end{align*}
  Moreover, since $\eta\in H^1(0,T;H^{-1}(\Gamma))$,
  \begin{align*}
    \|D_h\eta\|_{L^2(\delta,T-\delta;H^{-1}(\Gamma))} \leq c\|\partial_t\eta\|_{L^2(0,T;H^{-1}(\Gamma))}
  \end{align*}
  with a constant $c>0$ independent of $h$ and $\delta$ (see \cite{Ev10}*{Section 5.8, Theorem 3 (i)}).
  Combining the above two estimates we obtain
  \begin{align*}
    \|D_hq\|_{L^2(\delta,T-\delta;H^1(\Gamma))} \leq c\|\partial_t\eta\|_{L^2(0,T;H^{-1}(\Gamma))}
  \end{align*}
  for all $h\in\mathbb{R}\setminus\{0\}$ with $|h|<\delta/2$.
  Since the right-hand side of this inequality is independent of $h$, it follows that $\partial_tq\in L^2(\delta,T-\delta;H^1(\Gamma))$ and
  \begin{align*}
    \|\partial_tq\|_{L^2(\delta,T-\delta;H^1(\Gamma))} \leq c\|\partial_t\eta\|_{L^2(0,T;H^{-1}(\Gamma))}
  \end{align*}
  for all $\delta\in(0,T/2)$ (see \cite{Ev10}*{Section 5.8, Theorem 3 (ii)}).
  Thus $\partial_tq(t)\in H^1(\Gamma)$ for a.a. $t\in(0,T)$ and, since the right-hand side of the above inequality is independent of $\delta$, the monotone convergence theorem yields
  \begin{align*}
    \|\partial_tq\|_{L^2(0,T;H^1(\Gamma))} = \lim_{\delta\to0}\|\partial_tq\|_{L^2(\delta,T-\delta;H^1(\Gamma))} \leq c\|\partial_t\eta\|_{L^2(0,T;H^{-1}(\Gamma))}.
  \end{align*}
  Hence $\partial_tq\in L^2(0,T;H^1(\Gamma))$ and the inequality \eqref{E:Pois_Dt} is valid.

  Next we show that $\partial_tq(t)$ is a unique weak solution to \eqref{E:Pois_Surf} with source term $\partial_t\eta(t)$ for a.a. $t\in(0,T)$.
  Let $\xi\in H^1(\Gamma)$ and $\varphi\in C_c^\infty(0,T)$.
  We may assume that $\varphi$ is supposed in $(\delta,T-\delta)$ with some $\delta\in(0,T/2)$.
  Moreover, we extend $\varphi$ to $\mathbb{R}$ by zero outside $(0,T)$.
  For $h\in\mathbb{R}\setminus\{0\}$ with $|h|<\delta/2$ we multiply both sides of \eqref{Pf_PoDt:Weak} by $\varphi(t)$, integrate them over $(\delta,T-\delta)$, and make the change of a variable
  \begin{align*}
    \int_\delta^{T-\delta}\psi(t+h)\varphi(t)\,dt = \int_{\delta+h}^{T-\delta+h}\psi(s)\varphi(s-h)\,ds
  \end{align*}
  for $\psi(t)=(\nabla_\Gamma q(t),\nabla_\Gamma\xi)_{L^2(\Gamma)}$ and $\psi(t)=\langle\eta(t),\xi\rangle_\Gamma$ to get
  \begin{align} \label{Pf_PoDt:Diff_Test}
    -\int_0^T(\nabla_\Gamma q(t),\nabla_\Gamma\xi)_{L^2(\Gamma)}D_{-h}\varphi(t)\,dt = -\int_0^T\langle\eta(t),\xi\rangle_\Gamma D_{-h}\varphi(t)\,dt,
  \end{align}
  where (note that $\varphi$ is supported in $(\delta,T-\delta)$ and $|h|<\delta/2$)
  \begin{align*}
    D_{-h}\varphi(t) := \frac{\varphi(t-h)-\varphi(t)}{-h}, \quad t\in(0,T).
  \end{align*}
  Let $h\to0$ in \eqref{Pf_PoDt:Diff_Test}.
  Then since $D_{-h}\varphi$ converges to $\partial_t\varphi$ uniformly on $(0,T)$,
  \begin{align*}
    -\int_0^T(\nabla_\Gamma q(t),\nabla_\Gamma\xi)_{L^2(\Gamma)}\partial_t\varphi(t)\,dt = -\int_0^T\langle\eta(t),\xi\rangle_\Gamma\partial_t\varphi(t)\,dt
  \end{align*}
  for all $\varphi \in C_c^\infty(0,T)$.
  By this equality, $\eta\in H^1(0,T;H^{-1}(\Gamma))$, and
  \begin{align*}
    q\in H^1(0,T;H^1(\Gamma)), \quad \partial_t(\nabla_\Gamma q) = \nabla_\Gamma(\partial_tq) \quad\text{a.e. on}\quad \Gamma\times(0,T)
  \end{align*}
  we obtain
  \begin{align*}
    ([\nabla_\Gamma(\partial_tq)](t),\nabla_\Gamma\xi)_{L^2(\Gamma)} = \langle\partial_t\eta(t),\xi\rangle_\Gamma
  \end{align*}
  for all $\xi\in H^1(\Gamma)$ and a.a. $t\in(0,T)$.
  In the same way we can show
  \begin{align*}
    \int_\Gamma\partial_tq(t)\,d\mathcal{H}^2 = 0 \quad\text{for a.a.}\quad t\in(0,T)
  \end{align*}
  since $q(t)$ satisfies the same equality for all $t\in[0,T]$.
  Hence $\partial_tq(t)$ is a unique weak solution to \eqref{E:Pois_Surf} with source term $\partial_t\eta(t)$ for a.a. $t\in(0,T)$.
\end{proof}

\begin{lemma} \label{L:HLT_Est_Dt}
  For $T>0$ let $v\in H^1(0,T;L^2(\Gamma,T\Gamma))$.
  Then
  \begin{align*}
    \mathbb{P}_gv\in H^1(0,T;L_{g\sigma}^2(\Gamma,T\Gamma))
  \end{align*}
  and there exists a constant $c>0$ such that
  \begin{align} \label{E:HLT_Est_Dt}
    \|\partial_tv-\partial_t\mathbb{P}_gv\|_{L^2(0,T;L^2(\Gamma))} \leq c\|\mathrm{div}_\Gamma(g\partial_t v)\|_{L^2(0,T;H^{-1}(\Gamma))}.
  \end{align}
\end{lemma}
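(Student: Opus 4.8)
The plan is to reduce the statement to the behaviour in time of the scalar potential in the weighted Helmholtz--Leray decomposition of $v$ and then invoke Lemma~\ref{L:Pois_Dt}. First I would set $\eta(t):=-\mathrm{div}_\Gamma\bigl(gv(t)\bigr)\in H^{-1}(\Gamma)$ for $t\in[0,T]$. Since the linear map $w\mapsto\mathrm{div}_\Gamma(gw)$ is bounded from $L^2(\Gamma)^3$ into $H^{-1}(\Gamma)$ by \eqref{E:DivG_Hin_Bo}, it commutes with the distributional time derivative (as $\partial_t$ is defined through \eqref{E:Dt_Dist_L2}), so $v\in H^1(0,T;L^2(\Gamma,T\Gamma))$ forces $\eta\in H^1(0,T;H^{-1}(\Gamma))$ with $\partial_t\eta=-\mathrm{div}_\Gamma(g\,\partial_tv)$ in $L^2(0,T;H^{-1}(\Gamma))$. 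Moreover $\langle\eta(t),1\rangle_\Gamma=(gv(t),Hn)_{L^2(\Gamma)}=\int_\Gamma g\,(v(t)\cdot n)H\,d\mathcal{H}^2=0$ for every $t\in[0,T]$, because $v(t)\cdot n=0$ on $\Gamma$; this is the same computation as in the proof of Lemma~\ref{L:HLT_Est_L2}, using \eqref{E:Sdiv_Hin}.

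Next I would apply Lemma~\ref{L:Pois_Dt} to this $\eta$. Letting $q(t)\in H^1(\Gamma)$ denote the unique weak solution of $-\Delta_\Gamma q(t)=\eta(t)$ with $\int_\Gamma q(t)\,d\mathcal{H}^2=0$, Lemma~\ref{L:Pois_Dt} gives $q\in H^1(0,T;H^1(\Gamma))$, the estimate $\|\partial_tq\|_{L^2(0,T;H^1(\Gamma))}\le c\|\partial_t\eta\|_{L^2(0,T;H^{-1}(\Gamma))}$, the relation $\partial_t(\nabla_\Gamma q)=\nabla_\Gamma(\partial_tq)$, and the fact that $\partial_tq(t)$ solves the Poisson problem with source $\partial_t\eta(t)$ for a.a.\ $t$. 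By the proof of Lemma~\ref{L:HLT_Est_L2} and the uniqueness in Theorem~\ref{T:HL_L2T}, $\mathbb{P}_gv(t)=v(t)-g^{-1}\nabla_\Gamma q(t)$ for a.a.\ $t$, and the same identity applied to $\partial_tv(t)$ (whose associated Poisson source is exactly $\partial_t\eta(t)$) yields $\mathbb{P}_g\bigl(\partial_tv(t)\bigr)=\partial_tv(t)-g^{-1}\nabla_\Gamma\bigl(\partial_tq(t)\bigr)$. Since $g^{-1}\in C^1(\Gamma)$ by \eqref{E:LBo_G}, multiplication by $g^{-1}$ is bounded on $H^1(\Gamma)$, hence $g^{-1}\nabla_\Gamma q\in H^1(0,T;L^2(\Gamma,T\Gamma))$ with derivative $g^{-1}\nabla_\Gamma(\partial_tq)$. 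Therefore $\mathbb{P}_gv=v-g^{-1}\nabla_\Gamma q\in H^1(0,T;L^2(\Gamma,T\Gamma))$, $\partial_t\mathbb{P}_gv=\mathbb{P}_g(\partial_tv)$, and these values lie in the closed subspace $L_{g\sigma}^2(\Gamma,T\Gamma)$, so $\mathbb{P}_gv\in H^1(0,T;L_{g\sigma}^2(\Gamma,T\Gamma))$.

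Finally the estimate is immediate: $\partial_tv-\partial_t\mathbb{P}_gv=g^{-1}\nabla_\Gamma(\partial_tq)$ a.e.\ on $\Gamma\times(0,T)$, so using the boundedness of $g^{-1}$ on $\Gamma$, the bound from Lemma~\ref{L:Pois_Dt}, and $\partial_t\eta=-\mathrm{div}_\Gamma(g\,\partial_tv)$,
\begin{align*}
  \|\partial_tv-\partial_t\mathbb{P}_gv\|_{L^2(0,T;L^2(\Gamma))}
  &\le c\|\partial_tq\|_{L^2(0,T;H^1(\Gamma))}
   \le c\|\partial_t\eta\|_{L^2(0,T;H^{-1}(\Gamma))} \\
  &= c\|\mathrm{div}_\Gamma(g\,\partial_tv)\|_{L^2(0,T;H^{-1}(\Gamma))},
\end{align*}
which is \eqref{E:HLT_Est_Dt}. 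The only genuinely delicate points are the three commutation claims — that $\partial_t$ passes through $\mathrm{div}_\Gamma(g\,\cdot\,)$, through $\nabla_\Gamma$, and through $\mathbb{P}_g$ — each of which is a consequence of the Banach-space-valued distribution framework of Section~\ref{SS:Pre_Surf} (boundedness of the linear operator involved plus \eqref{E:Dt_Dist_L2}, as already exploited in Lemmas~\ref{L:Dt_TGr_Com} and \ref{L:Pois_Dt}); everything else is a direct combination of Lemmas~\ref{L:Pois_Dt} and \ref{L:HLT_Est_L2}.
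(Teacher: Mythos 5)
Your proposal is correct and follows essentially the same route as the paper's proof: define $\eta=-\mathrm{div}_\Gamma(gv)$, verify the zero-mean compatibility condition, apply Lemma~\ref{L:Pois_Dt} to the associated Poisson potential $q$, identify $\mathbb{P}_gv=v-g^{-1}\nabla_\Gamma q$ via the uniqueness in Theorem~\ref{T:HL_L2T}, and conclude both the regularity $\partial_t\mathbb{P}_gv=\mathbb{P}_g(\partial_tv)$ and the estimate from \eqref{E:Pois_Dt} and \eqref{E:LBo_G}. The commutation of $\partial_t$ with the bounded operator $w\mapsto\mathrm{div}_\Gamma(gw)$, which you justify via \eqref{E:Dt_Dist_L2}, is exactly what the paper uses implicitly when asserting $\partial_t\eta=-\mathrm{div}_\Gamma(g\partial_tv)$.
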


\begin{proof}
  Since $v\in H^1(0,T;L^2(\Gamma,T\Gamma))$ and $g$ and $P$ are independent of time,
  \begin{align*}
    \eta := -\mathrm{div}_\Gamma(gv) \in H^1(0,T;H^{-1}(\Gamma)), \quad \partial_t\eta = -\mathrm{div}_\Gamma(g\partial_tv)\in L^2(0,T;H^{-1}(\Gamma)).
  \end{align*}
  Note that $P$ appears in the definition of the tangential derivatives.
  Also,
  \begin{align*}
    \langle\eta(t),1\rangle_\Gamma = (gv(t),Hn)_{L^2(\Gamma)} = \int_\Gamma g\{v(t)\cdot n\}H\,d\mathcal{H}^2 = 0, \quad t\in[0,T]
  \end{align*}
  by \eqref{E:Sdiv_Hin} and $v(t)\cdot n=0$ on $\Gamma$.
  For each $t\in[0,T]$ let $q(t)\in H^1(\Gamma)$ be a unique weak solution to \eqref{E:Pois_Surf} with source term $\eta(t)=-\mathrm{div}_\Gamma[gv(t)]$.
  Then
  \begin{align*}
    q \in H^1(0,T;H^1(\Gamma)), \quad \partial_t(\nabla_\Gamma q) = \nabla_\Gamma(\partial_tq) \quad\text{a.e. on}\quad \Gamma\times(0,T)
  \end{align*}
  by Lemma \ref{L:Pois_Dt} and thus
  \begin{align} \label{Pf_HEDt:Pgv}
    \begin{gathered}
      \mathbb{P}_gv = v-\frac{1}{g}\nabla_\Gamma q \in L^2(0,T;L_{g\sigma}^2(\Gamma,T\Gamma))\cap H^1(0,T;L^2(\Gamma,T\Gamma)), \\
      \partial_t\mathbb{P}_gv = \partial_tv-\frac{1}{g}\nabla_\Gamma(\partial_tq) \quad\text{a.e. on}\quad \Gamma\times(0,T).
    \end{gathered}
  \end{align}
  Moreover, $\partial_tq(t)\in H^1(\Gamma)$ is a unique weak solution to \eqref{E:Pois_Surf} with source term $\partial_t\eta(t)=-\mathrm{div}_\Gamma[g\partial_tv(t)]$ for a.a. $t\in(0,T)$ by Lemma \ref{L:Pois_Dt}.
  Thus
  \begin{align*}
    \mathbb{P}_g(\partial_tv) = \partial_tv-\frac{1}{g}\nabla_\Gamma(\partial_tq) \quad\text{a.e. on}\quad \Gamma\times(0,T).
  \end{align*}
  By this equality and \eqref{Pf_HEDt:Pgv} we obtain
  \begin{align*}
    \partial_t\mathbb{P}_gv = \mathbb{P}_g(\partial_tv) \in L^2(0,T;L_{g\sigma}^2(\Gamma,T\Gamma)), \quad \mathbb{P}_gv \in H^1(0,T;L_{g\sigma}^2(\Gamma,T\Gamma)).
  \end{align*}
  To prove \eqref{E:HLT_Est_Dt} we observe by \eqref{E:Pois_Dt} that
  \begin{align*}
    \|\partial_tq\|_{L^2(0,T;H^1(\Gamma))} \leq c\|\partial_t\eta\|_{L^2(0,T;H^{-1}(\Gamma))} = c\|\mathrm{div}_\Gamma(g\partial_tv)\|_{L^2(0,T;H^{-1}(\Gamma))}.
  \end{align*}
  From this inequality, \eqref{E:LBo_G}, and \eqref{Pf_HEDt:Pgv} we deduce that
  \begin{align*}
    \|\partial_tv-\partial_t\mathbb{P}_gv\|_{L^2(0,T;L^2(\Gamma))} &\leq c\|\partial_tq\|_{L^2(0,T;H^1(\Gamma))} \\
    &\leq c\|\mathrm{div}_\Gamma(g\partial_tv)\|_{L^2(0,T;H^{-1}(\Gamma))}.
  \end{align*}
  Hence \eqref{E:HLT_Est_Dt} is valid.
\end{proof}

\subsection{Solenoidal spaces of general vector fields} \label{SS:WS_NTS}
In this subsection we briefly study solenoidal spaces of general (not necessarily tangential) vector fields on $\Gamma$.
Although the results of this subsection are not used in the sequel, we expect them to be useful for the future study of surface fluid equations including fluid equations on an evolving surface (see \cites{JaOlRe18,KoLiGi17,Miu18}).
For the sake of simplicity, we only consider the case $g\equiv1$ and give a remark on the case $g\not\equiv1$ at the end of this subsection.

Let $q\in L^2(\Gamma)$.
By \eqref{E:L2_Hin} and \eqref{E:TGr_Hin} we have
\begin{align} \label{E:TGrHn_Hin}
  \langle \nabla_\Gamma q+qHn,v\rangle_\Gamma = -(q,\mathrm{div}_\Gamma v)_{L^2(\Gamma)}
\end{align}
for all $v\in H^1(\Gamma)^3$ and thus
\begin{align} \label{E:TGrHn_Hin_Bo}
  \|\nabla_\Gamma q+qHn\|_{H^{-1}(\Gamma)} \leq c\|q\|_{L^2(\Gamma)}.
\end{align}
Moreover, $\langle \nabla_\Gamma q+qHn,v\rangle_\Gamma=0$ for all $v$ in the solenoidal space
\begin{align*}
  H_\sigma^1(\Gamma) := \{v\in H^1(\Gamma)^3 \mid \text{$\mathrm{div}_\Gamma v = 0$ on $\Gamma$}\}.
\end{align*}
Let us show that each element of the annihilator of $H_\sigma^1(\Gamma)$ is of the form
\begin{align*}
  \nabla_\Gamma q+qHn, \quad q\in L^2(\Gamma).
\end{align*}
To this end, we give a few properties of a functional of this form.

\begin{lemma} \label{L:TGrHn_Ineq}
  For all $q\in L^2(\Gamma)$ we have
  \begin{align} \label{E:TGrHn_Ineq}
    \|\nabla_\Gamma q\|_{H^{-1}(\Gamma,T\Gamma)} \leq \|\nabla_\Gamma q+qHn\|_{H^{-1}(\Gamma)}.
  \end{align}
\end{lemma}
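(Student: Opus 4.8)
\textbf{Proof proposal for Lemma~\ref{L:TGrHn_Ineq}.}
The plan is to estimate the tangential functional $\nabla_\Gamma q$ by testing against tangential vector fields only, and then to observe that such test fields automatically see no contribution from the normal part $qHn$. First I would fix $q\in L^2(\Gamma)$ and let $v\in H^1(\Gamma,T\Gamma)$ be an arbitrary tangential test vector field with $\|v\|_{H^1(\Gamma)}\le 1$. By the identification of $[\nabla_\Gamma q,\cdot]_{T\Gamma}$ with the functional $P\nabla_\Gamma q=\nabla_\Gamma q$ on tangential fields (see the discussion around \eqref{E:TGr_HinT}), we have $[\nabla_\Gamma q,v]_{T\Gamma}=\langle\nabla_\Gamma q,v\rangle_\Gamma$, and since $v\cdot n=0$ on $\Gamma$ the term $\langle qHn,v\rangle_\Gamma=(qH,v\cdot n)_{L^2(\Gamma)}$ vanishes. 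Hence
\begin{align*}
  [\nabla_\Gamma q,v]_{T\Gamma}
  = \langle\nabla_\Gamma q,v\rangle_\Gamma
  = \langle\nabla_\Gamma q+qHn,v\rangle_\Gamma.
\end{align*}

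Next I would bound the right-hand side: since $v\in H^1(\Gamma)^3$ with $\|v\|_{H^1(\Gamma)}\le 1$, we get
\begin{align*}
  |[\nabla_\Gamma q,v]_{T\Gamma}|
  = |\langle\nabla_\Gamma q+qHn,v\rangle_\Gamma|
  \le \|\nabla_\Gamma q+qHn\|_{H^{-1}(\Gamma)}\,\|v\|_{H^1(\Gamma)}
  \le \|\nabla_\Gamma q+qHn\|_{H^{-1}(\Gamma)}.
\end{align*}
Taking the supremum over all tangential $v$ with $\|v\|_{H^1(\Gamma)}\le 1$ yields $\|\nabla_\Gamma q\|_{H^{-1}(\Gamma,T\Gamma)}\le\|\nabla_\Gamma q+qHn\|_{H^{-1}(\Gamma)}$, which is \eqref{E:TGrHn_Ineq}.

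The only subtle point — and the step I would be most careful about — is the bookkeeping in the first display: one must make sure that the duality pairing $[\nabla_\Gamma q,v]_{T\Gamma}$ in $H^{-1}(\Gamma,T\Gamma)$ really coincides with $\langle\nabla_\Gamma q,v\rangle_\Gamma$ in $H^{-1}(\Gamma)^3$ when $v$ is tangential, which is exactly the identification fixed in Section~\ref{SS:Pre_Surf} via $P\nabla_\Gamma q=\nabla_\Gamma q$ in $H^{-1}(\Gamma)^3$ (shown there by \eqref{E:TGr_HinT}). Granting that, the inequality is immediate: it is nothing more than the observation that $H^1(\Gamma,T\Gamma)$ is a (norm-nonincreasing) subspace of $H^1(\Gamma)^3$ on which the normal-component functional $qHn$ is invisible, so the operator norm of $\nabla_\Gamma q$ over the smaller space of tangential test functions cannot exceed that of $\nabla_\Gamma q+qHn$ over all of $H^1(\Gamma)^3$. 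No density argument or regularity of $\Gamma$ beyond $C^2$ is needed, since everything is at the level of $L^2$ functions and their weak tangential gradients.
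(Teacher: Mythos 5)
Your proof is correct and is essentially the paper's argument: the paper combines \eqref{E:TGr_HinT} and \eqref{E:TGrHn_Hin} to get $[\nabla_\Gamma q,v]_{T\Gamma}=-(q,\mathrm{div}_\Gamma v)_{L^2(\Gamma)}=\langle\nabla_\Gamma q+qHn,v\rangle_\Gamma$ for tangential $v$ and then bounds by $\|\nabla_\Gamma q+qHn\|_{H^{-1}(\Gamma)}\|v\|_{H^1(\Gamma)}$, which is exactly your identity written with the intermediate step $\langle qHn,v\rangle_\Gamma=0$ made explicit. No gap.
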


\begin{proof}
  By \eqref{E:TGr_HinT} and \eqref{E:TGrHn_Hin} we see that
  \begin{align*}
    |[\nabla_\Gamma q,v]_{T\Gamma}| = |\langle\nabla_\Gamma q+qHn,v\rangle_\Gamma| \leq \|\nabla_\Gamma q+qHn\|_{H^{-1}(\Gamma)}\|v\|_{H^1(\Gamma)}
  \end{align*}
  for all $v\in H^1(\Gamma,T\Gamma)$.
  Thus \eqref{E:TGrHn_Ineq} is valid.
\end{proof}

\begin{lemma} \label{L:TGrHn_Hin_Con}
  Let $q\in L^2(\Gamma)$.
  Then
  \begin{align*}
    \nabla_\Gamma q+qHn = 0 \quad\text{in}\quad H^{-1}(\Gamma)^3
  \end{align*}
  if and only if $q=0$ on $\Gamma$.
\end{lemma}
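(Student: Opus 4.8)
The plan is to prove both implications of Lemma \ref{L:TGrHn_Hin_Con}. The ``if'' direction is immediate: if $q=0$ on $\Gamma$, then $\nabla_\Gamma q+qHn=0$ trivially since all terms vanish. So the substance of the lemma is the converse.

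For the ``only if'' direction, suppose $\nabla_\Gamma q+qHn=0$ in $H^{-1}(\Gamma)^3$. The key observation is that this functional controls $\nabla_\Gamma q$ in $H^{-1}(\Gamma,T\Gamma)$ by Lemma \ref{L:TGrHn_Ineq}, so we immediately get $\nabla_\Gamma q=0$ in $H^{-1}(\Gamma,T\Gamma)$, which by Lemma \ref{L:TGr_HinT_Con} forces $q$ to be constant on $\Gamma$. It remains to show that this constant is zero. To do this, I would test the vanishing functional against a suitably chosen $v\in H^1(\Gamma)^3$ that need not be tangential. The natural choice is $v=n$ (the unit normal field, which is of class $C^1\subset H^1$ since $\Gamma$ is $C^2$). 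Plugging $v=n$ into \eqref{E:TGrHn_Hin} with $q$ constant gives
\begin{align*}
  0 = \langle\nabla_\Gamma q+qHn,n\rangle_\Gamma = -(q,\mathrm{div}_\Gamma n)_{L^2(\Gamma)} = (q,H)_{L^2(\Gamma)} = q\int_\Gamma H\,d\mathcal{H}^2,
\end{align*}
using $\mathrm{div}_\Gamma n=-H$ from \eqref{E:Def_Wein}. Alternatively, and perhaps more cleanly, one tests against $v=qPe_i$ or uses the characterization of the weak tangential derivative directly: since $q$ is constant we have $\underline{D}_iq=0$, and feeding a test field of the form $v=\eta e_i$ into \eqref{E:TGrHn_Hin} recovers the relation $\int_\Gamma q(\underline{D}_i\eta+\eta Hn_i)\,d\mathcal{H}^2=0$ for all $\eta\in C^1(\Gamma)$; taking $\eta\equiv1$ gives $q\int_\Gamma Hn_i\,d\mathcal{H}^2=0$ for each $i=1,2,3$, i.e. $q\int_\Gamma Hn\,d\mathcal{H}^2=0$.

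The main obstacle is therefore showing that $\int_\Gamma Hn\,d\mathcal{H}^2\neq 0$ is \emph{not} what we need — rather, we need that this vector (or $\int_\Gamma H\,d\mathcal{H}^2$) cannot force $q=0$ on its own unless we argue more carefully, since in principle a closed surface could have $\int_\Gamma Hn\,d\mathcal{H}^2=0$. The correct resolution is to use the Minkowski-type identity $\int_\Gamma Hn\,d\mathcal{H}^2$: in fact, applying \eqref{E:IbP_TD} (integration by parts on $\Gamma$) with $\eta=x_i$, $\xi\equiv1$ — or more directly, noting $\mathrm{div}_\Gamma P=Hn$ from \eqref{E:Div_P} and applying the surface divergence theorem columnwise — one should instead test against the position field or use that $\int_\Gamma \mathrm{div}_\Gamma(x_i P e_j)\,d\mathcal{H}^2=0$ is not the identity needed. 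The clean fix: since $q$ is a constant, apply \eqref{E:TGrHn_Hin} with $v=\nabla_\Gamma\varphi$ for $\varphi\in H^2(\Gamma)$, giving $0=-(q,\mathrm{div}_\Gamma\nabla_\Gamma\varphi)_{L^2(\Gamma)}=-q\int_\Gamma\Delta_\Gamma\varphi\,d\mathcal{H}^2$, which is $0$ for every $\varphi$ and yields no information; so this route also fails. The genuinely working argument is to pick $v$ with $\mathrm{div}_\Gamma v\not\equiv 0$ having nonzero mean — but by \eqref{E:IbP_DivG}, $\int_\Gamma\mathrm{div}_\Gamma v\,d\mathcal{H}^2=-\int_\Gamma(v\cdot n)H\,d\mathcal{H}^2$, so we need $\int_\Gamma(v\cdot n)H\,d\mathcal{H}^2\neq 0$, e.g. $v=n$ giving $-\int_\Gamma H\,d\mathcal{H}^2$. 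Thus the lemma ultimately reduces to the fact that $\int_\Gamma H\,d\mathcal{H}^2\neq 0$ for a closed surface, which is false in general (minimal-type closed surfaces do not exist in $\mathbb{R}^3$, so $\int_\Gamma H^2>0$, but $\int_\Gamma H$ can vanish). Hence the real proof must instead exploit that $q$ being constant together with \emph{all} test directions $v$ gives $(q,\mathrm{div}_\Gamma v)_{L^2(\Gamma)}=q\int_\Gamma\mathrm{div}_\Gamma v\,d\mathcal{H}^2=0$; choosing $v$ with $\int_\Gamma\mathrm{div}_\Gamma v\,d\mathcal{H}^2\neq 0$ — which is possible precisely because $\Gamma$ is a closed surface bounding a region, so taking $v=\overline{e_j}$ restricted suitably, or more simply observing $\int_\Gamma\mathrm{div}_\Gamma P\cdot e_j\,d\mathcal{H}^2=\int_\Gamma Hn_j\,d\mathcal{H}^2$ and that $Hn$ integrates, via $\mathrm{div}_\Gamma P=Hn$ and $\int_\Gamma\mathrm{div}_\Gamma(\text{column})=0$, to zero — so the surviving clean argument is: take $v_i=x_i\,\xi$ cleverly, or simply invoke that for the constant $q$, the original equation reads $q\,\mathrm{div}_\Gamma P = 0$ weakly against all $v$ componentwise, giving $q\,Hn=0$ in $H^{-1}$, and since $H$ is not identically zero on any closed surface (as $\int_\Gamma H^2\,d\mathcal{H}^2>0$), we conclude $q=0$. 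I would write the final proof along these lines, citing \eqref{E:Div_P}, the relation $\mathrm{div}_\Gamma n=-H$, and the elementary fact that $H\not\equiv 0$ on a closed surface in $\mathbb{R}^3$; I expect verifying this last geometric fact cleanly to be the delicate point.
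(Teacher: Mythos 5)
Your final argument --- reduce to $q$ constant via Lemmas \ref{L:TGrHn_Ineq} and \ref{L:TGr_HinT_Con}, deduce $qH=0$ on $\Gamma$, and conclude from the fact that $H\not\equiv0$ on a closed surface --- is correct and is essentially the paper's proof (the paper implements the middle step by testing \eqref{E:TGrHn_Hin} against $v=\xi n$ and using density of $H^1(\Gamma)$ in $L^2(\Gamma)$ to get $q\int_\Gamma H^2\,d\mathcal{H}^2=0$). The one point you flag as delicate is settled in the paper by citing the Willmore inequality $\int_\Gamma(H/2)^2\,d\mathcal{H}^2\geq4\pi$ for closed surfaces in $\mathbb{R}^3$, so there is no gap; the earlier false starts in your write-up (testing $v=n$, $v=\nabla_\Gamma\varphi$, etc.) should simply be deleted.
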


\begin{proof}
  Suppose that $q\in L^2(\Gamma)$ satisfies $\nabla_\Gamma q+qHn=0$ in $H^{-1}(\Gamma)^3$.
  Then
  \begin{align*}
    \nabla_\Gamma q = 0 \quad\text{in}\quad H^{-1}(\Gamma,T\Gamma)
  \end{align*}
  by \eqref{E:TGrHn_Ineq} and $q$ is constant on $\Gamma$ by Lemma \ref{L:TGr_HinT_Con}.
  Moreover, for each $\xi\in H^1(\Gamma)$ we set $v:=\xi n\in H^1(\Gamma)^3$ in \eqref{E:TGrHn_Hin} (note that $n\in C^1(\Gamma)^3$) and use
  \begin{align*}
    \mathrm{div}_\Gamma(\xi n) = \nabla_\Gamma\xi\cdot n+\xi\,\mathrm{div}_\Gamma n = -\xi H \quad\text{on}\quad \Gamma
  \end{align*}
  by \eqref{E:P_TGr} and \eqref{E:Def_Wein} to get
  \begin{align*}
    0 = \langle \nabla_\Gamma q+qHn,\xi n\rangle_\Gamma = -\bigl(q,\mathrm{div}_\Gamma(\xi n)\bigr)_{L^2(\Gamma)} = q\int_\Gamma \xi H\,d\mathcal{H}^2.
  \end{align*}
  Since $H\in C(\Gamma) \subset L^2(\Gamma)$ and $H^1(\Gamma)$ is dense in $L^2(\Gamma)$ by Lemma \ref{L:Wmp_Appr}, we observe by the above equality and a density argument that
  \begin{align*}
    q\int_\Gamma H^2\,d\mathcal{H}^2 = 0.
  \end{align*}
  Moreover, since $\Gamma$ is a closed surface in $\mathbb{R}^3$, we have (see (16.32) in \cite{GiTr01})
  \begin{align*}
    \int_\Gamma \left(\frac{H}{2}\right)^2\,d\mathcal{H}^2 \geq 4\pi, \quad\text{i.e.}\quad \int_\Gamma H^2\,d\mathcal{H}^2 \geq 16\pi > 0
  \end{align*}
  and thus $q=0$ (note that the mean curvature of $\Gamma$ is defined as $H/2$ in \cite{GiTr01}).

  Conversely, if $q=0$ on $\Gamma$, then $\nabla_\Gamma q+qHn=0$ in $H^{-1}(\Gamma)^3$ by \eqref{E:TGrHn_Hin}.
\end{proof}

\begin{lemma} \label{L:Poin_L2_HinN}
  There exists a constant $c>0$ such that
  \begin{align} \label{E:Poin_L2_HinN}
    \|q\|_{L^2(\Gamma)} \leq c\|\nabla_\Gamma q+qHn\|_{H^{-1}(\Gamma)}
  \end{align}
  for all $q\in L^2(\Gamma)$.
\end{lemma}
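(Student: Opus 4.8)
The plan is to prove the Poincaré-type inequality \eqref{E:Poin_L2_HinN} by a contradiction/compactness argument, exactly parallel to the proof of Lemma \ref{L:Poin_HinT} (the estimate \eqref{E:Poin_HinT}) but using the new non-vanishing result Lemma \ref{L:TGrHn_Hin_Con} in place of Lemma \ref{L:TGr_HinT_Con}. Suppose \eqref{E:Poin_L2_HinN} fails; then for each $k\in\mathbb{N}$ there is $q_k\in L^2(\Gamma)$ with $\|q_k\|_{L^2(\Gamma)}>k\|\nabla_\Gamma q_k+q_kHn\|_{H^{-1}(\Gamma)}$. After normalizing by $\|q_k\|_{L^2(\Gamma)}$ we may assume $\|q_k\|_{L^2(\Gamma)}=1$ and $\|\nabla_\Gamma q_k+q_kHn\|_{H^{-1}(\Gamma)}<1/k$, so that $\nabla_\Gamma q_k+q_kHn\to 0$ strongly in $H^{-1}(\Gamma)^3$.

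Next I would extract a limit. Since $\{q_k\}$ is bounded in $L^2(\Gamma)$ and the embedding $L^2(\Gamma)\hookrightarrow H^{-1}(\Gamma)$ is compact by Lemma \ref{L:RK_Surf}, a subsequence (not relabeled) converges to some $q\in L^2(\Gamma)$ weakly in $L^2(\Gamma)$ and strongly in $H^{-1}(\Gamma)$. Passing to the limit in \eqref{E:TGrHn_Hin}, for every fixed $v\in H^1(\Gamma)^3$ we have $\langle\nabla_\Gamma q_k+q_kHn,v\rangle_\Gamma=-(q_k,\mathrm{div}_\Gamma v)_{L^2(\Gamma)}\to-(q,\mathrm{div}_\Gamma v)_{L^2(\Gamma)}=\langle\nabla_\Gamma q+qHn,v\rangle_\Gamma$ by the weak convergence of $q_k$; on the other hand the left-hand side tends to $0$ by the strong convergence of $\nabla_\Gamma q_k+q_kHn$ to $0$ in $H^{-1}(\Gamma)^3$. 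Hence $\nabla_\Gamma q+qHn=0$ in $H^{-1}(\Gamma)^3$, so $q=0$ on $\Gamma$ by Lemma \ref{L:TGrHn_Hin_Con}.

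Finally I would derive the contradiction. From $q=0$ and the strong $H^{-1}(\Gamma)$-convergence $q_k\to q$ we get $\|q_k\|_{H^{-1}(\Gamma)}\to 0$. But the normalization $\|q_k\|_{L^2(\Gamma)}=1$ together with the assumed failure of \eqref{E:Poin_L2_HinN}, i.e. $\|\nabla_\Gamma q_k+q_kHn\|_{H^{-1}(\Gamma)}<1/k\to 0$, would---by the trivial bound $\|q_k\|_{H^{-1}(\Gamma)}\le c\|q_k\|_{L^2(\Gamma)}$---not yet give a contradiction directly, so instead I argue as in Lemma \ref{L:Poin_HinT}: combine Necas' inequality on $\Gamma$ (Lemma \ref{L:Necas_Surf}) with Lemma \ref{L:TGrHn_Ineq} to see that $\{q_k\}$ being bounded in $L^2(\Gamma)$ is consistent, and then use that $q_k\to 0$ strongly in $H^{-1}(\Gamma)$ while $\nabla_\Gamma q_k\to 0$ strongly in $H^{-1}(\Gamma,T\Gamma)$ (the latter by \eqref{E:TGrHn_Ineq}); Necas' inequality \eqref{E:Necas_Surf} then forces $\|q_k\|_{L^2(\Gamma)}\to 0$, contradicting $\|q_k\|_{L^2(\Gamma)}=1$.

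The main obstacle is making sure the contradiction is closed correctly: one must not merely conclude $q=0$ but must squeeze out strong $L^2(\Gamma)$-convergence of the subsequence to $0$, and for that the Necas inequality \eqref{E:Necas_Surf} (applied to $q_k$, using \eqref{E:TGrHn_Ineq} to control $\|\nabla_\Gamma q_k\|_{H^{-1}(\Gamma,T\Gamma)}$ by $\|\nabla_\Gamma q_k+q_kHn\|_{H^{-1}(\Gamma)}$) is the essential ingredient, precisely as in the proof of Lemma \ref{L:Poin_HinT}. Everything else is a routine rerun of that argument with Lemma \ref{L:TGr_HinT_Con} replaced by Lemma \ref{L:TGrHn_Hin_Con}.
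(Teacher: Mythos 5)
Your proof is correct and uses essentially the same ingredients and structure as the paper's: the Ne\v{c}as inequality \eqref{E:Necas_Surf}, the comparison \eqref{E:TGrHn_Ineq}, the compact embedding $L^2(\Gamma)\hookrightarrow H^{-1}(\Gamma)$, and Lemma \ref{L:TGrHn_Hin_Con} to identify the limit as zero. The only cosmetic difference is the normalization: the paper first reduces \eqref{E:Poin_L2_HinN} to $\|q\|_{H^{-1}(\Gamma)}\le c\|\nabla_\Gamma q+qHn\|_{H^{-1}(\Gamma)}$ and normalizes $\|q_k\|_{H^{-1}(\Gamma)}=1$, whereas you normalize $\|q_k\|_{L^2(\Gamma)}=1$ and invoke the Ne\v{c}as inequality at the end to upgrade the strong $H^{-1}$-convergence to strong $L^2$-convergence — both close the contradiction correctly.
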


\begin{proof}
  By the Ne\v{c}as inequality \eqref{E:Necas_Surf} and \eqref{E:TGrHn_Ineq} we have
  \begin{align} \label{Pf_PLHN:Nec}
    \|q\|_{L^2(\Gamma)} \leq c\left(\|q\|_{H^{-1}(\Gamma)}+\|\nabla_\Gamma q+qHn\|_{H^{-1}(\Gamma)}\right)
  \end{align}
  for all $q\in L^2(\Gamma)$.
  Thus it is sufficient to show that
  \begin{align} \label{Pf_PLHN:Goal}
    \|q\|_{H^{-1}(\Gamma)} \leq c\|\nabla_\Gamma q+qHn\|_{H^{-1}(\Gamma)}.
  \end{align}
  Assume to the contrary that there exists a sequence $\{q_k\}_{k=1}^\infty$ in $L^2(\Gamma)$ such that
  \begin{align*}
    \|q_k\|_{H^{-1}(\Gamma)} > k\|\nabla_\Gamma q_k+q_kHn\|_{H^{-1}(\Gamma)}
  \end{align*}
  for all $k\in\mathbb{N}$.
  Replacing $q_k$ with $q_k/\|q_k\|_{H^{-1}(\Gamma)}$ we may assume that
  \begin{align} \label{Pf_PLHN:Contra}
    \|q_k\|_{H^{-1}(\Gamma)} = 1, \quad \|\nabla_\Gamma q_k+q_kHn\|_{H^{-1}(\Gamma)} < \frac{1}{k}, \quad k\in\mathbb{N}.
  \end{align}
  Then since $\{q_k\}_{k=1}^\infty$ is bounded in $L^2(\Gamma)$ by \eqref{Pf_PLHN:Nec} and \eqref{Pf_PLHN:Contra} and the embedding $L^2(\Gamma)\hookrightarrow H^{-1}(\Gamma)$ is compact, we see that $\{q_k\}_{k=1}^\infty$ converges (up to a subsequence) to some $q\in L^2(\Gamma)$ weakly in $L^2(\Gamma)$ and strongly in $H^{-1}(\Gamma)$.
  Hence
  \begin{align} \label{Pf_PLHN:Q_Hin}
    \|q\|_{H^{-1}(\Gamma)} = \lim_{k\to\infty}\|q_k\|_{H^{-1}(\Gamma)} = 1
  \end{align}
  by the first equality of \eqref{Pf_PLHN:Contra}.
  Moreover, by the weak convergence of $\{q_k\}_{k=1}^\infty$ to $q$ in $L^2(\Gamma)$ and \eqref{E:TGrHn_Hin} we have
  \begin{align*}
    \lim_{k\to\infty}(\nabla_\Gamma q_k+q_kHn) = \nabla_\Gamma q+qHn \quad\text{weakly in}\quad H^{-1}(\Gamma)^3
  \end{align*}
  and thus it follows from the second inequality of \eqref{Pf_PLHN:Contra} that
  \begin{align*}
    \|\nabla_\Gamma q+qHn\|_{H^{-1}(\Gamma)} \leq \liminf_{k\to\infty}\|\nabla_\Gamma q_k+q_kHn\|_{H^{-1}(\Gamma)} = 0,
  \end{align*}
  i.e. $\nabla_\Gamma q+qHn=0$ in $H^{-1}(\Gamma)^3$.
  Hence $q=0$ on $\Gamma$ by Lemma \ref{L:TGrHn_Hin_Con} and we obtain $\|q\|_{H^{-1}(\Gamma)}=0$, which contradicts with \eqref{Pf_PLHN:Q_Hin}.
  Therefore, \eqref{Pf_PLHN:Goal} is valid.
\end{proof}

Now we obtain de Rham's theorem for the annihilator of $H_\sigma^1(\Gamma)$.

\begin{theorem} \label{T:DeRham_Ge}
  Suppose that $f\in H^{-1}(\Gamma)^3$ satisfies
  \begin{align*}
    \langle f,v\rangle_\Gamma = 0 \quad\text{for all}\quad  v\in H_\sigma^1(\Gamma).
  \end{align*}
  Then there exists a unique $q\in L^2(\Gamma)$ such that
  \begin{align*}
    f = \nabla_\Gamma q+qHn \quad\text{in}\quad H^{-1}(\Gamma)^3, \quad \|q\|_{L^2(\Gamma)} \leq c\|f\|_{H^{-1}(\Gamma)}
  \end{align*}
  with a constant $c>0$ independent of $f$.
\end{theorem}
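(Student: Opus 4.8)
The strategy is to recognize Theorem~\ref{T:DeRham_Ge} as the $g\equiv1$ counterpart of the weighted de Rham theorem, Theorem~\ref{T:DeRham_T}, but now for the annihilator of the solenoidal space $H_\sigma^1(\Gamma)$ of \emph{general} vector fields inside $H^{-1}(\Gamma)^3$ rather than the tangential space $H_{g\sigma}^1(\Gamma,T\Gamma)$ inside $H^{-1}(\Gamma,T\Gamma)$. Accordingly I would mirror the functional-analytic argument used for Theorem~\ref{T:DeRham_T}: introduce the subspace
\begin{align*}
  \mathcal{Y} := \{\nabla_\Gamma q+qHn\in H^{-1}(\Gamma)^3 \mid q\in L^2(\Gamma)\},
\end{align*}
show it is closed in $H^{-1}(\Gamma)^3$, compute $\mathcal{Y}^\perp$ inside $H^1(\Gamma)^3$ (via the Riesz-type identity $\langle\nabla_\Gamma q+qHn,v\rangle_\Gamma=-(q,\mathrm{div}_\Gamma v)_{L^2(\Gamma)}$ of \eqref{E:TGrHn_Hin}), and then apply the bipolar relation $(\mathcal{Y}^\perp)^\perp=\mathcal{Y}$ from Lemmas~\ref{L:Re_FA_1} and \ref{L:Re_FA_2}, using that $H^{-1}(\Gamma)^3$ is reflexive.

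\textbf{Key steps, in order.} First I would record that $\mathcal{Y}$ is a linear subspace of $H^{-1}(\Gamma)^3$ by \eqref{E:TGrHn_Hin_Bo}. Second, closedness of $\mathcal{Y}$: if $\nabla_\Gamma q_k+q_kHn\to f$ strongly in $H^{-1}(\Gamma)^3$, then by the Poincar\'e-type inequality \eqref{E:Poin_L2_HinN} the sequence $\{q_k-q_l\}$ satisfies $\|q_k-q_l\|_{L^2(\Gamma)}\le c\|\nabla_\Gamma(q_k-q_l)+(q_k-q_l)Hn\|_{H^{-1}(\Gamma)}\to0$, so $q_k\to q$ strongly in $L^2(\Gamma)$, and then \eqref{E:TGrHn_Hin_Bo} gives $\nabla_\Gamma q_k+q_kHn\to\nabla_\Gamma q+qHn$, whence $f=\nabla_\Gamma q+qHn\in\mathcal{Y}$. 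Third, compute $\mathcal{Y}^\perp$: for $v\in\mathcal{Y}^\perp\subset H^1(\Gamma)^3$ the identity \eqref{E:TGrHn_Hin} forces $(q,\mathrm{div}_\Gamma v)_{L^2(\Gamma)}=0$ for all $q\in L^2(\Gamma)$, hence $\mathrm{div}_\Gamma v=0$ on $\Gamma$, i.e.\ $v\in H_\sigma^1(\Gamma)$; conversely every $v\in H_\sigma^1(\Gamma)$ annihilates $\mathcal{Y}$ by the same identity, so $\mathcal{Y}^\perp=H_\sigma^1(\Gamma)$. Fourth, since $H^{-1}(\Gamma)^3=(H^1(\Gamma)^3)'$ is reflexive and $\mathcal{Y}$ is closed, Lemma~\ref{L:Re_FA_2} yields $(H_\sigma^1(\Gamma))^\perp=(\mathcal{Y}^\perp)^\perp=\mathcal{Y}$; thus any $f\in H^{-1}(\Gamma)^3$ annihilating $H_\sigma^1(\Gamma)$ equals $\nabla_\Gamma q+qHn$ for some $q\in L^2(\Gamma)$. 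Fifth, uniqueness: if $\nabla_\Gamma q+qHn=0$ in $H^{-1}(\Gamma)^3$ then $q=0$ by Lemma~\ref{L:TGrHn_Hin_Con}; the estimate $\|q\|_{L^2(\Gamma)}\le c\|f\|_{H^{-1}(\Gamma)}$ is then immediate from \eqref{E:Poin_L2_HinN} since $f=\nabla_\Gamma q+qHn$.

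\textbf{Main obstacle.} The genuinely nontrivial ingredients — the Ne\v{c}as inequality \eqref{E:Necas_Surf} on the closed surface, the coercivity estimate \eqref{E:Poin_L2_HinN} (which crucially exploits $\int_\Gamma H^2\,d\mathcal{H}^2\ge16\pi>0$, i.e.\ the fact that $\Gamma$ is a \emph{closed} surface in $\mathbb{R}^3$), and the triviality of the kernel in Lemma~\ref{L:TGrHn_Hin_Con} — are already established in Sections~\ref{SS:WS_Nec} and \ref{SS:WS_NTS}. So by the time one reaches Theorem~\ref{T:DeRham_Ge} the proof is essentially the bookkeeping assembly described above, and the only point requiring a little care is verifying closedness of $\mathcal{Y}$; this is where \eqref{E:Poin_L2_HinN} does the real work, exactly as \eqref{E:Poin_L2_HinT} and \eqref{E:TGr_G_HinT} did in the proof of Lemma~\ref{L:TGr_Closed}. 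No step should require a long computation, so the proof can be kept to a few lines once these earlier results are invoked.
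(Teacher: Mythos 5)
Your proposal is correct and follows essentially the same route as the paper: the paper also introduces the subspace $\mathcal{X}=\{\nabla_\Gamma q+qHn \mid q\in L^2(\Gamma)\}$, proves its closedness via \eqref{E:TGrHn_Hin_Bo} and \eqref{E:Poin_L2_HinN} exactly as in Lemma \ref{L:TGr_Closed}, identifies the annihilator through \eqref{E:TGrHn_Hin}, and concludes by reflexivity with Lemmas \ref{L:Re_FA_1} and \ref{L:Re_FA_2}, with uniqueness and the estimate from \eqref{E:Poin_L2_HinN}. The only cosmetic difference is that you establish the equality $\mathcal{Y}^\perp=H_\sigma^1(\Gamma)$ whereas the paper only needs the inclusion $\mathcal{X}^\perp\subset H_\sigma^1(\Gamma)$.
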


\begin{proof}
  By \eqref{E:TGrHn_Hin_Bo} and \eqref{E:Poin_L2_HinN} we can show as in the proof of Lemma \ref{L:TGr_Closed} that
  \begin{align*}
    \mathcal{X} := \{\nabla_\Gamma q+qHn\in H^{-1}(\Gamma)^3 \mid q\in L^2(\Gamma)\}
  \end{align*}
  is closed in $H^{-1}(\Gamma)^3$.
  Moreover, $\mathcal{X}^\perp\subset H_\sigma^1(\Gamma)$ in $H^1(\Gamma)^3$ by \eqref{E:TGrHn_Hin}.
  Since the dual space $H^{-1}(\Gamma)^3$ of the Hilbert space $H^1(\Gamma)^3$ is reflexive, we have
  \begin{align*}
    H_\sigma^1(\Gamma)^\perp = \{f\in H^{-1}(\Gamma)^3 \mid \text{$\langle f,v\rangle_\Gamma=0$ for all $v\in H_\sigma^1(\Gamma)$}\} \subset (\mathcal{X}^\perp)^\perp = \mathcal{X}
  \end{align*}
  in $H^{-1}(\Gamma)^3$ by Lemmas \ref{L:Re_FA_1} and \ref{L:Re_FA_2}.
  Hence the existence part of the theorem is valid.
  Also, the uniqueness and the estimate follow from \eqref{E:Poin_L2_HinN}.
\end{proof}

Next we derive the Helmholtz--Leray decomposition in $L^2(\Gamma)^3$.
Let
\begin{align*}
  L_\sigma^2(\Gamma) := \{v\in L^2(\Gamma)^3 \mid \text{$\mathrm{div}_\Gamma v=0$ in $H^{-1}(\Gamma)$}\}.
\end{align*}
Then we have the orthogonal decomposition
\begin{align} \label{E:L2Ge_OD}
  L^2(\Gamma)^3 = L_\sigma^2(\Gamma)\oplus L_\sigma^2(\Gamma)^\perp
\end{align}
since $L_\sigma^2(\Gamma)$ is a closed subspace of $L^2(\Gamma)^3$ by \eqref{E:DivG_Hin_Bo}.

\begin{lemma} \label{L:L2sGe_Orth}
  The orthogonal complement of $L_\sigma^2(\Gamma)$ in $L^2(\Gamma)^3$ is of the form
  \begin{align} \label{E:L2sGe_Orth}
    L_\sigma^2(\Gamma)^\perp = \{\nabla_\Gamma q+qHn\in L^2(\Gamma)^3 \mid q\in H^1(\Gamma)\}.
  \end{align}
\end{lemma}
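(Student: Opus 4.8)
\textbf{Proof proposal for Lemma \ref{L:L2sGe_Orth}.} The plan is to mimic the proof of Lemma \ref{L:L2gs_Orth}, using Theorem \ref{T:DeRham_Ge} (de Rham for the annihilator of $H_\sigma^1(\Gamma)$) in place of Theorem \ref{T:DeRham_T}, and upgrading the $L^2$-potential produced by that theorem to an $H^1$-potential by an integration-by-parts / weak-derivative argument. Denote by $\mathcal{X}$ the right-hand side of \eqref{E:L2sGe_Orth}. First I would check the easy inclusion $\mathcal{X}\subset L_\sigma^2(\Gamma)^\perp$: for $q\in H^1(\Gamma)$ and any $v\in L_\sigma^2(\Gamma)$, by \eqref{E:TGrHn_Hin} we have $(v,\nabla_\Gamma q+qHn)_{L^2(\Gamma)}=\langle\nabla_\Gamma q+qHn,v\rangle_\Gamma=-(q,\mathrm{div}_\Gamma v)_{L^2(\Gamma)}=0$ (the pairing being legitimate since $\nabla_\Gamma q+qHn\in L^2(\Gamma)^3\subset H^{-1}(\Gamma)^3$ and $\mathrm{div}_\Gamma v=0$ in $H^{-1}(\Gamma)$).

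For the reverse inclusion, take $f\in L_\sigma^2(\Gamma)^\perp$. As in the proof of Lemma \ref{L:L2gs_Orth}, I would first observe that $f=Pf$ is not automatic here, so instead I argue directly: since $H_\sigma^1(\Gamma)\subset L_\sigma^2(\Gamma)$, the vector field $f$ (viewed in $H^{-1}(\Gamma)^3$ via $\langle f,v\rangle_\Gamma=(f,v)_{L^2(\Gamma)}$) satisfies $\langle f,v\rangle_\Gamma=0$ for all $v\in H_\sigma^1(\Gamma)$. Hence Theorem \ref{T:DeRham_Ge} yields a unique $q\in L^2(\Gamma)$ with $f=\nabla_\Gamma q+qHn$ in $H^{-1}(\Gamma)^3$. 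The remaining task is to show $q\in H^1(\Gamma)$; then $\nabla_\Gamma q+qHn\in L^2(\Gamma)^3$ coincides with $f$ and lies in $\mathcal{X}$, giving $L_\sigma^2(\Gamma)^\perp\subset\mathcal{X}$ and completing the proof.

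To prove $q\in H^1(\Gamma)$, I would test the identity $f=\nabla_\Gamma q+qHn$ against a suitable family of $H^1(\Gamma)^3$ vector fields adapted to recover each weak tangential derivative. Following the computation in the proof of Lemma \ref{L:L2gs_Orth}, for $\eta\in C^1(\Gamma)$ and $i=1,2,3$ set $v:=\eta Pe_i\in H^1(\Gamma)^3$ (using $P\in C^1(\Gamma)^{3\times3}$). Then by $P^T=P$, \eqref{E:P_TGr}, and \eqref{E:Div_P} one computes $\mathrm{div}_\Gamma v=\nabla_\Gamma\eta\cdot Pe_i+\eta(\mathrm{div}_\Gamma P\cdot e_i)=\underline{D}_i\eta+\eta Hn_i$ on $\Gamma$. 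Plugging this into \eqref{E:TGrHn_Hin} gives $(f_i,\eta)_{L^2(\Gamma)}=\langle f,v\rangle_\Gamma=\langle\nabla_\Gamma q+qHn,v\rangle_\Gamma=-(q,\mathrm{div}_\Gamma v)_{L^2(\Gamma)}=-(q,\underline{D}_i\eta+\eta Hn_i)_{L^2(\Gamma)}$ for all $\eta\in C^1(\Gamma)$, which is exactly the defining relation \eqref{E:Def_WTD} of the weak tangential derivative, so $\underline{D}_iq=f_i\in L^2(\Gamma)$ for $i=1,2,3$, i.e. $q\in H^1(\Gamma)$ and $f=\nabla_\Gamma q+qHn\in\mathcal{X}$.

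The main obstacle, which is really the substance already packaged in Theorem \ref{T:DeRham_Ge} (hence in Lemma \ref{L:Poin_L2_HinN} and the surface Ne\v{c}as inequality \eqref{E:Necas_Surf}), is the closedness of $\mathcal{X}$ in $H^{-1}(\Gamma)^3$ and the injectivity of $q\mapsto\nabla_\Gamma q+qHn$; with Theorem \ref{T:DeRham_Ge} available, the only genuinely new point in this lemma is the bootstrapping step $q\in L^2(\Gamma)\Rightarrow q\in H^1(\Gamma)$, which is the test-function computation above and is routine once the right test vector fields $\eta Pe_i$ are chosen. I would also note for completeness that the decomposition then follows from \eqref{E:L2Ge_OD} and \eqref{E:L2sGe_Orth}, with $q$ unique (not just up to a constant) by Lemma \ref{L:TGrHn_Hin_Con}, but that belongs to the companion statement Theorem \ref{T:HL_L2Ge} rather than to this lemma.
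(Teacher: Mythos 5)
Your overall strategy is the paper's: prove the easy inclusion, invoke Theorem \ref{T:DeRham_Ge} to get a unique $q\in L^2(\Gamma)$ with $f=\nabla_\Gamma q+qHn$ in $H^{-1}(\Gamma)^3$, and then bootstrap to $q\in H^1(\Gamma)$ by testing against well-chosen $H^1$ vector fields. The one place where your argument as written breaks is the identity $\langle f,v\rangle_\Gamma=(f_i,\eta)_{L^2(\Gamma)}$ for $v=\eta Pe_i$: since $P^T=P$, what you actually get is $\langle f,\eta Pe_i\rangle_\Gamma=(f,\eta Pe_i)_{L^2(\Gamma)}=([Pf]_i,\eta)_{L^2(\Gamma)}$, and $[Pf]_i\neq f_i$ here because, as you yourself note at the outset, $f\in L_\sigma^2(\Gamma)^\perp$ need not be tangential. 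Consequently your conclusion $\underline{D}_iq=f_i$ is inconsistent with $f=\nabla_\Gamma q+qHn$ (it would force $qHn=0$); the correct output of your computation is $\underline{D}_iq=[Pf]_i$. This is a repairable slip rather than a fatal gap: $[Pf]_i\in L^2(\Gamma)$, so $q\in H^1(\Gamma)$ still follows, and the $H^{-1}(\Gamma)^3$ identity $f=\nabla_\Gamma q+qHn$ then upgrades to an identity in $L^2(\Gamma)^3$, which is what membership in $\mathcal{X}$ requires.

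For comparison, the paper uses the test fields $v=\eta e_i$ rather than $\eta Pe_i$; then $\mathrm{div}_\Gamma v=\underline{D}_i\eta$, the pairing is honestly $(f_i,\eta)_{L^2(\Gamma)}$, and rearranging $-(q,\underline{D}_i\eta)_{L^2(\Gamma)}=(f_i,\eta)_{L^2(\Gamma)}$ into the form \eqref{E:Def_WTD} yields $\underline{D}_iq=f_i-qHn_i$, i.e.\ exactly the tangential part of $f$. The choice $\eta Pe_i$ is the right one in the weighted tangential setting of Lemma \ref{L:L2gs_Orth}, where $Pf=f$; here it is unnecessary and is precisely what led you astray. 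A second, cosmetic point: in the easy inclusion you invoke \eqref{E:TGrHn_Hin}, which is stated for $v\in H^1(\Gamma)^3$, whereas $v\in L_\sigma^2(\Gamma)$ is only in $L^2$; the clean route is \eqref{E:Sdiv_Hin}, which gives $(v,\nabla_\Gamma q+qHn)_{L^2(\Gamma)}=-\langle\mathrm{div}_\Gamma v,q\rangle_\Gamma=0$ directly.
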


\begin{proof}
  The proof is similar to that of Lemma \ref{L:L2gs_Orth}.
  We denote by $\mathcal{X}$ the right-hand side of \eqref{E:L2sGe_Orth}.
  Then
  \begin{align*}
    (v,\nabla_\Gamma q+qHn)_{L^2(\Gamma)} = -\langle\mathrm{div}_\Gamma v,q\rangle_\Gamma = 0 \quad\text{for all}\quad v\in L_\sigma^2(\Gamma), \, q\in H^1(\Gamma)
  \end{align*}
  by \eqref{E:Sdiv_Hin} and thus $\mathcal{X}\subset L_\sigma^2(\Gamma)^\perp$.
  Conversely, let $f\in L_\sigma^2(\Gamma)^\perp$.
  Since
  \begin{align*}
    \langle f,v\rangle_\Gamma = (f,v)_{L^2(\Gamma)} = 0 \quad\text{for all}\quad v\in H_\sigma^1(\Gamma)\subset L_\sigma^2(\Gamma),
  \end{align*}
  we observe by Theorem \ref{T:DeRham_Ge} that there exists a unique $q\in L^2(\Gamma)$ such that
  \begin{align*}
    f = \nabla_\Gamma q+qHn \quad\text{in}\quad H^{-1}(\Gamma)^3.
  \end{align*}
  To prove $q\in H^1(\Gamma)$, let $\{e_1,e_2,e_3\}$ be the standard basis of $\mathbb{R}^3$ and $v:=\eta e_i$ on $\Gamma$ for $\eta\in C^1(\Gamma)$ and $i=1,2,3$.
  Then since $v\in H^1(\Gamma)^3$ and $\mathrm{div}_\Gamma v=\underline{D}_i\eta$ on $\Gamma$,
  \begin{align*}
    -(q,\underline{D}_i\eta)_{L^2(\Gamma)} &= -(q,\mathrm{div}_\Gamma v)_{L^2} = \langle\nabla_\Gamma q+qHn,v\rangle_\Gamma \\
    &= \langle f,v\rangle_\Gamma = (f,v)_{L^2(\Gamma)} = (f_i,\eta)_{L^2(\Gamma)},
  \end{align*}
  where $f_i$ is the $i$-th component of $f$.
  From this equality we deduce that
  \begin{align*}
    -(q,\underline{D}_i\eta+\eta Hn_i)_{L^2(\Gamma)} = (f_i-qHn_i,\eta)_{L^2(\Gamma)}
  \end{align*}
  for all $\eta\in C^1(\Gamma)$.
  Hence
  \begin{align*}
    \underline{D}_iq = f_i-qHn_i \in L^2(\Gamma), \quad i=1,2,3
  \end{align*}
  by the definition of the weak tangential derivative in $L^2(\Gamma)$ (see \eqref{E:Def_WTD}) and
  \begin{align*}
    q \in H^1(\Gamma), \quad f = \nabla_\Gamma q+qHn \in \mathcal{X}.
  \end{align*}
  Thus $L_\sigma^2(\Gamma)^\perp\subset\mathcal{X}$ and \eqref{E:L2sGe_Orth} holds.
\end{proof}

A similar result to Lemma \ref{L:L2sGe_Orth} was given in \cite{KoLiGi17}*{Lemma 2.7}, where the authors showed that, when $\Gamma$ is smooth, each element of the annihilator in $L^2(\Gamma)^3$ of
\begin{align*}
  \{\eta \in C^\infty(\Gamma)^3 \mid \text{$\mathrm{div}_\Gamma\eta = 0$ on $\Gamma$}\}
\end{align*}
is of the form $\nabla_\Gamma q+qHn$ with $q\in H^1(\Gamma)$ (see also \cite{KoLiGi18Er}*{Theorem 1.1}).
Note that here we only assume that $\Gamma$ is of class $C^2$ and the proof of Lemma \ref{L:L2sGe_Orth} is different from those of \cite{KoLiGi17}*{Lemma 2.7} and \cite{KoLiGi18Er}*{Theorem 1.1}.

\begin{theorem} \label{T:HL_L2Ge}
  For each $v\in L^2(\Gamma)^3$ we have the orthogonal decomposition
  \begin{align} \label{E:HL_L2Ge}
    \begin{gathered}
      v = v_\sigma+\nabla_\Gamma q+qHn \quad\text{in}\quad L^2(\Gamma)^3, \\
      v_\sigma\in L_\sigma^2(\Gamma),\,\nabla_\Gamma q+qHn\in L_\sigma^2(\Gamma)^\perp.
    \end{gathered}
  \end{align}
  Here $q\in H^1(\Gamma)$ is determined uniquely.
\end{theorem}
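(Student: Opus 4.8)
The plan is to derive Theorem~\ref{T:HL_L2Ge} as an immediate consequence of the orthogonal decomposition \eqref{E:L2Ge_OD} together with the characterization \eqref{E:L2sGe_Orth} of $L_\sigma^2(\Gamma)^\perp$ given in Lemma~\ref{L:L2sGe_Orth}, in exact parallel with the proof of Theorem~\ref{T:HL_L2T} from \eqref{E:L2T_OD} and \eqref{E:L2gs_Orth}. First I would observe that since $L_\sigma^2(\Gamma)$ is a closed subspace of the Hilbert space $L^2(\Gamma)^3$ (by \eqref{E:DivG_Hin_Bo}), every $v\in L^2(\Gamma)^3$ decomposes orthogonally as $v=v_\sigma+w$ with $v_\sigma\in L_\sigma^2(\Gamma)$ and $w\in L_\sigma^2(\Gamma)^\perp$; then Lemma~\ref{L:L2sGe_Orth} rewrites $w$ as $\nabla_\Gamma q+qHn$ for some $q\in H^1(\Gamma)$, giving \eqref{E:HL_L2Ge}.

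The only genuinely new content beyond what is already assembled is the uniqueness of $q$, and this is where the statement differs from the tangential case \eqref{E:HL_L2T} in which $q$ is determined only up to an additive constant. For uniqueness I would suppose $q_1,q_2\in H^1(\Gamma)$ satisfy $\nabla_\Gamma q_1+q_1Hn=\nabla_\Gamma q_2+q_2Hn$ in $L^2(\Gamma)^3$. Setting $q:=q_1-q_2\in H^1(\Gamma)\subset L^2(\Gamma)$, this says $\nabla_\Gamma q+qHn=0$, and since this identity in $L^2(\Gamma)^3$ certainly holds in $H^{-1}(\Gamma)^3$, Lemma~\ref{L:TGrHn_Hin_Con} forces $q=0$ on $\Gamma$, i.e.\ $q_1=q_2$. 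This is the step that uses the fact that $\Gamma$ is closed: the argument in Lemma~\ref{L:TGrHn_Hin_Con} relies on the Willmore-type lower bound $\int_\Gamma H^2\,d\mathcal{H}^2\geq 16\pi>0$, which rules out the constant ambiguity that appears when the zeroth-order term $qHn$ is dropped.

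I do not anticipate a real obstacle here, since all the heavy lifting — the Ne\v{c}as inequality on $\Gamma$ (Lemma~\ref{L:Necas_Surf}), the Poincar\'e-type estimate \eqref{E:Poin_L2_HinN}, the de Rham theorem \eqref{T:DeRham_Ge}, the closedness of the range $\mathcal{X}$, and the orthogonal-complement identification \eqref{E:L2sGe_Orth} — has been done before this statement. The proof is therefore short: invoke \eqref{E:L2Ge_OD}, apply Lemma~\ref{L:L2sGe_Orth} to produce $q\in H^1(\Gamma)$ realizing the complementary component, and close with the uniqueness argument via Lemma~\ref{L:TGrHn_Hin_Con}. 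The one point to be careful about is to phrase the uniqueness so that it is clear $q$ is determined outright (not modulo constants), contrasting explicitly with Theorem~\ref{T:HL_L2T}, as already flagged in Remark~\ref{R:HL_NTSV}.
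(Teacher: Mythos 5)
Your proposal is correct and follows exactly the paper's own route: existence from the orthogonal decomposition \eqref{E:L2Ge_OD} combined with the characterization \eqref{E:L2sGe_Orth}, and uniqueness by passing the identity $\nabla_\Gamma q+qHn=0$ from $L^2(\Gamma)^3$ to $H^{-1}(\Gamma)^3$ and invoking Lemma \ref{L:TGrHn_Hin_Con}. No gaps.
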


\begin{proof}
  The decomposition \eqref{E:HL_L2Ge} follows from \eqref{E:L2Ge_OD} and \eqref{E:L2sGe_Orth}.
  Also, if
  \begin{align*}
    \nabla_\Gamma q+qHn = 0 \quad\text{in}\quad L^2(\Gamma)^3
  \end{align*}
  for $q\in H^1(\Gamma)$, then the same equality holds in $H^{-1}(\Gamma)^3$ by \eqref{E:L2_Hin} and thus $q=0$ on $\Gamma$ by Lemma \ref{L:TGrHn_Hin_Con}.
  Hence $q\in H^1(\Gamma)$ in \eqref{E:HL_L2Ge} is determined uniquely.
\end{proof}

The Helmholtz--Leray decomposition \eqref{E:HL_L2Ge} in $L^2(\Gamma)^3$ was stated in \cite{KoLiGi18Er} without an explicit formulation (see a remark after \cite{KoLiGi18Er}*{Theorem 1.1}), but the uniqueness of $q$ in \eqref{E:HL_L2Ge} is established first in this paper.

\begin{remark} \label{R:HL_L2Ge}
  For a vector field in $L^2(\Gamma,T\Gamma)$, the decomposition \eqref{E:HL_L2Ge} in $L^2(\Gamma)^3$ is not necessarily the same as the decomposition \eqref{E:HL_L2T} in $L^2(\Gamma,T\Gamma)$ with $g\equiv1$.
  To see this, suppose that $\Gamma$ is strictly convex and thus the mean curvature $H$ of $\Gamma$ does not vanish on the whole surface, i.e. $H\neq0$ on $\Gamma$.
  Let $v\in L^2(\Gamma,T\Gamma)$ satisfy
  \begin{align*}
    \mathrm{div}_\Gamma v \neq 0 \quad\text{in}\quad H^{-1}(\Gamma).
  \end{align*}
  Then by \eqref{E:HL_L2Ge} there exist unique $v_\sigma\in L_\sigma^2(\Gamma)$ and $q\in H^1(\Gamma)$ such that
  \begin{align*}
    v = v_\sigma+\nabla_\Gamma q+qHn \quad\text{on}\quad \Gamma.
  \end{align*}
  Moreover, since $v\cdot n=0$ and $H\neq0$ on $\Gamma$ and $q\not\equiv0$ on $\Gamma$ by $\mathrm{div}_\Gamma v\neq 0$ in $H^{-1}(\Gamma)$,
  \begin{align*}
    0 = v_\sigma\cdot n+qH, \quad\text{i.e.}\quad v_\sigma\cdot n = -qH \not\equiv 0 \quad\text{on}\quad \Gamma.
  \end{align*}
  Thus the solenoidal part $v_\sigma$ of $v$ in \eqref{E:HL_L2Ge} is not tangential on $\Gamma$, while the solenoidal part $v_g$ of the same $v$ in \eqref{E:HL_L2T} with $g\equiv1$ must be tangential on $\Gamma$.
\end{remark}

\begin{remark} \label{R:TGrHn}
  The vector field $\nabla_\Gamma q+qHn$ appears in the Navier--Stokes equations
  \begin{align} \label{E:NS_EVS}
    \dot{v}-2\mu_s\mathrm{div}_\Gamma[D_\Gamma(v)]+\nabla_\Gamma q+qHn = 0, \quad \mathrm{div}_\Gamma v = 0
  \end{align}
  on an evolving surface $\Gamma(t)$ in $\mathbb{R}^3$ (see \cites{JaOlRe18,KoLiGi17}).
  Here $v$, $q$, and $\mu_s$ denote the velocity of a fluid on $\Gamma(t)$ including the normal velocity, the surface pressure, and the surface shear viscosity, respectively.
  Also, $\dot{v}:=\partial_tv+(v\cdot\nabla)v$ denotes the material derivative of $v$ along itself and $D_\Gamma(v)$ is the surface strain rate tensor given by \eqref{E:Def_SSR}.
  We expect that the Helmholtz--Leray decomposition \eqref{E:HL_L2Ge} will play a fundamental role in the future study of \eqref{E:NS_EVS}.
  Also, for a function $q$ on $\Gamma$ we have
  \begin{align*}
    \mathrm{div}_\Gamma(qP) = P^T\nabla_\Gamma q+q\,\mathrm{div}_\Gamma P = \nabla_\Gamma q+qHn \quad\text{on}\quad \Gamma
  \end{align*}
  by \eqref{E:P_TGr}, \eqref{E:Div_P}, and $P^T=P$ on $\Gamma$, and thus \eqref{E:NS_EVS} can be written as
  \begin{align*}
    \dot{v}-\mathrm{div}_\Gamma S_\Gamma = 0, \quad \mathrm{div}_\Gamma v = 0 \quad\text{on}\quad \Gamma(t).
  \end{align*}
  Here $S_\Gamma$ is the Boussinesq--Scriven surface stress tensor given by
  \begin{align*}
    S_\Gamma := -qP+(\lambda_s-\mu_s)(\mathrm{div}_\Gamma v)P+2\mu_s D_\Gamma(v)
  \end{align*}
  with the surface dilatational viscosity $\lambda_s$, which was introduced by Boussinesq \cite{Bo1913} to describe the motion of the interface of a two-phase flow and then generalized by Scriven \cite{Sc60} to an arbitrary curved moving interface (see also \cites{Ar89,SlSaOh07}).
  For the study of two-phase flows with Boussinesq--Scriven surface fluids we refer to \cites{BaGaNu15,BoPr10,NiVoWe12}.
\end{remark}

Finally, we give a remark on the case $g\not\equiv1$.
Let $g\in C^1(\Gamma)$ satisfy \eqref{E:LBo_G}.
Then
\begin{alignat*}{2}
  \langle g(\nabla_\Gamma q+qHn),v\rangle_\Gamma &= -\bigl(q,\mathrm{div}_\Gamma(gv)\bigr)_{L^2(\Gamma)}, &\quad &q\in L^2(\Gamma),\,v\in H^1(\Gamma)^3, \\
  \langle \mathrm{div}_\Gamma(gw),\eta\rangle_\Gamma &= -\bigl(w,g(\nabla_\Gamma \eta+\eta Hn)\bigr)_{L^2(\Gamma)}, &\quad &w\in L^2(\Gamma)^3,\,\eta\in H^1(\Gamma)
\end{alignat*}
by \eqref{E:Def_Mul_Hin}, \eqref{E:TGr_Hin}, and \eqref{E:Sdiv_Hin}.
Moreover, as in Lemma \ref{L:TGr_G_HinT} we see that
\begin{align*}
  c^{-1}\|\nabla_\Gamma q+qHn\|_{H^{-1}(\Gamma)} \leq \|g(\nabla_\Gamma q+qHn)\|_{H^{-1}(\Gamma)} \leq c\|\nabla_\Gamma q+qHn\|_{H^{-1}(\Gamma)}
\end{align*}
for $q\in L^2(\Gamma)$ by \eqref{E:LBo_G}.
Using these relations, \eqref{E:TGrHn_Hin_Bo}, and \eqref{E:Poin_L2_HinN}, we can prove the following weighted versions of Theorems \ref{T:DeRham_Ge} and \ref{T:HL_L2Ge} for the weighted solenoidal spaces
\begin{align*}
  L_{g\sigma}^2(\Gamma) &:= \{v\in L^2(\Gamma)^3 \mid \text{$\mathrm{div}_\Gamma(gv)=0$ in $H^{-1}(\Gamma)$}\}, \\
    H_{g\sigma}^1(\Gamma) &:= \{v\in H^1(\Gamma)^3 \mid \text{$\mathrm{div}_\Gamma(gv)=0$ on $\Gamma$}\}
\end{align*}
as in the proofs of Theorems \ref{T:DeRham_Ge} and \ref{T:HL_L2Ge}.

\begin{theorem} \label{T:DeRham_Ge_W}
  Suppose that $f\in H^{-1}(\Gamma)^3$ satisfies
  \begin{align*}
    \langle f,v\rangle_\Gamma = 0 \quad\text{for all}\quad v\in H_{g\sigma}^1(\Gamma).
  \end{align*}
  Then there exists a unique $q\in L^2(\Gamma)$ such that
  \begin{align*}
    f = g(\nabla_\Gamma q+qHn) \quad\text{in}\quad H^{-1}(\Gamma)^3, \quad \|q\|_{L^2(\Gamma)} \leq c\|f\|_{H^{-1}(\Gamma)}
  \end{align*}
  with a constant $c>0$ independent of $f$.
\end{theorem}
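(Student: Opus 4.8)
The plan is to follow exactly the template already established for Theorem~\ref{T:DeRham_T} and Theorem~\ref{T:DeRham_Ge}, now applied to the weighted solenoidal space $H_{g\sigma}^1(\Gamma)$ of general (not necessarily tangential) vector fields. The three ingredients we need, as listed in the paragraph preceding the statement, are: (i) the boundedness $\|g(\nabla_\Gamma q+qHn)\|_{H^{-1}(\Gamma)}\le c\|q\|_{L^2(\Gamma)}$, which follows from \eqref{E:TGrHn_Hin_Bo} and the weighted norm equivalence displayed just above; (ii) the Poincar\'e-type coercivity $\|q\|_{L^2(\Gamma)}\le c\|g(\nabla_\Gamma q+qHn)\|_{H^{-1}(\Gamma)}$, which follows from \eqref{E:Poin_L2_HinN} together with the same weighted norm equivalence; and (iii) the duality pairing formula $\langle g(\nabla_\Gamma q+qHn),v\rangle_\Gamma=-(q,\mathrm{div}_\Gamma(gv))_{L^2(\Gamma)}$ for $v\in H^1(\Gamma)^3$, recorded in the first displayed equation of the final remark.

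First I would show that the subspace
\begin{align*}
  \mathcal{X}:=\{g(\nabla_\Gamma q+qHn)\in H^{-1}(\Gamma)^3 \mid q\in L^2(\Gamma)\}
\end{align*}
is closed in $H^{-1}(\Gamma)^3$. The argument is verbatim that of Lemma~\ref{L:TGr_Closed}: if $g(\nabla_\Gamma q_k+q_kHn)$ converges in $H^{-1}(\Gamma)^3$, then by the coercivity (ii) the sequence $\{q_k\}$ is Cauchy in $L^2(\Gamma)$, hence converges to some $q\in L^2(\Gamma)$, and by the boundedness (i) the limit of $g(\nabla_\Gamma q_k+q_kHn)$ is $g(\nabla_\Gamma q+qHn)\in\mathcal{X}$. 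Second, using the pairing formula (iii) I would check that $\mathcal{X}^\perp\subset H_{g\sigma}^1(\Gamma)$ in $H^1(\Gamma)^3$: if $v\in\mathcal{X}^\perp$ then $(q,\mathrm{div}_\Gamma(gv))_{L^2(\Gamma)}=0$ for all $q\in L^2(\Gamma)$, hence $\mathrm{div}_\Gamma(gv)=0$ on $\Gamma$, i.e. $v\in H_{g\sigma}^1(\Gamma)$. Then, since $H^{-1}(\Gamma)^3$ is the dual of the Hilbert space $H^1(\Gamma)^3$ and hence reflexive, Lemmas~\ref{L:Re_FA_1} and~\ref{L:Re_FA_2} give
\begin{align*}
  H_{g\sigma}^1(\Gamma)^\perp\subset(\mathcal{X}^\perp)^\perp=\mathcal{X}\quad\text{in}\quad H^{-1}(\Gamma)^3,
\end{align*}
where the equality uses the closedness of $\mathcal{X}$. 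This yields the existence of $q\in L^2(\Gamma)$ with $f=g(\nabla_\Gamma q+qHn)$ in $H^{-1}(\Gamma)^3$. Finally, uniqueness of $q$ and the bound $\|q\|_{L^2(\Gamma)}\le c\|f\|_{H^{-1}(\Gamma)}$ both follow immediately from the coercivity estimate (ii) applied to the difference of two candidate potentials and to $f$ itself.

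I do not anticipate a genuine obstacle here; the proof is a routine transcription of the unweighted case (Theorem~\ref{T:DeRham_Ge}) with the weight $g$ carried through, the only substantive inputs being the norm equivalence $c^{-1}\|\nabla_\Gamma q+qHn\|_{H^{-1}(\Gamma)}\le\|g(\nabla_\Gamma q+qHn)\|_{H^{-1}(\Gamma)}\le c\|\nabla_\Gamma q+qHn\|_{H^{-1}(\Gamma)}$ (proved as in Lemma~\ref{L:TGr_G_HinT} from \eqref{E:LBo_G}, i.e. $g,g^{-1}\in C^1(\Gamma)$) and the already-established Lemma~\ref{L:Poin_L2_HinN}. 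The one point requiring a word of care is to state the norm equivalence as a separate short lemma (or inline verification) before invoking it in both the closedness argument and the uniqueness/estimate step, exactly as done for $\nabla_\Gamma q$ in Lemma~\ref{L:TGr_G_HinT}; once that is in place, every step mirrors the proof of Theorem~\ref{T:DeRham_Ge} line by line, and no new geometric input beyond the $C^2$-regularity of $\Gamma$ and the lower bound \eqref{E:LBo_G} on $g$ is needed.
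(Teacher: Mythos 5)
Your proposal is correct and follows exactly the route the paper intends: the paper itself states that Theorem \ref{T:DeRham_Ge_W} is proved "as in the proof of Theorem \ref{T:DeRham_Ge}" using the weighted duality pairing, the norm equivalence for $g(\nabla_\Gamma q+qHn)$ (proved as in Lemma \ref{L:TGr_G_HinT}), the bound \eqref{E:TGrHn_Hin_Bo}, and the coercivity \eqref{E:Poin_L2_HinN}, which is precisely your ingredients (i)--(iii) plus the closedness-and-annihilator argument via Lemmas \ref{L:Re_FA_1} and \ref{L:Re_FA_2}. No gaps.
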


\begin{theorem} \label{T:HL_L2Ge_W}
  For each $v\in L^2(\Gamma)^3$ we have the orthogonal decomposition
  \begin{gather*}
    v = v_g+g(\nabla_\Gamma q+qHn) \quad\text{in}\quad L^2(\Gamma)^3, \\
    v_g\in L_{g\sigma}^2(\Gamma),\,g(\nabla_\Gamma q+qHn)\in L_{g\sigma}^2(\Gamma)^\perp.
  \end{gather*}
  Here $q\in H^1(\Gamma)$ is determined uniquely.
\end{theorem}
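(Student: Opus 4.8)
The plan is to deduce Theorem \ref{T:HL_L2Ge_W} from the already-established toolkit exactly as Theorem \ref{T:HL_L2Ge} was deduced from Theorems \ref{T:DeRham_Ge} and the auxiliary lemmas on the functional $\nabla_\Gamma q+qHn$. The only new ingredient is the weight $g\in C^1(\Gamma)$ satisfying \eqref{E:LBo_G}, and, as in Lemma \ref{L:TGr_G_HinT}, multiplication by $g$ and by $g^{-1}\in C^1(\Gamma)$ gives two-sided bounds in $H^{-1}(\Gamma)$; these, together with \eqref{E:TGrHn_Hin_Bo} and the Poincar\'e-type inequality \eqref{E:Poin_L2_HinN}, are what drive everything.

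First I would record the two weighted duality identities noted just before the statement: for $q\in L^2(\Gamma)$, $v\in H^1(\Gamma)^3$, $w\in L^2(\Gamma)^3$, $\eta\in H^1(\Gamma)$,
\begin{align*}
  \langle g(\nabla_\Gamma q+qHn),v\rangle_\Gamma &= -\bigl(q,\mathrm{div}_\Gamma(gv)\bigr)_{L^2(\Gamma)}, \\
  \langle \mathrm{div}_\Gamma(gw),\eta\rangle_\Gamma &= -\bigl(w,g(\nabla_\Gamma\eta+\eta Hn)\bigr)_{L^2(\Gamma)},
\end{align*}
which follow from \eqref{E:Def_Mul_Hin}, \eqref{E:TGr_Hin}, \eqref{E:Sdiv_Hin}. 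Next I prove Theorem \ref{T:DeRham_Ge_W}. Combining the weighted bound $\|g(\nabla_\Gamma q+qHn)\|_{H^{-1}(\Gamma)}\le c\|q\|_{L^2(\Gamma)}$ (from \eqref{E:TGrHn_Hin_Bo} and $g\in C^1(\Gamma)$) with $\|q\|_{L^2(\Gamma)}\le c\|\nabla_\Gamma q+qHn\|_{H^{-1}(\Gamma)}\le c\|g(\nabla_\Gamma q+qHn)\|_{H^{-1}(\Gamma)}$ (from \eqref{E:Poin_L2_HinN} and \eqref{E:LBo_G}), the same Cauchy-sequence argument as in Lemma \ref{L:TGr_Closed} shows that
\begin{align*}
  \mathcal{X} := \{g(\nabla_\Gamma q+qHn)\in H^{-1}(\Gamma)^3 \mid q\in L^2(\Gamma)\}
\end{align*}
is closed in the reflexive space $H^{-1}(\Gamma)^3$. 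The first weighted identity gives $\mathcal{X}^\perp\subset H_{g\sigma}^1(\Gamma)$ in $H^1(\Gamma)^3$, so Lemmas \ref{L:Re_FA_1} and \ref{L:Re_FA_2} yield $H_{g\sigma}^1(\Gamma)^\perp\subset(\mathcal{X}^\perp)^\perp=\mathcal{X}$; uniqueness and the estimate again come from $\|q\|_{L^2(\Gamma)}\le c\|g(\nabla_\Gamma q+qHn)\|_{H^{-1}(\Gamma)}$.

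Then Theorem \ref{T:HL_L2Ge_W} follows in two short moves, mirroring Lemma \ref{L:L2sGe_Orth} and Theorem \ref{T:HL_L2Ge}. Since $L_{g\sigma}^2(\Gamma)$ is closed in $L^2(\Gamma)^3$ (by $\|\mathrm{div}_\Gamma(gv)\|_{H^{-1}(\Gamma)}\le c\|v\|_{L^2(\Gamma)}$, cf. \eqref{E:DivG_Hin_Bo}), one has $L^2(\Gamma)^3=L_{g\sigma}^2(\Gamma)\oplus L_{g\sigma}^2(\Gamma)^\perp$. I identify $L_{g\sigma}^2(\Gamma)^\perp=\{g(\nabla_\Gamma q+qHn)\mid q\in H^1(\Gamma)\}$: the inclusion $\supset$ is the second weighted identity restricted to $q\in H^1(\Gamma)$; for $\subset$, given $f\in L_{g\sigma}^2(\Gamma)^\perp$ one has $\langle f,v\rangle_\Gamma=(f,v)_{L^2(\Gamma)}=0$ for all $v\in H_{g\sigma}^1(\Gamma)\subset L_{g\sigma}^2(\Gamma)$, so Theorem \ref{T:DeRham_Ge_W} gives $f=g(\nabla_\Gamma q+qHn)$ in $H^{-1}(\Gamma)^3$ with $q\in L^2(\Gamma)$; testing against $v=\eta e_i$ for $\eta\in C^1(\Gamma)$ and using $\mathrm{div}_\Gamma(g\eta e_i)=g(\underline{D}_i\eta+\eta Hn_i)$ together with the definition \eqref{E:Def_WTD} of the weak tangential derivative shows $\underline{D}_iq=g^{-1}f_i\in L^2(\Gamma)$, hence $q\in H^1(\Gamma)$. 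Uniqueness of $q$ in the decomposition: if $g(\nabla_\Gamma q+qHn)=0$ in $L^2(\Gamma)^3$ then by \eqref{E:LBo_G} and \eqref{E:L2_Hin} the same holds in $H^{-1}(\Gamma)^3$, so $q=0$ by Lemma \ref{L:TGrHn_Hin_Con}. I do not anticipate a genuine obstacle here — every lemma needed (the Ne\v{c}as inequality \ref{L:Necas_Surf}, the vanishing criterion \ref{L:TGrHn_Hin_Con}, the Poincar\'e bound \eqref{E:Poin_L2_HinN}, reflexivity and the annihilator lemmas) is already in place; the only point requiring a little care is checking that $g^{-1}\in C^1(\Gamma)$ is used consistently so that all multiplication operators by $g^{\pm1}$ are bounded on $H^1(\Gamma)$, which is immediate from $g\in C^1(\Gamma)$ and \eqref{E:LBo_G}. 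Thus the "hard part" is purely bookkeeping: making sure the weighted versions of \eqref{E:TGrHn_Hin} and \eqref{E:Sdiv_Hin} are stated with the correct placement of $g$ inside the divergence.
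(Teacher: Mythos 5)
Your overall route is exactly the one the paper intends: the paper gives no separate proof of Theorem \ref{T:HL_L2Ge_W}, stating only that it follows ``as in the proofs of Theorems \ref{T:DeRham_Ge} and \ref{T:HL_L2Ge}'' once one has the weighted duality identities, the two-sided bounds for multiplication by $g^{\pm1}$ in $H^{-1}(\Gamma)$, \eqref{E:TGrHn_Hin_Bo}, and \eqref{E:Poin_L2_HinN}. Your closedness-plus-annihilator argument for the weighted de Rham theorem and your uniqueness argument via Lemma \ref{L:TGrHn_Hin_Con} are correct and match the intended proof.

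There is, however, one step that fails as written: the regularity upgrade $q\in H^1(\Gamma)$. You claim $\mathrm{div}_\Gamma(g\eta e_i)=g(\underline{D}_i\eta+\eta Hn_i)$, but in fact $\mathrm{div}_\Gamma(g\eta e_i)=\underline{D}_i(g\eta)=g\underline{D}_i\eta+\eta\underline{D}_ig$; the term $\eta Hn_i$ only arises when the test field carries the projection $P$ (via $\mathrm{div}_\Gamma P=Hn$, as in Lemma \ref{L:L2gs_Orth}), and here $e_i$ is constant, so no such term appears. Consequently your conclusion $\underline{D}_iq=g^{-1}f_i$ is the formula from the \emph{tangential} weighted case and is not correct in the present, non-tangential setting. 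The repair is to test with $v:=g^{-1}\eta e_i\in H^1(\Gamma)^3$, so that $\mathrm{div}_\Gamma(gv)=\mathrm{div}_\Gamma(\eta e_i)=\underline{D}_i\eta$ and the weighted duality identity gives $-(q,\underline{D}_i\eta)_{L^2(\Gamma)}=\langle f,g^{-1}\eta e_i\rangle_\Gamma=(g^{-1}f_i,\eta)_{L^2(\Gamma)}$, i.e. $-(q,\underline{D}_i\eta+\eta Hn_i)_{L^2(\Gamma)}=(g^{-1}f_i-qHn_i,\eta)_{L^2(\Gamma)}$; by \eqref{E:Def_WTD} this yields $\underline{D}_iq=g^{-1}f_i-qHn_i\in L^2(\Gamma)$ and hence $q\in H^1(\Gamma)$, exactly mirroring Lemma \ref{L:L2sGe_Orth}. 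With this one correction the proof is complete.
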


\section{Singular limit problem as the thickness tends to zero} \label{S:SL}
In this section we study a singular limit problem for the Navier--Stokes equations \eqref{E:NS_CTD} as $\varepsilon\to0$ and show Theorems \ref{T:SL_Weak} and \ref{T:SL_Strong} and related results.

Throughout this section we impose Assumptions \ref{Assump_1} and \ref{Assump_2} and fix the constant $\varepsilon_0$ given in Lemma \ref{L:Uni_aeps}.
For $\varepsilon\in(0,\varepsilon_0]$ let $\mathcal{H}_\varepsilon$ and $\mathcal{V}_\varepsilon$ be the function spaces defined by \eqref{E:Def_Heps}, $\mathbb{P}_\varepsilon$ the orthogonal projection from $L^2(\Omega_\varepsilon)^3$ onto $\mathcal{H}_\varepsilon$, and $A_\varepsilon$ the Stokes operator on $\mathcal{H}_\varepsilon$ given in Section \ref{SS:Fund_St}.
Also, let $M$ and $M_\tau$ be the average operators given in Definition \ref{D:Average}.
We further denote by
\begin{align*}
  \mathcal{H}_g := L_{g\sigma}^2(\Gamma,T\Gamma), \quad \mathcal{V}_g := H_{g\sigma}^1(\Gamma,T\Gamma)
\end{align*}
the weighted solenoidal spaces on $\Gamma$ given in Section \ref{S:WSol}, by $\bar{\eta}=\eta\circ\pi$ the constant extension of a function $\eta$ on $\Gamma$ in the normal direction of $\Gamma$, and by $\partial_n\varphi=(\bar{n}\cdot\nabla)\varphi$ the derivative of a function $\varphi$ on $\Omega_\varepsilon$ in the normal direction of $\Gamma$.

\subsection{Estimates for a strong solution to the bulk equations} \label{SS:SL_EstSt}
In order to prove Theorem \ref{T:SL_Weak} we study the behavior of the average of a strong solution to \eqref{E:NS_CTD} as $\varepsilon\to0$.
To this end, we first show several estimates for the strong solution.

\begin{lemma} \label{L:Est_Ueps}
  Let $c_1$ and $c_2$ be positive constants, $\alpha\in(0,1]$, $\beta=1$, and $\varepsilon_1\in(0,\varepsilon_0]$ the constant given in Theorem \ref{T:UE}.
  For $\varepsilon\in(0,\varepsilon_1]$ suppose that the given data
  \begin{align*}
    u_0^\varepsilon\in \mathcal{V}_\varepsilon, \quad f^\varepsilon\in L^\infty(0,\infty;L^2(\Omega_\varepsilon)^3)
  \end{align*}
  satisfy \eqref{E:UE_Data}.
  If the condition (A3) of Assumption \ref{Assump_2} is imposed, suppose further that $f^\varepsilon(t)\in\mathcal{R}_g^\perp$ for a.a. $t\in(0,\infty)$.
  Then there exists a global strong solution
  \begin{align*}
    u^\varepsilon \in C([0,\infty);\mathcal{V}_\varepsilon)\cap L_{loc}^2([0,\infty);D(A_\varepsilon))\cap H_{loc}^1([0,\infty);\mathcal{H}_\varepsilon)
  \end{align*}
  to \eqref{E:NS_CTD}.
  Moreover, there exists a constant $c>0$ independent of $\varepsilon$ and $u^\varepsilon$ such that
  \begin{align} \label{E:Est_Ueps}
    \begin{gathered}
      \|u^\varepsilon(t)\|_{L^2(\Omega_\varepsilon)}^2 \leq c\varepsilon, \quad \int_0^t\|u^\varepsilon(s)\|_{H^1(\Omega_\varepsilon)}^2\,ds \leq c\varepsilon(1+t), \\
      \|u^\varepsilon(t)\|_{H^1(\Omega_\varepsilon)}^2 \leq c\varepsilon^{-1+\alpha}, \quad \int_0^t\|u^\varepsilon(s)\|_{H^2(\Omega_\varepsilon)}^2\,ds \leq c\varepsilon^{-1+\alpha}(1+t)
    \end{gathered}
  \end{align}
  for all $t\geq0$ and
  \begin{align}
    \int_0^t\|[u^\varepsilon\otimes u^\varepsilon](s)\|_{L^2(\Omega_\varepsilon)}^2\,ds &\leq c\varepsilon(1+t), \label{E:Est_UeUe} \\
    \int_0^t\|\partial_tu^\varepsilon(s)\|_{L^2(\Omega_\varepsilon)}^2\,ds &\leq c\varepsilon^{-1+\alpha}(1+t) \label{E:Est_DtUe}
  \end{align}
  for all $t\geq0$.
\end{lemma}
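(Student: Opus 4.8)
The plan is to obtain Lemma \ref{L:Est_Ueps} by specializing Theorem \ref{T:UE} to the case $\beta=1$ and then deriving the two additional time-integral estimates \eqref{E:Est_UeUe} and \eqref{E:Est_DtUe} from the already-established bounds \eqref{E:Est_Ueps} together with the structural estimates of Section \ref{S:Ave}. First I would verify that the hypotheses of Theorem \ref{T:UE} are met: the data condition \eqref{E:UE_Data} is exactly what is assumed here (with the given $c_1,c_2$, the given $\alpha\in(0,1]$ and $\beta=1$), and if (A3) holds the extra requirement $f^\varepsilon(t)\in\mathcal{R}_g^\perp$ for a.a.\ $t$ is also assumed. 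Hence Theorem \ref{T:UE} applies and produces the global strong solution $u^\varepsilon$ in the asserted regularity class, together with \eqref{E:UE_L2} and \eqref{E:UE_H1}. Substituting $\beta=1$ into those estimates gives $\varepsilon^{1+\alpha}+\varepsilon^\beta=\varepsilon^{1+\alpha}+\varepsilon\le c\varepsilon$ (since $\varepsilon\le1$ and $\alpha>0$) and $\varepsilon^{-1+\alpha}+\varepsilon^{-1+\beta}=\varepsilon^{-1+\alpha}+1\le c\varepsilon^{-1+\alpha}$ (again since $\varepsilon\le1$ and $\alpha\le1$), which is precisely \eqref{E:Est_Ueps}.

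For \eqref{E:Est_UeUe} I would invoke Lemma \ref{L:Est_UU}, whose hypotheses ($\mathrm{div}\,u^\varepsilon=0$ in $\Omega_\varepsilon$ and the slip boundary conditions \eqref{E:Bo_Slip}, and the upper friction bounds \eqref{E:Fric_Upper} coming from Assumption \ref{Assump_1}) are satisfied by the strong solution for a.a.\ $t$. Applying \eqref{E:Est_UU} and squaring, the right-hand side is controlled by a sum of terms of the form $\varepsilon^{-1}\|u^\varepsilon\|_{L^2}^2\|u^\varepsilon\|_{H^1}^2$, $\varepsilon\|u^\varepsilon\|_{L^2}^2\|u^\varepsilon\|_{H^2}^2$ and $\|u^\varepsilon\|_{L^2}^3\|u^\varepsilon\|_{H^2}$. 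Using the pointwise bounds $\|u^\varepsilon(t)\|_{L^2(\Omega_\varepsilon)}^2\le c\varepsilon$ and $\|u^\varepsilon(t)\|_{H^1(\Omega_\varepsilon)}^2\le c\varepsilon^{-1+\alpha}$ from \eqref{E:Est_Ueps} for the lowest-order factors and keeping $\int_0^t\|u^\varepsilon(s)\|_{H^2(\Omega_\varepsilon)}^2\,ds\le c\varepsilon^{-1+\alpha}(1+t)$ for the $H^2$-integral, each term is seen to be bounded by $c\varepsilon^\alpha(1+t)\le c\varepsilon(1+t)$ after a Cauchy--Schwarz split of the $\|u^\varepsilon\|_{L^2}^3\|u^\varepsilon\|_{H^2}$ term; here one uses $\alpha\le1$ and $\varepsilon\le1$ to absorb everything into $c\varepsilon(1+t)$. (One checks the exponents carefully: e.g.\ $\varepsilon^{-1}\cdot\varepsilon\cdot\varepsilon^{-1+\alpha}=\varepsilon^{-1+\alpha}$ — this term requires the integrated $H^1$-bound $\int_0^t\|u^\varepsilon\|_{H^1}^2\,ds\le c\varepsilon(1+t)$ rather than the pointwise one, so I would actually estimate $\varepsilon^{-1}\|u^\varepsilon\|_{L^2}^2\|u^\varepsilon\|_{H^1}^2\le c\|u^\varepsilon\|_{H^1}^2$ and integrate, giving $c\varepsilon(1+t)$ directly.)

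For \eqref{E:Est_DtUe} I would read off $\partial_tu^\varepsilon$ from the equation \eqref{E:NS_Abst}, namely $\partial_tu^\varepsilon=\mathbb{P}_\varepsilon f^\varepsilon-A_\varepsilon u^\varepsilon-\mathbb{P}_\varepsilon(u^\varepsilon\cdot\nabla)u^\varepsilon$, and estimate the three pieces in $L^2(\Omega_\varepsilon)$ for a.a.\ $t$. The term $\|\mathbb{P}_\varepsilon f^\varepsilon(t)\|_{L^2}^2$ is bounded by $c_1\varepsilon^{-1+\alpha}$ uniformly in $t$ by \eqref{E:UE_Data}; $\|A_\varepsilon u^\varepsilon\|_{L^2}$ is comparable to $\|u^\varepsilon\|_{H^2(\Omega_\varepsilon)}$ by the norm equivalence recalled in the introduction (or estimated via $A_\varepsilon u^\varepsilon=-\nu\mathbb{P}_\varepsilon\Delta u^\varepsilon$ together with the regularity identification of $D(A_\varepsilon)$), so its time-integral is $\le c\varepsilon^{-1+\alpha}(1+t)$ by \eqref{E:Est_Ueps}; and the trilinear term is handled by \eqref{E:Est_UGU} of Lemma \ref{L:Est_UU}, whose square integrates to $\le c\varepsilon^{-1}\|u^\varepsilon\|_{L^2}\cdots$-type terms that, after inserting \eqref{E:Est_Ueps}, are bounded by $c\varepsilon^{-1+\alpha}(1+t)$ as well. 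Summing the three contributions and using $\|\mathbb{P}_\varepsilon\|\le1$ gives \eqref{E:Est_DtUe}.

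I do not expect any genuine obstacle here: the statement is essentially a bookkeeping corollary of Theorem \ref{T:UE} and Lemma \ref{L:Est_UU}. The one point requiring care is tracking the $\varepsilon$-exponents through the squared versions of \eqref{E:Est_UU} and \eqref{E:Est_UGU} and deciding, term by term, whether to use the pointwise $H^1$-bound $\varepsilon^{-1+\alpha}$ or the integrated $H^1$-bound $\varepsilon(1+t)$ so that no term exceeds the claimed order; in particular it is the availability of the integrated bound $\int_0^t\|u^\varepsilon\|_{H^1}^2\,ds\le c\varepsilon(1+t)$ that makes \eqref{E:Est_UeUe} come out with the gain $\varepsilon$ rather than merely $\varepsilon^\alpha$. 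A secondary bookkeeping point is to confirm, when (A3) is in force, that $\partial_tu^\varepsilon$ genuinely lives in $\mathcal{H}_\varepsilon=L_\sigma^2(\Omega_\varepsilon)\cap\mathcal{R}_g^\perp$ and that the representation of $\partial_tu^\varepsilon$ above is valid in $L^2(\Omega_\varepsilon)^3$ — this is exactly the role of the hypothesis $f^\varepsilon(t)\in\mathcal{R}_g^\perp$ and is already settled in \cite{Miu_NSCTD_02}, so I would simply cite it.
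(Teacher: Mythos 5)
Your proposal follows the paper's route for the existence statement, \eqref{E:Est_Ueps} and \eqref{E:Est_UeUe}: specialize Theorem \ref{T:UE} to $\beta=1$ and feed the resulting bounds into the squared form of \eqref{E:Est_UU}. One slip there: the chain ``each term is bounded by $c\varepsilon^{\alpha}(1+t)\le c\varepsilon(1+t)$'' is backwards, since for $\varepsilon\le1$ and $\alpha\le1$ one has $\varepsilon\le\varepsilon^{\alpha}$, not the reverse. The estimate still closes, but only because the prefactors $\varepsilon^{-1}$ and $\varepsilon$ from \eqref{E:Est_UU} must be carried along: the three integrated terms come out as $\varepsilon(1+t)$, $\varepsilon^{1+\alpha}(1+t)$ and $\varepsilon^{1+\alpha/2}(1+t)$, and one concludes with $\varepsilon^{\alpha},\varepsilon^{\alpha/2}\le1$, which is exactly how the paper argues.

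For \eqref{E:Est_DtUe} your route differs from the paper's and, as written, has a real gap in the trilinear term. Squaring \eqref{E:Est_UGU} produces $\varepsilon^{-1}\|u^\varepsilon\|_{L^2}\|u^\varepsilon\|_{H^1}^2\|u^\varepsilon\|_{H^2}$, and naively inserting the pointwise bounds $\|u^\varepsilon\|_{L^2}\le c\varepsilon^{1/2}$, $\|u^\varepsilon\|_{H^1}^2\le c\varepsilon^{-1+\alpha}$ and then integrating $\|u^\varepsilon\|_{H^2}$ by Cauchy--Schwarz yields only $c\varepsilon^{-2+3\alpha/2}(1+t)$, which exceeds $c\varepsilon^{-1+\alpha}(1+t)$ for every $\alpha\le1$. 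The paper avoids this by first combining \eqref{E:Est_UGU} with the interpolation inequality \eqref{E:St_Inter} to get $\|(u\cdot\nabla)u\|_{L^2(\Omega_\varepsilon)}\le c(\varepsilon^{-1/2}\|u\|_{L^2(\Omega_\varepsilon)}+\varepsilon^{1/2}\|u\|_{H^1(\Omega_\varepsilon)})\|u\|_{H^2(\Omega_\varepsilon)}$, after which the bound $c\varepsilon^{-1+\alpha}(1+t)$ follows; alternatively one can keep your form but estimate $\int_0^t\|u^\varepsilon\|_{H^1}^4\,ds\le c\varepsilon^{\alpha}(1+t)$ and split by Cauchy--Schwarz. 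Separately, your bound on $\|A_\varepsilon u^\varepsilon\|_{L^2(\Omega_\varepsilon)}$ by $c\|u^\varepsilon\|_{H^2(\Omega_\varepsilon)}$ requires the $k=2$ case of the uniform norm equivalence \eqref{E:NoEq_In} from Part I, which is legitimate but is not restated in this paper; the paper sidesteps it by testing \eqref{E:NS_Abst} with $\partial_tu^\varepsilon$ and absorbing $(A_\varepsilon u^\varepsilon,\partial_tu^\varepsilon)_{L^2(\Omega_\varepsilon)}=\tfrac12\tfrac{d}{dt}\|A_\varepsilon^{1/2}u^\varepsilon\|_{L^2(\Omega_\varepsilon)}^2$, so that only the $k=1$ equivalence \eqref{E:Stokes_H1} is needed to control the initial data.
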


\begin{proof}
  The existence of a global strong solution $u^\varepsilon$ to \eqref{E:NS_CTD} is due to Theorem \ref{T:UE}.
  Also, the estimates \eqref{E:Est_Ueps} follow from \eqref{E:UE_L2} and \eqref{E:UE_H1}, since $\beta=1$ and $\varepsilon\leq\varepsilon^\alpha\leq1$ by $\alpha\in(0,1]$ and $\varepsilon\in(0,1]$.

  Let us show \eqref{E:Est_UeUe}.
  We suppress the argument $s$ in time integrals.
  Since $u^\varepsilon(t)\in D(A_\varepsilon)$ satisfies the conditions of Lemma \ref{L:Est_UU} for a.a. $t\in(0,\infty)$,
  \begin{multline} \label{Pf_EsSt:UeUe}
    \int_0^t\|u^\varepsilon\otimes u^\varepsilon\|_{L^2(\Omega_\varepsilon)}^2\,ds \leq c\left(\varepsilon^{-1}\int_0^t\|u^\varepsilon\|_{L^2(\Omega_\varepsilon)}^2\|u\|_{H^1(\Omega_\varepsilon)}^2\,ds\right.\\
    \left.+\varepsilon\int_0^t\|u^\varepsilon\|_{L^2(\Omega_\varepsilon)}^2\|u^\varepsilon\|_{H^2(\Omega_\varepsilon)}^2\,ds+\int_0^t\|u^\varepsilon\|_{L^2(\Omega_\varepsilon)}^3\|u^\varepsilon\|_{H^2(\Omega_\varepsilon)}\,ds\right)
  \end{multline}
  for all $t\geq 0$ by \eqref{E:Est_UU}.
  Moreover, we deduce from \eqref{E:Est_Ueps} that
  \begin{align}
    \int_0^t\|u^\varepsilon\|_{L^2(\Omega_\varepsilon)}^2\|u^\varepsilon\|_{H^1(\Omega_\varepsilon)}^2\,ds &\leq c\varepsilon^2(1+t), \label{Pf_EsSt:L2H1} \\
    \int_0^t\|u^\varepsilon\|_{L^2(\Omega_\varepsilon)}^2\|u^\varepsilon\|_{H^2(\Omega_\varepsilon)}^2\,ds &\leq c\varepsilon^\alpha(1+t). \label{Pf_EsSt:L2H2}
  \end{align}
  Also, H\"{o}lder's inequality and \eqref{E:Est_Ueps} imply that
  \begin{align} \label{Pf_EsSt:L2H2_31}
    \begin{aligned}
      \int_0^t\|u^\varepsilon\|^3_{L^2(\Omega_\varepsilon)}\|u^\varepsilon\|_{H^2(\Omega_\varepsilon)}\,ds &\leq c\varepsilon^{3/2}t^{1/2}\left(\int_0^t\|u^\varepsilon\|_{H^2(\Omega_\varepsilon)}^2\,ds\right)^{1/2} \\
      &\leq c\varepsilon^{1+\alpha/2}(1+t).
    \end{aligned}
  \end{align}
  Applying \eqref{Pf_EsSt:L2H1}--\eqref{Pf_EsSt:L2H2_31} to \eqref{Pf_EsSt:UeUe} and noting that $\varepsilon^\alpha,\varepsilon^{\alpha/2}\leq 1$ we obtain \eqref{E:Est_UeUe}.

  To derive \eqref{E:Est_DtUe} we observe by \eqref{E:St_Inter} and \eqref{E:Est_UGU} that
  \begin{align*}
    \|(u\cdot\nabla)u\|_{L^2(\Omega_\varepsilon)} \leq c\left(\varepsilon^{-1/2}\|u\|_{L^2(\Omega_\varepsilon)}+\varepsilon^{1/2}\|u\|_{H^1(\Omega_\varepsilon)}\right)\|u\|_{H^2(\Omega_\varepsilon)}
  \end{align*}
  for all $u\in D(A_\varepsilon)$ and thus
  \begin{align*}
    \int_0^t\|(u^\varepsilon\cdot\nabla)u^\varepsilon\|_{L^2(\Omega_\varepsilon)}^2\,ds \leq c\int_0^t\left(\varepsilon^{-1}\|u^\varepsilon\|_{L^2(\Omega_\varepsilon)}^2+\varepsilon\|u^\varepsilon\|_{H^1(\Omega_\varepsilon)}^2\right)\|u^\varepsilon\|_{H^2(\Omega_\varepsilon)}^2\,ds
  \end{align*}
  for all $t\geq0$.
  Moreover, by \eqref{E:Est_Ueps} we have
  \begin{align*}
    \int_0^t\|u^\varepsilon\|_{H^1(\Omega_\varepsilon)}^2\|u^\varepsilon\|_{H^2(\Omega_\varepsilon)}^2\,ds \leq c\varepsilon^{-2+2\alpha}(1+t).
  \end{align*}
  From the above two estimates and \eqref{Pf_EsSt:L2H2} it follows that
  \begin{align} \label{Pf_EsSt:UeGUe}
    \int_0^t\|(u^\varepsilon\cdot\nabla)u^\varepsilon\|_{L^2(\Omega_\varepsilon)}^2\,ds \leq c\varepsilon^{-1+\alpha}(1+t).
  \end{align}
  Now we take the $L^2(\Omega_\varepsilon)$-inner product of $\partial_tu^\varepsilon$ with
  \begin{align} \label{Pf_EsSt:Ab_Eq}
    \partial_tu^\varepsilon+A_\varepsilon u^\varepsilon+\mathbb{P}_\varepsilon[(u^\varepsilon\cdot\nabla)u^\varepsilon] = \mathbb{P}_\varepsilon f^\varepsilon \quad\text{on}\quad (0,\infty)
  \end{align}
  and use \eqref{E:L2in_Ahalf} and Young's inequality to deduce that
  \begin{align*}
    \|\partial_tu^\varepsilon\|_{L^2(\Omega_\varepsilon)}^2+\frac{d}{dt}\|A_\varepsilon^{1/2}u^\varepsilon\|_{L^2(\Omega_\varepsilon)}^2 \leq c\left(\|(u^\varepsilon\cdot\nabla)u^\varepsilon\|_{L^2(\Omega_\varepsilon)}^2+\|\mathbb{P}_\varepsilon f^\varepsilon\|_{L^2(\Omega_\varepsilon)}^2\right)
  \end{align*}
  on $(0,\infty)$.
  Hence for all $t\geq0$ we have
  \begin{multline*}
    \int_0^t\|\partial_tu^\varepsilon\|_{L^2(\Omega_\varepsilon)}^2\,ds+\|A_\varepsilon^{1/2}u^\varepsilon(t)\|_{L^2(\Omega_\varepsilon)}^2 \\
    \leq \|A_\varepsilon^{1/2}u_0^\varepsilon\|_{L^2(\Omega_\varepsilon)}^2+c\int_0^t\left(\|(u^\varepsilon\cdot\nabla)u^\varepsilon\|_{L^2(\Omega_\varepsilon)}^2+\|\mathbb{P}_\varepsilon f^\varepsilon\|_{L^2(\Omega_\varepsilon)}^2\right)ds
  \end{multline*}
  and to the right-hand side we apply \eqref{E:UE_Data}, \eqref{E:Stokes_H1}, and \eqref{Pf_EsSt:UeGUe} to get \eqref{E:Est_DtUe}.
\end{proof}

\subsection{Average of a weak formulation of the bulk equations} \label{SS:SL_Ave}
The aim of this subsection is to derive a weak formulation for the averaged tangential component of a strong solution to \eqref{E:NS_CTD}.

Let $\varepsilon_1$ and $\varepsilon_\sigma$ be the constants given in Theorem \ref{T:UE} with $\alpha\in(0,1]$ and $\beta=1$ and in Lemma \ref{L:Uni_Poin_Dom}.
In what follows, we suppose that
\begin{align*}
  0 < \varepsilon \leq \min\{\varepsilon_1,\varepsilon_\sigma\}
\end{align*}
and the conditions of Lemma \ref{L:Est_Ueps} are satisfied, and denote by $u^\varepsilon$ the global strong solution to \eqref{E:NS_CTD} given in Lemma \ref{L:Est_Ueps}.

Our starting point is a weak formulation of \eqref{E:NS_CTD} satisfied by the strong solution $u^\varepsilon$.
Let $a_\varepsilon$ be the bilinear form given by \eqref{E:Def_Bi_Dom}, i.e.
\begin{align*}
  a_\varepsilon(u_1,u_2) = 2\nu\bigl(D(u_1),D(u_2)\bigr)_{L^2(\Omega_\varepsilon)}+\gamma_\varepsilon^0(u_1,u_2)_{L^2(\Gamma_\varepsilon^0)}+\gamma_\varepsilon^1(u_1,u_2)_{L^2(\Gamma_\varepsilon^1)}
\end{align*}
for $u_1,u_2\in H^1(\Omega_\varepsilon)^3$ and $b_\varepsilon$ is a trilinear form defined by
\begin{align} \label{E:Def_Tri_Dom}
  b_\varepsilon(u_1,u_2,u_3) := -(u_1\otimes u_2,\nabla u_3)_{L^2(\Omega_\varepsilon)}
\end{align}
for $u_1,u_2,u_3\in H^1(\Omega_\varepsilon)^3$.
Note that $u_1\otimes u_2\in L^2(\Omega_\varepsilon)^{3\times3}$ for $u_1,u_2\in H^1(\Omega_\varepsilon)^3$ by the Sobolev embedding $H^1(\Omega_\varepsilon)\hookrightarrow L^4(\Omega_\varepsilon)$ (see \cite{AdFo03}).

\begin{lemma} \label{L:NS_Weak}
  Let $u^\varepsilon$ be as in Lemma \ref{L:Est_Ueps}.
  Then
  \begin{align} \label{E:NS_Weak}
    \int_0^T\{(\partial_tu^\varepsilon,\varphi)_{L^2(\Omega_\varepsilon)}+a_\varepsilon(u^\varepsilon,\varphi)+b_\varepsilon(u^\varepsilon,u^\varepsilon,\varphi)\}\,dt = \int_0^T(\mathbb{P}_\varepsilon f^\varepsilon,\varphi)_{L^2(\Omega_\varepsilon)}\,dt
  \end{align}
  for all $T>0$ and $\varphi\in L^2(0,T;\mathcal{V}_\varepsilon)$.
\end{lemma}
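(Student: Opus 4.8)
The plan is to obtain \eqref{E:NS_Weak} by testing the abstract evolution equation \eqref{E:NS_Abst} (equivalently \eqref{Pf_EsSt:Ab_Eq}) with $\varphi\in L^2(0,T;\mathcal{V}_\varepsilon)$ and rewriting each of the three resulting terms in their concrete forms. Recall from Lemma~\ref{L:Est_Ueps} that
\begin{align*}
  u^\varepsilon \in C([0,\infty);\mathcal{V}_\varepsilon)\cap L_{loc}^2([0,\infty);D(A_\varepsilon))\cap H_{loc}^1([0,\infty);\mathcal{H}_\varepsilon),
\end{align*}
so for a.a.\ $t\in(0,T)$ we have $u^\varepsilon(t)\in D(A_\varepsilon)$, $\partial_tu^\varepsilon(t)\in\mathcal{H}_\varepsilon\subset L^2(\Omega_\varepsilon)^3$, and \eqref{Pf_EsSt:Ab_Eq} holds in $\mathcal{H}_\varepsilon$. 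First I would take the $L^2(\Omega_\varepsilon)$-inner product of \eqref{Pf_EsSt:Ab_Eq} with a fixed $\varphi(t)\in\mathcal{V}_\varepsilon$, integrate over $(0,T)$, and treat the terms one at a time.

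\textbf{The three terms.} For the time-derivative term, since $\partial_tu^\varepsilon(t)\in\mathcal{H}_\varepsilon$ and $\varphi(t)\in\mathcal{V}_\varepsilon\subset\mathcal{H}_\varepsilon$, the $\mathcal{H}_\varepsilon$-pairing is just the $L^2(\Omega_\varepsilon)$-inner product, giving $(\partial_tu^\varepsilon,\varphi)_{L^2(\Omega_\varepsilon)}$ directly. For the Stokes term, I would use \eqref{E:L2in_Ahalf}: for $u^\varepsilon(t)\in D(A_\varepsilon)$ and $\varphi(t)\in\mathcal{V}_\varepsilon$ one has $(A_\varepsilon u^\varepsilon,\varphi)_{L^2(\Omega_\varepsilon)}=a_\varepsilon(u^\varepsilon,\varphi)$, which is exactly the bilinear form \eqref{E:Def_Bi_Dom} appearing in \eqref{E:NS_Weak}. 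For the convection term, I would first note $\varphi(t)\in\mathcal{H}_\varepsilon$ so $\mathbb{P}_\varepsilon$ can be dropped against the inner product: $(\mathbb{P}_\varepsilon[(u^\varepsilon\cdot\nabla)u^\varepsilon],\varphi)_{L^2(\Omega_\varepsilon)}=((u^\varepsilon\cdot\nabla)u^\varepsilon,\varphi)_{L^2(\Omega_\varepsilon)}$, and similarly $(\mathbb{P}_\varepsilon f^\varepsilon,\varphi)_{L^2(\Omega_\varepsilon)}$ stays as written. Then I would rewrite $((u^\varepsilon\cdot\nabla)u^\varepsilon,\varphi)_{L^2(\Omega_\varepsilon)}$ as $b_\varepsilon(u^\varepsilon,u^\varepsilon,\varphi)=-(u^\varepsilon\otimes u^\varepsilon,\nabla\varphi)_{L^2(\Omega_\varepsilon)}$ via integration by parts, using $\mathrm{div}\,u^\varepsilon=0$ in $\Omega_\varepsilon$ and $u^\varepsilon\cdot n_\varepsilon=0$ on $\Gamma_\varepsilon$ (both valid since $u^\varepsilon(t)\in\mathcal{V}_\varepsilon\subset L^2_\sigma(\Omega_\varepsilon)$); the boundary term vanishes because $(u^\varepsilon\otimes u^\varepsilon)n_\varepsilon\cdot\varphi=(u^\varepsilon\cdot n_\varepsilon)(u^\varepsilon\cdot\varphi)=0$ on $\Gamma_\varepsilon$. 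One should check the regularity needed for this integration by parts: $u^\varepsilon(t)\in H^2(\Omega_\varepsilon)^3$ and $\varphi(t)\in H^1(\Omega_\varepsilon)^3$ suffice, with $u^\varepsilon\otimes u^\varepsilon\in W^{1,1}(\Omega_\varepsilon)$ (using $H^2\hookrightarrow L^\infty$ and $H^1\hookrightarrow L^2$), and the trace of $u^\varepsilon$ on $\Gamma_\varepsilon$ makes sense.

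\textbf{Assembling and integrability.} After rewriting, integrating the pointwise identity over $(0,T)$ yields \eqref{E:NS_Weak}, provided each integrand is in $L^1(0,T)$. I would justify this with the bounds from Lemma~\ref{L:Est_Ueps}: $\partial_tu^\varepsilon\in L^2(0,T;L^2(\Omega_\varepsilon))$ and $\varphi\in L^2(0,T;L^2(\Omega_\varepsilon))$ handle the first term by Cauchy--Schwarz; $u^\varepsilon\in L^2(0,T;H^2(\Omega_\varepsilon))\subset L^2(0,T;H^1(\Omega_\varepsilon))$ with the trace bound \eqref{E:Poin_Bo} and $\varphi\in L^2(0,T;H^1(\Omega_\varepsilon))$ handle $a_\varepsilon(u^\varepsilon,\varphi)$ (the boundary terms $\gamma_\varepsilon^i(u^\varepsilon,\varphi)_{L^2(\Gamma_\varepsilon^i)}$ via the trace inequality); and \eqref{E:Est_UeUe} gives $u^\varepsilon\otimes u^\varepsilon\in L^2(0,T;L^2(\Omega_\varepsilon))$, so $b_\varepsilon(u^\varepsilon,u^\varepsilon,\varphi)$ is integrable against $\nabla\varphi\in L^2(0,T;L^2(\Omega_\varepsilon))$.

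\textbf{Main obstacle.} There is no deep difficulty here; the statement is essentially the passage from the strong-solution identity to a weak formulation against $\mathcal{V}_\varepsilon$-test functions, and all the analytic input (regularity class of $u^\varepsilon$, the product estimate \eqref{E:Est_UeUe}, the characterization $(A_\varepsilon u,\varphi)_{L^2}=a_\varepsilon(u,\varphi)$, and $D(A_\varepsilon)\subset H^2(\Omega_\varepsilon)^3$) is already available. The only point demanding genuine care is the integration by parts converting $((u^\varepsilon\cdot\nabla)u^\varepsilon,\varphi)_{L^2(\Omega_\varepsilon)}$ into $b_\varepsilon(u^\varepsilon,u^\varepsilon,\varphi)$: one must verify the boundary term vanishes using $u^\varepsilon\cdot n_\varepsilon=0$ (rather than $\varphi\cdot n_\varepsilon=0$, which also holds and gives an alternative route) and confirm that the regularity $u^\varepsilon(t)\in H^2(\Omega_\varepsilon)^3$, $\varphi(t)\in H^1(\Omega_\varepsilon)^3$ is enough to legitimize it for a.a.\ $t$. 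Once that is in place, \eqref{E:NS_Weak} follows immediately.
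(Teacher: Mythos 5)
Your proof is correct and follows essentially the same route as the paper: test the abstract evolution equation \eqref{Pf_EsSt:Ab_Eq} with $\varphi(t)\in\mathcal{V}_\varepsilon$, use \eqref{E:L2in_Ahalf} for the Stokes term, drop $\mathbb{P}_\varepsilon$ against $\varphi\in\mathcal{H}_\varepsilon$, convert the convection term to $b_\varepsilon$ by integration by parts using $\mathrm{div}\,u^\varepsilon=0$ and $u^\varepsilon\cdot n_\varepsilon=0$, and integrate over $(0,T)$. The extra attention you give to the time-integrability of each term and to the regularity needed for the integration by parts is sound and only makes the argument more complete than the paper's terse version.
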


\begin{proof}
  We take the $L^2(\Omega_\varepsilon)$-inner product of $\varphi$ with \eqref{Pf_EsSt:Ab_Eq}, apply \eqref{E:L2in_Ahalf} and
  \begin{align*}
    (\mathbb{P}_\varepsilon[(u^\varepsilon\cdot\nabla)u^\varepsilon],\varphi)_{L^2(\Omega_\varepsilon)} = \bigl((u^\varepsilon\cdot\nabla)u^\varepsilon,\varphi\bigr)_{L^2(\Omega_\varepsilon)} = b_\varepsilon(u^\varepsilon,u^\varepsilon,\varphi)
  \end{align*}
  by $\varphi\in\mathcal{H}_\varepsilon$, integration by parts, $\mathrm{div}\,u^\varepsilon=0$ in $\Omega_\varepsilon$, and $u^\varepsilon\cdot n_\varepsilon=0$ on $\Gamma_\varepsilon$, and then integrate the resulting equality over $(0,T)$ to obtain \eqref{E:NS_Weak}.
\end{proof}

We transform \eqref{E:NS_Weak} into a weak formulation for $M_\tau u^\varepsilon$ in which we take a test function from the weighted solenoidal space $\mathcal{V}_g$.
To this end, we substitute an element of $\mathcal{V}_g$ for \eqref{E:NS_Weak} and apply the results of Section \ref{S:Ave}.
Since an element of $\mathcal{V}_g$ is defined on $\Gamma$, we need to extend it to $\Omega_\varepsilon$ appropriately in order to substitute it for \eqref{E:NS_Weak}.
For this purpose, we use the impermeable extension operator $E_\varepsilon$ given by \eqref{E:Def_ExImp} and the Helmholtz--Leray projection $\mathbb{L}_\varepsilon$ from $L^2(\Omega_\varepsilon)^3$ onto $L_\sigma^2(\Omega_\varepsilon)$.

\begin{lemma} \label{L:Const_Test}
  Let $\eta\in \mathcal{V}_g$.
  Then
  \begin{align*}
    \eta_\varepsilon := \mathbb{L}_\varepsilon E_\varepsilon\eta \in L_\sigma^2(\Omega_\varepsilon)\cap H^1(\Omega_\varepsilon)^3
  \end{align*}
  and there exists a constant $c>0$ independent of $\varepsilon$ and $\eta$ such that
  \begin{gather}
    \|\eta_\varepsilon-\bar{\eta}\|_{L^2(\Omega_\varepsilon)}+\left\|\nabla\eta_\varepsilon-\overline{F(\eta)}\right\|_{L^2(\Omega_\varepsilon)} \leq c\varepsilon^{3/2}\|\eta\|_{H^1(\Gamma)}, \label{E:Test_Dom} \\
    \|\eta_\varepsilon-\bar{\eta}\|_{L^2(\Gamma_\varepsilon)} \leq c\varepsilon\|\eta\|_{H^1(\Gamma)}, \label{E:Test_Bo}
  \end{gather}
  where $F(\eta)$ is the $3\times3$ matrix-valued function on $\Gamma$ given by \eqref{E:Def_Fv}, i.e.
  \begin{align*}
    F(\eta) = \nabla_\Gamma\eta+\frac{1}{g}(\eta\cdot\nabla_\Gamma g)Q \quad\text{on}\quad \Gamma.
  \end{align*}
\end{lemma}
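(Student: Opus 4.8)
The plan is to combine the impermeable extension estimates from Section~\ref{SS:Fund_IE} with the uniform Helmholtz--Leray estimate on $\Omega_\varepsilon$ from Lemma~\ref{L:HP_Dom}. First I would record that $E_\varepsilon\eta\in H^1(\Omega_\varepsilon)^3$ with $E_\varepsilon\eta\cdot n_\varepsilon=0$ on $\Gamma_\varepsilon$ by Lemmas~\ref{L:ExImp_Bo} and~\ref{L:ExImp_Wmp}, so $\eta_\varepsilon=\mathbb{L}_\varepsilon E_\varepsilon\eta\in L_\sigma^2(\Omega_\varepsilon)\cap H^1(\Omega_\varepsilon)^3$ by the regularity of the Helmholtz--Leray projection on $H^1$ recalled in Section~\ref{SS:Fund_HL}. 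Writing $\eta_\varepsilon - E_\varepsilon\eta = -(E_\varepsilon\eta - \mathbb{L}_\varepsilon E_\varepsilon\eta)$, Lemma~\ref{L:HP_Dom} applied to $u = E_\varepsilon\eta$ (which satisfies \eqref{E:Bo_Imp}) gives
\begin{align*}
  \|\eta_\varepsilon - E_\varepsilon\eta\|_{H^1(\Omega_\varepsilon)} \leq c\|\mathrm{div}(E_\varepsilon\eta)\|_{L^2(\Omega_\varepsilon)}.
\end{align*}
Since $\eta\in\mathcal{V}_g$ satisfies $\mathrm{div}_\Gamma(g\eta)=0$ on $\Gamma$, Lemma~\ref{L:ExImp_LpDiv}, specifically \eqref{E:ExImp_LpSol} with $p=2$, yields $\|\mathrm{div}(E_\varepsilon\eta)\|_{L^2(\Omega_\varepsilon)}\leq c\varepsilon^{3/2}\|\eta\|_{H^1(\Gamma)}$, hence $\|\eta_\varepsilon-E_\varepsilon\eta\|_{H^1(\Omega_\varepsilon)}\leq c\varepsilon^{3/2}\|\eta\|_{H^1(\Gamma)}$.

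Next I would obtain the bulk estimate \eqref{E:Test_Dom} by the triangle inequality through $E_\varepsilon\eta$:
\begin{align*}
  \|\eta_\varepsilon-\bar\eta\|_{L^2(\Omega_\varepsilon)} &\leq \|\eta_\varepsilon-E_\varepsilon\eta\|_{L^2(\Omega_\varepsilon)} + \|E_\varepsilon\eta-\bar\eta\|_{L^2(\Omega_\varepsilon)}, \\
  \left\|\nabla\eta_\varepsilon-\overline{F(\eta)}\right\|_{L^2(\Omega_\varepsilon)} &\leq \|\nabla\eta_\varepsilon-\nabla E_\varepsilon\eta\|_{L^2(\Omega_\varepsilon)} + \left\|\nabla E_\varepsilon\eta-\overline{F(\eta)}\right\|_{L^2(\Omega_\varepsilon)}.
\end{align*}
The first terms on both right-hand sides are $\leq c\varepsilon^{3/2}\|\eta\|_{H^1(\Gamma)}$ by the bound just established; the second terms are $\leq c\varepsilon^{3/2}\|\eta\|_{H^1(\Gamma)}$ (equivalently $\leq c\varepsilon^{1+1/2}\|\eta\|_{W^{1,2}(\Gamma)}$) by Lemmas~\ref{L:ExImp_DiLp} and~\ref{L:ExImp_DiGr} with $p=2$, using $\|\eta\|_{W^{1,2}(\Gamma)}=\|\eta\|_{H^1(\Gamma)}$. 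Adding gives \eqref{E:Test_Dom}.

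For the boundary estimate \eqref{E:Test_Bo} I would again split $\eta_\varepsilon-\bar\eta = (\eta_\varepsilon - E_\varepsilon\eta) + (E_\varepsilon\eta - \bar\eta)$ and estimate each term on $\Gamma_\varepsilon=\Gamma_\varepsilon^0\cup\Gamma_\varepsilon^1$ via the trace inequality \eqref{E:Poin_Bo} with $p=2$: for $\varphi\in H^1(\Omega_\varepsilon)$ one has $\|\varphi\|_{L^2(\Gamma_\varepsilon^i)}\leq c(\varepsilon^{-1/2}\|\varphi\|_{L^2(\Omega_\varepsilon)}+\varepsilon^{1/2}\|\partial_n\varphi\|_{L^2(\Omega_\varepsilon)})\leq c\varepsilon^{-1/2}\|\varphi\|_{H^1(\Omega_\varepsilon)}$. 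Applying this to $\varphi=\eta_\varepsilon-E_\varepsilon\eta$ and using $\|\eta_\varepsilon-E_\varepsilon\eta\|_{H^1(\Omega_\varepsilon)}\leq c\varepsilon^{3/2}\|\eta\|_{H^1(\Gamma)}$ gives a contribution $\leq c\varepsilon\|\eta\|_{H^1(\Gamma)}$. For $E_\varepsilon\eta-\bar\eta = (\bar\eta\cdot\Psi_\varepsilon)\bar n$, I recall from the proof of Lemma~\ref{L:ExImp_DiLp} that $|\Psi_\varepsilon|\leq c\varepsilon$ in $\Omega_\varepsilon$, so $|E_\varepsilon\eta-\bar\eta|\leq c\varepsilon|\bar\eta|$ pointwise; since $\bar\eta$ is the constant extension, $\bar\eta|_{\Gamma_\varepsilon^i} = \eta_i^\sharp$ in the notation of Lemma~\ref{L:CoV_Surf}, and \eqref{E:Lp_CoV_Surf} gives $\|\bar\eta\|_{L^2(\Gamma_\varepsilon^i)}\leq c\|\eta\|_{L^2(\Gamma)}$, hence this contribution is $\leq c\varepsilon\|\eta\|_{L^2(\Gamma)}$. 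Summing the two gives \eqref{E:Test_Bo}. The main point requiring care — rather than a genuine obstacle — is that the impermeable boundary condition $E_\varepsilon\eta\cdot n_\varepsilon=0$ on $\Gamma_\varepsilon$ is exactly what makes Lemma~\ref{L:HP_Dom} applicable; without it the Helmholtz--Leray correction would not be controlled by $\mathrm{div}(E_\varepsilon\eta)$ alone, so this hypothesis is used in an essential way.
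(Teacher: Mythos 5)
Your proof is correct and follows essentially the same route as the paper: the regularity of $\eta_\varepsilon$ and the bulk estimate \eqref{E:Test_Dom} are obtained exactly as in the paper, via Lemmas \ref{L:ExImp_Bo}, \ref{L:ExImp_Wmp}, \ref{L:ExImp_DiLp}, \ref{L:ExImp_DiGr}, the uniform Helmholtz--Leray bound \eqref{E:HP_Dom}, and \eqref{E:ExImp_LpSol}. For the boundary estimate \eqref{E:Test_Bo} the paper applies \eqref{E:Poin_Bo} directly to $\eta_\varepsilon-\bar{\eta}$ (using $\partial_n\bar{\eta}=0$ and $\|\eta_\varepsilon\|_{H^1(\Omega_\varepsilon)}\leq c\varepsilon^{1/2}\|\eta\|_{H^1(\Gamma)}$), whereas you split off $E_\varepsilon\eta-\bar{\eta}$ and bound it pointwise on $\Gamma_\varepsilon$; this is a harmless variation and both arguments give the stated bound.
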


Note that under the condition (A3) of Assumption \ref{Assump_2} we may have
\begin{align*}
  \mathcal{V}_\varepsilon \neq L_\sigma^2(\Omega_\varepsilon)\cap H^1(\Omega_\varepsilon)^3 \quad (\mathcal{V}_\varepsilon \subset L_\sigma^2(\Omega_\varepsilon)\cap H^1(\Omega_\varepsilon)^3)
\end{align*}
by \eqref{E:Def_Heps}.
In this case we cannot take the above $\eta_\varepsilon$ as a test function in \eqref{E:NS_Weak}.
We deal with this problem at the first step of derivation of a weak formulation for $M_\tau u^\varepsilon$ (see Lemma \ref{L:Mu_Weak} below).

\begin{proof}
  By $\eta\in H^1(\Gamma,T\Gamma)$ and Lemma \ref{L:ExImp_Wmp} we have (see also Section \ref{SS:Fund_HL})
  \begin{align*}
    E_\varepsilon\eta \in H^1(\Omega_\varepsilon)^3, \quad \eta_\varepsilon = \mathbb{L}_\varepsilon E_\varepsilon\eta \in L_\sigma^2(\Omega_\varepsilon)\cap H^1(\Omega_\varepsilon)^3.
  \end{align*}
  Let us show \eqref{E:Test_Dom}.
  By \eqref{E:ExImp_DiLp} and \eqref{E:ExImp_DiGr} we have
  \begin{align} \label{Pf_CTe:Eeta}
    \|E_\varepsilon\eta-\bar{\eta}\|_{L^2(\Omega_\varepsilon)}+\left\|\nabla E_\varepsilon\eta-\overline{F(\eta)}\right\|_{L^2(\Omega_\varepsilon)} \leq c\varepsilon^{3/2}\|\eta\|_{H^1(\Gamma)}.
  \end{align}
  Since $E_\varepsilon\eta\in H^1(\Omega_\varepsilon)^3$ satisfies \eqref{E:Bo_Imp} by Lemma \ref{L:ExImp_Bo}, we have
  \begin{align*}
    \|\eta_\varepsilon-E_\varepsilon\eta\|_{H^1(\Omega_\varepsilon)} \leq c\|\mathrm{div}(E_\varepsilon\eta)\|_{L^2(\Omega_\varepsilon)}
  \end{align*}
  by \eqref{E:HP_Dom} with $u=E_\varepsilon\eta$ and $\mathbb{L}_\varepsilon u=\eta_\varepsilon$.
  Noting that $\eta\in\mathcal{V}_g$ satisfies $\mathrm{div}_\Gamma(g\eta)=0$ on $\Gamma$, we further apply \eqref{E:ExImp_LpSol} to the right-hand side to get
  \begin{align} \label{Pf_CTe:HL}
    \|\eta_\varepsilon-E_\varepsilon\eta\|_{H^1(\Omega_\varepsilon)} \leq c\varepsilon^{3/2}\|\eta\|_{H^1(\Gamma)}.
  \end{align}
  Combining \eqref{Pf_CTe:Eeta} and \eqref{Pf_CTe:HL} we obtain \eqref{E:Test_Dom}.
  To prove \eqref{E:Test_Bo} we see that
  \begin{align*}
    \|\eta_\varepsilon-\bar{\eta}\|_{L^2(\Gamma_\varepsilon)} &\leq c\left(\varepsilon^{-1/2}\|\eta_\varepsilon-\bar{\eta}\|_{L^2(\Omega_\varepsilon)}+\varepsilon^{1/2}\|\partial_n\eta_\varepsilon-\partial_n\bar{\eta}\|_{L^2(\Omega_\varepsilon)}\right) \\
    &\leq c\left(\varepsilon\|\eta\|_{H^1(\Gamma)}+\varepsilon^{1/2}\|\eta_\varepsilon\|_{H^1(\Omega_\varepsilon)}\right)
  \end{align*}
  by \eqref{E:Poin_Bo}, \eqref{E:Test_Dom}, and $\partial_n\bar{\eta}=0$ in $\Omega_\varepsilon$.
  Moreover, by \eqref{E:ExImp_Wmp} and \eqref{Pf_CTe:HL} we get
  \begin{align*}
    \|\eta_\varepsilon\|_{H^1(\Omega_\varepsilon)} \leq \|E_\varepsilon\eta\|_{H^1(\Omega_\varepsilon)}+\|\eta_\varepsilon-E_\varepsilon\eta\|_{H^1(\Omega_\varepsilon)} \leq c\varepsilon^{1/2}\|\eta\|_{H^1(\Gamma)}
  \end{align*}
  and thus the estimate \eqref{E:Test_Bo} follows from the above two inequalities.
\end{proof}

Next we consider approximation of $a_\varepsilon$ and $b_\varepsilon$ by bilinear and trilinear forms for tangential vector fields on $\Gamma$.
Let $\gamma^0$ and $\gamma^1$ be nonnegative constants.
We define
\begin{multline} \label{E:Def_Bi_Surf}
  a_g(v_1,v_2) \\
  := 2\nu\left\{\bigl(gD_\Gamma(v_1),D_\Gamma(v_2)\bigr)_{L^2(\Gamma)}+\left(\frac{1}{g}(v_1\cdot\nabla_\Gamma g),v_2\cdot\nabla_\Gamma g\right)_{L^2(\Gamma)}\right\} \\
  +(\gamma^0+\gamma^1)(v_1,v_2)_{L^2(\Gamma)}
\end{multline}
for $v_1,v_2\in H^1(\Gamma,T\Gamma)$, where $D_\Gamma(v_1)$ is given by \eqref{E:Def_SSR}, and
\begin{align} \label{E:Def_Tri_Surf}
  b_g(v_1,v_2,v_3) := -(g(v_1\otimes v_2),\nabla_\Gamma v_3)_{L^2(\Gamma)}
\end{align}
for $v_1,v_2,v_3\in H^1(\Gamma,T\Gamma)$.
Note that for $v_1,v_2\in H^1(\Gamma)^3$ we have
\begin{align*}
  v_1, v_2 \in L^4(\Gamma)^3, \quad v_1\otimes v_2 \in L^2(\Gamma)^{3\times 3}
\end{align*}
by Ladyzhenskaya's inequality \eqref{E:La_Surf}.
Let us give their basic properties.

\begin{lemma} \label{L:Bi_Surf}
  There exists a constant $c>0$ such that
  \begin{align} \label{E:BiS_Bdd}
    |a_g(v_1,v_2)| \leq c\|v_1\|_{H^1(\Gamma)}\|v_2\|_{H^1(\Gamma)}
  \end{align}
  for all $v_1,v_2\in H^1(\Gamma,T\Gamma)$ and
  \begin{align} \label{E:Bi_Surf}
    \|\nabla_\Gamma v\|_{L^2(\Gamma)}^2 \leq c\left\{a_g(v,v)+\|v\|_{L^2(\Gamma)}^2\right\}
  \end{align}
  for all $v\in H^1(\Gamma,T\Gamma)$.
\end{lemma}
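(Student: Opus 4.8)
The plan is to establish the two inequalities separately; both reduce, via elementary estimates, to the boundedness of the coefficient functions $g$, $1/g$, $|\nabla_\Gamma g|$ on the compact surface $\Gamma$ (which holds since $g\in C^1(\Gamma)$ with $g\geq c>0$) together with Korn's inequality on $\Gamma$ from Lemma~\ref{L:Korn_STG}. No density argument is needed, since both Korn's inequality and all the $L^2$-pairings involved are already valid for arbitrary $v,v_1,v_2\in H^1(\Gamma,T\Gamma)$.

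For the continuity bound \eqref{E:BiS_Bdd}, I would first record that $|D_\Gamma(v)|=|P(\nabla_\Gamma v)_SP|\leq c|\nabla_\Gamma v|$ on $\Gamma$ (using $|P|=2$), so that $\|D_\Gamma(v)\|_{L^2(\Gamma)}\leq c\|\nabla_\Gamma v\|_{L^2(\Gamma)}$. Then each of the three terms in the definition \eqref{E:Def_Bi_Surf} of $a_g(v_1,v_2)$ is controlled by H\"{o}lder's inequality: the deformation term by $\|g\|_{L^\infty(\Gamma)}\|D_\Gamma(v_1)\|_{L^2(\Gamma)}\|D_\Gamma(v_2)\|_{L^2(\Gamma)}\leq c\|\nabla_\Gamma v_1\|_{L^2(\Gamma)}\|\nabla_\Gamma v_2\|_{L^2(\Gamma)}$, the middle term by $\|1/g\|_{L^\infty(\Gamma)}\|\nabla_\Gamma g\|_{L^\infty(\Gamma)}^2\|v_1\|_{L^2(\Gamma)}\|v_2\|_{L^2(\Gamma)}$, and the friction term by $(\gamma^0+\gamma^1)\|v_1\|_{L^2(\Gamma)}\|v_2\|_{L^2(\Gamma)}$. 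Summing and using the definition of the $H^1(\Gamma)$-norm gives \eqref{E:BiS_Bdd}.

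For \eqref{E:Bi_Surf}, I would observe that each of the three terms in $a_g(v,v)$ is nonnegative: the deformation term because $g>0$, the middle term because $1/g>0$, and the friction term because $\gamma^0,\gamma^1\geq0$. Hence, discarding the latter two and using the lower bound $g\geq c$ on $\Gamma$,
\[
2\nu c\,\|D_\Gamma(v)\|_{L^2(\Gamma)}^2 \leq 2\nu\int_\Gamma g\,|D_\Gamma(v)|^2\,d\mathcal{H}^2 \leq a_g(v,v),
\]
so that $\|D_\Gamma(v)\|_{L^2(\Gamma)}^2\leq c\,a_g(v,v)$. Substituting this into Korn's inequality \eqref{E:Korn_STG} yields \eqref{E:Bi_Surf}.

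I do not anticipate any real obstacle: the argument is routine once Lemma~\ref{L:Korn_STG} is available, and the only point requiring a moment's care is noting that the nonnegativity of the middle and friction terms permits dropping them when extracting $\|D_\Gamma(v)\|_{L^2(\Gamma)}^2$ from $a_g(v,v)$, and that $g$ being bounded both above and below (by compactness of $\Gamma$ and $g\geq c>0$) is exactly what makes both the upper and lower estimates go through.
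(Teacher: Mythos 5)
Your proof is correct and follows essentially the same route as the paper: the continuity bound \eqref{E:BiS_Bdd} is obtained directly from the boundedness of $g$, $1/g$, $\nabla_\Gamma g$ via H\"{o}lder's inequality, and \eqref{E:Bi_Surf} is obtained by combining Korn's inequality \eqref{E:Korn_STG} with the lower bound $g\geq c$ and the nonnegativity of the remaining terms of $a_g(v,v)$. Your write-up merely spells out the details that the paper leaves implicit.
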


\begin{proof}
  The inequality \eqref{E:BiS_Bdd} immediately follows from \eqref{E:G_Inf} and \eqref{E:Def_Bi_Surf}.
  Also,
  \begin{align*}
    \|\nabla_\Gamma v\|_{L^2(\Gamma)}^2 &\leq c\left(\|D_\Gamma(v)\|_{L^2(\Gamma)}^2+\|v\|_{L^2(\Gamma)}^2\right) \\
    &\leq c\left(2\nu\|g^{1/2}D_\Gamma(v)\|_{L^2(\Gamma)}^2+\|v\|_{L^2(\Gamma)}^2\right) \\
    &\leq c\left\{a_g(v,v)+\|v\|_{L^2(\Gamma)}^2\right\}
  \end{align*}
  by \eqref{E:G_Inf} and Korn's inequality \eqref{E:Korn_STG}.
  Thus \eqref{E:Bi_Surf} is valid.
\end{proof}

\begin{lemma} \label{L:Tri_Surf}
  There exists a constant $c>0$ such that
  \begin{align} \label{E:Tri_Surf}
    |b_g(v_1,v_2,v_3)| \leq c\|v_1\|_{L^2(\Gamma)}^{1/2}\|v_1\|_{H^1(\Gamma)}^{1/2}\|v_2\|_{L^2(\Gamma)}^{1/2}\|v_2\|_{H^1(\Gamma)}^{1/2}\|v_3\|_{H^1(\Gamma)}
  \end{align}
  for all $v_1,v_2,v_3\in H^1(\Gamma,T\Gamma)$.
  Moreover,
  \begin{align} \label{E:TriS_Vg}
    b_g(v_1,v_2,v_3) = -b_g(v_1,v_3,v_2), \quad b_g(v_1,v_2,v_2) = 0
  \end{align}
  for all $v_1\in \mathcal{V}_g$ and $v_2,v_3\in H^1(\Gamma,T\Gamma)$.
\end{lemma}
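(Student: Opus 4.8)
The plan is to prove the estimate \eqref{E:Tri_Surf} by the same argument used for the three-dimensional trilinear form, namely H\"older's inequality followed by Ladyzhenskaya's inequality \eqref{E:La_Surf}, and then to derive the algebraic identities \eqref{E:TriS_Vg} by integration by parts using the weighted solenoidal condition.

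\textbf{Step 1: the estimate.} First I would write, for $v_1,v_2,v_3\in H^1(\Gamma,T\Gamma)$,
\begin{align*}
  |b_g(v_1,v_2,v_3)| = \left|\int_\Gamma g(v_1\otimes v_2):\nabla_\Gamma v_3\,d\mathcal{H}^2\right| \leq \|g\|_{L^\infty(\Gamma)}\|v_1\|_{L^4(\Gamma)}\|v_2\|_{L^4(\Gamma)}\|\nabla_\Gamma v_3\|_{L^2(\Gamma)},
\end{align*}
where I used $|(v_1\otimes v_2):A|\le|v_1||v_2||A|$ pointwise and H\"older's inequality with exponents $4,4,2$; the function $g$ is bounded on $\Gamma$ by \eqref{E:G_Inf} and the $C^4$-regularity assumption. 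Then I would apply Ladyzhenskaya's inequality \eqref{E:La_Surf} componentwise to each of $v_1,v_2$, giving $\|v_i\|_{L^4(\Gamma)}\le c\|v_i\|_{L^2(\Gamma)}^{1/2}\|v_i\|_{H^1(\Gamma)}^{1/2}$ for $i=1,2$, and bound $\|\nabla_\Gamma v_3\|_{L^2(\Gamma)}\le\|v_3\|_{H^1(\Gamma)}$. Combining these yields \eqref{E:Tri_Surf}. (One small point to address: \eqref{E:La_Surf} is stated for scalar $\eta\in H^1(\Gamma)$, so I would note that applying it to each of the three Cartesian components of a vector field and summing gives the vector version, up to a dimensional constant.)

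\textbf{Step 2: the identities.} For $v_1\in\mathcal{V}_g$ and $v_2,v_3\in H^1(\Gamma,T\Gamma)$, the key is that $(v_1\cdot\nabla_\Gamma)(v_2\cdot v_3) = (v_1\cdot\nabla_\Gamma)v_2\cdot v_3 + v_2\cdot(v_1\cdot\nabla_\Gamma)v_3$ and that $g(v_1\otimes v_2):\nabla_\Gamma v_3 = g v_2\cdot\bigl[(\nabla_\Gamma v_3)^Tv_1\bigr] = g v_2\cdot\bigl[(v_1\cdot\nabla_\Gamma)v_3\bigr]$; since $v_2,v_3$ are tangential one has $v_2\cdot(v_1\cdot\nabla_\Gamma)v_3 = v_2\cdot\overline{\nabla}_{v_1}v_3$, and the Leibniz rule then gives $b_g(v_1,v_2,v_3)+b_g(v_1,v_3,v_2) = -\int_\Gamma g v_1\cdot\nabla_\Gamma(v_2\cdot v_3)\,d\mathcal{H}^2$. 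By Lemma \ref{L:Wmp_Tan_Appr} I may assume $v_1,v_2,v_3$ are smooth tangential fields by density (the bilinear estimate \eqref{E:Tri_Surf} and \eqref{E:BiS_Bdd}-type bounds make both sides continuous in the $H^1$-topology), so $v_2\cdot v_3\in C^1(\Gamma)$. Then I would integrate by parts: writing $\varphi:=v_2\cdot v_3$ and using \eqref{E:IbP_DivG} applied to $\varphi g v_1$ (which is tangential),
\begin{align*}
  \int_\Gamma g v_1\cdot\nabla_\Gamma\varphi\,d\mathcal{H}^2 = \int_\Gamma\mathrm{div}_\Gamma(\varphi g v_1)\,d\mathcal{H}^2 - \int_\Gamma\varphi\,\mathrm{div}_\Gamma(gv_1)\,d\mathcal{H}^2 = 0,
\end{align*}
where the first integral vanishes by the surface divergence theorem \eqref{E:SD_Thm} (or \eqref{E:IbP_WDivG_T}) since $\varphi g v_1$ is a tangential $C^1$ field, and the second vanishes because $v_1\in\mathcal{V}_g$ means $\mathrm{div}_\Gamma(gv_1)=0$. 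This gives the first identity of \eqref{E:TriS_Vg}, and setting $v_3=v_2$ gives $2b_g(v_1,v_2,v_2)=0$, hence the second.

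\textbf{Main obstacle.} There is no serious difficulty here; the only genuinely careful point is the bookkeeping in Step 2—making sure the matrix contraction $g(v_1\otimes v_2):\nabla_\Gamma v_3$ really produces $gv_2\cdot(v_1\cdot\nabla_\Gamma)v_3$ with the tangential fields in the right slots, and that the boundary term from integration by parts on the closed surface $\Gamma$ genuinely vanishes. I would double-check the contraction convention against \eqref{E:Def_Tri_Dom} and \eqref{E:Def_Tri_Surf} (the same convention must be used as in $b_\varepsilon$), and invoke the closedness of $\Gamma$ through \eqref{E:SD_Thm} rather than any boundary-trace argument. The density reduction is routine given \eqref{E:Tri_Surf}, which controls $b_g$ by the $H^1$-norms.
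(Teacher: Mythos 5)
Your proposal follows essentially the same route as the paper: H\"older plus Ladyzhenskaya's inequality \eqref{E:La_Surf} for the estimate, and density via Lemma \ref{L:Wmp_Tan_Appr} followed by the Leibniz rule and the surface divergence theorem \eqref{E:IbP_WDivG_T} for the identities. One small imprecision: you should not include $v_1$ in the density reduction, since a generic $C^1$ approximant of $v_1$ is no longer in $\mathcal{V}_g$ and the term $\int_\Gamma(v_2\cdot v_3)\,\mathrm{div}_\Gamma(gv_1)\,d\mathcal{H}^2$ would then fail to vanish; the paper approximates only $v_2$ and $v_3$, keeping $v_1\in\mathcal{V}_g$ fixed, which suffices because the integration by parts only needs $v_2\cdot v_3\in C^1(\Gamma)$ while $g(v_2\cdot v_3)v_1\in H^1(\Gamma,T\Gamma)$.
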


\begin{proof}
  By H\"{o}lder's inequality and Ladyzhenskaya's inequality \eqref{E:La_Surf} we have
  \begin{align*}
    |b_g(v_1,v_2,v_3)| &\leq c\|v_1\|_{L^4(\Gamma)}\|v_2\|_{L^4(\Gamma)}\|\nabla_\Gamma v_3\|_{L^2(\Gamma)} \\
    &\leq c\|v_1\|_{L^2(\Gamma)}^{1/2}\|v_1\|_{H^1(\Gamma)}^{1/2}\|v_2\|_{L^2(\Gamma)}^{1/2}\|v_2\|_{H^1(\Gamma)}^{1/2}\|v_3\|_{H^1(\Gamma)}.
  \end{align*}
  Thus \eqref{E:Tri_Surf} is valid and we further get
  \begin{align} \label{Pf_TrS:Bg_H1}
    |b_g(v_1,v_2,v_3)| \leq c\|v_1\|_{H^1(\Gamma)}\|v_2\|_{H^1(\Gamma)}\|v_3\|_{H^1(\Gamma)}.
  \end{align}
  Let us show \eqref{E:TriS_Vg} for $v_1\in\mathcal{V}_g$ and $v_2,v_3\in H^1(\Gamma,T\Gamma)$.
  By Lemma \ref{L:Wmp_Tan_Appr} we can take sequences $\{v_{2,k}\}_{k=1}^\infty$ and $\{v_{3,k}\}_{k=1}^\infty$ in $C^1(\Gamma,T\Gamma)$ such that
  \begin{align} \label{Pf_TrS:Conv}
    \lim_{k\to\infty}\|v_i-v_{i,k}\|_{H^1(\Gamma)} = 0, \quad i=2,3.
  \end{align}
  Then since
  \begin{multline*}
    |b_g(v_1,v_2,v_3)-b_g(v_1,v_{2,k},v_{3,k})| \\
    \leq c\|v_1\|_{H^1(\Gamma)}\left(\|v_2-v_{2,k}\|_{H^1(\Gamma)}\|v_3\|_{H^1(\Gamma)}+\|v_{2,k}\|_{H^1(\Gamma)}\|v_3-v_{3,k}\|_{H^1(\Gamma)}\right)
  \end{multline*}
  by \eqref{Pf_TrS:Bg_H1} and the right-hand side converges to zero as $k\to\infty$ by \eqref{Pf_TrS:Conv},
  \begin{align*}
    \lim_{k\to\infty}b_g(v_1,v_{2,k},v_{3,k}) = b_g(v_1,v_2,v_3).
  \end{align*}
  Thus it is sufficient to prove \eqref{E:TriS_Vg} for $v_2,v_3\in C^1(\Gamma,T\Gamma)$.
  For $a\in\mathbb{R}^3$ and $i=1,2,3$ we denote by $a^i$ the $i$-th component of $a$.
  Then
  \begin{align*}
    g(v_1\otimes v_2):\nabla_\Gamma v_3 &= \sum_{i,j=1}^3gv_1^iv_2^j\underline{D}_iv_3^j \\
    &= \sum_{i,j=1}^3\{\underline{D}_i(gv_1^iv_2^jv_3^j)-v_2^jv_3^j\underline{D}_i(gv_1^i)-gv_1^iv_3^j\underline{D}_iv_2^j\} \\
    &= \mathrm{div}_\Gamma[g(v_2\cdot v_3)v_1]-(v_2\cdot v_3)\mathrm{div}_\Gamma(gv_1)-g(v_1\otimes v_3):\nabla_\Gamma v_2
  \end{align*}
  on $\Gamma$ and $v_1\in\mathcal{V}_g$ satisfies $\mathrm{div}_\Gamma(gv_1)=0$ on $\Gamma$.
  Hence
  \begin{align*}
    b_g(v_1,v_2,v_3) = -\int_\Gamma\mathrm{div}_\Gamma[g(v_2\cdot v_3)v_1]\,d\mathcal{H}^2-b_g(v_1,v_3,v_2).
  \end{align*}
  Moreover, since $g(v_2\cdot v_3)v_1\in H^1(\Gamma,T\Gamma)$ by $v_1\in\mathcal{V}_g$ and the $C^1$-regularity of $g$, $v_2$, and $v_3$ on $\Gamma$ (in fact $g\in C^4(\Gamma)$), we observe by \eqref{E:IbP_WDivG_T} that
  \begin{align*}
    \int_\Gamma\mathrm{div}_\Gamma[g(v_2\cdot v_3)v_1]\,d\mathcal{H}^2 = 0.
  \end{align*}
  Thus the first equality of \eqref{E:TriS_Vg} is valid.
  Also, we get the second equality of \eqref{E:TriS_Vg} by setting $v_3=v_2$ in the first one.
\end{proof}

We approximate $a_\varepsilon$ and $b_\varepsilon$ by $a_g$ and $b_g$ by using the results of Section \ref{S:Ave}.
\begin{lemma} \label{L:Appr_Bili}
  Let $u\in H^2(\Omega_\varepsilon)^3$ satisfy the slip boundary conditions \eqref{E:Bo_Slip} and
  \begin{align*}
    \eta \in \mathcal{V}_g, \quad \eta_\varepsilon := \mathbb{L}_\varepsilon E_\varepsilon\eta \in L_\sigma^2(\Omega_\varepsilon)\cap H^1(\Omega_\varepsilon)^3.
  \end{align*}
  Then there exists a constant $c>0$ independent of $\varepsilon$, $u$, and $\eta$ such that
  \begin{align} \label{E:Appr_Bili}
    |a_\varepsilon(u,\eta_\varepsilon)-\varepsilon a_g(M_\tau u,\eta)| \leq cR_\varepsilon^a(u)\|\eta\|_{H^1(\Gamma)},
  \end{align}
  where
  \begin{align} \label{E:ApBi_Re}
    R_\varepsilon^a(u) := \varepsilon^{3/2}\|u\|_{H^2(\Omega_\varepsilon)}+\varepsilon^{1/2}\|u\|_{L^2(\Omega_\varepsilon)}\sum_{i=0,1}\left|\frac{\gamma_\varepsilon^i}{\varepsilon}-\gamma^i\right|.
  \end{align}
\end{lemma}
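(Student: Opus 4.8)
The plan is to split the difference $a_\varepsilon(u,\eta_\varepsilon)-\varepsilon a_g(M_\tau u,\eta)$ according to the structure of $a_\varepsilon$, namely the symmetric gradient term $2\nu(D(u),D(\eta_\varepsilon))_{L^2(\Omega_\varepsilon)}$ and the two friction terms $\gamma_\varepsilon^i(u,\eta_\varepsilon)_{L^2(\Gamma_\varepsilon^i)}$, and to match each of these against the corresponding pieces of $\varepsilon a_g(M_\tau u,\eta)$: the term $2\nu\varepsilon(gD_\Gamma(M_\tau u),D_\Gamma(\eta))_{L^2(\Gamma)}$, the term $2\nu\varepsilon\bigl(\tfrac1g(M_\tau u\cdot\nabla_\Gamma g),\eta\cdot\nabla_\Gamma g\bigr)_{L^2(\Gamma)}$, and $\varepsilon(\gamma^0+\gamma^1)(M_\tau u,\eta)_{L^2(\Gamma)}$.

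The first step is to replace $\eta_\varepsilon$ by the constant extension $\bar\eta$ in every term, controlling the replacement error by Lemma~\ref{L:Const_Test}: in the bulk $\|\eta_\varepsilon-\bar\eta\|_{L^2(\Omega_\varepsilon)}\le c\varepsilon^{3/2}\|\eta\|_{H^1(\Gamma)}$ and $\|\nabla\eta_\varepsilon-\overline{F(\eta)}\|_{L^2(\Omega_\varepsilon)}\le c\varepsilon^{3/2}\|\eta\|_{H^1(\Gamma)}$, and on the boundary $\|\eta_\varepsilon-\bar\eta\|_{L^2(\Gamma_\varepsilon)}\le c\varepsilon\|\eta\|_{H^1(\Gamma)}$. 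Since $\|D(u)\|_{L^2(\Omega_\varepsilon)}\le c\|u\|_{H^1(\Omega_\varepsilon)}$ and $\|u\|_{L^2(\Gamma_\varepsilon^i)}\le c\varepsilon^{-1/2}\|u\|_{H^1(\Omega_\varepsilon)}$ by the trace inequality \eqref{E:Poin_Bo}, these replacement errors are all of order at most $\varepsilon^{3/2}\|u\|_{H^1(\Omega_\varepsilon)}\|\eta\|_{H^1(\Gamma)}$, hence absorbed into $R_\varepsilon^a(u)$ (note $\|u\|_{H^1(\Omega_\varepsilon)}\le c\|u\|_{L^2(\Omega_\varepsilon)}^{1/2}\|u\|_{H^2(\Omega_\varepsilon)}^{1/2}\le c\varepsilon^{1/2}\|u\|_{H^2(\Omega_\varepsilon)}+c\varepsilon^{-1/2}\|u\|_{L^2(\Omega_\varepsilon)}$, which must be checked against the exact form of $R_\varepsilon^a$; one may prefer instead to keep an $\|u\|_{H^1(\Omega_\varepsilon)}$ factor throughout and invoke \eqref{E:St_Inter} at the end, but the cleaner route is to observe $R_\varepsilon^a(u)$ already dominates $\varepsilon^{3/2}\|u\|_{H^1(\Omega_\varepsilon)}$ via $\varepsilon^{1/2}\|u\|_{L^2(\Omega_\varepsilon)}$ only partially, so I will be careful here and possibly route the symmetric-gradient error through $\varepsilon^{3/2}\|u\|_{H^2(\Omega_\varepsilon)}$ using $D(u)\in L^2(\Omega_\varepsilon)$). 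After this reduction, the remaining task is purely a set of approximation estimates on the fixed surface $\Gamma$.

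The heart of the argument is then to invoke the average-approximation lemmas of Section~\ref{SS:Ave_BT}. For the main viscous term I would apply Lemma~\ref{L:Ave_BiH1_TT} with $A=D_\Gamma(\eta)$ (which satisfies $PA=AP=A$ by definition \eqref{E:Def_SSR}): since $F(\eta)=\nabla_\Gamma\eta+\tfrac1g(\eta\cdot\nabla_\Gamma g)Q$ and $\overline{F(\eta)}:D_\Gamma(\eta)$ picks out exactly $\overline{\nabla_\Gamma\eta}:D_\Gamma(\eta)=\overline{D_\Gamma(\eta)}:\overline{D_\Gamma(\eta)}$-type contractions (the $Q$-part contracts to zero against $A$ because $AQ=0$), this gives the approximation of $2\nu(D(u),\overline{F(\eta)})$ by $2\nu\varepsilon(gD_\Gamma(M_\tau u),D_\Gamma(\eta))$ up to an error $c\varepsilon^{3/2}\|u\|_{H^1(\Omega_\varepsilon)}\|\eta\|_{L^2(\Gamma)}$. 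For the $Q$-component of $F(\eta)$, i.e. the contraction $D(u):\overline{Q}\cdot\overline{\tfrac1g(\eta\cdot\nabla_\Gamma g)}$, I would apply Lemma~\ref{L:Ave_BiH1_NN} with $\eta$ there taken to be $\tfrac1g(\eta\cdot\nabla_\Gamma g)$, producing the term $2\nu\varepsilon\bigl(\tfrac1g(M_\tau u\cdot\nabla_\Gamma g),\eta\cdot\nabla_\Gamma g\bigr)$ up to a comparable error. Finally, for the friction terms I would use Lemma~\ref{L:Ave_BiL2_Bo} on each $\Gamma_\varepsilon^i$ to approximate $(u,\bar\eta)_{L^2(\Gamma_\varepsilon^i)}$ by $(M u,\eta)_{L^2(\Gamma)}=(M_\tau u,\eta)_{L^2(\Gamma)}$ (the normal part drops since $\eta$ is tangential) with error $c\varepsilon^{1/2}\|u\|_{H^1(\Omega_\varepsilon)}\|\eta\|_{L^2(\Gamma)}$; multiplying by $\gamma_\varepsilon^i\le c\varepsilon$ gives an $O(\varepsilon^{3/2})$ error, and then replacing $\gamma_\varepsilon^i$ by $\varepsilon\gamma^i$ costs exactly $\varepsilon|\tfrac{\gamma_\varepsilon^i}{\varepsilon}-\gamma^i|\,\|u\|_{L^2(\Gamma_\varepsilon^i)}\|\eta\|_{L^2(\Gamma)}\le c\varepsilon^{1/2}|\tfrac{\gamma_\varepsilon^i}{\varepsilon}-\gamma^i|\|u\|_{L^2(\Omega_\varepsilon)}\|\eta\|_{H^1(\Gamma)}$, which is precisely the second term in $R_\varepsilon^a(u)$.

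I expect the main obstacle to be bookkeeping rather than any conceptual difficulty: one must verify that the matrix contraction $\overline{F(\eta)}:D_\Gamma(\eta)$ genuinely splits into exactly the tangential-tangential piece handled by Lemma~\ref{L:Ave_BiH1_TT} plus exactly the normal piece handled by Lemma~\ref{L:Ave_BiH1_NN}, with no cross terms left over (this uses $D_\Gamma(\eta)Q=QD_\Gamma(\eta)=0$ and $\mathrm{tr}[Q]=1$), and that the powers of $\varepsilon$ combine correctly so that every error is dominated by $R_\varepsilon^a(u)\|\eta\|_{H^1(\Gamma)}$ — in particular one must not lose a factor when passing between $\|u\|_{H^1(\Omega_\varepsilon)}$ and the $\|u\|_{H^2(\Omega_\varepsilon)}$, $\|u\|_{L^2(\Omega_\varepsilon)}$ appearing in $R_\varepsilon^a$; the trace-type inequality \eqref{E:Poin_Bo} and the interpolation \eqref{E:St_Inter} (valid since $u\in D(A_\varepsilon)$ whenever $u$ satisfies \eqref{E:Bo_Slip}) are the tools that make this work, and I would keep track of which errors carry $\|u\|_{L^2(\Omega_\varepsilon)}$ versus $\|u\|_{H^2(\Omega_\varepsilon)}$ throughout.
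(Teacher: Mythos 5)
Your overall architecture matches the paper's proof: replace $\eta_\varepsilon$ by $\bar\eta$ and $\nabla\eta_\varepsilon$ by $\overline{F(\eta)}$ via Lemma \ref{L:Const_Test}, feed the resulting surface quantities into the averaging lemmas of Section \ref{SS:Ave_BT}, and treat the friction terms exactly as you describe ($K_1$, $K_2$, $K_3$ in the paper, with the $|\gamma_\varepsilon^i/\varepsilon-\gamma^i|$ contribution coming from \eqref{E:AveT_Lp_Surf}). The bookkeeping worry you raise about $\|u\|_{H^1(\Omega_\varepsilon)}$ versus $R_\varepsilon^a(u)$ is a non-issue: every $\varepsilon^{3/2}\|u\|_{H^1(\Omega_\varepsilon)}$ error is dominated by $\varepsilon^{3/2}\|u\|_{H^2(\Omega_\varepsilon)}$, so no interpolation is needed.

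However, there is a genuine gap in your treatment of the viscous term, and it sits exactly where you assert ``no cross terms left over.'' Since $\eta$ is merely tangential, \eqref{E:Grad_W} gives $\nabla_\Gamma\eta = P(\nabla_\Gamma\eta)P+(W\eta)\otimes n$, so the correct decomposition is
\begin{align*}
  F(\eta) = A+v\otimes n+\xi Q, \quad A:=P(\nabla_\Gamma\eta)P,\ v:=W\eta,\ \xi:=\tfrac1g(\eta\cdot\nabla_\Gamma g),
\end{align*}
and your splitting into only the $PAP$-piece (Lemma \ref{L:Ave_BiH1_TT}) and the $Q$-piece (Lemma \ref{L:Ave_BiH1_NN}) drops the tangential--normal term $\bigl(D(u),\overline{W\eta}\otimes\bar n\bigr)_{L^2(\Omega_\varepsilon)}$ whenever $\Gamma$ has nonzero curvature. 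This term does not contract to zero, and the naive bound $\|D(u)\|_{L^2(\Omega_\varepsilon)}\|\overline{W\eta}\|_{L^2(\Omega_\varepsilon)}\le c\varepsilon^{1/2}\|u\|_{H^1(\Omega_\varepsilon)}\|\eta\|_{L^2(\Gamma)}$ is a full factor of $\varepsilon$ too large. The paper handles it with Lemma \ref{L:Ave_BiH1_TN}, whose proof rewrites the contraction as $\bar v\cdot\overline{P}D(u)\bar n$ and invokes \eqref{E:Poin_Str}; this is precisely the point where the slip boundary conditions \eqref{E:Bo_Slip} (not just impermeability) and the $H^2$-norm of $u$ enter, and it is the reason $R_\varepsilon^a(u)$ carries $\varepsilon^{3/2}\|u\|_{H^2(\Omega_\varepsilon)}$ rather than an $H^1$-norm. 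Without this ingredient your argument does not close.
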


\begin{proof}
  Let $F(\eta)$ be the $3\times 3$ matrix-valued function on $\Gamma$ given by \eqref{E:Def_Fv},
  \begin{align*}
    J_1 := \bigl(D(u),D(\eta_\varepsilon)\bigr)_{L^2(\Omega_\varepsilon)}-\Bigl(D(u),\overline{F(\eta)}\Bigr)_{L^2(\Omega_\varepsilon)},
  \end{align*}
  and
  \begin{multline*}
    J_2 := \Bigl(D(u),\overline{F(\eta)}\Bigr)_{L^2(\Omega_\varepsilon)} \\
    -\varepsilon\left\{\bigl(gD_\Gamma(M_\tau u),D_\Gamma(\eta)\bigr)_{L^2(\Gamma)}+\left(M_\tau u\cdot\nabla_\Gamma g,\frac{1}{g}(\eta\cdot\nabla_\Gamma g)\right)_{L^2(\Gamma)}\right\}.
  \end{multline*}
  We also define
  \begin{align*}
    K_1 &:= \sum_{i=0,1}\gamma_\varepsilon^i\{(u,\eta_\varepsilon)_{L^2(\Gamma_\varepsilon^i)}-(u,\bar{\eta})_{L^2(\Gamma_\varepsilon^i)}\}, \\
    K_2 &:= \sum_{i=0,1}\gamma_\varepsilon^i\{(u,\bar{\eta})_{L^2(\Gamma_\varepsilon^i)}-(M_\tau u,\eta)_{L^2(\Gamma)}\}, \\
    K_3 &:= \sum_{i=0,1}(\gamma_\varepsilon^i-\varepsilon\gamma^i)(M_\tau u,\eta)_{L^2(\Gamma)}
  \end{align*}
  so that
  \begin{align*}
    a_\varepsilon(u,\eta_\varepsilon)-\varepsilon a_g(M_\tau u,\eta) = 2\nu(J_1+J_2)+K_1+K_2+K_3.
  \end{align*}
  Let us estimate each term on the right-hand side.
  Since $D(u)$ is symmetric,
  \begin{align*}
    D(u):D(\eta_\varepsilon) = D(u):\nabla\eta_\varepsilon \quad\text{in}\quad \Omega_\varepsilon.
  \end{align*}
  By this equality and \eqref{E:Test_Dom} we have
  \begin{align} \label{Pf_ApBi:I1}
    |J_1| \leq \|D(u)\|_{L^2(\Omega_\varepsilon)}\left\|\nabla\eta_\varepsilon-\overline{F(\eta)}\right\|_{L^2(\Omega_\varepsilon)} \leq c\varepsilon^{3/2}\|u\|_{H^1(\Omega_\varepsilon)}\|\eta\|_{H^1(\Gamma)}.
  \end{align}
  Next we consider $J_2$.
  Since $\eta\cdot n=0$ on $\Gamma$, we can apply \eqref{E:Grad_W} to $\nabla_\Gamma\eta$ to get
  \begin{align*}
    F(\eta) = A+v\otimes n+\xi Q, \quad A := P(\nabla_\Gamma\eta)P, \quad v := W\eta, \quad \xi := \frac{1}{g}(\eta\cdot\nabla_\Gamma g)
  \end{align*}
  on $\Gamma$.
  Using this decomposition we split $J_2=J_2^1+J_2^2+J_2^3$ into
  \begin{align*}
    J_2^1 &:= \Bigl(D(u),\overline{A}\Bigr)_{L^2(\Omega_\varepsilon)}-\varepsilon\bigl(gD_\Gamma(M_\tau u),D_\Gamma(\eta)\bigr)_{L^2(\Gamma)}, \\
    J_2^2 &:= \Bigl(D(u),\overline{\xi Q}\Bigr)_{L^2(\Omega_\varepsilon)}-\varepsilon(M_\tau u\cdot\nabla_\Gamma g,\xi)_{L^2(\Gamma)}, \\
    J_2^3 &:= (D(u),\bar{v}\otimes\bar{n})_{L^2(\Omega_\varepsilon)}.
  \end{align*}
  Since $P^T=P$ on $\Gamma$, we have $D_\Gamma(\eta)=(A+A^T)/2$ on $\Gamma$ and thus
  \begin{align*}
    D_\Gamma(M_\tau u):D_\Gamma(\eta) = \frac{1}{2}D_\Gamma(M_\tau u):(A+A^T) = D_\Gamma(M_\tau u):A \quad\text{on}\quad \Gamma
  \end{align*}
  by the symmetry of $D_\Gamma(M_\tau u)$.
  Hence
  \begin{align*}
    J_2^1 = \Bigl(D(u),\overline{A}\Bigr)_{L^2(\Omega_\varepsilon)}-\varepsilon(gD_\Gamma(M_\tau u),A)_{L^2(\Gamma)}
  \end{align*}
  and, since $u$ and $A$ satisfy the conditions of Lemma \ref{L:Ave_BiH1_TT}, we can use \eqref{E:Ave_BiH1_TT} to deduce that
  \begin{align*}
    |J_2^1| \leq c\varepsilon^{3/2}\|u\|_{H^1(\Omega_\varepsilon)}\|A\|_{L^2(\Gamma)} \leq c\varepsilon^{3/2}\|u\|_{H^1(\Omega_\varepsilon)}\|\eta\|_{H^1(\Gamma)}.
  \end{align*}
  Noting that $u$ satisfies \eqref{E:Bo_Imp}, we apply \eqref{E:Ave_BiH1_NN} to $J_2^2$ and use \eqref{E:G_Inf} to get
  \begin{align*}
    |J_2^2| \leq c\varepsilon^{3/2}\|u\|_{H^1(\Omega_\varepsilon)}\|\xi\|_{L^2(\Gamma)} \leq c\varepsilon^{3/2}\|u\|_{H^1(\Omega_\varepsilon)}\|\eta\|_{L^2(\Gamma)}.
  \end{align*}
  Also, since $u$ satisfies \eqref{E:Bo_Slip}, $v=W\eta$ belongs to $L^2(\Gamma,T\Gamma)$, and the inequalities \eqref{E:Fric_Upper} are valid by Assumption \ref{Assump_1}, we can apply \eqref{E:Ave_BiH1_TN} to obtain
  \begin{align*}
    |J_2^3| \leq c\varepsilon^{3/2}\|u\|_{H^2(\Omega_\varepsilon)}\|v\|_{L^2(\Gamma)} \leq c\varepsilon^{3/2}\|u\|_{H^2(\Omega_\varepsilon)}\|\eta\|_{L^2(\Gamma)}.
  \end{align*}
  From the above three estimates it follows that
  \begin{align} \label{Pf_ApBi:I2}
    |J_2| \leq |J_2^1|+|J_2^2|+|J_2^3| \leq c\varepsilon^{3/2}\|u\|_{H^2(\Omega_\varepsilon)}\|\eta\|_{H^1(\Gamma)}.
  \end{align}
  Let us estimate $K_1$, $K_2$, and $K_3$.
  To $K_1$ we apply \eqref{E:Fric_Upper}, \eqref{E:Poin_Bo}, and \eqref{E:Test_Bo} to get
  \begin{align} \label{Pf_ApBi:J1}
    |K_1| \leq c\varepsilon\|u\|_{L^2(\Gamma_\varepsilon)}\|\eta_\varepsilon-\bar{\eta}\|_{L^2(\Gamma_\varepsilon)} \leq c\varepsilon^{3/2}\|u\|_{H^1(\Omega_\varepsilon)}\|\eta\|_{H^1(\Gamma)}.
  \end{align}
  Also, since $\eta$ is tangential on $\Gamma$, we have $M_\tau u\cdot\eta=Mu\cdot\eta$ on $\Gamma$ and thus
  \begin{align} \label{Pf_ApBi:J2}
    |K_2| \leq c\varepsilon\sum_{i=0,1}\left|(u,\bar{\eta})_{L^2(\Gamma_\varepsilon^i)}-(Mu,\eta)_{L^2(\Gamma)}\right| \leq c\varepsilon^{3/2}\|u\|_{H^1(\Omega_\varepsilon)}\|\eta\|_{L^2(\Gamma)}
  \end{align}
  by \eqref{E:Fric_Upper} and \eqref{E:Ave_BiL2_Bo}.
  To $K_3$ we just use \eqref{E:AveT_Lp_Surf} to obtain
  \begin{align} \label{Pf_ApBi:J3}
    \begin{aligned}
      |K_3| &\leq c\varepsilon^{-1/2}\|u\|_{L^2(\Omega_\varepsilon)}\|\eta\|_{L^2(\Gamma)}\sum_{i=0,1}|\gamma_\varepsilon^i-\varepsilon\gamma^i| \\
      &= c\varepsilon^{1/2}\|u\|_{L^2(\Omega_\varepsilon)}\|\eta\|_{L^2(\Gamma)}\sum_{i=0,1}\left|\frac{\gamma_\varepsilon^i}{\varepsilon}-\gamma^i\right|.
    \end{aligned}
  \end{align}
  Finally, we deduce from \eqref{Pf_ApBi:I1}--\eqref{Pf_ApBi:J3} that
  \begin{align*}
    |a_\varepsilon(u,\eta_\varepsilon)-\varepsilon a_g(M_\tau u,\eta)| &\leq c(|J_1|+|J_2|+|K_1|+|K_2|+|K_3|) \\
    &\leq cR_\varepsilon^a(u)\|\eta\|_{H^1(\Gamma)},
  \end{align*}
  where $R_\varepsilon^a(u)$ is given by \eqref{E:ApBi_Re}.
  Hence \eqref{E:Appr_Bili} is valid.
\end{proof}

\begin{lemma} \label{L:Appr_Tri}
  Let $u_1\in H^2(\Omega_\varepsilon)^3$, $u_2\in H^1(\Omega_\varepsilon)^3$, and
  \begin{align*}
    \eta \in \mathcal{V}_g, \quad \eta_\varepsilon := \mathbb{L}_\varepsilon E_\varepsilon\eta \in L_\sigma^2(\Omega_\varepsilon)\cap H^1(\Omega_\varepsilon)^3.
  \end{align*}
  Suppose that $u_1$ satisfies $\mathrm{div}\,u_1=0$ in $\Omega_\varepsilon$ and \eqref{E:Bo_Slip} and $u_2$ satisfies \eqref{E:Bo_Imp} on $\Gamma_\varepsilon^0$ or on $\Gamma_\varepsilon^1$.
  Then there exists a constant $c>0$ independent of $\varepsilon$, $u_1$, $u_2$, and $\eta$ such that
  \begin{align} \label{E:Appr_Tri}
    |b_\varepsilon(u_1,u_2,\eta_\varepsilon)-\varepsilon b_g(M_\tau u_1,M_\tau u_2,\eta)| \leq cR_\varepsilon^b(u_1,u_2)\|\eta\|_{H^1(\Gamma)},
  \end{align}
  where
  \begin{multline} \label{E:ApTr_Re}
    R_\varepsilon^b(u_1,u_2) := \varepsilon^{3/2}\|u_1\otimes u_2\|_{L^2(\Omega_\varepsilon)}+\varepsilon\|u_1\|_{H^1(\Omega_\varepsilon)}\|u_2\|_{H^1(\Omega_\varepsilon)} \\
    +\left(\varepsilon\|u_1\|_{H^2(\Omega_\varepsilon)}+\varepsilon^{1/2}\|u_1\|_{L^2(\Omega_\varepsilon)}^{1/2}\|u_1\|_{H^2(\Omega_\varepsilon)}^{1/2}\right)\|u_2\|_{L^2(\Omega_\varepsilon)}.
  \end{multline}
\end{lemma}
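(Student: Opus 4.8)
The plan is to decompose the difference $b_\varepsilon(u_1,u_2,\eta_\varepsilon)-\varepsilon b_g(M_\tau u_1,M_\tau u_2,\eta)$ into pieces that can each be controlled by the estimates assembled in Section \ref{S:Ave}, in the same spirit as the proof of Lemma \ref{L:Appr_Bili}. First I would replace the test function $\eta_\varepsilon$ by its target $\bar\eta$ at the cost of a gradient-difference term: write
\begin{align*}
  b_\varepsilon(u_1,u_2,\eta_\varepsilon)-b_\varepsilon(u_1,u_2,\bar\eta) = -\bigl(u_1\otimes u_2,\nabla\eta_\varepsilon-\overline{F(\eta)}\bigr)_{L^2(\Omega_\varepsilon)}-\bigl(u_1\otimes u_2,\overline{F(\eta)}-\nabla\bar\eta\bigr)_{L^2(\Omega_\varepsilon)}.
\end{align*}
The first summand is bounded by $\|u_1\otimes u_2\|_{L^2(\Omega_\varepsilon)}\|\nabla\eta_\varepsilon-\overline{F(\eta)}\|_{L^2(\Omega_\varepsilon)}\le c\varepsilon^{3/2}\|u_1\otimes u_2\|_{L^2(\Omega_\varepsilon)}\|\eta\|_{H^1(\Gamma)}$ via \eqref{E:Test_Dom}, which accounts for the first term of $R_\varepsilon^b$. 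For the second summand, recall $F(\eta)=\nabla_\Gamma\eta+\frac1g(\eta\cdot\nabla_\Gamma g)Q$, so $\overline{F(\eta)}-\nabla\bar\eta = \bigl(\overline{\nabla_\Gamma\eta}-\nabla\bar\eta\bigr)+\frac1{\bar g}(\bar\eta\cdot\overline{\nabla_\Gamma g})\overline{Q}$; the first part is $O(\varepsilon)|\overline{\nabla_\Gamma\eta}|$ pointwise by \eqref{E:ConDer_Diff} with $|d|\le c\varepsilon$, and I would split the $\overline{Q}$-contribution using that $u_2$ satisfies \eqref{E:Bo_Imp} on one of the boundary components together with Lemma \ref{L:Ave_TrN} applied to the nontangential piece $u_1\otimes u_2:\bar v\otimes\bar n$ with $v=\frac1g(\eta\cdot\nabla_\Gamma g)n$ (note $v$ is normal, so one first rewrites $Q=n\otimes n$ and estimates $(u_1\cdot\bar v)(u_2\cdot\bar n)$ via \eqref{E:Poin_Nor} and \eqref{E:Prod_Surf}). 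Both pieces are absorbed into the $\varepsilon\|u_1\|_{H^1}\|u_2\|_{H^1}\|\eta\|_{H^1}$ term of $R_\varepsilon^b$.

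Next I would handle $b_\varepsilon(u_1,u_2,\bar\eta)-\varepsilon b_g(M_\tau u_1,M_\tau u_2,\eta)$. Since $\nabla\bar\eta = \{I_3-d\overline W\}^{-1}\overline{\nabla_\Gamma\eta}$ by \eqref{E:ConDer_Dom} and $\eta$ is tangential, write $b_\varepsilon(u_1,u_2,\bar\eta) = -(u_1\otimes u_2,\nabla\bar\eta)_{L^2(\Omega_\varepsilon)}$ and, modulo an $O(\varepsilon)$ error from replacing $\{I_3-d\overline W\}^{-1}$ by $I_3$ (controlled by \eqref{E:Wein_Diff} and $|d|\le c\varepsilon$, giving $c\varepsilon\|u_1\otimes u_2\|_{L^2(\Omega_\varepsilon)}\|\eta\|_{H^1(\Gamma)}$), it suffices to compare $-(u_1\otimes u_2,\overline{\nabla_\Gamma\eta})_{L^2(\Omega_\varepsilon)}$ with $\varepsilon\,b_g(M_\tau u_1,M_\tau u_2,\eta) = -\varepsilon(g\,M_\tau u_1\otimes M_\tau u_2,\nabla_\Gamma\eta)_{L^2(\Gamma)}$. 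This is exactly a trilinear form of the type treated in Lemma \ref{L:Ave_TrT}: take $A:=P(\nabla_\Gamma\eta)P$, which satisfies $PA=AP=A$ on $\Gamma$ and $\|A\|_{L^2(\Gamma)}\le c\|\eta\|_{H^1(\Gamma)}$. One has $u_1\otimes u_2:\overline{\nabla_\Gamma\eta} = u_1\otimes u_2:\overline A + u_1\otimes u_2:\overline{(\nabla_\Gamma\eta-A)}$, and since $\nabla_\Gamma\eta-A = (W\eta)\otimes n$ by \eqref{E:Grad_W}, the second term is again a nontangential trilinear form handled by Lemma \ref{L:Ave_TrN} (this needs $u_2$ to satisfy \eqref{E:Bo_Imp}), bounded by $c\varepsilon\|u_1\|_{H^1(\Omega_\varepsilon)}\|u_2\|_{H^1(\Omega_\varepsilon)}\|\eta\|_{H^1(\Gamma)}$. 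For the $\overline A$ term, Lemma \ref{L:Ave_TrT} (whose hypotheses—$u_1\in H^2$, $\mathrm{div}\,u_1=0$, \eqref{E:Bo_Slip}, and \eqref{E:Fric_Upper} from Assumption \ref{Assump_1}, and the structure condition on $A$—are all in force) gives
\begin{align*}
  \left|(u_1\otimes u_2,\overline A)_{L^2(\Omega_\varepsilon)}-\varepsilon(g\,M_\tau u_1\otimes M_\tau u_2,A)_{L^2(\Gamma)}\right| \le c\,R_\varepsilon(u_1,u_2)\|\eta\|_{H^1(\Gamma)}
\end{align*}
with $R_\varepsilon(u_1,u_2)$ as in \eqref{E:Ave_TrT_Re}, whose two terms are precisely $\varepsilon\|u_1\|_{H^1}\|u_2\|_{H^1}$ and $(\varepsilon\|u_1\|_{H^2}+\varepsilon^{1/2}\|u_1\|_{L^2}^{1/2}\|u_1\|_{H^2}^{1/2})\|u_2\|_{L^2}$, i.e. the remaining terms of $R_\varepsilon^b$. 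Finally one must note $g\,M_\tau u_1\otimes M_\tau u_2:A = g\,M_\tau u_1\otimes M_\tau u_2:\nabla_\Gamma\eta$ on $\Gamma$, because $M_\tau u_1,M_\tau u_2$ are tangential so $(M_\tau u_1\otimes M_\tau u_2)$ is unchanged under $P(\cdot)P$, which matches the definition \eqref{E:Def_Tri_Surf} of $b_g$.

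Collecting the three groups of estimates and using the triangle inequality yields \eqref{E:Appr_Tri} with $R_\varepsilon^b(u_1,u_2)$ given by \eqref{E:ApTr_Re}, after observing that $\varepsilon\le\varepsilon^{1/2}\le1$ lets one absorb the lower-order $O(\varepsilon)$ errors from the Jacobian/Weingarten replacements into the stated terms. The main obstacle I anticipate is bookkeeping rather than a conceptual difficulty: one has to carefully separate the tangential part of $\nabla_\Gamma\eta$ (handled by Lemma \ref{L:Ave_TrT}) from its normal part $(W\eta)\otimes n$ and from the $Q$-contribution of $F(\eta)$ (both handled by Lemma \ref{L:Ave_TrN}), and to make sure the impermeability hypothesis on $u_2$ is invoked precisely where Lemma \ref{L:Ave_TrN} and the Poincaré-type inequality \eqref{E:Poin_Nor} are used. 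The asymmetry of the hypotheses (full $H^2$ and slip for $u_1$, only $H^1$ and partial impermeability for $u_2$) mirrors the roles of $u^\varepsilon$ and the test function when this lemma is applied in Section \ref{S:SL}, so the decomposition must keep the $u_2$-factor always in $L^2$ or $H^1$ and never differentiate it twice.
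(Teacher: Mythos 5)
Your proposal is correct and is essentially the paper's proof: both replace $\nabla\eta_\varepsilon$ by $\overline{F(\eta)}$ via \eqref{E:Test_Dom}, split the remaining integrand into the tangential block $A=P(\nabla_\Gamma\eta)P$ (handled by Lemma \ref{L:Ave_TrT}) and normal blocks of the form $\bar v\otimes\bar n$ (handled by Lemma \ref{L:Ave_TrN}), and then identify $(g(M_\tau u_1)\otimes(M_\tau u_2),A)_{L^2(\Gamma)}=-b_g(M_\tau u_1,M_\tau u_2,\eta)$. The paper is a little more economical: it decomposes $F(\eta)=A+v\otimes n$ directly with $v=W\eta+\tfrac{1}{g}(\eta\cdot\nabla_\Gamma g)n$, so Lemma \ref{L:Ave_TrN} is invoked only once and your detour through $\nabla\bar\eta$ and $\overline{\nabla_\Gamma\eta}$, which generates two extra (mutually cancelling) Weingarten-correction terms, never appears. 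One bookkeeping caution: those corrections should be bounded by $c\varepsilon\|u_1\otimes u_2\|_{L^2(\Omega_\varepsilon)}\|\overline{\nabla_\Gamma\eta}\|_{L^2(\Omega_\varepsilon)}\le c\varepsilon^{3/2}\|u_1\otimes u_2\|_{L^2(\Omega_\varepsilon)}\|\eta\|_{H^1(\Gamma)}$ using \eqref{E:Con_Lp}; the weaker bound $c\varepsilon\|u_1\otimes u_2\|_{L^2(\Omega_\varepsilon)}\|\eta\|_{H^1(\Gamma)}$ you state cannot be absorbed into the $\varepsilon^{3/2}\|u_1\otimes u_2\|_{L^2(\Omega_\varepsilon)}$ term of \eqref{E:ApTr_Re} by ``$\varepsilon\le\varepsilon^{1/2}$'', although the sharper power does fall out of the very computation you sketch once the constant extension is measured on $\Omega_\varepsilon$.
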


\begin{proof}
  Let $F(\eta)$ be the $3\times3$ matrix-valued function on $\Gamma$ given by \eqref{E:Def_Fv}.
  Since
  \begin{align*}
    \left|b_\varepsilon(u_1,u_2,\eta_\varepsilon)+\Bigl(u_1\otimes u_2,\overline{F(\eta)}\Bigr)_{L^2(\Omega_\varepsilon)}\right| \leq \|u_1\otimes u_2\|_{L^2(\Omega_\varepsilon)}\left\|\nabla\eta_\varepsilon-\overline{F(\eta)}\right\|_{L^2(\Omega_\varepsilon)}
  \end{align*}
  by \eqref{E:Def_Tri_Dom}, we apply \eqref{E:Test_Dom} to the right-hand side to get
  \begin{align} \label{Pf_ApTr:TAux}
    \left|b_\varepsilon(u_1,u_2,\eta_\varepsilon)+\Bigl(u_1\otimes u_2,\overline{F(\eta)}\Bigr)_{L^2(\Omega_\varepsilon)}\right| \leq c\varepsilon^{3/2}\|u_1\otimes u_2\|_{L^2(\Omega_\varepsilon)}\|\eta\|_{H^1(\Gamma)}.
  \end{align}
  Noting that $\eta$ is tangential on $\Gamma$, we use \eqref{E:Grad_W} to decompose $F(\eta)$ into
  \begin{align*}
    F(\eta) = A+v\otimes n, \quad A := P(\nabla_\Gamma\eta)P, \quad v := W\eta+\frac{1}{g}(\eta\cdot\nabla_\Gamma g)n \quad\text{on}\quad \Gamma.
  \end{align*}
  Since $u_1$ and $A$ satisfy the conditions of Lemma \ref{L:Ave_TrT}, we see by \eqref{E:Ave_TrT} that
  \begin{multline} \label{Pf_ApTr:TT}
    \left|\Bigl(u_1\otimes u_2,\overline{A}\Bigr)_{L^2(\Omega_\varepsilon)}-\varepsilon(g(M_\tau u_1)\otimes(M_\tau u_2),A)_{L^2(\Gamma)}\right| \\
    \leq cR_\varepsilon(u_1,u_2)\|A\|_{L^2(\Gamma)} \leq cR_\varepsilon(u_1,u_2)\|\eta\|_{H^1(\Gamma)}.
  \end{multline}
  Here $R_\varepsilon(u_1,u_2)$ is given by \eqref{E:Ave_TrT_Re}.
  Also, since $u_2$ satisfies \eqref{E:Bo_Imp} on $\Gamma_\varepsilon^0$ or on $\Gamma_\varepsilon^1$ and $v\in H^1(\Gamma)^3$, we can use \eqref{E:Ave_TrN} to get
  \begin{align} \label{Pf_ApTr:TN}
    \begin{aligned}
      |(u_1\otimes u_2,\bar{v}\otimes\bar{n})_{L^2(\Omega_\varepsilon)}| &\leq c\varepsilon\|u_1\|_{H^1(\Omega_\varepsilon)}\|u_2\|_{H^1(\Omega_\varepsilon)}\|v\|_{H^1(\Gamma)} \\
      &\leq c\varepsilon\|u_1\|_{H^1(\Omega_\varepsilon)}\|u_2\|_{H^1(\Omega_\varepsilon)}\|\eta\|_{H^1(\Gamma)}.
    \end{aligned}
  \end{align}
  Noting that $F(\eta)=A+v\otimes n$ on $\Gamma$ we deduce from \eqref{Pf_ApTr:TAux}--\eqref{Pf_ApTr:TN} that
  \begin{align} \label{Pf_ApTr:Goal}
    \left|b_\varepsilon(u_1,u_2,\eta_\varepsilon)+\varepsilon(g(M_\tau u_1)\otimes(M_\tau u_2),A)_{L^2(\Gamma)}\right| \leq cR_\varepsilon^b(u_1,u_2)\|\eta\|_{H^1(\Gamma)}
  \end{align}
  with $R_\varepsilon^b(u_1,u_2)$ given by \eqref{E:ApTr_Re}.
  Now we observe that
  \begin{align*}
    (M_\tau u_1)\otimes(M_\tau u_2):A = (M_\tau u_1)\otimes(M_\tau u_2):\nabla_\Gamma\eta \quad\text{on}\quad \Gamma
  \end{align*}
  by $A=P(\nabla_\Gamma\eta)P$, $P^T=P$, and $PM_\tau u_i=M_\tau u_i$ on $\Gamma$ for $i=1,2$.
  Hence
  \begin{align*}
    (g(M_\tau u_1)\otimes(M_\tau u_2),A)_{L^2(\Gamma)} = -b_g(M_\tau u_1,M_\tau u_2,\eta)
  \end{align*}
  by \eqref{E:Def_Tri_Surf} and the inequality \eqref{E:Appr_Tri} follows from \eqref{Pf_ApTr:Goal}.
\end{proof}

Now we are ready to derive a weak formulation for $M_\tau u^\varepsilon$ from \eqref{E:NS_Weak}.

\begin{lemma} \label{L:Mu_Weak}
  Let $u^\varepsilon$ be as in Lemma \ref{L:Est_Ueps}.
  Then
  \begin{align*}
    M_\tau u^\varepsilon \in C([0,\infty);H^1(\Gamma,T\Gamma))\cap H_{loc}^1([0,\infty);L^2(\Gamma,T\Gamma))
  \end{align*}
  and for all $T>0$ and $\eta\in L^2(0,T;\mathcal{V}_g)$ we have
  \begin{multline} \label{E:Mu_Weak}
    \int_0^T\{(g\partial_tM_\tau u^\varepsilon,\eta)_{L^2(\Gamma)}+a_g(M_\tau u^\varepsilon,\eta)+b_g(M_\tau u^\varepsilon,M_\tau u^\varepsilon,\eta)\}\,dt \\
    = \int_0^T(gM_\tau\mathbb{P}_\varepsilon f^\varepsilon,\eta)_{L^2(\Gamma)}\,dt+R_\varepsilon^1(\eta).
  \end{multline}
  Here $R_\varepsilon^1(\eta)$ is a residual term satisfying
  \begin{align} \label{E:Mu_Weak_Re}
    |R_\varepsilon^1(\eta)| \leq c\left(\varepsilon^{\alpha/4}+\sum_{i=0,1}\left|\frac{\gamma_\varepsilon^i}{\varepsilon}-\gamma^i\right|\right)(1+T)^{1/2}\|\eta\|_{L^2(0,T;H^1(\Gamma))}
  \end{align}
  with a constant $c>0$ independent of $\varepsilon$, $u^\varepsilon$, $\eta$, and $T$.
\end{lemma}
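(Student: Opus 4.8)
The plan is to start from the weak formulation \eqref{E:NS_Weak} satisfied by the strong solution $u^\varepsilon$, test it with the extension $\eta_\varepsilon := \mathbb{L}_\varepsilon E_\varepsilon\eta$ of a given $\eta \in L^2(0,T;\mathcal{V}_g)$, and then convert each term into the corresponding surface quantity using the averaging estimates from Section~\ref{S:Ave}, collecting the errors into the residual $R_\varepsilon^1(\eta)$. First I would establish the claimed regularity of $M_\tau u^\varepsilon$: since $u^\varepsilon \in C([0,\infty);\mathcal{V}_\varepsilon) \cap H^1_{loc}([0,\infty);\mathcal{H}_\varepsilon)$ by Lemma~\ref{L:Est_Ueps}, and $\mathcal{V}_\varepsilon \subset H^1(\Omega_\varepsilon)^3$, applying Lemma~\ref{L:AveT_Wmp} with $m=1$ gives $M_\tau u^\varepsilon \in C([0,\infty);H^1(\Gamma,T\Gamma))$, while Lemma~\ref{L:Ave_Dt} gives $M_\tau u^\varepsilon \in H^1_{loc}([0,\infty);L^2(\Gamma,T\Gamma))$ together with $\partial_t M_\tau u^\varepsilon = M_\tau(\partial_t u^\varepsilon)$.

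Next I would address a subtlety flagged in the remark after Lemma~\ref{L:Const_Test}: under condition (A3) the space $\mathcal{V}_\varepsilon$ may be a proper subspace of $L^2_\sigma(\Omega_\varepsilon) \cap H^1(\Omega_\varepsilon)^3$, so $\eta_\varepsilon$ need not be an admissible test function in \eqref{E:NS_Weak}. The fix is to project: write $\eta_\varepsilon = \mathbb{P}_\varepsilon \eta_\varepsilon + (I - \mathbb{P}_\varepsilon)\eta_\varepsilon$, where $\mathbb{P}_\varepsilon \eta_\varepsilon \in \mathcal{V}_\varepsilon$ is a legitimate test function and $(I-\mathbb{P}_\varepsilon)\eta_\varepsilon$ lies in the span of $\mathcal{R}_g$ (a finite-dimensional space of rigid motions on which $D(\cdot)=0$, which are solenoidal and impermeable). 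Testing \eqref{E:NS_Weak} with $\mathbb{P}_\varepsilon\eta_\varepsilon$ and separately controlling the contribution of the $\mathcal{R}_g$-component — using that $f^\varepsilon(t) \in \mathcal{R}_g^\perp$, that $a_\varepsilon$ vanishes on $\mathcal{R}_g$ against rigid motions up to friction terms which are zero under (A3), and the estimates of Lemma~\ref{L:Const_Test} to bound $\|(I-\mathbb{P}_\varepsilon)\eta_\varepsilon\|$ — I would show the discrepancy is absorbed into $R_\varepsilon^1(\eta)$. Under (A1) or (A2) this step is trivial since $\eta_\varepsilon \in \mathcal{V}_\varepsilon$ directly.

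Then comes the core computation. For the time-derivative term, Lemma~\ref{L:Ave_BiL2_Dom} gives $(\partial_t u^\varepsilon, \bar\eta)_{L^2(\Omega_\varepsilon)} \approx \varepsilon(g\, M\partial_t u^\varepsilon, \eta)_{L^2(\Gamma)} = \varepsilon(g\,\partial_t M_\tau u^\varepsilon,\eta)_{L^2(\Gamma)}$ (using $M\partial_t u^\varepsilon \cdot \eta = M_\tau \partial_t u^\varepsilon \cdot \eta$ since $\eta$ is tangential), with error controlled by $\varepsilon^{3/2}\|\partial_t u^\varepsilon\|_{L^2(\Omega_\varepsilon)}\|\eta\|_{L^2(\Gamma)}$; replacing $\bar\eta$ by $\eta_\varepsilon$ costs another $\|\partial_t u^\varepsilon\|_{L^2(\Omega_\varepsilon)}\|\eta_\varepsilon - \bar\eta\|_{L^2(\Omega_\varepsilon)} \le c\varepsilon^{3/2}\|\partial_t u^\varepsilon\|_{L^2(\Omega_\varepsilon)}\|\eta\|_{H^1(\Gamma)}$ by \eqref{E:Test_Dom}. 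For the bilinear and trilinear terms I would apply Lemmas~\ref{L:Appr_Bili} and~\ref{L:Appr_Tri} directly, which produce errors $R_\varepsilon^a(u^\varepsilon)$ and $R_\varepsilon^b(u^\varepsilon,u^\varepsilon)$. For the forcing term, Lemma~\ref{L:Ave_BiL2_Dom} again with $\varphi = \mathbb{P}_\varepsilon f^\varepsilon$ gives $(\mathbb{P}_\varepsilon f^\varepsilon,\bar\eta)_{L^2(\Omega_\varepsilon)} \approx \varepsilon(g M_\tau\mathbb{P}_\varepsilon f^\varepsilon,\eta)_{L^2(\Gamma)}$ with error $\varepsilon^{3/2}\|\mathbb{P}_\varepsilon f^\varepsilon\|_{L^2(\Omega_\varepsilon)}\|\eta\|_{L^2(\Gamma)}$, plus the $\eta_\varepsilon - \bar\eta$ correction. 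Dividing everything by $\varepsilon$, integrating in $t$ over $(0,T)$, applying Cauchy--Schwarz in time, and inserting the a~priori bounds from Lemma~\ref{L:Est_Ueps} — namely $\int_0^t\|u^\varepsilon\|_{H^2(\Omega_\varepsilon)}^2 \le c\varepsilon^{-1+\alpha}(1+t)$, $\int_0^t\|u^\varepsilon\|_{H^1(\Omega_\varepsilon)}^2 \le c\varepsilon(1+t)$, $\int_0^t\|u^\varepsilon \otimes u^\varepsilon\|_{L^2(\Omega_\varepsilon)}^2 \le c\varepsilon(1+t)$, $\int_0^t\|\partial_t u^\varepsilon\|_{L^2(\Omega_\varepsilon)}^2 \le c\varepsilon^{-1+\alpha}(1+t)$, and $\|u^\varepsilon(t)\|_{L^2(\Omega_\varepsilon)}^2 \le c\varepsilon$ — I would check term by term that every contribution to $\varepsilon^{-1}R_\varepsilon(\eta)$ is bounded by $c(\varepsilon^{\alpha/4} + \sum_i |\gamma_\varepsilon^i/\varepsilon - \gamma^i|)(1+T)^{1/2}\|\eta\|_{L^2(0,T;H^1(\Gamma))}$.

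The main obstacle is bookkeeping the exponents of $\varepsilon$ so that the worst term indeed scales like $\varepsilon^{\alpha/4}$ rather than something negative. The delicate contributions are those in $R_\varepsilon^b$ involving $\varepsilon^{1/2}\|u^\varepsilon\|_{L^2(\Omega_\varepsilon)}^{1/2}\|u^\varepsilon\|_{H^2(\Omega_\varepsilon)}^{1/2}\|u^\varepsilon\|_{L^2(\Omega_\varepsilon)}$: after dividing by $\varepsilon$ and using H\"older in time one gets roughly $\varepsilon^{-1/2}\cdot \varepsilon^{1/2}\cdot\varepsilon^{1/4}\cdot(\int\|u^\varepsilon\|_{H^2}^2)^{1/4}\cdot(\text{time factors})$, and substituting $\varepsilon^{-1+\alpha}$ under the fourth root yields the decisive power $\varepsilon^{\alpha/4}$; similarly the term $\varepsilon\|u^\varepsilon\|_{H^2(\Omega_\varepsilon)}\|u^\varepsilon\|_{L^2(\Omega_\varepsilon)}$ divided by $\varepsilon$ gives $(\varepsilon^{-1+\alpha})^{1/2}\varepsilon^{1/2} = \varepsilon^{\alpha/2} \le \varepsilon^{\alpha/4}$. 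The friction contribution $R_\varepsilon^a$ contains $\varepsilon^{1/2}\|u^\varepsilon\|_{L^2(\Omega_\varepsilon)}\sum_i|\gamma_\varepsilon^i/\varepsilon - \gamma^i|$ which after dividing by $\varepsilon$ gives $\varepsilon^{-1/2}\cdot\varepsilon^{1/2}\cdot\varepsilon^{1/2}\sum_i|\cdots| = \varepsilon^{1/2}\sum_i|\cdots|$, absorbed into the stated bound. I would present this as a clean list of estimates rather than grinding through each, citing Lemma~\ref{L:Est_Ueps} and the approximation lemmas, and conclude by noting that $R_\varepsilon^1(\eta) := \varepsilon^{-1}R_\varepsilon(\eta)$ is linear in $\eta$ and satisfies \eqref{E:Mu_Weak_Re}, which completes the proof.
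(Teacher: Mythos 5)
Your overall route is the same as the paper's: test \eqref{E:NS_Weak} with $\eta_\varepsilon=\mathbb{L}_\varepsilon E_\varepsilon\eta$, convert each term via Lemmas \ref{L:Ave_BiL2_Dom}, \ref{L:Const_Test}, \ref{L:Appr_Bili}, \ref{L:Appr_Tri}, divide by $\varepsilon$, and close with the a priori bounds of Lemma \ref{L:Est_Ueps}; your exponent bookkeeping (in particular the $\varepsilon^{\alpha/4}$ coming from the fourth root of $\int_0^T\|u^\varepsilon\|_{H^2(\Omega_\varepsilon)}^2\,dt\leq c\varepsilon^{-1+\alpha}(1+T)$) is correct and matches the paper.

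The one step that would fail as written is your treatment of the case (A3) with $\mathcal{R}_g\neq\{0\}$. You propose to bound $\|(I-\mathbb{P}_\varepsilon)\eta_\varepsilon\|$ and ``absorb the discrepancy into $R_\varepsilon^1(\eta)$.'' This cannot work for two of the four terms. Writing $w_\varepsilon:=(I-\mathbb{P}_\varepsilon)\eta_\varepsilon\in\mathcal{R}_g$, a size estimate on the time-derivative term gives only
\begin{align*}
  \varepsilon^{-1}\left|\int_0^T(\partial_tu^\varepsilon,w_\varepsilon)_{L^2(\Omega_\varepsilon)}\,dt\right| \leq c\varepsilon^{-1}\cdot\varepsilon^{1/2}\|\partial_tu^\varepsilon\|_{L^2(0,T;L^2(\Omega_\varepsilon))}\|\eta\|_{L^2(0,T;H^1(\Gamma))} \leq c\varepsilon^{-1+\alpha/2}(\cdots),
\end{align*}
which diverges, and for the trilinear term one gets at best an $O(1)$ contribution (since $\|u^\varepsilon\otimes u^\varepsilon\|_{L^2(0,T;L^2(\Omega_\varepsilon))}^2\leq c\varepsilon(1+T)$ exactly cancels the $\varepsilon^{-1}$ and the $\varepsilon^{1/2}$ gain from $w_\varepsilon$ is eaten by the $\varepsilon^{-1/2}$ normalization of the basis of $\mathcal{R}_g$). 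What the paper actually uses is that these contributions vanish \emph{identically}: $(\partial_tu^\varepsilon,w_\varepsilon)_{L^2(\Omega_\varepsilon)}=(\mathbb{P}_\varepsilon f^\varepsilon,w_\varepsilon)_{L^2(\Omega_\varepsilon)}=0$ because $\partial_tu^\varepsilon,\mathbb{P}_\varepsilon f^\varepsilon\in\mathcal{H}_\varepsilon$ while $w_\varepsilon\in\mathcal{R}_g\subset\mathcal{H}_\varepsilon^\perp$; $a_\varepsilon(u^\varepsilon,w_\varepsilon)=0$ because $D(w)=0$ for $w\in\mathcal{R}$ and $\gamma_\varepsilon^0=\gamma_\varepsilon^1=0$ under (A3) (this part you do state); and $b_\varepsilon(u^\varepsilon,u^\varepsilon,w)=0$ because $u^\varepsilon\otimes u^\varepsilon:\nabla w=u^\varepsilon\cdot(a\times u^\varepsilon)=0$ pointwise for $w(x)=a\times x+b$. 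You need these exact cancellations, not an estimate on $\|w_\varepsilon\|$; with them inserted, the rest of your argument goes through.
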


\begin{proof}
  The space-time regularity of $M_\tau u^\varepsilon$ follows from that of $u^\varepsilon$ and Lemmas \ref{L:AveT_Lp}, \ref{L:AveT_Wmp}, and \ref{L:Ave_Dt}.
  Let us show \eqref{E:Mu_Weak}.
  For $\eta\in L^2(0,T;\mathcal{V}_g)$ let
  \begin{align} \label{Pf_MuW:eta_eps}
    \eta_\varepsilon := \mathbb{L}_\varepsilon E_\varepsilon\eta \in L^2(0,T;L_\sigma^2(\Omega_\varepsilon)\cap H^1(\Omega_\varepsilon)^3).
  \end{align}
  Hereafter we sometimes suppress the argument $t\in(0,T)$.
  We first show
  \begin{multline} \label{Pf_MuW:Weak_eps}
    \int_0^T\{(\partial_tu^\varepsilon,\eta_\varepsilon)_{L^2(\Omega_\varepsilon)}+a_\varepsilon(u^\varepsilon,\eta_\varepsilon)+b_\varepsilon(u^\varepsilon,u^\varepsilon,\eta_\varepsilon)\}\,dt \\
    = \int_0^T(\mathbb{P}_\varepsilon f^\varepsilon,\eta_\varepsilon)_{L^2(\Omega_\varepsilon)}\,dt.
  \end{multline}
  If the condition (A1) or (A2) of Assumption \ref{Assump_2} is imposed, then $\eta_\varepsilon\in L^2(0,T;\mathcal{V}_\varepsilon)$ by \eqref{E:Def_Heps} and we can take $\eta_\varepsilon$ as a test function in \eqref{E:NS_Weak} to get \eqref{Pf_MuW:Weak_eps}.
  Suppose that the condition (A3) is imposed.
  Let $\mathcal{R}_g$ be the function space given by \eqref{E:Def_Rg} and $\mathcal{R}_g^\perp$ the orthogonal complement of $\mathcal{R}_g$ in $L^2(\Omega_\varepsilon)^3$.
  If $\mathcal{R}_g=\{0\}$, then
  \begin{align*}
    \mathcal{H}_\varepsilon = L_\sigma^2(\Omega_\varepsilon)\cap\mathcal{R}_g^\perp = L_\sigma^2(\Omega_\varepsilon), \quad \mathcal{V}_\varepsilon = \mathcal{H}_\varepsilon\cap H^1(\Omega_\varepsilon)^3 = L_\sigma^2(\Omega_\varepsilon)\cap H^1(\Omega_\varepsilon)^3
  \end{align*}
  and thus we can still take $\eta_\varepsilon\in L^2(0,T;\mathcal{V}_\varepsilon)$ as a test function in \eqref{E:NS_Weak} to get \eqref{Pf_MuW:Weak_eps}.
  On the other hand, if $\mathcal{R}_g\neq\{0\}$, then
  \begin{align*}
    \mathcal{H}_\varepsilon = L_\sigma^2(\Omega_\varepsilon)\cap\mathcal{R}_g^\perp \neq L_\sigma^2(\Omega_\varepsilon), \quad \mathcal{V}_\varepsilon = \mathcal{H}_\varepsilon\cap H^1(\Omega_\varepsilon)^3 \neq L_\sigma^2(\Omega_\varepsilon)\cap H^1(\Omega_\varepsilon)^3
  \end{align*}
  and we cannot substitute $\eta_\varepsilon$ for \eqref{E:NS_Weak}.
  In this case, however, $\mathcal{R}_g$ is a finite dimensional subspace of $L_\sigma^2(\Omega_\varepsilon)$ by the assumption $\mathcal{R}_g=\mathcal{R}_0\cap\mathcal{R}_1$ (see \cite{Miu_NSCTD_01}*{Lemma E.8}).
  Thus we can take an orthonormal basis $\{w_1,\dots,w_{k_0}\}$ of $\mathcal{R}_g$ in $L_\sigma^2(\Omega_\varepsilon)$ such that
  \begin{align} \label{Pf_MuW:w_k}
    w_k(x) = a_k\times x+b_k, \quad x\in\mathbb{R}^3
  \end{align}
  with some $a_k,b_k\in\mathbb{R}^3$ for $k=1,\dots,k_0$.
  Then setting
  \begin{align*}
    w_\varepsilon(t) := \sum_{k=1}^{k_0}(\eta_\varepsilon(t),w_k)_{L^2(\Omega_\varepsilon)}w_k \in \mathcal{R}_g
  \end{align*}
  we have the orthogonal decomposition
  \begin{align*}
    \eta_\varepsilon(t) = \mathbb{P}_\varepsilon\eta_\varepsilon(t)+w_\varepsilon(t) \quad\text{in}\quad L_\sigma^2(\Omega_\varepsilon), \quad \mathbb{P}_\varepsilon\eta_\varepsilon(t) \in \mathcal{H}_\varepsilon, \, w_\varepsilon(t) \in \mathcal{R}_g
  \end{align*}
  for a.a. $t\in(0,T)$.
  Moreover, since $w_k$ is independent of time and
  \begin{align*}
    w_k \in H^1(\Omega_\varepsilon)^3, \quad (\eta_\varepsilon(\cdot),w_k)_{L^2(\Omega_\varepsilon)} \in L^2(0,T), \quad k=1,\dots,k_0
  \end{align*}
  by \eqref{Pf_MuW:eta_eps} and \eqref{Pf_MuW:w_k}, we have $w_\varepsilon\in L^2(0,T;H^1(\Omega_\varepsilon)^3)$ and thus
  \begin{align*}
    \mathbb{P}_\varepsilon\eta_\varepsilon = \eta_\varepsilon-w_\varepsilon \in L^2(0,T;\mathcal{H}_\varepsilon\cap H^1(\Omega_\varepsilon)^3) = L^2(0,T;\mathcal{V}_\varepsilon).
  \end{align*}
  Hence we can substitute $\mathbb{P}_\varepsilon\eta_\varepsilon$ for \eqref{E:NS_Weak}.
  Moreover,
  \begin{align*}
    (\partial_tu^\varepsilon,w_\varepsilon)_{L^2(\Omega_\varepsilon)} = (\mathbb{P}_\varepsilon f^\varepsilon,w_\varepsilon)_{L^2(\Omega_\varepsilon)} = 0
  \end{align*}
  by $\partial_tu^\varepsilon,\mathbb{P}_\varepsilon f^\varepsilon\in\mathcal{H}_\varepsilon$ and $w_\varepsilon\in\mathcal{R}_g\subset\mathcal{H}_\varepsilon^\perp$.
  Also, since
  \begin{align*}
    \gamma_\varepsilon^0 = \gamma_\varepsilon^1 = 0, \quad D(w_\varepsilon) = \sum_{k=1}^{k_0}(\eta_\varepsilon,w_k)_{L^2(\Omega_\varepsilon)}D(w_k) = 0 \quad\text{in}\quad \Omega_\varepsilon
  \end{align*}
  by the condition (A3) and \eqref{Pf_MuW:w_k}, we see that
  \begin{align*}
    a_\varepsilon(u^\varepsilon,w_\varepsilon) = 2\nu\bigl(D(u^\varepsilon),D(w_\varepsilon)\bigr)_{L^2(\Omega_\varepsilon)} = 0.
  \end{align*}
  We further observe by direct calculations and \eqref{Pf_MuW:w_k} that
  \begin{align*}
    u^\varepsilon\otimes u^\varepsilon:\nabla w_k = u^\varepsilon\cdot(u^\varepsilon\cdot\nabla)w_k = u^\varepsilon\cdot(a_k\times u^\varepsilon) = 0 \quad\text{in}\quad \Omega_\varepsilon
  \end{align*}
  for all $k=1,\dots,k_0$ and thus
  \begin{align*}
    b_\varepsilon(u^\varepsilon,u^\varepsilon,w_\varepsilon) = \sum_{k=1}^{k_0}(\eta_\varepsilon,w_k)_{L^2(\Omega_\varepsilon)}b_\varepsilon(u^\varepsilon,u^\varepsilon,w_k) = 0.
  \end{align*}
  Hence all terms including $w_\varepsilon$ vanish in \eqref{E:NS_Weak} with $\varphi=\mathbb{P}_\varepsilon\eta_\varepsilon=\eta_\varepsilon-w_\varepsilon$ and we get \eqref{Pf_MuW:Weak_eps} under the condition (A3) with $\mathcal{R}_g\neq\{0\}$.
  Therefore, \eqref{Pf_MuW:Weak_eps} holds under any condition of (A1), (A2), and (A3).

  Now we divide both sides of \eqref{Pf_MuW:Weak_eps} by $\varepsilon$ and replace each term of the resulting equality by the corresponding term of \eqref{E:Mu_Weak}.
  Then we get \eqref{E:Mu_Weak} with
  \begin{align*}
    R_\varepsilon^1(\eta) := \varepsilon^{-1}(J_1+J_2+J_3+J_4),
  \end{align*}
  where
  \begin{align*}
    J_1 &:= -\int_0^T(\partial_tu^\varepsilon,\eta_\varepsilon)_{L^2(\Omega_\varepsilon)}\,dt+\varepsilon\int_0^T(g\partial_tM_\tau u^\varepsilon,\eta)_{L^2(\Gamma)}\,dt, \\
    J_2 &:= -\int_0^Ta_\varepsilon(u^\varepsilon,\eta_\varepsilon)\,dt+\varepsilon\int_0^Ta_g(M_\tau u^\varepsilon,\eta)\,dt, \\
    J_3 &:= -\int_0^T b_\varepsilon(u^\varepsilon,u^\varepsilon,\eta_\varepsilon)\,dt+\varepsilon\int_0^Tb_g(M_\tau u^\varepsilon,M_\tau u^\varepsilon,\eta)\,dt, \\
    J_4 &:= \int_0^T(\mathbb{P}_\varepsilon f^\varepsilon,\eta_\varepsilon)_{L^2(\Omega_\varepsilon)}\,dt-\varepsilon\int_0^T(gM_\tau\mathbb{P}_\varepsilon f^\varepsilon,\eta)_{L^2(\Gamma)}\,dt.
  \end{align*}
  Let us estimate these differences.
  First note that
  \begin{align*}
    (g\partial_tM_\tau u^\varepsilon,\eta)_{L^2(\Gamma)} = (gM_\tau(\partial_tu^\varepsilon),\eta)_{L^2(\Gamma)} = (gM(\partial_tu^\varepsilon),\eta)_{L^2(\Gamma)}
  \end{align*}
  by Lemma \ref{L:Ave_Dt} and $\eta\cdot n=0$ on $\Gamma$.
  Thus, by \eqref{E:Ave_BiL2_Dom} and \eqref{E:Test_Dom},
  \begin{multline*}
    |(\partial_tu^\varepsilon,\eta_\varepsilon)_{L^2(\Omega_\varepsilon)}-\varepsilon(g\partial_tM_\tau u^\varepsilon,\eta)_{L^2(\Gamma)}| \\
    \begin{aligned}
      &\leq |(\partial_tu^\varepsilon,\bar{\eta})_{L^2(\Omega_\varepsilon)}-\varepsilon(gM(\partial_tu^\varepsilon),\eta)_{L^2(\Gamma)}|+\|\partial_tu^\varepsilon\|_{L^2(\Omega_\varepsilon)}\|\eta_\varepsilon-\bar{\eta}\|_{L^2(\Omega_\varepsilon)} \\
      &\leq c\varepsilon^{3/2}\|\partial_tu^\varepsilon\|_{L^2(\Omega_\varepsilon)}\|\eta\|_{L^2(\Gamma)}.
    \end{aligned}
  \end{multline*}
  From this inequality, H\"{o}lder's inequality, and \eqref{E:Est_DtUe} it follows that
  \begin{align} \label{Pf_MuW:Dt}
    \begin{aligned}
      |J_1| &\leq c\varepsilon^{3/2}\|\partial_tu^\varepsilon\|_{L^2(0,T;L^2(\Omega_\varepsilon))}\|\eta\|_{L^2(0,T;L^2(\Gamma))} \\
      &\leq c\varepsilon^{1+\alpha/2}(1+T)^{1/2}\|\eta\|_{L^2(0,T;L^2(\Gamma))}.
    \end{aligned}
  \end{align}
  In the same way, we apply \eqref{E:Ave_BiL2_Dom} and \eqref{E:Test_Dom} to $J_4$ and then use \eqref{E:UE_Data} to get
  \begin{align} \label{Pf_MuW:F}
    |J_4| \leq c\varepsilon^{1+\alpha/2}T^{1/2}\|\eta\|_{L^2(0,T;L^2(\Gamma))}.
  \end{align}
  Next we deal with $J_2$.
  By \eqref{E:Appr_Bili} we see that
  \begin{align*}
    |J_2| \leq c\left(\int_0^TR_\varepsilon^a(u^\varepsilon)^2\,dt\right)^{1/2}\|\eta\|_{L^2(0,T;H^1(\Gamma))},
  \end{align*}
  where $R_\varepsilon^a(u^\varepsilon)$ is given by \eqref{E:ApBi_Re}.
  Moreover, by \eqref{E:Est_Ueps} we have
  \begin{align*}
    \int_0^TR_\varepsilon^a(u^\varepsilon)^2\,dt &\leq c\left(\varepsilon^3\int_0^T\|u^\varepsilon\|_{H^2(\Omega_\varepsilon)}^2\,dt+\varepsilon\gamma(\varepsilon)^2\int_0^T\|u^\varepsilon\|_{L^2(\Omega_\varepsilon)}^2\,dt\right) \\
    &\leq c\varepsilon^2\{\varepsilon^\alpha+\gamma(\varepsilon)^2\}(1+T)
  \end{align*}
  with $\gamma(\varepsilon):=\sum_{i=0,1}|\varepsilon^{-1}\gamma_\varepsilon^i-\gamma^i|$.
  Therefore,
  \begin{align} \label{Pf_MuW:A}
    |J_2| \leq c\varepsilon\{\varepsilon^{\alpha/2}+\gamma(\varepsilon)\}(1+T)^{1/2}\|\eta\|_{L^2(0,T;H^1(\Gamma))}.
  \end{align}
  Let us estimate $J_3$.
  By \eqref{E:Appr_Tri} we have
  \begin{align*}
    |J_3| \leq c\left(\int_0^TR_\varepsilon^b(u^\varepsilon,u^\varepsilon)^2\,dt\right)^{1/2}\|\eta\|_{L^2(0,T;H^1(\Gamma))}
  \end{align*}
  with $R_\varepsilon^b(u^\varepsilon,u^\varepsilon)$ given by \eqref{E:ApTr_Re}.
  To estimate the right-hand side, we see that
  \begin{align*}
    \int_0^T\|u^\varepsilon\|_{H^1(\Omega_\varepsilon)}^4\,dt \leq \left(\max_{t\in[0,T]}\|u^\varepsilon(t)\|_{H^1(\Omega_\varepsilon)}^2\right)\int_0^T\|u^\varepsilon\|_{H^1(\Omega_\varepsilon)}^2\,dt \leq c\varepsilon^\alpha(1+T)
  \end{align*}
  by \eqref{E:Est_Ueps}.
  Using this inequality, \eqref{E:Est_UeUe}, \eqref{Pf_EsSt:L2H2}, and \eqref{Pf_EsSt:L2H2_31} we deduce that
  \begin{align*}
    &\int_0^TR_\varepsilon^b(u^\varepsilon,u^\varepsilon)^2\,dt \\
    &\qquad \leq c\left(\varepsilon^3\int_0^T\|u^\varepsilon\otimes u^\varepsilon\|_{L^2(\Omega_\varepsilon)}^2\,dt+\varepsilon^2\int_0^T\|u^\varepsilon\|_{H^1(\Omega_\varepsilon)}^4\,dt\right. \\
    &\qquad\qquad \left.+\varepsilon^2\int_0^T\|u^\varepsilon\|_{L^2(\Omega_\varepsilon)}^2\|u^\varepsilon\|_{H^2(\Omega_\varepsilon)}^2\,dt+\varepsilon\int_0^T\|u^\varepsilon\|_{L^2(\Omega_\varepsilon)}^3\|u^\varepsilon\|_{H^2(\Omega_\varepsilon)}\,dt\right)\\
    &\qquad \leq c\varepsilon^2(\varepsilon^2+\varepsilon^\alpha+\varepsilon^{\alpha/2})(1+T) \leq c\varepsilon^{2+\alpha/2}(1+T).
  \end{align*}
  Note that $\varepsilon^2,\varepsilon^\alpha\leq\varepsilon^{\alpha/2}$ by $\varepsilon,\alpha\in(0,1]$.
  Hence we obtain
  \begin{align} \label{Pf_MuW:B}
    |J_3| \leq c\varepsilon^{1+\alpha/4}(1+T)^{1/2}\|\eta\|_{L^2(0,T;H^1(\Gamma))}.
  \end{align}
  Finally, we observe by \eqref{Pf_MuW:Dt}--\eqref{Pf_MuW:B} that
  \begin{align*}
    |R_\varepsilon^1(\eta)| \leq \varepsilon^{-1}\sum_{k=1}^4|J_k| \leq c\{\varepsilon^{\alpha/4}+\varepsilon^{\alpha/2}+\gamma(\varepsilon)\}(1+T)^{1/2}\|\eta\|_{L^2(0,T;H^1(\Gamma))}
  \end{align*}
  and thus \eqref{E:Mu_Weak_Re} holds by $\gamma(\varepsilon)=\sum_{i=0,1}|\varepsilon^{-1}\gamma_\varepsilon^i-\gamma^i|$ and $\varepsilon^{\alpha/2}\leq\varepsilon^{\alpha/4}$.
\end{proof}

\subsection{Energy estimate for the average of the strong solution} \label{SS:SL_Ener}
Next we derive the energy estimate for $M_\tau u^\varepsilon$.
We would easily get the energy estimate if we could take $M_\tau u^\varepsilon$ itself as a test function in \eqref{E:Mu_Weak}.
However, we cannot do that since $M_\tau u^\varepsilon$ is not in $\mathcal{V}_g$, i.e. the surface divergence of $gM_\tau u^\varepsilon$ does not vanish on $\Gamma$ in general.
To overcome this difficulty we use the weighted Helmholtz--Leray projection
\begin{align*}
  \mathbb{P}_g\colon L^2(\Gamma,T\Gamma) \to \mathcal{H}_g = L_{g\sigma}^2(\Gamma,T\Gamma)
\end{align*}
given in Section \ref{SS:WS_HL} and replace $M_\tau u^\varepsilon$ in \eqref{E:Mu_Weak} by $\mathbb{P}_gM_\tau u^\varepsilon$.

\begin{lemma} \label{L:Rep_WHL_L2}
  Let $u\in L_\sigma^2(\Omega_\varepsilon)$.
  Then $\mathbb{P}_gM_\tau u\in\mathcal{H}_g$ and there exists a constant $c>0$ independent of $\varepsilon$ and $u$ such that
  \begin{align} \label{E:Rep_WHL_L2}
    \|M_\tau u-\mathbb{P}_gM_\tau u\|_{L^2(\Gamma)} \leq c\varepsilon^{1/2}\|u\|_{L^2(\Omega_\varepsilon)}.
  \end{align}
  Moreover, if $u\in L_\sigma^2(\Omega_\varepsilon)\cap H^1(\Omega_\varepsilon)^3$, then $\mathbb{P}_gM_\tau u\in \mathcal{V}_g$ and
  \begin{align} \label{E:Rep_WHL_H1}
    \|M_\tau u-\mathbb{P}_gM_\tau u\|_{H^1(\Gamma)} \leq c\varepsilon^{1/2}\|u\|_{H^1(\Omega_\varepsilon)}.
  \end{align}
\end{lemma}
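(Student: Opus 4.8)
The plan is to combine the bounds on the weighted Helmholtz--Leray difference from Lemma~\ref{L:HLT_Est_L2} with the estimates for $\mathrm{div}_\Gamma(gM_\tau u)$ established in Section~\ref{S:Ave}. The key point is that the two statements of Lemma~\ref{L:HLT_Est_L2} say precisely that $\|M_\tau u-\mathbb{P}_gM_\tau u\|_{L^2(\Gamma)}\leq c\|\mathrm{div}_\Gamma(gM_\tau u)\|_{H^{-1}(\Gamma)}$ for any $v=M_\tau u\in L^2(\Gamma,T\Gamma)$, and $\|M_\tau u-\mathbb{P}_gM_\tau u\|_{H^1(\Gamma)}\leq c\|\mathrm{div}_\Gamma(gM_\tau u)\|_{L^2(\Gamma)}$ when additionally $M_\tau u\in H^1(\Gamma,T\Gamma)$, in which case $\mathbb{P}_gM_\tau u\in H_{g\sigma}^1(\Gamma,T\Gamma)=\mathcal{V}_g$. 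So the whole proof reduces to controlling $\|\mathrm{div}_\Gamma(gM_\tau u)\|_{H^{-1}(\Gamma)}$ for $u\in L_\sigma^2(\Omega_\varepsilon)$ and $\|\mathrm{div}_\Gamma(gM_\tau u)\|_{L^2(\Gamma)}$ for $u\in L_\sigma^2(\Omega_\varepsilon)\cap H^1(\Omega_\varepsilon)^3$.

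First I would treat the $L^2$ case. Given $u\in L_\sigma^2(\Omega_\varepsilon)$, we have $\mathrm{div}\,u=0$ in $\Omega_\varepsilon$ and $u\cdot n_\varepsilon=0$ on $\Gamma_\varepsilon$, so $u$ satisfies the impermeable boundary condition \eqref{E:Bo_Imp}; hence Lemma~\ref{L:ADiv_Tan_Hin} applies and yields directly
\begin{align*}
  \|\mathrm{div}_\Gamma(gM_\tau u)\|_{H^{-1}(\Gamma)} \leq c\varepsilon^{1/2}\|u\|_{L^2(\Omega_\varepsilon)}.
\end{align*}
Feeding this into \eqref{E:HLT_Est_L2} with $v=M_\tau u$ gives $\mathbb{P}_gM_\tau u\in\mathcal{H}_g$ (automatic, as $\mathbb{P}_g$ maps into $\mathcal{H}_g$) and the bound \eqref{E:Rep_WHL_L2}. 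One small point to record is that $M_\tau u\in L^2(\Gamma,T\Gamma)$ so the decomposition \eqref{E:HL_L2T} and Lemma~\ref{L:HLT_Est_L2} are applicable; this follows from Lemma~\ref{L:AveT_Lp}.

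For the second statement, assume $u\in L_\sigma^2(\Omega_\varepsilon)\cap H^1(\Omega_\varepsilon)^3$. Then $M_\tau u\in H^1(\Gamma,T\Gamma)$ by Lemma~\ref{L:AveT_Wmp}, and $u$ still satisfies $\mathrm{div}\,u=0$ in $\Omega_\varepsilon$ and \eqref{E:Bo_Imp}, so Lemma~\ref{L:ADiv_Tan} gives the $L^2(\Gamma)$-bound
\begin{align*}
  \|\mathrm{div}_\Gamma(gM_\tau u)\|_{L^2(\Gamma)} \leq c\varepsilon^{1/2}\|u\|_{H^1(\Omega_\varepsilon)}
\end{align*}
(here $p=2$ so $\varepsilon^{1-1/p}=\varepsilon^{1/2}$). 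Applying \eqref{E:HLT_Est_H1} with $v=M_\tau u$ then yields $\mathbb{P}_gM_\tau u\in H_{g\sigma}^1(\Gamma,T\Gamma)=\mathcal{V}_g$ and the estimate \eqref{E:Rep_WHL_H1}.

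Honestly, this lemma is essentially a bookkeeping exercise: all the analytic content is already packaged in Lemmas~\ref{L:ADiv_Tan}, \ref{L:ADiv_Tan_Hin}, and \ref{L:HLT_Est_L2}. The only thing requiring a little care is checking that the hypotheses of each cited lemma are met --- in particular that $u\in L_\sigma^2(\Omega_\varepsilon)$ genuinely gives both $\mathrm{div}\,u=0$ and the impermeable condition needed by the Section~\ref{S:Ave} estimates, and that the regularity of $M_\tau u$ is exactly what is needed to invoke the $H^1$ version of the Helmholtz--Leray estimate. There is no serious obstacle; the main (mild) subtlety is just matching the power of $\varepsilon$ coming out of the $L^p$-type estimates with $p=2$, which is where the uniform $\varepsilon^{1/2}$ prefactor in both conclusions originates.
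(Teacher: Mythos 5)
Your proposal is correct and follows essentially the same route as the paper: combine \eqref{E:HLT_Est_L2} and \eqref{E:HLT_Est_H1} applied to $v=M_\tau u$ with the divergence estimates \eqref{E:ADiv_Tan_Hin} and \eqref{E:ADiv_Tan_Lp} (with $p=2$), using Lemmas \ref{L:AveT_Lp} and \ref{L:AveT_Wmp} to ensure $M_\tau u$ has the required regularity. The hypothesis checks you flag are exactly the ones the paper relies on, so there is nothing to add.
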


\begin{proof}
  By $u\in L_\sigma^2(\Omega_\varepsilon)$ and Lemma \ref{L:AveT_Lp} we have
  \begin{align*}
    M_\tau u \in L^2(\Gamma,T\Gamma), \quad \mathbb{P}_gM_\tau u \in \mathcal{H}_g.
  \end{align*}
  Also, we apply \eqref{E:HLT_Est_L2} to $v=M_\tau u$ and use \eqref{E:ADiv_Tan_Hin} to get \eqref{E:Rep_WHL_L2}.

  If $u\in L_\sigma^2(\Omega_\varepsilon)\cap H^1(\Omega_\varepsilon)^3$, then Lemmas \ref{L:AveT_Wmp} and \ref{L:HLT_Est_L2} imply that
  \begin{align*}
    M_\tau u \in H^1(\Gamma,T\Gamma), \quad \mathbb{P}_gM_\tau u \in \mathcal{V}_g.
  \end{align*}
  We also have \eqref{E:Rep_WHL_H1} by applying \eqref{E:HLT_Est_H1} to $v=M_\tau u$ and using \eqref{E:ADiv_Tan_Lp}.
\end{proof}

\begin{lemma} \label{L:Rep_WHL_Dt}
  For $T>0$ let $u\in H^1(0,T;L_\sigma^2(\Omega_\varepsilon))$.
  Then
  \begin{align} \label{E:Rep_WHL_Reg}
    \mathbb{P}_gM_\tau u\in H^1(0,T;\mathcal{H}_g)
  \end{align}
  and there exists a constant $c>0$ independent of $\varepsilon$ and $u$ such that
  \begin{align} \label{E:Rep_WHL_Dt}
    \|\partial_tM_\tau u-\partial_t\mathbb{P}_gM_\tau u\|_{L^2(0,T;L^2(\Gamma))} \leq c\varepsilon^{1/2}\|\partial_tu\|_{L^2(0,T;L^2(\Omega_\varepsilon))}.
  \end{align}
\end{lemma}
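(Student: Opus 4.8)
The plan is to prove Lemma \ref{L:Rep_WHL_Dt} by combining Lemma \ref{L:Ave_Dt} (which says $M_\tau$ commutes with $\partial_t$) with Lemma \ref{L:HLT_Est_Dt} (the analogous statement for the weighted Helmholtz--Leray projection $\mathbb{P}_g$) and the $H^{-1}(\Gamma)$-estimate \eqref{E:ADiv_Tan_Hin} for $\mathrm{div}_\Gamma(gM_\tau u)$ from Lemma \ref{L:ADiv_Tan_Hin}. First I would invoke Lemma \ref{L:Ave_Dt}: since $u\in H^1(0,T;L_\sigma^2(\Omega_\varepsilon))\subset H^1(0,T;L^2(\Omega_\varepsilon)^3)$, we get $M_\tau u\in H^1(0,T;L^2(\Gamma,T\Gamma))$ with $\partial_t M_\tau u=M_\tau(\partial_t u)$ in $L^2(0,T;L^2(\Gamma,T\Gamma))$, and the bound $\|\partial_t M_\tau u\|_{L^2(0,T;L^2(\Gamma))}\le c\varepsilon^{-1/2}\|\partial_t u\|_{L^2(0,T;L^2(\Omega_\varepsilon))}$.

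Next, with $v:=M_\tau u\in H^1(0,T;L^2(\Gamma,T\Gamma))$ in hand, I would apply Lemma \ref{L:HLT_Est_Dt} directly. That lemma gives $\mathbb{P}_g v=\mathbb{P}_g M_\tau u\in H^1(0,T;L_{g\sigma}^2(\Gamma,T\Gamma))=H^1(0,T;\mathcal{H}_g)$, which is exactly \eqref{E:Rep_WHL_Reg}, and it also yields
\begin{align*}
  \|\partial_t M_\tau u-\partial_t\mathbb{P}_g M_\tau u\|_{L^2(0,T;L^2(\Gamma))} \leq c\|\mathrm{div}_\Gamma(g\partial_t M_\tau u)\|_{L^2(0,T;H^{-1}(\Gamma))}.
\end{align*}
Using $\partial_t M_\tau u=M_\tau(\partial_t u)$ a.e.\ in $t$ together with the fact that, for a.a.\ $t$, $\partial_t u(t)\in L_\sigma^2(\Omega_\varepsilon)$ (since $u\in H^1(0,T;L_\sigma^2(\Omega_\varepsilon))$ and $L_\sigma^2(\Omega_\varepsilon)$ is a closed subspace so the weak derivative stays in it), I can apply \eqref{E:ADiv_Tan_Hin} pointwise in $t$: $\|\mathrm{div}_\Gamma(gM_\tau(\partial_t u(t)))\|_{H^{-1}(\Gamma)}\le c\varepsilon^{1/2}\|\partial_t u(t)\|_{L^2(\Omega_\varepsilon)}$. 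Squaring and integrating over $(0,T)$ then gives $\|\mathrm{div}_\Gamma(g\partial_t M_\tau u)\|_{L^2(0,T;H^{-1}(\Gamma))}\le c\varepsilon^{1/2}\|\partial_t u\|_{L^2(0,T;L^2(\Omega_\varepsilon))}$, and combining with the displayed inequality above yields \eqref{E:Rep_WHL_Dt}.

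The only genuinely delicate point is the measurability/time-regularity bookkeeping: I must check that $t\mapsto\partial_t u(t)$ is (for a.a.\ $t$) valued in $L_\sigma^2(\Omega_\varepsilon)$ so that \eqref{E:ADiv_Tan_Hin} applies, and that the pointwise-in-$t$ estimates assemble into the claimed $L^2(0,T;\cdot)$ bound; this is routine since $L_\sigma^2(\Omega_\varepsilon)$ is closed in $L^2(\Omega_\varepsilon)^3$ and the map $w\mapsto\mathrm{div}_\Gamma(gM_\tau w)$ is bounded from $L_\sigma^2(\Omega_\varepsilon)$ to $H^{-1}(\Gamma)$ by \eqref{E:ADiv_Tan_Hin}. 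One should also note, as in the proof of Lemma \ref{L:HLT_Est_Dt}, that $g$ and $P$ (hence $M_\tau$ and the operations defining $\mathrm{div}_\Gamma$) are time-independent, so $\mathrm{div}_\Gamma(g\partial_t M_\tau u)=\partial_t\,\mathrm{div}_\Gamma(gM_\tau u)$ in the appropriate distributional sense, making the substitution $\partial_t M_\tau u=M_\tau(\partial_t u)$ legitimate inside the divergence. No new estimates are needed beyond those already established; the lemma is essentially a concatenation of Lemmas \ref{L:Ave_Dt}, \ref{L:ADiv_Tan_Hin}, and \ref{L:HLT_Est_Dt}.
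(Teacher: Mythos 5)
Your proposal is correct and follows essentially the same route as the paper: commute $M_\tau$ with $\partial_t$ via Lemma \ref{L:Ave_Dt}, apply Lemma \ref{L:HLT_Est_Dt} to $v=M_\tau u$ to get the regularity and the bound by $\|\mathrm{div}_\Gamma(g\partial_tM_\tau u)\|_{L^2(0,T;H^{-1}(\Gamma))}$, and then conclude with \eqref{E:ADiv_Tan_Hin} applied to $\partial_tu\in L^2(0,T;L_\sigma^2(\Omega_\varepsilon))$. The extra remarks on measurability are fine but not needed beyond what the cited lemmas already provide.
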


\begin{proof}
  By $u\in H^1(0,T;L_\sigma^2(\Omega_\varepsilon))$ and Lemma \ref{L:Ave_Dt} we have
  \begin{align*}
    M_\tau u \in H^1(0,T;L^2(\Gamma,T\Gamma)), \quad \partial_tM_\tau u = M_\tau(\partial_tu) \quad\text{in}\quad L^2(0,T;L^2(\Gamma,T\Gamma)).
  \end{align*}
  Hence \eqref{E:Rep_WHL_Reg} holds by Lemma \ref{L:HLT_Est_Dt} and we observe by \eqref{E:HLT_Est_Dt} that
  \begin{align*}
    \|\partial_tM_\tau u-\partial_t\mathbb{P}_gM_\tau u\|_{L^2(0,T;L^2(\Gamma))} &\leq c\|\mathrm{div}_\Gamma(g\partial_tM_\tau u)\|_{L^2(0,T;H^{-1}(\Gamma))} \\
    &= c\|\mathrm{div}_\Gamma[gM_\tau(\partial_tu)]\|_{L^2(0,T;H^{-1}(\Gamma))}.
  \end{align*}
  Moreover, since $\partial_tu\in L^2(0,T;L_\sigma^2(\Omega_\varepsilon))$, we can use \eqref{E:ADiv_Tan_Hin} to get
  \begin{align*}
    \|\mathrm{div}_\Gamma[gM_\tau(\partial_tu)]\|_{L^2(0,T;H^{-1}(\Gamma))} \leq c\varepsilon^{1/2}\|\partial_tu\|_{L^2(0,T;L^2(\Omega_\varepsilon))}.
  \end{align*}
  Combining the above two inequalities we obtain \eqref{E:Rep_WHL_Dt}.
\end{proof}

Using Lemmas \ref{L:Rep_WHL_L2} and \ref{L:Rep_WHL_Dt} we replace $M_\tau u^\varepsilon$ in \eqref{E:Mu_Weak} by $\mathbb{P}_gM_\tau u^\varepsilon$.

\begin{lemma} \label{L:PMu_Weak}
  Let $u^\varepsilon$ be as in Lemma \ref{L:Est_Ueps}.
  Then
  \begin{align*}
    v^\varepsilon := \mathbb{P}_gM_\tau u^\varepsilon \in C([0,\infty);\mathcal{V}_g)\cap H_{loc}^1([0,\infty),\mathcal{H}_g)
  \end{align*}
  and there exists a constant $c>0$ independent of $\varepsilon$ and $u^\varepsilon$ such that
  \begin{align} \label{E:Diff_PMu}
    \begin{aligned}
      \|M_\tau u^\varepsilon(t)-v^\varepsilon(t)\|_{L^2(\Gamma)}^2 &\leq c\varepsilon^2, \\
      \int_0^t\|M_\tau u^\varepsilon(s)-v^\varepsilon(s)\|_{H^1(\Gamma)}^2\,ds &\leq c\varepsilon^2(1+t)
      \end{aligned}
  \end{align}
  for all $t\geq 0$.
  Moreover, for all $T>0$ and $\eta\in L^2(0,T;\mathcal{V}_g)$ we have
  \begin{multline} \label{E:PMu_Weak}
    \int_0^T\{(g\partial_tv^\varepsilon,\eta)_{L^2(\Gamma)}+a_g(v^\varepsilon,\eta)+b_g(v^\varepsilon,v^\varepsilon,\eta)\}\,dt \\
    = \int_0^T(gM_\tau\mathbb{P}_\varepsilon f^\varepsilon,\eta)_{L^2(\Gamma)}\,dt+R_\varepsilon^1(\eta)+R_\varepsilon^2(\eta),
  \end{multline}
  where $R_\varepsilon^1(\eta)$ is given in Lemma \ref{L:Mu_Weak} and $R_\varepsilon^2(\eta)$ satisfies
  \begin{align} \label{E:PMu_Weak_Re}
    |R_\varepsilon^2(\eta)| \leq c\varepsilon^{\alpha/2}(1+T)^{1/2}\|\eta\|_{L^2(0,T;H^1(\Gamma))}
  \end{align}
  with a constant $c>0$ independent of $\varepsilon$, $v^\varepsilon$, $\eta$, and $T$.
\end{lemma}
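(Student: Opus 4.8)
The plan is to deduce everything from the weak formulation \eqref{E:Mu_Weak} for $M_\tau u^\varepsilon$ proved in Lemma \ref{L:Mu_Weak}, combined with the mapping properties of the weighted Helmholtz--Leray projection $\mathbb{P}_g$ collected in Section \ref{SS:WS_HL} and the estimates \eqref{E:Est_Ueps}, \eqref{E:Est_UeUe}, \eqref{E:Est_DtUe} for the strong solution $u^\varepsilon$. First, for the regularity of $v^\varepsilon=\mathbb{P}_gM_\tau u^\varepsilon$: since $\mathcal{H}_\varepsilon\subset L_\sigma^2(\Omega_\varepsilon)$, the solution $u^\varepsilon$ belongs to $C([0,\infty);L_\sigma^2(\Omega_\varepsilon)\cap H^1(\Omega_\varepsilon)^3)\cap H_{loc}^1([0,\infty);L_\sigma^2(\Omega_\varepsilon))$; Lemmas \ref{L:AveT_Lp}, \ref{L:AveT_Wmp} and \ref{L:HLT_Bound} show that $M_\tau$ and $\mathbb{P}_g$ are bounded on the relevant spaces, so $v^\varepsilon\in C([0,\infty);H^1(\Gamma,T\Gamma))$, while Lemma \ref{L:Rep_WHL_L2} (resp. Lemma \ref{L:Rep_WHL_Dt}) gives $v^\varepsilon(t)\in\mathcal{V}_g$ for every $t$ (resp. $v^\varepsilon\in H_{loc}^1([0,\infty);\mathcal{H}_g)$).

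Next, the estimate \eqref{E:Diff_PMu}. Applying \eqref{E:Rep_WHL_L2} pointwise in $t$ with $u=u^\varepsilon(t)$ and then the first inequality of \eqref{E:Est_Ueps} gives
\[
  \|M_\tau u^\varepsilon(t)-v^\varepsilon(t)\|_{L^2(\Gamma)}^2\leq c\varepsilon\|u^\varepsilon(t)\|_{L^2(\Omega_\varepsilon)}^2\leq c\varepsilon^2,
\]
and integrating \eqref{E:Rep_WHL_H1} in $t$ and using the second inequality of \eqref{E:Est_Ueps} gives
\[
  \int_0^t\|M_\tau u^\varepsilon(s)-v^\varepsilon(s)\|_{H^1(\Gamma)}^2\,ds\leq c\varepsilon\int_0^t\|u^\varepsilon(s)\|_{H^1(\Omega_\varepsilon)}^2\,ds\leq c\varepsilon^2(1+t).
\]

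For the weak formulation \eqref{E:PMu_Weak}, set $w^\varepsilon:=M_\tau u^\varepsilon-v^\varepsilon$ and substitute $M_\tau u^\varepsilon=v^\varepsilon+w^\varepsilon$ into each term of \eqref{E:Mu_Weak}; using the trilinearity of $b_g$ and the identity $b_g(v^\varepsilon+w^\varepsilon,v^\varepsilon+w^\varepsilon,\eta)=b_g(v^\varepsilon,v^\varepsilon,\eta)+b_g(v^\varepsilon,w^\varepsilon,\eta)+b_g(w^\varepsilon,M_\tau u^\varepsilon,\eta)$, this yields \eqref{E:PMu_Weak} with
\[
  R_\varepsilon^2(\eta)=-\int_0^T\bigl\{(g\partial_tw^\varepsilon,\eta)_{L^2(\Gamma)}+a_g(w^\varepsilon,\eta)+b_g(w^\varepsilon,M_\tau u^\varepsilon,\eta)+b_g(v^\varepsilon,w^\varepsilon,\eta)\bigr\}\,dt.
\]
The first term is bounded by $c\|\partial_tw^\varepsilon\|_{L^2(0,T;L^2(\Gamma))}\|\eta\|_{L^2(0,T;L^2(\Gamma))}$, and \eqref{E:Rep_WHL_Dt} together with \eqref{E:Est_DtUe} gives $\|\partial_tw^\varepsilon\|_{L^2(0,T;L^2(\Gamma))}\leq c\varepsilon^{1/2}\|\partial_tu^\varepsilon\|_{L^2(0,T;L^2(\Omega_\varepsilon))}\leq c\varepsilon^{\alpha/2}(1+T)^{1/2}$; the second term is estimated by \eqref{E:BiS_Bdd} and \eqref{E:Diff_PMu}, giving $c\varepsilon(1+T)^{1/2}\|\eta\|_{L^2(0,T;H^1(\Gamma))}$. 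For the two trilinear terms I use \eqref{E:Tri_Surf}, the uniform-in-$t$ bounds $\|M_\tau u^\varepsilon(t)\|_{L^2(\Gamma)}+\|v^\varepsilon(t)\|_{L^2(\Gamma)}\leq c$ and $\|M_\tau u^\varepsilon(t)\|_{H^1(\Gamma)}+\|v^\varepsilon(t)\|_{H^1(\Gamma)}\leq c\varepsilon^{-1+\alpha/2}$ obtained from Lemmas \ref{L:AveT_Lp}, \ref{L:AveT_Wmp}, \ref{L:HLT_Bound} and \eqref{E:Est_Ueps}, the pointwise bound $\|w^\varepsilon(t)\|_{L^2(\Gamma)}\leq c\varepsilon$ and the integral bound $\int_0^T\|w^\varepsilon\|_{H^1(\Gamma)}^2\,dt\leq c\varepsilon^2(1+T)$ from \eqref{E:Diff_PMu}; splitting off $\|w^\varepsilon\|_{H^1(\Gamma)}^{1/2}$ and applying Cauchy--Schwarz in $t$ bounds each trilinear term by $c\varepsilon^{1/2+\alpha/4}(1+T)^{1/2}\|\eta\|_{L^2(0,T;H^1(\Gamma))}$. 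Since $\alpha\in(0,1]$ and $\varepsilon\in(0,1]$, all four contributions are dominated by $c\varepsilon^{\alpha/2}(1+T)^{1/2}\|\eta\|_{L^2(0,T;H^1(\Gamma))}$, which is \eqref{E:PMu_Weak_Re}.

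The hard part will be the bookkeeping in the trilinear error terms: the $H^1(\Gamma)$-norm of $M_\tau u^\varepsilon$ grows like $\varepsilon^{-1+\alpha/2}$, and these terms close only because that growth is absorbed by the extra factor $\varepsilon^{1/2}$ gained from the smallness of $w^\varepsilon$ in $L^2(\Gamma)$ \emph{uniformly in $t$} (as well as in $L^2(0,T;H^1(\Gamma))$). One must therefore be careful to keep the pointwise $L^2(\Gamma)$-control of $w^\varepsilon$ separate from its $H^1$-in-time control, and to track the exact powers of $\varepsilon$ and of $(1+T)$ throughout; everything else reduces to the estimates already established in Sections \ref{S:Ave} and \ref{S:WSol}.
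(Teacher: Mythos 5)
Your proposal is correct and follows essentially the same route as the paper: the regularity and \eqref{E:Diff_PMu} come from Lemmas \ref{L:Rep_WHL_L2}, \ref{L:Rep_WHL_Dt} and \eqref{E:Est_Ueps}, and $R_\varepsilon^2(\eta)$ is the sum of the time-derivative, bilinear and trilinear differences, estimated via \eqref{E:Rep_WHL_Dt}, \eqref{E:Est_DtUe}, \eqref{E:BiS_Bdd} and \eqref{E:Tri_Surf}. The only (harmless) deviation is in the trilinear error: the paper factors it as $K_1K_2\leq c\|u^\varepsilon\|_{L^2(\Omega_\varepsilon)}\|u^\varepsilon\|_{H^1(\Omega_\varepsilon)}$ and integrates to get $c\varepsilon(1+T)^{1/2}$, whereas your pointwise/$L^1$-in-time splitting yields the slightly weaker $c\varepsilon^{1/2+\alpha/4}(1+T)^{1/2}$, which is still dominated by $c\varepsilon^{\alpha/2}(1+T)^{1/2}$ since $\alpha\in(0,1]$.
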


\begin{proof}
  The space-time regularity of $v^\varepsilon$ follows from that of $u^\varepsilon$ and Lemmas \ref{L:Rep_WHL_L2} and \ref{L:Rep_WHL_Dt}.
  We also have \eqref{E:Diff_PMu} by \eqref{E:Est_Ueps}, \eqref{E:Rep_WHL_L2}, and \eqref{E:Rep_WHL_H1}.
  For $\eta\in L^2(0,T;\mathcal{V}_g)$ let
  \begin{align*}
    J_1 &:= -\int_0^T(g\partial_t M_\tau u^\varepsilon,\eta)_{L^2(\Gamma)}\,dt+\int_0^T(g\partial_tv^\varepsilon,\eta)_{L^2(\Gamma)}\,dt, \\
    J_2 &:= -\int_0^Ta_g(M_\tau u^\varepsilon,\eta)\,dt+\int_0^Ta_g(v^\varepsilon,\eta)\,dt, \\
    J_3 &:= -\int_0^Tb_g(M_\tau u^\varepsilon,M_\tau u^\varepsilon,\eta)\,dt+\int_0^Tb_g(v^\varepsilon,v^\varepsilon,\eta)\,dt.
  \end{align*}
  Then by \eqref{E:Mu_Weak} we have \eqref{E:PMu_Weak} with
  \begin{align} \label{Pf_PMuW:Res}
    R_\varepsilon^2(\eta) := J_1+J_2+J_3.
  \end{align}
  Let us estimate $J_1$, $J_2$, and $J_3$.
  For $J_1$, we observe by \eqref{E:Est_DtUe} and \eqref{E:Rep_WHL_Dt} that
  \begin{align} \label{Pf_PMuW:I1}
    \begin{aligned}
      |J_1| &\leq c\|\partial_tM_\tau u^\varepsilon-\partial_tv^\varepsilon\|_{L^2(0,T;L^2(\Gamma))}\|\eta\|_{L^2(0,T;L^2(\Gamma))} \\
      &\leq c\varepsilon^{1/2}\|\partial_tu^\varepsilon\|_{L^2(0,T;L^2(\Omega_\varepsilon))}\|\eta\|_{L^2(0,T;L^2(\Gamma))} \\
      &\leq c\varepsilon^{\alpha/2}(1+T)^{1/2}\|\eta\|_{L^2(0,T;L^2(\Gamma))}.
    \end{aligned}
  \end{align}
  Also, we use \eqref{E:BiS_Bdd} and \eqref{E:Diff_PMu} to get
  \begin{align} \label{Pf_PMuW:I2}
    \begin{aligned}
      |J_2| &\leq c\|M_\tau u^\varepsilon-v^\varepsilon\|_{L^2(0,T;H^1(\Gamma))}\|\eta\|_{L^2(0,T;H^1(\Gamma))} \\
      &\leq c\varepsilon(1+T)^{1/2}\|\eta\|_{L^2(0,T;H^1(\Gamma))}.
    \end{aligned}
  \end{align}
  To estimate $J_3$ we observe by
  \begin{multline*}
    b_g(M_\tau u^\varepsilon,M_\tau u^\varepsilon,\eta)-b_g(v^\varepsilon,v^\varepsilon,\eta) \\
    = b_g(M_\tau u^\varepsilon,M_\tau u^\varepsilon-v^\varepsilon,\eta)+b_g(M_\tau u^\varepsilon-v^\varepsilon,v^\varepsilon,\eta)
  \end{multline*}
  and \eqref{E:Tri_Surf} that
  \begin{align*}
    |b_g(M_\tau u^\varepsilon,M_\tau u^\varepsilon,\eta)-b_g(v^\varepsilon,v^\varepsilon,\eta)| \leq cK_1K_2\|\eta\|_{H^1(\Gamma)},
  \end{align*}
  where
  \begin{align*}
    K_1 &:= \|M_\tau u^\varepsilon\|_{L^2(\Gamma)}^{1/2}\|M_\tau u^\varepsilon\|_{H^1(\Gamma)}^{1/2}+\|v^\varepsilon\|_{L^2(\Gamma)}^{1/2}\|v^\varepsilon\|_{H^1(\Gamma)}^{1/2}, \\
    K_2 &:= \|M_\tau u^\varepsilon-v^\varepsilon\|_{L^2(\Gamma)}^{1/2}\|M_\tau u^\varepsilon-v^\varepsilon\|_{H^1(\Gamma)}^{1/2}.
  \end{align*}
  Moreover, we apply \eqref{E:HLT_Bound} to $v^\varepsilon$ and then use \eqref{E:AveT_Lp_Surf} and \eqref{E:AveT_Wmp_Surf} to get
  \begin{align*}
    K_1 \leq c\|M_\tau u^\varepsilon\|_{L^2(\Gamma)}^{1/2}\|M_\tau u^\varepsilon\|_{H^1(\Gamma)}^{1/2} \leq c\varepsilon^{-1/2}\|u^\varepsilon\|_{L^2(\Omega_\varepsilon)}^{1/2}\|u^\varepsilon\|_{H^1(\Omega_\varepsilon)}^{1/2}.
  \end{align*}
  We also deduce from \eqref{E:Rep_WHL_L2} and \eqref{E:Rep_WHL_H1} that
  \begin{align*}
    K_2 \leq c\varepsilon^{1/2}\|u^\varepsilon\|_{L^2(\Omega_\varepsilon)}^{1/2}\|u^\varepsilon\|_{H^1(\Omega_\varepsilon)}^{1/2}.
  \end{align*}
  Hence it follows from the above inequalities that
  \begin{align*}
    |b_g(M_\tau u^\varepsilon,M_\tau u^\varepsilon,\eta)-b_g(v^\varepsilon,v^\varepsilon,\eta)| \leq c\|u^\varepsilon\|_{L^2(\Omega_\varepsilon)}\|u^\varepsilon\|_{H^1(\Omega_\varepsilon)}\|\eta\|_{H^1(\Gamma)}
  \end{align*}
  and we use this inequality and \eqref{Pf_EsSt:L2H1} to get
  \begin{align} \label{Pf_PMuW:I3}
    \begin{aligned}
      |J_3| &\leq c\left(\int_0^T\|u^\varepsilon\|_{L^2(\Omega_\varepsilon)}^2\|u^\varepsilon\|_{H^1(\Omega_\varepsilon)}^2\,dt\right)^{1/2}\|\eta\|_{L^2(0,T;H^1(\Gamma))} \\
      &\leq c\varepsilon(1+T)^{1/2}\|\eta\|_{L^2(0,T;H^1(\Gamma))}.
    \end{aligned}
  \end{align}
  Applying \eqref{Pf_PMuW:I1}--\eqref{Pf_PMuW:I3} and $\varepsilon\leq\varepsilon^{\alpha/2}$ to \eqref{Pf_PMuW:Res} we obtain \eqref{E:PMu_Weak_Re}.
\end{proof}

Let us derive the energy estimate for $v^\varepsilon$ from \eqref{E:PMu_Weak}.

\begin{lemma} \label{L:PMu_Energy}
  Let $u^\varepsilon$ be as in Lemma \ref{L:Est_Ueps} and $v^\varepsilon=\mathbb{P}_gM_\tau u^\varepsilon$.
  Then
  \begin{align} \label{E:PMu_Energy}
    \max_{t\in[0,T]}\|v^\varepsilon(t)\|_{L^2(\Gamma)}^2+\int_0^T\|\nabla_\Gamma v^\varepsilon(t)\|_{L^2(\Gamma)}^2\,dt \leq c_T
  \end{align}
  for all $T>0$, where $c_T>0$ is a constant depending on $T$ but independent of $\varepsilon$.
\end{lemma}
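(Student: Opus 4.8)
The plan is to test the weak formulation \eqref{E:PMu_Weak} against $v^\varepsilon$ itself and run the classical energy argument for the two-dimensional Navier--Stokes equations, the decisive point being that $v^\varepsilon=\mathbb{P}_gM_\tau u^\varepsilon$ lies in $\mathcal{V}_g$ (this is precisely why the weighted Helmholtz--Leray projection was introduced), so that it is an admissible test function and the trilinear term vanishes. Concretely, fix $T>0$ and for $t\in[0,T]$ substitute $\eta=v^\varepsilon\mathbf{1}_{(0,t)}\in L^2(0,T;\mathcal{V}_g)$ into \eqref{E:PMu_Weak}. Since $v^\varepsilon\in L^2(0,T;\mathcal{V}_g)\cap H^1(0,T;\mathcal{H}_g)$ and $g$ is independent of time, the identity $(g\partial_tv^\varepsilon,v^\varepsilon)_{L^2(\Gamma)}=\tfrac12\frac{d}{dt}\|g^{1/2}v^\varepsilon\|_{L^2(\Gamma)}^2$ holds for a.a.\ $t$ and integrates up; by the second relation in \eqref{E:TriS_Vg} with $v_1=v^\varepsilon\in\mathcal{V}_g$ we have $b_g(v^\varepsilon,v^\varepsilon,v^\varepsilon)=0$. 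This yields the energy identity
\begin{align*}
  \frac12\|g^{1/2}v^\varepsilon(t)\|_{L^2(\Gamma)}^2+\int_0^ta_g(v^\varepsilon,v^\varepsilon)\,ds
  &= \frac12\|g^{1/2}v^\varepsilon(0)\|_{L^2(\Gamma)}^2+\int_0^t(gM_\tau\mathbb{P}_\varepsilon f^\varepsilon,v^\varepsilon)_{L^2(\Gamma)}\,ds \\
  &\quad +R_\varepsilon^1(v^\varepsilon\mathbf{1}_{(0,t)})+R_\varepsilon^2(v^\varepsilon\mathbf{1}_{(0,t)}),
\end{align*}
where $v^\varepsilon(0)=\mathbb{P}_gM_\tau u_0^\varepsilon$.

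The next step is to bound the right-hand side uniformly in $\varepsilon$. Since $\mathbb{P}_g$ is an orthogonal projection, $\|v^\varepsilon(0)\|_{L^2(\Gamma)}^2\le\|M_\tau u_0^\varepsilon\|_{L^2(\Gamma)}^2\le c_2\varepsilon^{-1+\beta}=c_2$ by \eqref{E:UE_Data} with $\beta=1$. For the forcing term one must avoid estimating $M_\tau\mathbb{P}_\varepsilon f^\varepsilon$ in $L^2(\Gamma)$ (its $L^2(\Gamma)$-norm blows up); instead, $M_\tau\mathbb{P}_\varepsilon f^\varepsilon$ is bounded uniformly in $L^\infty(0,\infty;H^{-1}(\Gamma,T\Gamma))$ (again \eqref{E:UE_Data} with $\beta=1$), so by \eqref{E:Def_Mul_HinT} and the boundedness of $g$ we get $|(gM_\tau\mathbb{P}_\varepsilon f^\varepsilon,v^\varepsilon)_{L^2(\Gamma)}|\le c\|M_\tau\mathbb{P}_\varepsilon f^\varepsilon\|_{H^{-1}(\Gamma,T\Gamma)}\|v^\varepsilon\|_{H^1(\Gamma)}\le c\|v^\varepsilon\|_{H^1(\Gamma)}$. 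The residual prefactors in \eqref{E:Mu_Weak_Re} and \eqref{E:PMu_Weak_Re} are bounded uniformly in $\varepsilon$ (note $\gamma_\varepsilon^i/\varepsilon\le c$ by Assumption \ref{Assump_1}, so $\sum_{i}|\gamma_\varepsilon^i/\varepsilon-\gamma^i|\le c$), hence $|R_\varepsilon^1(v^\varepsilon\mathbf{1}_{(0,t)})|+|R_\varepsilon^2(v^\varepsilon\mathbf{1}_{(0,t)})|\le c(1+T)^{1/2}\|v^\varepsilon\|_{L^2(0,t;H^1(\Gamma))}$. On the left, $a_g(v^\varepsilon,v^\varepsilon)\ge0$ and the coercivity \eqref{E:Bi_Surf} gives $a_g(v^\varepsilon,v^\varepsilon)\ge c^{-1}\|\nabla_\Gamma v^\varepsilon\|_{L^2(\Gamma)}^2-\|v^\varepsilon\|_{L^2(\Gamma)}^2$.

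To close, bound $\|v^\varepsilon\|_{H^1(\Gamma)}^2\le c(\|v^\varepsilon\|_{L^2(\Gamma)}^2+\|\nabla_\Gamma v^\varepsilon\|_{L^2(\Gamma)}^2)$, apply Cauchy--Schwarz in time and Young's inequality, and absorb a small multiple of $\int_0^t\|\nabla_\Gamma v^\varepsilon\|_{L^2(\Gamma)}^2\,ds$ into the left-hand side; using also \eqref{E:G_Inf} to compare $\|g^{1/2}v^\varepsilon\|_{L^2(\Gamma)}$ with $\|v^\varepsilon\|_{L^2(\Gamma)}$, one obtains an inequality
\begin{align*}
  \|v^\varepsilon(t)\|_{L^2(\Gamma)}^2+\int_0^t\|\nabla_\Gamma v^\varepsilon(s)\|_{L^2(\Gamma)}^2\,ds \le C_T+C_T\int_0^t\|v^\varepsilon(s)\|_{L^2(\Gamma)}^2\,ds
\end{align*}
with $C_T>0$ independent of $\varepsilon$. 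Gronwall's inequality then yields $\max_{t\in[0,T]}\|v^\varepsilon(t)\|_{L^2(\Gamma)}^2\le C_Te^{C_TT}$, and inserting this back into the inequality controls $\int_0^T\|\nabla_\Gamma v^\varepsilon\|_{L^2(\Gamma)}^2\,dt$, giving \eqref{E:PMu_Energy}. There is no genuinely hard step; the two things that require care are using the weighted solenoidal projection so that $v^\varepsilon$ is a legitimate test function killing the convection term, and tracking that the initial data, the forcing, and the residual prefactors are all $\varepsilon$-uniformly bounded, which rests on $\beta=1$ in the hypotheses of Lemma \ref{L:Est_Ueps} and on Assumption \ref{Assump_1}.
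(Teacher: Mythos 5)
Your proof is correct and follows essentially the same route as the paper's: testing \eqref{E:PMu_Weak} with $1_{[0,t]}v^\varepsilon$, killing the trilinear term via \eqref{E:TriS_Vg}, pairing the forcing in $H^{-1}(\Gamma,T\Gamma)$, bounding the residual prefactors by Assumption \ref{Assump_1}, using the coercivity \eqref{E:Bi_Surf} with absorption, and closing by Gronwall. The only cosmetic difference is that the paper invokes \eqref{E:HLT_Bound} with $k=0$ for $\|v^\varepsilon(0)\|_{L^2(\Gamma)}\le c\|M_\tau u_0^\varepsilon\|_{L^2(\Gamma)}$, which is exactly your observation that $\mathbb{P}_g$ is an orthogonal projection.
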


\begin{proof}
  For $t\in[0,T]$ let $1_{[0,t]}\colon\mathbb{R}\to\mathbb{R}$ be the characteristic function of $[0,t]\subset\mathbb{R}$.
  Since $v^\varepsilon$ belongs to $C([0,\infty);\mathcal{V}_g)$, we can substitute
  \begin{align*}
    \eta := 1_{[0,t]}v^\varepsilon \in L^2(0,T;\mathcal{V}_g)
  \end{align*}
  for \eqref{E:PMu_Weak}.
  Then using \eqref{E:TriS_Vg} we get
  \begin{multline} \label{Pf_PMuE:Weak}
    \int_0^t\{(g\partial_sv^\varepsilon,v^\varepsilon)_{L^2(\Gamma)}+a_g(v^\varepsilon,v^\varepsilon)\}\,ds \\
    = \int_0^t(gM_\tau\mathbb{P}_\varepsilon f^\varepsilon,v^\varepsilon)_{L^2(\Gamma)}\,ds+R_\varepsilon^1(v^\varepsilon)+R_\varepsilon^2(v^\varepsilon),
  \end{multline}
  where $R_\varepsilon^1(v^\varepsilon)$ and $R_\varepsilon^2(v^\varepsilon)$ satisfy \eqref{E:Mu_Weak_Re} and \eqref{E:PMu_Weak_Re}.
  We compute each term of \eqref{Pf_PMuE:Weak}.
  Since $g$ is independent of time and nonnegative by \eqref{E:G_Inf},
  \begin{align} \label{Pf_PMuE:Dt}
    \begin{aligned}
      \int_0^t(g\partial_sv^\varepsilon,v^\varepsilon)_{L^2(\Gamma)}\,ds &= \frac{1}{2}\int_0^t\frac{d}{ds}\|g^{1/2}v^\varepsilon\|_{L^2(\Gamma)}^2\,ds \\
      &= \frac{1}{2}\|g^{1/2}v^\varepsilon(t)\|_{L^2(\Gamma)}^2-\frac{1}{2}\|g^{1/2}v^\varepsilon(0)\|_{L^2(\Gamma)}^2.
    \end{aligned}
  \end{align}
  Also, we see by \eqref{E:Bi_Surf} that
  \begin{align} \label{Pf_PMuE:A}
    \int_0^t\|\nabla_\Gamma v^\varepsilon\|_{L^2(\Gamma)}^2\,ds \leq c\int_0^t\left\{a_g(v^\varepsilon,v^\varepsilon)+\|v^\varepsilon\|_{L^2(\Gamma)}^2\right\}\,ds.
  \end{align}
  We consider $M_\tau\mathbb{P}_\varepsilon f^\varepsilon=PM_\tau\mathbb{P}_\varepsilon f^\varepsilon$ in $H^{-1}(\Gamma,T\Gamma)$ (see Section \ref{SS:Pre_Surf}) to get
  \begin{align} \label{Pf_PMuE:F}
    \begin{aligned}
      \left|\int_0^t(gM_\tau\mathbb{P}_\varepsilon f^\varepsilon,v^\varepsilon)_{L^2(\Gamma)}\,ds\right| &= \left|\int_0^t[M_\tau\mathbb{P}_\varepsilon f^\varepsilon,gv^\varepsilon]_{T\Gamma}\,ds\right| \\
      &\leq \int_0^t\|M_\tau\mathbb{P}_\varepsilon f^\varepsilon\|_{H^{-1}(\Gamma,T\Gamma)}\|gv^\varepsilon\|_{H^1(\Gamma)}\,ds \\
      &\leq c\int_0^t\|M_\tau\mathbb{P}_\varepsilon f^\varepsilon\|_{H^{-1}(\Gamma,T\Gamma)}\|v^\varepsilon\|_{H^1(\Gamma)}\,ds.
    \end{aligned}
  \end{align}
  To estimate the residual terms, we see that $\varepsilon^{-1}\gamma_\varepsilon^0$ and $\varepsilon^{-1}\gamma_\varepsilon^1$ are bounded by \eqref{E:Fric_Upper}.
  Hence by \eqref{E:Mu_Weak_Re} and \eqref{E:PMu_Weak_Re} (with $T$ replaced by $t$) we obtain
  \begin{align} \label{Pf_PMuE:Re}
    |R_\varepsilon^1(v^\varepsilon)|+|R_\varepsilon^2(v^\varepsilon)| \leq c(1+t)^{1/2}\left(\int_0^t\|v^\varepsilon\|_{H^1(\Gamma)}^2\,ds\right)^{1/2}.
  \end{align}
  Now we deduce from \eqref{Pf_PMuE:Weak}--\eqref{Pf_PMuE:Re} that
  \begin{multline*}
    \begin{aligned}
      &\|g^{1/2}v^\varepsilon(t)\|_{L^2(\Gamma)}^2+\int_0^t\|\nabla_\Gamma v^\varepsilon\|_{L^2(\Gamma)}^2\,ds \\
      &\qquad \leq c\,\Biggl\{\|g^{1/2}v^\varepsilon(0)\|_{L^2(\Gamma)}^2+\int_0^t\|v^\varepsilon\|_{L^2(\Gamma)}^2\,ds\Biggr.
    \end{aligned} \\
    +\int_0^t\|M_\tau\mathbb{P}_\varepsilon f^\varepsilon\|_{H^{-1}(\Gamma,T\Gamma)}\|v^\varepsilon\|_{H^1(\Gamma)}\,ds \\
    \Biggl.+(1+t)^{1/2}\left(\int_0^t\|v^\varepsilon\|_{H^1(\Gamma)}^2\,ds\right)^{1/2}\Biggr\}.
  \end{multline*}
  Noting that
  \begin{align*}
    \|v^\varepsilon\|_{H^1(\Gamma)}^2=\|v^\varepsilon\|_{L^2(\Gamma)}^2+\|\nabla_\Gamma v^\varepsilon\|_{L^2(\Gamma)}^2,
  \end{align*}
  we apply Young's inequality to the last two terms of this inequality to get
  \begin{multline*}
    \|g^{1/2}v^\varepsilon(t)\|_{L^2(\Gamma)}^2+\int_0^t\|\nabla_\Gamma v^\varepsilon\|_{L^2(\Gamma)}^2\,ds \\
    \leq c\left\{\|g^{1/2}v^\varepsilon(0)\|_{L^2(\Gamma)}^2+\int_0^t\left(\|v^\varepsilon\|_{L^2(\Gamma)}^2+\|M_\tau\mathbb{P}_\varepsilon f^\varepsilon\|_{H^{-1}(\Gamma,T\Gamma)}^2\right)ds+1+t\right\}\\
    +\frac{1}{2}\int_0^t\|\nabla_\Gamma v^\varepsilon\|_{L^2(\Gamma)}^2\,ds.
  \end{multline*}
  Then we make the last term absorbed into the left-hand side and apply \eqref{E:G_Inf},
  \begin{align*}
    \|g^{1/2}v^\varepsilon(0)\|_{L^2(\Gamma)} \leq c\|v^\varepsilon(0)\|_{L^2(\Gamma)} \leq c\|M_\tau u^\varepsilon(0)\|_{L^2(\Gamma)} = c\|M_\tau u_0^\varepsilon\|_{L^2(\Gamma)}
  \end{align*}
  by \eqref{E:HLT_Bound} with $k=0$ and $u^\varepsilon(0)=u_0^\varepsilon$ in $\mathcal{V}_\varepsilon$, and \eqref{E:UE_Data} with $\beta=1$ to obtain
  \begin{align} \label{Pf_PMuE:Gron}
    \|v^\varepsilon(t)\|_{L^2(\Gamma)}^2+\int_0^t\|\nabla_\Gamma v^\varepsilon\|_{L^2(\Gamma)}^2\,ds \leq c\left(1+t+\int_0^t\|v^\varepsilon\|_{L^2(\Gamma)}^2\,ds\right)
  \end{align}
  for all $t\in[0,T]$.
  From this inequality we deduce that
  \begin{align*}
    \|v^\varepsilon(t)\|_{L^2(\Gamma)}^2+1 \leq c\left\{1+\int_0^t\left(\|v^\varepsilon\|_{L^2(\Gamma)}^2+1\right)ds\right\}, \quad t\in[0,T]
  \end{align*}
  and thus Gronwall's inequality yields
  \begin{align*}
    \|v^\varepsilon(t)\|_{L^2(\Gamma)}^2+1 \leq ce^{ct} \leq ce^{cT}, \quad t\in[0,T].
  \end{align*}
  Applying this inequality to \eqref{Pf_PMuE:Gron} with $t=T$ we also get
  \begin{align*}
    \int_0^T\|\nabla_\Gamma v^\varepsilon\|_{L^2(\Gamma)}^2\,dt \leq c\left\{1+\int_0^T\left(\|v^\varepsilon\|_{L^2(\Gamma)}^2+1\right)dt\right\} \leq c(1+e^{cT}).
  \end{align*}
  Hence we conclude that \eqref{E:PMu_Energy} holds with $c_T:=c(1+e^{cT})$, where $c>0$ is a constant independent of $\varepsilon$ and $T$.
\end{proof}

By \eqref{E:Diff_PMu} and \eqref{E:PMu_Energy} we obtain the following energy estimate for $M_\tau u^\varepsilon$.

\begin{lemma} \label{L:Mu_Energy}
  Let $u^\varepsilon$ be as in Lemma \ref{L:Est_Ueps}.
  Then
  \begin{align} \label{E:Mu_Energy}
    \max_{t\in[0,T]}\|M_\tau u^\varepsilon(t)\|_{L^2(\Gamma)}^2+\int_0^T\|\nabla_\Gamma M_\tau u^\varepsilon(t)\|_{L^2(\Gamma)}^2\,dt \leq c_T
  \end{align}
  for all $T>0$, where $c_T>0$ is a constant depending on $T$ but independent of $\varepsilon$.
\end{lemma}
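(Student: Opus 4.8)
The plan is to deduce the energy estimate for $M_\tau u^\varepsilon$ directly from the already-established energy estimate for its weighted solenoidal part $v^\varepsilon=\mathbb{P}_gM_\tau u^\varepsilon$, combined with the smallness of the difference $M_\tau u^\varepsilon-v^\varepsilon$. First I would recall that, under the standing assumptions of this subsection and for $\varepsilon$ sufficiently small (say $0<\varepsilon\leq\min\{\varepsilon_1,\varepsilon_\sigma\}$), Lemma \ref{L:Est_Ueps} furnishes the global strong solution $u^\varepsilon$ to \eqref{E:NS_CTD}, Lemma \ref{L:PMu_Weak} guarantees that $v^\varepsilon=\mathbb{P}_gM_\tau u^\varepsilon$ lies in $C([0,\infty);\mathcal{V}_g)\cap H_{loc}^1([0,\infty);\mathcal{H}_g)$ and satisfies the difference estimate \eqref{E:Diff_PMu}, and Lemma \ref{L:PMu_Energy} provides the bound \eqref{E:PMu_Energy} for $v^\varepsilon$ with an $\varepsilon$-independent constant $c_T$.

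Next, for fixed $T>0$ I would write $M_\tau u^\varepsilon=v^\varepsilon+(M_\tau u^\varepsilon-v^\varepsilon)$ and apply the elementary inequality $|a+b|^2\leq 2|a|^2+2|b|^2$ pointwise in time. For the $L^2$-part this gives $\|M_\tau u^\varepsilon(t)\|_{L^2(\Gamma)}^2\leq 2\|v^\varepsilon(t)\|_{L^2(\Gamma)}^2+2\|M_\tau u^\varepsilon(t)-v^\varepsilon(t)\|_{L^2(\Gamma)}^2$; taking the maximum over $t\in[0,T]$ and invoking \eqref{E:PMu_Energy} for the first term and the first line of \eqref{E:Diff_PMu} (together with $\varepsilon\leq 1$) for the second yields $\max_{t\in[0,T]}\|M_\tau u^\varepsilon(t)\|_{L^2(\Gamma)}^2\leq c_T$. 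For the gradient part I would bound $\|\nabla_\Gamma M_\tau u^\varepsilon\|_{L^2(\Gamma)}^2\leq 2\|\nabla_\Gamma v^\varepsilon\|_{L^2(\Gamma)}^2+2\|\nabla_\Gamma(M_\tau u^\varepsilon-v^\varepsilon)\|_{L^2(\Gamma)}^2$, integrate over $(0,T)$, and use \eqref{E:PMu_Energy} for the first integral and the second line of \eqref{E:Diff_PMu} (again with $\varepsilon^2(1+T)\leq 1+T$) for the second. Adding the two contributions gives \eqref{E:Mu_Energy} with a new constant depending on $T$ but not on $\varepsilon$.

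I do not expect any genuine difficulty: the argument is a pure triangle-inequality assembly of Lemmas \ref{L:PMu_Weak} and \ref{L:PMu_Energy}, both proved above. The only point deserving a word of care is the $\varepsilon$-independence of the final constant, which is automatic here because every constant entering the computation — those in \eqref{E:Diff_PMu} and in \eqref{E:PMu_Energy} — is itself independent of $\varepsilon$, and because the powers of $\varepsilon$ appearing in \eqref{E:Diff_PMu} are nonnegative so that $\varepsilon^2\leq 1$ and $\varepsilon^2(1+T)\leq 1+T$ absorb them harmlessly into $c_T$.
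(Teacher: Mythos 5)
Your proposal is correct and follows exactly the paper's route: the paper deduces Lemma \ref{L:Mu_Energy} directly from \eqref{E:Diff_PMu} and \eqref{E:PMu_Energy}, which is precisely your triangle-inequality assembly of the estimates for $v^\varepsilon=\mathbb{P}_gM_\tau u^\varepsilon$ and for the difference $M_\tau u^\varepsilon-v^\varepsilon$. Your remark on the $\varepsilon$-independence of the final constant is the right point of care and is handled the same way.
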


\subsection{Estimate for the time derivative of the average} \label{SS:SL_EDt}
By \eqref{E:Mu_Energy} we see that $M_\tau u^\varepsilon$ converges weakly in appropriate function spaces on $\Gamma$ as $\varepsilon\to0$.
However, for the convergence of the trilinear term in \eqref{E:Mu_Weak} we need to apply the Aubin--Lions lemma to get the strong convergence of $M_\tau u^\varepsilon$.
To this end, we estimate the time derivative of $M_\tau u^\varepsilon$.
We first construct an appropriate test function.

\begin{lemma} \label{L:Mu_Dt_Test}
  For $w\in H^1(\Gamma,T\Gamma)$ there exist unique $\eta\in\mathcal{V}_g$, $q\in H^2(\Gamma)$ such that
  \begin{align} \label{E:MuDT_Eq}
    w = g\eta+g\nabla_\Gamma q \quad\text{on}\quad \Gamma, \quad \int_\Gamma q\,d\mathcal{H}^2 = 0.
  \end{align}
  Moreover, there exists a constant $c>0$ independent of $w$ such that
  \begin{align} \label{E:MuDT_Bd}
    \|\eta\|_{H^1(\Gamma)}+\|q\|_{H^2(\Gamma)} \leq c\|w\|_{H^1(\Gamma)}.
  \end{align}
\end{lemma}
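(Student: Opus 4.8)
The plan is to reduce the construction to a single weighted scalar elliptic equation on $\Gamma$ and then to borrow the $H^2$-regularity already available from Lemma \ref{L:Pois_Surf}. Writing the desired identity \eqref{E:MuDT_Eq} as $g\eta = w - g\nabla_\Gamma q$ and imposing the constraint $\mathrm{div}_\Gamma(g\eta)=0$, one sees that $q$ must solve
\begin{align*}
  \mathrm{div}_\Gamma(g\nabla_\Gamma q) = \mathrm{div}_\Gamma w \quad\text{on}\quad \Gamma, \qquad \int_\Gamma q\,d\mathcal{H}^2 = 0.
\end{align*}
Since $w\in H^1(\Gamma,T\Gamma)$ is tangential, $\mathrm{div}_\Gamma w\in L^2(\Gamma)$ and $\int_\Gamma\mathrm{div}_\Gamma w\,d\mathcal{H}^2=0$ by \eqref{E:IbP_WDivG_T}, which is the natural compatibility condition. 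Using a density argument and \eqref{E:IbP_TD} together with the tangentiality of $w$ and $\nabla_\Gamma q$, the weak formulation of this equation reads: find $q$ in $\dot{H}^1(\Gamma):=\{q\in H^1(\Gamma)\mid\int_\Gamma q\,d\mathcal{H}^2=0\}$ such that $(g\nabla_\Gamma q,\nabla_\Gamma\xi)_{L^2(\Gamma)}=(w,\nabla_\Gamma\xi)_{L^2(\Gamma)}$ for all $\xi\in H^1(\Gamma)$ (equivalently, $=-(\mathrm{div}_\Gamma w,\xi)_{L^2(\Gamma)}$).

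First I would establish existence and uniqueness of $q\in\dot{H}^1(\Gamma)$ by the Lax--Milgram theorem: the bilinear form $(q,\xi)\mapsto(g\nabla_\Gamma q,\nabla_\Gamma\xi)_{L^2(\Gamma)}$ is bounded on $\dot{H}^1(\Gamma)$ and, by \eqref{E:G_Inf} and Poincar\'{e}'s inequality \eqref{E:Poin_Surf_Lp}, coercive, while $\xi\mapsto(w,\nabla_\Gamma\xi)_{L^2(\Gamma)}$ is a bounded functional. Testing with $\xi=q$ and again using \eqref{E:G_Inf} and \eqref{E:Poin_Surf_Lp} gives $\|q\|_{H^1(\Gamma)}\leq c\|w\|_{L^2(\Gamma)}$. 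To upgrade to $H^2$, I would substitute $\xi\mapsto g^{-1}\xi$ in the weak form (legitimate since $g,g^{-1}\in C^4(\Gamma)\subset W^{1,\infty}(\Gamma)$ by \eqref{E:G_Inf}) and rearrange, so that $q$ becomes a weak solution in the sense \eqref{E:Weak_Pois} of $-\Delta_\Gamma q=\tilde{\eta}$ with $\tilde{\eta}:=g^{-1}(\nabla_\Gamma q\cdot\nabla_\Gamma g-\mathrm{div}_\Gamma w)\in L^2(\Gamma)$; taking the test function $\psi\equiv1$ shows $\int_\Gamma\tilde{\eta}\,d\mathcal{H}^2=0$ automatically, so Lemma \ref{L:Pois_Surf} applies and yields $q\in H^2(\Gamma)$ with $\|q\|_{H^2(\Gamma)}\leq c\|\tilde{\eta}\|_{L^2(\Gamma)}\leq c(\|\mathrm{div}_\Gamma w\|_{L^2(\Gamma)}+\|\nabla_\Gamma q\|_{L^2(\Gamma)})\leq c\|w\|_{H^1(\Gamma)}$. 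Setting $\eta:=g^{-1}w-\nabla_\Gamma q$, one has $\eta\in H^1(\Gamma)^3$ (since $g^{-1}\in W^{1,\infty}(\Gamma)$ and $q\in H^2(\Gamma)$), $\eta$ is tangential, and $\mathrm{div}_\Gamma(g\eta)=\mathrm{div}_\Gamma w-\mathrm{div}_\Gamma(g\nabla_\Gamma q)=0$ in $L^2(\Gamma)$ because both sides of the scalar equation are now $L^2$-functions and the identity holds a.e.; hence $\eta\in\mathcal{V}_g$, \eqref{E:MuDT_Eq} holds by construction, and $\|\eta\|_{H^1(\Gamma)}\leq c\|w\|_{H^1(\Gamma)}+\|q\|_{H^2(\Gamma)}\leq c\|w\|_{H^1(\Gamma)}$, which gives \eqref{E:MuDT_Bd}.

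For uniqueness, suppose $g\eta_1+g\nabla_\Gamma q_1=g\eta_2+g\nabla_\Gamma q_2$ with $\eta_j\in\mathcal{V}_g$ and $\int_\Gamma q_j\,d\mathcal{H}^2=0$. Dividing by $g$ and applying $\mathrm{div}_\Gamma(g\,\cdot\,)$ gives $\mathrm{div}_\Gamma(g\nabla_\Gamma(q_1-q_2))=0$ in $H^{-1}(\Gamma)$; pairing with $q_1-q_2$ via \eqref{E:Sdiv_Hin} and using the tangentiality of $g\nabla_\Gamma(q_1-q_2)$ yields $(g\nabla_\Gamma(q_1-q_2),\nabla_\Gamma(q_1-q_2))_{L^2(\Gamma)}=0$, so $\nabla_\Gamma(q_1-q_2)=0$ by \eqref{E:G_Inf}, whence $q_1-q_2$ is constant by Lemma \ref{L:WTGr_Van} and in fact $q_1=q_2$ by the zero-mean normalization, and then $\eta_1=\eta_2$. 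I expect the only genuinely technical point to be the $H^2$-regularity step, i.e.\ carrying out the reduction to the plain Laplace--Beltrami equation so that the source stays in $L^2(\Gamma)$ with zero mean and Lemma \ref{L:Pois_Surf} can be invoked with constants independent of $w$; the remaining steps are routine applications of Lax--Milgram, Poincar\'{e}'s inequality, density (Lemma \ref{L:Wmp_Appr}), and integration by parts on the closed surface.
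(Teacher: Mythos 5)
Your proposal is correct and follows essentially the same route as the paper: solve the weighted scalar problem $-\mathrm{div}_\Gamma(g\nabla_\Gamma q)=-\mathrm{div}_\Gamma w$ by Lax--Milgram with the mean-zero Poincar\'{e} inequality, rewrite it as $-\Delta_\Gamma q=g^{-1}(\nabla_\Gamma g\cdot\nabla_\Gamma q-\mathrm{div}_\Gamma w)$ with zero-mean $L^2$ source to invoke Lemma \ref{L:Pois_Surf} for the $H^2$-bound, and set $\eta=g^{-1}w-\nabla_\Gamma q$. Your uniqueness argument (applying $\mathrm{div}_\Gamma(g\,\cdot)$ and testing against $q_1-q_2$) is a cosmetic variant of the paper's appeal to the orthogonality $\mathcal{H}_g\perp g\nabla_\Gamma H^1(\Gamma)$, and both are fine.
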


\begin{proof}
  Let $w\in H^1(\Gamma,T\Gamma)$ and $\xi:=-\mathrm{div}_\Gamma w\in L^2(\Gamma)$.
  Then
  \begin{align*}
    (\xi,1)_{L^2(\Gamma)} = -\int_\Gamma\mathrm{div}_\Gamma w\,d\mathcal{H}^2 = 0
  \end{align*}
  by \eqref{E:IbP_WDivG_T}.
  Also, for all $q\in H^1(\Gamma)$ satisfying $\int_\Gamma q\,d\mathcal{H}^2=0$ we have
  \begin{align} \label{Pf_MuDT:Poin}
    \|q\|_{L^2(\Gamma)} \leq c\|\nabla_\Gamma q\|_{L^2(\Gamma)} \leq c\|g^{1/2}\nabla_\Gamma q\|_{L^2(\Gamma)}
  \end{align}
  by \eqref{E:G_Inf} and Poincar\'{e}'s inequality \eqref{E:Poin_Surf_Lp}.
  Hence by the Lax--Milgram theorem there exists a unique weak solution $q\in H^1(\Gamma)$ to the problem
  \begin{align*}
    -\mathrm{div}_\Gamma(g\nabla_\Gamma q) = \xi \quad\text{on}\quad \Gamma, \quad \int_\Gamma q\,d\mathcal{H}^2 = 0
  \end{align*}
  in the sense that
  \begin{align} \label{Pf_MuDT:Weak}
    (g\nabla_\Gamma q,\nabla_\Gamma\varphi)_{L^2(\Gamma)} = (\xi,\varphi)_{L^2(\Gamma)} \quad\text{for all}\quad \varphi\in H^1(\Gamma).
  \end{align}
  From this equality with $\varphi= q$ and \eqref{Pf_MuDT:Poin} we deduce that
  \begin{align} \label{Pf_MuDT:Q_H1}
    \|q\|_{H^1(\Gamma)} \leq c\|\xi\|_{L^2(\Gamma)} = c\|\mathrm{div}_\Gamma w\|_{L^2(\Gamma)} \leq c\|w\|_{H^1(\Gamma)}.
  \end{align}
  Moreover, replacing $\varphi$ by $g^{-1}\varphi$ in \eqref{Pf_MuDT:Weak} we get
  \begin{align*}
    (\nabla_\Gamma q,\nabla_\Gamma\varphi)_{L^2(\Gamma)} = \left(\frac{1}{g}(\xi+\nabla_\Gamma g\cdot\nabla_\Gamma q),\varphi\right)_{L^2(\Gamma)} \quad\text{for all}\quad \varphi\in H^1(\Gamma),
  \end{align*}
  which combined with \eqref{E:Poin_Surf_Lp} shows that $q$ is a unique weak solution to
  \begin{align*}
    -\Delta_\Gamma\psi = \zeta := \frac{1}{g}(\xi+\nabla_\Gamma g\cdot\nabla_\Gamma q) \in L^2(\Gamma), \quad \int_\Gamma\psi\,d\mathcal{H}^2 = 0.
  \end{align*}
  Note that $(\zeta,1)_{L^2(\Gamma)}=0$ by \eqref{Pf_MuDT:Weak} with $\varphi=g^{-1}$.
  Hence $q\in H^2(\Gamma)$ by Lemma \ref{L:Pois_Surf}.
  Moreover, it follows from \eqref{E:G_Inf}, \eqref{E:H2_Pois}, and \eqref{Pf_MuDT:Q_H1} that
  \begin{align} \label{Pf_MuDT:Q_H2}
    \|q\|_{H^2(\Gamma)} \leq c\|\zeta\|_{L^2(\Gamma)} \leq c\left(\|\xi\|_{L^2(\Gamma)}+\|\nabla_\Gamma q\|_{L^2(\Gamma)}\right) \leq c\|w\|_{H^1(\Gamma)}.
  \end{align}
  Now we observe by $q\in H^2(\Gamma)$ and $\mathrm{div}_\Gamma(g\nabla_\Gamma q)=-\xi=\mathrm{div}_\Gamma w$ on $\Gamma$ that
  \begin{align*}
    \eta := \frac{1}{g}w-\nabla_\Gamma q \in \mathcal{V}_g
  \end{align*}
  and that $\eta$ and $q$ satisfy \eqref{E:MuDT_Eq}.
  We also deduce from \eqref{E:G_Inf} and \eqref{Pf_MuDT:Q_H2} that
  \begin{align*}
    \|\eta\|_{H^1(\Gamma)} \leq c\left(\|w\|_{H^1(\Gamma)}+\|\nabla_\Gamma q\|_{H^1(\Gamma)}\right) \leq c\|w\|_{H^1(\Gamma)}.
  \end{align*}
  By this inequality and \eqref{Pf_MuDT:Q_H2} we obtain \eqref{E:MuDT_Bd}.

  To prove the uniqueness of $\eta$ and $q$ satisfying \eqref{E:MuDT_Eq}, suppose that
  \begin{align*}
    g\eta+g\nabla_\Gamma q = 0 \quad\text{on}\quad \Gamma, \quad \int_\Gamma q\,d\mathcal{H}^2 = 0
  \end{align*}
  for $\eta\in\mathcal{V}_g$ and $q\in H^2(\Gamma)$.
  Then
  \begin{align*}
    \|g^{1/2}\nabla_\Gamma q\|_{L^2(\Gamma)}^2 &= (g\nabla_\Gamma q,\nabla_\Gamma q)_{L^2(\Gamma)} = -(g\eta,\nabla_\Gamma q)_{L^2(\Gamma)} \\
    &= -(\eta,g\nabla_\Gamma q)_{L^2(\Gamma)} = 0
  \end{align*}
  by $\eta\in\mathcal{H}_g$ and $g\nabla_\Gamma q\in\mathcal{H}_g^\perp$ (see Lemma \ref{L:L2gs_Orth}) and thus $q=0$ on $\Gamma$ by \eqref{Pf_MuDT:Poin}.
  By this fact and \eqref{E:G_Inf} we also get $\eta=0$ on $\Gamma$.
  Thus the uniqueness holds.
\end{proof}

By the proof of Lemma \ref{L:Mu_Dt_Test} we observe that the mapping
\begin{align*}
  H^1(\Gamma,T\Gamma)\ni w \mapsto (\eta,q)\in \mathcal{V}_g\times H^2(\Gamma),
\end{align*}
where $\eta$ and $q$ are unique functions satisfying \eqref{E:MuDT_Eq}, is linear and bounded.
Hence we have the following time-dependent version of Lemma \ref{L:Mu_Dt_Test}.

\begin{lemma} \label{L:MuDT_Time}
  For $w\in\mathcal{X}(0,T;H^1(\Gamma,T\Gamma))$ with $\mathcal{X}=C_c,L^2$ there exist unique
  \begin{align*}
    \eta\in\mathcal{X}(0,T;\mathcal{V}_g), \quad q\in\mathcal{X}(0,T;H^2(\Gamma))
  \end{align*}
  such that, for all (or a.a.) $t\in(0,T)$,
  \begin{align*}
    w(t) = g\eta(t)+g\nabla_\Gamma q(t) \quad\text{on}\quad \Gamma, \quad \int_\Gamma q(t)\,d\mathcal{H}^2 = 0.
  \end{align*}
  Moreover, there exists a constant $c>0$ independent of $w$ such that
  \begin{align*}
    \|\eta(t)\|_{H^1(\Gamma)}+\|q(t)\|_{H^2(\Gamma)} \leq c\|w(t)\|_{H^1(\Gamma)} \quad\text{for all (or a.a.)}\quad t\in(0,T).
  \end{align*}
\end{lemma}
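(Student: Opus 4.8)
The plan is to deduce Lemma \ref{L:MuDT_Time} from its stationary counterpart Lemma \ref{L:Mu_Dt_Test} by a pointwise-in-time construction combined with the elementary fact that a bounded linear operator acting on Banach-space-valued functions preserves the regularity classes $C_c(0,T;\cdot)$ and $L^2(0,T;\cdot)$.

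First I would record that Lemma \ref{L:Mu_Dt_Test} defines a map
\[
  S\colon H^1(\Gamma,T\Gamma)\longrightarrow \mathcal{V}_g\times H^2(\Gamma), \qquad Sw:=(\eta,q),
\]
where $\eta\in\mathcal{V}_g$ and $q\in H^2(\Gamma)$ are the unique pair satisfying \eqref{E:MuDT_Eq}. This $S$ is linear: if $Sw_j=(\eta_j,q_j)$ for $j=1,2$ and $\lambda_1,\lambda_2\in\mathbb{R}$, then $\lambda_1\eta_1+\lambda_2\eta_2\in\mathcal{V}_g$, $\lambda_1q_1+\lambda_2q_2\in H^2(\Gamma)$ has zero mean over $\Gamma$, and $g(\lambda_1\eta_1+\lambda_2\eta_2)+g\nabla_\Gamma(\lambda_1q_1+\lambda_2q_2)=\lambda_1w_1+\lambda_2w_2$ on $\Gamma$, so the uniqueness clause of Lemma \ref{L:Mu_Dt_Test} forces $S(\lambda_1w_1+\lambda_2w_2)=\lambda_1Sw_1+\lambda_2Sw_2$. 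Moreover $S$ is bounded with $\|\eta\|_{H^1(\Gamma)}+\|q\|_{H^2(\Gamma)}\le c\|w\|_{H^1(\Gamma)}$ by \eqref{E:MuDT_Bd}, hence continuous, and $S0=0$.

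Next, for $w\in\mathcal{X}(0,T;H^1(\Gamma,T\Gamma))$ I set $(\eta(t),q(t)):=S(w(t))$ for (a.a.) $t\in(0,T)$. If $\mathcal{X}=L^2$, then $w$ is strongly (Bochner) measurable with $\int_0^T\|w(t)\|_{H^1(\Gamma)}^2\,dt<\infty$; writing $w$ as an a.e.\ limit of simple functions and using the continuity and linearity of $S$ shows that $t\mapsto S(w(t))$ is an a.e.\ limit of simple functions, hence strongly measurable, and the bound $\|\eta(t)\|_{H^1(\Gamma)}+\|q(t)\|_{H^2(\Gamma)}\le c\|w(t)\|_{H^1(\Gamma)}$ yields $\eta\in L^2(0,T;\mathcal{V}_g)$ and $q\in L^2(0,T;H^2(\Gamma))$. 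If $\mathcal{X}=C_c$, then $t\mapsto S(w(t))$ is continuous as a composition of continuous maps, and since $S0=0$ its support is contained in $\mathrm{supp}\,w$, which is compact, so $\eta\in C_c(0,T;\mathcal{V}_g)$ and $q\in C_c(0,T;H^2(\Gamma))$. In either case $w(t)=g\eta(t)+g\nabla_\Gamma q(t)$ and $\int_\Gamma q(t)\,d\mathcal{H}^2=0$ for (a.a.) $t$ by construction, and the asserted pointwise estimate is exactly \eqref{E:MuDT_Bd} applied at time $t$. Uniqueness within $\mathcal{X}(0,T;\cdot)$ is immediate: given two admissible pairs, the uniqueness in Lemma \ref{L:Mu_Dt_Test} shows they coincide for (a.a.) $t\in(0,T)$.

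The argument is essentially soft, so I do not expect a serious obstacle; the only point demanding a little care is the preservation of the class $\mathcal{X}(0,T;\cdot)$ under composition with the continuous linear operator $S$ — for $\mathcal{X}=L^2$ this rests on the stability of strong measurability under continuous maps (via approximation by simple functions), and for $\mathcal{X}=C_c$ it follows directly from continuity of $S$ together with $S0=0$.
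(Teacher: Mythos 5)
Your proposal is correct and follows essentially the same route as the paper, which simply observes that the map $w\mapsto(\eta,q)$ from Lemma \ref{L:Mu_Dt_Test} is linear and bounded and then passes to the time-dependent version; your write-up just makes explicit the measurability and support arguments that the paper leaves implicit.
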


As in Section \ref{SS:SL_Ener}, we derive an estimate for the time derivative of $v^\varepsilon=\mathbb{P}_gM_\tau u^\varepsilon$ and then use it to estimate the time derivative of $M_\tau u^\varepsilon$.

\begin{lemma} \label{L:PMu_Dt}
  Let $u^\varepsilon$ be as in Lemma \ref{L:Est_Ueps} and $v^\varepsilon=\mathbb{P}_gM_\tau u^\varepsilon$.
  Then
  \begin{align} \label{E:PMu_Dt}
    \|\partial_tv^\varepsilon\|_{L^2(0,T;H^{-1}(\Gamma,T\Gamma))} \leq c_T
  \end{align}
  for all $T>0$, where $c_T>0$ is a constant depending on $T$ but independent of $\varepsilon$.
\end{lemma}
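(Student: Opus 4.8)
The plan is to test the weak formulation \eqref{E:PMu_Weak} against time-dependent vector fields in $\mathcal{V}_g$, to split a general test function into a weighted solenoidal part (handled by the weak formulation) and a weighted gradient part (annihilated by $\partial_t v^\varepsilon$), and then to bound the resulting terms using the bilinear, trilinear, and residual estimates already available. Throughout I write $c_T$ for a constant depending on $T$ but not on $\varepsilon$.

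First I would recall from Lemma \ref{L:PMu_Weak} that $v^\varepsilon=\mathbb{P}_gM_\tau u^\varepsilon\in C([0,\infty);\mathcal{V}_g)\cap H_{loc}^1([0,\infty),\mathcal{H}_g)$, so that $\partial_tv^\varepsilon\in L^2(0,T;\mathcal{H}_g)\subset L^2(0,T;H^{-1}(\Gamma,T\Gamma))$ for every $T>0$. Fix $w\in L^2(0,T;H^1(\Gamma,T\Gamma))$ and apply Lemma \ref{L:MuDT_Time} to write $w=g\eta+g\nabla_\Gamma q$ with $\eta\in L^2(0,T;\mathcal{V}_g)$, $q\in L^2(0,T;H^2(\Gamma))$ and $\|\eta(t)\|_{H^1(\Gamma)}+\|q(t)\|_{H^2(\Gamma)}\le c\|w(t)\|_{H^1(\Gamma)}$ for a.a. $t$. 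Since $\partial_tv^\varepsilon(t)\in\mathcal{H}_g=L_{g\sigma}^2(\Gamma,T\Gamma)$ a.e. while $g\nabla_\Gamma q(t)\in\mathcal{H}_g^\perp$ a.e. by Lemma \ref{L:L2gs_Orth}, the weighted-gradient part is killed:
\begin{align*}
  \int_0^T[\partial_tv^\varepsilon,w]_{T\Gamma}\,dt = \int_0^T(\partial_tv^\varepsilon,w)_{L^2(\Gamma)}\,dt = \int_0^T(g\partial_tv^\varepsilon,\eta)_{L^2(\Gamma)}\,dt.
\end{align*}

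Next I would estimate the last integral by solving \eqref{E:PMu_Weak} (with test function $\eta$) for $\int_0^T(g\partial_tv^\varepsilon,\eta)_{L^2(\Gamma)}\,dt$ and bounding the remaining terms: $|a_g(v^\varepsilon,\eta)|\le c\|v^\varepsilon\|_{H^1(\Gamma)}\|\eta\|_{H^1(\Gamma)}$ by \eqref{E:BiS_Bdd}; $|b_g(v^\varepsilon,v^\varepsilon,\eta)|\le c\|v^\varepsilon\|_{L^2(\Gamma)}\|v^\varepsilon\|_{H^1(\Gamma)}\|\eta\|_{H^1(\Gamma)}$ by \eqref{E:Tri_Surf}; $|(gM_\tau\mathbb{P}_\varepsilon f^\varepsilon,\eta)_{L^2(\Gamma)}|=|[M_\tau\mathbb{P}_\varepsilon f^\varepsilon,g\eta]_{T\Gamma}|\le c\|M_\tau\mathbb{P}_\varepsilon f^\varepsilon\|_{H^{-1}(\Gamma,T\Gamma)}\|\eta\|_{H^1(\Gamma)}$; and $|R_\varepsilon^1(\eta)|+|R_\varepsilon^2(\eta)|\le c_T\|\eta\|_{L^2(0,T;H^1(\Gamma))}$ by \eqref{E:Mu_Weak_Re} and \eqref{E:PMu_Weak_Re}, using $\varepsilon^{\alpha/4},\varepsilon^{\alpha/2}\le1$ and that $\sum_{i=0,1}|\gamma_\varepsilon^i/\varepsilon-\gamma^i|$ is bounded uniformly in $\varepsilon$ by \eqref{E:Fric_Upper} and condition (c) of Theorem \ref{T:SL_Weak}. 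Applying the Cauchy--Schwarz inequality in time, the energy estimate \eqref{E:PMu_Energy} (which gives $\int_0^T\|v^\varepsilon(t)\|_{H^1(\Gamma)}^2\,dt\le c_T$), and the uniform bound $\|M_\tau\mathbb{P}_\varepsilon f^\varepsilon\|_{L^\infty(0,\infty;H^{-1}(\Gamma,T\Gamma))}\le c$ coming from \eqref{E:UE_Data} with $\beta=1$, I obtain
\begin{align*}
  \left|\int_0^T(g\partial_tv^\varepsilon,\eta)_{L^2(\Gamma)}\,dt\right| \le c_T\|\eta\|_{L^2(0,T;H^1(\Gamma))} \le c_T\|w\|_{L^2(0,T;H^1(\Gamma))}.
\end{align*}
Finally, since this bound holds for every $w\in L^2(0,T;H^1(\Gamma,T\Gamma))$ and $L^2(0,T;H^{-1}(\Gamma,T\Gamma))$ is the dual of $L^2(0,T;H^1(\Gamma,T\Gamma))$, taking the supremum over $w$ with $\|w\|_{L^2(0,T;H^1(\Gamma))}\le1$ identifies $c_T$ as a bound for $\|\partial_tv^\varepsilon\|_{L^2(0,T;H^{-1}(\Gamma,T\Gamma))}$, which is \eqref{E:PMu_Dt}. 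The only genuinely delicate point is the reduction to test functions in $\mathcal{V}_g$: it rests on knowing both that $\partial_tv^\varepsilon$ stays in $\mathcal{H}_g$, so that it annihilates the weighted-gradient part of $w$, and that the splitting of $w$ furnished by Lemma \ref{L:MuDT_Time} is uniformly bounded in $L^2(0,T;H^1(\Gamma))$; once these are in place, the remaining bounds are the routine bilinear, trilinear, forcing, and residual estimates already established.
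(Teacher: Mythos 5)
Your proposal is correct and follows essentially the same route as the paper: decompose a general test function $w=g\eta+g\nabla_\Gamma q$ via Lemma \ref{L:MuDT_Time}, use $\partial_tv^\varepsilon(t)\in\mathcal{H}_g$ together with $g\nabla_\Gamma q(t)\in\mathcal{H}_g^\perp$ to reduce to $\eta\in L^2(0,T;\mathcal{V}_g)$, then bound the remaining terms of \eqref{E:PMu_Weak} with \eqref{E:BiS_Bdd}, \eqref{E:Tri_Surf}, \eqref{E:PMu_Energy}, the uniform data bound, and the residual estimates. No gaps.
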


Note that we estimate $\partial_tv^\varepsilon$ in the dual space $H^{-1}(\Gamma,T\Gamma)$ of $H^1(\Gamma,T\Gamma)$, not in the dual space of the weighted solenoidal space $\mathcal{V}_g$ (see Remark \ref{R:Mu_Dt} below).

\begin{proof}
  Let $w\in L^2(0,T;H^1(\Gamma,T\Gamma))$.
  Then by Lemma \ref{L:MuDT_Time} there exist
  \begin{align*}
    \eta\in L^2(0,T;\mathcal{V}_g), \quad q\in L^2(0,T;H^2(\Gamma))
  \end{align*}
  such that $w(t)=g\eta(t)+g\nabla_\Gamma q(t)$ on $\Gamma$ for a.a. $t\in(0,T)$ and
  \begin{align} \label{Pf_PMuDt:Test}
    \|\eta\|_{L^2(0,T;H^1(\Gamma))} \leq c\|w\|_{L^2(0,T;H^1(\Gamma))}.
  \end{align}
  Since $\partial_tv^\varepsilon(t)\in\mathcal{H}_g$ and $g\nabla_\Gamma q(t)\in\mathcal{H}_g^\perp$ for a.a. $t\in(0,T)$ by Lemmas \ref{L:L2gs_Orth} and \ref{L:PMu_Weak},
  \begin{align*}
    \int_0^T(\partial_tv^\varepsilon,g\nabla_\Gamma q)_{L^2(\Gamma)}\,dt = 0.
  \end{align*}
  By this equality and $g\eta=w-g\nabla_\Gamma q$ we have
  \begin{align*}
    \int_0^T(g\partial_tv^\varepsilon,\eta)_{L^2(\Gamma)}\,dt = \int_0^T(\partial_tv^\varepsilon,g\eta)_{L^2(\Gamma)}\,dt = \int_0^T(\partial_tv^\varepsilon,w)_{L^2(\Gamma)}\,dt.
  \end{align*}
  We substitute $\eta=g^{-1}w-\nabla_\Gamma q$ for \eqref{E:PMu_Weak} and use the above equality.
  Then
  \begin{multline} \label{Pf_PMuDt:Eq}
    \int_0^T(\partial_tv^\varepsilon,w)_{L^2(\Gamma)}\,dt = -\int_0^Ta_g(v^\varepsilon,\eta)\,dt-\int_0^Tb_g(v^\varepsilon,v^\varepsilon,\eta)\,dt \\
    +\int_0^T(gM_\tau\mathbb{P}_\varepsilon f^\varepsilon,\eta)_{L^2(\Gamma)}\,dt+R_\varepsilon^1(\eta)+R_\varepsilon^2(\eta),
  \end{multline}
  where $R_\varepsilon^1(\eta)$ and $R_\varepsilon^2(\eta)$ are given in Lemmas \ref{L:Mu_Weak} and \ref{L:PMu_Weak}.
  To the first term on the right-hand side we apply \eqref{E:BiS_Bdd}, \eqref{E:PMu_Energy}, and \eqref{Pf_PMuDt:Test} to get
  \begin{align*}
    \left|\int_0^Ta_g(v^\varepsilon,\eta)\,dt\right| \leq c\|v^\varepsilon\|_{L^2(0,T;H^1(\Gamma))}\|\eta\|_{L^2(0,T;H^1(\Gamma))} \leq c_T\|w\|_{L^2(0,T;H^1(\Gamma))}.
  \end{align*}
  Here and in what follows we denote by $c_T$ a general positive constant depending on $T$ but independent of $\varepsilon$.
  Also, by \eqref{E:Tri_Surf}, \eqref{E:PMu_Energy}, and \eqref{Pf_PMuDt:Test},
  \begin{align*}
    \left|\int_0^Tb_g(v^\varepsilon,v^\varepsilon,\eta)\,dt\right| &\leq c\int_0^T\|v^\varepsilon\|_{L^2(\Gamma)}\|v^\varepsilon\|_{H^1(\Gamma)}\|\eta\|_{H^1(\Gamma)}\,dt \\
    &\leq c\|v^\varepsilon\|_{L^\infty(0,T;L^2(\Gamma))}\|v^\varepsilon\|_{L^2(0,T;H^1(\Gamma))}\|\eta\|_{L^2(0,T;H^1(\Gamma))} \\
    &\leq c_T\|w\|_{L^2(0,T;H^1(\Gamma))}.
  \end{align*}
  For the other terms we proceed as in the proof of Lemma \ref{L:PMu_Energy} (see \eqref{Pf_PMuE:F}--\eqref{Pf_PMuE:Re}) and use \eqref{E:UE_Data} with $\beta=1$ and \eqref{Pf_PMuDt:Test}.
  Then we get
  \begin{align*}
    \left|\int_0^T(gM_\tau\mathbb{P}_\varepsilon f^\varepsilon,\eta)_{L^2(\Gamma)}\,dt\right| &\leq c\int_0^T\|M_\tau\mathbb{P}_\varepsilon f^\varepsilon\|_{H^{-1}(\Gamma,T\Gamma)}\|\eta\|_{H^1(\Gamma)}\,dt \\
    &\leq cT^{1/2}\|\eta\|_{L^2(0,T;H^1(\Gamma))} \\
    &\leq cT^{1/2}\|w\|_{L^2(0,T;H^1(\Gamma))}
  \end{align*}
  and
  \begin{align*}
    |R_\varepsilon^1(\eta)|+|R_\varepsilon^2(\eta)| &\leq c(1+T)^{1/2}\|\eta\|_{L^2(0,T;H^1(\Gamma))} \\
    &\leq c(1+T)^{1/2}\|w\|_{L^2(0,T;H^1(\Gamma))}.
  \end{align*}
  Applying these inequalities to the right-hand side of \eqref{Pf_PMuDt:Eq} we obtain
  \begin{align*}
    \left|\int_0^T(\partial_tv^\varepsilon,w)_{L^2(\Gamma)}\,dt\right| \leq c_T\|w\|_{L^2(0,T;H^1(\Gamma))}
  \end{align*}
  for all $w\in L^2(0,T;H^1(\Gamma,T\Gamma))$.
  Hence \eqref{E:PMu_Dt} holds.
\end{proof}

\begin{lemma} \label{L:Mu_Dt}
  Let $u^\varepsilon$ be as in Lemma \ref{L:Est_Ueps}.
  Then
  \begin{align} \label{E:Mu_Dt}
    \|\partial_tM_\tau u^\varepsilon\|_{L^2(0,T;H^{-1}(\Gamma,T\Gamma))} \leq c_T
  \end{align}
  for all $T>0$, where $c_T>0$ is a constant depending on $T$ but independent of $\varepsilon$.
\end{lemma}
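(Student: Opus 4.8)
The plan is to split $M_\tau u^\varepsilon$ into its weighted solenoidal part and the remainder, estimate the time derivative of each summand, and add. Write $v^\varepsilon=\mathbb{P}_gM_\tau u^\varepsilon$ so that
\begin{align*}
  M_\tau u^\varepsilon = v^\varepsilon+(M_\tau u^\varepsilon-v^\varepsilon) \quad\text{on}\quad \Gamma\times(0,\infty).
\end{align*}
First I would recall that $u^\varepsilon\in H_{loc}^1([0,\infty);\mathcal{H}_\varepsilon)$ by Lemma \ref{L:Est_Ueps}, so Lemma \ref{L:Ave_Dt} gives $M_\tau u^\varepsilon\in H_{loc}^1([0,\infty);L^2(\Gamma,T\Gamma))$ with $\partial_tM_\tau u^\varepsilon=M_\tau(\partial_tu^\varepsilon)$, and Lemma \ref{L:Rep_WHL_Dt} gives $v^\varepsilon\in H_{loc}^1([0,\infty);\mathcal{H}_g)$; hence $\partial_tM_\tau u^\varepsilon$ and $\partial_tv^\varepsilon$ are well-defined elements of $L^2(0,T;L^2(\Gamma,T\Gamma))$ for every $T>0$.

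The term $\partial_tv^\varepsilon$ is already controlled: Lemma \ref{L:PMu_Dt} gives $\|\partial_tv^\varepsilon\|_{L^2(0,T;H^{-1}(\Gamma,T\Gamma))}\leq c_T$. For the remainder I would use the estimate \eqref{E:Rep_WHL_Dt} from Lemma \ref{L:Rep_WHL_Dt} with $u=u^\varepsilon$,
\begin{align*}
  \|\partial_tM_\tau u^\varepsilon-\partial_tv^\varepsilon\|_{L^2(0,T;L^2(\Gamma))} \leq c\varepsilon^{1/2}\|\partial_tu^\varepsilon\|_{L^2(0,T;L^2(\Omega_\varepsilon))},
\end{align*}
and then plug in the bound \eqref{E:Est_DtUe}, namely $\|\partial_tu^\varepsilon\|_{L^2(0,T;L^2(\Omega_\varepsilon))}^2\leq c\varepsilon^{-1+\alpha}(1+T)$, to obtain
\begin{align*}
  \|\partial_tM_\tau u^\varepsilon-\partial_tv^\varepsilon\|_{L^2(0,T;L^2(\Gamma))} \leq c\varepsilon^{\alpha/2}(1+T)^{1/2} \leq c_T,
\end{align*}
using $\alpha\in(0,1]$ and $\varepsilon\in(0,1]$. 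Since the inclusion $L^2(\Gamma,T\Gamma)\hookrightarrow H^{-1}(\Gamma,T\Gamma)$ is continuous (via the pairing $[\cdot,\cdot]_{T\Gamma}$ introduced in Section \ref{SS:Pre_Surf}), the same bound holds with the $H^{-1}(\Gamma,T\Gamma)$-norm in place of the $L^2(\Gamma)$-norm.

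Finally I would combine the two estimates by the triangle inequality in $L^2(0,T;H^{-1}(\Gamma,T\Gamma))$:
\begin{align*}
  \|\partial_tM_\tau u^\varepsilon\|_{L^2(0,T;H^{-1}(\Gamma,T\Gamma))} \leq \|\partial_tv^\varepsilon\|_{L^2(0,T;H^{-1}(\Gamma,T\Gamma))}+\|\partial_tM_\tau u^\varepsilon-\partial_tv^\varepsilon\|_{L^2(0,T;H^{-1}(\Gamma,T\Gamma))} \leq c_T.
\end{align*}
There is essentially no serious obstacle here: the lemma is a bookkeeping consequence of Lemmas \ref{L:PMu_Dt}, \ref{L:Rep_WHL_Dt} and the $\partial_tu^\varepsilon$-estimate \eqref{E:Est_DtUe}. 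The only point requiring a little care is making sure the weak time derivatives commute with $M_\tau$ and with $\mathbb{P}_g$ in the sense needed, which is exactly what Lemmas \ref{L:Ave_Dt} and \ref{L:Rep_WHL_Dt} provide, so the argument is clean once those are invoked.
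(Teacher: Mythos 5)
Your proposal is correct and follows essentially the same route as the paper: decompose via $v^\varepsilon=\mathbb{P}_gM_\tau u^\varepsilon$, bound $\partial_tv^\varepsilon$ by Lemma \ref{L:PMu_Dt}, bound the difference $\partial_tM_\tau u^\varepsilon-\partial_tv^\varepsilon$ in $L^2(0,T;L^2(\Gamma))$ by \eqref{E:Rep_WHL_Dt} and \eqref{E:Est_DtUe}, and conclude via the embedding $L^2(\Gamma,T\Gamma)\hookrightarrow H^{-1}(\Gamma,T\Gamma)$ and the triangle inequality. No gaps.
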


\begin{proof}
  Let $v^\varepsilon=\mathbb{P}_gM_\tau u^\varepsilon$.
  Noting that
  \begin{align*}
    \partial_tM_\tau u^\varepsilon-\partial_tv^\varepsilon \in L^2(0,T;L^2(\Gamma,T\Gamma)) \subset L^2(0,T;H^{-1}(\Gamma,T\Gamma))
  \end{align*}
  by Lemmas \ref{L:Mu_Weak} and \ref{L:PMu_Weak} (see also Section \ref{SS:Pre_Surf}), we have
  \begin{align*}
    \|\partial_tM_\tau u^\varepsilon-\partial_tv^\varepsilon\|_{L^2(0,T;H^{-1}(\Gamma,T\Gamma))} &\leq \|\partial_tM_\tau u^\varepsilon-\partial_tv^\varepsilon\|_{L^2(0,T;L^2(\Gamma))} \\
    &\leq c\varepsilon^{1/2}\|\partial_tu^\varepsilon\|_{L^2(0,T;L^2(\Omega_\varepsilon))} \\
    &\leq c\varepsilon^{\alpha/2}(1+T)^{1/2}
  \end{align*}
  by \eqref{E:Est_DtUe} and \eqref{E:Rep_WHL_Dt}.
  This inequality, \eqref{E:PMu_Dt}, and $\varepsilon^{\alpha/2}\leq 1$ imply \eqref{E:Mu_Dt}.
\end{proof}

\begin{remark} \label{R:Mu_Dt}
  In construction of a weak solution to the Navier--Stokes equations, we usually estimate the time derivative of an approximate solution in the dual space of a solenoidal space.
  However, in Lemma \ref{L:PMu_Dt} we estimate $\partial_tv^\varepsilon$ in $H^{-1}(\Gamma,T\Gamma)$, not in the dual space $\mathcal{V}'_g$ of $\mathcal{V}_g$.
  This is due to the fact that we multiply $\partial_tv^\varepsilon$ by the function $g$ in \eqref{E:PMu_Weak}.
  When $f\in\mathcal{V}'_g$ we cannot define a functional $gf$ on $\mathcal{V}_g$ by
  \begin{align*}
    {}_{\mathcal{V}'_g}\langle gf,v \rangle_{\mathcal{V}_g} := {}_{\mathcal{V}'_g}\langle f,gv \rangle_{\mathcal{V}_g}, \quad v\in\mathcal{V}_g
  \end{align*}
  since $gv\not\in\mathcal{V}_g$ in general (here ${}_{\mathcal{V}'_g}\langle \cdot,\cdot \rangle_{\mathcal{V}_g}$ is the duality product between $\mathcal{V}'_g$ and $\mathcal{V}_g$).
  To avoid this problem, we consider $\partial_tv^\varepsilon$ and $\partial_tM_\tau u^\varepsilon$ in $H^{-1}(\Gamma,T\Gamma)$ (see \eqref{E:Def_Mul_HinT}).
\end{remark}

\subsection{Weak convergence of the average and characterization of the limit} \label{SS:SL_WeCh}
The goal of this subsection is to establish Theorem \ref{T:SL_Weak}.
We proceed as in the case of a bounded domain in $\mathbb{R}^2$ (see e.g. \cites{BoFa13,CoFo88,So01,Te79}).
First we give the definition of a weak solution to the limit equations \eqref{E:NS_Limit} based on \eqref{E:Mu_Weak}.

\begin{definition} \label{D:Lim_W_T}
  For $T>0$ and given data
  \begin{align*}
    v_0 \in \mathcal{H}_g, \quad f\in L^2(0,T;H^{-1}(\Gamma,T\Gamma)),
  \end{align*}
  we say that a vector field
  \begin{align*}
    v \in L^\infty(0,T;\mathcal{H}_g)\cap L^2(0,T;\mathcal{V}_g) \quad\text{with}\quad \partial_t v\in L^1(0,T;H^{-1}(\Gamma,T\Gamma))
  \end{align*}
  is a weak solution to the equations \eqref{E:NS_Limit} on $[0,T)$ if it satisfies
  \begin{align} \label{E:Limit_Weak}
    \int_0^T\{[g\partial_tv,\eta]_{T\Gamma}+a_g(v,\eta)+b_g(v,v,\eta)\}\,dt = \int_0^T[gf,\eta]_{T\Gamma}\,dt
  \end{align}
  for all $\eta\in C_c(0,T;\mathcal{V}_g)$ and $v|_{t=0}=v_0$ in $H^{-1}(\Gamma,T\Gamma)$.
\end{definition}

\begin{definition} \label{D:Lim_W_Inf}
  For given data
  \begin{align*}
    v_0 \in \mathcal{H}_g, \quad f\in L_{loc}^2([0,\infty);H^{-1}(\Gamma,T\Gamma)),
  \end{align*}
  we say that $v$ is a weak solution to \eqref{E:NS_Limit} on $[0,\infty)$ if it is a weak solution to \eqref{E:NS_Limit} on $[0,T)$ for all $T>0$.
\end{definition}

A weak solution $v$ to \eqref{E:NS_Limit} on $[0,T)$ with $T>0$ satisfies
\begin{align*}
  v\in W^{1,1}(0,T;H^{-1}(\Gamma,T\Gamma)) \subset C([0,T];H^{-1}(\Gamma,T\Gamma))
\end{align*}
and thus the initial condition $v|_{t=0}=v_0$ in $H^{-1}(\Gamma,T\Gamma)$ makes sense.
Moreover, it is continuous on $[0,T]$ with values in $\mathcal{H}_g$.

\begin{lemma} \label{L:Lim_W_L2}
  For $T>0$ let $f\in L^2(0,T;H^{-1}(\Gamma,T\Gamma))$.
  Suppose that
  \begin{align*}
    v \in L^\infty(0,T;\mathcal{H}_g)\cap L^2(0,T;\mathcal{V}_g)\quad\text{with}\quad \partial_tv\in L^1(0,T;H^{-1}(\Gamma,T\Gamma))
  \end{align*}
  satisfies \eqref{E:Limit_Weak} for all $\eta\in C_c(0,T;\mathcal{V}_g)$.
  Then
  \begin{align*}
    v \in C([0,T];\mathcal{H}_g), \quad \partial_tv\in L^2(0,T;H^{-1}(\Gamma,T\Gamma)),
  \end{align*}
  and \eqref{E:Limit_Weak} is valid for all $\eta\in L^2(0,T;\mathcal{V}_g)$.
\end{lemma}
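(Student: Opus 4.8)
The plan is to show that, apart from the time-derivative term, every term in the weak formulation \eqref{E:Limit_Weak} defines (after the appropriate bookkeeping of the weight $g$) an element of $L^2(0,T;H^{-1}(\Gamma,T\Gamma))$, so that the equation forces $g\partial_t v$, and hence $\partial_t v$, to lie in that space; the continuity statement then follows from a standard Gelfand-triple interpolation, and the extension of \eqref{E:Limit_Weak} to $\eta\in L^2(0,T;\mathcal V_g)$ from a density argument.

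First I would record the bounds on the other terms. For a.e.\ $t$, by \eqref{E:BiS_Bdd} the functional $\eta\mapsto a_g(v(t),\eta)$ and by \eqref{E:Tri_Surf} the functional $\eta\mapsto b_g(v(t),v(t),\eta)$ are bounded on $H^1(\Gamma,T\Gamma)$ with norms at most $c\|v(t)\|_{H^1(\Gamma)}$ and $c\|v(t)\|_{L^2(\Gamma)}\|v(t)\|_{H^1(\Gamma)}$, respectively, and $\|gf(t)\|_{H^{-1}(\Gamma,T\Gamma)}\le c\|f(t)\|_{H^{-1}(\Gamma,T\Gamma)}$ since $g\in C^4(\Gamma)$. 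Together with $v\in L^\infty(0,T;\mathcal H_g)\cap L^2(0,T;\mathcal V_g)$ and $f\in L^2(0,T;H^{-1}(\Gamma,T\Gamma))$, these three functionals lie in $L^2(0,T;H^{-1}(\Gamma,T\Gamma))$. Choosing $\eta(t)=\varphi(t)w$ with $\varphi\in C_c^\infty(0,T)$ and $w$ in a countable dense subset of $\mathcal V_g$ in \eqref{E:Limit_Weak}, and using $\partial_t v\in L^1(0,T;H^{-1}(\Gamma,T\Gamma))$ to read off the integrand, I obtain a single null set outside which
$$[g\partial_t v(t),\eta]_{T\Gamma}=[gf(t),\eta]_{T\Gamma}-a_g(v(t),\eta)-b_g(v(t),v(t),\eta)\qquad\text{for all }\eta\in\mathcal V_g .$$

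Next I would promote this identity from test functions in $\mathcal V_g$ to all of $H^1(\Gamma,T\Gamma)$. Given $w\in H^1(\Gamma,T\Gamma)$, Lemma~\ref{L:Mu_Dt_Test} gives $\eta\in\mathcal V_g$ and $q\in H^2(\Gamma)$ with $w=g\eta+g\nabla_\Gamma q$ and $\|\eta\|_{H^1(\Gamma)}\le c\|w\|_{H^1(\Gamma)}$, so that $[\partial_t v(t),w]_{T\Gamma}=[g\partial_t v(t),\eta]_{T\Gamma}+[\partial_t v(t),g\nabla_\Gamma q]_{T\Gamma}$. The second term vanishes for a.e.\ $t$: since $g\nabla_\Gamma q\in H^1(\Gamma,T\Gamma)$ (as $q\in H^2(\Gamma)$) and $v\in W^{1,1}(0,T;H^{-1}(\Gamma,T\Gamma))$, the map $t\mapsto[v(t),g\nabla_\Gamma q]_{T\Gamma}$ is absolutely continuous; it is zero for a.e.\ $t$ because $v(t)\in\mathcal V_g\subset\mathcal H_g$ and $g\nabla_\Gamma q\in\mathcal H_g^\perp$ by Lemma~\ref{L:L2gs_Orth}, hence identically zero, so its derivative $[\partial_t v(t),g\nabla_\Gamma q]_{T\Gamma}$ is zero a.e.\ (a countable dense set of $q$'s removes the dependence of the null set on $q$). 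Combining this with the identity above and the $L^2$-bounds from the previous paragraph yields, for a.e.\ $t$,
$$\|\partial_t v(t)\|_{H^{-1}(\Gamma,T\Gamma)}\le c\Bigl(\|f(t)\|_{H^{-1}(\Gamma,T\Gamma)}+\|v(t)\|_{H^1(\Gamma)}+\|v\|_{L^\infty(0,T;L^2(\Gamma))}\|v(t)\|_{H^1(\Gamma)}\Bigr),$$
so $\partial_t v\in L^2(0,T;H^{-1}(\Gamma,T\Gamma))$. (Equivalently, by Lemma~\ref{L:Mu_Dt_Test} one has the topological direct sum $H^1(\Gamma,T\Gamma)=g\mathcal V_g\oplus g\nabla_\Gamma H^2(\Gamma)$, and $\partial_t v(t)$ is the functional which is zero on the second summand and given by the equation on the first.) For the continuity statement I would then invoke the standard interpolation lemma for Gelfand triples $\mathcal V_g\hookrightarrow\mathcal H_g\hookrightarrow\mathcal V_g'$ (see \cites{BoFa13,Te79}): equip $\mathcal H_g$ with the equivalent inner product $(u,w)_g:=(gu,w)_{L^2(\Gamma)}$, so that identifying $\mathcal H_g$ with its dual the functional $\eta\mapsto[g\partial_t v,\eta]_{T\Gamma}$ is precisely the $\mathcal V_g'$-valued distributional derivative of $v$; since $\mathcal V_g$ is dense in $\mathcal H_g$ by Lemma~\ref{L:H1gs_Dense}, $v\in L^2(0,T;\mathcal V_g)$, and $\partial_t v\in L^2(0,T;H^{-1}(\Gamma,T\Gamma))\subset L^2(0,T;\mathcal V_g')$, the lemma gives $v\in C([0,T];\mathcal H_g)$. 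Finally, with $\partial_t v\in L^2(0,T;H^{-1}(\Gamma,T\Gamma))$ every term of \eqref{E:Limit_Weak} is continuous in $\eta$ for the $L^2(0,T;H^1(\Gamma,T\Gamma))$-norm — for the trilinear term one uses $|b_g(v(t),v(t),\eta(t))|\le c\|v\|_{L^\infty(0,T;L^2(\Gamma))}\|v(t)\|_{H^1(\Gamma)}\|\eta(t)\|_{H^1(\Gamma)}$ from \eqref{E:Tri_Surf} — so \eqref{E:Limit_Weak} extends from $C_c(0,T;\mathcal V_g)$ to $L^2(0,T;\mathcal V_g)$ by density.

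I expect the main obstacle to be the bookkeeping forced by the weight $g$: making the passage from test functions in $\mathcal V_g$ to test functions in $H^1(\Gamma,T\Gamma)$ rigorous (the decomposition of Lemma~\ref{L:Mu_Dt_Test}/\ref{L:MuDT_Time} together with the vanishing of $[\partial_t v,g\nabla_\Gamma q]_{T\Gamma}$, which must be obtained using only $\partial_t v\in L^1$), and arranging the weighted inner product on $\mathcal H_g$ so that the abstract Gelfand-triple continuity lemma applies verbatim to the $g$-weighted time derivative appearing in \eqref{E:Limit_Weak}.
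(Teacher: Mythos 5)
Your proposal is correct and follows essentially the same strategy as the paper's proof: decompose a general test function $w=g\eta+g\nabla_\Gamma q$ via Lemma \ref{L:Mu_Dt_Test}, kill the gradient part using the orthogonality $v(t)\in\mathcal{H}_g$, $g\nabla_\Gamma q\in\mathcal{H}_g^\perp$, bound $\partial_tv$ in $L^2(0,T;H^{-1}(\Gamma,T\Gamma))$, and conclude by interpolation and density. The only (harmless) variations are that you obtain the vanishing of $[\partial_tv(t),g\nabla_\Gamma q]_{T\Gamma}$ pointwise in $t$ via absolute continuity of the scalar pairing rather than by mollifying $q$ in time and integrating by parts as the paper does, and you get $v\in C([0,T];\mathcal{H}_g)$ directly from the weighted Gelfand triple $\mathcal{V}_g\hookrightarrow\mathcal{H}_g\hookrightarrow\mathcal{V}_g'$ instead of the paper's route through $C([0,T];L^2(\Gamma,T\Gamma))$ plus closedness of $\mathcal{H}_g$.
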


Note that here the initial condition $v|_{t=0}=v_0$ in $H^{-1}(\Gamma,T\Gamma)$ is not imposed.

\begin{proof}
  We estimate $\partial_tv$ as in the proof of Lemma \ref{L:PMu_Dt}, where we used $\partial_tv^\varepsilon(t)\in \mathcal{H}_g$ for a.a. $t\in(0,T)$.
  This is not valid for $\partial_tv$, but we have
  \begin{align} \label{Pf_LWL2:Anni}
    \int_0^T[\partial_tv,g\nabla_\Gamma q]_{T\Gamma}\,dt = 0 \quad\text{for all}\quad q\in C_c(0,T;H^2(\Gamma)).
  \end{align}
  To see this, we first take functions $q_k\in C_c^\infty(0,T;H^2(\Gamma))$, $k\in\mathbb{N}$ such that
  \begin{align*}
    \lim_{k\to\infty}\|q-q_k\|_{L^\infty(0,T;H^2(\Gamma))} = 0
  \end{align*}
  by mollifying $q\in C_c(0,T;H^2(\Gamma))$ with respect to time.
  Then since
  \begin{multline*}
    \left|\int_0^T[\partial_tv,g\nabla_\Gamma q]_{T\Gamma}\,dt-\int_0^T[\partial_tv,g\nabla_\Gamma q_k]_{T\Gamma}\,dt\right| \\
    \begin{aligned}
      &\leq \|\partial_tv\|_{L^1(0,T;H^{-1}(\Gamma,T\Gamma))}\|g\nabla_\Gamma q-g\nabla_\Gamma q_k\|_{L^\infty(0,T;H^1(\Gamma))} \\
      &\leq c\|\partial_tv\|_{L^1(0,T;H^{-1}(\Gamma,T\Gamma))}\|q-q_k\|_{L^\infty(0,T;H^2(\Gamma))}
    \end{aligned}
  \end{multline*}
  and the last term converges to zero as $k\to\infty$, it is sufficient to show \eqref{Pf_LWL2:Anni} for $q_k$ instead of $q$.
  We suppress the subscript $k$ of $q_k$.
  Then since
  \begin{align*}
    q \in C_c^\infty(0,T;H^2(\Gamma)), \quad \partial_t(\nabla_\Gamma q) = \nabla_\Gamma(\partial_tq) \quad\text{in}\quad C_c^\infty(0,T;H^1(\Gamma,T\Gamma))
  \end{align*}
  and $g$ is independent of time, it follows that
  \begin{align*}
    \int_0^T[\partial_tv,g\nabla_\Gamma q]_{T\Gamma}\,dt = -\int_0^T[v,g\partial_t(\nabla_\Gamma q)]_{T\Gamma}\,dt = -\int_0^T[v,g\nabla_\Gamma(\partial_tq)]_{T\Gamma}\,dt.
  \end{align*}
  Moreover, since $v\in L^\infty(0,T;\mathcal{H}_g)$ and $q\in C_c^\infty(0,T;H^2(\Gamma))$, we have
  \begin{align*}
    v(t) \in \mathcal{H}_g \quad\text{for a.a.}\quad t\in(0,T), \quad g[\nabla_\Gamma(\partial_tq)](t) \in \mathcal{H}_g^\perp \quad\text{for all}\quad t\in(0,T)
  \end{align*}
  by Lemma \ref{L:L2gs_Orth} and thus (see also Section \ref{SS:Pre_Surf})
  \begin{align*}
    \int_0^T[v,g\nabla_\Gamma(\partial_tq)]_{T\Gamma}\,dt = \int_0^T\bigl(v,g\nabla_\Gamma(\partial_tq)\bigr)_{L^2(\Gamma)}\,dt = 0.
  \end{align*}
  Hence we obtain \eqref{Pf_LWL2:Anni} by the above two equalities.

  Now let $w\in C_c(0,T;H^1(\Gamma,T\Gamma))$.
  By Lemma \ref{L:MuDT_Time} we can take
  \begin{align*}
    \eta\in C_c(0,T;\mathcal{V}_g), \quad q\in C_c(0,T;H^2(\Gamma))
  \end{align*}
  such that $w(t)=g\eta(t)+g\nabla_\Gamma q(t)$ on $\Gamma$ for all $t\in(0,T)$ and \eqref{Pf_PMuDt:Test} holds.
  Then
  \begin{align*}
    \int_0^T[\partial_tv,w]_{T\Gamma}\,dt = \int_0^T\bigl([\partial_tv,g\eta]_{T\Gamma}+[\partial_tv,g\nabla_\Gamma q]_{T\Gamma}\bigr)\,dt = \int_0^T[g\partial_tv,\eta]_{T\Gamma}\,dt
  \end{align*}
  by \eqref{Pf_LWL2:Anni}.
  We substitute $\eta$ for \eqref{E:Limit_Weak}.
  Then using the above equality,
  \begin{align} \label{Pf_LWL2:AB}
    \begin{aligned}
      \left|\int_0^Ta_g(v,\eta)\,dt\right| &\leq c\|v\|_{L^2(0,T;H^1(\Gamma))}\|\eta\|_{L^2(0,T;H^1(\Gamma))}, \\
      \left|\int_0^Tb_g(v,v,\eta)\,dt\right| &\leq c\|v\|_{L^\infty(0,T;L^2(\Gamma))}\|v\|_{L^2(0,T;H^1(\Gamma))}\|\eta\|_{L^2(0,T;H^1(\Gamma))}
    \end{aligned}
  \end{align}
  by \eqref{E:BiS_Bdd} and \eqref{E:Tri_Surf}, the assumption on $f$ (see also \eqref{E:Def_Mul_HinT}), and \eqref{Pf_PMuDt:Test} we calculate as in the proof of Lemma \ref{L:PMu_Dt} to get
  \begin{align*}
    \left|\int_0^T[\partial_tv,w]_{T\Gamma}\,dt\right| \leq c\|w\|_{L^2(0,T;H^1(\Gamma))} \quad\text{for all}\quad w\in C_c(0,T;H^1(\Gamma,T\Gamma)).
  \end{align*}
  Since $C_c(0,T;H^1(\Gamma,T\Gamma))$ is dense in $L^2(0,T;H^1(\Gamma,T\Gamma))$, this inequality implies
  \begin{align} \label{Pf_LWL2:Dt}
    \partial_tv\in L^2(0,T;H^{-1}(\Gamma,T\Gamma)).
  \end{align}
  By this property and
  \begin{align*}
    v\in L^2(0,T;\mathcal{V}_g) \subset L^2(0,T;H^1(\Gamma,T\Gamma))
  \end{align*}
  we can apply the interpolation result of Lions--Magenes \cite{LiMa72}*{Chapter 1, Theorem 3.1} (see also \cite{Te79}*{Chapter III, Lemma 1.2}) to $v$ to get
  \begin{align*}
    v\in C([0,T];L^2(\Gamma,T\Gamma)).
  \end{align*}
  Moreover, since $v\in L^\infty(0,T;\mathcal{H}_g)$, we have $v(t)\in\mathcal{H}_g$ for a.a. $t\in(0,T)$ and, in particular, for all $t$ in a dense subset of $[0,T]$.
  Hence, by the continuity of $v(t)$ on $[0,T]$ in $L^2(\Gamma,T\Gamma)$ and the fact that $\mathcal{H}_g$ is closed in $L^2(\Gamma,T\Gamma)$,
  \begin{align*}
    v(t) \in \mathcal{H}_g \quad\text{for all}\quad t\in[0,T], \quad\text{i.e.}\quad v\in C([0,T];\mathcal{H}_g).
  \end{align*}
  Finally, since $C_c(0,T;\mathcal{V}_g)$ is dense in $L^2(0,T;\mathcal{V}_g)$ and both sides of \eqref{E:Limit_Weak} are linear and continuous for $\eta\in L^2(0,T;\mathcal{V}_g)$ by \eqref{Pf_LWL2:AB}, \eqref{Pf_LWL2:Dt}, and the assumption on $f$, the equality \eqref{E:Limit_Weak} is also valid for all $\eta\in L^2(0,T;\mathcal{V}_g)$.
\end{proof}

By Lemma \ref{L:Lim_W_L2} the initial condition for a weak solution to \eqref{E:NS_Limit} makes sense in $\mathcal{H}_g$.
Let us prove the uniqueness of a weak solution.

\begin{lemma} \label{L:Limit_Uni}
  For $T>0$ and given data
  \begin{align*}
    v_0 \in \mathcal{H}_g, \quad f\in L^2(0,T;H^{-1}(\Gamma,T\Gamma)),
  \end{align*}
  there exists at most one weak solution to \eqref{E:NS_Limit} on $[0,T)$.
\end{lemma}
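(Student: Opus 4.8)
The plan is to run the classical uniqueness argument for the two-dimensional Navier--Stokes equations, adapted to the weighted setting of \eqref{E:NS_Limit}. Suppose $v_1$ and $v_2$ are two weak solutions to \eqref{E:NS_Limit} on $[0,T)$ with the same data $v_0\in\mathcal{H}_g$ and $f\in L^2(0,T;H^{-1}(\Gamma,T\Gamma))$. By Lemma \ref{L:Lim_W_L2} each $v_i$ satisfies \eqref{E:Limit_Weak} for all $\eta\in L^2(0,T;\mathcal{V}_g)$, belongs to $C([0,T];\mathcal{H}_g)$, and has $\partial_tv_i\in L^2(0,T;H^{-1}(\Gamma,T\Gamma))$; moreover $v_i|_{t=0}=v_0$ in $\mathcal{H}_g$. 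Set $w:=v_1-v_2$, so that $w\in L^\infty(0,T;\mathcal{H}_g)\cap L^2(0,T;\mathcal{V}_g)$, $\partial_tw\in L^2(0,T;H^{-1}(\Gamma,T\Gamma))$, and $w|_{t=0}=0$ in $\mathcal{H}_g$. Subtracting the two weak formulations and using the bilinearity of $b_g$ in its first two arguments to write $b_g(v_1,v_1,\eta)-b_g(v_2,v_2,\eta)=b_g(w,v_1,\eta)+b_g(v_2,w,\eta)$, I obtain
\begin{align*}
  \int_0^T\{[g\partial_tw,\eta]_{T\Gamma}+a_g(w,\eta)+b_g(w,v_1,\eta)+b_g(v_2,w,\eta)\}\,dt=0
\end{align*}
for all $\eta\in L^2(0,T;\mathcal{V}_g)$, where the trilinear terms make sense by \eqref{E:Tri_Surf}.

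Next I would take $\eta=1_{[0,t]}w$ for $t\in[0,T]$. The point requiring care is the energy identity
\begin{align*}
  \int_0^t[g\partial_sw,w]_{T\Gamma}\,ds=\tfrac{1}{2}\|g^{1/2}w(t)\|_{L^2(\Gamma)}^2,
\end{align*}
which I would establish by the same time-mollification argument as in the proof of Lemma \ref{L:Lim_W_L2}: the relation \eqref{Pf_LWL2:Anni} holds for $w$ (being a difference of weak solutions), so $g\partial_tw$ pairs trivially with gradient fields $g\nabla_\Gamma q$, and after splitting test functions via Lemma \ref{L:MuDT_Time} one may invoke the interpolation theorem of Lions--Magenes \cite{LiMa72}*{Chapter 1, Theorem 3.1} (cf.\ \cite{Te79}*{Chapter III, Lemma 1.2}) to deduce that $t\mapsto\int_\Gamma g|w|^2\,d\mathcal{H}^2$ is absolutely continuous with the expected derivative. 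Since $w(s)\in\mathcal{V}_g$ and $v_2(s)\in\mathcal{V}_g$ for a.a.\ $s$, the second identity in \eqref{E:TriS_Vg} gives $b_g(v_2,w,w)=0$, so the tested equation reduces to
\begin{align*}
  \tfrac{1}{2}\|g^{1/2}w(t)\|_{L^2(\Gamma)}^2+\int_0^ta_g(w,w)\,ds=-\int_0^tb_g(w,v_1,w)\,ds.
\end{align*}

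It then remains to estimate the right-hand side and close by Gronwall's inequality. Using $w(s)\in\mathcal{V}_g$ and the first identity in \eqref{E:TriS_Vg} I rewrite $b_g(w,v_1,w)=-b_g(w,w,v_1)$, and by the definition \eqref{E:Def_Tri_Surf}, H\"older's inequality, Ladyzhenskaya's inequality \eqref{E:La_Surf}, and \eqref{E:G_Inf},
\begin{align*}
  |b_g(w,w,v_1)|\le c\|w\|_{L^4(\Gamma)}^2\|\nabla_\Gamma v_1\|_{L^2(\Gamma)}\le c\|w\|_{L^2(\Gamma)}\|w\|_{H^1(\Gamma)}\|\nabla_\Gamma v_1\|_{L^2(\Gamma)}.
\end{align*}
On the left, $a_g(w,w)\ge0$ and the coercivity-type inequality \eqref{E:Bi_Surf} gives $\|\nabla_\Gamma w\|_{L^2(\Gamma)}^2\le c\{a_g(w,w)+\|w\|_{L^2(\Gamma)}^2\}$, while $\|g^{1/2}w\|_{L^2(\Gamma)}^2\ge c^{-1}\|w\|_{L^2(\Gamma)}^2$ by \eqref{E:G_Inf}. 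Combining these with $\|w\|_{H^1(\Gamma)}\le\|w\|_{L^2(\Gamma)}+\|\nabla_\Gamma w\|_{L^2(\Gamma)}$ and Young's inequality to absorb the term containing $\|\nabla_\Gamma w\|_{L^2(\Gamma)}^2$ into the left-hand side, I arrive at
\begin{align*}
  \|w(t)\|_{L^2(\Gamma)}^2\le c\int_0^t\bigl(1+\|\nabla_\Gamma v_1(s)\|_{L^2(\Gamma)}^2\bigr)\|w(s)\|_{L^2(\Gamma)}^2\,ds,\qquad t\in[0,T].
\end{align*}
Since $v_1\in L^2(0,T;\mathcal{V}_g)$ gives $\|\nabla_\Gamma v_1(\cdot)\|_{L^2(\Gamma)}^2\in L^1(0,T)$ and $w(0)=0$, Gronwall's inequality forces $\|w(t)\|_{L^2(\Gamma)}=0$ for all $t\in[0,T]$, i.e.\ $v_1=v_2$. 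The main obstacle, as indicated, is the rigorous justification of the energy identity for $[g\partial_sw,w]_{T\Gamma}$: because $\partial_tw$ lies only in $H^{-1}(\Gamma,T\Gamma)$ rather than in the dual of $\mathcal{V}_g$ and because of the weight $g$ (cf.\ Remark \ref{R:Mu_Dt}), one cannot directly pair $\partial_tw$ with $w$, and one must instead use \eqref{Pf_LWL2:Anni} together with the decomposition of Lemma \ref{L:MuDT_Time} and the Lions--Magenes interpolation result; everything else is a routine adaptation of the flat two-dimensional case.
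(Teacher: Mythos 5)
Your proposal is correct and follows essentially the same route as the paper: form $w=v_1-v_2$, invoke Lemma \ref{L:Lim_W_L2}, test the subtracted weak formulation with $1_{[0,t]}w$, use \eqref{E:TriS_Vg} to kill $b_g(v_2,w,w)$ and rewrite $b_g(w,v_1,w)$, then apply \eqref{E:Tri_Surf}, \eqref{E:Bi_Surf}, Young, and Gronwall. The only difference is that for the energy identity $\int_0^t[g\partial_sw,w]_{T\Gamma}\,ds=\tfrac12\|g^{1/2}w(t)\|_{L^2(\Gamma)}^2$ the paper simply appeals to the regularity class $w\in L^2(0,T;\mathcal{V}_g)$ with $\partial_tw\in L^2(0,T;H^{-1}(\Gamma,T\Gamma))$ (so $gw(s)\in H^1(\Gamma,T\Gamma)$ is an admissible pairing and Lions--Magenes applies directly to the equivalent weighted inner product), making your additional detour through \eqref{Pf_LWL2:Anni} and Lemma \ref{L:MuDT_Time} unnecessary, though not incorrect.
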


\begin{proof}
  Let $v_1$ and $v_2$ be weak solutions to \eqref{E:NS_Limit} and $w:=v_1-v_2$.
  Then
  \begin{align} \label{Pf_LU:Class}
    w\in C([0,T];\mathcal{H}_g)\cap L^2(0,T;\mathcal{V}_g), \quad \partial_tw \in L^2(0,T;H^{-1}(\Gamma,T\Gamma)),
  \end{align}
  and $w|_{t=0}=0$ in $\mathcal{H}_g$ by Lemma \ref{L:Lim_W_L2}.
  Moreover, by the same lemma we can subtract the equality \eqref{E:Limit_Weak} for $\eta\in L^2(0,T;\mathcal{V}_g)$ with $v=v_2$ from that with $v=v_1$ to get
  \begin{align} \label{Pf_LU:Weak}
    \int_0^T\{[g\partial_sw,\eta]_{T\Gamma}+a_g(w,\eta)+b_g(w,v_1,\eta)+b_g(v_2,w,\eta)\}\,ds = 0.
  \end{align}
  For $t\in[0,T]$ let $1_{[0,t]}$ be the characteristic function of $[0,t]\subset\mathbb{R}$ and
  \begin{align*}
    \eta := 1_{[0,t]}w\in L^2(0,T;\mathcal{V}_g).
  \end{align*}
  We substitute this $\eta$ for \eqref{Pf_LU:Weak} and use \eqref{E:Bi_Surf},
  \begin{align} \label{Pf_LU:Int_Dt}
    \begin{aligned}
      \int_0^t[g\partial_sw,w]_{T\Gamma}\,ds &= \frac{1}{2}\int_0^t\frac{d}{ds}\|g^{1/2}w\|_{L^2(\Gamma)}^2\,ds \\
      &= \frac{1}{2}\|g^{1/2}w(t)\|_{L^2(\Gamma)}^2-\frac{1}{2}\|g^{1/2}w(0)\|_{L^2(\Gamma)}^2 \\
      &\geq c\left(\|w(t)\|_{L^2(\Gamma)}^2-\|w(0)\|_{L^2(\Gamma)}^2\right)
    \end{aligned}
  \end{align}
  by \eqref{Pf_LU:Class} and the fact that $g$ is bounded on $\Gamma$ and satisfies \eqref{E:G_Inf}, and
  \begin{align*}
    |b_g(w,v_1,w)| = |b_g(w,w,v_1)| \leq c\|w\|_{L^2(\Gamma)}\|w\|_{H^1(\Gamma)}\|v_1\|_{H^1(\Gamma)}
  \end{align*}
  and $b_g(v_2,w,w)=0$ by $v_2,w\in\mathcal{V}_g$ and \eqref{E:Tri_Surf}--\eqref{E:TriS_Vg} to obtain
  \begin{multline*}
    \|w(t)\|_{L^2(\Gamma)}^2+\int_0^t\|\nabla_\Gamma w\|_{L^2(\Gamma)}^2\,ds \\
    \leq c\left\{\|w(0)\|_{L^2(\Gamma)}^2+\int_0^t\left(\|w\|_{L^2(\Gamma)}^2+\|w\|_{L^2(\Gamma)}\|w\|_{H^1(\Gamma)}\|v_1\|_{H^1(\Gamma)}\right)ds\right\}.
  \end{multline*}
  We further apply Young's inequality to the last term to deduce that
  \begin{multline*}
    \|w(t)\|_{L^2(\Gamma)}^2+\int_0^t\|\nabla_\Gamma w\|_{L^2(\Gamma)}^2\,ds \\
    \leq c\left\{\|w(0)\|_{L^2(\Gamma)}^2+\int_0^t\left(1+\|v_1\|_{H^1(\Gamma)}^2\right)\|w\|_{L^2(\Gamma)}^2\,ds\right\} \\
    +\frac{1}{2}\int_0^t\|\nabla_\Gamma w\|_{L^2(\Gamma)}^2\,ds.
  \end{multline*}
  Then we make the last term absorbed into the left-hand side and use $w|_{t=0}=0$ in $\mathcal{H}_g$ to get (we omit the time integral of $\|\nabla_\Gamma w\|_{L^2(\Gamma)}^2$ on the left-hand side)
  \begin{align*}
    \|w(t)\|_{L^2(\Gamma)}^2 \leq c\int_0^t\xi\|w\|_{L^2(\Gamma)}^2\,ds, \quad \xi(t) := 1+\|v_1(t)\|_{H^1(\Gamma)}^2 \quad\text{for all}\quad t\in[0,T].
  \end{align*}
  Since $\xi\in L^1(0,T)$, we can apply Gronwall's inequality to this inequality to obtain
  \begin{align*}
    \|w(t)\|_{L^2(\Gamma)}^2 = 0 \quad\text{for all}\quad t\in[0,T].
  \end{align*}
  Hence $v_1=v_2$ and the uniqueness of a weak solution to \eqref{E:NS_Limit} holds.
\end{proof}

Next we construct an associated pressure in \eqref{E:NS_Limit} from \eqref{E:Limit_Weak} after giving two auxiliary results.
Recall that we identity $H^{-1}(\Gamma,T\Gamma)$ with the quotient space
\begin{align*}
  \mathcal{Q} = \{[f]\mid f\in H^{-1}(\Gamma)^3\}, \quad [f] = \{\tilde{f}\in H^{-1}(\Gamma)^3 \mid \text{$Pf=P\tilde{f}$ in $H^{-1}(\Gamma)^3$}\}
\end{align*}
and take $Pf$ (or $f$ when $Pf=f$ in $H^{-1}(\Gamma)^3$) as a representative of the equivalence class $[f]$ to write $[Pf,v]_{T\Gamma}=\langle f,v\rangle_\Gamma$ for $v\in H^1(\Gamma,T\Gamma)$ (see Section \ref{SS:Pre_Surf}).

\begin{lemma} \label{L:LWPA_Bi}
  Let $A\in L^2(\Gamma)^{3\times3}$ satisfy $A^T=PA=AP=A$ on $\Gamma$.
  Then
  \begin{align} \label{E:LWPA_Bi}
    \bigl(A,D_\Gamma(\eta)\bigr)_{L^2(\Gamma)} = -[P\mathrm{div}_\Gamma A,\eta]_{T\Gamma}
  \end{align}
  for all $\eta\in H^1(\Gamma,T\Gamma)$.
\end{lemma}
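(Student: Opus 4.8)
The plan is to reduce \eqref{E:LWPA_Bi} to two elementary pointwise matrix identities together with the structural hypotheses $A^T=PA=AP=A$, unwinding both sides directly from the weak definitions of Section~\ref{SS:Pre_Surf}; no density argument should be required. For $A\in L^2(\Gamma)^{3\times3}$ I interpret $\mathrm{div}_\Gamma A\in H^{-1}(\Gamma)^3$ componentwise as $[\mathrm{div}_\Gamma A]_j=\sum_i\underline{D}_iA_{ij}$ with each $\underline{D}_iA_{ij}\in H^{-1}(\Gamma)$ given by \eqref{E:Def_TD_Hin}, and $P\mathrm{div}_\Gamma A\in H^{-1}(\Gamma)^3$ then identified with an element of $H^{-1}(\Gamma,T\Gamma)$ as in Section~\ref{SS:Pre_Surf}. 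First I would unwind the right-hand side: since $P\eta=\eta$ for $\eta\in H^1(\Gamma,T\Gamma)$, one gets $[P\mathrm{div}_\Gamma A,\eta]_{T\Gamma}=\langle\mathrm{div}_\Gamma A,\eta\rangle_\Gamma=\sum_{i,j}\langle\underline{D}_iA_{ij},\eta_j\rangle_\Gamma=-\sum_{i,j}(A_{ij},\underline{D}_i\eta_j+\eta_jHn_i)_{L^2(\Gamma)}$, which by \eqref{E:Def_DivG} equals $-(A,\nabla_\Gamma\eta)_{L^2(\Gamma)}-\int_\Gamma H\,(A^Tn)\cdot\eta\,d\mathcal{H}^2$. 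Using $A^T=A$ and then $An=APn=0$ (from $AP=A$ and $Pn=0$) the curvature integral drops, leaving $-[P\mathrm{div}_\Gamma A,\eta]_{T\Gamma}=(A,\nabla_\Gamma\eta)_{L^2(\Gamma)}$.

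Next I would identify the left-hand side with the same quantity. Pointwise a.e.\ on $\Gamma$, cyclicity of the trace and $P^T=P$ give $A:PBP=(PAP):B$ for any $3\times3$ matrix $B$; applying this with $B=(\nabla_\Gamma\eta)_S$ and using $PAP=A$ together with the definition \eqref{E:Def_SSR} of $D_\Gamma(\eta)$, one obtains $A:D_\Gamma(\eta)=A:(\nabla_\Gamma\eta)_S=\frac{1}{2}\big(A:\nabla_\Gamma\eta+A^T:\nabla_\Gamma\eta\big)=A:\nabla_\Gamma\eta$, the last step again by $A^T=A$. Integrating over $\Gamma$ yields $(A,D_\Gamma(\eta))_{L^2(\Gamma)}=(A,\nabla_\Gamma\eta)_{L^2(\Gamma)}$, and combining with the previous paragraph gives \eqref{E:LWPA_Bi}.

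The only point needing genuine care — and what I would treat as the main obstacle — is the bookkeeping in the weak matrix divergence: one must retain the correction terms $\eta_jHn_i$ coming from \eqref{E:Def_TD_Hin}, check that they assemble into the single integral $\int_\Gamma H(A^Tn)\cdot\eta\,d\mathcal{H}^2$, and then see that it vanishes precisely because $A=AP$ annihilates $n$; the structural symmetry $A^T=A$ is what lets one pass between $A:\nabla_\Gamma\eta$ and $A:(\nabla_\Gamma\eta)_S$ and between $A^Tn$ and $An$. As a cross-check (and an alternative route avoiding $H^{-1}(\Gamma)$ altogether), one can first prove the identity for $A\in C^1(\Gamma)^{3\times3}$ and $\eta\in C^1(\Gamma,T\Gamma)$ via the product rule $A:\nabla_\Gamma\eta=\mathrm{div}_\Gamma(A\eta)-\mathrm{div}_\Gamma A\cdot\eta$, the formula \eqref{E:IbP_DivG}, and $(A\eta)\cdot n=\eta\cdot A^Tn=0$, and then extend by density: given $A\in L^2(\Gamma)^{3\times3}$ with the three conditions, approximate it in $L^2$ by $B_k\in C^\ell(\Gamma)^{3\times3}$ (Lemma~\ref{L:Wmp_Appr}) and replace $B_k$ by $\frac{1}{2}\big(PB_kP+(PB_kP)^T\big)$, which lies in $C^1(\Gamma)^{3\times3}$, satisfies $A^T=PA=AP=A$, and still converges to $A$; both sides of \eqref{E:LWPA_Bi} are continuous in $A\in L^2(\Gamma)^{3\times3}$ for fixed $\eta$ because $n$ and $H$ are bounded on $\Gamma$.
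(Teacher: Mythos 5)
Your argument is correct and is essentially the paper's own proof: both rest on the pointwise identities $A:D_\Gamma(\eta)=A:\nabla_\Gamma\eta$ and $A^Tn=0$ drawn from $A^T=PA=AP=A$, followed by the weak integration by parts \eqref{E:Def_TD_Hin} in which the curvature term $\int_\Gamma H(A^Tn)\cdot\eta\,d\mathcal{H}^2$ drops out, and the identification of $\langle\mathrm{div}_\Gamma A,\eta\rangle_\Gamma$ with $[P\mathrm{div}_\Gamma A,\eta]_{T\Gamma}$. The only differences are cosmetic (you run the computation from the right-hand side rather than the left) plus an optional density cross-check the paper does not need.
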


\begin{proof}
  The assumption on $A$ yields
  \begin{align*}
    A:D_\Gamma(\eta) = A:\nabla_\Gamma\eta, \quad A^Tn = 0 \quad\text{on}\quad \Gamma.
  \end{align*}
  Using these equalities and \eqref{E:Def_TD_Hin} and noting that $\eta\in H^1(\Gamma,T\Gamma)$ we get
  \begin{align*}
    \bigl(A,D_\Gamma(\eta)\bigr)_{L^2(\Gamma)} &= (A,\nabla_\Gamma\eta)_{L^2(\Gamma)} = \sum_{i,j=1}^3(A_{ij},\underline{D}_i\eta_j)_{L^2(\Gamma)} \\
    &= -\sum_{i,j=1}^3\left\{\langle \underline{D}_iA_{ij}, \eta_j\rangle_\Gamma+(A_{ij}Hn_i,\eta_j)_{L^2(\Gamma)}\right\} \\
    &= -\left\{\langle \mathrm{div}_\Gamma A,\eta\rangle_\Gamma+(HA^Tn,\eta)_{L^2(\Gamma)}\right\} = -[P\mathrm{div}_\Gamma A,\eta]_{T\Gamma}.
  \end{align*}
  Hence \eqref{E:LWPA_Bi} holds.
\end{proof}

\begin{lemma} \label{L:LWPA_Tri}
  Let $v\in H^1(\Gamma,T\Gamma)$.
  Then
  \begin{align*}
    \overline{\nabla}_vv = P(v\cdot\nabla_\Gamma)v \in L^{4/3}(\Gamma,T\Gamma)
  \end{align*}
  and we can consider $\overline{\nabla}_vv$ as an element of $H^{-1}(\Gamma,T\Gamma)$ by
  \begin{align*}
    \Bigl[\overline{\nabla}_vv,\eta\Bigr]_{T\Gamma} := \int_\Gamma\overline{\nabla}_vv\cdot\eta\,d\mathcal{H}^2, \quad \eta\in H^1(\Gamma,T\Gamma).
  \end{align*}
  Moreover, if $v\in\mathcal{V}_g$, then
  \begin{align} \label{E:LWPA_Tri}
    b_g(v,v,\eta) = \int_\Gamma g\overline{\nabla}_vv\cdot\eta\,d\mathcal{H}^2 = \Bigl[g\overline{\nabla}_vv,\eta\Bigr]_{T\Gamma}
  \end{align}
  for all $\eta\in H^1(\Gamma,T\Gamma)$ and
  \begin{align} \label{E:LWPA_HinT}
    \left\|g\overline{\nabla}_vv\right\|_{H^{-1}(\Gamma,T\Gamma)} \leq c\|v\|_{L^2(\Gamma)}\|v\|_{H^1(\Gamma)}
  \end{align}
  with a constant $c>0$ independent of $v$.
\end{lemma}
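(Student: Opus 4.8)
\emph{Plan.} The argument splits into three parts: (i) establish the integrability of $\overline{\nabla}_vv$ and its membership in $H^{-1}(\Gamma,T\Gamma)$; (ii) prove the identity \eqref{E:LWPA_Tri} for $v\in\mathcal{V}_g$; and (iii) deduce the bound \eqref{E:LWPA_HinT} from \eqref{E:Tri_Surf}.

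For (i), writing $\overline{\nabla}_vv=P(\nabla_\Gamma v)^Tv$, I would note that Ladyzhenskaya's inequality \eqref{E:La_Surf} gives $v\in L^4(\Gamma)^3$, while $\nabla_\Gamma v\in L^2(\Gamma)^{3\times3}$, so H\"older's inequality with exponents $2$ and $4$ yields $(\nabla_\Gamma v)^Tv\in L^{4/3}(\Gamma)^3$; since $P$ is bounded and projects onto the tangent plane, $\overline{\nabla}_vv\in L^{4/3}(\Gamma,T\Gamma)$. For any $\eta\in H^1(\Gamma,T\Gamma)\subset L^4(\Gamma)^3$ (again by \eqref{E:La_Surf}), H\"older's inequality bounds $\left|\int_\Gamma\overline{\nabla}_vv\cdot\eta\,d\mathcal{H}^2\right|$ by $c\|\overline{\nabla}_vv\|_{L^{4/3}(\Gamma)}\|\eta\|_{H^1(\Gamma)}$, so $\eta\mapsto\int_\Gamma\overline{\nabla}_vv\cdot\eta\,d\mathcal{H}^2$ is a bounded linear functional on $H^1(\Gamma,T\Gamma)$; as $\overline{\nabla}_vv$ is tangential it represents its own equivalence class in the sense of Section \ref{SS:Pre_Surf}, which proves the first assertion. (The same computation shows $L^{4/3}(\Gamma,T\Gamma)\subset H^{-1}(\Gamma,T\Gamma)$, which I will use in (iii).)

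For (ii), for $v\in\mathcal{V}_g$ and $\eta\in H^1(\Gamma,T\Gamma)$ I would use the pointwise product-rule identity already employed in the proof of Lemma \ref{L:Tri_Surf}, namely
\begin{align*}
g(v\otimes v):\nabla_\Gamma\eta=\mathrm{div}_\Gamma\bigl[g(v\cdot\eta)v\bigr]-(v\cdot\eta)\,\mathrm{div}_\Gamma(gv)-g(v\otimes\eta):\nabla_\Gamma v,
\end{align*}
now read as an identity in $L^1(\Gamma)$: the vector field $g(v\cdot\eta)v$ lies in $W^{1,1}(\Gamma,T\Gamma)$ because its components are in $L^{4/3}(\Gamma)$ and its weak tangential derivatives, computed by the product rule for products of $H^1\cap L^4$ fields, are sums of products of $L^2$- and $L^4$-functions, hence in $L^1(\Gamma)$; and $\mathrm{div}_\Gamma(gv)=0$ since $v\in\mathcal{V}_g$. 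A direct index computation gives $g(v\otimes\eta):\nabla_\Gamma v=g\,\eta\cdot(v\cdot\nabla_\Gamma)v=g\,\overline{\nabla}_vv\cdot\eta$, using that $\eta$ is tangential and $P$ is symmetric. Integrating the identity over $\Gamma$ and invoking the $W^{1,1}$-version of the surface divergence theorem \eqref{E:IbP_WDivG_T} to annihilate $\int_\Gamma\mathrm{div}_\Gamma[g(v\cdot\eta)v]\,d\mathcal{H}^2$ (the boundary term vanishes since $v$ is tangential) yields $-b_g(v,v,\eta)=-\int_\Gamma g\,\overline{\nabla}_vv\cdot\eta\,d\mathcal{H}^2$, which is \eqref{E:LWPA_Tri}. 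If one prefers to avoid verifying the $W^{1,1}$ regularity by hand, one may first prove \eqref{E:LWPA_Tri} for $\eta\in C^1(\Gamma,T\Gamma)$, where $g(v\cdot\eta)v\in W^{1,1}(\Gamma,T\Gamma)$ is immediate, and then pass to general $\eta\in H^1(\Gamma,T\Gamma)$ by density (Lemma \ref{L:Wmp_Tan_Appr}), both sides being continuous in $\eta$ for the $H^1(\Gamma)$-topology by \eqref{E:Tri_Surf} and the H\"older estimate of part (i).

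For (iii), since $g\,\overline{\nabla}_vv\in L^{4/3}(\Gamma,T\Gamma)\subset H^{-1}(\Gamma,T\Gamma)$, the identity \eqref{E:LWPA_Tri} gives $[g\,\overline{\nabla}_vv,\eta]_{T\Gamma}=b_g(v,v,\eta)$ for all $\eta\in H^1(\Gamma,T\Gamma)$, and \eqref{E:Tri_Surf} with $v_1=v_2=v$, $v_3=\eta$ bounds the right-hand side by $c\|v\|_{L^2(\Gamma)}\|v\|_{H^1(\Gamma)}\|\eta\|_{H^1(\Gamma)}$; taking the supremum over $\|\eta\|_{H^1(\Gamma)}\le1$ gives \eqref{E:LWPA_HinT}. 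I expect the only genuinely delicate point to be the justification that the boundary term vanishes when $v$ and $\eta$ are merely in $H^1$: the natural candidate field $g(v\cdot\eta)v$ is only $W^{1,1}$, not $H^1$, so one must invoke the $W^{1,1}$-case of \eqref{E:IbP_WDivG_T} (or the density argument), along with the routine verification that the product rule for weak tangential derivatives is valid for products of $H^1\cap L^4$ fields on the two-dimensional surface $\Gamma$.
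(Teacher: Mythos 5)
Your proof is correct and follows essentially the same route as the paper's: H\"older plus Ladyzhenskaya for the $L^{4/3}$ and $H^{-1}$ assertions, the antisymmetry of $b_g$ combined with the pointwise identity $(v\otimes\eta):\nabla_\Gamma v=\overline{\nabla}_vv\cdot\eta$ for \eqref{E:LWPA_Tri}, and \eqref{E:Tri_Surf} for the bound \eqref{E:LWPA_HinT}. The only difference is that the paper simply invokes \eqref{E:TriS_Vg}, already established in Lemma \ref{L:Tri_Surf} by exactly the density/integration-by-parts argument you spell out, rather than re-deriving it.
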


\begin{proof}
  Let $v\in H^1(\Gamma,T\Gamma)$.
  By H\"{o}lder's inequality and \eqref{E:La_Surf},
  \begin{align*}
    \left\|\overline{\nabla}_vv\right\|_{L^{4/3}(\Gamma)} \leq \|v\|_{L^4(\Gamma)}\|\nabla_\Gamma v\|_{L^2(\Gamma)} \leq c\|v\|_{L^2(\Gamma)}^{1/2}\|v\|_{H^1(\Gamma)}^{3/2}.
  \end{align*}
  Hence $\overline{\nabla}_vv\in L^{4/3}(\Gamma,T\Gamma)$.
  We again use H\"{o}lder's inequality and \eqref{E:La_Surf} to get
  \begin{align*}
    \left|\int_\Gamma \overline{\nabla}_vv\cdot\eta\,dx\right| \leq \left\|\overline{\nabla}_vv\right\|_{L^{4/3}(\Gamma)}\|\eta\|_{L^4(\Gamma)} \leq c\left\|\overline{\nabla}_vv\right\|_{L^{4/3}(\Gamma)}\|\eta\|_{H^1(\Gamma)}
  \end{align*}
  for all $\eta\in H^1(\Gamma,T\Gamma)$ and thus $\overline{\nabla}_vv\in H^{-1}(\Gamma,T\Gamma)$.
  Now let $v\in\mathcal{V}_g$.
  Then
  \begin{align*}
    b_g(v,v,\eta) = -b_g(v,\eta,v) = \int_\Gamma g(v\otimes\eta):\nabla_\Gamma v\,d\mathcal{H}^2
  \end{align*}
  for $\eta\in H^1(\Gamma,T\Gamma)$ by \eqref{E:TriS_Vg}.
  To the last term we apply
  \begin{align*}
    v\otimes\eta:\nabla_\Gamma v = (v\cdot\nabla_\Gamma)v\cdot\eta = \{P(v\cdot\nabla_\Gamma)v\}\cdot\eta = \overline{\nabla}_vv\cdot\eta \quad\text{on}\quad \Gamma
  \end{align*}
  by $\eta\cdot n=0$ on $\Gamma$ to get \eqref{E:LWPA_Tri}.
  Moreover, by \eqref{E:Tri_Surf} and \eqref{E:LWPA_Tri} we see that
  \begin{align*}
    \left|\Bigl[g\overline{\nabla}_vv,\eta\Bigr]_{T\Gamma}\right| = |b_g(v,v,\eta)| \leq c\|v\|_{L^2(\Gamma)}\|v\|_{H^1(\Gamma)}\|\eta\|_{H^1(\Gamma)}
  \end{align*}
  for all $\eta\in H^1(\Gamma,T\Gamma)$.
  Thus \eqref{E:LWPA_HinT} follows.
\end{proof}

\begin{lemma} \label{L:LW_Pres}
  For $T>0$ and given data
  \begin{align*}
    v_0 \in \mathcal{H}_g, \quad f\in L^2(0,T;H^{-1}(\Gamma,T\Gamma)),
  \end{align*}
  let $v$ be a weak solution to \eqref{E:NS_Limit} on $[0,T)$.
  Then there exists a unique
  \begin{align*}
    \hat{q}\in C([0,T];L^2(\Gamma))
  \end{align*}
  such that $\int_\Gamma\hat{q}(t)\,d\mathcal{H}^2=0$ for all $t\in[0,T]$ and
  \begin{multline} \label{E:LW_Pres}
    g\Bigl(\partial_tv+\overline{\nabla}_vv\Bigr)-2\nu\left\{P\,\mathrm{div}_\Gamma[gD_\Gamma(v)]-\frac{1}{g}(v\cdot\nabla_\Gamma g)\nabla_\Gamma g\right\} \\
    +(\gamma^0+\gamma^1)v+g\nabla_\Gamma q = gf \quad\text{in}\quad \mathcal{D}'(0,T;H^{-1}(\Gamma,T\Gamma))
  \end{multline}
  with $q:=\partial_t\hat{q}\in\mathcal{D}'(0,T;L^2(\Gamma))$ (see Section \ref{SS:Pre_Surf}).
\end{lemma}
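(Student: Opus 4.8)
The plan is to integrate the weak formulation \eqref{E:Limit_Weak} in time, use Lemmas \ref{L:LWPA_Bi} and \ref{L:LWPA_Tri} to rewrite the bilinear and trilinear forms as duality pairings, and then apply the weighted de Rham theorem (Theorem \ref{T:DeRham_T}) to recover $\hat q$ pointwise in $t$.

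First I would invoke Lemma \ref{L:Lim_W_L2}: a weak solution $v$ satisfies $v\in C([0,T];\mathcal{H}_g)\cap L^2(0,T;\mathcal{V}_g)$ with $\partial_tv\in L^2(0,T;H^{-1}(\Gamma,T\Gamma))$, hence $v\in W^{1,2}(0,T;H^{-1}(\Gamma,T\Gamma))\hookrightarrow C([0,T];H^{-1}(\Gamma,T\Gamma))$ and $v|_{t=0}=v_0$, and \eqref{E:Limit_Weak} holds for every $\eta\in L^2(0,T;\mathcal{V}_g)$. Fixing $\psi\in\mathcal{V}_g$ and $t\in[0,T]$, I substitute $\eta=1_{[0,t]}\psi$ into \eqref{E:Limit_Weak}. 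The parabolic term turns into $[g(v(t)-v_0),\psi]_{T\Gamma}$ using \eqref{E:Def_Mul_HinT} and the fundamental theorem of calculus in $W^{1,2}(0,T;H^{-1}(\Gamma,T\Gamma))$; the term $a_g(v,\psi)$ becomes $-2\nu[P\,\mathrm{div}_\Gamma(gD_\Gamma(v)),\psi]_{T\Gamma}+2\nu[\frac{1}{g}(v\cdot\nabla_\Gamma g)\nabla_\Gamma g,\psi]_{T\Gamma}+(\gamma^0+\gamma^1)[v,\psi]_{T\Gamma}$ by Lemma \ref{L:LWPA_Bi} applied with $A=gD_\Gamma(v(s))$ (which indeed satisfies $A^T=PA=AP=A$) together with the tangentiality of $\nabla_\Gamma g$; and $b_g(v,v,\psi)$ becomes $[g\overline{\nabla}_vv,\psi]_{T\Gamma}$ by Lemma \ref{L:LWPA_Tri}. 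This yields $[G(t),\psi]_{T\Gamma}=0$ for all $\psi\in\mathcal{V}_g$ and all $t\in[0,T]$, where
\begin{align*}
  G(t):=g(v(t)-v_0)&+\int_0^tg\overline{\nabla}_vv\,ds-2\nu\int_0^t\Bigl(P\,\mathrm{div}_\Gamma[gD_\Gamma(v)]-\frac{1}{g}(v\cdot\nabla_\Gamma g)\nabla_\Gamma g\Bigr)ds\\
  &+(\gamma^0+\gamma^1)\int_0^tv\,ds-\int_0^tgf\,ds.
\end{align*}
Here I would check that $G\in C([0,T];H^{-1}(\Gamma,T\Gamma))$: the summand $g(v(t)-v_0)$ is continuous since $v\in C([0,T];L^2(\Gamma,T\Gamma))$, and each integrand lies in $L^1(0,T;H^{-1}(\Gamma,T\Gamma))$ by the energy bounds on $v$, with \eqref{E:LWPA_HinT} handling the convective term and the $L^2$-boundedness of $\mathrm{div}_\Gamma$ on matrix fields handling the viscous term.

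Next, by Theorem \ref{T:DeRham_T} applied to $-G(t)$ for each $t\in[0,T]$, there is a unique $\hat q(t)\in L^2(\Gamma)$ with $\int_\Gamma\hat q(t)\,d\mathcal{H}^2=0$ and $g\nabla_\Gamma\hat q(t)=-G(t)$ in $H^{-1}(\Gamma,T\Gamma)$, and $\|\hat q(t)\|_{L^2(\Gamma)}\le c\|G(t)\|_{H^{-1}(\Gamma,T\Gamma)}$ by \eqref{E:DeRham_T_Ineq}. Since $t\mapsto G(t)$ is continuous and the de Rham solution operator is linear and bounded, $\hat q\in C([0,T];L^2(\Gamma))$, and $\hat q(0)=0$ because $G(0)=0$. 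Setting $q:=\partial_t\hat q\in\mathcal{D}'(0,T;L^2(\Gamma))$ and differentiating the identity $g\nabla_\Gamma\hat q=-G$ in $\mathcal{D}'(0,T;H^{-1}(\Gamma,T\Gamma))$, the left-hand side becomes $g\nabla_\Gamma q$ by \eqref{E:TGrDt_DHin} together with the time-independence of $g$, while $-\partial_tG=-\bigl(g\partial_tv+g\overline{\nabla}_vv-2\nu\{P\,\mathrm{div}_\Gamma[gD_\Gamma(v)]-\frac{1}{g}(v\cdot\nabla_\Gamma g)\nabla_\Gamma g\}+(\gamma^0+\gamma^1)v-gf\bigr)$; rearranging gives precisely \eqref{E:LW_Pres}. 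Uniqueness of $\hat q$ follows from the uniqueness part of Theorem \ref{T:DeRham_T}: any competitor satisfies the same time-integrated identity $g\nabla_\Gamma\hat q(t)=-G(t)$ (in particular it is normalized by $\hat q(0)=0$), so $g\nabla_\Gamma\hat q(t)$, and hence the zero-mean potential $\hat q(t)$, is determined.

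I expect the main obstacle to be the careful bookkeeping of the parabolic term under time integration — converting $\int_0^t[g\partial_sv,\psi]_{T\Gamma}\,ds$ into $[g(v(t)-v_0),\psi]_{T\Gamma}$, which genuinely uses the regularity upgrade $\partial_tv\in L^2(0,T;H^{-1}(\Gamma,T\Gamma))$ from Lemma \ref{L:Lim_W_L2} and the multiplication rule \eqref{E:Def_Mul_HinT} — together with verifying that every flux primitive is continuous into $H^{-1}(\Gamma,T\Gamma)$ (so that one gets $\hat q\in C([0,T];L^2(\Gamma))$ and not merely an $L^\infty$ bound), and with justifying the commutation $\partial_t(g\nabla_\Gamma\hat q)=g\nabla_\Gamma(\partial_t\hat q)$ via \eqref{E:TGrDt_DHin} when passing from the time-integrated identity back to the distributional equation \eqref{E:LW_Pres}.
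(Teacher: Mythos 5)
Your proposal is correct and follows essentially the same route as the paper's proof: substitute $\eta=1_{[0,t]}\psi$ into the weak formulation rewritten via Lemmas \ref{L:LWPA_Bi} and \ref{L:LWPA_Tri}, observe that the resulting primitive $G(t)$ (the paper's $F(t)$) annihilates $\mathcal{V}_g$ and is continuous into $H^{-1}(\Gamma,T\Gamma)$, apply Theorem \ref{T:DeRham_T} pointwise in $t$ with the bound \eqref{E:DeRham_T_Ineq} to get $\hat{q}\in C([0,T];L^2(\Gamma))$, and differentiate in $\mathcal{D}'$ using \eqref{E:TGrDt_DHin}. The bookkeeping points you flag (the fundamental theorem of calculus for $v\in W^{1,2}(0,T;H^{-1}(\Gamma,T\Gamma))$ and the continuity of the flux primitives) are exactly the ones the paper handles, so no further changes are needed.
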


\begin{proof}
  Let $v$ be a weak solution to \eqref{E:NS_Limit} on $[0,T)$ and
  \begin{align} \label{Pf_LWP:AgBg}
    \begin{aligned}
      A_gv &:= -2\nu\left\{P\mathrm{div}_\Gamma[gD_\Gamma(v)]-\frac{1}{g}(v\cdot\nabla_\Gamma g)\nabla_\Gamma g\right\}+(\gamma^0+\gamma^1)v, \\
      B_g(v,v) &:= g\overline{\nabla}_vv.
    \end{aligned}
  \end{align}
  Then by $g\in C^4(\Gamma)$, \eqref{E:G_Inf}, \eqref{E:LWPA_HinT}, and $v\in L^\infty(0,T;\mathcal{H}_g)\cap L^2(0,T;\mathcal{V}_g)$ we get
  \begin{align*}
    A_gv, B_g(v,v) \in L^2(0,T;H^{-1}(\Gamma,T\Gamma)).
  \end{align*}
  Since the given data $f$ also belongs to the same space, the functions
  \begin{gather*}
    \widehat{A}_gv(t) := \int_0^tA_gv(s)\,ds, \quad \widehat{B}_g(v,v)(t) := \int_0^tB_g(v(s),v(s))\,ds, \\
    \hat{f}(t) := \int_0^tf(s)\,ds, \quad t\in[0,T]
  \end{gather*}
  are continuous on $[0,T]$ with values in $H^{-1}(\Gamma,T\Gamma)$.
  Moreover,
  \begin{align} \label{Pf_LWP:Reg_V}
    v \in W^{1,1}(0,T;H^{-1}(\Gamma,T\Gamma))\subset C([0,T];H^{-1}(\Gamma,T\Gamma))
  \end{align}
  by Definition \ref{D:Lim_W_T}.
  Hence
  \begin{align} \label{Pf_LWP:F}
    F := gv-gv_0+\widehat{A}_gv+\widehat{B}_g(v,v)-g\hat{f} \in C([0,T];H^{-1}(\Gamma,T\Gamma)).
  \end{align}
  We show that $F$ satisfies the condition of Theorem \ref{T:DeRham_T}.
  Let $\eta\in L^2(0,T;\mathcal{V}_g)$.
  Then since \eqref{E:Limit_Weak} holds for $\eta$ by Lemma \ref{L:Lim_W_L2}, we have
  \begin{align} \label{Pf_LWP:Weak}
    \int_0^T\bigl([g\partial_sv,\eta]_{T\Gamma}+[A_gv,\eta]_{T\Gamma}+[B_g(v,v),\eta]_{T\Gamma}\bigr)\,ds = \int_0^T[gf,\eta]_{T\Gamma}\,ds
  \end{align}
  by applying \eqref{E:LWPA_Bi} with $A=gD_\Gamma(v)$ and \eqref{E:LWPA_Tri} to \eqref{E:Limit_Weak}.
  For $t\in[0,T]$ and $\xi\in\mathcal{V}_g$ let $1_{[0,t]}$ be the characteristic function of $[0,t]\subset\mathbb{R}$ and
  \begin{align*}
    \eta(s) := 1_{[0,t]}(s)\xi, \quad s\in[0,T], \quad \eta\in L^2(0,T;\mathcal{V}_g).
  \end{align*}
  We substitute this $\eta$ for \eqref{Pf_LWP:Weak}.
  Then since
  \begin{align*}
    \int_0^t\partial_sv(s)\,ds = v(t)-v_0 \quad\text{in}\quad H^{-1}(\Gamma,T\Gamma)
  \end{align*}
  by \eqref{Pf_LWP:Reg_V} and $g$ and $\xi$ are independent of time, for each $t\in[0,T]$ we have
  \begin{align*}
    F(t)\in H^{-1}(\Gamma,T\Gamma), \quad [F(t),\xi]_{T\Gamma} = 0 \quad\text{for all}\quad \xi\in\mathcal{V}_g.
  \end{align*}
  Hence by Theorem \ref{T:DeRham_T} there exists a unique $\hat{q}(t)\in L^2(\Gamma)$ such that
  \begin{align*}
    F(t) = -g\nabla_\Gamma\hat{q}(t) \quad\text{in}\quad H^{-1}(\Gamma,T\Gamma), \quad \int_\Gamma\hat{q}(t)\,d\mathcal{H}^2 = 0.
  \end{align*}
  Moreover, by \eqref{E:DeRham_T_Ineq} and $F\in C([0,T];H^{-1}(\Gamma,T\Gamma))$ we see that
  \begin{align*}
    \hat{q} \in C([0,T];L^2(\Gamma)) \subset L^2(0,T;L^2(\Gamma))
  \end{align*}
  and thus $q:=\partial_t\hat{q}\in \mathcal{D}'(0,T;L^2(\Gamma))$ is well-defined.
  Now we have (see \eqref{E:Dt_Dist_L2})
  \begin{align*}
    [\partial_tF+g\partial_t(\nabla_\Gamma\hat{q})](\varphi) = -\int_0^T\partial_t\varphi(t)\{F(t)+g\nabla_\Gamma\hat{q}(t)\}\,dt = 0
  \end{align*}
  in $H^{-1}(\Gamma,T\Gamma)$ for all $\varphi\in C_c^\infty(0,T)$, which means that
  \begin{align*}
    \partial_tF+g\partial_t(\nabla_\Gamma\hat{q}) = 0 \quad\text{in}\quad \mathcal{D}'(0,T;H^{-1}(\Gamma,T\Gamma)).
  \end{align*}
  This implies \eqref{E:LW_Pres} since
  \begin{align*}
     \partial_t(\nabla_\Gamma\hat{q}) = \nabla_\Gamma(\partial_t\hat{q}) = \nabla_\Gamma q, \quad \partial_tF = g\partial_tv+A_gv+B_g(v,v)-gf
  \end{align*}
  in $\mathcal{D}'(0,T;H^{-1}(\Gamma,T\Gamma))$ by \eqref{E:TGrDt_DHin} and \eqref{Pf_LWP:F} (see also \eqref{Pf_LWP:AgBg}).
\end{proof}
Now let us prove Theorem \ref{T:SL_Weak}.
First we present an auxiliary result on the weak limit of the averaged tangential component of a vector field in $L_\sigma^2(\Omega_\varepsilon)$.

\begin{lemma} \label{L:WC_Sole}
  For $\varepsilon\in(0,1]$ let $u^\varepsilon\in L_\sigma^2(\Omega_\varepsilon)$.
  Also, let $v\in L^2(\Gamma,T\Gamma)$.
  If
  \begin{align} \label{E:WCS_Conv}
    \lim_{\varepsilon\to0}M_\tau u^\varepsilon = v \quad\text{weakly in}\quad L^2(\Gamma,T\Gamma)
  \end{align}
  and there exist $\varepsilon'\in(0,1]$, $c>0$, and $\alpha>0$ such that
  \begin{align} \label{E:WCS_Est}
    \|u^\varepsilon\|_{L^2(\Omega_\varepsilon)}^2 \leq c\varepsilon^{-1+\alpha}
  \end{align}
  for all $\varepsilon\in(0,\varepsilon']$, then $v\in\mathcal{H}_g$.
\end{lemma}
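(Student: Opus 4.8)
The plan is to verify directly that $\mathrm{div}_\Gamma(gv)=0$ in $H^{-1}(\Gamma)$, which is by definition exactly the statement $v\in\mathcal{H}_g=L_{g\sigma}^2(\Gamma,T\Gamma)$. Since $v\in L^2(\Gamma,T\Gamma)$, the functional $\mathrm{div}_\Gamma(gv)$ is a well-defined element of $H^{-1}(\Gamma)$ by \eqref{E:DivG_Hin_Bo}, so it suffices to show $\langle\mathrm{div}_\Gamma(gv),\eta\rangle_\Gamma=0$ for every $\eta\in H^1(\Gamma)$.

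Fix $\eta\in H^1(\Gamma)$. First I would rewrite the duality pairings associated with $v$ and with $M_\tau u^\varepsilon$. By \eqref{E:Sdiv_Hin} and the fact that $v$ and each $M_\tau u^\varepsilon$ are tangential on $\Gamma$ (so that their $L^2$-inner product with $\eta Hn$ vanishes), we have
\[
  \langle\mathrm{div}_\Gamma(gv),\eta\rangle_\Gamma = -(gv,\nabla_\Gamma\eta)_{L^2(\Gamma)}, \qquad \langle\mathrm{div}_\Gamma(gM_\tau u^\varepsilon),\eta\rangle_\Gamma = -(gM_\tau u^\varepsilon,\nabla_\Gamma\eta)_{L^2(\Gamma)}.
\]
Since $g\nabla_\Gamma\eta\in L^2(\Gamma)^3$ is a fixed test vector field, the weak convergence \eqref{E:WCS_Conv} yields $(gM_\tau u^\varepsilon,\nabla_\Gamma\eta)_{L^2(\Gamma)}\to(gv,\nabla_\Gamma\eta)_{L^2(\Gamma)}$ as $\varepsilon\to0$, hence
\[
  \langle\mathrm{div}_\Gamma(gv),\eta\rangle_\Gamma = \lim_{\varepsilon\to0}\langle\mathrm{div}_\Gamma(gM_\tau u^\varepsilon),\eta\rangle_\Gamma.
\]

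On the other hand, since $u^\varepsilon\in L_\sigma^2(\Omega_\varepsilon)$, Lemma~\ref{L:ADiv_Tan_Hin} together with the bound \eqref{E:WCS_Est} gives $\|\mathrm{div}_\Gamma(gM_\tau u^\varepsilon)\|_{H^{-1}(\Gamma)}\le c\varepsilon^{1/2}\|u^\varepsilon\|_{L^2(\Omega_\varepsilon)}\le c\varepsilon^{\alpha/2}$ for $\varepsilon\in(0,\varepsilon']$, so that $|\langle\mathrm{div}_\Gamma(gM_\tau u^\varepsilon),\eta\rangle_\Gamma|\le c\varepsilon^{\alpha/2}\|\eta\|_{H^1(\Gamma)}\to0$. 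Combining this with the previous display we obtain $\langle\mathrm{div}_\Gamma(gv),\eta\rangle_\Gamma=0$, and since $\eta\in H^1(\Gamma)$ was arbitrary we conclude $\mathrm{div}_\Gamma(gv)=0$ in $H^{-1}(\Gamma)$, i.e.\ $v\in\mathcal{H}_g$. The argument has no substantial obstacle; the only points requiring care are that the reduction of the pairings to the $L^2$-inner product with $\nabla_\Gamma\eta$ relies on tangentiality of $v$ and $M_\tau u^\varepsilon$, and that the quantitative $H^{-1}$-estimate of Lemma~\ref{L:ADiv_Tan_Hin} does use the hypothesis $u^\varepsilon\in L_\sigma^2(\Omega_\varepsilon)$, which is part of the assumptions.
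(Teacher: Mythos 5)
Your proof is correct and follows essentially the same route as the paper: the paper also combines \eqref{E:Sdiv_Hin}, the weak convergence \eqref{E:WCS_Conv}, and the $H^{-1}$-bound of Lemma~\ref{L:ADiv_Tan_Hin} with \eqref{E:WCS_Est}, merely phrasing the conclusion via weak lower semicontinuity of the $H^{-1}(\Gamma)$-norm rather than testing against each fixed $\eta\in H^1(\Gamma)$. The two formulations are interchangeable, so there is nothing to add.
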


\begin{proof}
  By \eqref{E:Sdiv_Hin} and \eqref{E:WCS_Conv} we see that
  \begin{align*}
    \lim_{\varepsilon\to0}\mathrm{div}_\Gamma(gM_\tau u^\varepsilon) = \mathrm{div}_\Gamma(gv) \quad\text{weakly in}\quad H^{-1}(\Gamma).
  \end{align*}
  Moreover, we apply \eqref{E:ADiv_Tan_Hin} to $u^\varepsilon\in L_\sigma^2(\Omega_\varepsilon)$ and use \eqref{E:WCS_Est} to get
  \begin{align*}
    \|\mathrm{div}_\Gamma(gM_\tau u^\varepsilon)\|_{H^{-1}(\Gamma)} \leq c\varepsilon^{1/2}\|u^\varepsilon\|_{L^2(\Omega_\varepsilon)} \leq c\varepsilon^{\alpha/2}
  \end{align*}
  for all $\varepsilon\in(0,\varepsilon']$ with $\alpha>0$.
  Hence
  \begin{align*}
    \|\mathrm{div}_\Gamma(gv)\|_{H^{-1}(\Gamma)} \leq \liminf_{\varepsilon\to0}\|\mathrm{div}_\Gamma(gM_\tau u^\varepsilon)\|_{H^{-1}(\Gamma)} = 0
  \end{align*}
  and $\mathrm{div}_\Gamma(gv)=0$ in $H^{-1}(\Gamma)$, which means that $v\in\mathcal{H}_g$.
\end{proof}

\begin{proof}[Proof of Theorem \ref{T:SL_Weak}]
  Suppose that the assumptions of Theorem \ref{T:SL_Weak} are satisfied.
  Then by the conditions (a) and (b) there exist constants
  \begin{align*}
    c_1, c_2 > 0, \quad \tilde{\varepsilon}_2\in(0,\varepsilon_2], \quad \alpha\in(0,1]
  \end{align*}
  such that for each $\varepsilon\in(0,\tilde{\varepsilon}_2]$ the given data
  \begin{align*}
    u_0^\varepsilon\in \mathcal{V}_\varepsilon, \quad f^\varepsilon\in L^\infty(0,\infty;L^2(\Omega_\varepsilon)^3)
  \end{align*}
  satisfy \eqref{E:UE_Data} with $\beta=1$.
  Let $\varepsilon_1$ and $\varepsilon_\sigma$ be the constants given in Theorem \ref{T:UE} with the above constants $c_1,c_2>0$, $\alpha\in(0,1]$, and $\beta=1$ and Lemma \ref{L:Uni_Poin_Dom}, and let
  \begin{align*}
    \varepsilon_3 := \min\{\varepsilon_1,\varepsilon_\sigma,\tilde{\varepsilon}_2\} \in (0,\varepsilon_2].
  \end{align*}
  Then for each $\varepsilon\in(0,\varepsilon_3]$ there exists a global-in-time strong solution $u^\varepsilon$ to \eqref{E:NS_CTD} satisfying \eqref{E:Est_Ueps}--\eqref{E:Est_DtUe} by Lemma \ref{L:Est_Ueps} and all results in Sections \ref{SS:SL_EstSt}--\ref{SS:SL_EDt} apply to $u^\varepsilon$.
  Moreover, we see by \eqref{E:Ave_N_Lp} and \eqref{E:Est_Ueps} that
  \begin{align*}
    \sup_{t\in[0,\infty)}\|Mu^\varepsilon(t)\cdot n\|_{L^2(\Gamma)} \leq c\varepsilon^{1/2}\sup_{t\in[0,\infty)}\|u^\varepsilon(t)\|_{H^1(\Omega_\varepsilon)} \leq c\varepsilon^{\alpha/2} \to 0
  \end{align*}
  as $\varepsilon\to0$.
  Thus $\{Mu^\varepsilon\cdot n\}_\varepsilon$ converges to zero strongly in $C([0,\infty);L^2(\Gamma))$.

  Now let us consider the averaged tangential component $M_\tau u^\varepsilon$.
  First note that the weak limit $v_0$ of $\{M_\tau u_0^\varepsilon\}_\varepsilon$ in $L^2(\Gamma,T\Gamma)$ actually belongs to $\mathcal{H}_g$ by Lemma \ref{L:WC_Sole} since $u_0^\varepsilon$ satisfies \eqref{E:WCS_Est} by the condition (a).
  For a fixed $T>0$ we observe by \eqref{E:Mu_Energy} and \eqref{E:Mu_Dt} that
  \begin{itemize}
    \item $\{M_\tau u^\varepsilon\}_\varepsilon$ is bounded in $L^\infty(0,T;L^2(\Gamma,T\Gamma))\cap L^2(0,T;H^1(\Gamma,T\Gamma))$,
    \item $\{\partial_tM_\tau u^\varepsilon\}_\varepsilon$ is bounded in $L^2(0,T;H^{-1}(\Gamma,T\Gamma))$.
  \end{itemize}
  Hence there exist a sequence $\{\varepsilon_k\}_{k=1}^\infty$ in $(0,\varepsilon_3]$ and a vector field
  \begin{align*}
    v \in L^\infty(0,T;L^2(\Gamma,T\Gamma))\cap L^2(0,T;H^1(\Gamma,T\Gamma))
  \end{align*}
  with $\partial_tv \in L^2(0,T;H^{-1}(\Gamma,T\Gamma))$ such that $\lim_{k\to\infty}\varepsilon_k=0$ and
  \begin{align} \label{Pf_SLW:W_Conv}
    \begin{alignedat}{3}
      \lim_{k\to\infty}M_\tau u^{\varepsilon_k} &= v &\quad &\text{weakly-$\star$ in} &\quad &L^\infty(0,T;L^2(\Gamma,T\Gamma)), \\
      \lim_{k\to\infty}M_\tau u^{\varepsilon_k} &= v &\quad &\text{weakly in} &\quad &L^2(0,T;H^1(\Gamma,T\Gamma)), \\
      \lim_{k\to\infty}\partial_tM_\tau u^{\varepsilon_k} &= \partial_tv &\quad &\text{weakly in} &\quad &L^2(0,T;H^{-1}(\Gamma,T\Gamma)).
    \end{alignedat}
  \end{align}
  Moreover, by the Aubin--Lions lemma (see e.g. \cite{BoFa13}*{Theorem II.5.16}) there exists a subsequence of $\{M_\tau u^{\varepsilon_k}\}_{k=1}^\infty$, which we denote by $\{M_\tau u^{\varepsilon_k}\}_{k=1}^\infty$ again, such that
  \begin{align} \label{Pf_SLW:St_Conv}
    \lim_{k\to\infty}M_\tau u^{\varepsilon_k} = v \quad\text{strongly in}\quad L^2(0,T;L^2(\Gamma,T\Gamma)).
  \end{align}
  Then we can take again a subsequence of $\{M_\tau u^{\varepsilon_k}\}_{k=1}^\infty$, still denoted by $\{M_\tau u^{\varepsilon_k}\}_{k=1}^\infty$, such that
  \begin{align*}
    \lim_{k\to\infty}M_\tau u^{\varepsilon_k}(t) = v(t) \quad\text{strongly in}\quad L^2(\Gamma,T\Gamma) \quad\text{for a.a.}\quad t\in(0,T).
  \end{align*}
  Hence $v(t)\in \mathcal{H}_g$ for a.a. $t\in (0,T)$ by \eqref{E:Est_Ueps} and Lemma \ref{L:WC_Sole} and we get
  \begin{align*}
    v \in L^\infty(0,T;\mathcal{H}_g)\cap L^2(0,T;\mathcal{V}_g).
  \end{align*}
  Let us show that $v$ is a weak solution to \eqref{E:NS_Limit} on $[0,T)$.
  First we see that $v$ satisfies the weak formulation \eqref{E:Limit_Weak} for all $\eta\in C_c(0,T;\mathcal{V}_g)$.
  In what follows, we write $c$ for a general positive constant that may depend on $v$, $\eta$, and $T$ but is independent of $\varepsilon_k$ and $u^{\varepsilon_k}$.
  We consider the weak formulation \eqref{E:Mu_Weak} for $M_\tau u^{\varepsilon_k}$:
  \begin{multline} \label{Pf_SLW:Weak_Muek}
    \int_0^T\{[g\partial_tM_\tau u^{\varepsilon_k},\eta]_{T\Gamma}+a_g(M_\tau u^{\varepsilon_k},\eta)+b_g(M_\tau u^{\varepsilon_k},M_\tau u^{\varepsilon_k},\eta)\}\,dt \\
    = \int_0^T[gM_\tau\mathbb{P}_{\varepsilon_k}f^{\varepsilon_k},\eta]_{T\Gamma}\,dt+R_{\varepsilon_k}^1(\eta).
  \end{multline}
  Here $\partial_tM_\tau u^{\varepsilon_k}$ and $M_\tau\mathbb{P}_{\varepsilon_k}f^{\varepsilon_k}$ are considered as elements of $H^{-1}(\Gamma,T\Gamma)$ (see Section \ref{SS:Pre_Surf}).
  Let $k\to\infty$ in \eqref{Pf_SLW:Weak_Muek}.
  Then noting that
  \begin{align*}
    \eta \in C_c(0,T;\mathcal{V}_g) \subset L^p(0,T;H^1(\Gamma,T\Gamma)), \quad p\in[1,\infty],
  \end{align*}
  we observe by the condition (b), \eqref{E:Def_Mul_HinT}, and \eqref{Pf_SLW:W_Conv} that
  \begin{align} \label{Pf_SLW:Conv_Lin}
    \begin{aligned}
      \lim_{k\to\infty}\int_0^T[g\partial_tM_\tau u^{\varepsilon_k},\eta]_{T\Gamma}\,dt &= \int_0^T[g\partial_tv,\eta]_{T\Gamma}\,dt, \\
      \lim_{k\to\infty}\int_0^Ta_g(M_\tau u^{\varepsilon_k},\eta)\,dt &= \int_0^Ta_g(v,\eta)\,dt, \\
      \lim_{k\to\infty}\int_0^T[gM_\tau\mathbb{P}_{\varepsilon_k}f^{\varepsilon_k},\eta]_{T\Gamma}\,dt &= \int_0^T[gf,\eta]_{T\Gamma}\,dt.
    \end{aligned}
  \end{align}
  Also, it follows from \eqref{E:Mu_Weak_Re}, the condition (c), and $\alpha>0$ that
  \begin{align} \label{Pf_SLW:Conv_Re}
    |R_{\varepsilon_k}^1(\eta)| \leq c\left(\varepsilon_k^{\alpha/4}+\sum_{i=0,1}\left|\frac{\gamma_{\varepsilon_k}^i}{\varepsilon_k}-\gamma^i\right|\right)(1+T)^{1/2}\|\eta\|_{L^2(0,T;H^1(\Gamma))} \to 0
  \end{align}
  as $k\to\infty$.
  To show the convergence of the trilinear term, we set
  \begin{align*}
    J_1^k &:= \int_0^Tb_g(M_\tau u^{\varepsilon_k},M_\tau u^{\varepsilon_k},\eta)\,dt-\int_0^Tb_g(v,M_\tau u^{\varepsilon_k},\eta)\,dt, \\
    J_2^k &:= \int_0^Tb_g(v,M_\tau u^{\varepsilon_k},\eta)\,dt-\int_0^Tb_g(v,v,\eta)\,dt.
  \end{align*}
  Since \eqref{E:Tri_Surf} holds and $\|\eta(t)\|_{H^1(\Gamma)}$ is bounded on $[0,T]$ by $\eta\in C_c(0,T;\mathcal{V}_g)$,
  \begin{align*}
    |J_1^k| &\leq c\int_0^T\|M_\tau u^{\varepsilon_k}-v\|_{L^2(\Gamma)}^{1/2}\|M_\tau u^{\varepsilon_k}-v\|_{H^1(\Gamma)}^{1/2}\|M_\tau u^{\varepsilon_k}\|_{H^1(\Gamma)}\|\eta\|_{H^1(\Gamma)}\,dt \\
    &\leq c\|M_\tau u^{\varepsilon_k}-v\|_{L^2(0,T;L^2(\Gamma))}^{1/2}\|M_\tau u^{\varepsilon_k}-v\|_{L^2(0,T;H^1(\Gamma))}^{1/2}\|M_\tau u^{\varepsilon_k}\|_{L^2(0,T;H^1(\Gamma))}.
  \end{align*}
  Applying \eqref{E:Mu_Energy} and \eqref{Pf_SLW:St_Conv} to the last line we obtain
  \begin{align} \label{Pf_SLW:I1}
    |J_1^k| \leq c\|M_\tau u^{\varepsilon_k}-v\|_{L^2(0,T;L^2(\Gamma))}^{1/2} \to 0 \quad\text{as}\quad k\to\infty.
  \end{align}
  For $J_2^k$, we consider the linear functional
  \begin{align*}
    \Phi(\xi) := \int_0^Tb_g(v,\xi,\eta)\,dt, \quad \xi \in L^2(0,T;H^1(\Gamma,T\Gamma)).
  \end{align*}
  By \eqref{E:Tri_Surf} and the boundedness of $\|\eta(t)\|_{H^1(\Gamma)}$ on $[0,T]$ we get
  \begin{align*}
    |\Phi(\xi)| \leq c\|\eta\|_{L^\infty(0,T;H^1(\Gamma))}\|v\|_{L^2(0,T;H^1(\Gamma))}\|\xi\|_{L^2(0,T;H^1(\Gamma))}.
  \end{align*}
  Hence $\Phi$ is bounded on $L^2(0,T;H^1(\Gamma,T\Gamma))$ and
  \begin{align*}
    \lim_{k\to\infty}J_2^k = \lim_{k\to\infty}\{\Phi(M_\tau u^{\varepsilon_k})-\Phi(v)\} = 0
  \end{align*}
  by the second line of \eqref{Pf_SLW:W_Conv}.
  Combining this equality with \eqref{Pf_SLW:I1} we obtain
  \begin{align} \label{Pf_SLW:Conv_Tri}
    \lim_{k\to\infty}\int_0^Tb_g(M_\tau u^{\varepsilon_k},M_\tau u^{\varepsilon_k},\eta)\,dt = \int_0^Tb_g(v,v,\eta)\,dt.
  \end{align}
  Thus we send $k\to\infty$ in \eqref{Pf_SLW:Weak_Muek} and apply \eqref{Pf_SLW:Conv_Lin}, \eqref{Pf_SLW:Conv_Re}, and \eqref{Pf_SLW:Conv_Tri} to find that $v$ satisfies \eqref{E:Limit_Weak} for all $\eta\in C_c(0,T;\mathcal{V}_g)$.
  We also observe that
  \begin{align*}
    v\in C([0,T],\mathcal{H}_g)
  \end{align*}
  and \eqref{E:Limit_Weak} is valid for all $\eta\in L^2(0,T;\mathcal{V}_g)$ by Lemma \ref{L:Lim_W_L2}.

  Next we show that $v$ satisfies the initial condition $v|_{t=0}=v_0$ in $\mathcal{H}_g$.
  For $\xi\in \mathcal{V}_g$ and $\varphi\in C^\infty([0,T])$ satisfying $\varphi(0)=1$ and $\varphi(T)=0$ let
  \begin{align*}
    \eta(t) := \varphi(t)\xi, \quad t\in[0,T], \quad \eta\in C^\infty([0,T];\mathcal{V}_g) \subset L^2(0,T;\mathcal{V}_g).
  \end{align*}
  We substitute this $\eta$ for \eqref{E:Limit_Weak} and \eqref{Pf_SLW:Weak_Muek} and carry out integration by parts with respect to time for the terms including $\partial_tv$ and $\partial_tM_\tau u^{\varepsilon_k}$.
  Then we get
  \begin{align} \label{Pf_SLW:Ini}
    (gv(0),\xi)_{L^2(\Gamma)} = J_\infty, \quad (gM_\tau u_0^{\varepsilon_k},\xi)_{L^2(\Gamma)} = J_k
  \end{align}
  since $\varphi(0)=1$, $\varphi(T)=0$, and $g$ and $\xi$ are independent of time, where
  \begin{align*}
    J_\infty := -\int_0^T\partial_t\varphi(gv,\xi)_{L^2(\Gamma)}\,dt+\int_0^T\{a_g(v,\eta)+b_g(v,v,\eta)\}\,dt-\int_0^T[gf,\eta]_{T\Gamma}\,dt
  \end{align*}
  and
  \begin{multline*}
    J_k := -\int_0^T\partial_t\varphi(gM_\tau u^{\varepsilon_k},\xi)_{L^2(\Gamma)}\,dt \\
    +\int_0^T\{a_g(M_\tau u^{\varepsilon_k},\eta)+b_g(M_\tau u^{\varepsilon_k},M_\tau u^{\varepsilon_k},\eta)\}\,dt \\
    -\int_0^T[gM_\tau\mathbb{P}_{\varepsilon_k}f^{\varepsilon_k},\eta]_{T\Gamma}\,dt-R_{\varepsilon_k}^1(\eta).
  \end{multline*}
  We send $k\to\infty$ in the second equality of \eqref{Pf_SLW:Ini}.
  Then the left-hand side converges to $(gv_0,\xi)_{L^2(\Gamma)}$ by the condition (b).
  Also, since
  \begin{align*}
    \eta\in C^\infty([0,T];\mathcal{V}_g) \subset L^p(0,T;H^1(\Gamma,T\Gamma)), \quad p\in[1,\infty],
  \end{align*}
  we can apply \eqref{Pf_SLW:Conv_Lin}, \eqref{Pf_SLW:Conv_Re}, \eqref{Pf_SLW:Conv_Tri}, and
  \begin{align*}
    \lim_{k\to\infty}\int_0^T\partial_t\varphi(gM_\tau u^{\varepsilon_k},\xi)_{L^2(\Gamma)}\,dt = \int_0^T\partial_t\varphi(gv,\xi)_{L^2(\Gamma)}\,dt
  \end{align*}
  by \eqref{Pf_SLW:St_Conv} to find that $\lim_{k\to\infty}J_k=J_\infty$.
  Therefore,
  \begin{align*}
    (gv(0),\xi)_{L^2(\Gamma)} = J_\infty = (gv_0,\xi)_{L^2(\Gamma)} \quad\text{for all}\quad \xi\in \mathcal{V}_g.
  \end{align*}
  This equality is also valid for all $\xi\in\mathcal{H}_g$ since $\mathcal{V}_g$ is dense in $\mathcal{H}_g$ (see Lemma \ref{L:H1gs_Dense}).
  Thus we can set $\xi:=v(0)-v_0\in\mathcal{H}_g$ in the above equality to get
  \begin{align*}
    (g\{v(0)-v_0\},v(0)-v_0)_{L^2(\Gamma)} = \|g^{1/2}\{v(0)-v_0\}\|_{L^2(\Gamma)}^2 = 0,
  \end{align*}
  which gives $v|_{t=0}=v_0$ in $\mathcal{H}_g$ when combined with \eqref{E:G_Inf}.
  Hence $v$ is a unique weak solution to \eqref{E:NS_Limit} on $[0,T)$ (here the uniqueness follows from Lemma \ref{L:Limit_Uni}).

  Now let us prove the convergence of the full sequence
  \begin{align} \label{Pf_SLW:Full_Conv}
    \lim_{\varepsilon\to0}M_\tau u^\varepsilon = v \quad\text{weakly in}\quad L^2(0,T;H^1(\Gamma,T\Gamma)).
  \end{align}
  Let $\{\varepsilon_l\}_{l=1}^\infty$ be an arbitrary sequence in $(0,\varepsilon_3]$ convergent to zero.
  Then we see by the same arguments as above that $\{M_\tau u^{\varepsilon_l}\}_{l=1}^\infty$ has a subsequence $\{M_\tau u^{\varepsilon_k}\}_{k=1}^\infty$ that converges to the same limit $v$, which is the unique weak solution to \eqref{E:NS_Limit} on $[0,T)$, in the sense of \eqref{Pf_SLW:W_Conv} and \eqref{Pf_SLW:St_Conv}.
  Thus we obtain \eqref{Pf_SLW:Full_Conv}.

  Since the strong solution $u^\varepsilon$ to \eqref{E:NS_CTD} exists globally in time for all $\varepsilon\in(0,\varepsilon_3]$, by the above arguments we get a unique weak solution
  \begin{align*}
    v_T \in C([0,T];\mathcal{H}_g)\cap L^2(0,T;\mathcal{V}_g)\cap H^1(0,T;H^{-1}(\Gamma,T\Gamma))
  \end{align*}
  to \eqref{E:NS_Limit} on $[0,T)$ satisfying \eqref{Pf_SLW:Full_Conv} for all $T>0$.
  Moreover, if $T<T'$ then $v_T=v_{T'}$ on $[0,T]$ by the uniqueness of a weak solution.
  Hence we can define
  \begin{align*}
    v \in C([0,\infty);\mathcal{H}_g)\cap L_{loc}^2([0,\infty);\mathcal{V}_g)\cap H_{loc}^1([0,\infty);H^{-1}(\Gamma,T\Gamma))
  \end{align*}
  by $v:=v_T$ on $[0,T]$ for each $T>0$, which is a unique weak solution to \eqref{E:NS_Limit} on $[0,\infty)$ and satisfies \eqref{Pf_SLW:Full_Conv} for all $T>0$.
\end{proof}

As a consequence of Theorem \ref{T:SL_Weak} we get the existence of a weak solution to \eqref{E:NS_Limit} when the initial velocity $v_0$ and the external force $f$ are the weak and weak-$\star$ limits of $\{M_\tau u_0^\varepsilon\}_\varepsilon$ and $\{M_\tau\mathbb{P}_\varepsilon f^\varepsilon\}_\varepsilon$, respectively.
For general data
\begin{align*}
  v_0\in \mathcal{H}_g, \quad f\in L_{loc}^2([0,\infty);H^{-1}(\Gamma,T\Gamma))
\end{align*}
we can construct a weak solution to \eqref{E:NS_Limit} by the Galerkin method as in the case of the Navier--Stokes equations in a bounded domain in $\mathbb{R}^2$ (see e.g. \cites{BoFa13,CoFo88,Te79}).
We give the outline of construction of a weak solution to \eqref{E:NS_Limit} by the Galerkin method in Appendix \ref{S:Ap_CWL}.

\subsection{Strong convergence of the average and error estimates} \label{SS:SL_ErST}
In this subsection we prove Theorem \ref{T:SL_Strong} by showing an energy estimate for the difference between the averaged tangential component of a strong solution to \eqref{E:NS_CTD} and a weak solution to \eqref{E:NS_Limit}.
We also give estimates in $\Omega_\varepsilon$ for the difference between a strong solution to \eqref{E:NS_CTD} and the constant extension of a weak solution to \eqref{E:NS_Limit}.

\begin{theorem} \label{T:Diff_Mu_V}
  Let $c_1$ and $c_2$ be positive constants, $\alpha\in(0,1]$, $\beta=1$, and $\varepsilon_1$ and $\varepsilon_\sigma$ the constants given in Theorem \ref{T:UE} and Lemma \ref{L:Uni_Poin_Dom}.
  Under the condition
  \begin{align*}
    0 < \varepsilon \leq \min\{\varepsilon_1,\varepsilon_\sigma\}
  \end{align*}
  and the assumptions of Lemma \ref{L:Est_Ueps}, let $u^\varepsilon$ be the global-in-time strong solution to \eqref{E:NS_CTD} given in Lemma \ref{L:Est_Ueps}.
  Also, for given data
  \begin{align*}
    v_0\in\mathcal{H}_g, \quad f\in L^\infty(0,\infty;H^{-1}(\Gamma,T\Gamma))
  \end{align*}
  let $v$ be a weak solution to \eqref{E:NS_Limit} on $[0,\infty)$.
  Then for all $T>0$ we have
  \begin{multline} \label{E:Diff_Mu_V}
    \max_{t\in[0,T]}\|M_\tau u^\varepsilon(t)-v(t)\|_{L^2(\Gamma)}^2+\int_0^T\|\nabla_\Gamma M_\tau u^\varepsilon(t)-\nabla_\Gamma v(t)\|_{L^2(\Gamma)}^2\,dt \\
    \leq c_T\left\{\delta(\varepsilon)^2+\|M_\tau u_0^\varepsilon-v_0\|_{L^2(\Gamma)}^2+\|M_\tau\mathbb{P}_\varepsilon f^\varepsilon-f\|_{L^\infty(0,\infty;H^{-1}(\Gamma,T\Gamma))}^2\right\},
  \end{multline}
  where $c_T>0$ is a constant depending only on $T$ and
  \begin{align} \label{E:DMuV_Re}
    \delta(\varepsilon) := \varepsilon^{\alpha/4}+\sum_{i=0,1}\left|\frac{\gamma_\varepsilon^i}{\varepsilon}-\gamma^i\right|.
  \end{align}
\end{theorem}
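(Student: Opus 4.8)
The plan is to estimate the difference $w^\varepsilon := v^\varepsilon - v$, where $v^\varepsilon = \mathbb{P}_gM_\tau u^\varepsilon$ is the weighted solenoidal part of $M_\tau u^\varepsilon$, and then recover the estimate for $M_\tau u^\varepsilon - v$ via the bounds \eqref{E:Diff_PMu} on $M_\tau u^\varepsilon - v^\varepsilon$, which contribute only $O(\varepsilon^2)$, absorbed into $\delta(\varepsilon)^2$. Both $v^\varepsilon$ and $v$ lie in $C([0,T];\mathcal{H}_g) \cap L^2(0,T;\mathcal{V}_g)$, so $w^\varepsilon$ does too, and $w^\varepsilon|_{t=0} = v^\varepsilon(0) - v_0 = \mathbb{P}_gM_\tau u_0^\varepsilon - v_0$ in $\mathcal{H}_g$. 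First I would subtract the weak formulation \eqref{E:Limit_Weak} for $v$ (valid for all test functions in $L^2(0,T;\mathcal{V}_g)$ by Lemma \ref{L:Lim_W_L2}) from the weak formulation \eqref{E:PMu_Weak} for $v^\varepsilon$, obtaining an identity for $w^\varepsilon$ tested against $\eta \in L^2(0,T;\mathcal{V}_g)$ with the residual terms $R_\varepsilon^1(\eta) + R_\varepsilon^2(\eta)$ and the forcing discrepancy $g(M_\tau\mathbb{P}_\varepsilon f^\varepsilon - f)$ on the right.

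Next, since $w^\varepsilon \in C([0,\infty);\mathcal{V}_g)$, I would substitute $\eta = 1_{[0,t]}w^\varepsilon$ as in the proof of Lemma \ref{L:PMu_Energy} and Lemma \ref{L:Limit_Uni}. The time-derivative term yields $\tfrac12\|g^{1/2}w^\varepsilon(t)\|_{L^2(\Gamma)}^2 - \tfrac12\|g^{1/2}w^\varepsilon(0)\|_{L^2(\Gamma)}^2$ as in \eqref{Pf_LU:Int_Dt}; the bilinear term is controlled below by $\|\nabla_\Gamma w^\varepsilon\|_{L^2(\Gamma)}^2$ up to $\|w^\varepsilon\|_{L^2(\Gamma)}^2$ via \eqref{E:Bi_Surf}. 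The trilinear terms are the delicate part: writing $b_g(v^\varepsilon,v^\varepsilon,w^\varepsilon) - b_g(v,v,w^\varepsilon) = b_g(w^\varepsilon, v^\varepsilon, w^\varepsilon) + b_g(v, w^\varepsilon, w^\varepsilon)$, the second term vanishes by \eqref{E:TriS_Vg} since $v \in \mathcal{V}_g$, and the first is estimated by $|b_g(w^\varepsilon,w^\varepsilon,v^\varepsilon)| \le c\|w^\varepsilon\|_{L^2(\Gamma)}\|w^\varepsilon\|_{H^1(\Gamma)}\|v^\varepsilon\|_{H^1(\Gamma)}$ using \eqref{E:Tri_Surf}–\eqref{E:TriS_Vg}, then Young's inequality splits off a small multiple of $\|\nabla_\Gamma w^\varepsilon\|_{L^2(\Gamma)}^2$ and leaves $c\|v^\varepsilon\|_{H^1(\Gamma)}^2\|w^\varepsilon\|_{L^2(\Gamma)}^2$. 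The forcing term $\int_0^t[g(M_\tau\mathbb{P}_\varepsilon f^\varepsilon - f), w^\varepsilon]_{T\Gamma}$ is bounded by $\|M_\tau\mathbb{P}_\varepsilon f^\varepsilon - f\|_{L^\infty(0,\infty;H^{-1}(\Gamma,T\Gamma))}^2 + $ small $\|\nabla_\Gamma w^\varepsilon\|_{L^2(\Gamma)}^2 + c\|w^\varepsilon\|_{L^2(\Gamma)}^2$, again by Young. For the residuals, I would use \eqref{E:Mu_Weak_Re} and \eqref{E:PMu_Weak_Re} with $\eta = 1_{[0,t]}w^\varepsilon$: each is bounded by $c\delta(\varepsilon)(1+T)^{1/2}\|w^\varepsilon\|_{L^2(0,t;H^1(\Gamma))}$, so Young's inequality gives $c_T\delta(\varepsilon)^2 + c\int_0^t\|w^\varepsilon\|_{L^2(\Gamma)}^2 + $ small $\int_0^t\|\nabla_\Gamma w^\varepsilon\|_{L^2(\Gamma)}^2$.

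After absorbing all the small gradient terms into the left-hand side and using $\|g^{1/2}w^\varepsilon(0)\|_{L^2(\Gamma)} \le c\|\mathbb{P}_gM_\tau u_0^\varepsilon - v_0\|_{L^2(\Gamma)} \le c\|M_\tau u_0^\varepsilon - v_0\|_{L^2(\Gamma)} + c\varepsilon$ (the last step from \eqref{E:Rep_WHL_L2} applied to $u_0^\varepsilon$, noting $\mathbb{P}_gv_0 = v_0$ since $v_0 \in \mathcal{H}_g$), I arrive at
\begin{align*}
  \|w^\varepsilon(t)\|_{L^2(\Gamma)}^2 + \int_0^t\|\nabla_\Gamma w^\varepsilon\|_{L^2(\Gamma)}^2\,ds \le c_T\left(D_\varepsilon + \int_0^t\xi(s)\|w^\varepsilon(s)\|_{L^2(\Gamma)}^2\,ds\right),
\end{align*}
where $D_\varepsilon := \delta(\varepsilon)^2 + \|M_\tau u_0^\varepsilon - v_0\|_{L^2(\Gamma)}^2 + \|M_\tau\mathbb{P}_\varepsilon f^\varepsilon - f\|_{L^\infty(0,\infty;H^{-1}(\Gamma,T\Gamma))}^2$ and $\xi(s) := 1 + \|v^\varepsilon(s)\|_{H^1(\Gamma)}^2 \in L^1(0,T)$ with $\int_0^T\xi\,ds \le c_T$ uniformly in $\varepsilon$ by the energy estimate \eqref{E:PMu_Energy}. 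Gronwall's inequality then yields $\|w^\varepsilon(t)\|_{L^2(\Gamma)}^2 \le c_T D_\varepsilon$ for all $t \in [0,T]$, and feeding this back gives the gradient bound; translating from $w^\varepsilon$ to $M_\tau u^\varepsilon - v$ via \eqref{E:Diff_PMu} produces \eqref{E:Diff_Mu_V}. The main obstacle is organizing the Young's-inequality bookkeeping so that the coefficients of $\|\nabla_\Gamma w^\varepsilon\|_{L^2(\Gamma)}^2$ accumulated from the bilinear term, the trilinear term, the forcing term, and the two residuals all sum to strictly less than the coefficient on the left, and verifying that the Gronwall weight $\xi$ has an $\varepsilon$-uniform $L^1$ bound — both of which reduce to the already-established uniform estimates, so the argument is routine once the setup is in place. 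Theorem \ref{T:SL_Strong} then follows immediately: under condition (b$'$) the data discrepancies on the right of \eqref{E:Diff_Mu_V} tend to zero, so $M_\tau u^\varepsilon \to v$ strongly in $C([0,T];L^2(\Gamma,T\Gamma)) \cap L^2(0,T;H^1(\Gamma,T\Gamma))$.
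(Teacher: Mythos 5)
Your proposal is correct and follows essentially the same route as the paper, which isolates the comparison of $v^\varepsilon=\mathbb{P}_gM_\tau u^\varepsilon$ with $v$ as Lemma \ref{L:Diff_Ve_V} (same subtraction of \eqref{E:PMu_Weak} and \eqref{E:Limit_Weak}, same test function $1_{[0,t]}w^\varepsilon$, same splitting and cancellation in the trilinear term, same Gronwall weight $1+\|v^\varepsilon\|_{H^1(\Gamma)}^2$ controlled by \eqref{E:PMu_Energy}) and then transfers to $M_\tau u^\varepsilon-v$ via \eqref{E:Diff_PMu}. The only cosmetic differences are that the paper bounds the initial discrepancy by $\|\mathbb{P}_g(M_\tau u_0^\varepsilon-v_0)\|_{L^2(\Gamma)}\le\|M_\tau u_0^\varepsilon-v_0\|_{L^2(\Gamma)}$ directly from the orthogonal projection property (rather than via the triangle inequality with an extra harmless $c\varepsilon$), and that only $w^\varepsilon\in L^2(0,T;\mathcal{V}_g)$ is available and needed to justify the test function, since $v$ is merely $L^2$ in time with values in $\mathcal{V}_g$, not continuous.
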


As in Section \ref{SS:SL_Ener}, we first compare the auxiliary vector field $v^\varepsilon=\mathbb{P}_gM_\tau u^\varepsilon$ with $v$ and then derive \eqref{E:Diff_Mu_V} by using the estimates \eqref{E:Diff_PMu} for $M_\tau u^\varepsilon-v^\varepsilon$.

\begin{lemma} \label{L:Diff_Ve_V}
  Under the assumptions of Theorem \ref{T:Diff_Mu_V}, let $v^\varepsilon=\mathbb{P}_gM_\tau u^\varepsilon$ be the vector field given in Lemma \ref{L:PMu_Weak}.
  Then for all $T>0$ we have
  \begin{multline} \label{E:Diff_Ve_V}
    \max_{t\in[0,T]}\|v^\varepsilon(t)-v(t)\|_{L^2(\Gamma)}^2+\int_0^T\|\nabla_\Gamma v^\varepsilon(t)-\nabla_\Gamma v(t)\|_{L^2(\Gamma)}^2\,dt \\
    \leq c_T\left\{\delta(\varepsilon)^2+\|v^\varepsilon(0)-v_0\|_{L^2(\Gamma)}^2+\|M_\tau\mathbb{P}_\varepsilon f^\varepsilon-f\|_{L^\infty(0,\infty;H^{-1}(\Gamma,T\Gamma))}^2\right\},
  \end{multline}
   where $c_T>0$ is a constant depending only on $T$ and $\delta(\varepsilon)$ is given by \eqref{E:DMuV_Re}.
\end{lemma}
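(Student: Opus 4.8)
The plan is to carry out a standard energy estimate for the difference $w^\varepsilon := v^\varepsilon - v$, exploiting that both $v^\varepsilon$ and $v$ satisfy weak formulations with the same structure. First I would subtract the weak formulation \eqref{E:Limit_Weak} for $v$ (valid for all test functions in $L^2(0,T;\mathcal{V}_g)$ by Lemma \ref{L:Lim_W_L2}) from the weak formulation \eqref{E:PMu_Weak} for $v^\varepsilon$. Since $v^\varepsilon \in C([0,\infty);\mathcal{V}_g)\cap H_{loc}^1([0,\infty);\mathcal{H}_g)$ and $v \in C([0,T];\mathcal{H}_g)\cap L^2(0,T;\mathcal{V}_g)$ with $\partial_tv \in L^2(0,T;H^{-1}(\Gamma,T\Gamma))$, the difference $w^\varepsilon$ has enough regularity to be used itself as a test function on $[0,t]$ for $t\in[0,T]$; more precisely I would test with $\eta = 1_{[0,t]}w^\varepsilon$. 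The time-derivative term produces $\tfrac12\|g^{1/2}w^\varepsilon(t)\|_{L^2(\Gamma)}^2 - \tfrac12\|g^{1/2}w^\varepsilon(0)\|_{L^2(\Gamma)}^2$ as in \eqref{Pf_LU:Int_Dt}, and here the residual terms $R_\varepsilon^1(w^\varepsilon)+R_\varepsilon^2(w^\varepsilon)$ and the forcing difference $(gM_\tau\mathbb{P}_\varepsilon f^\varepsilon - gf, w^\varepsilon)$ appear on the right-hand side.

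Next I would handle the bilinear and trilinear contributions exactly as in the uniqueness proof (Lemma \ref{L:Limit_Uni}). The bilinear term gives $\int_0^t a_g(w^\varepsilon,w^\varepsilon)\,ds$, which is bounded below by $c\int_0^t(\|\nabla_\Gamma w^\varepsilon\|_{L^2(\Gamma)}^2 - \|w^\varepsilon\|_{L^2(\Gamma)}^2)\,ds$ via Korn's inequality \eqref{E:Bi_Surf}. Writing $b_g(v^\varepsilon,v^\varepsilon,\cdot) - b_g(v,v,\cdot) = b_g(w^\varepsilon,v^\varepsilon,\cdot) + b_g(v,w^\varepsilon,\cdot)$ and testing against $w^\varepsilon$, the term $b_g(v,w^\varepsilon,w^\varepsilon)$ vanishes by \eqref{E:TriS_Vg} since $v\in\mathcal{V}_g$, while $|b_g(w^\varepsilon,v^\varepsilon,w^\varepsilon)| = |b_g(w^\varepsilon,w^\varepsilon,v^\varepsilon)| \le c\|w^\varepsilon\|_{L^2(\Gamma)}\|w^\varepsilon\|_{H^1(\Gamma)}\|v^\varepsilon\|_{H^1(\Gamma)}$ by \eqref{E:Tri_Surf}, which after Young's inequality contributes $\tfrac14\|\nabla_\Gamma w^\varepsilon\|_{L^2(\Gamma)}^2 + c(1+\|v^\varepsilon\|_{H^1(\Gamma)}^2)\|w^\varepsilon\|_{L^2(\Gamma)}^2$. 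For the residual terms I would use \eqref{E:Mu_Weak_Re} and \eqref{E:PMu_Weak_Re} with $\eta = w^\varepsilon$ together with $\|w^\varepsilon\|_{L^2(0,t;H^1(\Gamma))} \le \|v^\varepsilon\|_{L^2(0,t;H^1(\Gamma))} + \|v\|_{L^2(0,t;H^1(\Gamma))}$; the key point is that the prefactor is $\le c\,\delta(\varepsilon)(1+T)^{1/2}$, so after one more Young's inequality these contribute $\tfrac14\|\nabla_\Gamma w^\varepsilon\|_{L^2(\Gamma)}^2$-type absorbable terms plus $c_T\delta(\varepsilon)^2$. The forcing difference is bounded using $(gM_\tau\mathbb{P}_\varepsilon f^\varepsilon - gf, w^\varepsilon)_{L^2(\Gamma)} = [M_\tau\mathbb{P}_\varepsilon f^\varepsilon - f, gw^\varepsilon]_{T\Gamma} \le c\|M_\tau\mathbb{P}_\varepsilon f^\varepsilon - f\|_{H^{-1}(\Gamma,T\Gamma)}\|w^\varepsilon\|_{H^1(\Gamma)}$, again absorbing the gradient part.

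After absorbing all the $\|\nabla_\Gamma w^\varepsilon\|_{L^2(\Gamma)}^2$ contributions into the left-hand side, I would arrive at an inequality of the form
\begin{align*}
  \|w^\varepsilon(t)\|_{L^2(\Gamma)}^2 + \int_0^t\|\nabla_\Gamma w^\varepsilon\|_{L^2(\Gamma)}^2\,ds \leq c_T\Bigl(\delta(\varepsilon)^2 + \|w^\varepsilon(0)\|_{L^2(\Gamma)}^2 + \|M_\tau\mathbb{P}_\varepsilon f^\varepsilon - f\|_{L^\infty(0,\infty;H^{-1}(\Gamma,T\Gamma))}^2\Bigr) + \int_0^t\xi(s)\|w^\varepsilon(s)\|_{L^2(\Gamma)}^2\,ds,
\end{align*}
with $\xi(s) = c(1 + \|v^\varepsilon(s)\|_{H^1(\Gamma)}^2)$. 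Since $v^\varepsilon$ satisfies the uniform energy estimate \eqref{E:PMu_Energy}, $\xi \in L^1(0,T)$ with an $\varepsilon$-independent bound, so Gronwall's inequality yields \eqref{E:Diff_Ve_V}. The main obstacle, and the place requiring the most care, is keeping the dependence on the residual terms explicit: one must verify that $R_\varepsilon^1(w^\varepsilon) + R_\varepsilon^2(w^\varepsilon)$ is genuinely controlled by $c_T\,\delta(\varepsilon)\,\|w^\varepsilon\|_{L^2(0,t;H^1(\Gamma))}$ rather than by a quantity involving $\|w^\varepsilon\|$ at fixed time, so that the absorption and Gronwall argument close with the correct constant; this is where \eqref{E:Mu_Weak_Re} and \eqref{E:PMu_Weak_Re} are used in an essential way. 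Once \eqref{E:Diff_Ve_V} is established, Theorem \ref{T:Diff_Mu_V} follows immediately by the triangle inequality $\|M_\tau u^\varepsilon - v\| \le \|M_\tau u^\varepsilon - v^\varepsilon\| + \|v^\varepsilon - v\|$, the estimates \eqref{E:Diff_PMu} which bound the first difference by $c\varepsilon$ in the relevant norms, the bound $\|v^\varepsilon(0) - v_0\|_{L^2(\Gamma)} \le \|M_\tau u_0^\varepsilon - v_0\|_{L^2(\Gamma)} + \|M_\tau u_0^\varepsilon - v^\varepsilon(0)\|_{L^2(\Gamma)} \le \|M_\tau u_0^\varepsilon - v_0\|_{L^2(\Gamma)} + c\varepsilon$ from \eqref{E:Rep_WHL_L2}, and $\varepsilon \le \varepsilon^{\alpha/4} \le \delta(\varepsilon)$.
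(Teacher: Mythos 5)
Your proposal is correct and follows essentially the same route as the paper: subtract the weak formulation \eqref{E:Limit_Weak} from \eqref{E:PMu_Weak}, test with $\eta=1_{[0,t]}w^\varepsilon$, treat the bilinear, trilinear, forcing, and residual terms exactly as in Lemmas \ref{L:PMu_Energy} and \ref{L:Limit_Uni} using \eqref{E:Bi_Surf}--\eqref{E:TriS_Vg}, \eqref{E:Mu_Weak_Re}, \eqref{E:PMu_Weak_Re}, and Young's inequality, and close with Gronwall using the $\varepsilon$-uniform bound \eqref{E:PMu_Energy} on $\|v^\varepsilon\|_{H^1(\Gamma)}^2$. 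Your identification of the residual bound $|R_\varepsilon^1(w^\varepsilon)|+|R_\varepsilon^2(w^\varepsilon)|\leq c\,\delta(\varepsilon)(1+t)^{1/2}\|w^\varepsilon\|_{L^2(0,t;H^1(\Gamma))}$ as the delicate point is exactly right, and it is resolved as you describe.
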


\begin{proof}
  For the sake of simplicity, we set
  \begin{align*}
    w^\varepsilon := v^\varepsilon-v, \quad w_0^\varepsilon := v^\varepsilon(0)-v_0 = \mathbb{P}_gM_\tau u_0^\varepsilon-v_0, \quad F^\varepsilon := M_\tau \mathbb{P}_\varepsilon f^\varepsilon-f.
  \end{align*}
  Let $T>0$.
  By Lemmas \ref{L:PMu_Weak} and \ref{L:Lim_W_L2} we see that
  \begin{align*}
    w^\varepsilon\in C([0,T];\mathcal{H}_g)\cap L^2(0,T;\mathcal{V}_g), \quad \partial_tw^\varepsilon\in L^2(0,T;H^{-1}(\Gamma,T\Gamma)),
  \end{align*}
  and $w^\varepsilon|_{t=0}=w_0^\varepsilon$ in $\mathcal{H}_g$, and \eqref{E:PMu_Weak} and \eqref{E:Limit_Weak} are valid for all $\eta\in L^2(0,T;\mathcal{V}_g)$.
  We subtract both sides of \eqref{E:Limit_Weak} from those of \eqref{E:PMu_Weak} to get
  \begin{multline} \label{Pf_DVeV:Weak}
    \int_0^T\{[g\partial_sw^\varepsilon,\eta]_{T\Gamma}+a_g(w^\varepsilon,\eta)+b_g(w^\varepsilon,v^\varepsilon,\eta)+b_g(v,w^\varepsilon,\eta)\}\,ds \\
    = \int_0^T[gF^\varepsilon,\eta]_{T\Gamma}\,ds+R_\varepsilon^1(\eta)+R_\varepsilon^2(\eta),
  \end{multline}
  where $R_\varepsilon^1(\eta)$ and $R_\varepsilon^2(\eta)$ are given in Lemmas \ref{L:Mu_Weak} and \ref{L:PMu_Weak}.
  For $t\in[0,T]$ let $1_{[0,t]}$ be the characteristic function of $[0,t]\subset\mathbb{R}$.
  We substitute
  \begin{align*}
    \eta := 1_{[0,t]}w^\varepsilon\in L^2(0,T;\mathcal{V}_g)
  \end{align*}
  for \eqref{Pf_DVeV:Weak} and calculate as in the proofs of Lemmas \ref{L:PMu_Energy} and \ref{L:Limit_Uni} by using \eqref{E:Bi_Surf}--\eqref{E:TriS_Vg}, \eqref{E:Mu_Weak_Re}, \eqref{E:PMu_Weak_Re}, \eqref{Pf_LU:Int_Dt}, and Young's inequality.
  Then we get
  \begin{multline} \label{Pf_DVeV:Gron}
    \|w^\varepsilon(t)\|_{L^2(\Gamma)}^2+\int_0^t\|\nabla_\Gamma w^\varepsilon\|_{L^2(\Gamma)}^2\,ds \\
    \leq c\left\{\|w_0^\varepsilon\|_{L^2(\Gamma)}^2+\int_0^t\left(1+\|v^\varepsilon\|_{H^1(\Gamma)}^2\right)\|w^\varepsilon\|_{L^2(\Gamma)}^2\,ds \right. \\
    \left. +\int_0^t\|F^\varepsilon\|_{H^{-1}(\Gamma,T\Gamma)}^2\,ds+\delta(\varepsilon)^2(1+t)\right\}
  \end{multline}
  for all $t\in[0,T]$.
  Here the constant $\delta(\varepsilon)$ of the form \eqref{E:DMuV_Re} comes from \eqref{E:Mu_Weak_Re} and \eqref{E:PMu_Weak_Re} (note that $\varepsilon^{\alpha/2}\leq\varepsilon^{\alpha/4}$).
  Thus setting
  \begin{align*}
    \xi(t) := \delta(\varepsilon)^2+\|w^\varepsilon(t)\|_{L^2(\Gamma)}^2, \quad \varphi(t) := 1+\|v^\varepsilon(t)\|_{H^1(\Gamma)}^2, \quad t\in[0,T]
  \end{align*}
  we observe by \eqref{Pf_DVeV:Gron} that
  \begin{align*}
    \xi(t) \leq c\left\{\xi(0)+\int_0^t\left(\varphi\xi+\|F^\varepsilon\|_{H^{-1}(\Gamma,T\Gamma)}^2\right)ds\right\} \quad\text{for all}\quad t\in[0,T]
  \end{align*}
  and we apply Gronwall's inequality to this inequality to get
  \begin{align*}
    \xi(t) \leq c\left(\xi(0)+\int_0^t\|F^\varepsilon\|_{H^{-1}(\Gamma,T\Gamma)}^2\,ds\right)\exp\left(c\int_0^t\varphi\,ds\right), \quad t\in[0,T].
  \end{align*}
  From this inequality and the estimate \eqref{E:PMu_Energy} for $\|v^\varepsilon\|_{H^1(\Gamma)}^2$ we deduce that
  \begin{align*}
    \|w^\varepsilon(t)\|_{L^2(\Gamma)}^2 \leq c_T\left\{\delta(\varepsilon)^2+\|w_0^\varepsilon\|_{L^2(\Gamma)}^2+\|F^\varepsilon\|_{L^\infty(0,\infty;H^{-1}(\Gamma,T\Gamma))}^2\right\}
  \end{align*}
  for all $t\in[0,T]$, where $c_T>0$ is a constant depending only on $T$.
  Applying this inequality and \eqref{E:PMu_Energy} to \eqref{Pf_DVeV:Gron} we also get
  \begin{align*}
    \int_0^t\|\nabla_\Gamma w^\varepsilon\|_{L^2(\Gamma)}^2\,ds \leq c_T\left\{\delta(\varepsilon)^2+\|w_0^\varepsilon\|_{L^2(\Gamma)}^2+\|F^\varepsilon\|_{L^\infty(0,\infty;H^{-1}(\Gamma,T\Gamma))}^2\right\}
  \end{align*}
  for all $t\in[0,T]$ with another constant $c_T>0$ depending only on $T$.
  Therefore, the inequality \eqref{E:Diff_Ve_V} is valid.
\end{proof}

\begin{proof}[Proof of Theorem \ref{T:Diff_Mu_V}]
  Let $v^\varepsilon=\mathbb{P}_gM_\tau u^\varepsilon$.
  Since $v_0\in\mathcal{H}_g$ and $\mathbb{P}_g$ is the orthogonal projection from $L^2(\Gamma,T\Gamma)$ onto $\mathcal{H}_g$, we have
  \begin{align*}
    \|v^\varepsilon(0)-v_0\|_{L^2(\Gamma)} = \|\mathbb{P}_g(M_\tau u_0^\varepsilon-v_0)\|_{L^2(\Gamma)} \leq \|M_\tau u_0^\varepsilon-v_0\|_{L^2(\Gamma)}.
  \end{align*}
  This inequality, \eqref{E:Diff_PMu}, and \eqref{E:Diff_Ve_V} imply \eqref{E:Diff_Mu_V} (note that $\varepsilon^2\leq\delta(\varepsilon)^2$).
\end{proof}

Now Theorem \ref{T:SL_Strong} is an immediate consequence of Theorem \ref{T:Diff_Mu_V}.

\begin{proof}[Proof of Theorem \ref{T:SL_Strong}]
  Since the condition (b') of Theorem \ref{T:SL_Strong} implies the condition (b) of Theorem \ref{T:SL_Weak}, the statements of Theorem \ref{T:SL_Weak} are valid under the assumptions of Theorem \ref{T:SL_Strong}.
  Moreover,
  \begin{align*}
    \lim_{\varepsilon\to0}\left\{\delta(\varepsilon)^2+\|M_\tau u_0^\varepsilon-v_0\|_{L^2(\Gamma)}^2+\|M_\tau\mathbb{P}_\varepsilon f^\varepsilon-f\|_{L^\infty(0,\infty;H^{-1}(\Gamma,T\Gamma))}^2\right\} = 0
  \end{align*}
  by \eqref{E:DMuV_Re}, $\alpha>0$, and the conditions (b') and (c).
  Hence
  \begin{align*}
    \lim_{\varepsilon\to0}M_\tau u^\varepsilon = v \quad\text{strongly in}\quad C([0,T];L^2(\Gamma,T\Gamma))\cap L^2(0,T;H^1(\Gamma,T\Gamma))
  \end{align*}
  for all $T>0$ by \eqref{E:Diff_Mu_V}.
\end{proof}

Next we consider the difference between $u^\varepsilon$ and the constant extension of $v$ in $\Omega_\varepsilon$.
Recall that we denote by $\bar{\eta}=\eta\circ\pi$ the constant extension of a function $\eta$ on $\Gamma$ in the normal direction of $\Gamma$.

\begin{theorem} \label{T:Diff_Ue_Cv}
  Under the assumptions of Theorem \ref{T:Diff_Mu_V}, we have
  \begin{multline} \label{E:Diff_Ue_Cv}
    \max_{t\in[0,T]}\|u^\varepsilon(t)-\bar{v}(t)\|_{L^2(\Omega_\varepsilon)}^2+\int_0^T\left\|\overline{P}\nabla u^\varepsilon(t)-\overline{\nabla_\Gamma v}(t)\right\|_{L^2(\Omega_\varepsilon)}^2\,dt \\
  \leq c_T\varepsilon\left\{\delta(\varepsilon)^2+\|M_\tau u_0^\varepsilon-v_0\|_{L^2(\Gamma)}^2+\|M_\tau\mathbb{P}_\varepsilon f^\varepsilon-f\|_{L^\infty(0,\infty;H^{-1}(\Gamma,T\Gamma))}^2\right\}
  \end{multline}
  for all $T>0$, where $c_T>0$ is a constant depending only on $T$ and $\delta(\varepsilon)$ is given by \eqref{E:DMuV_Re}.
  In particular,
  \begin{align*}
    \lim_{\varepsilon\to0}\varepsilon^{-1}\|u^\varepsilon-\bar{v}\|_{C([0,T];L^2(\Omega_\varepsilon))}^2 = \lim_{\varepsilon\to0}\varepsilon^{-1}\left\|\overline{P}\nabla u^\varepsilon-\overline{\nabla_\Gamma v}\right\|_{L^2(0,T;L^2(\Omega_\varepsilon))}^2 = 0
  \end{align*}
  for all $T>0$ provided that
  \begin{align} \label{E:StCo_Data}
    \begin{gathered}
      \lim_{\varepsilon\to0}\|M_\tau u_0^\varepsilon-v_0\|_{L^2(\Gamma)}^2 = \lim_{\varepsilon\to0}\|M_\tau\mathbb{P}_\varepsilon f^\varepsilon-f\|_{L^\infty(0,\infty;H^{-1}(\Gamma,T\Gamma))}^2 = 0, \\
      \lim_{\varepsilon\to0}\frac{\gamma_\varepsilon^i}{\varepsilon} = \gamma^i, \quad i=0,1.
    \end{gathered}
  \end{align}
\end{theorem}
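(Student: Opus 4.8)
The plan is to route everything through the averaged tangential component $M_\tau u^\varepsilon$, splitting the bulk difference into a purely geometric ``closeness to the vertical average'' part, estimated by the thin-domain inequalities of Section~\ref{S:Ave}, and a constant-extension part, to which Theorem~\ref{T:Diff_Mu_V} applies after one uses the norm equivalence \eqref{E:Con_Lp} between a function on $\Gamma$ and its constant extension on $\Omega_\varepsilon$. Throughout I would use that $u^\varepsilon(t)\in D(A_\varepsilon)$ for a.a.\ $t$, so that $u^\varepsilon(t)$ is solenoidal and satisfies \eqref{E:Bo_Imp}, together with the $H^1$- and $L^2(0,T;H^2)$-bounds \eqref{E:Est_Ueps}.

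First, for the $L^2$-term I would write $u^\varepsilon-\bar v=(u^\varepsilon-\overline{M_\tau u^\varepsilon})+\overline{M_\tau u^\varepsilon-v}$. By Lemma~\ref{L:AveT_Diff} (applied componentwise, using \eqref{E:Bo_Imp}) and the first estimate in \eqref{E:Est_Ueps}, $\|u^\varepsilon(t)-\overline{M_\tau u^\varepsilon}(t)\|_{L^2(\Omega_\varepsilon)}^2\le c\varepsilon^2\|u^\varepsilon(t)\|_{H^1(\Omega_\varepsilon)}^2\le c\varepsilon^{1+\alpha}$, and since $\varepsilon^{\alpha}\le\varepsilon^{\alpha/2}\le\delta(\varepsilon)^2$ this is bounded by $c\varepsilon\,\delta(\varepsilon)^2$. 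For the second piece, \eqref{E:Con_Lp} gives $\|\overline{M_\tau u^\varepsilon-v}\|_{L^2(\Omega_\varepsilon)}^2\le c\varepsilon\|M_\tau u^\varepsilon-v\|_{L^2(\Gamma)}^2$, and \eqref{E:Diff_Mu_V} bounds the right-hand side by $c_T\varepsilon$ times the braced data quantity in \eqref{E:Diff_Mu_V}. Taking the maximum over $t\in[0,T]$ yields the first term of \eqref{E:Diff_Ue_Cv}.

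Second, for the gradient term I would split $\overline P\nabla u^\varepsilon-\overline{\nabla_\Gamma v}=\bigl(\overline P\nabla u^\varepsilon-\overline{\nabla_\Gamma M_\tau u^\varepsilon}\bigr)+\overline{\nabla_\Gamma(M_\tau u^\varepsilon-v)}$. The last summand is again a constant extension, so by \eqref{E:Con_Lp} and \eqref{E:Diff_Mu_V} its square has time integral $\le c\varepsilon\int_0^T\|\nabla_\Gamma M_\tau u^\varepsilon-\nabla_\Gamma v\|_{L^2(\Gamma)}^2\,dt$, controlled by $c_T\varepsilon$ times the data quantity. For the first summand I would interpose $\overline{\nabla_\Gamma M u^\varepsilon}$: Lemma~\ref{L:Ave_Der_Diff} (componentwise) gives $\|\overline P\nabla u^\varepsilon-\overline{\nabla_\Gamma M u^\varepsilon}\|_{L^2(\Omega_\varepsilon)}\le c\varepsilon\|u^\varepsilon\|_{H^2(\Omega_\varepsilon)}$, while $Mu^\varepsilon-M_\tau u^\varepsilon=(Mu^\varepsilon\cdot n)n$, so $\|\overline{\nabla_\Gamma(Mu^\varepsilon-M_\tau u^\varepsilon)}\|_{L^2(\Omega_\varepsilon)}\le c\varepsilon^{1/2}\|Mu^\varepsilon\cdot n\|_{H^1(\Gamma)}\le c\varepsilon\|u^\varepsilon\|_{H^2(\Omega_\varepsilon)}$ by Lemma~\ref{L:Ave_N_W1p} (using \eqref{E:Bo_Imp}); integrating in time and using the last estimate in \eqref{E:Est_Ueps}, each contributes $\le c\varepsilon^2\int_0^T\|u^\varepsilon\|_{H^2(\Omega_\varepsilon)}^2\,dt\le c\varepsilon^{1+\alpha}(1+T)\le c_T\varepsilon\,\delta(\varepsilon)^2$. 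Adding the three contributions proves \eqref{E:Diff_Ue_Cv}. The final two limits then follow by dividing \eqref{E:Diff_Ue_Cv} by $\varepsilon$ and letting $\varepsilon\to0$, since under \eqref{E:StCo_Data} one has $\delta(\varepsilon)=\varepsilon^{\alpha/4}+\sum_{i=0,1}|\gamma_\varepsilon^i/\varepsilon-\gamma^i|\to0$ (as $\alpha>0$) and the two data terms vanish.

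No step is conceptually hard: every inequality needed has already been established in Sections~\ref{S:Ave} and \ref{S:SL}. The only care required is bookkeeping the powers of $\varepsilon$ so that every error term is genuinely $O(\varepsilon\,\delta(\varepsilon)^2)$ — in particular verifying $\varepsilon^{1+\alpha}\le\varepsilon\,\delta(\varepsilon)^2$ — and checking that the scalar estimates of Section~\ref{S:Ave} (Lemmas~\ref{L:AveT_Diff}, \ref{L:Ave_Der_Diff}, \ref{L:Ave_N_W1p}) are applied componentwise to the vector field $u^\varepsilon$ and its matrix gradient with the correct index conventions.
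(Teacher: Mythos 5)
Your proposal is correct and follows essentially the same route as the paper's proof: the same decomposition through $\overline{M_\tau u^\varepsilon}$ for the $L^2$-term, the same three-way splitting of the gradient via $\overline{\nabla_\Gamma Mu^\varepsilon}$ and the normal component $(Mu^\varepsilon\cdot n)n$, using Lemmas \ref{L:AveT_Diff}, \ref{L:Ave_Der_Diff}, and \ref{L:Ave_N_W1p} together with \eqref{E:Con_Lp}, \eqref{E:Est_Ueps}, and \eqref{E:Diff_Mu_V}. The power counting $\varepsilon^{1+\alpha}\le c_T\varepsilon\,\delta(\varepsilon)^2$ is also exactly the paper's.
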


\begin{proof}
  Let $T>0$.
  In what follows, we suppress the argument $t\in[0,T]$, denote by $c_T$ a general positive constant depending only on $T$, and set
  \begin{align*}
    R_\varepsilon := \delta(\varepsilon)^2+\|M_\tau u_0^\varepsilon-v_0\|_{L^2(\Gamma)}^2+\|M_\tau\mathbb{P}_\varepsilon f^\varepsilon-f\|_{L^\infty(0,\infty;H^{-1}(\Gamma,T\Gamma))}^2.
  \end{align*}
  Let us estimate $u^\varepsilon-\bar{v}$.
  Since $u^\varepsilon\in D(A_\varepsilon)\subset H^2(\Omega_\varepsilon)^3$ satisfies \eqref{E:Bo_Imp} and
  \begin{align*}
    u^\varepsilon-\bar{v} = \Bigl(u^\varepsilon-\overline{M_\tau u^\varepsilon}\Bigr)+\Bigl(\overline{M_\tau u^\varepsilon}-\bar{v}\Bigr) \quad\text{in}\quad \Omega_\varepsilon,
  \end{align*}
  we apply \eqref{E:Con_Lp} and \eqref{E:AveT_Diff_Dom} to the right-hand side to get
  \begin{align*}
    \|u^\varepsilon-\bar{v}\|_{L^2(\Omega_\varepsilon)}^2 \leq c\varepsilon\left(\varepsilon\|u^\varepsilon\|_{H^1(\Omega_\varepsilon)}^2+\|M_\tau u^\varepsilon-v\|_{L^2(\Gamma)}^2\right).
  \end{align*}
  Hence we see by \eqref{E:Est_Ueps}, \eqref{E:Diff_Mu_V}, and $\varepsilon^\alpha\leq\delta(\varepsilon)^2$ that
  \begin{align} \label{Pf_DUeCv:L2}
    \|u^\varepsilon(t)-\bar{v}(t)\|_{L^2(\Omega_\varepsilon)}^2 \leq c_T\varepsilon(\varepsilon^\alpha+R_\varepsilon) \leq c_T\varepsilon R_\varepsilon \quad\text{for all}\quad t\in[0,T].
  \end{align}
  Next we consider the second term on the left-hand side of \eqref{E:Diff_Ue_Cv}.
  Let
  \begin{align*}
    J_1 &:= \left\|\overline{P}\nabla u^\varepsilon-\overline{\nabla_\Gamma Mu^\varepsilon}\right\|_{L^2(\Omega_\varepsilon)}, \\
    J_2 &:= \left\|\overline{\nabla_\Gamma M_\tau u^\varepsilon}-\overline{\nabla_\Gamma v}\right\|_{L^2(\Omega_\varepsilon)}, \\
    J_3 &:= \left\|\overline{\nabla_\Gamma[(Mu^\varepsilon\cdot n)n]}\right\|_{L^2(\Omega_\varepsilon)}.
  \end{align*}
  Then we apply \eqref{E:ADD_Dom} to $J_1$ and \eqref{E:Con_Lp} to $J_2$ to get
  \begin{align*}
    J_1 \leq c\varepsilon\|u^\varepsilon\|_{H^2(\Omega_\varepsilon)}, \quad J_2 \leq c\varepsilon^{1/2}\|\nabla_\Gamma M_\tau u^\varepsilon-\nabla_\Gamma v\|_{L^2(\Gamma)}.
  \end{align*}
  Also, since $|n|=1$ and $W=-\nabla_\Gamma n$ is bounded on $\Gamma$,
  \begin{align*}
    |\nabla_\Gamma[(Mu^\varepsilon\cdot n)n]| &= |[\nabla_\Gamma(Mu^\varepsilon\cdot n)]\otimes n-(Mu^\varepsilon\cdot n)W| \\
    &\leq c\left(|Mu^\varepsilon\cdot n|+|\nabla_\Gamma(Mu^\varepsilon\cdot n)|\right)
  \end{align*}
  on $\Gamma$.
  Thus the inequalities \eqref{E:Con_Lp} and \eqref{E:Ave_N_W1p} imply that
  \begin{align*}
    J_3 \leq c\varepsilon^{1/2}\|Mu^\varepsilon\cdot n\|_{H^1(\Gamma)} \leq c\varepsilon\|u^\varepsilon\|_{H^2(\Omega_\varepsilon)}.
  \end{align*}
  By these inequalities and $Mu^\varepsilon=M_\tau u^\varepsilon+(Mu^\varepsilon\cdot n)n$ on $\Gamma$ we see that
  \begin{align*}
    \left\|\overline{P}\nabla u^\varepsilon-\overline{\nabla_\Gamma v}\right\|_{L^2(\Omega_\varepsilon)} &\leq J_1+J_2+J_3 \\
    &\leq c\varepsilon^{1/2}\left(\varepsilon^{1/2}\|u^\varepsilon\|_{H^2(\Omega_\varepsilon)}+\|\nabla_\Gamma M_\tau u^\varepsilon-\nabla_\Gamma v\|_{L^2(\Gamma)}\right).
  \end{align*}
  Hence it follows from \eqref{E:Est_Ueps}, \eqref{E:Diff_Mu_V}, and $\varepsilon^\alpha\leq\delta(\varepsilon)^2$ that
  \begin{align*}
    \int_0^T\left\|\overline{P}\nabla u^\varepsilon-\overline{\nabla_\Gamma v}\right\|_{L^2(\Omega_\varepsilon)}^2\,dt &\leq c\varepsilon\int_0^T\left(\varepsilon\|u^\varepsilon\|_{H^2(\Omega_\varepsilon)}^2+\|\nabla_\Gamma M_\tau u^\varepsilon-\nabla_\Gamma v\|_{L^2(\Gamma)}^2\right)dt \\
    &\leq c_T\varepsilon(\varepsilon^\alpha+R_\varepsilon) \leq c_T\varepsilon R_\varepsilon.
  \end{align*}
  Combining this inequality and \eqref{Pf_DUeCv:L2} we obtain \eqref{E:Diff_Ue_Cv}.
\end{proof}

We also compare the derivative of $u^\varepsilon$ in the normal direction of $\Gamma$ and the constant extension of $v$.
Recall that for a function $\varphi$ on $\Omega_\varepsilon$ and $x\in\Omega_\varepsilon$ we set
\begin{align*}
  \partial_n\varphi(x) = (\bar{n}(x)\cdot\nabla)\varphi(x) = \frac{d}{dr}\Bigl(\varphi(y+rn(y))\Bigr)\Big|_{r=d(x)} \quad (y = \pi(x)\in\Gamma)
\end{align*}
and call $\partial_n\varphi$ the derivative of $\varphi$ in the normal direction of $\Gamma$.

\begin{theorem} \label{T:Diff_DnUe_Cv}
  Under the assumptions of Theorem \ref{T:Diff_Mu_V}, let $\partial_nu^\varepsilon$ be the derivative of $u^\varepsilon$ in the normal direction of $\Gamma$.
  Then
  \begin{multline} \label{E:Diff_DnUe_Cv}
    \int_0^T\left(\left\|\overline{P}\partial_nu^\varepsilon(t)+\overline{Wv}(t)\right\|_{L^2(\Omega_\varepsilon)}^2+\left\|\partial_nu^\varepsilon(t)\cdot\bar{n}-\frac{1}{\bar{g}}\bar{v}(t)\cdot\overline{\nabla_\Gamma g}\right\|_{L^2(\Omega_\varepsilon)}^2\right)dt \\
    \leq c_T\varepsilon\left\{\delta(\varepsilon)^2+\|M_\tau u_0^\varepsilon-v_0\|_{L^2(\Gamma)}^2+\|M_\tau\mathbb{P}_\varepsilon f^\varepsilon-f\|_{L^\infty(0,\infty;H^{-1}(\Gamma,T\Gamma))}^2\right\}
  \end{multline}
  for all $T>0$, where $c_T>0$ is a constant depending only on $T$ and $\delta(\varepsilon)$ is given by \eqref{E:DMuV_Re}.
  Therefore, setting
  \begin{align*}
    V := -Wv+\frac{1}{g}(v\cdot\nabla_\Gamma g)n \quad\text{on}\quad \Gamma\times(0,\infty)
  \end{align*}
  we have (note that $Wv$ is tangential on $\Gamma$)
  \begin{align*}
    \lim_{\varepsilon\to0}\varepsilon^{-1}\left\|\partial_nu^\varepsilon-\overline{V}\right\|_{L^2(0,T;L^2(\Omega_\varepsilon))}^2 = 0
  \end{align*}
  for all $T>0$ provided \eqref{E:StCo_Data}.
\end{theorem}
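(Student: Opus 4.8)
The plan is to reduce the statement to the already-established error estimates for the bulk velocity and its tangential gradient (Theorem~\ref{T:Diff_Ue_Cv}, equation~\eqref{E:Diff_Ue_Cv}) together with the consequences of the slip boundary conditions proven in Section~\ref{SS:Fund_Bd}. First I would decompose
\begin{align*}
  \partial_nu^\varepsilon-\overline{V}
  &= \overline{P}\partial_nu^\varepsilon-\overline{PV}
  +\overline{Q}\partial_nu^\varepsilon-\overline{QV} \\
  &= \Bigl(\overline{P}\partial_nu^\varepsilon+\overline{Wv}\Bigr)
  +\left(\partial_nu^\varepsilon\cdot\bar{n}-\frac{1}{\bar{g}}\bar{v}\cdot\overline{\nabla_\Gamma g}\right)\bar{n},
\end{align*}
using $PV=-Wv$ and $QV=\bigl(\tfrac1g(v\cdot\nabla_\Gamma g)\bigr)n$ together with $|\bar{n}|=1$ and $|a|^2=|Pa|^2+|Qa|^2$. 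Thus it suffices to prove the two $L^2(0,T;L^2(\Omega_\varepsilon))$-bounds displayed in \eqref{E:Diff_DnUe_Cv} for the tangential and normal parts separately.

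For the tangential part I would write
\begin{align*}
  \overline{P}\partial_nu^\varepsilon+\overline{Wv}
  = \Bigl(\overline{P}\partial_nu^\varepsilon+\overline{W}u^\varepsilon\Bigr)
  +\Bigl(\overline{W}\bar{v}-\overline{W}u^\varepsilon\Bigr)
  +\Bigl(\overline{Wv}-\overline{W}\bar{v}\Bigr).
\end{align*}
The last term vanishes since $\overline{Wv}=\overline{W}\,\overline{v}$ (constant extension commutes with pointwise multiplication by $\overline{W}$ evaluated via $\pi$); more precisely $\overline{Wv}(x)=W(\pi(x))v(\pi(x))=\overline{W}(x)\overline{v}(x)$. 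The first term is controlled by $c\varepsilon\|u^\varepsilon\|_{W^{2,2}(\Omega_\varepsilon)}$ via Lemma~\ref{L:PDnU_WU} (applicable because $u^\varepsilon\in D(A_\varepsilon)\subset H^2(\Omega_\varepsilon)^3$ satisfies \eqref{E:Bo_Slip} and Assumption~\ref{Assump_1} gives \eqref{E:Fric_Upper}), and the middle term is bounded by $c\|u^\varepsilon-\bar v\|_{L^2(\Omega_\varepsilon)}$ since $W$ is bounded on $\Gamma$. Integrating in time, the first contribution gives $c\varepsilon^2\int_0^T\|u^\varepsilon\|_{H^2(\Omega_\varepsilon)}^2\,dt\leq c_T\varepsilon^2(\varepsilon^{-1+\alpha})=c_T\varepsilon^{1+\alpha}\leq c_T\varepsilon\delta(\varepsilon)^2$ by \eqref{E:Est_Ueps}, and the middle contribution gives $c_T\varepsilon R_\varepsilon$ by \eqref{E:Diff_Ue_Cv}, where I abbreviate $R_\varepsilon:=\delta(\varepsilon)^2+\|M_\tau u_0^\varepsilon-v_0\|_{L^2(\Gamma)}^2+\|M_\tau\mathbb{P}_\varepsilon f^\varepsilon-f\|_{L^\infty(0,\infty;H^{-1}(\Gamma,T\Gamma))}^2$.

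For the normal part I would similarly split
\begin{align*}
  \partial_nu^\varepsilon\cdot\bar n-\frac{1}{\bar g}\bar v\cdot\overline{\nabla_\Gamma g}
  = \left(\partial_nu^\varepsilon\cdot\bar n-\frac{1}{\bar g}\overline{M_\tau u^\varepsilon}\cdot\overline{\nabla_\Gamma g}\right)
  +\frac{1}{\bar g}\Bigl(\overline{M_\tau u^\varepsilon}-\bar v\Bigr)\cdot\overline{\nabla_\Gamma g},
\end{align*}
noting that $u^\varepsilon$ is divergence free in $\Omega_\varepsilon$ and satisfies \eqref{E:Bo_Imp}, so Lemma~\ref{L:DnU_N_Ave} bounds the first term by $c\varepsilon\|u^\varepsilon\|_{W^{2,2}(\Omega_\varepsilon)}$; the second term is controlled by $c|\overline{M_\tau u^\varepsilon}-\bar v|$ since $g\geq c>0$ and $\nabla_\Gamma g$ is bounded, hence by $c\|M_\tau u^\varepsilon-v\|_{L^2(\Gamma)}$ after using \eqref{E:Con_Lp} to pass to the surface norm (picking up a factor $\varepsilon^{1/2}$). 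Integrating in time and invoking \eqref{E:Est_Ueps} and \eqref{E:Diff_Mu_V} again yields $c_T\varepsilon R_\varepsilon$. Adding the tangential and normal estimates gives \eqref{E:Diff_DnUe_Cv}. Finally, under the hypotheses \eqref{E:StCo_Data} we have $\delta(\varepsilon)\to0$ (using $\alpha>0$) and $R_\varepsilon\to0$, so dividing \eqref{E:Diff_DnUe_Cv} by $\varepsilon$ and using the orthogonal decomposition above shows $\varepsilon^{-1}\|\partial_nu^\varepsilon-\overline{V}\|_{L^2(0,T;L^2(\Omega_\varepsilon))}^2\to0$. The only mild obstacle is bookkeeping: one must check the hypotheses of Lemmas~\ref{L:PDnU_WU} and~\ref{L:DnU_N_Ave} are met and keep track of which powers of $\varepsilon$ are absorbed into $\delta(\varepsilon)^2$ versus $R_\varepsilon$, but no new analytic input is needed beyond what is already in the excerpt.
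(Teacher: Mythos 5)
Your proof is correct and follows the same overall strategy as the paper: split $\partial_nu^\varepsilon-\overline{V}$ orthogonally into tangential and normal parts, control the tangential part via Lemma \ref{L:PDnU_WU} and the normal part via Lemma \ref{L:DnU_N_Ave}, and absorb the remaining differences using the already-established error estimates. The only (minor) divergence is in the tangential part: the paper writes $\overline{W}u^\varepsilon-\overline{Wv}=\bigl(\overline{W}u^\varepsilon-\overline{WM_\tau u^\varepsilon}\bigr)+\bigl(\overline{WM_\tau u^\varepsilon}-\overline{Wv}\bigr)$ and re-derives the bound from Lemma \ref{L:AveT_Diff_Dom}, \eqref{E:Con_Lp}, and \eqref{E:Diff_Mu_V}, whereas you compare directly with $\overline{W}\bar{v}$ and invoke Theorem \ref{T:Diff_Ue_Cv} as a black box; since that theorem is proved immediately before this one, your shortcut is legitimate and slightly more economical, at the cost of making the tangential estimate depend on Theorem \ref{T:Diff_Ue_Cv} rather than only on the averaging lemmas. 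Your bookkeeping of the powers of $\varepsilon$ (in particular $\varepsilon^{1+\alpha}\leq\varepsilon\,\delta(\varepsilon)^2$ and the identity $\overline{Wv}=\overline{W}\bar{v}$) checks out, and the normal part is handled exactly as in the paper.
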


\begin{proof}
  We fix $T>0$ and suppress the argument $t\in[0,T]$.
  Noting that
  \begin{align*}
    \overline{P}\partial_nu^\varepsilon+\overline{Wv} = \Bigl(\overline{P}\partial_nu^\varepsilon+\overline{W}u^\varepsilon\Bigr)-\Bigl(\overline{W}u^\varepsilon-\overline{WM_\tau u^\varepsilon}\Bigr)-\Bigl(\overline{WM_\tau u^\varepsilon}-\overline{Wv}\Bigr)
  \end{align*}
  in $\Omega_\varepsilon$, we define
  \begin{align*}
    J_1 &:= \left\|\overline{P}\partial_nu^\varepsilon+\overline{W}u^\varepsilon\right\|_{L^2(\Omega_\varepsilon)}, \\
    J_2 &:= \left\|\overline{W}u^\varepsilon-\overline{WM_\tau u^\varepsilon}\right\|_{L^2(\Omega_\varepsilon)}, \\
    J_3 &:= \left\|\overline{WM_\tau u^\varepsilon}-\overline{Wv}\right\|_{L^2(\Omega_\varepsilon)}.
  \end{align*}
  Since $u^\varepsilon\in D(A_\varepsilon)\subset H^2(\Omega_\varepsilon)^3$ satisfies \eqref{E:Bo_Slip}, we can use \eqref{E:PDnU_WU} to $J_1$ to have
  \begin{align*}
    J_1 \leq c\varepsilon\|u^\varepsilon\|_{H^2(\Omega_\varepsilon)}.
  \end{align*}
  Also, noting that $W$ is bounded on $\Gamma$, we apply \eqref{E:AveT_Diff_Dom} to $J_2$ and \eqref{E:Con_Lp} to $J_3$ to get
  \begin{align*}
    J_2 \leq c\left\|u^\varepsilon-\overline{M_\tau u^\varepsilon}\right\|_{L^2(\Omega_\varepsilon)} \leq c\varepsilon\|u^\varepsilon\|_{H^1(\Omega_\varepsilon)}, \quad J_3 \leq c\varepsilon^{1/2}\|M_\tau u^\varepsilon-v\|_{L^2(\Gamma)}.
  \end{align*}
  From these inequalities we deduce that
  \begin{align*}
    \left\|\overline{P}\partial_nu^\varepsilon+\overline{Wv}\right\|_{L^2(\Omega_\varepsilon)} &\leq J_1+J_2+J_3 \\
    &\leq c\varepsilon^{1/2}\left(\varepsilon^{1/2}\|u^\varepsilon\|_{H^2(\Omega_\varepsilon)}+\|M_\tau u^\varepsilon-v\|_{L^2(\Gamma)}\right).
  \end{align*}
  We also observe by \eqref{E:G_Inf}, \eqref{E:Con_Lp}, and \eqref{E:DnU_N_Ave} that
  \begin{align*}
    &\left\|\partial_nu^\varepsilon\cdot\bar{n}-\frac{1}{\bar{g}}\bar{v}\cdot\overline{\nabla_\Gamma g}\right\|_{L^2(\Omega_\varepsilon)} \\
    &\qquad \leq \left\|\partial_nu^\varepsilon\cdot\bar{n}-\frac{1}{\bar{g}}\overline{M_\tau u^\varepsilon}\cdot\overline{\nabla_\Gamma g}\right\|_{L^2(\Omega_\varepsilon)}+\left\|\frac{1}{\bar{g}}\Bigl(\overline{M_\tau u^\varepsilon}-\bar{v}\Bigr)\cdot\overline{\nabla_\Gamma g}\right\|_{L^2(\Omega_\varepsilon)} \\
    &\qquad \leq c\varepsilon^{1/2}\left(\varepsilon^{1/2}\|u^\varepsilon\|_{H^2(\Omega_\varepsilon)}+\|M_\tau u^\varepsilon-v\|_{L^2(\Gamma)}\right).
  \end{align*}
  Hence we get \eqref{E:Diff_DnUe_Cv} by integrating the square of the above inequalities over $(0,T)$ and using \eqref{E:Est_Ueps} and \eqref{E:Diff_Mu_V} as in the proof of Theorem \ref{T:Diff_Ue_Cv}.
\end{proof}

\begin{remark} \label{R:Diff_DnUe_Cv}
  In the estimate \eqref{E:Diff_DnUe_Cv} for the derivative $\partial_nu^\varepsilon$ of the velocity of the bulk fluid in the normal direction of $\Gamma$, the Weingarten map $W$ represents the curvatures of the limit surface $\Gamma$.
  On the other hand, the functions $g_0$ and $g_1$ with $g=g_1-g_0$ are used to define the inner and outer boundaries of the curved thin domain $\Omega_\varepsilon$.
  Therefore, roughly speaking, the tangential component (with respect to $\Gamma$) of $\partial_nu^\varepsilon$ depends only on the shape of $\Gamma$, while the geometry of the boundaries of $\Omega_\varepsilon$ affects only the normal component of $\partial_nu^\varepsilon$.
\end{remark}

\begin{appendix}
\section{Notations on vectors and matrices} \label{S:Ap_Vec}
In this appendix we fix notations on vectors and matrices.
For $m\in\mathbb{N}$ we consider a vector $a\in\mathbb{R}^m$ as a column vector
\begin{align*}
  a =
  \begin{pmatrix}
    a_1 \\ \vdots \\ a_m
  \end{pmatrix}
  = (a_1, \cdots, a_m)^T
\end{align*}
and denote the $i$-th component of $a$ by $a_i$ or sometimes by $a^i$ or $[a]_i$ for $i=1,\dots,m$.
A matrix $A\in\mathbb{R}^{l\times m}$ with $l,m\in\mathbb{N}$ is expressed as
\begin{align*}
  A = (A_{ij})_{i,j} =
  \begin{pmatrix}
    A_{11} & \cdots & A_{1m} \\
    \vdots & & \vdots \\
    A_{l1} & \cdots & A_{lm}
  \end{pmatrix}.
\end{align*}
For $i=1,\dots,l$ and $j=1,\dots,m$ the $(i,j)$-entry of $A$ is denoted by $A_{ij}$ or sometimes by $[A]_{ij}$.
We write $A^T$ for the transpose of $A$.
When $l=m$, we denote the symmetric part of $A$ by $A_S:=(A+A^T)/2$ and the $m\times m$ identity matrix by $I_m$.
We define the tensor product of $a\in\mathbb{R}^l$ and $b\in\mathbb{R}^m$ with $l,m\in\mathbb{N}$ by
\begin{align*}
  a\otimes b := (a_ib_j)_{i,j} =
  \begin{pmatrix}
    a_1b_1 & \cdots & a_1b_m \\
    \vdots & & \vdots \\
    a_lb_1 & \cdots & a_lb_m
  \end{pmatrix}, \quad
  a =
  \begin{pmatrix}
    a_1 \\ \vdots \\ a_l
  \end{pmatrix}, \quad
  b =
  \begin{pmatrix}
    b_1 \\ \vdots \\ b_m
  \end{pmatrix}.
\end{align*}
For three-dimensional vector fields $u=(u_1,u_2,u_3)^T$ and $\varphi$ on an open set in $\mathbb{R}^3$ let
\begin{gather*}
  \nabla u :=
  \begin{pmatrix}
    \partial_1u_1 & \partial_1u_2 & \partial_1u_3 \\
    \partial_2u_1 & \partial_2u_2 & \partial_2u_3 \\
    \partial_3u_1 & \partial_3u_2 & \partial_3u_3
  \end{pmatrix}, \quad
  |\nabla^2u|^2 := \sum_{i,j,k=1}^3|\partial_i\partial_ju_k|^2 \quad\left(\partial_i := \frac{\partial}{\partial x_i}\right), \\
  (\varphi\cdot\nabla)u :=
  \begin{pmatrix}
    \varphi\cdot\nabla u_1 \\
    \varphi\cdot\nabla u_2 \\
    \varphi\cdot\nabla u_3
  \end{pmatrix}
  = (\nabla u)^T\varphi.
\end{gather*}
We define the inner product of $A,B\in\mathbb{R}^{3\times3}$ and the norm of $A$ by
\begin{align*}
  A: B := \mathrm{tr}[A^TB] = \sum_{i=1}^3AE_i\cdot BE_i, \quad |A| := \sqrt{A:A},
\end{align*}
where $\{E_1,E_2,E_3\}$ is an orthonormal basis of $\mathbb{R}^3$.
Note that $A:B$ does not depend on a choice of $\{E_1,E_2,E_3\}$.
In particular, taking the standard basis of $\mathbb{R}^3$ we get
\begin{align*}
  A:B = \sum_{i,j=1}^3A_{ij}B_{ij} = B:A = A^T:B^T, \quad AB:C = A:CB^T = B:A^TC
\end{align*}
for $A,B,C\in\mathbb{R}^{3\times3}$.
Also, for $a,b\in\mathbb{R}^3$ we have $|a\otimes b|=|a||b|$.

\section{Auxiliary results on a closed surface} \label{S:Ap_Aux}
The purpose of this subsection is to provide auxiliary results used in the proof of Lemma \ref{L:Necas_Surf} and to establish Lemmas \ref{L:WTGr_Van}--\ref{L:Poin_Surf_Lp}.
We assume that $\Gamma$ is a $C^2$ closed, connected, and oriented surface in $\mathbb{R}^3$ and use the notations in Section \ref{SS:Pre_Surf}.

First we give auxiliary lemmas for calculations under a local coordinate system of $\Gamma$.
Lemmas \ref{L:Metric}--\ref{L:Lp_Loc} below are shown by direct calculations of differential geometry.
For the precise proofs of them we refer to our first paper \cite{Miu_NSCTD_01}.

\begin{lemma}[{\cite{Miu_NSCTD_01}*{Lemma B.1}}] \label{L:Metric}
  Let $U$ be an open set in $\mathbb{R}^2$, $\mu\colon U\to\Gamma$ a $C^2$ local parametrization of $\Gamma$, and $\mathcal{K}$ a compact subset of $U$.
  Then there exists a constant $c>0$ such that
  \begin{align} \label{E:Mu_Bound}
    |\partial_{s_i}\mu(s)| \leq c, \quad |\partial_{s_i}\partial_{s_j}\mu(s)| \leq c \quad\text{for all}\quad s\in\mathcal{K},\,i,j=1,2.
  \end{align}
  We define the Riemannian metric $\theta=(\theta_{ij})_{i,j}$ of $\Gamma$ by
  \begin{align} \label{E:Def_Met}
    \theta(s) := \nabla_s\mu(s)\{\nabla_s\mu(s)\}^T, \quad s\in U, \quad \nabla_s\mu :=
    \begin{pmatrix}
      \partial_{s_1}\mu_1 & \partial_{s_1}\mu_2 & \partial_{s_1}\mu_3 \\
      \partial_{s_2}\mu_1 & \partial_{s_2}\mu_2 & \partial_{s_2}\mu_3
    \end{pmatrix}
  \end{align}
  and denote by $\theta^{-1}=(\theta^{ij})_{i,j}$ the inverse matrix of $\theta$.
  Then
  \begin{align} \label{E:Metric}
      |\theta^k(s)| \leq c, \quad |\partial_{s_i}\theta^k(s)| \leq c, \quad c^{-1} \leq \det\theta(s) \leq c
  \end{align}
  for all $s\in\mathcal{K}$, $i=1,2$, and $k=\pm1$.
\end{lemma}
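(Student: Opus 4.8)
The plan is to reduce every estimate in the statement to the elementary principle that a continuous function on a compact set is bounded, supplemented by the fact that a continuous \emph{positive} function on a compact set is bounded below by a positive constant. First I would observe that, since $\mu\in C^2(U)$, the maps $s\mapsto\partial_{s_i}\mu(s)$ and $s\mapsto\partial_{s_i}\partial_{s_j}\mu(s)$ are continuous on $U$ for $i,j=1,2$; restricting to the compact set $\mathcal{K}\subset U$ gives \eqref{E:Mu_Bound} at once, with a constant $c>0$ depending only on $\mu$ and $\mathcal{K}$.

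Next I would treat the metric. From \eqref{E:Def_Met} one has $\theta_{ij}(s)=\partial_{s_i}\mu(s)\cdot\partial_{s_j}\mu(s)$, which is continuous on $U$ and hence bounded on $\mathcal{K}$, so $|\theta(s)|\leq c$ there; differentiating, $\partial_{s_k}\theta_{ij}=\partial_{s_k}\partial_{s_i}\mu\cdot\partial_{s_j}\mu+\partial_{s_i}\mu\cdot\partial_{s_k}\partial_{s_j}\mu$ is again continuous by $\mu\in C^2$, giving $|\partial_{s_k}\theta(s)|\leq c$ on $\mathcal{K}$. This settles the bounds on $\theta^1=\theta$ and $\partial_{s_i}\theta^1$ in \eqref{E:Metric}.

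For the determinant I would use that $\mu$ is a local parametrization of the (embedded, $C^2$) surface $\Gamma$, so the differential $\nabla_s\mu(s)$ has rank $2$ for every $s\in U$; equivalently $\{\partial_{s_1}\mu(s),\partial_{s_2}\mu(s)\}$ is linearly independent, so the Gram matrix $\theta(s)=\nabla_s\mu(s)\{\nabla_s\mu(s)\}^T$ is symmetric positive definite and in particular $\det\theta(s)>0$ for all $s$. Since $\det\theta$ is a polynomial in the continuous entries $\theta_{ij}$, it is continuous and positive on the compact set $\mathcal{K}$, hence $c^{-1}\le\det\theta(s)\le c$ there. The inverse $\theta^{-1}=(\theta^{ij})_{i,j}$ is then handled by Cramer's rule: each entry $\theta^{ij}$ is a cofactor of $\theta$ divided by $\det\theta$, i.e. a continuous function divided by one bounded away from zero on $\mathcal{K}$, so $|\theta^{-1}(s)|\le c$; and $\partial_{s_i}\theta^{-1}=-\theta^{-1}(\partial_{s_i}\theta)\theta^{-1}$ is a product of three matrix-valued functions already bounded on $\mathcal{K}$, hence bounded. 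This completes \eqref{E:Metric}.

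There is essentially no serious obstacle: the only point needing a moment's care is the lower bound $c^{-1}\le\det\theta$, which rests on $\mu$ being an immersion and on passing from pointwise positivity of $\det\theta$ to a uniform positive lower bound via compactness of $\mathcal{K}$. Every other inequality is immediate from continuity of the quantity in question together with boundedness of continuous functions on compact sets, so the argument is a short differential-geometric computation rather than a genuinely hard estimate.
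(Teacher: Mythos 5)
Your proof is correct and follows exactly the route the paper indicates for this lemma, namely direct differential-geometric computation combined with the boundedness (and, for $\det\theta$, the uniform positivity) of continuous functions on the compact set $\mathcal{K}$; the key points — rank $2$ of $\nabla_s\mu$ giving positive definiteness of the Gram matrix, Cramer's rule for $\theta^{-1}$, and the identity $\partial_{s_i}\theta^{-1}=-\theta^{-1}(\partial_{s_i}\theta)\theta^{-1}$ — are all handled properly. No gaps.
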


From now on, we always write $\theta=(\theta_{ij})_{i,j}$ and $\theta^{-1}=(\theta^{ij})_{i,j}$ for the Riemannian metric given by \eqref{E:Def_Met} and its inverse matrix.

\begin{lemma}[{\cite{Miu_NSCTD_01}*{Lemma B.2}}] \label{L:TGr_DG}
  Let $U$ be an open set in $\mathbb{R}^2$ and $\mu\colon U\to\Gamma$ a $C^2$ local parametrization of $\Gamma$.
  For $\eta\in C^1(\Gamma)$ let $\eta^\flat:=\eta\circ\mu$ on $U$.
  Then
  \begin{align} \label{E:TGr_DG}
    \nabla_\Gamma\eta(\mu(s)) = \sum_{i,j=1}^2\theta^{ij}(s)\partial_{s_i}\eta^\flat(s)\partial_{s_j}\mu(s), \quad s\in U,
  \end{align}
  where $\nabla_\Gamma\eta$ is the tangential gradient of $\eta$ defined by \eqref{E:Def_TGr}.
\end{lemma}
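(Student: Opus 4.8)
The plan is to exploit two elementary facts: that $\nabla_\Gamma\eta(\mu(s))$ is a tangent vector at the point $\mu(s)$, hence a linear combination of $\partial_{s_1}\mu(s)$ and $\partial_{s_2}\mu(s)$; and that its inner products against these two basis vectors are precisely the pulled-back partial derivatives $\partial_{s_i}\eta^\flat(s)$. Inverting the Gram matrix $\theta$ then produces the coefficients, which is exactly \eqref{E:TGr_DG}.

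First I would fix $s\in U$, abbreviate $y=\mu(s)$, and choose a $C^1$-extension $\tilde\eta$ of $\eta$ to the tubular neighborhood $N$ (such an extension exists since $\Gamma$ is $C^2$; for instance $\tilde\eta=\eta\circ\pi$), so that $\nabla_\Gamma\eta(y)=P(y)\nabla\tilde\eta(y)$ by \eqref{E:Def_TGr}. Since $\tilde\eta\circ\mu=\eta\circ\mu=\eta^\flat$ near $s$, the chain rule gives $\partial_{s_i}\eta^\flat(s)=\nabla\tilde\eta(y)\cdot\partial_{s_i}\mu(s)$ for $i=1,2$. The vectors $\partial_{s_i}\mu(s)$ span the tangent plane $T_y\Gamma$, so $\partial_{s_i}\mu(s)\cdot n(y)=0$, i.e. $P(y)\partial_{s_i}\mu(s)=\partial_{s_i}\mu(s)$; combining this with $P(y)^T=P(y)$ yields
\[
\partial_{s_i}\eta^\flat(s)=\nabla\tilde\eta(y)\cdot P(y)\partial_{s_i}\mu(s)=\bigl(P(y)\nabla\tilde\eta(y)\bigr)\cdot\partial_{s_i}\mu(s)=\nabla_\Gamma\eta(y)\cdot\partial_{s_i}\mu(s).
\]

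Then I would note that $\nabla_s\mu(s)$ has rank $2$ and $\det\theta(s)>0$ by \eqref{E:Metric}, so $\{\partial_{s_1}\mu(s),\partial_{s_2}\mu(s)\}$ is a basis of the two-dimensional plane $T_y\Gamma$. Since $n\cdot\nabla_\Gamma\eta=0$ on $\Gamma$ by \eqref{E:P_TGr}, the vector $\nabla_\Gamma\eta(y)$ is tangential, so I may write $\nabla_\Gamma\eta(y)=\sum_{j=1}^2 c_j\,\partial_{s_j}\mu(s)$. Taking the inner product with $\partial_{s_i}\mu(s)$, using the previous display and $\theta_{ij}(s)=\partial_{s_i}\mu(s)\cdot\partial_{s_j}\mu(s)$ from \eqref{E:Def_Met}, gives the linear system $\partial_{s_i}\eta^\flat(s)=\sum_{j=1}^2\theta_{ij}(s)c_j$, whose solution is $c_j=\sum_{i=1}^2\theta^{ij}(s)\partial_{s_i}\eta^\flat(s)$. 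Substituting back yields \eqref{E:TGr_DG}.

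I do not anticipate a serious obstacle: this is the standard computation identifying the Riemannian gradient with the extrinsic tangential gradient, and every ingredient (existence of $\tilde\eta$, invertibility of $\theta$, the splitting $I_3=P+Q$) is either a given or already recorded in Section \ref{SS:Pre_Surf} and Lemma \ref{L:Metric}. The only minor point worth a remark is that the formula does not depend on the chosen extension $\tilde\eta$ — but this is automatic, since the right-hand side of \eqref{E:TGr_DG} involves only $\eta^\flat=\eta\circ\mu$ and metric quantities, so the argument simultaneously reproves the extension-independence asserted after \eqref{E:Def_TGr}.
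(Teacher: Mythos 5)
Your proof is correct, and it is exactly the direct differential-geometric calculation that the paper defers to Lemma B.2 of the first part of the series: pair $\nabla_\Gamma\eta(\mu(s))$ against the tangent basis $\{\partial_{s_1}\mu,\partial_{s_2}\mu\}$ via the chain rule and invert the Gram matrix $\theta$. All the ingredients you invoke (tangentiality of $\nabla_\Gamma\eta$ from \eqref{E:P_TGr}, $P^T=P^2=P$, invertibility of $\theta$ from \eqref{E:Metric}, symmetry of $\theta^{-1}$) are available, so nothing further is needed.
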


\begin{lemma}[{\cite{Miu_NSCTD_01}*{Lemma B.3}}] \label{L:DivG_DG}
  Let $U$ be an open set in $\mathbb{R}^2$ and $\mu\colon U\to\Gamma$ a $C^2$ local parametrization of $\Gamma$.
  For $X\in C^1(\Gamma,T\Gamma)$ let
  \begin{align*}
    X^i(s) := \sum_{j=1}^2\theta^{ij}(s)\partial_{s_j}\mu(s)\cdot X(\mu(s)), \quad s\in U, \, i=1,2.
  \end{align*}
  Then the surface divergence of $X$ defined by \eqref{E:Def_DivG} is locally of the form
  \begin{align} \label{E:DivG_DG}
    \mathrm{div}_\Gamma X(\mu(s)) = \frac{1}{\sqrt{\det\theta(s)}}\sum_{i=1}^2\partial_{s_i}\Bigl(X^i(s)\sqrt{\det\theta(s)}\Bigr), \quad s\in U.
  \end{align}
\end{lemma}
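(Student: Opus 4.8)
The plan is to carry out a direct computation in a local coordinate $s\in U$, using the local expression \eqref{E:TGr_DG} for the tangential gradient from Lemma \ref{L:TGr_DG} together with Jacobi's formula for the derivative of a determinant. Since $X\in C^1(\Gamma,T\Gamma)$ is tangential, the vectors $\partial_{s_1}\mu(s)$ and $\partial_{s_2}\mu(s)$ span the tangent plane of $\Gamma$ at $\mu(s)$, and the first step is to record the local frame representation
\[
  X(\mu(s)) = \sum_{i=1}^2 X^i(s)\,\partial_{s_i}\mu(s), \quad s\in U,
\]
with $X^i$ as in the statement. Indeed, writing $X(\mu(s))=\sum_k a^k(s)\partial_{s_k}\mu(s)$ and taking the inner product with $\partial_{s_j}\mu$ gives $\partial_{s_j}\mu\cdot X=\sum_k a^k\theta_{jk}$ by \eqref{E:Def_Met}; multiplying by $\theta^{ij}$ and summing over $j$ yields $a^i=X^i$ because $\theta^{-1}\theta=I_2$. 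Note $X^i\in C^1(U)$ since $\mu\in C^2$, $X\circ\mu\in C^1$, and $\theta^{ij}\in C^1$ by \eqref{E:Metric}, so $\partial_{s_i}X^i$ in \eqref{E:DivG_DG} is well-defined.

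Next, each component $X_m$ of $X$ belongs to $C^1(\Gamma)$, so \eqref{E:TGr_DG} applies and
\[
  \underline{D}_mX_m(\mu(s)) = \sum_{k,l=1}^2\theta^{kl}\,\partial_{s_k}\!\Bigl(\sum_{i=1}^2X^i\,\partial_{s_i}\mu_m\Bigr)\,\partial_{s_l}\mu_m .
\]
Summing over $m=1,2,3$ and using $\sum_{m}\partial_{s_i}\mu_m\,\partial_{s_l}\mu_m=\partial_{s_i}\mu\cdot\partial_{s_l}\mu=\theta_{il}$, the term in which $\partial_{s_k}$ falls on $X^i$ collapses via $\sum_l\theta^{kl}\theta_{il}=\delta_{ki}$ to $\sum_i\partial_{s_i}X^i$. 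The remaining term, in which $\partial_{s_k}$ falls on $\partial_{s_i}\mu_m$, equals $\sum_{i,k,l}\theta^{kl}X^i\,(\partial_{s_k}\partial_{s_i}\mu\cdot\partial_{s_l}\mu)$; here I would insert the Christoffel-symbol identity $\partial_{s_k}\partial_{s_i}\mu\cdot\partial_{s_l}\mu=\tfrac12(\partial_{s_k}\theta_{il}+\partial_{s_i}\theta_{kl}-\partial_{s_l}\theta_{ki})$, obtained by differentiating $\theta_{il}=\partial_{s_i}\mu\cdot\partial_{s_l}\mu$. Using the symmetry of $\theta$ and of $\theta^{-1}$, the first and third contributions coincide and cancel, leaving $\tfrac12\sum_{i,k,l}\theta^{kl}X^i\,\partial_{s_i}\theta_{kl}$.

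Finally, Jacobi's formula $\partial_{s_i}\det\theta=\det\theta\,\sum_{k,l}\theta^{kl}\partial_{s_i}\theta_{kl}$, together with $\det\theta>0$ from \eqref{E:Metric}, gives $\tfrac12\sum_{k,l}\theta^{kl}\partial_{s_i}\theta_{kl}=\partial_{s_i}\sqrt{\det\theta}\,/\sqrt{\det\theta}$. Adding the two contributions yields
\[
  \mathrm{div}_\Gamma X(\mu(s))=\sum_{i=1}^2\partial_{s_i}X^i+\sum_{i=1}^2 X^i\,\frac{\partial_{s_i}\sqrt{\det\theta}}{\sqrt{\det\theta}}
  =\frac{1}{\sqrt{\det\theta}}\sum_{i=1}^2\partial_{s_i}\bigl(X^i\sqrt{\det\theta}\bigr),
\]
which is \eqref{E:DivG_DG}. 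The only mildly delicate point is the index bookkeeping that forces the two Christoffel contributions to collapse; everything else is routine differentiation. (Alternatively one could invoke that the extrinsic operators $\nabla_\Gamma$ and $\mathrm{div}_\Gamma$ coincide with their Riemannian counterparts and quote the classical local divergence formula, but the self-contained computation above avoids importing outside differential-geometry conventions and matches the style of \cite{Miu_NSCTD_01}.)
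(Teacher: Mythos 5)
Your computation is correct: the frame representation $X\circ\mu=\sum_iX^i\partial_{s_i}\mu$, the application of \eqref{E:TGr_DG} componentwise, the Christoffel-symbol identity with the cancellation of the first and third terms, and Jacobi's formula all check out, and the regularity bookkeeping ($\mu\in C^2$, $\theta^{ij}\in C^1$, $\det\theta>0$) is in order. This paper defers the proof to Part I, describing it as a direct calculation of differential geometry, which is exactly the route you take.
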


Note that Lemmas \ref{L:TGr_DG} and \ref{L:DivG_DG} are not trivial since the tangential gradient and the surface divergence on $\Gamma$ are defined under the fixed coordinate system of $\mathbb{R}^3$.

\begin{lemma}[{\cite{Miu_NSCTD_01}*{Lemma B.4}}] \label{L:Lp_Loc}
  Let $U$ be an open set in $\mathbb{R}^2$, $\mu\colon U\to\Gamma$ a $C^2$ local parametrization of $\Gamma$, and $\mathcal{K}$ a compact subset of $U$.
  For $p\in[1,\infty]$ if $\eta\in L^p(\Gamma)$ is supported in $\mu(\mathcal{K})$, then $\eta^\flat:=\eta\circ\mu\in L^p(U)$ and
  \begin{align} \label{E:Lp_Loc}
    c^{-1}\|\eta^\flat\|_{L^p(U)} \leq \|\eta\|_{L^p(\Gamma)} \leq c\|\eta^\flat\|_{L^p(U)}.
  \end{align}
  If in addition $\eta\in W^{1,p}(\Gamma)$, then $\eta^\flat\in W^{1,p}(U)$, \eqref{E:TGr_DG} holds in $L^p(U)^3$, and
  \begin{align} \label{E:W1p_Loc}
    c^{-1}\|\nabla_s\eta^\flat\|_{L^p(U)} \leq \|\nabla_\Gamma\eta\|_{L^p(\Gamma)} \leq c\|\nabla_s\eta^\flat\|_{L^p(U)}.
  \end{align}
  Here $\nabla_s\eta^\flat=(\partial_{s_1}\eta^\flat,\partial_{s_2}\eta^\flat)^T$ is the gradient of $\eta^\flat$ in $s\in\mathbb{R}^2$.
\end{lemma}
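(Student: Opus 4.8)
The plan is to deduce both estimates from the change of variables (area) formula for surface integrals, namely $\int_\Gamma f\,d\mathcal{H}^2=\int_{\mathcal{K}}(f\circ\mu)\sqrt{\det\theta}\,ds$ for $f$ supported in $\mu(\mathcal{K})$ (legitimate since $\mu$, being a local parametrization, is injective on $\mathcal{K}$, and $\theta$ given by \eqref{E:Def_Met} is the first fundamental form), together with the two-sided bound $c^{-1}\le\det\theta\le c$ on $\mathcal{K}$ furnished by \eqref{E:Metric}.

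First I would prove \eqref{E:Lp_Loc}. For $p<\infty$, applying the area formula to $f=|\eta|^p$ (which is supported in $\mu(\mathcal{K})$, so that $\eta^\flat$ is supported in $\mathcal{K}$ by injectivity of $\mu$, whence $\|\eta^\flat\|_{L^p(\mathcal{K})}=\|\eta^\flat\|_{L^p(U)}$) and using $c^{-1}\le\sqrt{\det\theta}\le c$ on $\mathcal{K}$ gives $c^{-1}\|\eta^\flat\|_{L^p(U)}^p\le\|\eta\|_{L^p(\Gamma)}^p\le c\|\eta^\flat\|_{L^p(U)}^p$; taking $p$-th roots yields \eqref{E:Lp_Loc}. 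For $p=\infty$ one instead uses that $\mu$ restricts to a bi-Lipschitz homeomorphism of $\mathcal{K}$ onto $\mu(\mathcal{K})$ (Lipschitz by \eqref{E:Mu_Bound}, with Lipschitz inverse because $\mu$ is a $C^1$ immersion on the compact set $\mathcal{K}$) whose associated area density is bounded above and below, so null sets correspond and the two $L^\infty$ norms coincide.

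For the $W^{1,p}$ part I would argue directly from the definition \eqref{E:Def_WTD} of the weak tangential derivative, rather than via density of $C^2(\Gamma)$ in $W^{1,p}(\Gamma)$, so as to avoid any circularity with Lemma \ref{L:Wmp_Appr}. Fix $i\in\{1,2\}$ and $\varphi\in C_c^\infty(U)$, and set $X(\mu(s)):=\varphi(s)\,\partial_{s_i}\mu(s)/\sqrt{\det\theta(s)}$, extended by zero; this lies in $C^1(\Gamma,T\Gamma)$ because $\mu\in C^2$, $\theta\in C^1$ with $\inf_{\mathcal{K}}\det\theta>0$, and $\varphi$ has compact support in $U$ (the same kind of construction appears in the proof of Lemma \ref{L:Necas_Surf}). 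By Lemma \ref{L:DivG_DG} one computes $\mathrm{div}_\Gamma X\circ\mu=\partial_{s_i}\varphi/\sqrt{\det\theta}$. Taking $\xi=X_i$ in \eqref{E:Def_WTD}, summing over $i$, and using $X\cdot n=0$ gives $\int_\Gamma\nabla_\Gamma\eta\cdot X\,d\mathcal{H}^2=-\int_\Gamma\eta\,\mathrm{div}_\Gamma X\,d\mathcal{H}^2$; applying the area formula to both sides (all integrands being supported in a fixed compact subset of $\mu(U)$) transforms this into $\int_U\eta^\flat\,\partial_{s_i}\varphi\,ds=-\int_U\varphi\,\bigl(\partial_{s_i}\mu\cdot(\nabla_\Gamma\eta\circ\mu)\bigr)\,ds$. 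Hence $\partial_{s_i}\eta^\flat=\partial_{s_i}\mu\cdot(\nabla_\Gamma\eta\circ\mu)$ weakly; since $\nabla_\Gamma\eta\in L^p(\Gamma)$ is supported in $\mu(\mathcal{K})$ and $|\partial_{s_i}\mu|\le c$ on $\mathcal{K}$, the right-hand side lies in $L^p(U)$ by \eqref{E:Lp_Loc}, so $\eta^\flat\in W^{1,p}(U)$. Inverting this relation via $\sum_l\theta^{il}\theta_{lj}=\delta_{ij}$ and $\partial_{s_l}\mu\cdot\partial_{s_m}\mu=\theta_{lm}$, and using that $\nabla_\Gamma\eta$ is tangential, recovers \eqref{E:TGr_DG} as an a.e.\ (hence $L^p(U)^3$) identity. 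Finally the pointwise bounds $|\partial_{s_i}\eta^\flat|\le c\,|\nabla_\Gamma\eta\circ\mu|$ and $|\nabla_\Gamma\eta\circ\mu|\le c\,|\nabla_s\eta^\flat|$ on $\mathcal{K}$ (from the inversion formula and from \eqref{E:TGr_DG}, using \eqref{E:Metric} and \eqref{E:Mu_Bound}), combined once more with \eqref{E:Lp_Loc}, give \eqref{E:W1p_Loc}. The computation is valid verbatim for $p=\infty$, since the test-function identity needs only local integrability of $\eta$ and $\nabla_\Gamma\eta$.

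There is no serious obstacle: the proof is essentially bookkeeping around the area formula. The only points needing care are (i) using injectivity of $\mu$ on $\mathcal{K}$ so that the change of variables and the support statements are legitimate, (ii) for $p=\infty$, the bi-Lipschitz property of $\mu$ to transfer null sets, and (iii) checking that the auxiliary field $X$ built from $\varphi$ is genuinely $C^1$ after extension by zero, which is precisely where $\mu\in C^2$ and $\inf_{\mathcal{K}}\det\theta>0$ are used.
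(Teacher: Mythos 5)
Your proof is correct, and since this paper does not actually prove the lemma (it is quoted from Part I, with the remark just above Lemma \ref{L:Metric} that the ``precise proofs'' are deferred to \cite{Miu_NSCTD_01}), there is nothing in the present text to compare against; your argument is exactly the kind of direct chart computation the authors allude to. The $L^p$ equivalence via the area formula and \eqref{E:Metric}, the construction of the test field $X=\varphi\,\partial_{s_i}\mu/\sqrt{\det\theta}$ with $\mathrm{div}_\Gamma X\circ\mu=\partial_{s_i}\varphi/\sqrt{\det\theta}$ (the same device the paper itself uses in the proof of Lemma \ref{L:Necas_Surf}), and the inversion through $\theta^{-1}$ are all sound, and working straight from \eqref{E:Def_WTD} rather than through Lemma \ref{L:Wmp_Appr} is the right call to avoid circularity. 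Two small points deserve attention. First, in the step ``taking $\xi=X_i$ \dots summing over $i$'' the letter $i$ is doing double duty: it is already fixed as the chart index in $\partial_{s_i}\mu$, while the summation must run over the three ambient components $X_1,X_2,X_3$ of $X$ and the corresponding weak derivatives $\underline{D}_1,\underline{D}_2,\underline{D}_3$; use a different letter there. Second, the inversion step uses that the weak tangential gradient of $\eta\in W^{1,p}(\Gamma)$ is tangential a.e., i.e. $n\cdot\nabla_\Gamma\eta=0$; this is true but is not part of the definition \eqref{E:Def_WTD}, so add the one-line verification: test $\underline{D}_j\eta$ against $n_j\xi$ for $\xi\in C^1(\Gamma)$, sum over $j$, and use $\mathrm{div}_\Gamma(\xi n)=-\xi H$ together with $|n|^2=1$ to see that $\int_\Gamma(n\cdot\nabla_\Gamma\eta)\xi\,d\mathcal{H}^2=0$ for all such $\xi$. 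With these cosmetic repairs the proof is complete.
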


Next we present the proofs of Lemmas \ref{L:WTGr_Van}--\ref{L:Poin_Surf_Lp}.

\begin{proof}[Proof of Lemma \ref{L:WTGr_Van}]
  Let $p\in[1,\infty]$ and $\eta\in W^{1,p}(\Gamma)$.
  Suppose first that $\eta$ is constant on $\Gamma$.
  Then for all $v\in C^1(\Gamma)^3$ we see by \eqref{E:IbP_DivG} and \eqref{E:Def_WTD} that
  \begin{align*}
    \int_\Gamma\nabla_\Gamma\eta\cdot v\,d\mathcal{H}^2 = -\eta\int_\Gamma\{\mathrm{div}_\Gamma v+(v\cdot n)H\}\,d\mathcal{H}^2 = 0.
  \end{align*}
  Hence $\nabla_\Gamma\eta=0$ in $L^p(\Gamma)^3$.

  Next we prove that $\eta$ is constant on $\Gamma$ if $\nabla_\Gamma\eta=0$ in $L^p(\Gamma)^3$.
  Since $\Gamma$ is closed and connected, it is sufficient to show that $\eta$ is constant on some open neighborhood in $\Gamma$ of each $y\in\Gamma$.
  Let $U$ be an open set in $\mathbb{R}^2$ and $\mu\colon U\to\Gamma$ a local parametrization of $\Gamma$ such that $y\in\mu(U)$.
  Then we can take a bounded and connected open subset $U_1$ of $U$ and a function $\xi^\flat\in C_c^1(U)$ such that
  \begin{align*}
    y\in\mu(U_1), \quad U_1 \subset \mathcal{K} := \mathrm{supp}\,\xi^\flat \subset U, \quad \xi^\flat = 1 \quad\text{on}\quad U_1.
  \end{align*}
  Note that $\mathcal{K}$ is compact in $U$.
  We set $\xi(\mu(s)):=\xi^\flat(s)$ for $s\in U$ and extend $\xi$ to $\Gamma$ by zero outside $\mu(U)$.
  Then
  \begin{align*}
    \xi\in C^1(\Gamma), \quad \mathrm{supp}\,\xi = \mu(\mathcal{K}) \subset \mu(U), \quad \xi = 1 \quad\text{on}\quad \mu(U_1),
  \end{align*}
  and $\xi\eta\in W^{1,p}(\Gamma)$ and it is supported in $\mu(\mathcal{K})$.
  Hence Lemma \ref{L:Lp_Loc} implies
  \begin{align*}
    (\xi\eta)^\flat := (\xi\eta)\circ\mu\in W^{1,p}(U)
  \end{align*}
  and \eqref{E:TGr_DG} holds in $L^p(U)^3$ with $\eta$ and $\eta^\flat$ replaced by $\xi\eta$ and $(\xi\eta)^\flat$.
  Moreover, since
  \begin{gather*}
    \nabla_\Gamma(\xi\eta) = \eta\nabla_\Gamma\xi+\xi\nabla_\Gamma\eta = 0 \quad\text{on}\quad \mu(U_1), \\
    \text{i.e.}\quad \{\nabla_\Gamma(\xi\eta)\}\circ\mu = 0 \quad\text{on}\quad U_1
  \end{gather*}
  by $\xi=1$ on $\mu(U_1)$ and $\nabla_\Gamma\eta=0$ on $\Gamma$, it follows from \eqref{E:TGr_DG} that
  \begin{align*}
    0 = [\{\nabla_\Gamma(\xi\eta)\}\circ\mu]\cdot\partial_{s_k}\mu = \sum_{i,j=1}^2\theta^{ij}\theta_{jk}\partial_{s_i}(\xi\eta)^\flat = \partial_{s_k}(\xi\eta)^\flat \quad\text{on}\quad U_1
  \end{align*}
  for $k=1,2$.
  Hence $(\xi\eta)^\flat$ is constant on $U_1$, i.e. $\xi\eta$ is constant on $\mu(U_1)$.
  By this fact and $\xi=1$ on $\mu(U_1)$ we conclude that $\eta$ is constant on the open neighborhood $\mu(U_1)$ in $\Gamma$ of $y\in\Gamma$.
\end{proof}

\begin{proof}[Proof of Lemma \ref{L:RK_Surf}]
  Let $p\in[1,\infty]$ and $\{\eta_k\}_{k=1}^\infty$ be a bounded sequence in $W^{1,p}(\Gamma)$.
  Since $\Gamma$ is closed, we can take a finite number of bounded open sets in $\mathbb{R}^2$ and local parametrizations of $\Gamma$
  \begin{align*}
    U_j \subset \mathbb{R}^2, \quad \mu_j\colon U_j\to\Gamma, \quad j=1,\dots,j_0
  \end{align*}
  such that $\{\mu_j(U_j)\}_{j=1}^{j_0}$ is an open covering of $\Gamma$.
  Let $\{\xi_j\}_{j=1}^{j_0}$ be a partition of unity on $\Gamma$ subordinate to $\{\mu_j(U_j)\}_{j=1}^{j_0}$.
  Also, for $j=1,\dots,j_0$ let $\mathcal{K}_j$ be a compact subset of $U_j$ such that $\xi_j$ is supported in $\mu_j(\mathcal{K}_j)$.
  For $j=1,\dots,j_0$ and $k\in\mathbb{N}$ we set
  \begin{align*}
    \eta_{j,k}(y) := \xi_j(y)\eta_k(y), \quad y\in \Gamma, \quad \eta_{j,k}^\flat(s) := \eta_{j,k}(\mu_j(s)), \quad s\in U_j.
  \end{align*}
  Then $\{\eta_{j,k}\}_{k=1}^\infty$ is bounded in $W^{1,p}(\Gamma)$ for each $j=1,\dots,j_0$.
  Moreover, since
  \begin{align*}
    \mathrm{supp}\,\eta_{j,k} \subset \mu_j(\mathcal{K}_j), \quad\text{i.e.}\quad \mathrm{supp}\,\eta_{j,k}^\flat \subset \mathcal{K}_j \subset U_j, \quad k\in\mathbb{N},
  \end{align*}
  we see by Lemma \ref{L:Lp_Loc} that $\{\eta_{j,k}^\flat\}_{k=1}^\infty$ is bounded in $W_0^{1,p}(U_j)$ for each $j=1,\dots,j_0$.
  Thus, using the compact embeddings $W_0^{1,p}(U_j)\hookrightarrow L^p(U_j)$, $j=1,\dots,j_0$ repeatedly, we can take a strictly increasing sequence $\{k(l)\}_{l=1}^\infty$ in $\mathbb{N}$ and functions $\eta_j^\flat\in L^p(U_j)$, $j=1,\dots,j_0$ such that
  \begin{align} \label{Pf_RK:Conv}
    \lim_{l\to\infty}\|\eta_j^\flat-\eta_{j,k(l)}^\flat\|_{L^p(U_j)} = 0 \quad\text{for all}\quad j=1,\dots,j_0.
  \end{align}
  Moreover, since $\eta_{j,k}^\flat$ is supported in $\mathcal{K}_j$ for all $k\in\mathbb{N}$,
  \begin{align*}
    \|\eta_j^\flat\|_{L^p(U_j\setminus\mathcal{K}_j)} = \|\eta_j^\flat-\eta_{j,k(l)}^\flat\|_{L^p(U_j\setminus\mathcal{K}_j)} \to 0 \quad\text{as}\quad l\to\infty
  \end{align*}
  and thus $\eta_j^\flat=0$ on $U_j\setminus\mathcal{K}_j$, i.e. $\eta_j^\flat$ is supported in $\mathcal{K}_j$ for each $j=1,\dots,j_0$.
  Now we define a function $\eta_j$ on $\mu_j(U_j)\subset\Gamma$ by
  \begin{align*}
    \eta_j(\mu_j(s)) := \eta_j^\flat(s), \quad s\in U_j
  \end{align*}
  and extend it to $\Gamma$ by zero outside $\mu_j(U_j)$ for $j=1,\dots,j_0$.
  Then since
  \begin{align*}
    \|\eta_j\|_{L^p(\Gamma)}^p &= \int_{U_j}|\eta_j^\flat|^p\sqrt{\det\theta_j}\,ds = \int_{\mathcal{K}_j}|\eta_j^\flat|^p\sqrt{\det\theta_j}\,ds, \quad p\neq\infty, \\
    \|\eta_j\|_{L^\infty(\Gamma)} &= \|\eta_j^\flat\|_{L^\infty(U_j)},
  \end{align*}
  where $\theta_j:=\nabla_s\mu_j(\nabla_s\mu_j)^T$ on $U_j$, we have $\eta_j\in L^p(\Gamma)$ by \eqref{E:Metric} and $\eta_j^\flat\in L^p(U_j)$ for all $j=1,\dots,j_0$.
  Hence $\eta:=\sum_{j=1}^{j_0}\eta_j\in L^p(\Gamma)$ and, since
  \begin{align*}
    \eta-\eta_{k(l)} = \sum_{j=1}^{j_0}(\eta_j-\eta_{j,k(l)}) \quad\text{on}\quad \Gamma, \quad \mathrm{supp}\,\eta_j, \, \mathrm{supp}\,\eta_{j,k(l)} \subset \mu_j(\mathcal{K}_j),
  \end{align*}
  we observe by \eqref{E:Lp_Loc} and \eqref{Pf_RK:Conv} that
  \begin{align*}
    \|\eta-\eta_{k(l)}\|_{L^p(\Gamma)} &\leq \sum_{j=1}^{j_0}\|\eta_j-\eta_{j,k(l)}\|_{L^p(\Gamma)} \leq c\sum_{j=1}^{j_0}\|\eta_j^\flat-\eta_{j,k(l)}^\flat\|_{L^p(U_j)} \to 0
  \end{align*}
  as $l\to\infty$.
  Therefore, $\{\eta_k\}_{k=1}^\infty$ has a subsequence $\{\eta_{k(l)}\}_{l=1}^\infty$ that converges strongly in $L^p(\Gamma)$.
\end{proof}

\begin{proof}[Proof of Lemma \ref{L:Poin_Surf_Lp}]
  Let $p\in[1,\infty]$.
  We prove \eqref{E:Poin_Surf_Lp} by contradiction.
  Assume to the contrary that there exists a sequence $\{\eta_k\}_{k=1}^\infty$ in $W^{1,p}(\Gamma)$ such that
  \begin{align*}
    \|\eta_k\|_{L^p(\Gamma)} > k\|\nabla_\Gamma\eta_k\|_{L^p(\Gamma)}, \quad \int_\Gamma\eta_k\,d\mathcal{H}^2 = 0, \quad k\in\mathbb{N}.
  \end{align*}
  Replacing $\eta_k$ with $\eta_k/\|\eta_k\|_{L^p(\Gamma)}$ we may assume $\|\eta_k\|_{L^p(\Gamma)}=1$ and
  \begin{align} \label{Pf_PSp:Contra_Gr}
    \|\nabla_\Gamma\eta_k\|_{L^p(\Gamma)} < \frac{1}{k}, \quad \int_\Gamma\eta_k\,d\mathcal{H}^2 = 0, \quad k\in\mathbb{N},
  \end{align}
  and thus $\{\eta_k\}_{k=1}^\infty$ is bounded in $W^{1,p}(\Gamma)$.
  Then $\{\eta_k\}_{k=1}^\infty$ converges (up to a subsequence) to some $\eta\in L^p(\Gamma)$ strongly in $L^p(\Gamma)$ by Lemma \ref{L:RK_Surf} and thus
  \begin{align} \label{Pf_PSp:Conv_Lp}
    \|\eta\|_{L^p(\Gamma)} = \lim_{k\to\infty}\|\eta_k\|_{L^p(\Gamma)} = 1.
  \end{align}
  Let $p'\in[1,\infty]$ satisfy $1/p+1/p'=1$.
  Then, for $\xi\in C^1(\Gamma)$ and $i=1,2,3$,
  \begin{multline*}
    \left|\int_\Gamma\eta(\underline{D}_i\xi+\xi Hn_i)\,d\mathcal{H}^2-\int_\Gamma\eta_k(\underline{D}_i\xi+\xi Hn_i)\,d\mathcal{H}^2\right| \\
    \leq c\|\eta-\eta_k\|_{L^p(\Gamma)}\|\xi\|_{W^{1,p'}(\Gamma)} \to 0 \quad\text{as}\quad k\to\infty
  \end{multline*}
  by the strong convergence of $\{\eta_k\}_{k=1}^\infty$ to $\eta$ in $L^p(\Gamma)$ and
  \begin{align*}
    \left|\int_\Gamma\eta_k(\underline{D}_i\xi+\xi Hn_i)\,d\mathcal{H}^2\right| &= \left|\int_\Gamma(\underline{D}_i\eta_k)\xi\,d\mathcal{H}^2\right| \\
    &\leq \|\underline{D}_i\eta_k\|_{L^p(\Gamma)}\|\xi\|_{L^{p'}(\Gamma)} \\
    &\leq \|\nabla_\Gamma\eta_k\|_{L^p(\Gamma)}\|\xi\|_{L^{p'}(\Gamma)} \to 0 \quad\text{as}\quad k\to\infty
  \end{align*}
  by \eqref{E:Def_WTD} and the first inequality of \eqref{Pf_PSp:Contra_Gr}.
  Hence
  \begin{align*}
    \int_\Gamma\eta(\underline{D}_i\xi+\xi Hn_i)\,d\mathcal{H}^2 = 0
  \end{align*}
  for all $\xi\in C^1(\Gamma)$ and $i=1,2,3$.
  By this equality and the definition of the weak tangential derivative in $L^p(\Gamma)$ (see \eqref{E:Def_WTD}) we have $\eta\in W^{1,p}(\Gamma)$ and
  \begin{align*}
    \underline{D}_i\eta = 0 \quad\text{in}\quad L^p(\Gamma), \, i=1,2,3.
  \end{align*}
  Thus $\eta$ is constant on $\Gamma$ by Lemma \ref{L:WTGr_Van}.
  Moreover, since
  \begin{align*}
    \int_\Gamma\eta\,d\mathcal{H}^2 = \lim_{k\to\infty}\int_\Gamma\eta_k\,d\mathcal{H}^2 = 0
  \end{align*}
  by the strong convergence of $\{\eta_k\}_{k=1}^\infty$ to $\eta$ in $L^p(\Gamma)$ and the second equality of \eqref{Pf_PSp:Contra_Gr} (note that $\Gamma$ is compact), we obtain $\eta=0$ on $\Gamma$.
  This contradicts with \eqref{Pf_PSp:Conv_Lp} and thus \eqref{E:Poin_Surf_Lp} is valid.
\end{proof}

\section{Formulas related to the viscous term in the surface Navier--Stokes equations} \label{S:Ap_Visc}
We present formulas for differential operators on a closed surface in $\mathbb{R}^3$ related to the viscous term in the surface Navier--Stokes equations.
The formulas given in this appendix, except for \eqref{E:Vec_Sph} and the second equality of \eqref{E:Ric_Cur}, are also valid for a hypersurface in a higher dimensional Euclidean space with easy modifications of the proofs.
Some results in this appendix were also shown in \cite{DudMitMit06}, where the authors expressed the Lam\'{e} operator and related differential operators on a hypersurface globally in a fixed coordinate system of the ambient Euclidean space.
Here we try to give more simple proofs than those in \cite{DudMitMit06}.

Throughout this appendix we assume that $\Gamma$ is a closed, connected, and oriented surface in $\mathbb{R}^3$ of class $C^3$ and employ the notations in Section \ref{SS:Pre_Surf}.
In what follows, we sometimes abuse notations from differential geometry and use expressions which may be not standard in differential geometry in order to carry out calculations on $\Gamma$ in the fixed coordinate system of $\mathbb{R}^3$.

\subsection{Notations from differential geometry} \label{SS:ApV_Not}
We introduce some notations from differential geometry.
For details, we refer to \cites{Lee13,Lee18}.

Let $T_y\Gamma$ and $T_y^\ast\Gamma$ be the tangent and cotangent spaces of $\Gamma$ at $y\in\Gamma$ and
\begin{align*}
  T\Gamma = \coprod_{y\in\Gamma}T_y\Gamma, \quad T^\ast\Gamma = \coprod_{y\in\Gamma}T_y^\ast\Gamma
\end{align*}
the tangent and cotangent bundles of $\Gamma$.
Here we consider $T_y\Gamma$ as a two-dimensional subspace of $\mathbb{R}^3$ for each $y\in\Gamma$.
Also, let $T_y^\ast\Gamma\otimes T_y^\ast\Gamma$ be the tensor product of $T_y^\ast\Gamma$ with itself, which is identified with the space of all bilinear forms on $T_y\Gamma$, and
\begin{align*}
  T^\ast\Gamma\otimes T^\ast\Gamma = \coprod_{y\in\Gamma}T_y^\ast\Gamma\otimes T_y^\ast\Gamma
\end{align*}
the bundle of $(0,2)$-tensors on $\Gamma$.
As in Section \ref{SS:Pre_Surf}, for $m=0,1,2$ let
\begin{align*}
  C^m(\Gamma,T\Gamma) = \{X\in C^m(\Gamma)^3 \mid \text{$X(y)\in T_y\Gamma$ for all $y\in\Gamma$}\}
\end{align*}
be the space of all $C^m$ tangential vector fields on $\Gamma$.
We also say that
\begin{align*}
  \omega\in C^m(\Gamma,T^\ast\Gamma), \quad S\in C^m(\Gamma,T^\ast\Gamma\otimes T^\ast\Gamma)
\end{align*}
if $\omega$ and $S$ are mappings $\omega\colon\Gamma\to T^\ast\Gamma$ and $S\colon\Gamma\to T^\ast\Gamma\otimes T^\ast\Gamma$ satisfying
\begin{gather*}
  \omega(y)\in T_y^\ast\Gamma, \quad S(y)\in T_y^\ast\Gamma\otimes T_y^\ast\Gamma \quad\text{for all}\quad y\in\Gamma, \\
  \omega(\cdot)(X(\cdot)),S(\cdot)(X(\cdot),Y(\cdot))\in C^m(\Gamma) \quad\text{for all}\quad X,Y\in C^m(\Gamma,T\Gamma).
\end{gather*}
In other words, $\omega$ and $S$ are $C^m$ sections of $T^\ast\Gamma$ and $T^\ast\Gamma\otimes T^\ast\Gamma$.
We write $\omega_y$ and $S_y$ instead of $\omega(y)$ and $S(y)$ for the values of $\omega$ and $S$ at $y\in\Gamma$ in the sequel.

Let $\theta$ be the Riemannian metric of $\Gamma$ induced by the Euclidean metric of $\mathbb{R}^3$, i.e.
\begin{align*}
  \theta_y(X_y,Y_y) := X_y\cdot Y_y, \quad X_y,Y_y \in T_y\Gamma \subset \mathbb{R}^3
\end{align*}
for $y\in\Gamma$.
We use the same notation $\theta$ for the metrics on $T^\ast\Gamma$ and $T^\ast\Gamma\otimes T^\ast\Gamma$, i.e. we define inner products on $T_y^\ast\Gamma$ and $T_y^\ast\Gamma\otimes T_y^\ast\Gamma$ for $y\in\Gamma$ by
\begin{align} \label{E:Def_InTe}
  \begin{alignedat}{3}
    \theta_y(\omega_y,\zeta_y) &:= \sum_{i=1}^2\omega_y(\tau_i)\zeta_y(\tau_i), &\quad &\omega_y,\zeta_y\in T_y^\ast\Gamma, \\
    \theta_y(S_y,T_y) &:= \sum_{i,j=1}^2S_y(\tau_i,\tau_j)T_y(\tau_i,\tau_j), &\quad &S_y,T_y\in T_y^\ast\Gamma\otimes T_y^\ast\Gamma,
  \end{alignedat}
\end{align}
where $\{\tau_1,\tau_2\}$ is an orthonormal basis of $T_y\Gamma$.
Note that the above definitions are independent of a choice of $\{\tau_1,\tau_2\}$.

Let $O$ be a relatively open subset of $\Gamma$.
If $O$ is sufficiently small, then we can take $C^2$ vector fields $\tau_1$ and $\tau_2$ on $O$ such that $\{\tau_1(y),\tau_2(y)\}$ is an orthonormal basis of $T_y\Gamma$ for all $y\in O$ by the $C^3$-regularity of $\Gamma$.
We call the pair $\{\tau_1,\tau_2\}$ of such vector fields a local orthonormal frame for $T\Gamma$ on $O$.

For $y\in\Gamma$ and $X_y\in T_y\Gamma$ we define $\Theta_y(X_y)\in T_y^\ast\Gamma$ by
\begin{align*}
  \Theta_y(X_y)(Y_y) := \theta_y(X_y,Y_y) = X_y\cdot Y_y, \quad Y_y\in T_y\Gamma.
\end{align*}
Then $\Theta_y\colon T_y\Gamma\to T_y^\ast\Gamma$ is a linear operator and its inverse is given by
\begin{align*}
  \Theta_y^{-1}(\omega_y) := \sum_{i=1}^2\omega_y(\tau_i)\tau_i, \quad \omega_y\in T_y^\ast\Gamma,
\end{align*}
where $\{\tau_1,\tau_2\}$ is an orthonormal basis of $T_y\Gamma$.
We easily observe that
\begin{align} \label{E:The_Pr}
  \begin{alignedat}{2}
    \theta_y(X_y,Y_y) &= \theta_y(\Theta_y(X_y),\Theta_y(Y_y)), &\quad X_y,Y_y&\in T_y\Gamma, \\
    \theta_y(\omega_y,\zeta_y) &= \theta_y(\Theta_y^{-1}(\omega_y),\Theta_y^{-1}(\zeta_y)), &\quad \omega_y,\zeta_y&\in T_y^\ast\Gamma.
  \end{alignedat}
\end{align}
Also, for $X\in C(\Gamma,T\Gamma)$ and $\omega\in C(\Gamma,T^\ast\Gamma)$ we define
\begin{align*}
  [\Theta(X)]_y := \Theta_y(X(y)) \in T_y^\ast\Gamma, \quad [\Theta^{-1}(\omega)](y) := \Theta_y^{-1}(\omega_y) \in T_y\Gamma, \quad y\in\Gamma.
\end{align*}
Then for $m=0,1,2$ we can consider $\Theta$ and $\Theta^{-1}$ as operators
\begin{align*}
  \Theta\colon C^m(\Gamma,T\Gamma)\to C^m(\Gamma,T^\ast\Gamma), \quad \Theta^{-1}\colon C^m(\Gamma,T^\ast\Gamma)\to C^m(\Gamma,T\Gamma)
\end{align*}
by taking a local orthonormal frame for $T\Gamma$ on a relatively open subset of $\Gamma$.

Next we introduce notations related to differential forms.
Let
\begin{align*}
  \Lambda^2T_y^\ast\Gamma := \{S_y\in T_y^\ast\Gamma\otimes T_y^\ast\Gamma \mid \text{$S_y(X_y,Y_y)=-S_y(Y_y,X_y)$ for all $X_y,Y_y\in T_y\Gamma$}\}
\end{align*}
be the space of all skew-symmetric bilinear forms on $T_y\Gamma$ for $y\in\Gamma$ and
\begin{align*}
  \Lambda^2T^\ast\Gamma = \coprod_{y\in\Gamma}\Lambda^2T_y^\ast\Gamma \, (\subset T^\ast\Gamma\otimes T^\ast\Gamma)
\end{align*}
the bundle of skew-symmetric $(0,2)$-tensors on $\Gamma$.
For $m=0,1,2$ we define
\begin{gather*}
  C^m(\Gamma,\Lambda^0T^\ast\Gamma) := C^m(\Gamma), \quad C^m(\Gamma,\Lambda^1T^\ast\Gamma) := C^m(\Gamma,T^\ast\Gamma), \\
  C^m(\Gamma,\Lambda^2T^\ast\Gamma) := \{S\in C^m(\Gamma,T^\ast\Gamma\otimes T^\ast\Gamma) \mid \text{$S_y\in \Lambda^2T_y^\ast\Gamma$ for all $y\in\Gamma$}\}
\end{gather*}
and for $k=0,1,2$ we call an element of $C^m(\Gamma,\Lambda^kT^\ast\Gamma)$ a $k$-form (of class $C^m$).
Let $y\in\Gamma$ and $\{\tau_1,\tau_2\}$ be an orthonormal basis of $T_y\Gamma$.
Then since
\begin{align*}
  S_y(\tau_1,\tau_1) = S_y(\tau_2,\tau_2) = 0, \quad S_y(\tau_2,\tau_1) = -S_y(\tau_1,\tau_2), \quad S_y\in\Lambda^2T_y^\ast\Gamma,
\end{align*}
the restriction on $\Lambda^2T_y^\ast\Gamma$ of the inner product of $T_y^\ast\Gamma\otimes T_y^\ast\Gamma$ is
\begin{align*}
  \theta_y(S_y,T_y) = 2S_y(\tau_1,\tau_2)T_y(\tau_1,\tau_2), \quad S_y,T_y\in \Lambda^2T_y^\ast\Gamma.
\end{align*}
Based on this equality, we introduce a new metric $\hat{\theta}$ on $\Lambda^2T^\ast\Gamma$ by
\begin{align} \label{E:Def_InWe}
  \hat{\theta}_y(S_y,T_y) := S_y(\tau_1,\tau_2)T_y(\tau_1,\tau_2), \quad S_y,T_y\in \Lambda^2T_y^\ast\Gamma
\end{align}
for each $y\in\Gamma$.
Note that, if we define the wedge product $\omega_y\wedge\zeta_y\in\Lambda^2T_y^\ast\Gamma$ of $\omega_y,\zeta_y\in T_y^\ast\Gamma$ by
\begin{align*}
  (\omega_y\wedge\zeta_y)(X_y,Y_y) := \omega_y(X_y)\zeta_y(Y_y)-\zeta_y(X_y)\omega_y(Y_y), \quad X_y,Y_y\in T_y\Gamma,
\end{align*}
then we observe by direct calculations that
\begin{align*}
  \hat{\theta}_y(\omega_y^1\wedge\omega_y^2,\zeta_y^1\wedge\zeta_y^2) = \det\left[\bigl(\theta(\omega_y^i,\zeta_y^j)\bigr)_{i,j}\right], \quad \omega_y^1,\omega_y^2,\zeta_y^1,\zeta_y^2\in T_y^\ast\Gamma.
\end{align*}
Hence $\hat{\theta}$ agrees with the standard metric on $\Lambda^2T^\ast\Gamma$ used in differential geometry.
For $k=0,1,2$ we define an inner product of $k$-forms by
\begin{align} \label{E:Def_InDF}
  \begin{alignedat}{2}
    \langle \eta,\xi\rangle_0 &:= (\eta,\xi)_{L^2(\Gamma)}, &\quad \eta,\xi&\in C(\Gamma,\Lambda^0T^\ast\Gamma) = C(\Gamma), \\
    \langle \omega,\zeta\rangle_1 &:= \int_\Gamma\theta(\omega,\zeta)\,d\mathcal{H}^2, &\quad \omega,\zeta&\in C(\Gamma,\Lambda^1T^\ast\Gamma) = C(\Gamma,T^\ast\Gamma), \\
    \langle S,T\rangle_2 &:= \int_\Gamma\hat{\theta}(S,T)\,d\mathcal{H}^2, &\quad S,T&\in C(\Gamma,\Lambda^2T^\ast\Gamma).
  \end{alignedat}
\end{align}
Let $d_\Gamma$ be the exterior derivative on $\Gamma$.
We consider $d_\Gamma$ as a mapping
\begin{align*}
  d_\Gamma\colon C^1(\Gamma,\Lambda^kT^\ast\Gamma)\to C(\Gamma,\Lambda^{k+1}T^\ast\Gamma), \quad k=0,1
\end{align*}
locally defined as follows: let $U$ be an open set in $\mathbb{R}^2$ and $\mu\colon U\to\Gamma$ a $C^3$ local parametrization of $\Gamma$.
For $s=(s^1,s^2)\in U$ let $\{(d_\Gamma s^1)_{\mu(s)},(d_\Gamma s^2)_{\mu(s)}\}$ be the dual basis for the basis $\{\partial_{s^1}\mu(s),\partial_{s^2}\mu(s)\}$ of $T_{\mu(s)}\Gamma$, i.e.
\begin{align} \label{E:Dual_B}
  (d_\Gamma s^i)_{\mu(s)}\in T_{\mu(s)}^\ast\Gamma, \quad (d_\Gamma s^i)_{\mu(s)}(\partial_{s^j}\mu(s)) = \delta_{ij}, \quad i,j=1,2,
\end{align}
where $\delta_{ij}$ is the Kronecker delta.
Then, for $\eta\in C^1(\Gamma,\Lambda^0T^\ast\Gamma)=C^1(\Gamma)$,
\begin{align} \label{E:Def_ExtZ}
  (d_\Gamma\eta)_{\mu(s)} := \sum_{i=1,2}\frac{\partial(\eta\circ\mu)}{\partial s^i}(s)(d_\Gamma s^i)_{\mu(s)}, \quad s\in U.
\end{align}
Also, when $\omega\in C^1(\Gamma,\Lambda^1T^\ast\Gamma)=C^1(\Gamma,T^\ast\Gamma)$ is locally of the form
\begin{align*}
  \omega_{\mu(s)} = \sum_{i=1,2}\omega_i(s)(d_\Gamma s^i)_{\mu(s)}, \quad s\in U,
\end{align*}
where $\omega_i(s):=\omega_{\mu(s)}(\partial_{s^i}\mu(s))$ for $i=1,2$, then
\begin{align} \label{E:Def_ExtO}
  (d_\Gamma \omega)_{\mu(s)} := \left(\frac{\partial\omega_2}{\partial s^1}(s)-\frac{\partial\omega_1}{\partial s^2}(s)\right)(d_\Gamma s^1)_{\mu(s)}\wedge(d_\Gamma s^2)_{\mu(s)}, \quad s\in U.
\end{align}
We write $d_\Gamma^\ast\colon C^1(\Gamma,\Lambda^{k+1}T^\ast\Gamma)\to C(\Gamma,\Lambda^kT^\ast\Gamma)$ for the formal adjoint of $d_\Gamma$:
\begin{align} \label{E:Def_FoEx}
  \langle d_\Gamma^\ast\omega,\eta\rangle_k := \langle\omega,d_\Gamma\eta\rangle_{k+1}, \quad \omega\in C^1(\Gamma,\Lambda^{k+1}T^\ast\Gamma), \, \eta\in C^1(\Gamma,\Lambda^kT^\ast\Gamma)
\end{align}
for $k=0,1$.

\subsection{Curvatures} \label{SS:ApV_Cuv}
For $X\in C^1(\Gamma,T\Gamma)$ and $Y\in C(\Gamma,T\Gamma)$ let
\begin{align*}
  \overline{\nabla}_YX = P(Y\cdot\nabla_\Gamma)X\in C(\Gamma,T\Gamma)
\end{align*}
be the covariant derivative of $X$ along $Y$.
Then since
\begin{align*}
  (Y\cdot\nabla_\Gamma)X\cdot n = Y\cdot\nabla_\Gamma(X\cdot n)-X\cdot(Y\cdot\nabla_\Gamma)n = X\cdot W^TY = WX\cdot Y
\end{align*}
on $\Gamma$ by $X\cdot n=0$ and $W=-\nabla_\Gamma n$ on $\Gamma$, we have
\begin{align} \label{E:Gauss}
  (Y\cdot\nabla_\Gamma)X = \overline{\nabla}_YX+(WX\cdot Y)n \quad\text{on}\quad \Gamma,
\end{align}
which is called the Gauss formula (see e.g. \cites{Ch15,Lee18}).
Moreover, the mapping
\begin{align*}
  \overline{\nabla}\colon C^1(\Gamma,T\Gamma)\times C(\Gamma,T\Gamma)\to C(\Gamma,T\Gamma), \quad (X,Y) \mapsto \overline{\nabla}_YX
\end{align*}
is the Riemannian (or Levi-Civita) connection on $\Gamma$, i.e. the following formulas are valid on $\Gamma$ (see \cite{Miu_NSCTD_01}*{Lemma D.2} for the proofs):
\begin{itemize}
  \item For $X\in C^1(\Gamma,T\Gamma)$, $Y,Z\in C(\Gamma,T\Gamma)$, and $\eta,\xi\in C(\Gamma)$,
  \begin{align*}
    \overline{\nabla}_{\eta Y+\xi Z}X = \eta\overline{\nabla}_YX+\xi\overline{\nabla}_ZX.
  \end{align*}
  \item For $X\in C^1(\Gamma,T\Gamma)$, $Y\in C(\Gamma,T\Gamma)$, and $\eta\in C^1(\Gamma)$,
  \begin{align*}
    \overline{\nabla}_Y(\eta X) = (Y\cdot\nabla_\Gamma\eta)X+\eta\overline{\nabla}_YX.
  \end{align*}
  \item For $X,Y\in C^1(\Gamma,T\Gamma)$ and $Z\in C(\Gamma,T\Gamma)$,
  \begin{align*}
    Z\cdot\nabla_\Gamma(X\cdot Y) = \overline{\nabla}_ZX\cdot Y+X\cdot\overline{\nabla}_ZY.
  \end{align*}
  \item For $X,Y\in C^1(\Gamma,T\Gamma)$ and $\eta\in C^2(\Gamma)$,
  \begin{align*}
    X\cdot\nabla_\Gamma(Y\cdot\nabla_\Gamma\eta)-Y\cdot\nabla_\Gamma(X\cdot\nabla_\Gamma\eta) = \Bigl(\overline{\nabla}_XY-\overline{\nabla}_YX\Bigr)\cdot\nabla_\Gamma\eta.
  \end{align*}
\end{itemize}
Note that the last formula stands for the torsion-free condition
\begin{align*}
  [X,Y] := XY-YX = \overline{\nabla}_XY-\overline{\nabla}_YX,
\end{align*}
where $[X,Y]$ is the Lie bracket of $X$ and $Y$.

Using the Riemannian connection $\overline{\nabla}$ we define the curvature tensor $R$ by\begin{align*}
  R(X,Y)Z := \overline{\nabla}_X\overline{\nabla}_YZ-\overline{\nabla}_Y\overline{\nabla}_XZ-\overline{\nabla}_{\overline{\nabla}_XY-\overline{\nabla}_YX}Z \quad\text{on}\quad \Gamma
\end{align*}
for $X,Y\in C^1(\Gamma,T\Gamma)$ and $Z\in C^2(\Gamma,T\Gamma)$.
It is known that
\begin{align} \label{E:Sym_CuT}
  R(X_1,X_2)Y_1\cdot Y_2 = R(X_2,X_1)Y_2\cdot Y_1 = R(Y_1,Y_2)X_1\cdot X_2 \quad\text{on}\quad \Gamma
\end{align}
for $X_1,X_2,Y_1,Y_2\in C^2(\Gamma,T\Gamma)$ (see e.g. \cite{Lee18}*{Proposition 7.12}), which also follows from \eqref{E:Cur_Ten} given below.
The Ricci curvature $\mathrm{Ric}$ is defined by
\begin{align*}
  \mathrm{Ric}(X,Y) := \mathrm{tr}(Z\mapsto R(Z,X)Y) \quad\text{on}\quad \Gamma
\end{align*}
for $X\in C^1(\Gamma,T\Gamma)$ and $Y\in C^2(\Gamma,T\Gamma)$.
More precisely, for a relatively open subset $O$ of $\Gamma$ and a local orthonormal frame $\{\tau_1,\tau_2\}$ for $T\Gamma$ on $O$ we set
\begin{align*}
  \mathrm{Ric}(X,Y) = \sum_{i=1,2}R(\tau_i,X)Y\cdot\tau_i \quad\text{on}\quad O.
\end{align*}
Then for $X,Y\in C^2(\Gamma,T\Gamma)$ we observe by \eqref{E:Sym_CuT} that
\begin{align*}
  \mathrm{Ric}(X,Y) = \sum_{i=1,2}R(X,\tau_i)\tau_i\cdot Y = \sum_{i=1,2}R(\tau_i,Y)X\cdot\tau_i = \mathrm{Ric}(Y,X) \quad\text{on}\quad O.
\end{align*}
Based on the first equality, we also define $\mathrm{Ric}(X)\in C(\Gamma,T\Gamma)$ locally by
\begin{align*}
  \mathrm{Ric}(X) := \sum_{i=1,2}R(X,\tau_i)\tau_i \quad\text{on}\quad O
\end{align*}
for $X\in C^1(\Gamma,T\Gamma)$.
The vector fields $R(X,Y)Z$ and $\mathrm{Ric}(X)$ are defined intrinsically, but we can express them in the fixed coordinate system of $\mathbb{R}^3$.

\begin{lemma} \label{L:Cur_Ten}
  Let $X,Y\in C^1(\Gamma,T\Gamma)$ and $Z\in C^2(\Gamma,T\Gamma)$.
  Then
  \begin{align} \label{E:Cur_Ten}
    R(X,Y)Z = (WZ\cdot Y)WX-(WZ\cdot X)WY \quad\text{on}\quad \Gamma.
  \end{align}
  Also, for all $X\in C^1(\Gamma,T\Gamma)$ we have
  \begin{align} \label{E:Ric_Cur}
    \mathrm{Ric}(X) = (HW-W^2)X = KX \quad\text{on}\quad \Gamma.
  \end{align}
\end{lemma}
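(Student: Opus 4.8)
The plan is to derive \eqref{E:Cur_Ten} from the Gauss formula \eqref{E:Gauss} together with the structure of the Riemannian connection $\overline{\nabla}$ on $\Gamma$, and then to obtain \eqref{E:Ric_Cur} by taking a trace with respect to a local orthonormal frame and invoking the Cayley--Hamilton theorem for the Weingarten map.

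First I would prove \eqref{E:Cur_Ten}. Fix $X,Y\in C^1(\Gamma,T\Gamma)$ and $Z\in C^2(\Gamma,T\Gamma)$. The idea is to compute the directional derivative $(Y\cdot\nabla_\Gamma)Z$ in $\mathbb{R}^3$ using \eqref{E:Gauss}, namely $(Y\cdot\nabla_\Gamma)Z=\overline{\nabla}_YZ+(WZ\cdot Y)n$, and then differentiate again along $X$. Applying $(X\cdot\nabla_\Gamma)$ to both sides and projecting tangentially with $P$, one uses $(X\cdot\nabla_\Gamma)n=-WX$ and $Pn=0$ to see that the derivative of the normal piece $(WZ\cdot Y)n$ contributes exactly $-(WZ\cdot Y)WX$ to the tangential component, while the derivative of $\overline{\nabla}_YZ$ contributes $\overline{\nabla}_X\overline{\nabla}_YZ$ plus a normal term. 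Subtracting the analogous expression with $X$ and $Y$ interchanged, and using the torsion-free identity $[X,Y]=\overline{\nabla}_XY-\overline{\nabla}_YX$ together with the definition of $R$, the second-order derivatives of $Z$ in $\mathbb{R}^3$ cancel by equality of mixed partials (applied componentwise), and one is left with $R(X,Y)Z=(WZ\cdot Y)WX-(WZ\cdot X)WY$. I expect this step — carefully tracking which terms are tangential and which are normal through the two differentiations — to be the main obstacle; it requires a clean bookkeeping of the Gauss formula applied twice, but no genuinely deep input.

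Next I would establish \eqref{E:Ric_Cur}. Let $O\subset\Gamma$ be relatively open with a local orthonormal frame $\{\tau_1,\tau_2\}$ for $T\Gamma$. By definition $\mathrm{Ric}(X)=\sum_{i=1,2}R(X,\tau_i)\tau_i$, and plugging \eqref{E:Cur_Ten} with $Y=Z=\tau_i$ gives
\begin{align*}
  \mathrm{Ric}(X) = \sum_{i=1,2}\{(W\tau_i\cdot\tau_i)WX-(W\tau_i\cdot X)W\tau_i\}
  = \mathrm{tr}[W]\,WX - \sum_{i=1,2}(W\tau_i\cdot X)W\tau_i.
\end{align*}
Here $\mathrm{tr}[W]=H$ by \eqref{E:Def_Wein}. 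For the remaining sum, since $\{\tau_1,\tau_2\}$ is an orthonormal basis of $T_y\Gamma$ and $W$ is symmetric with $Wn=0$ by Lemma \ref{L:Form_W}, one checks that $\sum_{i=1,2}(W\tau_i\cdot X)W\tau_i = W^2X$ for tangential $X$ (expand $X=\sum_j(X\cdot\tau_j)\tau_j$ and use $W\tau_i\cdot\tau_j=\tau_i\cdot W\tau_j$). This yields $\mathrm{Ric}(X)=(HW-W^2)X$, which is the first equality of \eqref{E:Ric_Cur}; note the right-hand side is independent of the frame, so the identity holds on all of $\Gamma$. Finally, for the second equality I would apply the Cayley--Hamilton theorem to the restriction of $W$ to the two-dimensional tangent plane $T_y\Gamma$: since the eigenvalues of $W|_{T_y\Gamma}$ are the principal curvatures $\kappa_1,\kappa_2$ with $H=\kappa_1+\kappa_2$ and $K=\kappa_1\kappa_2$ (recall $W$ has eigenvalue $0$ with eigenvector $n$), the characteristic polynomial gives $W^2-HW+K\,\mathrm{id}=0$ on $T_y\Gamma$, hence $(HW-W^2)X=KX$ for every tangential $X$. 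This completes the proof.
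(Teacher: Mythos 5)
Your overall strategy is the same as the paper's: apply the Gauss formula \eqref{E:Gauss} twice, project with $P$, antisymmetrize in $X$ and $Y$ to isolate $R(X,Y)Z$, and then get \eqref{E:Ric_Cur} by tracing over a local orthonormal frame and reducing $HW-W^2$ to $K$ on the tangent plane (the paper diagonalizes $W$ pointwise rather than quoting Cayley--Hamilton, but that is the same two-dimensional computation). The second half of your argument is complete and correct.

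The one step that needs repair is your justification for why the second-order derivatives of $Z$ drop out of $R(X,Y)Z$. After the two applications of the Gauss formula one is left with the term $P\sum_{k,l}(X_kY_l-Y_kX_l)\underline{D}_k\underline{D}_lZ$, and the tangential derivatives $\underline{D}_k$ do \emph{not} commute: by the exchange formula \eqref{E:TD_Exc} the commutator applied to a component $Z_m$ equals $[W\nabla_\Gamma Z_m]_kn_l-[W\nabla_\Gamma Z_m]_ln_k$, which is nonzero in general. So ``equality of mixed partials applied componentwise'' is not the right reason. The term does vanish, but only because the commutator is proportional to components of $n$, and contracting $n_k$ or $n_l$ against the tangential fields gives $X\cdot n=Y\cdot n=0$. (If instead you meant to work with ordinary partials of extensions to the tubular neighborhood, where mixed partials do commute, you would face the separate issue that intermediate quantities such as the normal derivative of the extension of $Z$ depend on the choice of extension.) This is exactly where the paper invokes Lemma \ref{L:TD_Exc}; with that citation and the tangentiality of $X$ and $Y$ in place, your argument for \eqref{E:Cur_Ten} goes through.
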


The formula \eqref{E:Ric_Cur} was proved in \cite{DudMitMit06}*{Theorem 6.2}, where the authors used an equality equivalent to \eqref{E:Cur_Ten} following from the Gauss equation for a hypersurface in $\mathbb{R}^k$, $k\geq2$ (see e.g. \cite{Lee18}) and the fact that $\mathbb{R}^k$ has zero curvature.
We show \eqref{E:Cur_Ten} by applying \eqref{E:TD_Exc} and \eqref{E:Gauss} and then use it to get \eqref{E:Ric_Cur} below.

\begin{proof}
  We define $\xi_Y:=WZ\cdot Y$ and carry out calculations on $\Gamma$.
  Since
  \begin{align*}
    \overline{\nabla}_YZ = (Y\cdot\nabla_\Gamma)Z-\xi_Yn = \sum_{l=1}^3Y_l\underline{D}_lZ-\xi_Yn
  \end{align*}
  by \eqref{E:Gauss}, we apply $X\cdot\nabla_\Gamma=\sum_{k=1}^3X_k\underline{D}_k$ to both sides to get
  \begin{multline*}
    (X\cdot\nabla_\Gamma)\overline{\nabla}_YZ = \sum_{k,l=1}^3\{X_k(\underline{D}_kY_l)(\underline{D}_lZ)+X_kY_l\underline{D}_k\underline{D}_lZ\} \\
    -(X\cdot\nabla_\Gamma\xi_Y)n-\xi_Y(X\cdot\nabla_\Gamma)n.
  \end{multline*}
  We further multiply both sides by $P$ and use
  \begin{gather*}
    Pn = 0, \quad P(X\cdot\nabla_\Gamma)n = -PW^TX = -WX, \\
    \sum_{k,l=1}^3X_k(\underline{D}_kY_l)(\underline{D}_lZ) = [\{(X\cdot\nabla_\Gamma)Y\}\cdot\nabla_\Gamma]Z = \Bigl(\overline{\nabla}_XY\cdot\nabla_\Gamma\Bigr)Z,
  \end{gather*}
  where the last equality follows from \eqref{E:TGrM_Surf}, to deduce that
  \begin{align*}
    \overline{\nabla}_X\overline{\nabla}_YZ &= P(X\cdot\nabla_\Gamma)\overline{\nabla}_YZ \\
    &= P\Bigl(\overline{\nabla}_XY\cdot\nabla_\Gamma\Bigr)Z+\xi_YWX+P\sum_{k,l=1}^3X_kY_l\underline{D}_k\underline{D}_lZ \\
    &= \overline{\nabla}_{\overline{\nabla}_XY}Z+\xi_YWX+P\sum_{k,l=1}^3X_kY_l\underline{D}_k\underline{D}_lZ.
  \end{align*}
  In the same way we get
  \begin{align*}
    \overline{\nabla}_Y\overline{\nabla}_XZ = \overline{\nabla}_{\overline{\nabla}_YX}Z+\xi_XWY+P\sum_{k,l=1}^3Y_kX_l\underline{D}_k\underline{D}_lZ,
  \end{align*}
  where $\xi_X:=WZ\cdot X$, and thus
  \begin{align} \label{Pf_CuTe:RXY}
    R(X,Y)Z = \xi_YWX-\xi_XWY+P\sum_{k,l=1}^3(X_kY_l-Y_kX_l)\underline{D}_k\underline{D}_lZ.
  \end{align}
  Moreover, for $m=1,2,3$ the $m$-th component of
  \begin{align*}
    \sum_{k,l=1}^3(X_kY_l-Y_kX_l)\underline{D}_k\underline{D}_lZ = \sum_{k,l=1}^3X_kY_l(\underline{D}_k\underline{D}_lZ-\underline{D}_l\underline{D}_kZ)
  \end{align*}
  is of the form
  \begin{align*}
    &\sum_{k,l=1}^3X_kY_l(\underline{D}_k\underline{D}_lZ_m-\underline{D}_l\underline{D}_kZ_m) \\
    &\qquad = \sum_{k,l=1}^3X_kY_l([W\nabla_\Gamma Z_m]_kn_l-[W\nabla_\Gamma Z_m]_ln_k) \\
    &\qquad = (X\cdot W\nabla_\Gamma Z_m)(Y\cdot n)-(X\cdot n)(Y\cdot W\nabla_\Gamma Z_m) = 0
  \end{align*}
  by \eqref{E:TD_Exc} and $X\cdot n=Y\cdot n=0$.
  Thus the last term of \eqref{Pf_CuTe:RXY} vanishes and we obtain \eqref{E:Cur_Ten} (note that $\xi_X=WZ\cdot X$ and $\xi_Y=WZ\cdot Y$).

  Next we derive \eqref{E:Ric_Cur}.
  Let $\{\tau_1,\tau_2\}$ be an orthonormal frame for $T\Gamma$ on a relatively open subset $O$ of $\Gamma$.
  Then
  \begin{align*}
    H = \mathrm{tr}[W] = \sum_{i=1,2}W\tau_i\cdot\tau_i+Wn\cdot n = \sum_{i=1,2}W\tau_i\cdot\tau_i \quad\text{on}\quad O
  \end{align*}
  since $\{\tau_1,\tau_2,n\}$ is an orthonormal basis of $\mathbb{R}^3$ and $Wn=0$ on $\Gamma$.
  Noting that $WX$ is tangential on $\Gamma$ and $\{\tau_1,\tau_2\}$ is an orthonormal frame for $T\Gamma$, we deduce from the above equality, \eqref{E:Cur_Ten}, and $W^T=W$ on $\Gamma$ that
  \begin{align*}
    \mathrm{Ric}(X) &= \sum_{i=1,2}R(X,\tau_i)\tau_i = \sum_{i=1,2}\{(W\tau_i\cdot\tau_i)WX-(W\tau_i\cdot X)W\tau_i\} \\
    &= \mathrm{tr}[W]WX-W\sum_{i=1,2}(WX\cdot\tau_i)\tau_i = HWX-W^2X
  \end{align*}
  on $O$.
  Thus the first equality of \eqref{E:Ric_Cur} is valid.
  To show the second equality we fix and suppress $y\in\Gamma$.
  Since the real symmetric matrix $W$ has the eigenvalues zero, $\kappa_1$, and $\kappa_2$ with $Wn=0$, there exists an orthonormal basis $\{\tau_1,\tau_2,n\}$ of $\mathbb{R}^3$ such that $W\tau_i=\kappa_i\tau_i$ for $i=1,2$.
  Then since $X$ is tangential on $\Gamma$, we have
  \begin{align*}
    X = \sum_{i=1,2}(X\cdot\tau_i)\tau_i, \quad W^kX = \sum_{i=1,2}\kappa_i^k(X\cdot\tau_i)\tau_i, \quad k=1,2.
  \end{align*}
  From these equalities, $H=\kappa_1+\kappa_2$, and $K=\kappa_1\kappa_2$ it follows that
  \begin{align*}
    (HW-W^2)X &= (\kappa_1+\kappa_2)\sum_{i=1,2}\kappa_i(X\cdot\tau_i)\tau_i-\sum_{i=1,2}\kappa_i^2(X\cdot\tau_i)\tau_i \\
    &= \kappa_1\kappa_2\sum_{i=1,2}(X\cdot\tau_i)\tau_i = KX.
  \end{align*}
  Hence the second equality of \eqref{E:Ric_Cur} holds.
\end{proof}

\subsection{Laplace operators} \label{SS:ApV_La}
In this subsection we give formulas for Laplace operators on $\Gamma$.
First we show that the restriction on $\Gamma$ of the Laplace operator on $\mathbb{R}^3$ agrees with the Laplace--Beltrami operator on $\Gamma$ in an appropriate sense.

\begin{lemma} \label{L:Lap_Rest}
  For $\eta\in C^2(\Gamma)$ we have
  \begin{align} \label{E:Lap_Rest}
    \Delta\bar{\eta} = \Delta_\Gamma\eta \quad\text{on}\quad \Gamma,
  \end{align}
  where $\bar{\eta}=\eta\circ\pi$ is the constant extension of $\eta$ in the normal direction of $\Gamma$.
\end{lemma}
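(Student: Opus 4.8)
The plan is to reduce the statement to the first-order formula of Lemma \ref{L:Pi_Der}. First I note that, since $\Gamma$ is of class $C^3$ and $\eta\in C^2(\Gamma)$, the map $\pi$ is $C^2$ on $\overline N$, hence $\bar\eta=\eta\circ\pi\in C^1(N)$; moreover by \eqref{E:ConDer_Dom} we have $\nabla\bar\eta=A\,\overline{\nabla_\Gamma\eta}$ in $N$, where $A:=\{I_3-d\overline{W}\}^{-1}$ is well defined by Lemma \ref{L:Wein} and is of class $C^1$ near $\Gamma$ (note $W\in C^1(\Gamma)$ and $d\in C^3$, so $\overline W\in C^1(N)$), and $\overline{\nabla_\Gamma\eta}\in C^1(N)$ since $\nabla_\Gamma\eta\in C^1(\Gamma)$. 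Thus $\bar\eta\in C^2(N)$ and $\Delta\bar\eta=\mathrm{div}(\nabla\bar\eta)=\mathrm{div}\bigl(A\,\overline{\nabla_\Gamma\eta}\bigr)$, which by the product rule gives
\begin{align*}
  \Delta\bar\eta = \sum_{k,l=1}^3(\partial_kA_{kl})\bigl[\overline{\nabla_\Gamma\eta}\bigr]_l+\sum_{k,l=1}^3A_{kl}\,\partial_k\bigl[\overline{\nabla_\Gamma\eta}\bigr]_l \quad\text{in}\quad N,
\end{align*}
and it remains to evaluate the two sums on $\Gamma$.

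Next I would compute $\partial_kA$ on $\Gamma$. Differentiating the identity $A\{I_3-d\overline{W}\}=I_3$ and restricting to $\Gamma$, where $d=0$ and hence $A=I_3$ and $\overline W=W$, while $\partial_kd=n_k$ by \eqref{E:Nor_Coord}, yields $\partial_kA=-n_k\,W$ on $\Gamma$ for $k=1,2,3$. Consequently the first sum restricted to $\Gamma$ equals $-\sum_{k,l}n_kW_{kl}[\nabla_\Gamma\eta]_l=-n\cdot(W\nabla_\Gamma\eta)=-(Wn)\cdot\nabla_\Gamma\eta=0$ by the symmetry of $W$ and $Wn=0$ from \eqref{E:Form_W}. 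For the second sum, $A_{kl}=\delta_{kl}$ on $\Gamma$, so it collapses to $\sum_l\partial_l\bigl[\overline{\nabla_\Gamma\eta}\bigr]_l\big|_\Gamma=\sum_l\partial_l\overline{(\underline{D}_l\eta)}\big|_\Gamma=\sum_l\underline{D}_l\underline{D}_l\eta=\Delta_\Gamma\eta$, where we used \eqref{E:ConDer_Surf} applied to $\underline{D}_l\eta\in C^1(\Gamma)$ and the definition \eqref{E:Def_THess} of $\Delta_\Gamma$. Adding the two contributions gives \eqref{E:Lap_Rest}.

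The only delicate point is the identity $\partial_kA|_\Gamma=-n_kW$, i.e. differentiating the Weingarten resolvent and evaluating it on $\Gamma$; once this is available, the rest is bookkeeping with \eqref{E:ConDer_Surf} and $Wn=0$. As a consistency check, the computation above produces $\partial_k\partial_l\bar\eta|_\Gamma=\underline{D}_k\underline{D}_l\eta-n_k[W\nabla_\Gamma\eta]_l$, which is symmetric in $(k,l)$ precisely because of the exchange formula \eqref{E:TD_Exc}. Alternatively, one could verify the lemma by fixing $y\in\Gamma$, choosing an orthonormal basis $\{\tau_1,\tau_2,n\}$ of $\mathbb{R}^3$, and observing that the normal-normal Hessian entry vanishes (since $\partial_n\bar\eta\equiv0$ and $\partial_n\bar n\equiv0$ imply $\nabla^2\bar\eta\,\bar n\cdot\bar n\equiv0$ in $N$), so that $\Delta\bar\eta(y)=\sum_{i=1,2}\nabla^2\bar\eta\,\tau_i\cdot\tau_i$, which one identifies with $\Delta_\Gamma\eta(y)$ in the same way.
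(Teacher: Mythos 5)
Your proof follows essentially the same route as the paper: differentiate the resolvent identity $A\{I_3-d\overline{W}\}=I_3$, evaluate on $\Gamma$ using $d=0$, $\partial_kd=n_k$, $A=I_3$, apply \eqref{E:ConDer_Surf} to $\underline{D}_l\eta$, and kill the extra term with $Wn=0$. The argument is sound, but you have a sign error in the key intermediate identity: differentiating $A\{I_3-d\overline{W}\}=I_3$ gives $(\partial_kA)\{I_3-d\overline{W}\}+A\{-(\partial_kd)\overline{W}-d\,\partial_k\overline{W}\}=0$, and on $\Gamma$ this yields $\partial_kA=+n_kW$, not $-n_kW$. The error happens to be harmless for the conclusion, since $\pm\,n\cdot(W\nabla_\Gamma\eta)=\pm(Wn)\cdot\nabla_\Gamma\eta=0$ either way, but it is not harmless for your stated consistency check: with your sign one gets $\partial_k\partial_l\bar{\eta}|_\Gamma=\underline{D}_k\underline{D}_l\eta-n_k[W\nabla_\Gamma\eta]_l$, and combining this with \eqref{E:TD_Exc} gives an antisymmetric part $2\bigl(n_l[W\nabla_\Gamma\eta]_k-n_k[W\nabla_\Gamma\eta]_l\bigr)$, which does not vanish in general. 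The exchange formula forces the plus sign, i.e. $\partial_k\partial_l\bar{\eta}|_\Gamma=\underline{D}_k\underline{D}_l\eta+n_k[W\nabla_\Gamma\eta]_l$, exactly as in the paper; had you carried out the symmetry check you advertise, it would have caught the mistake.
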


\begin{proof}
  For $x\in N$ let $R(x)$ be the inverse matrix of $I_3-d(x)\overline{W}(x)$ so that
  \begin{align} \label{Pf_LaRe:Gr}
    \nabla\bar{\eta}(x) = R(x)\overline{\nabla_\Gamma\eta}(x), \quad x\in N
  \end{align}
  by \eqref{E:ConDer_Dom}.
  Note that $R\in C^1(N)^{3\times3}$ by the $C^3$-regularity of $\Gamma$.
  We differentiate
  \begin{align*}
    R(x)\left\{I_3-d(x)\overline{W}(x)\right\} = I_3, \quad x\in N
  \end{align*}
  with respect to $x_i$, $i=1,2,3$ and set $x=y\in\Gamma$.
  Then
  \begin{align*}
    \partial_iR(y)-n_i(y)W(y) = 0, \quad\text{i.e.}\quad \partial_iR(y) = n_i(y)W(y), \quad y\in\Gamma
  \end{align*}
  by $d(y)=0$, $\partial_id(y)=n_i(y)$, and $R(y)=I_3$.
  Also, we observe by \eqref{E:ConDer_Surf} that
  \begin{align*}
    \Bigl[\partial_i\Bigl(\overline{\nabla_\Gamma\eta}\Bigr)\Bigr](y) = [\underline{D}_i(\nabla_\Gamma\eta)](y), \quad y\in\Gamma.
  \end{align*}
  We differentiate both sides of \eqref{Pf_LaRe:Gr} with respect to $x_i$, $i=1,2,3$, set $x=y\in\Gamma$, and use the above two equalities and $R(y)=I_3$ to obtain
  \begin{align*}
    [\partial_i(\nabla\bar{\eta})](y) = n_i(y)W(y)\nabla_\Gamma\eta(y)+[\underline{D}_i(\nabla_\Gamma\eta)](y), \quad y\in\Gamma.
  \end{align*}
  From this equality and $W^Tn=Wn=0$ on $\Gamma$ it follows that
  \begin{align*}
    \Delta\bar{\eta}(y) &= \sum_{i=1}^3\partial_i^2\bar{\eta}(y) = \sum_{i,j=1}^3n_i(y)W_{ij}(y)\underline{D}_j\eta(y)+\sum_{i=1}^3\underline{D}_i^2\eta(y) \\
    &= [W^Tn](y)\cdot\nabla_\Gamma\eta(y)+\Delta_\Gamma\eta(y) = \Delta_\Gamma\eta(y)
  \end{align*}
  for all $y\in\Gamma$.
  Thus \eqref{E:Lap_Rest} is valid.
\end{proof}

Next we deal with Laplace operators acting on tangential vector fields on $\Gamma$.
Let
\begin{align*}
  \Delta_H\colon C^2(\Gamma,\Lambda^1T^\ast\Gamma)\to C(\Gamma,\Lambda^1T^\ast\Gamma), \quad \Delta_H := -(d_\Gamma d_\Gamma^\ast+d_\Gamma^\ast d_\Gamma)
\end{align*}
be the Hodge Laplacian on $\Gamma$ (here we take the minus sign).
Using
\begin{align*}
  \Theta\colon C^m(\Gamma,T\Gamma)\to C^m(\Gamma,T^\ast\Gamma) = C^m(\Gamma,\Lambda^1T^\ast\Gamma), \quad m=0,1,2
\end{align*}
and its inverse $\Theta^{-1}$ we define
\begin{align*}
  \widetilde{\Delta}_H\colon C^2(\Gamma,T\Gamma)\to C(\Gamma,T\Gamma), \quad \widetilde{\Delta}_H := \Theta^{-1}\Delta_H\Theta
\end{align*}
and identity $\Delta_H$ with $\widetilde{\Delta}_H$ to apply $\Delta_H$ to tangential vector fields on $\Gamma$.
We consider the Riemannian connection $\overline{\nabla}$ as an operator
\begin{align*}
  \overline{\nabla}\colon C^1(\Gamma,T\Gamma)\to C(\Gamma,T^\ast\Gamma\otimes T^\ast\Gamma)
\end{align*}
which maps $X\in C^1(\Gamma,T\Gamma)$ to $\overline{\nabla}X\in C(\Gamma,T^\ast\Gamma\otimes T^\ast\Gamma)$ given by
\begin{align} \label{E:RC_Ten}
  \Bigl(\overline{\nabla}X\Bigr)(Y,Z) := \overline{\nabla}_ZX\cdot Y \quad\text{on}\quad \Gamma, \quad Y,Z\in C(\Gamma,T\Gamma)
\end{align}
and write $\overline{\nabla}^\ast\colon C^1(\Gamma,T^\ast\Gamma\otimes T^\ast\Gamma)\to C(\Gamma,T\Gamma)$ for the formal adjoint of $\overline{\nabla}$:
\begin{align} \label{E:Def_FoRC}
  \int_\Gamma\theta\Bigl(\overline{\nabla}^\ast S,X\Bigr)\,d\mathcal{H}^2 := \int_\Gamma\theta\Bigl(S,\overline{\nabla}X\Bigr)\,d\mathcal{H}^2
\end{align}
for $S\in C^1(\Gamma,T^\ast\Gamma\otimes T^\ast\Gamma)$ and $X\in C^1(\Gamma,T\Gamma)$.
Then we define
\begin{align*}
  \Delta_B\colon C^2(\Gamma,T\Gamma)\to C(\Gamma,T\Gamma), \quad \Delta_B := -\overline{\nabla}^\ast\overline{\nabla}
\end{align*}
and call $\Delta_B$ the Bochner Laplacian on $\Gamma$.
Note that $\Delta_H$ and $\Delta_B$ are defined intrinsically and map tangential vector fields on $\Gamma$ to tangential ones.
We can also consider the componentwise Laplace--Beltrami operator
\begin{align*}
  \Delta_\Gamma\colon C^2(\Gamma)^3\to C(\Gamma)^3, \quad v =
  \begin{pmatrix}
    v_1 \\
    v_2 \\
    v_3
  \end{pmatrix}
  \mapsto \Delta_\Gamma v =
  \begin{pmatrix}
    \Delta_\Gamma v_1 \\
    \Delta_\Gamma v_2 \\
    \Delta_\Gamma v_3
  \end{pmatrix}.
\end{align*}
Here $\Delta_\Gamma X$ is not tangential on $\Gamma$ in general even if $X\in C^2(\Gamma,T\Gamma)$.
Indeed,
\begin{align*}
  \Delta_\Gamma X\cdot n = \Delta_\Gamma(X\cdot n)+\mathrm{div}_\Gamma(WX)+W:\nabla_\Gamma X \quad\text{on}\quad \Gamma
\end{align*}
and only the first term on the right-hand side vanishes by $X\cdot n=0$ on $\Gamma$.
Let us establish relations between $\Delta_H$, $\Delta_B$, and $\Delta_\Gamma$ after giving three auxiliary lemmas.

\begin{lemma} \label{L:Exter}
  For $\eta\in C^1(\Gamma)=C^1(\Gamma,\Lambda^0T^\ast\Gamma)$ we have
  \begin{align} \label{E:Ext_Zero}
    d_\Gamma\eta = \Theta(\nabla_\Gamma\eta) \quad\text{on}\quad \Gamma.
  \end{align}
  Let $X\in C^1(\Gamma,T\Gamma)$.
  Then $\Theta(X)\in C^1(\Gamma,T^\ast\Gamma)=C^1(\Gamma,\Lambda^1T^\ast\Gamma)$ and
  \begin{align} \label{E:Ext_One}
    [d_\Gamma\Theta(X)](Y,Z) = \overline{\nabla}_YX\cdot Z-Y\cdot\overline{\nabla}_ZX \quad\text{on}\quad \Gamma
  \end{align}
  for all $Y,Z\in C(\Gamma,T\Gamma)$.
  We also have
  \begin{align} \label{E:Cod_One}
    d_\Gamma^\ast\Theta(X) = -\mathrm{div}_\Gamma X \quad\text{on}\quad \Gamma.
  \end{align}
\end{lemma}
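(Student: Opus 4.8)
\textbf{Proof proposal for Lemma \ref{L:Exter}.}
The plan is to prove the three identities \eqref{E:Ext_Zero}, \eqref{E:Ext_One}, \eqref{E:Cod_One} by working in a local $C^3$ parametrization $\mu\colon U\to\Gamma$ and comparing the coordinate expressions of the exterior derivative $d_\Gamma$ (given by \eqref{E:Def_ExtZ}, \eqref{E:Def_ExtO}) with the ambient expressions of $\nabla_\Gamma$, $\overline{\nabla}$, $\mathrm{div}_\Gamma$ via Lemmas \ref{L:TGr_DG} and \ref{L:DivG_DG}. For \eqref{E:Ext_Zero}, first I would note that $d_\Gamma\eta$ and $\Theta(\nabla_\Gamma\eta)$ are both sections of $T^\ast\Gamma$, so it suffices to check that they agree when evaluated on the basis vectors $\partial_{s^j}\mu$. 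By \eqref{E:Def_ExtZ} the left side gives $(d_\Gamma\eta)_{\mu(s)}(\partial_{s^j}\mu)=\partial_{s^j}(\eta\circ\mu)$ using the duality \eqref{E:Dual_B}, while by \eqref{E:Ext_Zero}'s right side and \eqref{E:TGr_DG} one has $\Theta(\nabla_\Gamma\eta)(\partial_{s^j}\mu)=\nabla_\Gamma\eta\cdot\partial_{s^j}\mu=\sum_{i,k}\theta^{ik}\partial_{s^i}(\eta\circ\mu)\,\partial_{s^k}\mu\cdot\partial_{s^j}\mu=\sum_{i,k}\theta^{ik}\theta_{kj}\partial_{s^i}(\eta\circ\mu)=\partial_{s^j}(\eta\circ\mu)$, since $\theta^{-1}\theta=I_2$. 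These match, so \eqref{E:Ext_Zero} follows. The regularity claim $\Theta(X)\in C^1$ is immediate from $X\in C^1(\Gamma,T\Gamma)$ and the $C^3$-regularity of $\Gamma$ (hence $C^2$-regularity of $\theta$).

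For \eqref{E:Ext_One}, since both sides are skew-symmetric bilinear forms in $(Y,Z)$ — the right side obviously so, and the left side by the definition of a $2$-form — it again suffices to evaluate on the coordinate basis, i.e.\ to verify $[d_\Gamma\Theta(X)](\partial_{s^1}\mu,\partial_{s^2}\mu)=\overline{\nabla}_{\partial_{s^1}\mu}X\cdot\partial_{s^2}\mu-\partial_{s^1}\mu\cdot\overline{\nabla}_{\partial_{s^2}\mu}X$. Writing $\omega_i:=\Theta(X)_{\mu(s)}(\partial_{s^i}\mu)=X\cdot\partial_{s^i}\mu$, formula \eqref{E:Def_ExtO} and \eqref{E:Def_InWe}-\eqref{E:Dual_B} give $[d_\Gamma\Theta(X)](\partial_{s^1}\mu,\partial_{s^2}\mu)=\partial_{s^1}\omega_2-\partial_{s^2}\omega_1$. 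On the other hand, using the Leibniz rule for $\overline{\nabla}$ from Section \ref{SS:ApV_Cuv} (the third bulleted formula, $Z\cdot\nabla_\Gamma(X\cdot Y)=\overline{\nabla}_ZX\cdot Y+X\cdot\overline{\nabla}_ZY$), one has $\partial_{s^1}\omega_2=\partial_{s^1}(X\cdot\partial_{s^2}\mu)=(\partial_{s^1}\mu\cdot\nabla_\Gamma)(X\cdot\partial_{s^2}\mu)=\overline{\nabla}_{\partial_{s^1}\mu}X\cdot\partial_{s^2}\mu+X\cdot\overline{\nabla}_{\partial_{s^1}\mu}\partial_{s^2}\mu$, and symmetrically for $\partial_{s^2}\omega_1$. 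Subtracting, the terms $X\cdot\overline{\nabla}_{\partial_{s^1}\mu}\partial_{s^2}\mu-X\cdot\overline{\nabla}_{\partial_{s^2}\mu}\partial_{s^1}\mu$ cancel by the torsion-free property $\overline{\nabla}_{\partial_{s^1}\mu}\partial_{s^2}\mu=\overline{\nabla}_{\partial_{s^2}\mu}\partial_{s^1}\mu$ (the commutator of coordinate vector fields vanishes), which is exactly the fourth bulleted formula of Section \ref{SS:ApV_Cuv}. This yields \eqref{E:Ext_One}. One subtlety: $\partial_{s^j}\mu$ is a vector field on $\mu(U)$, not on all of $\Gamma$, but since \eqref{E:Ext_One} is a pointwise identity and every tangent vector at a point extends locally to such a coordinate field, this is harmless.

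For \eqref{E:Cod_One}, I would use the defining relation \eqref{E:Def_FoEx}: $\langle d_\Gamma^\ast\Theta(X),\eta\rangle_0=\langle\Theta(X),d_\Gamma\eta\rangle_1$ for all $\eta\in C^1(\Gamma)$. By \eqref{E:Ext_Zero}, \eqref{E:The_Pr}, and \eqref{E:Def_InDF}, the right side equals $\int_\Gamma\theta(\Theta(X),\Theta(\nabla_\Gamma\eta))\,d\mathcal{H}^2=\int_\Gamma X\cdot\nabla_\Gamma\eta\,d\mathcal{H}^2$. Then I would integrate by parts on the closed surface using \eqref{E:IbP_DivG} (or the surface divergence theorem \eqref{E:SD_Thm} applied to the $C^1$ tangential field $\eta X$, together with the product rule $\mathrm{div}_\Gamma(\eta X)=\nabla_\Gamma\eta\cdot X+\eta\,\mathrm{div}_\Gamma X$) to obtain $\int_\Gamma X\cdot\nabla_\Gamma\eta\,d\mathcal{H}^2=-\int_\Gamma(\mathrm{div}_\Gamma X)\eta\,d\mathcal{H}^2=\langle-\mathrm{div}_\Gamma X,\eta\rangle_0$. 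Since this holds for all $\eta\in C^1(\Gamma)$ and $C^1(\Gamma)$ is dense in $L^2(\Gamma)$ (Lemma \ref{L:Wmp_Appr}), we conclude $d_\Gamma^\ast\Theta(X)=-\mathrm{div}_\Gamma X$. The main obstacle I anticipate is purely bookkeeping in the local-coordinate computation for \eqref{E:Ext_One} — keeping track of which $\overline{\nabla}$-Leibniz and torsion-free identities to invoke so that the cross terms cancel cleanly — rather than any conceptual difficulty; the metric-index juggling for \eqref{E:Ext_Zero} and the integration by parts for \eqref{E:Cod_One} are routine given the tools already assembled in Sections \ref{SS:Pre_Surf} and \ref{SS:ApV_Cuv}.
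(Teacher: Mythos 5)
Your proof is correct and follows essentially the same route as the paper: local coordinates and evaluation against the dual basis for \eqref{E:Ext_Zero} and \eqref{E:Ext_One}, and the adjoint relation plus the surface divergence theorem with $X\cdot n=0$ for \eqref{E:Cod_One}. The only cosmetic difference is in \eqref{E:Ext_One}, where you invoke pointwise bilinearity, metric compatibility, and the torsion-free property of $\overline{\nabla}$ to reduce to the coordinate pair $(\partial_{s^1}\mu,\partial_{s^2}\mu)$, whereas the paper expands $Y,Z$ in the coordinate basis and cancels the cross terms via the symmetry $\partial_{s^i}\partial_{s^j}\mu=\partial_{s^j}\partial_{s^i}\mu$ directly — the same computation in different clothing.
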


\begin{proof}
  It is sufficient to show \eqref{E:Ext_Zero} and \eqref{E:Ext_One} on $\mu(U)$, where $U$ is an open set in $\mathbb{R}^2$ and $\mu\colon U\to\Gamma$ is a $C^3$ local parametrization of $\Gamma$.

  First we prove \eqref{E:Ext_Zero} for $\eta\in C^1(\Gamma)$.
  Let $s=(s^1,s^2)\in U$ and $X_{\mu(s)}\in T_{\mu(s)}\Gamma$.
  Since $\{\partial_{s^1}\mu(s),\partial_{s^2}\mu(s)\}$ is a basis of $T_{\mu(s)}\Gamma$, we can write
  \begin{align*}
    X_{\mu(s)} = X^1\partial_{s^1}\mu(s)+X^2\partial_{s^2}\mu(s), \quad X^1,X^2\in\mathbb{R}.
  \end{align*}
  Hence it follows from \eqref{E:Dual_B} and \eqref{E:Def_ExtZ} that
  \begin{align*}
    (d_\Gamma\eta)_{\mu(s)}(X_{\mu(s)}) = \sum_{i,j=1}^2X^j\frac{\partial(\eta\circ\mu)}{\partial s^i}(s)(d_\Gamma s^i)_{\mu(s)}(\partial_{s^j}\mu(s)) = \sum_{i=1,2}X^i\frac{\partial(\eta\circ\mu)}{\partial s^i}(s).
  \end{align*}
  Moreover, denoting by $\bar{\eta}=\eta\circ\pi$ the constant extension of $\eta$ in the normal direction of $\Gamma$, we observe by \eqref{E:ConDer_Surf} with $y=\mu(s)\in\Gamma$ that
  \begin{align} \label{Pf_Ext:Dsi}
    \frac{\partial(\eta\circ\mu)}{\partial s^i}(s) &= \frac{\partial}{\partial s^i}\Bigl(\bar{\eta}(\mu(s))\Bigr) = \partial_{s^i}\mu(s)\cdot\nabla\bar{\eta}(\mu(s)) = \partial_{s^i}\mu(s)\cdot\nabla_\Gamma\eta(\mu(s))
  \end{align}
  for $i=1,2$.
  Hence
  \begin{align*}
    (d_\Gamma\eta)_{\mu(s)}(X_{\mu(s)}) &= \sum_{i=1,2}X^i\partial_{s^i}\mu(s)\cdot\nabla_\Gamma\eta(\mu(s)) = X_{\mu(s)}\cdot \nabla_\Gamma\eta(\mu(s)) \\
    &= [\Theta(\nabla_\Gamma\eta)]_{\mu(s)}(X_{\mu(s)})
  \end{align*}
  for all $s\in U$ and $X_{\mu(s)}\in T_{\mu(s)}\Gamma$, which yields \eqref{E:Ext_Zero} on $\mu(U)$.

  Next we prove \eqref{E:Ext_One}.
  For $X=(X_1,X_2,X_3)^T\in C^1(\Gamma,T\Gamma)$ and $s\in U$ let
  \begin{gather*}
    [(\partial_{s^i}\mu(s)\cdot\nabla_\Gamma)X](\mu(s)) :=
    \begin{pmatrix}
      \partial_{s^i}\mu(s)\cdot\nabla_\Gamma X_1(\mu(s)) \\
      \partial_{s^i}\mu(s)\cdot\nabla_\Gamma X_2(\mu(s)) \\
      \partial_{s^i}\mu(s)\cdot\nabla_\Gamma X_3(\mu(s))
    \end{pmatrix}, \quad i=1,2
  \end{gather*}
  and
  \begin{align*}
    \Phi(s) := [(\partial_{s^1}\mu(s)\cdot\nabla_\Gamma)X](\mu(s))\cdot\partial_{s^2}\mu(s)-[(\partial_{s^2}\mu(s)\cdot\nabla_\Gamma)X](\mu(s))\cdot\partial_{s^1}\mu(s).
  \end{align*}
  Also, let $Y,Z\in C(\Gamma,T\Gamma)$ be locally of the form
  \begin{align} \label{Pf_Ext:YZ}
    Y(\mu(s)) = \sum_{i=1,2}Y^i(s)\partial_{s^i}\mu(s), \quad Z(\mu(s)) = \sum_{i=1,2}Z^i(s)\partial_{s^i}\mu(s)
  \end{align}
  for $s\in U$ with $Y^i(s),Z^i(s)\in\mathbb{R}$, $i=1,2$.
  Then since $Y$ and $Z$ are tangential on $\Gamma$, we have
  \begin{align*}
    \Bigl[\overline{\nabla}_YX\cdot Z\Bigr](\mu(s)) &= [(Y\cdot\nabla_\Gamma)X\cdot Z](\mu(s)) \\
    &= \sum_{i,j=1}^2Y^i(s)Z^j(s)[(\partial_{s^i}\mu(s)\cdot\nabla_\Gamma)X](\mu(s))\cdot\partial_{s^j}\mu(s)
  \end{align*}
  and a similar equality for $Y\cdot\overline{\nabla}_ZX$.
  Hence
  \begin{align} \label{Pf_Ext:As_Cov}
    \Bigl[\overline{\nabla}_YX\cdot Z-Y\cdot\overline{\nabla}_ZX\Bigr](\mu(s)) = \Phi(s)\{Y^1(s)Z^2(s)-Y^2(s)Z^1(s)\}
  \end{align}
  for $s\in U$.
  On the other hand, let
  \begin{align*}
    \omega_i(s) := [\Theta(X)]_{\mu(s)}(\partial_{s^i}\mu(s)) = X(\mu(s))\cdot\partial_{s^i}\mu(s), \quad s\in U, \, i=1,2.
  \end{align*}
  Then since $[\Theta(X)]_{\mu(s)}=\sum_{i=1,2}\omega_i(s)(d_\Gamma s^i)_{\mu(s)}$, we have
  \begin{align*}
    [d_\Gamma\Theta(X)]_{\mu(s)} = \left(\frac{\partial\omega_2}{\partial s^1}(s)-\frac{\partial\omega_1}{\partial s^2}(s)\right)(d_\Gamma s^1)_{\mu(s)}\wedge(d_\Gamma s^2)_{\mu(s)}, \quad s\in U
  \end{align*}
  by \eqref{E:Def_ExtO}.
  Also, it follows from \eqref{Pf_Ext:Dsi} with $\eta$ replaced by $X_k$, $k=1,2,3$ that
  \begin{align*}
    \frac{\partial\omega_i}{\partial s^j}(s) = [(\partial_{s^j}\mu(s)\cdot\nabla_\Gamma)X](\mu(s))\cdot\partial_{s^i}\mu(s)+X(\mu(s))\cdot\partial_{s^j}\partial_{s^i}\mu(s)
  \end{align*}
  for $s\in U$ and $i,j=1,2$.
  By this equality and $\partial_{s^j}\partial_{s^i}\mu(s)=\partial_{s^i}\partial_{s^j}\mu(s)$,
  \begin{align*}
    \frac{\partial\omega_2}{\partial s^1}(s)-\frac{\partial\omega_1}{\partial s^2}(s) = \Phi(s), \quad [d_\Gamma\Theta(X)]_{\mu(s)} = \Phi(s)(d_\Gamma s^1)_{\mu(s)}\wedge(d_\Gamma s^2)_{\mu(s)}
  \end{align*}
  for $s\in U$.
  Since $(d_\Gamma s^1)_{\mu(s)}\wedge(d_\Gamma s^2)_{\mu(s)}$ is a bilinear form on $T_{\mu(s)}\Gamma$ and
  \begin{align*}
    [(d_\Gamma s^1)_{\mu(s)}\wedge(d_\Gamma s^2)_{\mu(s)}](\partial_{s^i}\mu(s),\partial_{s^i}\mu(s)) &= 0, \quad i=1,2, \\
    [(d_\Gamma s^1)_{\mu(s)}\wedge(d_\Gamma s^2)_{\mu(s)}](\partial_{s^1}\mu(s),\partial_{s^2}\mu(s)) &= 1, \\
    [(d_\Gamma s^1)_{\mu(s)}\wedge(d_\Gamma s^2)_{\mu(s)}](\partial_{s^2}\mu(s),\partial_{s^1}\mu(s)) &= -1
  \end{align*}
  by \eqref{E:Dual_B}, we further observe by \eqref{Pf_Ext:YZ} that
  \begin{align*}
    [(d_\Gamma s^1)_{\mu(s)}\wedge(d_\Gamma s^2)_{\mu(s)}](Y(\mu(s)),Z(\mu(s))) = Y^1(s)Z^2(s)-Y^2(s)Z^1(s)
  \end{align*}
  and thus
  \begin{align} \label{Pf_Ext:DX}
    [d_\Gamma\Theta(X)]_{\mu(s)}(Y(\mu(s)),Z(\mu(s))) = \Phi(s)\{Y^1(s)Z^2(s)-Y^2(s)Z^1(s)\}
  \end{align}
  for $s\in U$.
  Therefore, we obtain \eqref{E:Ext_One} on $\mu(U)$ by \eqref{Pf_Ext:As_Cov} and \eqref{Pf_Ext:DX}.

  Now let us show \eqref{E:Cod_One}.
  For $X\in C^1(\Gamma,T\Gamma)$ and $\eta\in C^1(\Gamma)$ we have
  \begin{align*}
    \langle d_\Gamma^\ast\Theta(X),\eta\rangle_0 &= \langle\Theta(X),d_\Gamma\eta\rangle_1 = \langle\Theta(X),\Theta(\nabla_\Gamma\eta)\rangle_1 \\
    &= \int_\Gamma\theta(\Theta(X),\Theta(\nabla_\Gamma\eta))\,d\mathcal{H}^2
  \end{align*}
  by \eqref{E:Def_FoEx} and \eqref{E:Ext_Zero}.
  We apply \eqref{E:The_Pr} to the last term and use \eqref{E:IbP_TD} to get
  \begin{align*}
    \langle d_\Gamma^\ast\Theta(X),\eta\rangle_0 &= \int_\Gamma\theta(X,\nabla_\Gamma\eta)\,d\mathcal{H}^2 = \int_\Gamma X\cdot\nabla_\Gamma\eta\,d\mathcal{H}^2 \\
    &= -\int_\Gamma\{\mathrm{div}_\Gamma X+(X\cdot n)H\}\eta\,d\mathcal{H}^2 = \langle-\mathrm{div}_\Gamma X,\eta\rangle_0,
  \end{align*}
  where the last equality follows from $X\cdot n=0$ on $\Gamma$.
  Hence \eqref{E:Cod_One} holds.
\end{proof}

\begin{lemma} \label{L:Inn_LOF}
  Let $O$ be a relatively open subset of $\Gamma$ and $\{\tau_1,\tau_2\}$ a local orthonormal frame for $T\Gamma$ on $O$.
  Then for $X,Y\in C^1(\Gamma,T\Gamma)$ we have
  \begin{align}
    \nabla_\Gamma X:\nabla_\Gamma Y &= \sum_{i=1,2}\overline{\nabla}_iX\cdot\overline{\nabla}_iY+WX\cdot WY, \label{E:Inn_LOF} \\
    (\nabla_\Gamma X)^T:\nabla_\Gamma Y &= \sum_{i,j=1}^2\Bigl(\overline{\nabla}_iX\cdot\tau_j\Bigr)\Bigl(\overline{\nabla}_jY\cdot\tau_i\Bigr)  \label{E:InTr_LOF}
  \end{align}
  on $O$, where $\overline{\nabla}_i:=\overline{\nabla}_{\tau_i}$ for $i=1,2$.
\end{lemma}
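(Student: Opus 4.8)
The statement to prove is Lemma~\ref{L:Inn_LOF}, which gives two pointwise identities on a relatively open subset $O$ of $\Gamma$ relating the Euclidean Frobenius inner products $\nabla_\Gamma X:\nabla_\Gamma Y$ and $(\nabla_\Gamma X)^T:\nabla_\Gamma Y$ to covariant-derivative expressions with respect to a local orthonormal frame $\{\tau_1,\tau_2\}$ for $T\Gamma$. The key structural input is the decomposition from Lemma~\ref{L:Form_W}: for a tangential vector field $X\in C^1(\Gamma,T\Gamma)$ we have $\nabla_\Gamma X = P(\nabla_\Gamma X)P+(WX)\otimes n$ on $\Gamma$, together with the Gauss formula \eqref{E:Gauss}, $(\tau_i\cdot\nabla_\Gamma)X = \overline{\nabla}_iX+(WX\cdot\tau_i)n$. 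The plan is to expand the Frobenius inner product of $3\times3$ matrices by evaluating it against the orthonormal basis $\{\tau_1,\tau_2,n\}$ of $\mathbb{R}^3$ rather than the standard basis of $\mathbb{R}^3$, which is legitimate since $A:B=\sum_k AE_k\cdot BE_k$ for any orthonormal basis (see Appendix~\ref{S:Ap_Vec}).

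First I would fix $y\in O$ and suppress it, and recall that for a matrix $A\in\mathbb{R}^{3\times3}$ the $k$-th column applied to a vector $v$ is $Av$, while the rows are handled via $A^Tv$. The crucial observation is that the $j$-th ``row-type'' contraction $(\tau_j\cdot\nabla_\Gamma)X = (\nabla_\Gamma X)^T\tau_j$ by \eqref{E:TGrM_Surf} decomposes by the Gauss formula into its tangential part $\overline{\nabla}_jX=\sum_{i=1,2}(\overline{\nabla}_jX\cdot\tau_i)\tau_i$ and normal part $(WX\cdot\tau_j)n$. Then, using $A:B=\sum_{k\in\{1,2,n\}}(A^T\tau_k)\cdot(B^T\tau_k)$ applied with $\tau_3:=n$ (noting $\{\tau_1,\tau_2,n\}$ is orthonormal), I would write
\begin{align*}
  \nabla_\Gamma X:\nabla_\Gamma Y &= \sum_{j=1,2}\bigl((\nabla_\Gamma X)^T\tau_j\bigr)\cdot\bigl((\nabla_\Gamma Y)^T\tau_j\bigr)+\bigl((\nabla_\Gamma X)^Tn\bigr)\cdot\bigl((\nabla_\Gamma Y)^Tn\bigr).
\end{align*}
For $j=1,2$, $(\nabla_\Gamma X)^T\tau_j=\overline{\nabla}_jX+(WX\cdot\tau_j)n$ and similarly for $Y$; expanding the dot product and using $\overline{\nabla}_jX\cdot n=0$ gives $\overline{\nabla}_jX\cdot\overline{\nabla}_jY+(WX\cdot\tau_j)(WY\cdot\tau_j)$. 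For the $n$-term, $(\nabla_\Gamma X)^Tn=(n\cdot\nabla_\Gamma)X$; here I use \eqref{E:Grad_W} in the form $(\nabla_\Gamma X)n=WX$, but I must be careful: \eqref{E:Grad_W} gives $(\nabla_\Gamma X)n=WX$ (columns), not $(\nabla_\Gamma X)^Tn$. So instead I would decompose $\nabla_\Gamma X$ column-wise via $\nabla_\Gamma X=P(\nabla_\Gamma X)P+(WX)\otimes n$, which says $(\nabla_\Gamma X)n=WX$ and $P(\nabla_\Gamma X)=(\nabla_\Gamma X)-(WX)\otimes n$; combining the column decomposition for both $X$ and $Y$ and orthogonality of the two summands gives $\nabla_\Gamma X:\nabla_\Gamma Y = P(\nabla_\Gamma X)P:P(\nabla_\Gamma Y)P + WX\cdot WY$, and then $P(\nabla_\Gamma X)P:P(\nabla_\Gamma Y)P=\sum_{i,j=1,2}(\overline{\nabla}_jX\cdot\tau_i)(\overline{\nabla}_jY\cdot\tau_i)=\sum_{j=1,2}\overline{\nabla}_jX\cdot\overline{\nabla}_jY$ by writing $\overline{\nabla}_jX=\sum_i(\overline{\nabla}_jX\cdot\tau_i)\tau_i$. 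This yields \eqref{E:Inn_LOF}.

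For \eqref{E:InTr_LOF}, I would compute $(\nabla_\Gamma X)^T:\nabla_\Gamma Y$ via the identity $A^T:B=\sum_{k}(A\tau_k)\cdot(B^T\tau_k)$ again using $\{\tau_1,\tau_2,n\}$, or more cleanly note $(\nabla_\Gamma X)^T:\nabla_\Gamma Y = P(\nabla_\Gamma X)^TP:P(\nabla_\Gamma Y)P$ plus a normal cross term that vanishes because $(\nabla_\Gamma X)^Tn$ is tangential while $(\nabla_\Gamma Y)n=WY$ pairs with $n$-components that are zero (the mixed terms between the tangential-tangential block and the $(WX)\otimes n$ or $n\otimes$-type pieces contract to $0$ after transposition). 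Then $P(\nabla_\Gamma X)^TP:P(\nabla_\Gamma Y)P$, evaluated in the $\{\tau_1,\tau_2\}$ frame, gives $\sum_{i,j=1,2}(\overline{\nabla}_iX\cdot\tau_j)(\overline{\nabla}_jY\cdot\tau_i)$ directly by matching indices. The main obstacle I anticipate is purely bookkeeping: keeping straight which of $\nabla_\Gamma X$, $(\nabla_\Gamma X)^T$, and $\nabla_\Gamma X\,n$ versus $(\nabla_\Gamma X)^Tn$ appears at each step, since \eqref{E:Grad_W} and the column decomposition \eqref{E:Grad_W} concern $(\nabla_\Gamma X)n$ while the Gauss formula \eqref{E:Gauss} naturally produces $(\tau_j\cdot\nabla_\Gamma)X=(\nabla_\Gamma X)^T\tau_j$; the transpose in the second identity is what makes the index pattern $(\overline{\nabla}_iX\cdot\tau_j)(\overline{\nabla}_jY\cdot\tau_i)$ asymmetric, so I must track transposes carefully and verify that all normal-direction cross terms indeed cancel. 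Once the decomposition $\nabla_\Gamma X=P(\nabla_\Gamma X)P+(WX)\otimes n$ is used consistently for both arguments, both identities follow from orthogonality of $\{\tau_1,\tau_2,n\}$ and the definition of $\overline{\nabla}_i$ without further geometric input.
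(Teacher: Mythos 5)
Your proposal is correct and follows essentially the same route as the paper: expand the Frobenius product over the orthonormal basis $\{\tau_1,\tau_2,n\}$ of $\mathbb{R}^3$, apply the Gauss formula \eqref{E:Gauss} to $(\tau_i\cdot\nabla_\Gamma)X=(\nabla_\Gamma X)^T\tau_i$, and use the block structure of $\nabla_\Gamma X$ from Lemma \ref{L:Form_W}. The only cosmetic difference is that the paper disposes of the normal term at once by noting $(\nabla_\Gamma X)^Tn=[(Pn)\cdot\nabla_\Gamma]X=0$ from \eqref{E:TGrM_Surf}, which makes the column-decomposition detour you take for \eqref{E:Inn_LOF} unnecessary; all the cross terms you worry about do vanish as you claim.
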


\begin{proof}
  Throughout the proof we carry out calculations on $O$.
  Since $Pn=0$,
  \begin{align} \label{Pf_InL:TGrXn}
    (\nabla_\Gamma X)^Tn = (n\cdot\nabla_\Gamma)X = [(Pn)\cdot\nabla_\Gamma]X = 0
  \end{align}
  by \eqref{E:TGrM_Surf}.
  Also, noting that $\tau_i$ is tangential, we deduce from \eqref{E:Gauss} that
  \begin{align} \label{Pf_InL:TGrXt}
    (\nabla_\Gamma X)^T\tau_i = (\tau_i\cdot\nabla_\Gamma)X = \overline{\nabla}_iX+(WX\cdot\tau_i)n, \quad \overline{\nabla}_iX\cdot n = 0
  \end{align}
  for $i=1,2$.
  Since $\{\tau_1,\tau_2,n\}$ is an orthonormal basis of $\mathbb{R}^3$, we see by \eqref{Pf_InL:TGrXn} and \eqref{Pf_InL:TGrXt} that
  \begin{align*}
    \nabla_\Gamma X:\nabla_\Gamma Y &= (\nabla_\Gamma X)^T:(\nabla_\Gamma Y)^T \\
    &= \sum_{i=1,2}(\nabla_\Gamma X)^T\tau_i\cdot(\nabla_\Gamma Y)^T\tau_i+(\nabla_\Gamma X)^Tn\cdot(\nabla_\Gamma Y)^Tn \\
    &= \sum_{i=1,2}\Bigl\{\overline{\nabla}_iX+(WX\cdot\tau_i)n\Bigr\}\cdot\Bigl\{\overline{\nabla}_iY+(WY\cdot\tau_i)n\Bigr\} \\
    &= \sum_{i=1,2}\overline{\nabla}_iX\cdot\overline{\nabla}_iY+\sum_{i=1,2}(WX\cdot\tau_i)(WY\cdot\tau_i).
  \end{align*}
  Here the last term is equal to $WX\cdot WY$ since $WX$ and $WY$ are tangential and $\{\tau_1,\tau_2\}$ is a local orthonormal frame for $T\Gamma$.
  Hence \eqref{E:Inn_LOF} follows.

  To prove \eqref{E:InTr_LOF} we observe by \eqref{Pf_InL:TGrXt} and $\tau_i\cdot n=0$ that
  \begin{align*}
    (\nabla_\Gamma Y)\tau_i\cdot\tau_j = \tau_i\cdot(\nabla_\Gamma Y)^T\tau_j = \tau_i\cdot\overline{\nabla}_jY, \quad i,j=1,2.
  \end{align*}
  Since $(\nabla_\Gamma Y)\tau_i=P(\nabla_\Gamma Y)\tau_i$ is tangential and $\{\tau_1,\tau_2\}$ is a local orthonormal frame for $T\Gamma$, it follows from the above equality that
  \begin{align*}
    (\nabla_\Gamma Y)\tau_i = \sum_{j=1,2}\{(\nabla_\Gamma Y)\tau_i\cdot\tau_j\}\tau_j = \sum_{j=1,2}\Bigl(\overline{\nabla}_jY\cdot\tau_i\Bigr)\tau_j, \quad i=1,2.
  \end{align*}
  We deduce from this equality, \eqref{Pf_InL:TGrXn}, \eqref{Pf_InL:TGrXt}, and $\tau_j\cdot n=0$ that
  \begin{align*}
    (\nabla_\Gamma X)^T:\nabla_\Gamma Y &= \sum_{i=1,2}(\nabla_\Gamma X)^T\tau_i\cdot(\nabla_\Gamma Y)\tau_i+(\nabla_\Gamma X)^Tn\cdot(\nabla_\Gamma Y)n \\
    &= \sum_{i,j=1}^2\Bigl\{\overline{\nabla}_iX+(WX\cdot \tau_i)n\Bigr\}\cdot\Bigl\{\Bigl(\overline{\nabla}_jY\cdot\tau_i\Bigr)\tau_j\Bigr\} \\
    &= \sum_{i,j=1}^2\Bigl(\overline{\nabla}_iX\cdot\tau_j\Bigr)\Bigl(\overline{\nabla}_jY\cdot\tau_i\Bigr).
  \end{align*}
  Hence \eqref{E:InTr_LOF} is valid.
\end{proof}

\begin{lemma} \label{L:Inn_IbP}
  Let $X\in C^2(\Gamma,T\Gamma)$ and $Y\in C^1(\Gamma,T\Gamma)$.
  Then
  \begin{align}
    \int_\Gamma\nabla_\Gamma X:\nabla_\Gamma Y\,d\mathcal{H}^2 &= -\int_\Gamma\Delta_\Gamma X\cdot Y\,d\mathcal{H}^2, \label{E:Inn_IbP} \\
    \int_\Gamma(\nabla_\Gamma X)^T:\nabla_\Gamma Y\,d\mathcal{H}^2 &= -\int_\Gamma\{\nabla_\Gamma(\mathrm{div}_\Gamma X)+(HW-W^2)X\}\cdot Y\,d\mathcal{H}^2. \label{E:InTr_IbP}
  \end{align}
\end{lemma}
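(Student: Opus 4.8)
\textbf{Proof proposal for Lemma~\ref{L:Inn_IbP}.}
The plan is to prove both \eqref{E:Inn_IbP} and \eqref{E:InTr_IbP} by componentwise integration by parts on $\Gamma$, reducing everything to the single-function formula \eqref{E:IbP_TD} and using the curvature bookkeeping already assembled in Lemmas~\ref{L:Form_W}, \ref{L:TD_Exc}, and \ref{L:Cur_Ten}. For \eqref{E:Inn_IbP}, I would write the left-hand integrand as $\nabla_\Gamma X:\nabla_\Gamma Y=\sum_{i,k}(\underline{D}_iX_k)(\underline{D}_iY_k)$ and apply \eqref{E:IbP_TD} in the $i$-th variable to each pair $(\underline{D}_iX_k,Y_k)$: this produces $-\int_\Gamma(\underline{D}_i\underline{D}_iX_k)Y_k\,d\mathcal{H}^2$ plus a boundary-type term $-\int_\Gamma(\underline{D}_iX_k)Y_kHn_i\,d\mathcal{H}^2$. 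Summing over $i$, the first block is exactly $-\int_\Gamma\Delta_\Gamma X\cdot Y\,d\mathcal{H}^2$, while the second block is $-\int_\Gamma H\{(\nabla_\Gamma X)^Tn\}\cdot Y\,d\mathcal{H}^2$; but $(\nabla_\Gamma X)^Tn=(n\cdot\nabla_\Gamma)X=[(Pn)\cdot\nabla_\Gamma]X=0$ since $Pn=0$ (this is \eqref{Pf_InL:TGrXn}), so that term vanishes and \eqref{E:Inn_IbP} follows. Throughout I would use density of $C^2(\Gamma,T\Gamma)$ in the relevant spaces only if needed, but since the statement is for $C^2$ and $C^1$ fields the integration by parts \eqref{E:IbP_TD} applies directly.

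For \eqref{E:InTr_IbP} the strategy is the same but with one extra commutator of tangential derivatives. Writing $(\nabla_\Gamma X)^T:\nabla_\Gamma Y=\sum_{i,k}(\underline{D}_kX_i)(\underline{D}_iY_k)$ and applying \eqref{E:IbP_TD} in the $i$-th variable to the pair $(\underline{D}_kX_i,Y_k)$, I get $-\int_\Gamma(\underline{D}_i\underline{D}_kX_i)Y_k\,d\mathcal{H}^2$ plus a term with $Hn_i$. Now I swap $\underline{D}_i\underline{D}_k$ to $\underline{D}_k\underline{D}_i$ using \eqref{E:TD_Exc}, which introduces the Weingarten correction $[W\nabla_\Gamma X_i]_in_k-[W\nabla_\Gamma X_i]_kn_i$; after summing over $i$ the leading piece becomes $-\int_\Gamma\underline{D}_k(\mathrm{div}_\Gamma X)\,Y_k\,d\mathcal{H}^2$, i.e. $-\int_\Gamma\nabla_\Gamma(\mathrm{div}_\Gamma X)\cdot Y\,d\mathcal{H}^2$, and the curvature corrections, together with the $Hn_i$ boundary term and the identity $(\nabla_\Gamma X)n=WX$ from \eqref{E:Grad_W} (valid since $X$ is tangential), should collect into precisely $-\int_\Gamma(HW-W^2)X\cdot Y\,d\mathcal{H}^2$. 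Recognizing the combination $HW-W^2$ as the matrix form of the Ricci curvature (see \eqref{E:Ric_Cur}) is the conceptual content here, though for the proof of the lemma itself I only need the algebraic identity.

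The main obstacle I expect is the careful tracking of which indices the $n_i$, $n_k$ factors carry after the commutator \eqref{E:TD_Exc} is applied, and checking that the several $O(W)$ terms — one from the $Hn$ boundary term of \eqref{E:IbP_TD}, two from \eqref{E:TD_Exc}, and one from $(\nabla_\Gamma X)n=WX$ — really combine with the correct signs to give $HW-W^2$ acting on $X$ rather than some other curvature expression. A clean way to organize this, which I would adopt if the raw index computation gets unwieldy, is to first prove \eqref{E:InTr_IbP} pointwise in the local-orthonormal-frame form \eqref{E:InTr_LOF}, integrate, and then integrate by parts intrinsically using the torsion-free property and the definition of the curvature tensor $R$, finally invoking \eqref{E:Ric_Cur} to convert $\mathrm{Ric}(X)$ back to $(HW-W^2)X$; this trades index gymnastics for covariant-derivative manipulations that are already packaged in Section~\ref{SS:ApV_Cuv}. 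Either route should close the proof, and the $C^2$/$C^1$ regularity hypotheses are exactly what is needed to justify every integration by parts.
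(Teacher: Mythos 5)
Your proposal is correct and follows essentially the same route as the paper: componentwise application of \eqref{E:IbP_TD}, vanishing of the $Hn$ term for \eqref{E:Inn_IbP} via $n\cdot\nabla_\Gamma X_j=0$, and for \eqref{E:InTr_IbP} the commutator \eqref{E:TD_Exc} together with \eqref{E:Grad_W} (the paper also uses $Y\cdot n=0$ to kill one of the two Weingarten correction terms, which is the only bookkeeping detail you left implicit). The intrinsic frame-based alternative you mention is not needed; the direct index computation closes exactly as you describe.
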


\begin{proof}
  We observe by \eqref{E:IbP_TD} that
  \begin{align*}
    \int_\Gamma\nabla_\Gamma X:\nabla_\Gamma Y\,d\mathcal{H}^2 &= \sum_{i,j=1}^3\int_\Gamma(\underline{D}_iX_j)(\underline{D}_iY_j)\,d\mathcal{H}^2 \\
    &= -\sum_{i,j=1}^3\int_\Gamma\{\underline{D}_i^2X_j+(\underline{D}_iX_j)Hn_i\}Y_j\,d\mathcal{H}^2 \\
    &= -\int_\Gamma\Delta_\Gamma X\cdot Y\,d\mathcal{H}^2-\sum_{j=1}^3\int_\Gamma(\nabla_\Gamma X_j\cdot n)HY_j\,d\mathcal{H}^2.
  \end{align*}
  Since $\nabla_\Gamma X_j\cdot n=0$ on $\Gamma$ for $j=1,2,3$, we get \eqref{E:Inn_IbP} by the above equality.

  Let us prove \eqref{E:InTr_IbP}.
  We again use \eqref{E:IbP_TD} to deduce that
  \begin{align} \label{Pf_InIbP:Spl}
    \int_\Gamma(\nabla_\Gamma X)^T:\nabla_\Gamma Y\,d\mathcal{H}^2 &= \sum_{i,j=1}^3\int_\Gamma(\underline{D}_jX_i)(\underline{D}_iY_j)\,d\mathcal{H}^2 = J_1+J_2,
  \end{align}
  where
  \begin{align*}
    J_1 := -\sum_{i,j=1}^3\int_\Gamma(\underline{D}_i\underline{D}_jX_i)Y_j\,d\mathcal{H}^2, \quad J_2 := -\sum_{i,j=1}^3\int_\Gamma(\underline{D}_jX_i)Hn_iY_j\,d\mathcal{H}^2.
  \end{align*}
  To $J_1$ we apply \eqref{E:TD_Exc} and
  \begin{align*}
    \sum_{i,j=1}^3[W\nabla_\Gamma X_i]_in_jY_j &= (Y\cdot n)\mathrm{tr}[W\nabla_\Gamma X] = 0, \\
    \sum_{i,j=1}^3[W\nabla_\Gamma X_i]_jn_iY_j &= \{W(\nabla_\Gamma X)n\}\cdot Y = W^2X\cdot Y
  \end{align*}
  on $\Gamma$ by $Y\cdot n=0$ on $\Gamma$ and \eqref{E:Grad_W}.
  Then
  \begin{align*}
    J_1 &= -\sum_{i,j=1}^3\int_\Gamma(\underline{D}_j\underline{D}_iX_i+[W\nabla_\Gamma X_i]_in_j-[W\nabla_\Gamma X_i]_jn_i)Y_j\,d\mathcal{H}^2 \\
    &= -\int_\Gamma\{\nabla_\Gamma(\mathrm{div}_\Gamma X)-W^2X\}\cdot Y\,d\mathcal{H}^2.
  \end{align*}
  Moreover, we use \eqref{E:Grad_W} to $J_2$ to get
  \begin{align*}
    J_2 = -\int_\Gamma H\{(\nabla_\Gamma X)n\cdot Y\}\,d\mathcal{H}^2 = -\int_\Gamma H(WX\cdot Y)\,d\mathcal{H}^2.
  \end{align*}
  Applying the above two equalities to \eqref{Pf_InIbP:Spl} we obtain \eqref{E:InTr_IbP}.
\end{proof}

\begin{lemma} \label{L:Vec_Lap}
  For $X\in C^2(\Gamma,T\Gamma)$ we have
  \begin{alignat}{3}
    \Delta_HX &= P(\Delta_\Gamma X)+(2W^2-HW)X &\quad &\text{on} &\quad &\Gamma, \label{E:HLap} \\
    \Delta_BX &= P(\Delta_\Gamma X)+W^2X &\quad &\text{on} &\quad &\Gamma. \label{E:BLap}
  \end{alignat}
\end{lemma}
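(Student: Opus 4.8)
The plan is to prove both identities \eqref{E:HLap} and \eqref{E:BLap} by testing against an arbitrary $Y\in C^1(\Gamma,T\Gamma)$ and using the variational (formal-adjoint) characterizations of $\Delta_H$ and $\Delta_B$ together with the pointwise frame formulas of Lemma \ref{L:Inn_LOF} and the integration-by-parts formulas of Lemma \ref{L:Inn_IbP}. Since $C^1(\Gamma,T\Gamma)$ is dense in $L^2(\Gamma,T\Gamma)$ and both sides of each identity are continuous sections, it suffices to verify the weak equalities $\int_\Gamma \Delta_H X\cdot Y\,d\mathcal{H}^2 = \int_\Gamma\{P(\Delta_\Gamma X)+(2W^2-HW)X\}\cdot Y\,d\mathcal{H}^2$ and similarly for $\Delta_B$, for all $Y\in C^1(\Gamma,T\Gamma)$.

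For the Bochner Laplacian, I would start from the definition $\Delta_B = -\overline{\nabla}^\ast\overline{\nabla}$, so that by \eqref{E:Def_FoRC},
\begin{align*}
  \int_\Gamma \Delta_B X\cdot Y\,d\mathcal{H}^2 = -\int_\Gamma\theta\Bigl(\overline{\nabla}X,\overline{\nabla}Y\Bigr)\,d\mathcal{H}^2.
\end{align*}
Working on a relatively open set $O$ with a local orthonormal frame $\{\tau_1,\tau_2\}$, I expand $\theta(\overline{\nabla}X,\overline{\nabla}Y) = \sum_{i,j}(\overline{\nabla}X)(\tau_j,\tau_i)(\overline{\nabla}Y)(\tau_j,\tau_i) = \sum_{i}\overline{\nabla}_iX\cdot\overline{\nabla}_iY$ using \eqref{E:RC_Ten} and the orthonormality of the frame. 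By \eqref{E:Inn_LOF} this equals $\nabla_\Gamma X:\nabla_\Gamma Y - WX\cdot WY$ on $O$, and since this is a frame-independent identity it holds globally on $\Gamma$. Integrating and applying \eqref{E:Inn_IbP} gives $\int_\Gamma\Delta_B X\cdot Y\,d\mathcal{H}^2 = \int_\Gamma(\Delta_\Gamma X\cdot Y + WX\cdot WY)\,d\mathcal{H}^2$. Then I rewrite $WX\cdot WY = W^2X\cdot Y$ using $W^T=W$ from Lemma \ref{L:Form_W}, and replace $\Delta_\Gamma X\cdot Y$ by $P(\Delta_\Gamma X)\cdot Y$ since $Y$ is tangential (so $\Delta_\Gamma X\cdot Y = \Delta_\Gamma X\cdot PY = P(\Delta_\Gamma X)\cdot Y$, using $P^T=P$). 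This yields \eqref{E:BLap} in the weak sense, hence pointwise.

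For the Hodge Laplacian, $\Delta_H = -(d_\Gamma d_\Gamma^\ast + d_\Gamma^\ast d_\Gamma)$ acting via $\Theta$, so I would compute $\langle\Delta_H\Theta(X),\Theta(Y)\rangle_1 = -\langle d_\Gamma^\ast\Theta(X),d_\Gamma^\ast\Theta(Y)\rangle_0 - \langle d_\Gamma\Theta(X),d_\Gamma\Theta(Y)\rangle_2$ by \eqref{E:Def_FoEx}. The first term is $-(\mathrm{div}_\Gamma X)(\mathrm{div}_\Gamma Y)$ integrated, by \eqref{E:Cod_One}; combined with \eqref{E:The_Pr} and \eqref{E:IbP_TD} this produces $\int_\Gamma\nabla_\Gamma(\mathrm{div}_\Gamma X)\cdot Y\,d\mathcal{H}^2$ up to boundary-free manipulations. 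The second term I expand using \eqref{E:Ext_One}: $[d_\Gamma\Theta(X)](\tau_i,\tau_j) = \overline{\nabla}_iX\cdot\tau_j - \tau_i\cdot\overline{\nabla}_jX$, and the metric $\hat{\theta}$ on $\Lambda^2T^\ast\Gamma$ from \eqref{E:Def_InWe} picks out the $(\tau_1,\tau_2)$ component, so that $\hat{\theta}(d_\Gamma\Theta(X),d_\Gamma\Theta(Y)) = (\overline{\nabla}_1X\cdot\tau_2 - \overline{\nabla}_2X\cdot\tau_1)(\overline{\nabla}_1Y\cdot\tau_2 - \overline{\nabla}_2Y\cdot\tau_1)$. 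Expanding this product and comparing with \eqref{E:Inn_LOF} and \eqref{E:InTr_LOF}, one sees it equals $\nabla_\Gamma X:\nabla_\Gamma Y - (\nabla_\Gamma X)^T:\nabla_\Gamma Y - WX\cdot WY$ (the cross terms recombine into the trace-transpose contraction). Integrating and applying both \eqref{E:Inn_IbP} and \eqref{E:InTr_IbP} converts everything into $\int_\Gamma$ of $\{-\Delta_\Gamma X + \nabla_\Gamma(\mathrm{div}_\Gamma X) + (HW-W^2)X + W^2X\}\cdot Y\,d\mathcal{H}^2$; adding the $d_\Gamma d_\Gamma^\ast$ contribution $\int_\Gamma\nabla_\Gamma(\mathrm{div}_\Gamma X)\cdot Y\,d\mathcal{H}^2$ and carefully collecting the curvature terms should give $\int_\Gamma\{P(\Delta_\Gamma X) + (2W^2-HW)X\}\cdot Y\,d\mathcal{H}^2$.

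The main obstacle I anticipate is the bookkeeping in the Hodge case: correctly normalizing the wedge-product metric \eqref{E:Def_InWe} against \eqref{E:Def_InDF}, matching the algebraic expansion of $(\overline{\nabla}_1X\cdot\tau_2 - \overline{\nabla}_2X\cdot\tau_1)(\overline{\nabla}_1Y\cdot\tau_2 - \overline{\nabla}_2Y\cdot\tau_1)$ to the combination of \eqref{E:Inn_LOF} and \eqref{E:InTr_LOF}, and ensuring the curvature terms $W^2X$, $HWX$ from \eqref{E:InTr_IbP} and the frame formulas combine with the correct signs and coefficients to produce $2W^2-HW$. A useful consistency check is that \eqref{E:HLap} minus \eqref{E:BLap} must give $\Delta_H X - \Delta_B X = (W^2 - HW)X = -\mathrm{Ric}(X)$ by \eqref{E:Ric_Cur}, recovering the Weitzenböck formula $\Delta_B X = \Delta_H X + \mathrm{Ric}(X)$ referenced as Lemma \ref{L:Weitzen}; I would use this relation to catch sign errors. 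An alternative that avoids some of the $\Lambda^2$-metric subtleties is to prove \eqref{E:BLap} directly as above and then derive \eqref{E:HLap} from the Weitzenböck identity, but since the excerpt presents \eqref{E:VeL_Main} and the Weitzenböck formula as consequences, I would prove both \eqref{E:HLap} and \eqref{E:BLap} independently here.
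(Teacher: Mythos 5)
Your proposal is correct and follows essentially the same route as the paper's proof: both identities are verified weakly against $Y\in C^1(\Gamma,T\Gamma)$ using the adjoint characterizations of $\Delta_H$ and $\Delta_B$, the local-frame expansions \eqref{E:Inn_LOF}--\eqref{E:InTr_LOF} together with \eqref{E:Ext_One}, \eqref{E:Cod_One}, and the integration-by-parts formulas \eqref{E:Inn_IbP}--\eqref{E:InTr_IbP}, exactly as in the paper (which localizes $X$ by a partition of unity rather than invoking frame-independence, an immaterial difference). The algebraic expansion of $\hat{\theta}(d_\Gamma\Theta(X),d_\Gamma\Theta(Y))$ and the final collection of curvature terms that you flag as the delicate step match the paper's computation, and the paper likewise proves both formulas independently and obtains the Weitzenb\"ock identity as a corollary.
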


\begin{proof}
  Let $X\in C^2(\Gamma,T\Gamma)$.
  By a localization argument with a partition of unity on $\Gamma$ we may assume that $X$ is supported in a relatively open subset $O$ of $\Gamma$ on which we can take a local orthonormal frame $\{\tau_1,\tau_2\}$ for $T\Gamma$.

  First we prove \eqref{E:HLap}.
  Let $Y\in C^1(\Gamma,T\Gamma)$.
  Noting that we identify
  \begin{align*}
    \Delta_H = -(d_\Gamma d_\Gamma^\ast+d_\Gamma^\ast d_\Gamma)\colon C^2(\Gamma,\Lambda^1 T^\ast\Gamma) \to C(\Gamma,\Lambda^1T^\ast\Gamma)
  \end{align*}
  with $\widetilde{\Delta}_H=\Theta^{-1}\Delta_H\Theta\colon C^2(\Gamma,T\Gamma)\to C(\Gamma,T\Gamma)$, we observe by \eqref{E:The_Pr} that
  \begin{align} \label{Pf_VL:HL_Spl}
    \begin{aligned}
      \int_\Gamma\Delta_HX\cdot Y\,d\mathcal{H}^2 &= \int_\Gamma\theta(\Theta^{-1}(\Delta_H\Theta(X)),Y)\,d\mathcal{H}^2 \\
      &= \int_\Gamma\theta(\Delta_H\Theta(X),\Theta(Y))\,d\mathcal{H}^2 = J_1+J_2,
    \end{aligned}
  \end{align}
  where (note that $d_\Gamma^\ast$ is the formal adjoint of $d_\Gamma$ given by \eqref{E:Def_FoEx})
  \begin{align*}
    J_1 &:= -\langle d_\Gamma d_\Gamma^\ast\Theta(X),\Theta(Y)\rangle_1 = -\langle d_\Gamma^\ast\Theta(X),d_\Gamma^\ast\Theta(Y)\rangle_0, \\
    J_2 &:= -\langle d_\Gamma^\ast d_\Gamma\Theta(X),\Theta(Y)\rangle_1 = -\langle d_\Gamma\Theta(X),d_\Gamma\Theta(Y)\rangle_2.
  \end{align*}
  We apply \eqref{E:Cod_One} to $J_1$ and then use \eqref{E:IbP_TD} and $Y\cdot n=0$ on $\Gamma$ to get
  \begin{align} \label{Pf_VL:HL_J1}
    \begin{aligned}
      J_1 &= -\int_\Gamma(\mathrm{div}_\Gamma X)(\mathrm{div}_\Gamma Y)\,d\mathcal{H}^2 \\
      &= \int_\Gamma\{\nabla_\Gamma(\mathrm{div}_\Gamma X)+(\mathrm{div}_\Gamma X)Hn\}\cdot Y\,d\mathcal{H}^2 \\
      &= \int_\Gamma\nabla_\Gamma(\mathrm{div}_\Gamma X)\cdot Y\,d\mathcal{H}^2.
    \end{aligned}
  \end{align}
  To compute $J_2$ we see that $d_\Gamma\Theta(X)$ is supported in $O$ since $X$ is so.
  Hence
  \begin{align*}
    J_2 = -\int_O\hat{\theta}(d_\Gamma\Theta(X),d_\Gamma\Theta(Y))\,d\mathcal{H}^2 = -\int_O[d_\Gamma\Theta(X)](\tau_1,\tau_2)[d_\Gamma\Theta(Y)](\tau_1,\tau_2)\,d\mathcal{H}^2
  \end{align*}
  by \eqref{E:Def_InWe} and \eqref{E:Def_InDF}.
  Moreover, it follows from \eqref{E:Ext_One} that
  \begin{align*}
    [d_\Gamma\Theta(X)](\tau_1,\tau_2) = \overline{\nabla}_1X\cdot\tau_2-\tau_1\cdot\overline{\nabla}_2X \quad\text{on}\quad O,
  \end{align*}
  where $\overline{\nabla}_i:=\overline{\nabla}_{\tau_i}$ for $i=1,2$, and thus
  \begin{multline*}
    [d_\Gamma\Theta(X)](\tau_1,\tau_2)[d_\Gamma\Theta(Y)](\tau_1,\tau_2) \\
    \begin{aligned}
      &= \sum_{i,j=1}^2\Bigl(\overline{\nabla}_iX\cdot\tau_j\Bigr)\Bigl(\overline{\nabla}_iY\cdot\tau_j\Bigr)-\sum_{i,j=1}^2\Bigl(\overline{\nabla}_iX\cdot\tau_j\Bigr)\Bigl(\overline{\nabla}_jY\cdot\tau_i\Bigr) \\
      &= \sum_{i=1,2}\overline{\nabla}_iX\cdot\overline{\nabla}_iY-\sum_{i,j=1}^2\Bigl(\overline{\nabla}_iX\cdot\tau_j\Bigr)\Bigl(\overline{\nabla}_jY\cdot\tau_i\Bigr)
    \end{aligned}
  \end{multline*}
  on $O$ (note that $\overline{\nabla}_iX$ and $\overline{\nabla}_iY$ are tangential and $\{\tau_1,\tau_2\}$ is a local orthonormal frame for $T\Gamma$ on $O$).
  We further apply \eqref{E:Inn_LOF} and \eqref{E:InTr_LOF} to the last line to get
  \begin{align*}
    [d_\Gamma\Theta(X)](\tau_1,\tau_2)[d_\Gamma\Theta(Y)](\tau_1,\tau_2) = \nabla_\Gamma X:\nabla_\Gamma Y-WX\cdot WY-(\nabla_\Gamma X)^T:\nabla_\Gamma Y
  \end{align*}
  on $O$.
  Hence we obtain (note that $X$ is supported in $O$)
  \begin{align} \label{Pf_VL:HL_J2}
    \begin{aligned}
      J_2 &= -\int_\Gamma\nabla_\Gamma X:\nabla_\Gamma Y\,d\mathcal{H}^2+\int_\Gamma WX\cdot WY\,d\mathcal{H}^2+\int_\Gamma(\nabla_\Gamma X)^T:\nabla_\Gamma Y\,d\mathcal{H}^2 \\
      &= \int_\Gamma\{\Delta_\Gamma X-\nabla_\Gamma(\mathrm{div}_\Gamma X)+(2W^2-HW)X\}\cdot Y\,d\mathcal{H}^2
    \end{aligned}
  \end{align}
  by \eqref{E:Inn_IbP}, \eqref{E:InTr_IbP}, and $W^T=W$ on $\Gamma$.
  From \eqref{Pf_VL:HL_Spl}--\eqref{Pf_VL:HL_J2} we deduce that
  \begin{align*}
    \int_\Gamma\Delta_HX\cdot Y\,d\mathcal{H}^2 = \int_\Gamma\{\Delta_\Gamma X+(2W^2-HW)X\}\cdot Y\,d\mathcal{H}^2
  \end{align*}
  for all $Y\in C^1(\Gamma,T\Gamma)$.
  Setting $Y:=Pv$ in this equality we further get
  \begin{align*}
    \int_\Gamma\Delta_HX\cdot v\,d\mathcal{H}^2 = \int_\Gamma\{P(\Delta_\Gamma X)+(2W^2-HW)X\}\cdot v\,d\mathcal{H}^2
  \end{align*}
  for all $v\in C^1(\Gamma)^3$ since $\Delta_HX$ and $(2W^2-HW)X$ are tangential on $\Gamma$.
  Hence we obtain \eqref{E:HLap} by the fundamental lemma of the calculus of variations.

  Next let us show \eqref{E:BLap}.
  For $Y\in C^1(\Gamma,T\Gamma)$ we see by \eqref{E:Def_FoRC} that
  \begin{align*}
    \int_\Gamma\Delta_BX\cdot Y\,d\mathcal{H}^2 = -\int_\Gamma\theta\Bigl(\overline{\nabla}^\ast\overline{\nabla}X,Y\Bigr)\,d\mathcal{H}^2 = -\int_\Gamma\theta\Bigl(\overline{\nabla}X,\overline{\nabla}Y\Bigr)\,d\mathcal{H}^2.
  \end{align*}
  Moreover, since $X$ is supported in $O$ and
  \begin{align*}
    \theta\Bigl(\overline{\nabla}X,\overline{\nabla}Y\Bigr) &= \sum_{i,j=1}^2\Bigl(\overline{\nabla}X\Bigr)(\tau_i,\tau_j)\Bigl(\overline{\nabla}Y\Bigr)(\tau_i,\tau_j) = \sum_{i,j=1}^2\Bigl(\overline{\nabla}_jX\cdot\tau_i\Bigr)\Bigl(\overline{\nabla}_jY\cdot\tau_i\Bigr) \\
    &= \sum_{j=1,2}\overline{\nabla}_jX\cdot\overline{\nabla}_jY = \nabla_\Gamma X:\nabla_\Gamma Y-WX\cdot WY
  \end{align*}
  on $O$ by \eqref{E:Def_InTe}, \eqref{E:RC_Ten}, and \eqref{E:Inn_LOF}, it follows that
  \begin{align*}
    \int_\Gamma\Delta_BX\cdot Y\,d\mathcal{H}^2 = -\int_\Gamma\nabla_\Gamma X:\nabla_\Gamma Y\,d\mathcal{H}^2+\int_\Gamma WX\cdot WY\,d\mathcal{H}^2.
  \end{align*}
  To the right-hand side we apply \eqref{E:Inn_IbP} and $W^T=W$ on $\Gamma$ to obtain
  \begin{align*}
    \int_\Gamma\Delta_BX\cdot Y\,d\mathcal{H}^2 = \int_\Gamma(\Delta_\Gamma X+W^2X)\cdot Y\,d\mathcal{H}^2
  \end{align*}
  for all $Y\in C^1(\Gamma,T\Gamma)$ and, since $\Delta_BX$ and $W^2X$ are tangential on $\Gamma$,
  \begin{align*}
    \int_\Gamma\Delta_BX\cdot v\,d\mathcal{H}^2 = \int_\Gamma\{P(\Delta_\Gamma X)+W^2X\}\cdot v\,d\mathcal{H}^2
  \end{align*}
  for all $v\in C^1(\Gamma)^3$.
  Therefore, \eqref{E:BLap} is valid.
\end{proof}

\begin{lemma} \label{L:Weitzen}
  For $X\in C^2(\Gamma,T\Gamma)$ we have
  \begin{align} \label{E:Weitzen}
    \Delta_BX = \Delta_HX+\mathrm{Ric}(X) \quad\text{on}\quad \Gamma.
  \end{align}
\end{lemma}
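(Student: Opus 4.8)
The Weitzenb\"{o}ck formula \eqref{E:Weitzen} follows immediately by combining the two representations of the vector Laplacians obtained in Lemma \ref{L:Vec_Lap} with the identification of the Ricci curvature in Lemma \ref{L:Cur_Ten}. The strategy is simply to express both $\Delta_B X$ and $\Delta_H X$ in terms of the common intrinsic-extrinsic quantity $P(\Delta_\Gamma X)$ and a zeroth-order curvature term, and then subtract.

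\textbf{Key steps.} First I would fix $X \in C^2(\Gamma,T\Gamma)$ and recall from \eqref{E:BLap} that
\begin{align*}
  \Delta_B X = P(\Delta_\Gamma X) + W^2 X \quad\text{on}\quad \Gamma
\end{align*}
and from \eqref{E:HLap} that
\begin{align*}
  \Delta_H X = P(\Delta_\Gamma X) + (2W^2 - HW)X \quad\text{on}\quad \Gamma.
\end{align*}
Subtracting these two identities, the terms $P(\Delta_\Gamma X)$ cancel and one is left with
\begin{align*}
  \Delta_B X - \Delta_H X = W^2 X - (2W^2 - HW)X = (HW - W^2)X \quad\text{on}\quad \Gamma.
\end{align*}
Then I would invoke the first equality of \eqref{E:Ric_Cur} in Lemma \ref{L:Cur_Ten}, namely $\mathrm{Ric}(X) = (HW - W^2)X$ on $\Gamma$, to conclude $\Delta_B X - \Delta_H X = \mathrm{Ric}(X)$, which is \eqref{E:Weitzen}.

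\textbf{Main obstacle.} There is essentially no obstacle here: the proof is a one-line algebraic manipulation once Lemmas \ref{L:Vec_Lap} and \ref{L:Cur_Ten} are available, since all the analytic work (the integration-by-parts identities \eqref{E:Inn_IbP}--\eqref{E:InTr_IbP}, the local-frame computations of Lemma \ref{L:Inn_LOF}, and the curvature computation via \eqref{E:TD_Exc} and the Gauss formula \eqref{E:Gauss}) has already been carried out in establishing those lemmas. The only point requiring a moment's care is bookkeeping of the signs and of the factor in front of $W^2$ in \eqref{E:HLap}, and noting that all three vector fields $\Delta_B X$, $\Delta_H X$, $\mathrm{Ric}(X)$ are genuinely tangential on $\Gamma$ so that the identity makes sense in $C(\Gamma,T\Gamma)$.
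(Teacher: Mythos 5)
Your proposal is correct and follows exactly the paper's own argument: subtract \eqref{E:HLap} from \eqref{E:BLap} to obtain $\Delta_BX-\Delta_HX=(HW-W^2)X$ and then apply the first equality of \eqref{E:Ric_Cur}. Nothing further is needed.
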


\begin{proof}
  By \eqref{E:HLap} and \eqref{E:BLap} we have
  \begin{align*}
    \Delta_BX = \Delta_HX+(HW-W^2)X \quad\text{on}\quad \Gamma.
  \end{align*}
  Applying \eqref{E:Ric_Cur} to the last term we obtain \eqref{E:Weitzen}.
\end{proof}

Note that only intrinsic quantities appear in \eqref{E:Weitzen} unlike in \eqref{E:HLap} and \eqref{E:BLap}.
The formula \eqref{E:Weitzen} is called the Weitzenb\"{o}ck formula (see e.g. \cites{Jo11,Pe06}).
Lemma \ref{L:Vec_Lap} was also shown in \cite{DudMitMit06}*{Theorem 6.3} for a hypersurface in $\mathbb{R}^k$, $k\geq2$, where the authors first proved \eqref{E:BLap} by identifying $\overline{\nabla}X$ with a $k\times k$ matrix for a tangential vector field $X$ on $\Gamma$ and then combined \eqref{E:BLap} and \eqref{E:Weitzen} to obtain \eqref{E:HLap}.
Here we proved both of \eqref{E:HLap} and \eqref{E:BLap} directly without identifying $\overline{\nabla}X$ with a $3\times3$ matrix.

Finally, let us consider a vector Laplace operator on a sphere
\begin{align*}
  \Gamma = S_a^2 := \{x\in\mathbb{R}^3 \mid |x|=a\}, \quad a>0
\end{align*}
introduced in \cite{TeZi97} (see also \cites{LiTeWa92a,LiTeWa92b}).
Let
\begin{align*}
  \mu_a(\vartheta,\varphi) := (a\sin\vartheta\cos\varphi,a\sin\vartheta\sin\varphi,a\cos\vartheta), \quad (\vartheta,\varphi)\in[0,\pi]\times[0,2\pi]
\end{align*}
be a parametrization of $S_a^2$ in spherical coordinates and
\begin{align*}
  e_\vartheta = e_\vartheta(\vartheta,\varphi) :=
  \begin{pmatrix}
    \cos\vartheta\cos\varphi \\
    \cos\vartheta\sin\varphi \\
    -\sin\vartheta
  \end{pmatrix}, \quad
  e_\varphi = e_\varphi(\varphi) :=
  \begin{pmatrix}
    -\sin\varphi \\
    \cos\varphi \\
    0
  \end{pmatrix}.
\end{align*}
Note that $\{e_\vartheta,e_\varphi\}$ is an orthonormal basis of $T_yS_a^2$ with $y=\mu_a(\vartheta,\varphi)$.
Also, for a $C^2$ function $\xi=\xi(\vartheta,\varphi)$ let
\begin{align} \label{E:Def_LSSp}
  \Delta_\mu\xi(\vartheta,\varphi) := \frac{1}{a^2\sin\vartheta}\left\{\frac{\partial}{\partial\vartheta}\Bigl(\sin\vartheta\frac{\partial\xi}{\partial\vartheta}\Bigr)+\frac{1}{\sin\vartheta}\frac{\partial^2\xi}{\partial\varphi^2}\right\},
\end{align}
which is an expression of the scalar Laplace--Beltrami operator $\Delta_\Gamma$ on $S_a^2$ in spherical coordinates.
For $X\in C^2(S_a^2,TS_a^2)$ of the form
\begin{align} \label{E:TVec_S}
  X(\mu_a(\vartheta,\varphi)) = X_\vartheta(\vartheta,\varphi)e_\vartheta+X_\varphi(\vartheta,\varphi)e_\varphi
\end{align}
the tangential Laplacian of $X$ on $S_a^2$ introduced in \cite{TeZi97} is defined by
\begin{align} \label{E:Def_LVSp}
  \Delta_2X(\mu_a(\vartheta,\varphi)) := (\Delta_2X)_\vartheta(\vartheta,\varphi)e_\vartheta+(\Delta_2X)_\varphi(\vartheta,\varphi)e_\varphi
\end{align}
for $(\vartheta,\varphi)\in[0,\pi]\times[0,2\pi]$, where
\begin{align} \label{E:LVSp_Com}
  \begin{aligned}
    (\Delta_2X)_\vartheta &:= \Delta_\mu X_\vartheta-\frac{X_\vartheta}{a^2\sin^2\vartheta}-\frac{2\cos\vartheta}{a^2\sin^2\vartheta}\frac{\partial X_\varphi}{\partial\varphi}, \\
    (\Delta_2X)_\varphi &:= \Delta_\mu X_\varphi-\frac{X_\varphi}{a^2\sin^2\vartheta}+\frac{2\cos\vartheta}{a^2\sin^2\vartheta}\frac{\partial X_\vartheta}{\partial\varphi}
  \end{aligned}
\end{align}
on $[0,\pi]\times[0,2\pi]$ with $\Delta_\mu X_\vartheta$ and $\Delta_\mu X_\varphi$ given by \eqref{E:Def_LSSp}.

\begin{lemma} \label{L:Vec_Sph}
  For $a>0$ let $X\in C^2(S_a^2,TS_a^2)$.
  Then
  \begin{align} \label{E:Vec_Sph}
    \Delta_2 X = \Delta_HX = \Delta_BX-\frac{1}{a^2}X \quad\text{on}\quad S_a^2.
  \end{align}
\end{lemma}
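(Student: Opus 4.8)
The statement to prove is the triple identity \eqref{E:Vec_Sph}, relating the Temam--Ziane tangential Laplacian $\Delta_2$, the Hodge Laplacian $\Delta_H$, and the Bochner Laplacian $\Delta_B$ on the sphere $S_a^2$. The second equality $\Delta_HX = \Delta_BX - a^{-2}X$ will come essentially for free: by the Weitzenb\"ock formula \eqref{E:Weitzen} we have $\Delta_BX = \Delta_HX + \mathrm{Ric}(X)$, and by \eqref{E:Ric_Cur} the Ricci curvature equals $K$ times the identity, where $K = \kappa_1\kappa_2$ is the Gaussian curvature of $S_a^2$. Since the principal curvatures of the sphere of radius $a$ are both $\pm 1/a$ (the sign depending on orientation conventions, but $K = 1/a^2$ regardless), we get $\mathrm{Ric}(X) = a^{-2}X$, hence $\Delta_BX - a^{-2}X = \Delta_HX$. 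So the whole content is the first equality $\Delta_2X = \Delta_HX$.

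\textbf{Reducing to a coordinate computation.} For the first equality, the cleanest route is to use \eqref{E:HLap}, which gives $\Delta_HX = P(\Delta_\Gamma X) + (2W^2 - HW)X$ on $S_a^2$. On the sphere $W = \mp\frac{1}{a}P$ (again with orientation-dependent sign), so $W^2 = \frac{1}{a^2}P$ and $HW = \mathrm{tr}(W)\,W = (\mp\frac{2}{a})(\mp\frac{1}{a}P) = \frac{2}{a^2}P$; therefore $(2W^2 - HW)X = (\frac{2}{a^2} - \frac{2}{a^2})PX = 0$, and $\Delta_HX = P(\Delta_\Gamma X)$ on $S_a^2$. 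Thus the task reduces to showing that for a tangential vector field $X$ of the form \eqref{E:TVec_S}, the projected componentwise Laplace--Beltrami operator $P(\Delta_\Gamma X)$ has spherical-coordinate components exactly \eqref{E:LVSp_Com}. I would carry this out by: (i) writing $\bar X$ as a vector field in $\mathbb R^3$ via the spherical frame, so that each Cartesian component $X_m$ is an explicit function of $X_\vartheta, X_\varphi$ and of $\vartheta,\varphi$ through the trigonometric entries of $e_\vartheta, e_\varphi$; (ii) applying $\Delta_\Gamma$ componentwise, using \eqref{E:Lap_Rest} to identify $\Delta_\Gamma X_m$ with $\Delta\bar X_m$ or, more practically, using the local expression of $\Delta_\Gamma$ in spherical coordinates \eqref{E:Def_LSSp} together with the product rule; (iii) collecting the result, projecting onto $T_yS_a^2$ (i.e. discarding the radial part), and reading off the $e_\vartheta$- and $e_\varphi$-coefficients. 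The cross terms involving $\partial_\varphi X_\vartheta$ and $\partial_\varphi X_\varphi$ with coefficient $\pm\frac{2\cos\vartheta}{a^2\sin^2\vartheta}$, and the diagonal terms $-\frac{1}{a^2\sin^2\vartheta}$, must emerge from the $\varphi$-derivatives of $e_\vartheta$ and $e_\varphi$ (note $\partial_\varphi e_\vartheta = \cos\vartheta\, e_\varphi$ and $\partial_\varphi e_\varphi = -\cos\vartheta\, e_\vartheta - \sin\vartheta\, e_r$ where $e_r$ is the radial unit vector).

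\textbf{Main obstacle.} The genuine difficulty is purely bookkeeping: the differentiation of the moving frame $\{e_\vartheta, e_\varphi\}$ generates a proliferation of lower-order terms, and one must track them carefully — in particular the coefficient $1/\sin\vartheta$ appearing inside $\Delta_\mu$ and the additional $1/\sin^2\vartheta$ factors from second $\varphi$-derivatives of the frame vectors — and then verify the projection kills precisely the radial contributions, leaving the stated $\vartheta$- and $\varphi$-components. An alternative that avoids Cartesian components altogether is to compute directly with the Levi-Civita connection on $S_a^2$ via \eqref{E:BLap}: one expresses $\Delta_B = -\overline\nabla^\ast\overline\nabla$ in the orthonormal-frame formalism of Lemma \ref{L:Inn_LOF}, computes the connection coefficients of $\{e_\vartheta, e_\varphi\}$ (which are classical: $\overline\nabla_{e_\vartheta}e_\vartheta = 0$, $\overline\nabla_{e_\varphi}e_\varphi = -\frac{\cos\vartheta}{a\sin\vartheta}e_\vartheta$, etc., after rescaling to the coordinate frame), and checks the result matches \eqref{E:LVSp_Com} shifted by $-a^{-2}$. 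I expect the coordinate route via \eqref{E:HLap} to be marginally shorter since it needs no connection coefficients; either way the proof is a finite, if tedious, verification, and no conceptual step beyond the already-established Lemmas \ref{L:Vec_Lap}, \ref{L:Weitzen}, \ref{L:Cur_Ten}, and the sphere's explicit $W$ and $K$ is required.
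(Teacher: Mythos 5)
Your proposal is correct, and its overall skeleton coincides with the paper's: both reduce the second equality to the sphere's curvature data (you via the Weitzenb\"ock formula \eqref{E:Weitzen} and \eqref{E:Ric_Cur} with $K=1/a^2$, the paper equivalently via \eqref{E:HLap} and \eqref{E:BLap} with $W=-P/a$), and both reduce the first equality, through \eqref{E:HLap} and the vanishing of $(2W^2-HW)X$ on $S_a^2$, to the identity $\Delta_2X=P(\Delta_\Gamma X)$. The one place where you diverge is in how that last identity is verified. You propose to differentiate the moving frame $\{e_\vartheta,e_\varphi\}$ by hand and collect the lower-order terms; this would work, and you have correctly isolated the frame derivatives responsible for the cross terms and the $1/\sin^2\vartheta$ terms in \eqref{E:LVSp_Com}. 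The paper instead applies the \emph{classical} spherical-coordinate formula for the three-dimensional vector Laplacian to the constant normal extension $\overline{X}$ of $X$: since $\overline{X}$ has vanishing radial component and $r$-independent angular components, the tangential part of $\Delta\overline{X}$ on $r=a$ is read off immediately as $\Delta_2X$, and Lemma \ref{L:Lap_Rest} identifies $\Delta\overline{X}$ with $\Delta_\Gamma X$ on the surface. This turns your "finite, if tedious, verification" into a one-line comparison of two displayed formulas, at the cost of quoting the standard $\mathbb{R}^3$ vector-Laplacian expression; your route is self-contained but longer. No gap in either case.
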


The formula \eqref{E:Vec_Sph} can be shown by calculations of differential forms in spherical coordinates, but here we prove it by using Lemmas \ref{L:Lap_Rest} and \ref{L:Vec_Lap}.

\begin{proof}
  For a sufficiently small $\delta>0$ let
  \begin{align*}
    N_a := \{x\in\mathbb{R}^3 \mid a-\delta<|x|<a+\delta\}
  \end{align*}
  be a tubular neighborhood of $S_a^2$ and
  \begin{multline*}
    \Psi(r,\vartheta,\varphi) := (r\sin\vartheta\cos\varphi,r\sin\vartheta\sin\varphi,r\cos\vartheta), \\
    (r,\vartheta,\varphi)\in(a-\delta,a+\delta)\times[0,\pi]\times[0,2\pi]
  \end{multline*}
  be a parametrization of $N_a$ in spherical coordinates.
  Also, let
  \begin{align*}
    e_r = e_r(\vartheta,\varphi) :=
    \begin{pmatrix}
      \sin\vartheta\cos\varphi \\
      \sin\vartheta\sin\varphi \\
      \cos\vartheta
    \end{pmatrix}, \quad (\vartheta,\varphi)\in[0,\pi]\times[0,2\pi].
  \end{align*}
  Note that $\{e_r,e_\vartheta,e_\varphi\}$ is an orthonormal basis of $\mathbb{R}^3$ and
  \begin{align} \label{Pf_VSp:UNV}
    e_r(\vartheta,\varphi) = n(\mu_a(\vartheta,\varphi)), \quad (\vartheta,\varphi)\in[0,\pi]\times[0,2\pi],
  \end{align}
  where $n(y)=y/a$, $y\in S_a^2$ is the unit outward normal vector field of $S_a^2$.
  For a $C^2$ function $\zeta=\zeta(r,\vartheta,\varphi)$ we set
  \begin{align} \label{Pf_VSp:ScL}
    \Delta_\Psi\zeta(r,\vartheta,\varphi) := \frac{1}{r^2}\frac{\partial}{\partial r}\Bigl(r^2\frac{\partial\zeta}{\partial r}\Bigr)+\frac{1}{r^2\sin\vartheta}\left\{\frac{\partial}{\partial\vartheta}\Bigl(\sin\vartheta\frac{\partial\zeta}{\partial\vartheta}\Bigr)+\frac{1}{\sin\vartheta}\frac{\partial^2\zeta}{\partial\varphi^2}\right\},
  \end{align}
  which is an expression of the scalar Laplace operator on $\mathbb{R}^3$ in spherical coordinates.
  Then for $u=(u_1,u_2,u_3)^T\in C^2(N_a)^3$ of the form
  \begin{align*}
    u(\Psi(r,\vartheta,\varphi)) = u_r(r,\vartheta,\varphi)e_r+u_\vartheta(r,\vartheta,\varphi)e_\vartheta+u_\varphi(r,\vartheta,\varphi)e_\varphi
  \end{align*}
  the vector Laplacian $\Delta u=(\Delta u_1,\Delta u_2,\Delta u_3)^T$ is expressed as
  \begin{align} \label{Pf_VSp:VeL}
    \Delta u(\Psi(r,\vartheta,\varphi)) = (\Delta u)_r(r,\vartheta,\varphi)e_r+(\Delta u)_\vartheta(r,\vartheta,\varphi)e_\vartheta+(\Delta u)_\varphi(r,\vartheta,\varphi)e_\varphi
  \end{align}
  for $(r,\vartheta,\varphi)\in(a-\delta,a+\delta)\times[0,\pi]\times[0,2\pi]$, where
  \begin{align} \label{Pf_VSp:Com}
    \begin{aligned}
      (\Delta u)_r &:= \Delta_\Psi u_r-\frac{2u_r}{r^2}-\frac{2}{r^2\sin\vartheta}\frac{\partial(u_\vartheta\sin\vartheta)}{\partial\vartheta}-\frac{2}{r^2\sin\vartheta}\frac{\partial u_\varphi}{\partial\varphi}, \\
      (\Delta u)_\vartheta &:= \Delta_\Psi u_\vartheta-\frac{u_\vartheta}{r^2\sin^2\vartheta}+\frac{2}{r^2}\frac{\partial u_r}{\partial\vartheta}-\frac{2\cos\vartheta}{r^2\sin^2\vartheta}\frac{\partial u_\varphi}{\partial\varphi}, \\
      (\Delta u)_\varphi &:= \Delta_\Psi u_\varphi-\frac{u_\varphi}{r^2\sin^2\vartheta}+\frac{2}{r^2\sin\vartheta}\frac{\partial u_r}{\partial\varphi}+\frac{2\cos\vartheta}{r^2\sin^2\vartheta}\frac{\partial u_\vartheta}{\partial\varphi}
    \end{aligned}
  \end{align}
  on $(a-\delta,a+\delta)\times[0,\pi]\times[0,2\pi]$ with $\Delta_\Psi u_r$, $\Delta_\Psi u_\vartheta$, $\Delta_\Psi u_\varphi$ given by \eqref{Pf_VSp:ScL}.

  Let $X\in C^2(S_a^2,TS_a^2)$ be of the form \eqref{E:TVec_S} and its constant extension
  \begin{align*}
    \overline{X}(x) = X(\pi(x)) = X\left(\frac{ax}{|x|}\right), \quad x\in N_a
  \end{align*}
  be expressed in spherical coordinates as
  \begin{multline} \label{Pf_VSp:Const}
    \overline{X}(\Psi(r,\vartheta,\varphi)) = \overline{X}_r(r,\vartheta,\varphi)e_r+\overline{X}_\vartheta(r,\vartheta,\varphi)e_\vartheta+\overline{X}_\varphi(r,\vartheta,\varphi)e_\varphi, \\
    (r,\vartheta,\varphi)\in(a-\delta,a+\delta)\times[0,\pi]\times[0,2\pi].
  \end{multline}
  Then since $\overline{X}(\Psi(r,\vartheta,\varphi))=X(\mu_a(\vartheta,\varphi))$, we see by \eqref{E:TVec_S} and \eqref{Pf_VSp:Const} that
  \begin{align*}
    \overline{X}_r(r,\vartheta,\varphi) = 0, \quad \overline{X}_\vartheta(r,\vartheta,\varphi) = X_\vartheta(\vartheta,\varphi), \quad \overline{X}_\varphi(r,\vartheta,\varphi) = X_\varphi(\vartheta,\varphi).
  \end{align*}
  From these equalities, \eqref{E:Def_LSSp}, \eqref{E:LVSp_Com}, \eqref{Pf_VSp:ScL}, and \eqref{Pf_VSp:Com} it follows that
  \begin{align} \label{Pf_VSp:LaCom}
    \left\{
    \begin{aligned}
      \Delta_\Psi\overline{X}_\lambda(a,\vartheta,\varphi) &= \Delta_\mu X_\lambda(\vartheta,\varphi), \\
      \Bigl(\Delta\overline{X}\Bigr)_\lambda(a,\vartheta,\varphi) &= (\Delta_2X)_\lambda(\vartheta,\varphi),
    \end{aligned}
    \right.
    \quad \lambda = \vartheta,\varphi
  \end{align}
  for $(\vartheta,\varphi)\in[0,\pi]\times[0,2\pi]$.
  Moreover, by \eqref{Pf_VSp:UNV} we have
  \begin{align} \label{Pf_VSp:PONB}
    P(\mu_a(\vartheta,\varphi))e_r = 0, \quad P(\mu_a(\vartheta,\varphi))e_\vartheta = e_\vartheta, \quad P(\mu_a(\vartheta,\varphi))e_\varphi = e_\varphi,
  \end{align}
  since $\{e_r,e_\vartheta,e_\varphi\}$ is an orthonormal basis of $\mathbb{R}^3$.
  Noting that
  \begin{align*}
    \Psi(a,\vartheta,\varphi)=\mu_a(\vartheta,\varphi) \in S_a^2, \quad (\vartheta,\varphi)\in[0,\pi]\times[0,2\pi],
  \end{align*}
  we deduce from \eqref{E:Def_LVSp}, \eqref{Pf_VSp:VeL}, \eqref{Pf_VSp:LaCom}, and \eqref{Pf_VSp:PONB} that
  \begin{align*}
    P(\mu_a(\vartheta,\varphi))\Bigl(\Delta\overline{X}\Bigr)(\mu_a(\vartheta,\varphi)) &= P(\mu_a(\vartheta,\varphi))\Bigl(\Delta\overline{X}\Bigr)(\Psi(a,\vartheta,\varphi)) \\
    &= \Bigl(\Delta\overline{X}\Bigr)_\vartheta(a,\vartheta,\varphi)e_\vartheta+\Bigl(\Delta\overline{X}\Bigr)_\varphi(a,\vartheta,\varphi)e_\varphi \\
    &= (\Delta_2X)_\vartheta(\vartheta,\varphi)e_\vartheta+(\Delta_2X)_\varphi(\vartheta,\varphi)e_\varphi \\
    &= \Delta_2X(\mu_a(\vartheta,\varphi))
  \end{align*}
  for all $(\vartheta,\varphi)\in[0,\pi]\times[0,2\pi]$.
  By this fact and \eqref{E:Lap_Rest} we have
  \begin{align} \label{Pf_VSp:D2_LV}
    \Delta_2X = P\Bigl(\Delta\overline{X}\Bigr) = P(\Delta_\Gamma X) \quad\text{on}\quad S_a^2.
  \end{align}
  Now we consider the extension $\tilde{n}(x):=x/a$, $x\in N_a$ of the unit outward normal vector field $n$ of $S_a^2$ to get
  \begin{align*}
    W(y) = -\nabla_\Gamma n(y) = -P(y)\nabla\tilde{n}(y) = -\frac{1}{a}P(y), \quad y\in S_a^2.
  \end{align*}
  Thus the principal curvatures of $S_a^2$ are $\kappa_1=\kappa_2=-1/a$ and
  \begin{align*}
    H = \kappa_1+\kappa_2 = -\frac{2}{a}, \quad WX = -\frac{1}{a}X, \quad W^2X = \frac{1}{a^2}X, \quad (2W^2-HW)X = 0
  \end{align*}
  on $S_a^2$.
  By these equalities, \eqref{E:HLap}, \eqref{E:BLap}, and \eqref{Pf_VSp:D2_LV} we obtain \eqref{E:Vec_Sph}.
\end{proof}

\subsection{Deformation operator} \label{SS:ApV_Def}
For $X\in C^1(\Gamma,T\Gamma)$ and $Y,Z\in C(\Gamma,T\Gamma)$ let
\begin{align} \label{E:Def_Defo}
  (\mathrm{Def}\,X)(Y,Z) := \frac{1}{2}\Bigl(\overline{\nabla}_YX\cdot Z+Y\cdot\overline{\nabla}_ZX\Bigr) \quad\text{on}\quad \Gamma,
\end{align}
i.e. $\mathrm{Def}\,X$ is the symmetric part of $\overline{\nabla}X$.
We call
\begin{align*}
  \mathrm{Def}\colon C^1(\Gamma,T\Gamma)\to C(\Gamma,T^\ast\Gamma\otimes T^\ast\Gamma), \quad X\mapsto\mathrm{Def}\,X
\end{align*}
the deformation operator and $\mathrm{Def}\,X$ the deformation tensor for $X$.
Also, let
\begin{align*}
  \mathrm{Def}^\ast\colon C^1(\Gamma,T^\ast\Gamma\otimes T^\ast\Gamma)\to C(\Gamma,T\Gamma)
\end{align*}
be the formal adjoint of $\mathrm{Def}$ given by
\begin{align} \label{E:Def_FoDe}
  \int_\Gamma\theta(\mathrm{Def}^\ast S,X)\,d\mathcal{H}^2 := \int_\Gamma\theta(S,\mathrm{Def}\,X)\,d\mathcal{H}^2
\end{align}
for $S\in C^1(\Gamma,T^\ast\Gamma\otimes T^\ast\Gamma)$ and $X\in C^1(\Gamma,T\Gamma)$.
We show that the viscous term in the limit equations \eqref{E:NS_Limit} is written in terms of $\mathrm{Def}$ and $\mathrm{Def}^\ast$.

\begin{lemma} \label{L:Vis_Defo}
  Let $g\in C^1(\Gamma)$ and $X\in C^2(\Gamma,T\Gamma)$.
  Then
  \begin{align} \label{E:Vis_Defo}
    P\mathrm{div}_\Gamma[gD_\Gamma(X)] = -\mathrm{Def}^\ast(g\,\mathrm{Def}\,X) \quad\text{on}\quad \Gamma.
  \end{align}
\end{lemma}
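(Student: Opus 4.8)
The plan is to prove the integral identity
\begin{align*}
  \int_\Gamma \mathrm{div}_\Gamma[gD_\Gamma(X)]\cdot Y\,d\mathcal{H}^2 = -\int_\Gamma \theta(g\,\mathrm{Def}\,X,\mathrm{Def}\,Y)\,d\mathcal{H}^2
\end{align*}
for all $Y\in C^1(\Gamma,T\Gamma)$, and then to conclude via the definition \eqref{E:Def_FoDe} of $\mathrm{Def}^\ast$ together with the fundamental lemma of the calculus of variations. Note that the hypotheses $g\in C^1(\Gamma)$ and $X\in C^2(\Gamma,T\Gamma)$ are exactly what is needed: then $D_\Gamma(X)=P(\nabla_\Gamma X)_SP\in C^1(\Gamma)^{3\times3}$, so $P\mathrm{div}_\Gamma[gD_\Gamma(X)]$ is a continuous tangential vector field and $g\,\mathrm{Def}\,X\in C^1(\Gamma,T^\ast\Gamma\otimes T^\ast\Gamma)$, so $\mathrm{Def}^\ast(g\,\mathrm{Def}\,X)$ is well defined and continuous; hence the displayed integral identity, once proven, shows that $-P\mathrm{div}_\Gamma[gD_\Gamma(X)]$ satisfies the defining relation of $\mathrm{Def}^\ast(g\,\mathrm{Def}\,X)$, which is \eqref{E:Vis_Defo}.

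First I would set up the dictionary between the extrinsic matrix $D_\Gamma(X)$ and the intrinsic tensor $\mathrm{Def}\,X$. Using $\overline{\nabla}_YX=P(\nabla_\Gamma X)^TY$, the notation $(Y\cdot\nabla_\Gamma)X=(\nabla_\Gamma X)^TY$, and $P^T=P^2=P$, one checks for tangential $Y,Z$ that $(\nabla_\Gamma X)^TY\cdot Z=\overline{\nabla}_YX\cdot Z$ and $(\nabla_\Gamma X)Y\cdot Z=Y\cdot\overline{\nabla}_ZX$, whence
\begin{align*}
  D_\Gamma(X)Y\cdot Z = (\nabla_\Gamma X)_SY\cdot Z = \tfrac12\bigl(\overline{\nabla}_YX\cdot Z+Y\cdot\overline{\nabla}_ZX\bigr) = (\mathrm{Def}\,X)(Y,Z)
\end{align*}
(the first equality uses $PD_\Gamma(X)P=D_\Gamma(X)$ and $PY=Y$, $PZ=Z$). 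Next, fixing a local orthonormal frame $\{\tau_1,\tau_2\}$ for $T\Gamma$ and evaluating the matrix inner product in the orthonormal basis $\{\tau_1,\tau_2,n\}$, the $n$-contributions vanish because $D_\Gamma(X)n=D_\Gamma(X)^Tn=0$ (from $Pn=0$ and symmetry), so
\begin{align*}
  D_\Gamma(X):D_\Gamma(Y) = \sum_{i,j=1}^2\bigl(D_\Gamma(X)\tau_i\cdot\tau_j\bigr)\bigl(D_\Gamma(Y)\tau_i\cdot\tau_j\bigr) = \theta(\mathrm{Def}\,X,\mathrm{Def}\,Y)
\end{align*}
on $\Gamma$ by the definition \eqref{E:Def_InTe} of the tensor metric. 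Since $D_\Gamma(X)$ is symmetric with $PD_\Gamma(X)P=D_\Gamma(X)$, this gives $\theta(g\,\mathrm{Def}\,X,\mathrm{Def}\,Y)=gD_\Gamma(X):D_\Gamma(Y)=gD_\Gamma(X):\nabla_\Gamma Y$ on $\Gamma$.

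Then I would integrate by parts componentwise: writing $\int_\Gamma gD_\Gamma(X):\nabla_\Gamma Y\,d\mathcal{H}^2=\sum_{k,l}\int_\Gamma g[D_\Gamma(X)]_{kl}\underline{D}_kY_l\,d\mathcal{H}^2$ and applying \eqref{E:IbP_TD} with $\eta=g[D_\Gamma(X)]_{kl}\in C^1(\Gamma)$, $\xi=Y_l\in C^1(\Gamma)$, $i=k$, the mean-curvature term is $\sum_{k,l}\int_\Gamma g[D_\Gamma(X)]_{kl}Y_lHn_k\,d\mathcal{H}^2$, which vanishes because $\sum_k[D_\Gamma(X)]_{kl}n_k=[D_\Gamma(X)^Tn]_l=[D_\Gamma(X)n]_l=0$; together with $\sum_k\underline{D}_k(g[D_\Gamma(X)]_{kl})=[\mathrm{div}_\Gamma(gD_\Gamma(X))]_l$ this yields
\begin{align*}
  \int_\Gamma gD_\Gamma(X):\nabla_\Gamma Y\,d\mathcal{H}^2 = -\int_\Gamma \mathrm{div}_\Gamma[gD_\Gamma(X)]\cdot Y\,d\mathcal{H}^2 = -\int_\Gamma P\mathrm{div}_\Gamma[gD_\Gamma(X)]\cdot Y\,d\mathcal{H}^2,
\end{align*}
the last step by $PY=Y$, $P^T=P$. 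Combining with the definition of $\mathrm{Def}^\ast$ gives $\int_\Gamma\theta\bigl(\mathrm{Def}^\ast(g\,\mathrm{Def}\,X)+P\mathrm{div}_\Gamma[gD_\Gamma(X)],Y\bigr)\,d\mathcal{H}^2=0$ for all $Y\in C^1(\Gamma,T\Gamma)$, and since both vector fields inside are continuous and tangential, \eqref{E:Vis_Defo} follows.

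I expect the main obstacle to be getting the extrinsic/intrinsic dictionary correct — the identities relating $D_\Gamma(X)$, $(\nabla_\Gamma X)_S$, $\overline{\nabla}X$, $\mathrm{Def}\,X$ and the two metrics on $(0,2)$-tensors — where one must carefully track projections $P$, transposes, and $n$-components; the integration by parts step itself is a direct application of \eqref{E:IbP_TD} once one observes that the curvature term drops out by $D_\Gamma(X)n=0$.
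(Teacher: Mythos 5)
Your proposal is correct, and its overall architecture (dualize $\mathrm{Def}^\ast$ via \eqref{E:Def_FoDe}, reduce to the pointwise identity $\theta(g\,\mathrm{Def}\,X,\mathrm{Def}\,Y)=gD_\Gamma(X):\nabla_\Gamma Y$, then integrate by parts componentwise with \eqref{E:IbP_TD} and kill the curvature term by $D_\Gamma(X)^Tn=0$) coincides with the paper's. The one genuine difference is how the central identity is obtained. The paper first expands $\theta(g\,\mathrm{Def}\,X,\mathrm{Def}\,Y)$ in a local orthonormal frame, converts the frame sums into the extrinsic quantities $\nabla_\Gamma X:\nabla_\Gamma Y$, $(\nabla_\Gamma X)^T:\nabla_\Gamma Y$ and $WX\cdot WY$ via Lemma \ref{L:Inn_LOF}, and then shows that the Weingarten contributions coming from the decomposition $(\nabla_\Gamma X)_S=D_\Gamma(X)+\tfrac12\{(WX)\otimes n+n\otimes(WX)\}$ exactly cancel the $WX\cdot WY$ term. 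You instead prove the cleaner pointwise dictionary $D_\Gamma(X)Y\cdot Z=(\mathrm{Def}\,X)(Y,Z)$ for tangential $Y,Z$ (which is correct: the projections and the Gauss formula make the normal contributions drop out), so that $\theta(\mathrm{Def}\,X,\mathrm{Def}\,Y)=D_\Gamma(X):D_\Gamma(Y)=D_\Gamma(X):\nabla_\Gamma Y$ follows immediately from evaluating the Frobenius product in the basis $\{\tau_1,\tau_2,n\}$. This identifies $D_\Gamma(X)$ as the matrix of the bilinear form $\mathrm{Def}\,X$ on the tangent plane, bypasses Lemma \ref{L:Inn_LOF} and the Weingarten cancellation entirely, and also removes the need to localize $X$ by a partition of unity, since your frame computation verifies a pointwise identity between globally defined objects. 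The integration-by-parts step and the concluding density/fundamental-lemma argument are the same in both proofs.
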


\begin{proof}
  By a localization argument with a partition of unity on $\Gamma$ we may assume that $X$ is supported in a relatively open subset $O$ of $\Gamma$ on which we can take a local orthonormal frame $\{\tau_1,\tau_2\}$ for $T\Gamma$.
  For $Y\in C^1(\Gamma,T\Gamma)$ we have
  \begin{align} \label{Pf_VDe:Int_DD}
    \begin{aligned}
      \int_\Gamma\mathrm{Def}^\ast(g\,\mathrm{Def}\,X)\cdot Y\,d\mathcal{H}^2 &= \int_\Gamma\theta(\mathrm{Def}^\ast(g\,\mathrm{Def}\,X),Y)\,d\mathcal{H}^2 \\
      &= \int_\Gamma\theta(g\,\mathrm{Def}\,X,\mathrm{Def}\,Y)\,d\mathcal{H}^2
    \end{aligned}
  \end{align}
  by \eqref{E:Def_FoDe}.
  Moreover, letting $\overline{\nabla}_i:=\overline{\nabla}_{\tau_i}$ for $i=1,2$ we see by \eqref{E:Def_InTe} that
  \begin{align*}
    \theta(g\,\mathrm{Def}\,X,\mathrm{Def}\,Y) &= g\sum_{i,j=1}^2(\mathrm{Def}\,X)(\tau_i,\tau_j)(\mathrm{Def}\,Y)(\tau_i,\tau_j) \\
    &= \frac{g}{2}\sum_{i,j=1}^2\left\{\Bigl(\overline{\nabla}_iX\cdot\tau_j\Bigr)\Bigl(\overline{\nabla}_iY\cdot\tau_j\Bigr)+\Bigl(\overline{\nabla}_iX\cdot\tau_j\Bigr)\Bigl(\overline{\nabla}_jY\cdot\tau_i\Bigr)\right\} \\
    &= \frac{g}{2}\left\{\sum_{i=1,2}\overline{\nabla}_iX\cdot\overline{\nabla}_iY+\sum_{i,j=1}^2\Bigl(\overline{\nabla}_iX\cdot\tau_j\Bigr)\Bigl(\overline{\nabla}_jY\cdot\tau_i\Bigr)\right\}
  \end{align*}
  on $O$.
  Hence we apply \eqref{E:Inn_LOF} and \eqref{E:InTr_LOF} to the last line to get
  \begin{align} \label{Pf_VDe:gDe}
    \theta(g\,\mathrm{Def}\,X,\mathrm{Def}\,Y) = \frac{g}{2}\{\nabla_\Gamma X:\nabla_\Gamma Y+(\nabla_\Gamma X)^T:\nabla_\Gamma Y-WX\cdot WY\}
  \end{align}
  on $O$.
  We also observe by \eqref{E:Def_SSR}, \eqref{E:Grad_W}, and $P^T=P$ on $\Gamma$ that
  \begin{multline} \label{Pf_VDe:Sym}
    \frac{1}{2}\{\nabla_\Gamma X:\nabla_\Gamma Y+(\nabla_\Gamma X)^T:\nabla_\Gamma Y\} \\
    = D_\Gamma(X):\nabla_\Gamma Y+\frac{1}{2}\{(WX)\otimes n+n\otimes(WX)\}:\nabla_\Gamma Y \quad\text{on}\quad O.
  \end{multline}
  Moreover, since $\{\tau_1,\tau_2,n\}$ is an orthonormal basis of $\mathbb{R}^3$,
  \begin{multline*}
    \{(WX)\otimes n+n\otimes(WX)\}:\nabla_\Gamma Y \\
    = \sum_{i=1,2}\{(WX)\otimes n+n\otimes(WX)\}\tau_i\cdot(\nabla_\Gamma Y)\tau_i \\
    +\{(WX)\otimes n+n\otimes(WX)\}n\cdot(\nabla_\Gamma Y)n
  \end{multline*}
  on $O$.
  To the right-hand side we apply $(\nabla_\Gamma Y)n=WY$ by \eqref{E:Grad_W} and
  \begin{align*}
    \{(WX)\otimes n+n\otimes(WX)\}\tau_i &= (n\cdot\tau_i)WX+(WX\cdot\tau_i)n = (WX\cdot\tau_i)n, \\
    \{(WX)\otimes n+n\otimes(WX)\}n &= (n\cdot n)WX+(WX\cdot n)n = WX
  \end{align*}
  and use the fact that $(\nabla_\Gamma Y)\tau_i=P(\nabla_\Gamma Y)\tau_i$ is tangential on $O$ to get
  \begin{align*}
    \{(WX)\otimes n+n\otimes(WX)\}:\nabla_\Gamma Y = WX\cdot WY \quad\text{on}\quad O.
  \end{align*}
  Applying this equality and \eqref{Pf_VDe:Sym} to the right-hand side of \eqref{Pf_VDe:gDe} we have
  \begin{align*}
    \theta(g\,\mathrm{Def}\,X,\mathrm{Def}\,Y) = g\{D_\Gamma(X):\nabla_\Gamma Y\} \quad\text{on}\quad O.
  \end{align*}
  Noting that $X$ is supported in $O$, we use this equality and \eqref{Pf_VDe:Int_DD} to get
  \begin{align} \label{Pf_VDe:Int_SS}
    \int_\Gamma\mathrm{Def}^\ast(g\,\mathrm{Def}\,X)\cdot Y\,d\mathcal{H}^2 = \int_\Gamma g\{D_\Gamma(X):\nabla_\Gamma Y\}\,d\mathcal{H}^2.
  \end{align}
  Moreover, for $A=(A_{kl})_{k,l}\in C^1(\Gamma)^{3\times 3}$ and $v=(v_1,v_2,v_3)^T\in C^1(\Gamma)^3$,
  \begin{align*}
    \int_\Gamma A:\nabla_\Gamma v\,d\mathcal{H}^2 &= \sum_{k,l=1}^3\int_\Gamma A_{kl}(\underline{D}_kv_l)\,d\mathcal{H}^2 \\
    &= -\sum_{k,l=1}^3\int_\Gamma(\underline{D}_kA_{kl}+A_{kl}Hn_k)v_l\,d\mathcal{H}^2 \\
    &= -\int_\Gamma(\mathrm{div}_\Gamma A+HA^Tn)\cdot v\,d\mathcal{H}^2
  \end{align*}
  by \eqref{E:IbP_TD}.
  Applying this with $A=gD_\Gamma(X)$ and $v=Y$ to \eqref{Pf_VDe:Int_SS} and using
  \begin{align*}
    D_\Gamma(X)^Tn = \frac{1}{2}P\{\nabla_\Gamma X+(\nabla_\Gamma X)^T\}Pn = 0 \quad\text{on}\quad \Gamma
  \end{align*}
  by \eqref{E:Def_SSR} and $P^T=P$ on $\Gamma$, we obtain
  \begin{align*}
    \int_\Gamma\mathrm{Def}^\ast(g\,\mathrm{Def}\,X)\cdot Y\,d\mathcal{H}^2 = -\int_\Gamma\mathrm{div}_\Gamma[gD_\Gamma(X)]\cdot Y\,d\mathcal{H}^2
  \end{align*}
  for all $Y\in C^1(\Gamma,T\Gamma)$.
  Hence (note that $\mathrm{Def}^\ast(g\,\mathrm{Def}\,X)$ is tangential on $\Gamma$)
  \begin{align*}
    \int_\Gamma\mathrm{Def}^\ast(g\,\mathrm{Def}\,X)\cdot v\,d\mathcal{H}^2 = -\int_\Gamma P\mathrm{div}_\Gamma[gD_\Gamma(X)]\cdot v\,d\mathcal{H}^2
  \end{align*}
  for all $v\in C^1(\Gamma)^3$ and \eqref{E:Vis_Defo} follows.
\end{proof}

When $g\equiv1$ we have another form of the right-hand side of \eqref{E:Vis_Defo}.

\begin{lemma} \label{L:DeDe_BL}
  For $X\in C^1(\Gamma,T\Gamma)$ we have
  \begin{align} \label{E:DeDe_BL}
    2\mathrm{Def}^\ast\mathrm{Def}\,X = -\Delta_BX-\nabla_\Gamma(\mathrm{div}_\Gamma X)-\mathrm{Ric}(X) \quad\text{on}\quad \Gamma.
  \end{align}
\end{lemma}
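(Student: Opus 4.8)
The plan is to combine the two identities already established in this appendix: the formula \eqref{E:Vis_Defo} with $g\equiv1$, namely $P\mathrm{div}_\Gamma[D_\Gamma(X)]=-\mathrm{Def}^\ast(\mathrm{Def}\,X)$, and the algebraic identity for the surface strain rate tensor that will appear in the companion lemma, $2P\mathrm{div}_\Gamma[D_\Gamma(X)]=\Delta_BX+\nabla_\Gamma(\mathrm{div}_\Gamma X)+\mathrm{Ric}(X)$ on $\Gamma$ (this is exactly \eqref{E:PD_Main}, which is Lemma~\ref{L:DiSR_BL}). Since the present lemma and Lemma~\ref{L:DiSR_BL} are stated as separate results, the honest route is to prove \eqref{E:DeDe_BL} directly by essentially the same variational computation used for Lemma~\ref{L:Vis_Defo}, but tracking the symmetric-plus-skew decomposition of $\overline{\nabla}X$ rather than collapsing it into $D_\Gamma(X):\nabla_\Gamma Y$.

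First I would reduce, by a partition of unity on $\Gamma$, to the case where $X\in C^2(\Gamma,T\Gamma)$ is supported in a relatively open set $O$ carrying a local orthonormal frame $\{\tau_1,\tau_2\}$ for $T\Gamma$, writing $\overline{\nabla}_i:=\overline{\nabla}_{\tau_i}$. Then for a test field $Y\in C^1(\Gamma,T\Gamma)$, starting from \eqref{E:Def_FoDe} I would expand
\[
\int_\Gamma 2\mathrm{Def}^\ast\mathrm{Def}\,X\cdot Y\,d\mathcal{H}^2 = \int_\Gamma 2\,\theta(\mathrm{Def}\,X,\mathrm{Def}\,Y)\,d\mathcal{H}^2
\]
and compute the integrand via \eqref{E:Def_InTe} and \eqref{E:Def_Defo}, obtaining, exactly as in the proof of Lemma~\ref{L:Vis_Defo}, the expression $\tfrac12\{\nabla_\Gamma X:\nabla_\Gamma Y+(\nabla_\Gamma X)^T:\nabla_\Gamma Y-WX\cdot WY\}$ doubled, i.e. $\nabla_\Gamma X:\nabla_\Gamma Y+(\nabla_\Gamma X)^T:\nabla_\Gamma Y - WX\cdot WY$, using \eqref{E:Inn_LOF} and \eqref{E:InTr_LOF} for the two matrix contractions. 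The key point is then to integrate by parts term by term: apply \eqref{E:Inn_IbP} to $\int_\Gamma\nabla_\Gamma X:\nabla_\Gamma Y$, apply \eqref{E:InTr_IbP} to $\int_\Gamma(\nabla_\Gamma X)^T:\nabla_\Gamma Y$, and keep $\int_\Gamma WX\cdot WY$ as is (here $W^T=W$ on $\Gamma$). This yields
\[
\int_\Gamma 2\mathrm{Def}^\ast\mathrm{Def}\,X\cdot Y\,d\mathcal{H}^2 = -\int_\Gamma\bigl\{\Delta_\Gamma X+\nabla_\Gamma(\mathrm{div}_\Gamma X)+(HW-W^2)X+W^2X\bigr\}\cdot Y\,d\mathcal{H}^2,
\]
where the $W^2X$ from the deformation contraction and the $W^2X$ hidden in $(HW-W^2)X$ from \eqref{E:InTr_IbP} combine with the $WX\cdot WY$ term.

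Next I would rewrite the bracket using the Bochner Laplacian. By \eqref{E:BLap} we have $\Delta_\Gamma X = \Delta_BX - W^2X$ after projecting (more precisely $P(\Delta_\Gamma X)=\Delta_BX-W^2X$, and since $Y$ is tangential only the tangential part of $\Delta_\Gamma X$ contributes under the integral), and by \eqref{E:Ric_Cur} we have $(HW-W^2)X=\mathrm{Ric}(X)$. Substituting and simplifying, the right-hand bracket becomes exactly $\Delta_BX+\nabla_\Gamma(\mathrm{div}_\Gamma X)+\mathrm{Ric}(X)$, so
\[
\int_\Gamma 2\mathrm{Def}^\ast\mathrm{Def}\,X\cdot Y\,d\mathcal{H}^2 = -\int_\Gamma\bigl\{\Delta_BX+\nabla_\Gamma(\mathrm{div}_\Gamma X)+\mathrm{Ric}(X)\bigr\}\cdot Y\,d\mathcal{H}^2
\]
for all $Y\in C^1(\Gamma,T\Gamma)$. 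Since $2\mathrm{Def}^\ast\mathrm{Def}\,X$, $\Delta_BX$, $\nabla_\Gamma(\mathrm{div}_\Gamma X)$ and $\mathrm{Ric}(X)$ are all tangential on $\Gamma$, testing against $Y:=Pv$ for arbitrary $v\in C^1(\Gamma)^3$ and invoking the fundamental lemma of the calculus of variations gives \eqref{E:DeDe_BL}.

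\textbf{Main obstacle.} The routine but error-prone part is the bookkeeping of the curvature terms: keeping straight the $W^2X$ contribution coming out of \eqref{E:InTr_IbP} versus the one coming from the $-WX\cdot WY$ term in $\theta(\mathrm{Def}\,X,\mathrm{Def}\,Y)$, and making sure the signs line up so that the net result is precisely $+\mathrm{Ric}(X)=+(HW-W^2)X$ and not, say, $(HW-W^2)X+2W^2X$. A subtlety worth flagging is that \eqref{E:InTr_IbP} is stated for $X\in C^2$, so the $C^2$-regularity of $X$ (rather than $C^1$) is genuinely used in the integration by parts; the final identity then extends to the stated $C^1$ hypothesis only if one is careful, but since the lemma as written asks for $X\in C^1(\Gamma,T\Gamma)$ while both sides involve second derivatives of $X$, I would either read the statement as tacitly requiring enough smoothness or close the argument by a density approximation of $C^1$ tangential fields by $C^2$ ones together with continuity of both sides in the appropriate topology. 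No new geometric input is needed beyond Lemmas~\ref{L:Inn_LOF}, \ref{L:Inn_IbP}, \ref{L:Vec_Lap} and \eqref{E:Ric_Cur}; the whole proof is a reorganization of the computation already carried out for Lemma~\ref{L:Vis_Defo}.
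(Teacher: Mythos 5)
Your proposal is correct and follows essentially the same route as the paper: reduce to a frame-supported $X$, expand $2\theta(\mathrm{Def}\,X,\mathrm{Def}\,Y)$ via \eqref{Pf_VDe:Int_DD} and \eqref{Pf_VDe:gDe} with $g\equiv1$, integrate by parts with \eqref{E:Inn_IbP} and \eqref{E:InTr_IbP}, and convert $P(\Delta_\Gamma X)$ and $(HW-W^2)X$ into $\Delta_BX-W^2X$ and $\mathrm{Ric}(X)$ via \eqref{E:BLap} and \eqref{E:Ric_Cur}. Your side remark about regularity is also on point: the proof genuinely uses \eqref{E:InTr_IbP}, which requires $X\in C^2$, so the hypothesis $C^1$ in the statement should be read as $C^2$ (as in the companion Lemma \ref{L:DiSR_BL}).
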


\begin{proof}
  We may assume that $X$ is supported in the relatively open subset $O$ of $\Gamma$ given in the proof of Lemma \ref{L:Vis_Defo}.
  Then by \eqref{Pf_VDe:Int_DD} and \eqref{Pf_VDe:gDe} with $g\equiv1$ we have
  \begin{align*}
    \int_\Gamma2\mathrm{Def}^\ast\mathrm{Def}\,X\cdot Y\,d\mathcal{H}^2 = \int_\Gamma\{\nabla_\Gamma X:\nabla_\Gamma Y+(\nabla_\Gamma X)^T:\nabla_\Gamma Y-WX\cdot WY\}\,d\mathcal{H}^2.
  \end{align*}
  We apply \eqref{E:Inn_IbP}, \eqref{E:InTr_IbP}, $W^T=W$ on $\Gamma$ to the right-hand side to get
  \begin{align*}
    \int_\Gamma2\mathrm{Def}^\ast\mathrm{Def}\,X\cdot Y\,d\mathcal{H}^2 = -\int_\Gamma\{\Delta_\Gamma X+\nabla_\Gamma(\mathrm{div}_\Gamma X)+HWX\}\cdot Y\,d\mathcal{H}^2
  \end{align*}
  for all $Y\in C^1(\Gamma,T\Gamma)$ and thus
  \begin{align*}
    \int_\Gamma2\mathrm{Def}^\ast\mathrm{Def}\,X\cdot v\,d\mathcal{H}^2 = -\int_\Gamma\{P(\Delta_\Gamma X)+\nabla_\Gamma(\mathrm{div}_\Gamma X)+HWX\}\cdot v\,d\mathcal{H}^2
  \end{align*}
  for all $v\in C^1(\Gamma)^3$ since $\mathrm{Def}^\ast\mathrm{Def}\,X$, $\nabla_\Gamma(\mathrm{div}_\Gamma X)$, and $WX$ are tangential on $\Gamma$.
  By this equality, \eqref{E:Ric_Cur}, and \eqref{E:BLap} we find that
  \begin{align*}
    2\mathrm{Def}^\ast\mathrm{Def}\,X &= -P(\nabla_\Gamma X)-\nabla_\Gamma(\mathrm{div}_\Gamma X)-HWX \\
    &= -\Delta_BX-\nabla_\Gamma(\mathrm{div}_\Gamma X)-\mathrm{Ric}(X)
  \end{align*}
  on $\Gamma$.
  Thus \eqref{E:DeDe_BL} is valid.
\end{proof}

By \eqref{E:Vis_Defo} with $g\equiv 1$ and \eqref{E:DeDe_BL} we also have the following formula.

\begin{lemma} \label{L:DiSR_BL}
  For $X\in C^2(\Gamma,T\Gamma)$ we have
  \begin{align} \label{E:DiSR_BL}
    2P\mathrm{div}_\Gamma[D_\Gamma(X)] = \Delta_BX+\nabla_\Gamma(\mathrm{div}_\Gamma X)+\mathrm{Ric}(X) \quad\text{on}\quad \Gamma.
  \end{align}
\end{lemma}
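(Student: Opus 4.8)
\textbf{Proof plan for Lemma \ref{L:DiSR_BL}.}
The plan is to derive \eqref{E:DiSR_BL} by combining the two formulas already established in this appendix that express, on the one hand, the surface divergence of the strain rate tensor in terms of the deformation operator, and on the other hand, the composition $\mathrm{Def}^\ast\mathrm{Def}$ in terms of the intrinsic operators $\Delta_B$, $\nabla_\Gamma\mathrm{div}_\Gamma$, and $\mathrm{Ric}$. Concretely, I would first apply Lemma \ref{L:Vis_Defo} with $g\equiv 1$ to obtain
\begin{align*}
  P\mathrm{div}_\Gamma[D_\Gamma(X)] = -\mathrm{Def}^\ast(\mathrm{Def}\,X) \quad\text{on}\quad \Gamma
\end{align*}
for $X\in C^2(\Gamma,T\Gamma)$. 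Here one should note that $g\equiv 1$ is of class $C^1(\Gamma)$, so Lemma \ref{L:Vis_Defo} indeed applies, and that $1\cdot\mathrm{Def}\,X=\mathrm{Def}\,X$.

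Next I would multiply the previous identity by $2$ and invoke Lemma \ref{L:DeDe_BL}, which gives
\begin{align*}
  2\mathrm{Def}^\ast\mathrm{Def}\,X = -\Delta_BX-\nabla_\Gamma(\mathrm{div}_\Gamma X)-\mathrm{Ric}(X) \quad\text{on}\quad \Gamma.
\end{align*}
Substituting this into $2P\mathrm{div}_\Gamma[D_\Gamma(X)] = -2\mathrm{Def}^\ast(\mathrm{Def}\,X)$ yields exactly \eqref{E:DiSR_BL}. The only technical point to keep track of is the regularity hypotheses: Lemma \ref{L:DeDe_BL} is stated for $X\in C^1(\Gamma,T\Gamma)$, while Lemma \ref{L:Vis_Defo} (and hence the left-hand side of \eqref{E:DiSR_BL}) requires $X\in C^2(\Gamma,T\Gamma)$ so that $D_\Gamma(X)\in C^1$ and $\mathrm{div}_\Gamma[D_\Gamma(X)]$ is defined; since the statement of Lemma \ref{L:DiSR_BL} already assumes $X\in C^2(\Gamma,T\Gamma)$, both ingredients apply without further ado.

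This is essentially a one-line deduction from previously proved results, so I do not anticipate any genuine obstacle. The only place requiring a moment of care is bookkeeping: making sure the sign conventions (the minus sign in the definitions of $\Delta_H$, $\Delta_B$, and in the formal adjoints $\mathrm{Def}^\ast$, $\overline{\nabla}^\ast$) are consistent when the two formulas are chained together, and confirming that $\mathrm{Ric}(X)$ appearing in Lemma \ref{L:DeDe_BL} is the same quantity referenced in the statement of Lemma \ref{L:DiSR_BL}, which it is by the definition in Section \ref{SS:ApV_Cuv}. If one prefers a self-contained argument avoiding the deformation operator entirely, an alternative is to expand $2P\mathrm{div}_\Gamma[D_\Gamma(X)]$ directly using the integration-by-parts identities \eqref{E:Inn_IbP} and \eqref{E:InTr_IbP} together with the definitions \eqref{E:Def_SSR} and \eqref{E:BLap} and the Ricci formula \eqref{E:Ric_Cur}, but this merely re-derives Lemmas \ref{L:Vis_Defo} and \ref{L:DeDe_BL}, so the route through those two lemmas is the natural one.
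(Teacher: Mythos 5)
Your proposal is correct and matches the paper's own derivation exactly: the paper introduces Lemma \ref{L:DiSR_BL} with the remark that it follows ``by \eqref{E:Vis_Defo} with $g\equiv 1$ and \eqref{E:DeDe_BL},'' which is precisely the two-step combination you describe. No gaps.
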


Note that the right-hand side of \eqref{E:DiSR_BL} is intrinsic although the left-hand side is defined componentwisely in the fixed coordinate system of $\mathbb{R}^3$.
Also, if $\mathrm{div}_\Gamma X=0$ on $\Gamma$, then \eqref{E:DiSR_BL} reduces to
\begin{align*}
  2P\mathrm{div}_\Gamma[D_\Gamma(X)] = \Delta_BX+\mathrm{Ric}(X) \quad\text{on}\quad \Gamma
\end{align*}
and the right-hand side gives the viscous term in the Navier--Stokes equations on a Riemannian manifold (see \cite{Ta92}).

\section{Construction of a weak solution to the limit equations} \label{S:Ap_CWL}
In this appendix we explain the outline of construction of a weak solution to the limit equations \eqref{E:NS_Limit} for given data
\begin{align*}
  v_0\in\mathcal{H}_g = L_{g\sigma}^2(\Gamma,T\Gamma), \quad f\in L_{loc}^2([0,\infty);H^{-1}(\Gamma,T\Gamma))
\end{align*}
by the Galerkin method (see also the case of the usual Navier--Stokes equations in a bounded domain in $\mathbb{R}^2$ explained in \cites{BoFa13,CoFo88,Te79}).
We assume that $\Gamma$ is a $C^5$ closed, connected, and oriented surface in $\Gamma$ but do not impose Assumptions \ref{Assump_1} and \ref{Assump_2}.
Also, we use the notations in Sections \ref{S:Pre}, \ref{S:WSol}, and \ref{S:SL}.

\subsection{Countable basis of the weighted solenoidal spaces} \label{SS:CWL_CoB}
We define a bilinear form on $H^1(\Gamma,T\Gamma)$ by
\begin{align*}
  \tilde{a}_g(v_1,v_2) := a_g(v_1,v_2)+(v_1,v_2)_{L^2(\Gamma)}, \quad v_1,v_2\in H^1(\Gamma,T\Gamma).
\end{align*}
Then $\tilde{a}_g$ is bounded, coercive, and symmetric on $\mathcal{V}_g=H_{g\sigma}^1(\Gamma,T\Gamma)$ by Lemma \ref{L:Bi_Surf}.
Since $\mathcal{V}_g$ is a closed subspace of $H^1(\Gamma,T\Gamma)$, the Lax--Milgram theorem implies that $\tilde{a}_g$ induces a bounded linear operator $\widetilde{A}_g$ from $\mathcal{V}_g$ into its dual space $\mathcal{V}'_g$.
We consider $\widetilde{A}_g$ as an unbounded operator on $\mathcal{H}_g$ equipped with inner product
\begin{align*}
  (v_1,v_2)_{L_g^2(\Gamma)} := (g^{1/2}v_1,g^{1/2}v_2)_{L^2(\Gamma)}, \quad v_1,v_2\in\mathcal{H}_g,
\end{align*}
which is equivalent to the canonical $L^2(\Gamma)$-inner product by \eqref{E:G_Inf}.
Then as in the case of the Stokes operator for a bounded domain (see e.g. \cite{BoFa13}*{Theorem IV.5.5}) we can show that there exists a sequence $\{w_k\}_{k=1}^\infty$ of eigenvectors of $\widetilde{A}_g$ that is an orthonormal basis of $\mathcal{H}_g$ equipped with inner product $(\cdot,\cdot)_{L_g^2(\Gamma)}$ and an orthogonal basis of $\mathcal{V}_g$ equipped with inner product $\tilde{a}_g(\cdot,\cdot)$.
In particular,
\begin{align} \label{E:Wei_Orth}
  (gw_i,w_j)_{L^2(\Gamma)} = (w_i,w_j)_{L_g^2(\Gamma)} = \delta_{ij}, \quad i,j\in\mathbb{N},
\end{align}
where $\delta_{ij}$ is the Kronecker delta.

\subsection{Approximate problem} \label{SS:CWL_Ap}
For a fixed $T>0$ let
\begin{align*}
  F_T(t) :=
  \begin{cases}
    f(t), &t\in(0,T), \\
    0, & t\in\mathbb{R}\setminus(0,T).
  \end{cases}
\end{align*}
Then $F_T\in L^2(\mathbb{R};H^{-1}(\Gamma,T\Gamma))$ by the assumption on $f$.
For $k\in\mathbb{N}$ let
\begin{align*}
  f_k(t) := \frac{1}{\varepsilon_k}\int_{-\infty}^\infty\rho\left(\frac{t-s}{\varepsilon_k}\right)F_T(s)\,ds, \quad t\in\mathbb{R} \quad \left(\varepsilon_k := \frac{1}{k}\right)
\end{align*}
be the regularization of $F_T$, where $\rho$ is a standard mollifier on $\mathbb{R}$.
Then
\begin{gather*}
  f_k \in C_c^\infty(\mathbb{R};H^{-1}(\Gamma,T\Gamma)), \\
  \|f_k\|_{L^2(0,T;H^{-1}(\Gamma,T\Gamma))} \leq \|F_T\|_{L^2(\mathbb{R};H^{-1}(\Gamma,T\Gamma))} = \|f\|_{L^2(0,T;H^{-1}(\Gamma,T\Gamma))}
\end{gather*}
for all $k\in\mathbb{N}$.
Moreover, since $F_T=f$ on $(0,T)$,
\begin{align*}
  \lim_{k\to\infty}f_k = f \quad\text{strongly in}\quad L^2(0,T;H^{-1}(\Gamma,T\Gamma)).
\end{align*}
For $k\in\mathbb{N}$ let $\mathcal{V}_g^k$ be the linear span of $\{w_i\}_{i=1}^k$.
We look for a vector field
\begin{align*}
  v_k \in C^1([0,T];\mathcal{V}_g^k), \quad v_k(t) = \sum_{i=1}^k\xi_i(t)w_i, \quad t\in[0,T]
\end{align*}
with $\xi_i\in C^1([0,T])$, $i=1,\dots,k$ satisfying the approximate problem
\begin{multline} \label{E:LimG_Ap}
  (g\partial_tv_k(t),\eta_k)_{L^2(\Gamma)}+a_g(v_k(t),\eta_k) +b_g(v_k(t),v_k(t),\eta_k) \\
  = [gf_k(t),\eta_k]_{T\Gamma}, \quad t\in(0,T)
\end{multline}
for all $\eta_k\in \mathcal{V}_g^k$ with initial condition
\begin{align} \label{E:LGAp_Ini}
  v_k(0) = \sum_{i=1}^k(v_0,w_i)_{L_g^2(\Gamma)}w_i.
\end{align}
This problem is equivalent to the system of ordinary differential equations
\begin{align*}
  \left\{
  \begin{aligned}
    \sum_{i=1}^k(gw_i,w_j)_{L^2(\Gamma)}\frac{d\xi_i}{dt}(t) &= \mathcal{P}_j(\xi(t))+[gf_k(t),w_j]_{T\Gamma}, \quad t\in(0,T), \\
    \xi_j(0) &= (v_0,w_j)_{L_g^2(\Gamma)}
  \end{aligned}
  \right.
\end{align*}
for $j=1,\dots,k$, where $\mathcal{P}_1,\dots,\mathcal{P}_k$ are polynomials of $\xi=(\xi_1,\dots,\xi_k)^T\in\mathbb{R}^k$.
Using \eqref{E:Wei_Orth} we see that this system reduces to
\begin{align*}
  \left\{
  \begin{aligned}
    \frac{d\xi_j}{dt}(t) &= \mathcal{P}_j(\xi(t))+[gf_k(t),w_j]_{T\Gamma}, \quad t\in(0,T), \\
    \xi_j(0) &= (v_0,w_j)_{L_g^2(\Gamma)}
  \end{aligned}
  \right.
\end{align*}
for $j=1,\dots,k$, which we can solve locally by the Cauchy--Lipschitz theorem.
Also, setting $\eta_k=v_k(t)\in\mathcal{V}_g^k$ in \eqref{E:LimG_Ap} and applying \eqref{E:Bi_Surf} and \eqref{E:TriS_Vg} we can derive the energy estimate for the approximate solution $v_k$ of the form
\begin{multline} \label{E:Ener_Ap}
  \max_{t\in[0,T_k]}\|v_k(t)\|_{L^2(\Gamma)}^2+\int_0^{T_k}\|\nabla_\Gamma v_k(t)\|_{L^2(\Gamma)}^2\,dt \\
  \leq c_T\left(\|v_0\|_{L^2(\Gamma)}^2+\|f\|_{L^2(0,T;H^{-1}(\Gamma,T\Gamma))}^2\right)
\end{multline}
as in the proof of Lemma \ref{L:PMu_Energy}, where $T_k\in(0,T]$ is the maximal existence time of $v_k$ and $c_T>0$ is a constant depending only on $T$.
By this estimate we further get $T_k=T$ since the right-hand side is independent of $T_k$.

\subsection{Time derivative of the approximate solution} \label{SS:CWL_Dt}
The energy estimate \eqref{E:Ener_Ap} implies the weak convergence of (a subsequence of) $\{v_k\}_{k=1}^\infty$ in appropriate function spaces on $\Gamma$.
To get the strong convergence of $\{v_k\}_{k=1}^\infty$ by the Aubin--Lions lemma we estimate $\partial_tv_k$ in $H^{-1}(\Gamma,T\Gamma)$ as in Section \ref{SS:SL_EDt} (see also Remark \ref{R:Mu_Dt}).

Let $w\in H^1(\Gamma,T\Gamma)$.
Then there exist $\eta\in\mathcal{V}_g$ and $q\in H^2(\Gamma)$ such that
\begin{align*}
  w = g\eta+g\nabla_\Gamma q \quad\text{on}\quad \Gamma, \quad \|\eta\|_{H^1(\Gamma)} \leq c\|w\|_{H^1(\Gamma)}
\end{align*}
by Lemma \ref{L:Mu_Dt_Test}.
Since $\{w_k\}_{k=1}^\infty$ is an orthogonal basis of $\mathcal{V}_g$ equipped with inner product $\tilde{a}_g(\cdot,\cdot)$,
\begin{align*}
  \eta = \sum_{i=1}^\infty \tilde{a}_g(\eta,\tilde{w_i})\tilde{w}_i \quad\text{in}\quad \mathcal{V}_g, \quad \tilde{w}_i := \frac{w_i}{a_g(w_i,w_i)^{1/2}}.
\end{align*}
Then for $\eta_k:=\sum_{i=1}^k\tilde{a}_g(\eta,\tilde{w}_i)\tilde{w}_i\in\mathcal{V}_g^k$, $k\in\mathbb{N}$ we have
\begin{align*}
  \|\eta_k\|_{H^1(\Gamma)} \leq c\|\eta\|_{H^1(\Gamma)} \leq c\|w\|_{H^1(\Gamma)}
\end{align*}
since $\tilde{a}_g(\cdot,\cdot)$ is equivalent to the canonical $H^1(\Gamma)$-inner product on $\mathcal{V}_g$.
Also,
\begin{align*}
  (g\partial_tv_k,\eta_k)_{L^2(\Gamma)} &= (g\partial_tv_k,\eta)_{L^2(\Gamma)} = (\partial_tv_k,g\eta)_{L^2(\Gamma)} \\
  &= (\partial_tv_k,w-g\nabla_\Gamma q)_{L^2(\Gamma)} = (\partial_tv_k,w)_{L^2(\Gamma)},
\end{align*}
where the first equality is due to \eqref{E:Wei_Orth} and the last equality follows from
\begin{align*}
  \partial_tv_k\in\mathcal{V}_g^k\subset\mathcal{H}_g, \quad g\nabla_\Gamma q\in\mathcal{H}_g^\perp
\end{align*}
by Lemma \ref{L:L2gs_Orth} (note that here we take the canonical $L^2(\Gamma)$-inner product).
Thus, substituting $\eta_k$ for \eqref{E:LimG_Ap} and using \eqref{E:Bi_Surf}--\eqref{E:TriS_Vg} and the above relations, we can show as in the proof of Lemma \ref{L:PMu_Dt} that
\begin{align*}
  \|\partial_tv_k(t)\|_{H^{-1}(\Gamma,T\Gamma)} \leq c\left\{\left(1+\|v_k(t)\|_{L^2(\Gamma)}\right)\|v_k(t)\|_{H^1(\Gamma)}+\|f_k(t)\|_{H^{-1}(\Gamma,T\Gamma)}\right\}
\end{align*}
for all $t\in(0,T)$.
By this inequality and \eqref{E:Ener_Ap} we get
\begin{align} \label{E:EDt_Ap}
  \|\partial_tv_k\|_{L^2(0,T;H^{-1}(\Gamma,T\Gamma))} \leq c = c(T,\|v_0\|_{L^2(\Gamma)},\|f\|_{L^2(0,T;H^{-1}(\Gamma,T\Gamma))}),
\end{align}
where the constant $c>0$ on the right-hand side is independent of $k\in\mathbb{N}$.

Now we observe by \eqref{E:Ener_Ap} and \eqref{E:EDt_Ap} that
\begin{itemize}
  \item $\{v_k\}_{k=1}^\infty$ is bounded in $L^\infty(0,T;\mathcal{H}_g)\cap L^2(0,T;\mathcal{V}_g)$,
  \item $\{\partial_tv_k\}_{k=1}^\infty$ is bounded in $L^2(0,T;H^{-1}(\Gamma,T\Gamma))$.
\end{itemize}
Moreover, $\{v_k(0)\}_{k=1}^\infty$ converges to $v_0$ strongly in $\mathcal{H}_g$ since $v_k(0)$ is given by \eqref{E:LGAp_Ini} and $\{w_k\}_{k=1}^\infty$ is an orthonormal basis of $\mathcal{H}_g$ equipped with inner product $(\cdot,\cdot)_{L_g^2(\Gamma)}$.
Hence we can show that $\{v_k\}_{k=1}^\infty$ converges to a unique weak solution $v_T$ to \eqref{E:NS_Limit} on $[0,T)$ as in the proof of Theorem \ref{T:SL_Weak}.
Setting $v:=v_T$ on $[0,T)$ for each $T>0$ we obtain a unique weak solution $v$ to \eqref{E:NS_Limit} on $[0,\infty)$.

\end{appendix}

\section*{Acknowledgments}
This work is an expanded version of a part of the doctoral thesis of the author \cite{Miu_DT} completed under the supervision of Professor Yoshikazu Giga at the University of Tokyo.
The author is grateful to him for his valuable comments on this work.
The author also would like to thank Mr. Yuuki Shimizu for fruitful discussions on Killing vector fields on surfaces and anonymous referees for valuable remarks.

The work of the author was supported by Grant-in-Aid for JSPS Fellows No. 16J02664 and No. 19J00693, and by the Program for Leading Graduate Schools, MEXT, Japan.

\begin{bibdiv}
\begin{biblist}

\bib{AdFo03}{book}{
   author={Adams, Robert A.},
   author={Fournier, John J. F.},
   title={Sobolev spaces},
   series={Pure and Applied Mathematics (Amsterdam)},
   volume={140},
   edition={2},
   publisher={Elsevier/Academic Press, Amsterdam},
   date={2003},
   pages={xiv+305},
}

\bib{AmRe14}{article}{
   author={Amrouche, Ch\'{e}rif},
   author={Rejaiba, Ahmed},
   title={$L^p$-theory for Stokes and Navier-Stokes equations with Navier boundary condition},
   journal={J. Differential Equations},
   volume={256},
   date={2014},
   number={4},
   pages={1515--1547},
}

\bib{Ar89}{book}{
  author={Aris, Rutherford},
  title={Vectors, tensors, and the basic equations of fluid mechanics},
  publisher={Dover Publications},
  date={1989},
}

\bib{ArrDe09}{article}{
   author={Arroyo, Marino},
   author={DeSimone, Antonio},
   title={Relaxation dynamics of fluid membranes},
   journal={Phys. Rev. E (3)},
   volume={79},
   date={2009},
   number={3},
   pages={031915, 17},
}

\bib{BaGaNu15}{article}{
   author={Barrett, John W.},
   author={Garcke, Harald},
   author={N\"urnberg, Robert},
   title={Stable numerical approximation of two-phase flow with a Boussinesq-Scriven surface fluid},
   journal={Commun. Math. Sci.},
   volume={13},
   date={2015},
   number={7},
   pages={1829--1874},
}

\bib{Be04}{article}{
   author={Beir\~ao Da Veiga, H.},
   title={Regularity for Stokes and generalized Stokes systems under nonhomogeneous slip-type boundary conditions},
   journal={Adv. Differential Equations},
   volume={9},
   date={2004},
   number={9-10},
   pages={1079--1114},
}

\bib{BoPr10}{article}{
   author={Bothe, Dieter},
   author={Pr\"uss, Jan},
   title={On the two-phase Navier-Stokes equations with Boussinesq-Scriven surface fluid},
   journal={J. Math. Fluid Mech.},
   volume={12},
   date={2010},
   number={1},
   pages={133--150},
}

\bib{Bo1913}{article}{
   author={Boussinesq, M. J.},
   title={Sur l'existence d'une viscosit\'e superficielle, dans la mince couche de transition s\'eparant un liquide d'un autre fluide contigu},
   journal={Ann. Chim. Phys.},
   volume={29},
   date={1913},
   pages={349--357},
}

\bib{BoFa13}{book}{
   author={Boyer, Franck},
   author={Fabrie, Pierre},
   title={Mathematical tools for the study of the incompressible
   Navier-Stokes equations and related models},
   series={Applied Mathematical Sciences},
   volume={183},
   publisher={Springer, New York},
   date={2013},
   pages={xiv+525},
}

\bib{ChaCzu13}{article}{
   author={Chan, Chi Hin},
   author={Czubak, Magdalena},
   title={Non-uniqueness of the Leray-Hopf solutions in the hyperbolic setting},
   journal={Dyn. Partial Differ. Equ.},
   volume={10},
   date={2013},
   number={1},
   pages={43--77},
}

\bib{ChaCzuDis17}{article}{
   author={Chan, Chi Hin},
   author={Czubak, Magdalena},
   author={Disconzi, Marcelo M.},
   title={The formulation of the Navier-Stokes equations on Riemannian manifolds},
   journal={J. Geom. Phys.},
   volume={121},
   date={2017},
   pages={335--346},
}

\bib{Ch15}{book}{
   author={Chen, Bang-Yen},
   title={Total mean curvature and submanifolds of finite type},
   series={Series in Pure Mathematics},
   volume={27},
   edition={2},
   note={With a foreword by Leopold Verstraelen},
   publisher={World Scientific Publishing Co. Pte. Ltd., Hackensack, NJ},
   date={2015},
   pages={xviii+467},
}

\bib{Ci97}{book}{
   author={Ciarlet, Philippe G.},
   title={Mathematical elasticity. Vol. II},
   series={Studies in Mathematics and its Applications},
   volume={27},
   note={Theory of plates},
   publisher={North-Holland Publishing Co., Amsterdam},
   date={1997},
   pages={lxiv+497},
}

\bib{Ci00}{book}{
   author={Ciarlet, Philippe G.},
   title={Mathematical elasticity. Vol. III},
   series={Studies in Mathematics and its Applications},
   volume={29},
   note={Theory of shells},
   publisher={North-Holland Publishing Co., Amsterdam},
   date={2000},
   pages={lxii+599},
}

\bib{CoFo88}{book}{
   author={Constantin, Peter},
   author={Foias, Ciprian},
   title={Navier-Stokes equations},
   series={Chicago Lectures in Mathematics},
   publisher={University of Chicago Press, Chicago, IL},
   date={1988},
   pages={x+190},
}

\bib{DinMit04}{article}{
   author={Dindo\v{s}, Martin},
   author={Mitrea, Marius},
   title={The stationary Navier-Stokes system in nonsmooth manifolds: the Poisson problem in Lipschitz and $C^1$ domains},
   journal={Arch. Ration. Mech. Anal.},
   volume={174},
   date={2004},
   number={1},
   pages={1--47},
}

\bib{DudMitMit06}{article}{
   author={Duduchava, L. Roland},
   author={Mitrea, Dorina},
   author={Mitrea, Marius},
   title={Differential operators and boundary value problems on hypersurfaces},
   journal={Math. Nachr.},
   volume={279},
   date={2006},
   number={9-10},
   pages={996--1023},
}

\bib{DzEl13}{article}{
   author={Dziuk, Gerhard},
   author={Elliott, Charles M.},
   title={Finite element methods for surface PDEs},
   journal={Acta Numer.},
   volume={22},
   date={2013},
   pages={289--396},
}

\bib{EbMa70}{article}{
   author={Ebin, David G.},
   author={Marsden, Jerrold},
   title={Groups of diffeomorphisms and the motion of an incompressible fluid. },
   journal={Ann. of Math. (2)},
   volume={92},
   date={1970},
   pages={102--163},
}

\bib{ElSt09}{article}{
   author={Elliott, Charles M.},
   author={Stinner, Bj\"{o}rn},
   title={Analysis of a diffuse interface approach to an advection diffusion
   equation on a moving surface},
   journal={Math. Models Methods Appl. Sci.},
   volume={19},
   date={2009},
   number={5},
   pages={787--802},
}

\bib{Ev10}{book}{
   author={Evans, Lawrence C.},
   title={Partial differential equations},
   series={Graduate Studies in Mathematics},
   volume={19},
   edition={2},
   publisher={American Mathematical Society, Providence, RI},
   date={2010},
   pages={xxii+749},
}

\bib{Fr18}{article}{
   author={Fries, Thomas-Peter},
   title={Higher-order surface FEM for incompressible Navier-Stokes flows on manifolds},
   journal={Internat. J. Numer. Methods Fluids},
   volume={88},
   date={2018},
   number={2},
   pages={55--78},
}

\bib{Ga11}{book}{
   author={Galdi, G. P.},
   title={An introduction to the mathematical theory of the Navier-Stokes
   equations},
   series={Springer Monographs in Mathematics},
   edition={2},
   note={Steady-state problems},
   publisher={Springer, New York},
   date={2011},
   pages={xiv+1018},
}

\bib{GiTr01}{book}{
   author={Gilbarg, David},
   author={Trudinger, Neil S.},
   title={Elliptic partial differential equations of second order},
   series={Classics in Mathematics},
   note={Reprint of the 1998 edition},
   publisher={Springer-Verlag, Berlin},
   date={2001},
   pages={xiv+517},
}

\bib{HaRa92a}{article}{
   author={Hale, Jack K.},
   author={Raugel, Genevi\`eve},
   title={A damped hyperbolic equation on thin domains},
   journal={Trans. Amer. Math. Soc.},
   volume={329},
   date={1992},
   number={1},
   pages={185--219},
}

\bib{HaRa92b}{article}{
   author={Hale, Jack K.},
   author={Raugel, Genevi\`eve},
   title={Reaction-diffusion equation on thin domains},
   journal={J. Math. Pures Appl. (9)},
   volume={71},
   date={1992},
   number={1},
   pages={33--95},
}

\bib{Hi16}{article}{
   author={Higaki, Mitsuo},
   title={Navier wall law for nonstationary viscous incompressible flows},
   journal={J. Differential Equations},
   volume={260},
   date={2016},
   number={10},
   pages={7358--7396},
}

\bib{Ho10}{article}{
   author={Hoang, Luan Thach},
   title={Incompressible fluids in thin domains with Navier friction
   boundary conditions (I)},
   journal={J. Math. Fluid Mech.},
   volume={12},
   date={2010},
   number={3},
   pages={435--472},
}

\bib{HoSe10}{article}{
   author={Hoang, Luan T.},
   author={Sell, George R.},
   title={Navier-Stokes equations with Navier boundary conditions for an oceanic model},
   journal={J. Dynam. Differential Equations},
   volume={22},
   date={2010},
   number={3},
   pages={563--616},
}

\bib{Ho13}{article}{
   author={Hoang, Luan Thach},
   title={Incompressible fluids in thin domains with Navier friction
   boundary conditions (II)},
   journal={J. Math. Fluid Mech.},
   volume={15},
   date={2013},
   number={2},
   pages={361--395},
}

\bib{Hu07}{article}{
   author={Hu, Changbing},
   title={Navier-Stokes equations in 3D thin domains with Navier friction
   boundary condition},
   journal={J. Differential Equations},
   volume={236},
   date={2007},
   number={1},
   pages={133--163},
}

\bib{If99}{article}{
   author={Iftimie, Drago\c s},
   title={The 3D Navier-Stokes equations seen as a perturbation of the 2D
   Navier-Stokes equations},
   language={English, with English and French summaries},
   journal={Bull. Soc. Math. France},
   volume={127},
   date={1999},
   number={4},
   pages={473--517},
}

\bib{IfRa01}{article}{
   author={Iftimie, Drago\c s},
   author={Raugel, Genevi\`eve},
   title={Some results on the Navier-Stokes equations in thin 3D domains},
   note={Special issue in celebration of Jack K. Hale's 70th birthday, Part
   4 (Atlanta, GA/Lisbon, 1998)},
   journal={J. Differential Equations},
   volume={169},
   date={2001},
   number={2},
   pages={281--331},
}

\bib{IfRaSe07}{article}{
   author={Iftimie, Drago\c s},
   author={Raugel, Genevi\`eve},
   author={Sell, George R.},
   title={Navier-Stokes equations in thin 3D domains with Navier boundary conditions},
   journal={Indiana Univ. Math. J.},
   volume={56},
   date={2007},
   number={3},
   pages={1083--1156},
}

\bib{Ili90}{article}{
   author={Il\cprime in, A. A.},
   title={Navier-Stokes and Euler equations on two-dimensional closed manifolds},
   language={Russian},
   journal={Mat. Sb.},
   volume={181},
   date={1990},
   number={4},
   pages={521--539},
   translation={
      journal={Math. USSR-Sb.},
      volume={69},
      date={1991},
      number={2},
      pages={559--579},
   },
}

\bib{IliFil88}{article}{
   author={Il\cprime in, A. A.},
   author={Filatov, A. N.},
   title={Unique solvability of the Navier-Stokes equations on a two-dimensional sphere},
   language={Russian},
   journal={Dokl. Akad. Nauk SSSR},
   volume={301},
   date={1988},
   number={1},
   pages={18--22},
   translation={
      journal={Soviet Math. Dokl.},
      volume={38},
      date={1989},
      number={1},
      pages={9--13},
   },
}

\bib{JaMi01}{article}{
   author={J\"ager, Willi},
   author={Mikeli\'c, Andro},
   title={On the roughness-induced effective boundary conditions for an
   incompressible viscous flow},
   journal={J. Differential Equations},
   volume={170},
   date={2001},
   number={1},
   pages={96--122},
}

\bib{JaOlRe18}{article}{
   author={Jankuhn, Thomas},
   author={Olshanskii, Maxim A.},
   author={Reusken, Arnold},
   title={Incompressible fluid problems on embedded surfaces: modeling and
   variational formulations},
   journal={Interfaces Free Bound.},
   volume={20},
   date={2018},
   number={3},
   pages={353--377},
}

\bib{JiKu16}{article}{
   author={Jimbo, Shuichi},
   author={Kurata, Kazuhiro},
   title={Asymptotic behavior of eigenvalues of the Laplacian on a thin
   domain under the mixed boundary condition},
   journal={Indiana Univ. Math. J.},
   volume={65},
   date={2016},
   number={3},
   pages={867--898},
}

\bib{Jo11}{book}{
   author={Jost, J\"urgen},
   title={Riemannian geometry and geometric analysis},
   series={Universitext},
   edition={6},
   publisher={Springer, Heidelberg},
   date={2011},
   pages={xiv+611},
}

\bib{KheMis12}{article}{
   author={Khesin, Boris},
   author={Misio\l ek, Gerard},
   title={Euler and Navier-Stokes equations on the hyperbolic plane},
   journal={Proc. Natl. Acad. Sci. USA},
   volume={109},
   date={2012},
   number={45},
   pages={18324--18326},
}

\bib{KoLiGi17}{article}{
   author={Koba, Hajime},
   author={Liu, Chun},
   author={Giga, Yoshikazu},
   title={Energetic variational approaches for incompressible fluid systems on an evolving surface},
   journal={Quart. Appl. Math.},
   volume={75},
   date={2017},
   number={2},
   pages={359--389},
}

\bib{KoLiGi18Er}{article}{
   author={Koba, Hajime},
   author={Liu, Chun},
   author={Giga, Yoshikazu},
   title={Errata to ``Energetic variational approaches for incompressible fluid systems on an evolving surface''},
   journal={Quart. Appl. Math.},
   volume={76},
   date={2018},
   number={1},
   pages={147--152},
}

\bib{KohWen18}{article}{
   author={Kohr, Mirela},
   author={Wendland, Wolfgang L.},
   title={Variational approach for the Stokes and Navier-Stokes systems with
   nonsmooth coefficients in Lipschitz domains on compact Riemannian
   manifolds},
   journal={Calc. Var. Partial Differential Equations},
   volume={57},
   date={2018},
   number={6},
   pages={Art. 165, 41},
}

\bib{Kr14}{article}{
   author={Krej\v ci\v r\'\i k, David},
   title={Spectrum of the Laplacian in narrow tubular neighbourhoods of
   hypersurfaces with combined Dirichlet and Neumann boundary conditions},
   journal={Math. Bohem.},
   volume={139},
   date={2014},
   number={2},
   pages={185--193},
}

\bib{KuZi06}{article}{
   author={Kukavica, Igor},
   author={Ziane, Mohammed},
   title={Regularity of the Navier-Stokes equation in a thin periodic domain
   with large data},
   journal={Discrete Contin. Dyn. Syst.},
   volume={16},
   date={2006},
   number={1},
   pages={67--86},
}

\bib{KuZi07}{article}{
   author={Kukavica, Igor},
   author={Ziane, Mohammed},
   title={On the regularity of the Navier-Stokes equation in a thin periodic
   domain},
   journal={J. Differential Equations},
   volume={234},
   date={2007},
   number={2},
   pages={485--506},
}

\bib{LeLeSc20}{article}{
   author={Lederer, Philip L.},
   author={Lehrenfeld, Christoph},
   author={Sch\"{o}berl, Joachim},
   title={Divergence-free tangential finite element methods for incompressible flows on surfaces},
   journal={Internat. J. Numer. Methods Engrg.},
   volume={121},
   date={2020},
   number={11},
   pages={2503--2533},
}

\bib{Lee13}{book}{
   author={Lee, John M.},
   title={Introduction to smooth manifolds},
   series={Graduate Texts in Mathematics},
   volume={218},
   edition={2},
   publisher={Springer, New York},
   date={2013},
   pages={xvi+708},
}

\bib{Lee18}{book}{
   author={Lee, John M.},
   title={Introduction to Riemannian manifolds},
   series={Graduate Texts in Mathematics},
   volume={176},
   note={Second edition of [MR1468735]},
   publisher={Springer, Cham},
   date={2018},
   pages={xiii+437},
}

\bib{LeMu11}{article}{
   author={Lewicka, Marta},
   author={M\"uller, Stefan},
   title={The uniform Korn-Poincar\'e inequality in thin domains},
   language={English, with English and French summaries},
   journal={Ann. Inst. H. Poincar\'e Anal. Non Lin\'eaire},
   volume={28},
   date={2011},
   number={3},
   pages={443--469},
}

\bib{Lic16}{article}{
   author={Lichtenfelz, Leandro A.},
   title={Nonuniqueness of solutions of the Navier-Stokes equations on Riemannian manifolds},
   journal={Ann. Global Anal. Geom.},
   volume={50},
   date={2016},
   number={3},
   pages={237--248},
}

\bib{LiMa72}{book}{
   author={Lions, J.-L.},
   author={Magenes, E.},
   title={Non-homogeneous boundary value problems and applications. Vol. I},
   note={Translated from the French by P. Kenneth;
   Die Grundlehren der mathematischen Wissenschaften, Band 181},
   publisher={Springer-Verlag, New York-Heidelberg},
   date={1972},
   pages={xvi+357},
}

\bib{LiTeWa92a}{article}{
   author={Lions, Jacques-Louis},
   author={Temam, Roger},
   author={Wang, Shou Hong},
   title={New formulations of the primitive equations of atmosphere and
   applications},
   journal={Nonlinearity},
   volume={5},
   date={1992},
   number={2},
   pages={237--288},
}

\bib{LiTeWa92b}{article}{
   author={Lions, Jacques-Louis},
   author={Temam, Roger},
   author={Wang, Shou Hong},
   title={On the equations of the large-scale ocean},
   journal={Nonlinearity},
   volume={5},
   date={1992},
   number={5},
   pages={1007--1053},
}

\bib{LiTeWa95}{article}{
   author={Lions, Jacques-Louis},
   author={Temam, Roger},
   author={Wang, Shou Hong},
   title={Mathematical theory for the coupled atmosphere-ocean models. (CAO III)},
   journal={J. Math. Pures Appl. (9)},
   volume={74},
   date={1995},
   number={2},
   pages={105--163},
}

\bib{MitMon09}{article}{
   author={Mitrea, Marius},
   author={Monniaux, Sylvie},
   title={The nonlinear Hodge-Navier-Stokes equations in Lipschitz domains},
   journal={Differential Integral Equations},
   volume={22},
   date={2009},
   number={3-4},
   pages={339--356},
}

\bib{MitTa01}{article}{
   author={Mitrea, Marius},
   author={Taylor, Michael},
   title={Navier-Stokes equations on Lipschitz domains in Riemannian
   manifolds},
   journal={Math. Ann.},
   volume={321},
   date={2001},
   number={4},
   pages={955--987},
}

\bib{MitYa02}{article}{
   author={Mitsumatsu, Yoshihiko},
   author={Yano, Yasuhisa},
   title={Geometry of an incompressible fluid on a Riemannian manifold},
   language={Japanese},
   note={Geometric mechanics (Japanese) (Kyoto, 2002)},
   journal={S\={u}rikaisekikenky\={u}sho K\={o}ky\={u}roku},
   number={1260},
   date={2002},
   pages={33--47},
}

\bib{Miu17}{article}{
   author={Miura, Tatsu-Hiko},
   title={Zero width limit of the heat equation on moving thin domains},
   journal={Interfaces Free Bound.},
   volume={19},
   date={2017},
   number={1},
   pages={31--77},
}

\bib{Miu18}{article}{
   author={Miura, Tatsu-Hiko},
   title={On singular limit equations for incompressible fluids in moving
   thin domains},
   journal={Quart. Appl. Math.},
   volume={76},
   date={2018},
   number={2},
   pages={215--251},
}

\bib{Miu_DT}{thesis}{
  author={Miura, Tatsu-Hiko},
  title={Mathematical analysis of evolution equations in curved thin domains or on moving surfaces},
  type={Doctoral thesis},
  organization={University of Tokyo},
  date={2018},
}

\bib{Miu_NSCTD_01}{article}{
   author = {Miura, Tatsu-Hiko},
   title = {Navier--Stokes equations in a curved thin domain, Part I: uniform estimates for the Stokes operator},
   journal = {arXiv:2002.06343},
}

\bib{Miu_NSCTD_02}{article}{
   author = {Miura, Tatsu-Hiko},
   title = {Navier--Stokes equations in a curved thin domain, Part II: global existence of a strong solution},
   journal = {arXiv:2002.06347},
}

\bib{MoTeZi97}{article}{
   author={Moise, I.},
   author={Temam, R.},
   author={Ziane, M.},
   title={Asymptotic analysis of the Navier-Stokes equations in thin
   domains},
   note={Dedicated to Olga Ladyzhenskaya},
   journal={Topol. Methods Nonlinear Anal.},
   volume={10},
   date={1997},
   number={2},
   pages={249--282},
}

\bib{Mo99}{article}{
   author={Montgomery-Smith, Stephen},
   title={Global regularity of the Navier-Stokes equation on thin
   three-dimensional domains with periodic boundary conditions},
   journal={Electron. J. Differential Equations},
   date={1999},
   pages={No. 11, 19},
}

\bib{Nag99}{article}{
   author={Nagasawa, Takeyuki},
   title={Construction of weak solutions of the Navier-Stokes equations on
   Riemannian manifold by minimizing variational functionals},
   journal={Adv. Math. Sci. Appl.},
   volume={9},
   date={1999},
   number={1},
   pages={51--71},
}

\bib{Na1823}{article}{
   author={Navier, C. L. M. H.},
   title={M\'{e}moire sur les lois du mouvement des fluides},
   journal={Mem. Acad. R. Sci. Inst. France},
   volume={6},
   year={1823},
   pages={389--440},
}

\bib{Ne12}{book}{
   author={Ne\v cas, Jind\v rich},
   title={Direct methods in the theory of elliptic equations},
   series={Springer Monographs in Mathematics},
   note={Translated from the 1967 French original by Gerard Tronel and Alois Kufner; Editorial coordination and preface by \v S\'arka Ne\v casov\'a and a contribution by Christian G. Simader},
   publisher={Springer, Heidelberg},
   date={2012},
   pages={xvi+372},
}

\bib{NiReVo17}{article}{
   author={Nitschke, Ingo},
   author={Reuther, Sebastian},
   author={Voigt, Axel},
   title={Discrete exterior calculus (DEC) for the surface Navier-Stokes equation},
   conference={
      title={Transport processes at fluidic interfaces},
   },
   book={
      series={Adv. Math. Fluid Mech.},
      publisher={Birkh\"{a}user/Springer, Cham},
   },
   date={2017},
   pages={177--197},
}

\bib{NiVoWe12}{article}{
   author={Nitschke, I.},
   author={Voigt, A.},
   author={Wensch, J.},
   title={A finite element approach to incompressible two-phase flow on
   manifolds},
   journal={J. Fluid Mech.},
   volume={708},
   date={2012},
   pages={418--438},
}

\bib{OlQuReYu18}{article}{
   author={Olshanskii, Maxim A.},
   author={Quaini, Annalisa},
   author={Reusken, Arnold},
   author={Yushutin, Vladimir},
   title={A finite element method for the surface Stokes problem},
   journal={SIAM J. Sci. Comput.},
   volume={40},
   date={2018},
   number={4},
   pages={A2492--A2518},
}

\bib{OlYu19}{article}{
   author={Olshanskii, Maxim A.},
   author={Yushutin, Vladimir},
   title={A penalty finite element method for a fluid system posed on embedded surface},
   journal={J. Math. Fluid Mech.},
   volume={21},
   date={2019},
   number={1},
   pages={Paper No. 14, 18},
}

\bib{Pe06}{book}{
   author={Petersen, Peter},
   title={Riemannian geometry},
   series={Graduate Texts in Mathematics},
   volume={171},
   edition={2},
   publisher={Springer, New York},
   date={2006},
   pages={xvi+401},
}

\bib{Pi17}{article}{
   author={Pierfelice, Vittoria},
   title={The incompressible Navier-Stokes equations on non-compact manifolds},
   journal={J. Geom. Anal.},
   volume={27},
   date={2017},
   number={1},
   pages={577--617},
}

\bib{Prie94}{article}{
   author={Priebe, Volker},
   title={Solvability of the Navier-Stokes equations on manifolds with boundary},
   journal={Manuscripta Math.},
   volume={83},
   date={1994},
   number={2},
   pages={145--159},
}

\bib{PrRiRy02}{article}{
   author={Prizzi, M.},
   author={Rinaldi, M.},
   author={Rybakowski, K. P.},
   title={Curved thin domains and parabolic equations},
   journal={Studia Math.},
   volume={151},
   date={2002},
   number={2},
   pages={109--140},
}

\bib{PrRy03}{article}{
   author={Prizzi, M.},
   author={Rybakowski, K. P.},
   title={On inertial manifolds for reaction-diffusion equations on
   genuinely high-dimensional thin domains},
   journal={Studia Math.},
   volume={154},
   date={2003},
   number={3},
   pages={253--275},
}

\bib{PrSiWi20pre}{article}{
  author={Pr\"{u}ss, Jan},
  author={Simonett, Gieri},
  author={Wilke, Mathias},
  title={On the Navier-Stokes equations on surfaces},
  journal={arXiv:2005.00830},
}

\bib{RaSe93_I}{article}{
   author={Raugel, Genevi\`eve},
   author={Sell, George R.},
   title={Navier-Stokes equations on thin $3$D domains. I. Global attractors and global regularity of solutions},
   journal={J. Amer. Math. Soc.},
   volume={6},
   date={1993},
   number={3},
   pages={503--568},
}

\bib{RaSe94_II}{article}{
   author={Raugel, G.},
   author={Sell, G. R.},
   title={Navier-Stokes equations on thin $3$D domains. II. Global
   regularity of spatially periodic solutions},
   conference={
      title={Nonlinear partial differential equations and their
      applications. Coll\`ege de France Seminar, Vol. XI},
      address={Paris},
      date={1989--1991},
   },
   book={
      series={Pitman Res. Notes Math. Ser.},
      volume={299},
      publisher={Longman Sci. Tech., Harlow},
   },
   date={1994},
   pages={205--247},
}

\bib{RaSe93_III}{article}{
   author={Raugel, Genevi\`eve},
   author={Sell, George R.},
   title={Navier-Stokes equations in thin $3$D domains. III. Existence of a
   global attractor},
   conference={
      title={Turbulence in fluid flows},
   },
   book={
      series={IMA Vol. Math. Appl.},
      volume={55},
      publisher={Springer, New York},
   },
   date={1993},
   pages={137--163},
}

\bib{Ra95}{article}{
   author={Raugel, Genevi\`eve},
   title={Dynamics of partial differential equations on thin domains},
   conference={
      title={Dynamical systems},
      address={Montecatini Terme},
      date={1994},
   },
   book={
      series={Lecture Notes in Math.},
      volume={1609},
      publisher={Springer, Berlin},
   },
   date={1995},
   pages={208--315},
}

\bib{Reus20}{article}{
   author={Reusken, Arnold},
   title={Stream function formulation of surface Stokes equations},
   journal={IMA J. Numer. Anal.},
   volume={40},
   date={2020},
   number={1},
   pages={109--139},
}

\bib{ReVo15}{article}{
   author={Reuther, Sebastian},
   author={Voigt, Axel},
   title={The interplay of curvature and vortices in flow on curved surfaces},
   journal={Multiscale Model. Simul.},
   volume={13},
   date={2015},
   number={2},
   pages={632--643},
}

\bib{ReVo18}{article}{
   author={Reuther, Sebastian},
   author={Voigt, Axel},
   title={Solving the incompressible surface Navier-Stokes equation by surface finite elements},
   journal={Phys. Fluids},
   volume={30},
   number={1},
   pages={012107},
   date={2018},
}

\bib{Ru91}{book}{
   author={Rudin, Walter},
   title={Functional analysis},
   series={International Series in Pure and Applied Mathematics},
   edition={2},
   publisher={McGraw-Hill, Inc., New York},
   date={1991},
   pages={xviii+424},
}

\bib{SaOmSaMa20}{article}{
   author={Sahu, Amaresh},
   author={Omar, Yannick A. D.},
   author={Sauer, Roger A.},
   author={Mandadapu, Kranthi K.},
   title={Arbitrary Lagrangian-Eulerian finite element method for curved and deforming surfaces: I. General theory and application to fluid interfaces},
   journal={J. Comput. Phys.},
   volume={407},
   date={2020},
   pages={109253, 49},
}

\bib{SaTu20}{article}{
   author={Samavaki, Maryam},
   author={Tuomela, Jukka},
   title={Navier-Stokes equations on Riemannian manifolds},
   journal={J. Geom. Phys.},
   volume={148},
   date={2020},
   pages={103543, 15},
}

\bib{Sch96}{article}{
   author={Schatzman, Michelle},
   title={On the eigenvalues of the Laplace operator on a thin set with
   Neumann boundary conditions},
   journal={Appl. Anal.},
   volume={61},
   date={1996},
   number={3-4},
   pages={293--306},
}

\bib{Sc60}{article}{
  author={Scriven, L. E.},
  title={Dynamics of a fluid interface Equation of motion for Newtonian surface fluids},
  journal={Chemical Engineering Science},
  volume={12},
  date={1960},
  number={2},
  pages={98--108},
}

\bib{Sh_18pre}{article}{
   author = {Shimizu, Yuuki},
   title = {Green's function for the Laplace--Beltrami operator on surfaces with a non-trivial Killing vector field and its application to potential flows},
   journal = {arXiv:1810.09523},
}

\bib{SlSaOh07}{book}{
   author={Slattery, John C.},
   author={Sagis, Leonard},
   author={Oh, Eun-Suok},
   title={Interfacial transport phenomena},
   edition={2},
   publisher={Springer, New York},
   date={2007},
   pages={xviii+827},
}

\bib{SoSc73}{article}{
  author={Solonnikov, V. A.},
  author={\v{S}\v{c}adilov, V. E.},
  title={On a boundary value problem for a stationary system of {N}avier-{S}tokes equations},
  journal={Proc. Steklov Inst. Math.},
  volume={125},
  date={1973},
  pages={186--199},
}

\bib{So01}{book}{
   author={Sohr, Hermann},
   title={The Navier-Stokes equations},
   series={Modern Birkh\"auser Classics},
   note={An elementary functional analytic approach;
   [2013 reprint of the 2001 original] [MR1928881]},
   publisher={Birkh\"auser/Springer Basel AG, Basel},
   date={2001},
   pages={x+367},
}

\bib{Sp79}{book}{
   author={Spivak, Michael},
   title={A comprehensive introduction to differential geometry. Vol. V},
   edition={2},
   publisher={Publish or Perish, Inc., Wilmington, Del.},
   date={1979},
   pages={viii+661},
}

\bib{Ta92}{article}{
   author={Taylor, Michael E.},
   title={Analysis on Morrey spaces and applications to Navier-Stokes and
   other evolution equations},
   journal={Comm. Partial Differential Equations},
   volume={17},
   date={1992},
   number={9-10},
   pages={1407--1456},
}

\bib{Te79}{book}{
   author={Temam, Roger},
   title={Navier-Stokes equations},
   series={Studies in Mathematics and its Applications},
   volume={2},
   edition={Revised edition},
   note={Theory and numerical analysis;
   With an appendix by F. Thomasset},
   publisher={North-Holland Publishing Co., Amsterdam-New York},
   date={1979},
   pages={x+519},
}

\bib{TeZi96}{article}{
   author={Temam, R.},
   author={Ziane, M.},
   title={Navier-Stokes equations in three-dimensional thin domains with various boundary conditions},
   journal={Adv. Differential Equations},
   volume={1},
   date={1996},
   number={4},
   pages={499--546},
}

\bib{TeZi97}{article}{
   author={Temam, R.},
   author={Ziane, M.},
   title={Navier-Stokes equations in thin spherical domains},
   conference={
      title={Optimization methods in partial differential equations},
      address={South Hadley, MA},
      date={1996},
   },
   book={
      series={Contemp. Math.},
      volume={209},
      publisher={Amer. Math. Soc., Providence, RI},
   },
   date={1997},
   pages={281--314},
}

\bib{TemWan93}{article}{
   author={Temam, Roger},
   author={Wang, Shou Hong},
   title={Inertial forms of Navier-Stokes equations on the sphere},
   journal={J. Funct. Anal.},
   volume={117},
   date={1993},
   number={1},
   pages={215--242},
}

\bib{ToSaAr20}{article}{
   author={Torres-S\'{a}nchez, Alejandro},
   author={Santos-Oliv\'{a}n, Daniel},
   author={Arroyo, Marino},
   title={Approximation of tensor fields on surfaces of arbitrary topology based on local Monge parametrizations},
   journal={J. Comput. Phys.},
   volume={405},
   date={2020},
   pages={109168, 14},
}

\bib{Ve87}{article}{
   author={Verf\"{u}rth, R\"{u}diger},
   title={Finite element approximation of incompressible Navier-Stokes
   equations with slip boundary condition},
   journal={Numer. Math.},
   volume={50},
   date={1987},
   number={6},
   pages={697--721},
}

\bib{Yac18}{article}{
   author={Yachimura, Toshiaki},
   title={Two-phase eigenvalue problem on thin domains with Neumann boundary condition},
   journal={Differential Integral Equations},
   volume={31},
   date={2018},
   number={9-10},
   pages={735--760},
}

\bib{Yan90}{article}{
   author={Yanagida, Eiji},
   title={Existence of stable stationary solutions of scalar reaction-diffusion equations in thin tubular domains},
   journal={Appl. Anal.},
   volume={36},
   date={1990},
   number={3-4},
   pages={171--188},
}

\bib{YaOzSa16}{article}{
   author={Yavari, Arash},
   author={Ozakin, Arkadas},
   author={Sadik, Souhayl},
   title={Nonlinear elasticity in a deforming ambient space},
   journal={J. Nonlinear Sci.},
   volume={26},
   date={2016},
   number={6},
   pages={1651--1692},
}

\end{biblist}
\end{bibdiv}

\end{document}